\DeclareMathAlphabet{\mathpzc}{OT1}{pzc}{m}{it}
\DeclareMathAlphabet{\mathdutchcal}{U}{dutchcal}{m}{n}
\SetMathAlphabet{\mathdutchcal}{bold}{U}{dutchcal}{b}{n}
\newcommand{\ts}[1]{{\color{red} #1}}
\newcommand{\matteo}[1]{{\color{brown} #1}}
\numberwithin{equation}{section}
\tikzset{
>=stealth',
help lines/.style={dashed, thick},
axis/.style={<->},
important line/.style={thick},
connection/.style={thick, dotted},
punkt/.style={
rectangle,
rounded corners,
draw=black, thick,
text width=4.5em,
minimum height=2em,
text centered,
},
pil/.style={
->,
thick,
gray,
shorten <=2pt,
shorten >=2pt,}
}
\newtheorem{proposition}{Proposition}[section]
\newtheorem{lemma}[proposition]{Lemma}
\newtheorem{corollary}[proposition]{Corollary}
\newtheorem{theorem}[proposition]{Theorem}
\newtheorem*{theorem*}{Theorem}
\theoremstyle{definition}
\newtheorem{definition}[proposition]{Definition}
\newtheorem{remark}[proposition]{Remark}
\newtheorem*{remark*}{Remark}
\newtheorem{example}[proposition]{Example}
\newtheorem{question}[]{Question}
\newcommand{\Qbinomial}[3]{\binom{#1}{#2}_{\hspace{-2pt} #3}}
\newcommand{\E}[1]{\widetilde{e}_{#1}}
\newcommand{\F}[1]{\widetilde{f}_{#1}}
\newcommand{\pr}{\mathrm{pr}}
\newcommand{\evac}{\mathrm{evac}}
\newcommand{\wt}{\mathrm{wt}}
\newcommand{\skwRSK}{\mathbf{SS}}
\newcommand{\RS}{\mathbf{RS}}
\newcommand{\RSK}{\mathbf{RSK}}
\newcommand{\V}{\mathbf{V}}
\newcommand{\rc}{\mathrm{rc}}
\newcommand{\std}{\mathrm{std}}
\newcommand{\overlap}{\mathrm{ov}}
\newcommand{\cupdot}{\mathbin{\mathaccent\cdot\cup}}
\newcommand{\smallvdots}{\scalebox{0.5}{$\vdots$}}
\begin{document}
\title{Skew RSK dynamics: Greene invariants, affine crystals and applications to $q$-Whittaker polynomials}

\author[T. Imamura]{Takashi Imamura}\address{T. Imamura, 
Department of Mathematics and Informatics, Chiba University, Chiba, 263-8522,Japan}\email{imamura@math.s.chiba-u.ac.jp}

\author[M. Mucciconi]{Matteo Mucciconi}\address{M. Mucciconi, 
Department of Physics,
Tokyo Institute of Technology, Tokyo, 152-8551 Japan}\email{matteomucciconi@gmail.com}

\author[T. Sasamoto]{Tomohiro Sasamoto}\address{T. Sasamoto, 
Department of Physics,
Tokyo Institute of Technology, Tokyo, 152-8551 Japan}\email{sasamoto@phys.titech.ac.jp}

\date{}

\maketitle

\begin{abstract}
    Iterating the skew RSK correspondence discovered by Sagan and Stanley in the late '80s, we define a deterministic dynamics on the space of pairs of skew Young tableaux $(P,Q)$. We find that this skew RSK dynamics displays conservation laws which, in the picture of Viennot's shadow line construction, identify generalizations of Greene invariants.
    The introduction of a novel realization of $0$-th Kashiwara operators reveals that the skew RSK dynamics possesses symmetries induced by an affine bicrystal structure,
    which, combined with connectedness properties of Demazure crystals,  leads to its linearization. 
    Studying asymptotic evolution of the dynamics started from a pair of skew tableaux $(P,Q)$, we discover a new bijection $\Upsilon : (P,Q) \mapsto (V,W; \kappa, \nu)$. Here $(V,W)$ is a pair of  
    vertically strict tableaux, i.e., column strict filling of Young diagrams with no condition on rows, with shape prescribed by the Greene invariant, $\kappa$ is an array of non-negative weights and $\nu$ is a partition. 
    An application of this construction is the first bijective proof of Cauchy and Littlewood identities involving $q$-Whittaker polynomials. New identities relating sums of $q$-Whittaker and Schur polynomials are also presented.
\end{abstract}

\tableofcontents

\section{Introduction}

\subsection{The goal of this paper} \label{subs:goals}

    The Robinson-Schensted-Knuth (RSK) correspondence is a fundamental bijection between matrices $M$ with non-negative integer entries, sometimes encoded by biwords $\pi$,
    and pairs of semi-standard tableaux $(P,Q)$ \cite{robinson1938representations,Schensted1961,Knuth1970}. It represents one of the central tools in combinatorics and its applications range from representation theory to probability. Along with a simple algorithmic description, the RSK correspondence possesses a surprising number of properties and symmetries. These have been central object of study throughout the twentieth century receiving contributions from a number of celebrated combinatorialists. A detailed account on the theory of RSK correspondence can be found in classical books as \cite{fulton1997young,Stanley1999,sagan2001symmetric,lothaire_2002}.
    
    The RSK correspondence provides powerful tools to prove various identities involving symmetric functions. For instance the Cauchy identity for the Schur polynomials $s_{\lambda}$, with $x=(x_1,\cdots,x_n)$, $y=(y_1,\cdots,y_n)$,
    \begin{equation} \label{eq:Cauchy_id_Schur0}
        \sum_{\lambda} s_{\lambda}(x) s_{\lambda}(y) =  \prod_{i,j=1}^n \frac{1}{1-x_i y_j}, 
    \end{equation}
    which can be proved in various ways, may be also seen as a consequence of the RSK correspondence. On the left hand side the Schur polynomial appears as a result of the combinatorial formula $s_{\lambda}(x)=\sum_{T:{\rm sh} T=\lambda}x^T$ where the sum is over semi-standard tableaux with shape $\lambda$, whereas each factor in the right hand side is a geometric sum corresponding to each matrix element of an integral matrix $M$ of size $n \times n$. 
    An advantage of finding a bijective proof is that by 
    leveraging symmetries it leads to a number of related identities, see for instance 
    \cite{Stanley1999} 
    
    A well known property of the RSK is the Schensted's theorem \cite{Schensted1961}. It says that, assuming $\pi \xleftrightarrow[]{\RSK} (P,Q)$, the length of the first row of tableaux $P,Q$ equals the length of the longest increasing subsequence of the biword $\pi$. Noticeably this property became a crucial tool in the solution of the Ulam's problem \cite{logan_shepp1977variational,VershikKerov_LimShape1077,baik1999distribution}. A generalization of Schensted's theorem was found by Greene \cite{Greene1974}, who proved that the full shape of tableaux $P,Q$ can be identified maximizing disjoint increasing subsequences of $\pi$. 
    Greene's characterization has found uses in the discovery of universal objects in probability theory such as the Directed Landscape \cite{directed_landscape}, which is generalization the Airy process \cite{PhahoferSpohn2002}.

    In \cite{sagan1990robinson} Sagan and Stanley discovered a generalization of the RSK correspondence which 
    relates pairs $(\overline{M};\nu)$ consisting of a matrix of non-negative integer sequences $\overline{M}= (\overline{M}_{i,j}(k)\in \mathbb{N}_0 :i,j\in\{1,\dots,n\},k\in\mathbb{N}_0)$ and a partition $\nu$ with pairs $(P,Q)$ of semi-standard tableaux of generic skew shape. Throughout we will use the convention $\mathbb{N}=\{1,2,\dots\}$ and $\mathbb{N}_0=\mathbb{N} \cup \{ 0 \}$.  In this paper we will refer to this as \emph{Sagan-Stanley correspondence} and we will often use the short-hand $(\overline{M};\nu) \xleftrightarrow[]{\skwRSK\,} (P,Q)$. 
    Naturally they also discussed an application of their correspondence 
    to prove bijectively a Cauchy identity for skew Schur polynomials $s_{\lambda/\rho}$ \cite{Macdonald1995}. 
    Fixing a parameter $|q|<1$ and variables $|x_i y_j|<1$, $i,j=1,\dots,n$, it reads
    \begin{equation} \label{eq:Cauchy_id_Schur}
        \sum_{\rho \subseteq \lambda} q^{|\rho|} s_{\lambda/\rho}(x) s_{\lambda/\rho}(y) = \frac{1}{(q;q)_\infty} \prod_{i,j=1}^n \frac{1}{(x_i y_j ; q)_\infty}, 
    \end{equation}
    where $(z;q)_n =(1-z)(1-qz)\cdots (1-q^{n-1}z) ,n\in \mathbb{N}_0 \cup \{+\infty\}$ is the $q$-Pochhammer symbol.
    
     Unlike for the classical RSK correspondence, a detailed description of properties of Sagan and Stanley's algorithm has proven to be more challenging to obtain. Powerful tools such as Sch{\"u}tzenberger's theory of jeu de taquin \cite{schutzenberger_RS} do not admit straightforward ``skew" analogs and extensions of Greene invariants in skew setting have also remained unexplored. For instance, if we assume $(\overline{M};\nu) \xleftrightarrow[]{\skwRSK\,} (P,Q)$, then a simple characterization of the shape of $P,Q$ in terms of $\overline{M},\nu$ is at the moment not available. 
    In this paper we fill this void and provide a generalization of Greene's theorem in this skew setting,
    as a consequence of the theory we develop, as described below. 

    To people with experience in symmetric polynomials, the factorized expression in the right hand side of identity \eqref{eq:Cauchy_id_Schur} should look familiar. In fact, a closely resembling expression arises when considering the Cauchy identity for $q$-Whittaker polynomials $\mathscr{P}_\mu(x;q)$ \cite{GLO2010}, that are Macdonald polynomials $\mathscr{P}_\mu(x;q,t)$  \cite[Chapter VI]{Macdonald1995} with parameter $t=0$. We have 
    \begin{equation} \label{eq:Cauchy_id_qW_intro}
        \sum_\mu \mathdutchcal{b}_\mu (q) \mathscr{P}_\mu(x;q) \mathscr{P}_\mu(y;q) = \prod_{i,j=1}^n \frac{1}{(x_i y_j ; q)_\infty},
    \end{equation}
where $\mathdutchcal{b}_\mu$ is a normalization factor and its explicit definition can be found in \eqref{eq:b_mu} in the text. The Macdonald polynomials $\mathscr{P}_\mu(x;q,t)$ are widely considered as a central object in the theory of special functions and play prominent roles in various fields such as affine Hecke algebras \cite{Cherednik_DAHA}, Hilbert schemes \cite{Haiman_Hilber_Schemes}, combinatorics \cite{haglund_haiman_loehr} and more recently in integrable probability \cite{BorodinCorwin2011Macdonald} and integrable systems \cite{Cantini_2015}.   
The particular case of the $q$-Whittaker polynomials have also attracted special attention in recent years because of their importance in integrable probability \cite{BorodinCorwin2011Macdonald,OConnellPei2012,MatveevPetrov2014,OrrPetrov2016,imamura2019stationary}, representation theory \cite{Garsia_Procesi_1992,Sanderson_Demazure_q_Whittaker,schilling_tingley,naito_sagaki_Demazure}, and combinatorics \cite{BorodinWheelerSpinq, Cantini_2015,Garbali_Wheeler_2020} and a few other subjects.
A proof of the Cauchy identity \eqref{eq:Cauchy_id_qW_intro} is explained in \cite{Macdonald1995}. Several different proofs have appeared in literature in recent years, which are based on the Yang-Baxter equation \cite{BorodinWheelerSpinq}, or \emph{randomized} variants of the RSK algorithm \cite{OConnellPei2012,MatveevPetrov2014}.
However, to the best of the authors knowledge, none of the techniques available in existing literature allow for a bijective proof of the Cauchy identity \eqref{eq:Cauchy_id_qW_intro}.
Nevertheless the striking similarity between partition functions \eqref{eq:Cauchy_id_Schur}, \eqref{eq:Cauchy_id_qW_intro}, along with the fact that all terms involved $q^{|\rho|},s_{\lambda/\rho}(x), \mathdutchcal{b}_\mu(q),\mathscr{P}_\mu(x;q)$ possess positive monomials expansions, suggest the possibility of relating the theories concerning the RSK correspondence to $q$-Whittaker polynomials. \emph{The goal of this paper is to develop a combinatorial theory extending the scope of the RSK correspondence and which allows the first bijective proof of the Cauchy identity \eqref{eq:Cauchy_id_qW_intro}.}
As consequence our theory will produce a number of new identities involving $q$-Whittaker polynomials and we envision it playing 
important roles in a wide range of related fields in future.
    
\subsection{Skew RSK dynamics: examples and emerging questions} \label{subs:examples}

    To achieve the goals outlined above we first introduce a new deterministic time evolution on pairs of skew tableaux, which is defined by combining the skew RSK map introduced in \cite{sagan1990robinson} and a novel cyclic operation on tableaux. We call this the \emph{skew RSK dynamics} and in this subsection we will see through an example how it would bring a connection between skew tableaux and $q$-Whittaker polynomials. Looking at time evolution of skew tableaux for some examples, we observe certain properties of the dynamics and a few questions emerge. Indeed results presented in this paper are obtained while proving these properties and answering these questions. 
    
    To define our dynamics, we first recall a basic operation on a tableau called the \emph{internal insertion}, which was introduced in \cite{sagan1990robinson}. From a semi-standard tableau of skew shape $P$ select a row $r$ such that the leftmost cell $(c,r)$ at that row is a corner cell, i.e. both $(c-1,r)$ and $(c,r-1)$ are empty cells. Then, $\mathcal{R}_{[r]}(P)$ is the tableau obtained vacating the cell $(c,r)$ of $P$ and inserting, following the usual Schensted's bumping algorithm, the value $P(c,r)$ at row below. For a more precise description of this procedure see \cref{subs:RSK_product_tableaux} below. In the following example, calling $P$ the tableau in the left hand side, we show, step by step, the computation of $\mathcal{R}_{[2]}(P)$
    \begin{equation}
        \begin{minipage}{.78\linewidth}
            \includegraphics[scale=1.2]{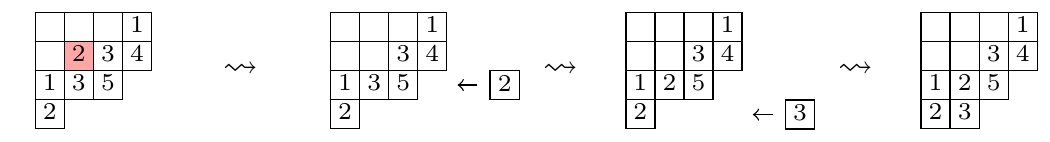}
        \end{minipage}
        \hspace{1cm} .
    \end{equation}
    Using the notion of internal insertion we define a new map, this time acting on pairs of skew tableaux $(P,Q)$ with same shape. We call it $\iota_2$ to emphasize its nontrivial action on the second tableaux $Q$; later in \cref{subs:results} we will also introduce $\iota_1$. Entries of tableaux here are assumed to belong to the alphabet $\{1,\dots,n\}$ for some fixed $n\ge 1$. Define $\iota_2 : (P,Q) \mapsto ( P', Q' )$, where $P' = \mathcal{R}_{[i_k]} \cdots \mathcal{R}_{[i_1]}(P)$, $i_1 \ge \dots \ge i_k$ are all row coordinates of $1$-cells (i.e. cells with label 1) of $Q$ and $Q'$ is obtained from $Q$ vacating all $1$-cells, decreasing by 1 the labels of all remaining cells and creating $n$-cells so to make the shape of $P',Q'$ equal. The following example shows a realization of $\iota_2(P,Q)$ and we assume $n=5$
    \begin{equation}
    \label{ex_iota2}
    \ytableausetup{aligntableaux = center,smalltableaux}
        (P,Q)=
        \left(
        \,
            \begin{ytableau}
                \, &  &  & 1
                \\
                & 2 & 3 & 4
                \\
                1 & 3 & 5
                \\
                2
            \end{ytableau}
            \,\, , \,\,
            \begin{ytableau}
                \,  &  &  & 2
                \\
                & 1 & 3 & 3
                \\
                2 & 2 & 5
                \\
                3
            \end{ytableau}
            \,
        \right)
        \xrightarrow[]{\hspace{.5cm} \iota_2 \hspace{.5cm}}
        \left(
        \,
            \begin{ytableau}
                \,  &  &  & 1
                \\
                & & 3 & 4
                \\
                1 & 2 & 5
                \\
                2 & 3
            \end{ytableau}
            \,\, , \,\,
            \begin{ytableau}
                \,  &  &  & 1
                \\
                & & 2 & 2
                \\
                1 & 1 & 4
                \\
                2 & 5
            \end{ytableau}
            \,
        \right) .
    \end{equation}
    $\iota_2$ is invertible and the inverse $\iota_2^{-1}$ is always well defined, provided we allow cells of tableaux to occupy also non strictly positive rows. To give a reference, while drawing tableaux we will color such cells in gray, so for instance we have
    \begin{equation}
        \left(
        \,
            \begin{ytableau}
                \,  &  &  & 1
                \\
                & 2 & 3 & 4
                \\
                1 & 3 & 5
                \\
                2
            \end{ytableau}
            \,\, , \,\,
            \begin{ytableau}
                \,  &  &  & 2
                \\
                & 1 & 3 & 3
                \\
                2 & 2 & 5
                \\
                3
            \end{ytableau}
            \,
        \right)
        \xrightarrow[]{\hspace{.5cm} \iota_2^{-1} \hspace{.5cm}}
        \left(
        \,
            \begin{ytableau}
                *(gray!35) & *(gray!35) & *(gray!35) & *(gray!35) 1
                \\
                \,  &  &  & 4
                \\
                & 2 & 3 & 5
                \\
                1 & 2
                \\
                2
            \end{ytableau}
            \,\, , \,\,
            \begin{ytableau}
                *(gray!35) & *(gray!35) & *(gray!35) & *(gray!35) 1
                \\
                \, &  &  & 3
                \\
                & 2 & 4 & 4
                \\
                3 & 3
                \\
                4
            \end{ytableau}
            \,
        \right) .
    \end{equation}
    The operation $\iota_2$, in particular the cycling operation on a $Q$ tableau, is new in this paper and represents a dynamical rule preserving semi-standard properties. 
    Iterating $n$ times the application of $\iota_2$ yields a known content preserving map, that in \cite{sagan1990robinson} was called ``skew Knuth map" and that we will call \emph{skew $\RSK$ map}, 
    \begin{equation} \label{eq:intro_RSK_iota_2}
        \RSK(P,Q) \coloneqq \iota_2^n (P,Q). 
    \end{equation}
    For instance we have
    \begin{equation}
        \left(
        \,
            \begin{ytableau}
                \, & & & 1
                \\
                & 2 & 3 & 4
                \\
                1 & 3 & 5
                \\
                2
            \end{ytableau}
            \,\, , \,\,
            \begin{ytableau}
                \, &  &  & 2
                \\
                & 1 & 3 & 3
                \\
                2 & 2 & 5
                \\
                3
            \end{ytableau}
            \,
        \right)
        \xrightarrow[]{\hspace{.5cm} \RSK \hspace{.5cm}}
        \left(
        \,
            \begin{ytableau}
                \, &  &  & 
                \\
                & &  & 
                \\
                & & & 4
                \\
                & 1 & 3
                \\
                1 & 2 & 5
                \\
                2 & 3
            \end{ytableau}
            \,\, , \,\,
            \begin{ytableau}
                \, &  &  & 
                \\
                & &  & 
                \\
                & & & 3
                \\
                & 1 & 2
                \\
                2 & 2 & 3
                \\
                3 & 5
            \end{ytableau}
            \,
        \right) .
    \end{equation}
    
    From (\ref{eq:intro_RSK_iota_2}), $\iota_2$ can be considered as a refinement of the skew $\RSK$ map. It will also play a 
    crucial role when we discuss an affine bicrystal symmetry of the skew $\RSK$ dynamics; see (\ref{eq:intro_0_th_operators}) below. The skew $\RSK$ map is invertible and its inverse $\RSK^{-1}$ comes from the application of $n$ consecutive times of $\iota_2^{-1}$. Continuing with our running example we find
    \begin{equation}
        \left(
        \,
            \begin{ytableau}
                \, &  &  & 1
                \\
                & 2 & 3 & 4
                \\
                1 & 3 & 5
                \\
                2
            \end{ytableau}
            \,\, , \,\,
            \begin{ytableau}
                \, &  &  & 2
                \\
                & 1 & 3 & 3
                \\
                2 & 2 & 5
                \\
                3
            \end{ytableau}
            \,
        \right)
        \xrightarrow[]{\hspace{.5cm} \RSK^{-1} \hspace{.5cm}}
        \left(
        \,
        \begin{ytableau}
            *(gray!35) & *(gray!35) & *(gray!35) & *(gray!35) 1
            \\
            *(gray!35) & *(gray!35) & *(gray!35) & *(gray!35) 4
            \\
            *(gray!35) & *(gray!35) & *(gray!35) 2 & *(gray!35) 5
            \\
            1 & 3 & 3
            \\
            2
        \end{ytableau}
        \,\, , \,\,
        \begin{ytableau}
            *(gray!35) & *(gray!35) & *(gray!35) & *(gray!35) 2
            \\
            *(gray!35) & *(gray!35) & *(gray!35) & *(gray!35) 3
            \\
            *(gray!35) & *(gray!35) & *(gray!35) 1 & *(gray!35) 5
            \\
            2 & 2 & 3
            \\
            3
        \end{ytableau}
        \,
        \right).
    \end{equation}
   
    The skew $\RSK$ map is a map from a pair of skew tableaux to another. Iterating the map for $t$ times
    one can define a time evolution of a pair of skew tableaux by $(P_{t+1},Q_{t+1}) = \RSK^{t}(P,Q)$, with the initial condition given by $(P_1,Q_1)=(P,Q)$. 
    Note that $t$ can be an arbitrary integer, using $\RSK^{-1}$ for a negative $t$.  
    We call this the \emph{skew $\RSK$ dynamics} and it plays the central role in our theory. 
    
   An interesting phenomenon happens when we consider the large $t$ limit.  Tableaux $(P_t,Q_t)$, 
   from a certain $t$ onward become ``stable", in the sense that the application of the skew $\RSK$ map has the only effect of rigidly shifting columns downward. The amplitude of each shift equals the number of labeled cells at the column. Let us show this in our example taking, for instance, $t=10$. With some patience one can compute $\RSK^{10}(P,Q)$ as
    \begin{equation} \label{eq:RSK_10}
        \left(
        \,
            \begin{ytableau}
                \, &  &  & 1
                \\
                & 2 & 3 & 4
                \\
                1 & 3 & 5
                \\
                2
            \end{ytableau}
            \,\, , \,\,
            \begin{ytableau}
                \, &  &  & 2
                \\
                & 1 & 3 & 3
                \\
                2 & 2 & 5
                \\
                3
            \end{ytableau}
            \,
        \right)
        \xrightarrow[]{\hspace{.5cm} \RSK^{10} \hspace{.5cm}}
        \left(
        \,
        \begin{ytableau}
            \none[\smallvdots] & \smallvdots & \smallvdots & \smallvdots & \smallvdots
            \\
            \none[{\textcolor{gray}{\scriptstyle 12\,\,}}] & & & & 4
            \\
            \none[\smallvdots] & \smallvdots & \smallvdots & \smallvdots
            \\
            \none[{\textcolor{gray}{\scriptstyle 22\,\,}}] & & & 3
            \\
            \none[{\textcolor{gray}{\scriptstyle 23\,\,}}] & & 1 & 5
            \\
            \none[{\textcolor{gray}{\scriptstyle 24\,\,}}] & & 2
            \\
            \none[\smallvdots] & \smallvdots
            \\
            \none[{\textcolor{gray}{\scriptstyle 31\,\,}}] & 1
            \\
            \none[{\textcolor{gray}{\scriptstyle 32\,\,}}] & 2
            \\
            \none[{\textcolor{gray}{\scriptstyle 33\,\,}}] & 3
        \end{ytableau}
        \,\, , \,\,
        \begin{ytableau}
            \none[\smallvdots] & \smallvdots & \smallvdots & \smallvdots & \smallvdots
            \\
            \none[{\textcolor{gray}{\scriptstyle 12\,\,}}] & & & & 3
            \\
            \none[\smallvdots] & \smallvdots & \smallvdots & \smallvdots
            \\
            \none[{\textcolor{gray}{\scriptstyle 22\,\,}}] & & & 2
            \\
            \none[{\textcolor{gray}{\scriptstyle 23\,\,}}] & & 2 & 3
            \\
            \none[{\textcolor{gray}{\scriptstyle 24\,\,}}] & & 5
            \\
            \none[\smallvdots] & \smallvdots
            \\
            \none[{\textcolor{gray}{\scriptstyle 31\,\,}}] & 1
            \\
            \none[{\textcolor{gray}{\scriptstyle 32\,\,}}] & 2
            \\
            \none[{\textcolor{gray}{\scriptstyle 33\,\,}}] & 3
        \end{ytableau}
        \,
        \right),
    \end{equation}
    so that applying the skew $\RSK$ map one more time yields
    \begin{equation} \label{eq:RSK_11}
        \left(
        \,
        \begin{ytableau}
            \none[\smallvdots] & \smallvdots & \smallvdots & \smallvdots & \smallvdots
            \\
            \none[{\textcolor{gray}{\scriptstyle 12\,\,}}] & & & & 4
            \\
            \none[\smallvdots] & \smallvdots & \smallvdots & \smallvdots
            \\
            \none[{\textcolor{gray}{\scriptstyle 22\,\,}}] & & & 3
            \\
            \none[{\textcolor{gray}{\scriptstyle 23\,\,}}] & & 1 & 5
            \\
            \none[{\textcolor{gray}{\scriptstyle 24\,\,}}] & & 2
            \\
            \none[\smallvdots] & \smallvdots
            \\
            \none[{\textcolor{gray}{\scriptstyle 31\,\,}}] & 1
            \\
            \none[{\textcolor{gray}{\scriptstyle 32\,\,}}] & 2
            \\
            \none[{\textcolor{gray}{\scriptstyle 33\,\,}}] & 3
        \end{ytableau}
        \,\, , \,\,
        \begin{ytableau}
            \none[\smallvdots] & \smallvdots & \smallvdots & \smallvdots & \smallvdots
            \\
            \none[{\textcolor{gray}{\scriptstyle 12\,\,}}] & & & & 3
            \\
            \none[\smallvdots] & \smallvdots & \smallvdots & \smallvdots
            \\
            \none[{\textcolor{gray}{\scriptstyle 22\,\,}}] & & & 2
            \\
            \none[{\textcolor{gray}{\scriptstyle 23\,\,}}] & & 2 & 3
            \\
            \none[{\textcolor{gray}{\scriptstyle 24\,\,}}] & & 5
            \\
            \none[\smallvdots] & \smallvdots
            \\
            \none[{\textcolor{gray}{\scriptstyle 31\,\,}}] & 1
            \\
            \none[{\textcolor{gray}{\scriptstyle 32\,\,}}] & 2
            \\
            \none[{\textcolor{gray}{\scriptstyle 33\,\,}}] & 3
        \end{ytableau}
        \,
        \right)
        \xrightarrow[]{\hspace{.5cm} \RSK \hspace{.5cm}}
        \left(
        \,
        \begin{ytableau}
            \none[\smallvdots] & \smallvdots & \smallvdots & \smallvdots & \smallvdots
            \\
            \none[{\textcolor{gray}{\scriptstyle 13\,\,}}] & & & & 4
            \\
            \none[\smallvdots] & \smallvdots & \smallvdots & \smallvdots
            \\
            \none[{\textcolor{gray}{\scriptstyle 24\,\,}}] & & & 3
            \\
            \none[{\textcolor{gray}{\scriptstyle 25\,\,}}] & & 1 & 5
            \\
            \none[{\textcolor{gray}{\scriptstyle 26\,\,}}] & & 2
            \\
            \none[\smallvdots] & \smallvdots
            \\
            \none[{\textcolor{gray}{\scriptstyle 34\,\,}}] & 1
            \\
            \none[{\textcolor{gray}{\scriptstyle 35\,\,}}] & 2
            \\
            \none[{\textcolor{gray}{\scriptstyle 36\,\,}}] & 3
        \end{ytableau}
        \,\, , \,\,
        \begin{ytableau}
            \none[\smallvdots] & \smallvdots & \smallvdots & \smallvdots & \smallvdots
            \\
            \none[{\textcolor{gray}{\scriptstyle 13,\,}}] & & & & 3
            \\
            \none[\smallvdots] & \smallvdots & \smallvdots & \smallvdots
            \\
            \none[{\textcolor{gray}{\scriptstyle 24\,\,}}] & & & 2
            \\
            \none[{\textcolor{gray}{\scriptstyle 25\,\,}}] & & 2 & 3
            \\
            \none[{\textcolor{gray}{\scriptstyle 26\,\,}}] & & 5
            \\
            \none[\smallvdots] & \smallvdots
            \\
            \none[{\textcolor{gray}{\scriptstyle 34\,\,}}] & 1
            \\
            \none[{\textcolor{gray}{\scriptstyle 35\,\,}}] & 2
            \\
            \none[{\textcolor{gray}{\scriptstyle 36\,\,}}] & 3
        \end{ytableau}
        \,
        \right).
    \end{equation}
    In the previous two equations grey numbers at the left of tableaux indicate the row coordinates of cells to their right. We notice that in \eqref{eq:RSK_11}, the skew $\RSK$ map had the only effect of shifting columns downward, as an instance of the stabilization phenomenon described just above. Notice again that in such stable states each column travels downward with ``speed" equal to the number of labeled cells it hosts. In the above example the speeds are  3,2,2,1 for the 1st, 2nd, 3rd, 4th columns. Obviously longer columns travel ``faster". 
    This procedure defines an important object. 
    \begin{definition}[Asymptotic increments] \label{def:asymptotic_increment}
        For a pair $(P,Q)$ of semi-standard tableaux of same skew shape let $\lambda^{t+1}/\rho^{t+1}$ be the shape of pair $\RSK^t(P,Q)$. The \emph{asymptotic increment} $\mu(P,Q)$ is the partition defined by
        \begin{equation}\label{eq:intro_asymptotic_increment}
            \mu_i' = \lim_{t \to \infty} (\lambda^t)_i'/t,
        \end{equation}
    where $\lambda'$ means the transpose of $\lambda$, i.e., $\lambda_i'$ is the number of cells in the $i$-th column of $\lambda$.     
    \end{definition}
    \noindent
    In other words partition $\mu$, defined by \eqref{eq:intro_asymptotic_increment}, is such that $\mu_i'$ is the speed 
    of the $i$-th column of $(P_t,Q_t)$, 
    or the number of labeled cells eventually remaining in it, when $t$ becomes large. It is an easy exercise to verify that limits \eqref{eq:intro_asymptotic_increment} always exist and numbers $\mu_i'$ define, in fact, an integer partition. In our example we have
    \begin{equation} \label{eq:intro_greene_inv_example}
        \mu(P,Q) = \ydiagram{4,3,1}.
    \end{equation}
    
    The same stabilization phenomenon happens when iterating the map $\RSK^{-1}$ and we have, for instance
    \begin{equation} \label{eq:RSK_minus_10}
        \left(
        \,
            \begin{ytableau}
                \, &  &  & 1
                \\
                & 2 & 3 & 4
                \\
                1 & 3 & 5
                \\
                2
            \end{ytableau}
            \,\, , \,\,
            \begin{ytableau}
                \, &  &  & 2
                \\
                & 1 & 3 & 3
                \\
                2 & 2 & 5
                \\
                3
            \end{ytableau}
            \,
        \right)
        \xrightarrow[]{\hspace{.5cm} \RSK^{-10} \hspace{.5cm}}
        \left(
        \,\,\,
        \begin{ytableau}
            \none[{\textcolor{gray}{\scriptstyle -29\hspace{2ex}}}] & *(gray!35) & *(gray!35) & *(gray!35) & *(gray!35) 1
            \\
            \none[{\textcolor{gray}{\scriptstyle -28\hspace{2ex}}}] & *(gray!35) & *(gray!35) & *(gray!35) & *(gray!35) 4
            \\
            \none[{\textcolor{gray}{\scriptstyle -27\hspace{2ex}}}] & *(gray!35) & *(gray!35) & *(gray!35) & *(gray!35) 5
            \\
            \none[\smallvdots] & *(gray!35) \smallvdots & *(gray!35) \smallvdots & *(gray!35) \smallvdots
            \\
            \none[{\textcolor{gray}{\scriptstyle -18\hspace{2ex}}}] & *(gray!35) & *(gray!35) & *(gray!35) 2 
            \\
            \none[{\textcolor{gray}{\scriptstyle -17\hspace{2ex}}}] & *(gray!35) & *(gray!35) 1 & *(gray!35) 3
            \\
            \none[{\textcolor{gray}{\scriptstyle -16\hspace{2ex}}}] & *(gray!35) & *(gray!35) 3
            \\
            \none[\smallvdots] & *(gray!35) \smallvdots 
            \\
            \none[{\textcolor{gray}{\scriptstyle -8\hspace{2ex}}}] & *(gray!35) 2
        \end{ytableau}
        \,\, , \,\,\,\,\,\,
        \begin{ytableau}
            \none[{\textcolor{gray}{\scriptstyle -29\hspace{2ex}}}] & *(gray!35) & *(gray!35) & *(gray!35) & *(gray!35) 2
            \\
            \none[{\textcolor{gray}{\scriptstyle -28\hspace{2ex}}}] & *(gray!35) & *(gray!35) & *(gray!35) & *(gray!35) 3
            \\
            \none[{\textcolor{gray}{\scriptstyle -27\hspace{2ex}}}] & *(gray!35) & *(gray!35) & *(gray!35) & *(gray!35) 5
            \\
            \none[\smallvdots] & *(gray!35) \smallvdots & *(gray!35) \smallvdots & *(gray!35) \smallvdots
            \\
            \none[{\textcolor{gray}{\scriptstyle -18\hspace{2ex}}}] & *(gray!35) & *(gray!35) & *(gray!35) 1
            \\
            \none[{\textcolor{gray}{\scriptstyle -17\hspace{2ex}}}] & *(gray!35) & *(gray!35) 2 & *(gray!35) 3
            \\
            \none[{\textcolor{gray}{\scriptstyle -16\hspace{2ex}}}] & *(gray!35) & *(gray!35) 3
            \\
            \none[\smallvdots] & *(gray!35) \smallvdots
            \\
            \none[{\textcolor{gray}{\scriptstyle -8\hspace{2ex}}}] & *(gray!35) 2
        \end{ytableau}
        \,
        \right).
    \end{equation}
    A striking observation is that asymptotic increments of tableaux in the right hand side of \eqref{eq:RSK_10}, \eqref{eq:RSK_minus_10} are equal, if we sort columns by length: in both cases labeled cells eventually arrange themselves into four blocks which propagate with the same fixed speeds 3,2,2,1. 
    This is not a coincidence. For any chosen pair of tableaux $P,Q$, the ``backward" asymptotic increments one computes taking the limit $\RSK^{-t}(P,Q)$ for large $t$ are always equal to $\mu(P,Q)$, after sorting columns by length. 
    This strongly suggests that the asymptotic increments $\mu$ record in fact conserved quantities of 
    the skew $\RSK$ dynamics throughout the time evolution, and the information of $\mu$ may be contained already in $(P,Q)$.
    This leads us to a first major question.
    \begin{question}
        Can we characterize the asymptotic increment $\mu$ in terms of initial data $(P,Q)$?
    \end{question}
    \noindent
    We will answer this question in \cref{thm:intro_Greene_invariants} and in \cref{thm:asymptotic_shape_RSK} in the text. Moreover the equivalence between backward and forward asymptotic increment will be addressed by result in \cref{thm:soliton_conservation}. 
    
     Existence of the conservation laws suggests that the skew $\RSK$ dynamics admits a description as an integrable system. In fact the skew $\RSK$ dynamics shows a clear resemblance to the multi-species Box-Ball system (BBS), which is a well-known discrete classical integrable system  \cite{Takahashi_Satsuma,Takahashi_BBS,TNS_BBS,Hatayama_et_al_AM} (see \cite{Inoue_2012} for a review). 
     We find such perspective particularly insightful. 
     In this language columns of tableaux become solitons. When $t \ll 0$ they are well separated 
     and travel independently with their own speeds. 
     At some point they interact with each other through collisions that momentarily mess up their structure. Once mutual interactions end they recover their original shape and again propagate with the same speed as before.
     A profound result in the theory of classical integrable systems is that the whole time evolution of such a system is fully determined by the knowledge of the \emph{scattering rules}.
     These consist in the precise description of exchange of degrees of freedom (i.e. how content of columns changes between backward and forward aysmptotic states) and of the \emph{phase shift}, which in our context are the shifts in asymptotic positions of solitons as compared to the ones anticipated from initial positions and speeds assuming no interaction occurs. 
   
    A natural question here is the following. 
    \begin{question}
        Can we describe the scattering rules of the skew $\RSK$ dynamics?
    \end{question}
    \noindent
    The answer to such questions from the point of view of solition theory will be provided in \cref{sec:scattering_rules}.
    
    \medskip
    
    The asymptotic increment $\mu$ was defined to be a partition such that $\mu_i'$ is the number of labeled cells of the $i$-th column in $P_t,Q_t$ for large $t$.
    Recording labels eventually remaining on each column of $P_t,Q_t$, we can construct two tableaux $V,W$, each of which consists of columns of increasing numbers. We will refer to these as \emph{vertically strict tableaux} and they differ from semi-standard tableaux in that there is no condition on rows\footnote{
    Note that  ``column strict tableaux", which sounds like a natural term to denote our vertically strict tableaux, is often used as a synonym of semi-standard tableaux.
    In literature ``column strict fillings" is also sometimes used (e.g. \cite{Lenart_Schilling_charge}), but we decided to employ a new term which includes the word ``tableaux".}. This defines a projection map $\Phi$ from a pair of skew tableaux $(P,Q)$ to a pair of vertically strict tableuax $(V, W)$.
    In the example we are considering in this section, from \eqref{eq:RSK_10} we can write $\Phi(P,Q) = ( V,W )$, with
    \begin{equation} \label{eq:intro_vst}
        V = \begin{ytableau}
            1 & 1 & 3 & 4
            \\
            2 & 2 & 5
            \\
            3
        \end{ytableau}
        ,
        \qquad
        W = \begin{ytableau}
            1 & 2 & 2 & 3
            \\
            2 & 5 & 3
            \\
            3
        \end{ytableau}.
    \end{equation}
    Vertically strict tableaux, though much less studied compared to semi-standard tableaux, play an important role in our theory because their generating function, with suitable weights, is known to produce the $q$-Whittaker polynomial \cite{Sanderson_Demazure_q_Whittaker,Nakayashiki_Yamada,schilling_tingley}; see \eqref{qWVST} below.
    This opens up a possibility to understand Cauchy type summation 
    identities involving $q$-Whittaker polynomials in a bijective fashion. 
    In order to do so we need to account for the information we lose while projecting, through $\Phi$, a pair of tableaux $(P,Q)$ to the corresponding vertically strict tableaux $(V,W)$.
    Then we arrive at the following question.
    \begin{question}
        Can we refine projection $\Phi$ into a bijection?
    \end{question}
    \noindent
    The answer to this third question represents a fundamental problem we solve in this paper. 
    The refined map $\Upsilon$, which will be described in \cref{subs:results} and \cref{sec:bijection}, yields a bijection between pairs $(P,Q)$ of skew tableaux and pairs of vertically strict tableaux $(V,W)$ plus some ``additional data" characterizing, for instance, the shape of $\RSK^t(P,Q)$ for large $t$. 

\subsection{Results, Ideas and Tools, and Applications} \label{subs:results}
    
    There are two main results in this paper: the characterization of asymptotic increment $\mu(P,Q)$ as Greene invariants and the construction of the bijection $\Upsilon$. 
    The first result answers Question 1. The second one, while being a direct answer to Question 3, also resolves Question 2. 
    An application of bijection $\Upsilon$, leads to summation identities involving $q$-Whittaker polynomials. We explain these results together with main ideas and tools to obtain them.
    
    \subsubsection{Generalized Greene Invariants}
    
    In the previous subsection we hinted how the asymptotic increment $\mu(P,Q)$ records certain conserved quantities of the skew $\RSK$ dynamics, result that we will prove in \cref{thm:asymptotic_shape_RSK}. In order to explain these conservation laws we find that algorithmic description of the skew $\RSK$ map given in terms of Schensted's bumping algorithm is not particularly insightful. 
    Instead we employ a geometrical visualization of the RSK correspondence through Viennot’s shadow line construction [Vie77]. An analogous geometric realization can be devised for the Sagan-Stanley correspondence, where Viennot’s shadow lines are “drawn” in a lattice with periodic geometry that we call {\it twisted cylinder}. 
    
    For a natural number $n$ the twisted cylinder $\mathscr{C}_n$ can be represented as an infinite vertical strip $\{1,\dots ,n\} \times \mathbb{Z}$, where we impose faces $(n,i)$ and $(1,i+n)$ to be adjacent for all $i$; see \cref{fig:matrix_intro}.
    For a more precise definition, see \cref{subs:periodic_shadow_line}.
    A matrix of non-negative integer sequences
    $\overline{M}=(\overline{M}_{i,j}(k):i,j\in\{1,\dots,n\}, k\in\mathbb{N}_0)$
    can be represented as a map $\overline{M}:\mathscr{C}_n \to \mathbb{N}_0$ by 
    setting\footnote{In the context of RSK correspondence it is common to represent matrices $M$ transposed and with first rows drawn at the bottom. This explains why $\overline{M}_{i,j}(k)$ becomes $\overline{M}(j,i-kn)$.}
    \begin{equation}
        \overline{M}(j,i-kn) = \overline{M}_{i,j}(k),
        \qquad
        \text{for all}
        \qquad
        i,j\in\{1,\dots ,n\}, k\in \mathbb{N}_0,
    \end{equation}
    with a slight abuse of notation. 
    In this new representation the Sagan-Stanley correspondence, described in \cref{subs:periodic_shadow_line} below, gives a bijection between compactly supported fillings $\overline{M}$ of $\mathscr{C}_n$ and partitions $\nu$ to pairs of tableaux. As an example, such correspondence applied to the pair $(P,Q)$ used in \cref{subs:examples} above appears in \cref{fig:matrix_intro} where, for the sake of a cleaner notation we left cells $(j,i)$ of $\mathscr{C}_n$ empty whenever $\overline{M}(j,i)=0$. In the same figure entries of the $\overline{M}$ are taken, for simplicity, to be all 0 or 1, although in general we have $\overline{M}_{i,j}(k)\in \mathbb{N}_0$. In case all entries $\overline{M}_{i,j}(k)=0$ for $k \neq 0$ such representation reduces to the Matrix-Ball construction by Fulton \cite{fulton1997young} and the Sagan-Stanley correspondence becomes the usual RSK.

\begin{figure}[ht]
    \centering
    \includegraphics[scale=.75]{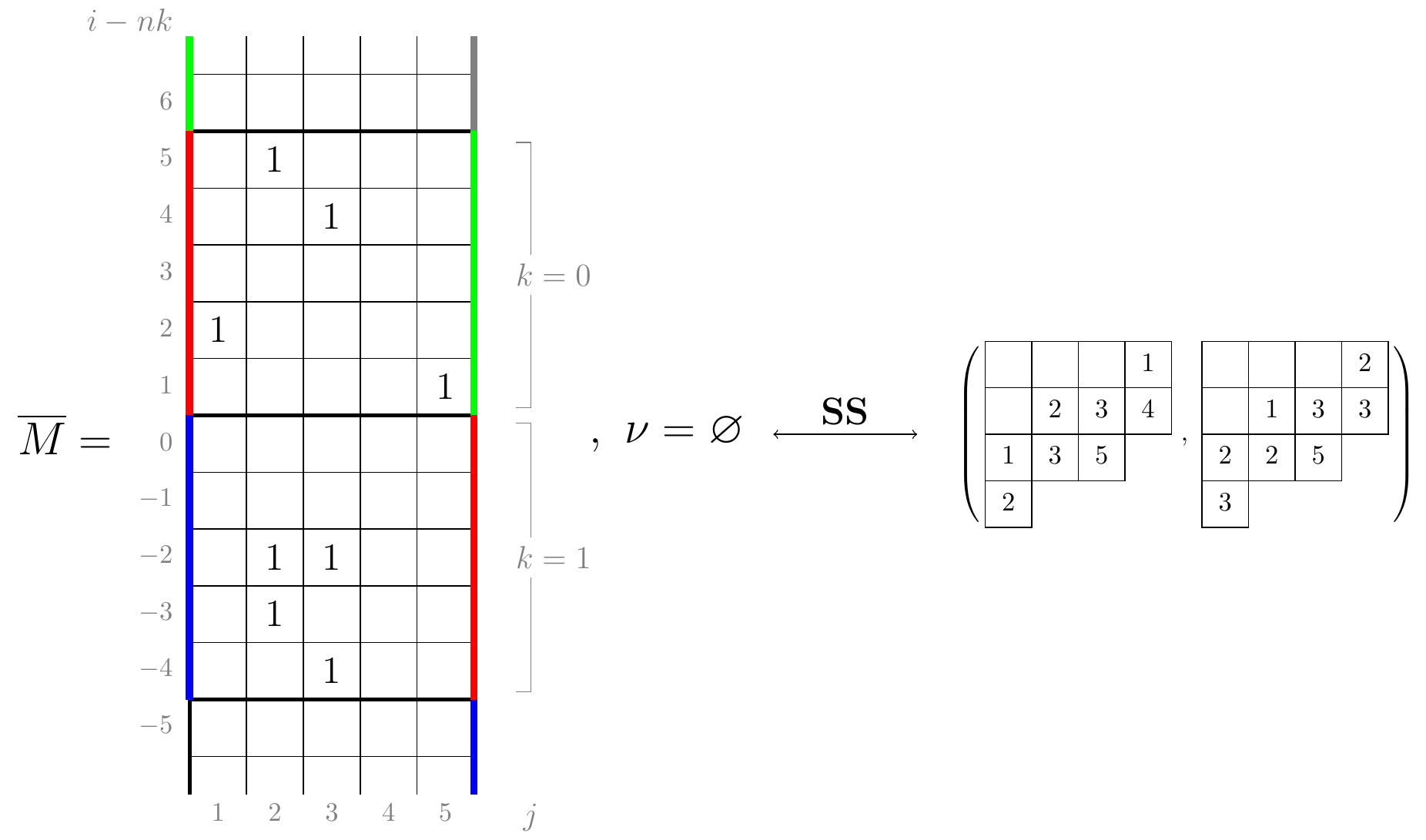}
    \caption{A realization of the Sagan-Stanley correspondence $(\overline{M};\nu) \xleftrightarrow[]{\skwRSK} (P,Q)$. The matrix $\overline{M}$ is represented as a filling of the twisted cylinder $\mathscr{C}_5$. Solid colored lines are identified by periodicity.}
    \label{fig:matrix_intro}
\end{figure}

An \emph{up-right path} $\varpi$ on the twisted cylinder is a sequence $(\varpi_\ell: \ell \in \mathbb{Z})  \subset \mathscr{C}_n$ such that
\begin{equation} \label{eq:up_right_path}
    \varpi_{\ell+1} \sim_n \varpi_\ell + \mathbf{e}_1,
    \qquad
    \text{or}
    \qquad
    \varpi_{\ell+1} \sim_n \varpi_\ell + \mathbf{e}_2,
    \qquad
    \text{for all } \ell \in \mathbb{Z},
\end{equation}
where $\sim_n$ means the equivalence such that $\mathscr{C}_n = \mathbb{Z}^2/\sim_n$; see \cref{def:twisted_cylinder} in the text.
Examples of up-right paths are colored trajectories in \cref{fig:matrix_intro_paths}. Notice that the up-right condition, along with the geometry of $\mathscr{C}_n$ implies that $\varpi$ is not self-intersecting. Define the \emph{passage time} of an up-right path $\varpi$ as 
\begin{equation}
    G(\varpi;\overline{M}) = \sum_{\ell \in \mathbb{Z}} \overline{M}(\varpi_\ell).
\end{equation}
Objects as passage times are standard in the context of RSK correspondence \cite{Greene1974}, although in classical setting endpoints of paths are usually fixed. In our case up-right paths are always infinite and have no endpoints. Define the \emph{last passage time} of $k$ disjoint paths
\begin{equation}
    I_k(\overline{M}) = \max_{\substack{\varpi^{(1)} \cupdot \cdots \cupdot \varpi^{(k)}  \subset \mathscr{C}_n \\ \varpi^{(j)} : \text{up-right path}}} \sum_{j=1}^k G(\varpi^{(j)};\overline{M}),
\end{equation}
 where $\cupdot$ denotes the disjoint union.
For the map $\overline{M}$ given in \cref{fig:matrix_intro} all last passage times can be computed explicitly, as done in \cref{fig:matrix_intro_paths}. It is in general not true that the maximizer is unique or that in order to maximize the passage time for $k+1$ paths it suffices to add an up-right path to a maximizing list of $k$ paths. 

\begin{figure}[ht]
    \centering
    \includegraphics[scale=.75]{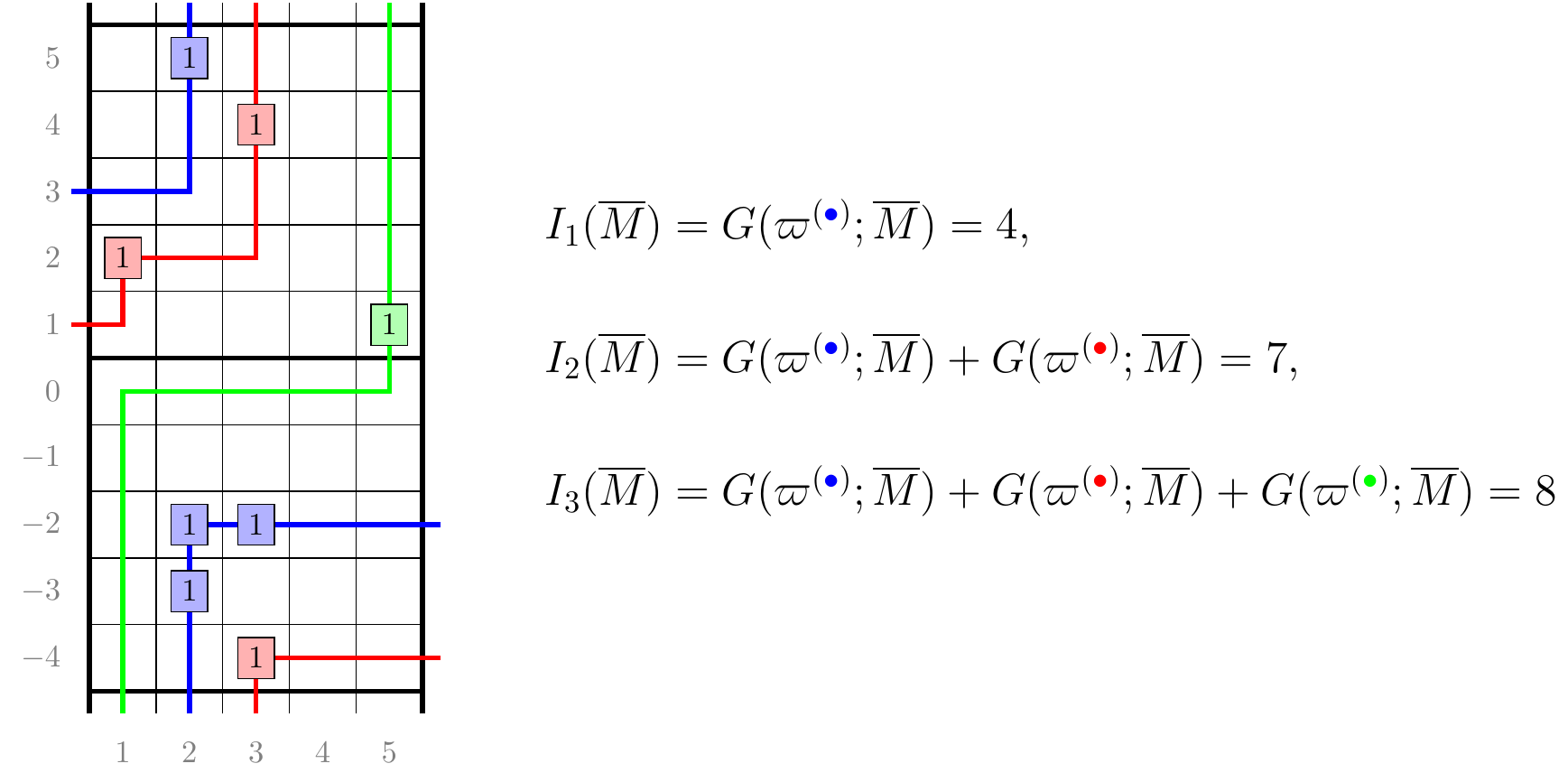}
    \caption{Up-right paths $\varpi^{ ( \textcolor{blue}{\bullet})}, \varpi^{ ( \textcolor{red}{\bullet})}, \varpi^{ ( \textcolor{green}{\bullet})}$ maximize the passage times.}
    \label{fig:matrix_intro_paths}
\end{figure}

\medskip

The first original result we present relates last passage times of a matrix $\overline{M}$ with the asymptotic increment $\mu$ of pair of tableaux $(P,Q)$ corresponding to $\overline{M}$ under Sagan-Stanley correspondence. We think of this as a generalization of Greene's theorem \cite{Greene1974}. 

\begin{theorem}[Corollary of \cref{thm:asymptotic_shape_RSK} in the text] \label{thm:intro_Greene_invariants}
    Consider $(\overline{M};\nu) \xleftrightarrow[]{\skwRSK\,} (P,Q)$ and let $\mu=\mu(P,Q)$ be the asymptotic increment of $(P,Q)$ under the skew $\RSK$ dynamics. Then we have
    \begin{equation} \label{eq:intro_Greene_invariants}
        I_k(\overline{M}) = \mu_1 + \cdots + \mu_k,
    \end{equation}
    for all $k=1,2,\dots$.
\end{theorem}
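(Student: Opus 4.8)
\emph{Strategy.} The plan is to split the identity into a conservation statement and an asymptotic evaluation. Writing $\overline{M}^{(t)}$ for the cylinder filling attached by the Sagan--Stanley correspondence to $\RSK^{t-1}(P,Q)=(P_t,Q_t)$, so that $\overline{M}^{(1)}=\overline{M}$, I would prove that (i) each last passage time is a conserved quantity, $I_k(\overline{M}^{(t)})=I_k(\overline{M})$ for every $t$ and $k$, and (ii) for $t$ large enough $I_k(\overline{M}^{(t)})=\mu_1+\cdots+\mu_k$. Combining (i) and (ii) gives the theorem. This mirrors the box--ball heuristic stressed in \cref{subs:examples}: the configuration itself evolves, but the soliton content, encoded by $\mu$, is invariant and can be read off once the solitons have separated.

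\emph{Conservation of $I_k$.} The heart of the argument is (i), which I would establish geometrically on the twisted cylinder using the periodic shadow line construction of \cref{subs:periodic_shadow_line}. Since up-right paths on $\mathscr{C}_n$ are bi-infinite and $\overline{M}$ is compactly supported, a passage time $G(\varpi;\overline{M})$ depends only on which supported faces $\varpi$ threads, and in particular is invariant under vertical translation of the filling. The task is therefore to exhibit, for the cylinder transformation induced by a single application of $\RSK$, a bijection between families of $k$ disjoint up-right paths before and after the move that preserves total passage time; showing this bijection carries the up-right and disjointness conditions to themselves in both directions yields $I_k(\overline{M}^{(t+1)})=I_k(\overline{M}^{(t)})$. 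I expect this to reduce to a local analysis of how the refinement $\iota_2$ (of which $\RSK=\iota_2^n$) moves shadow lines, together with the observation that optimal path families interact with those lines only through their relative up-right order, which $\iota_2$ preserves.

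\emph{Asymptotic evaluation.} For (ii) I would invoke the stabilization phenomenon exhibited in \eqref{eq:RSK_10}--\eqref{eq:RSK_11}: for large $t$ the tableaux $(P_t,Q_t)$ consist of rigidly descending columns, pairwise arbitrarily far apart, the $i$-th carrying $\mu_i'$ labeled cells. On the cylinder the corresponding filling $\overline{M}^{(t)}$ splits into widely separated blocks, one per column, and a column of height $\ell$ becomes an antichain of $\ell$ faces for the up-right order, so a single up-right path meets at most one of its faces, while $j$ disjoint up-right paths can collect $\min(\ell,j)$ of them and no more. Because the blocks are separated, the optimization defining $I_k$ decouples over columns, giving
\begin{equation*}
    I_k(\overline{M}^{(t)}) = \sum_i \min(\mu_i',k) = \sum_{j=1}^k \#\{\, i : \mu_i' \ge j \,\} = \mu_1 + \cdots + \mu_k,
\end{equation*}
where the last equality is the conjugation identity $\mu_j=\#\{i:\mu_i'\ge j\}$. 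This is exactly the value predicted by the theorem.

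\emph{Main obstacle.} The routine part is the asymptotic count; the genuine difficulty is the conservation law (i), i.e. proving that $\RSK$ acts on cylinder fillings so as to carry optimal disjoint path families to optimal disjoint path families with equal passage time. Unlike in the classical setting, jeu de taquin and Sch\"utzenberger symmetries are unavailable, so controlling this action, likely through the affine bicrystal and linearization structure announced in \cref{subs:results} and culminating in \cref{thm:asymptotic_shape_RSK}, is where the real work lies; the parallel statement that forward and backward asymptotic increments agree is the content of \cref{thm:soliton_conservation}.
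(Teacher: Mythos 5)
Your overall architecture (a conservation law plus an asymptotic evaluation once the solitons separate) matches the paper's, and your step (ii) is essentially the computation the paper performs: the chain/antichain bound $\sum_i \min(\mu_i',k)=\mu_1+\cdots+\mu_k$ appears in \cref{lemma:equivalence_partition_mu_and_dual} and \cref{lemma:mu_tilde_less_mu}. The genuine gap is step (i). You propose to prove $I_k(\mathbf{V}(\overline{\pi}))=I_k(\overline{\pi})$ directly, by a local analysis of how $\iota_2$ moves shadow lines and a passage-time-preserving bijection on families of $k$ disjoint up-right paths. You never construct this bijection, and it is precisely the step the authors sidestep: they state explicitly before \cref{subs:proofs_of_two_thm} that ``proving invariance of the length of longest $k$-increasing subsequences $I_k$ from the shadow line construction seems to be less elementary.'' The difficulty is real: an up-right path can cross a single down-right shadow loop many times, so the tidy loop-by-loop relabelling that works for $D_k$ (\cref{lemma:Viennot_preserves_Dk}) has no evident analogue for $I_k$. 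As written, the central lemma of your proof is an unproved assertion.

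What the paper does instead is worth internalizing, because it inverts your logic: conservation of $I_k$ under $\mathbf{V}$ (\cref{thm:subsequences_Viennot}) is a \emph{consequence} of the theorem, not an input to it. The only conservation law proved directly from the shadow lines is for the dual statistics $D_k$ (localized decreasing subsequences). One then defines $\mu$ by $\mu_1'+\cdots+\mu_k'=D_k$ and $\widetilde\mu$ by $\widetilde\mu_1+\cdots+\widetilde\mu_k=I_k$, identifies $\mu$ with the asymptotic increment via the soliton decomposition (\cref{prop:Viennot_asymptotic}, \cref{prop:mu_equals_mu}), and proves $\mu=\widetilde\mu$ by a dominance-order sandwich: the chain/antichain bound gives $\widetilde\mu\trianglelefteq\mu$ for \emph{every} $\overline{\pi}$ (no asymptotics needed), while the reverse inequality uses the invariance of $I_k$ and $D_k$ under Kashiwara operators (\cref{thm:subsequences_crystals}, proved by the long case analysis of \cref{app:proof_thm}) together with connectedness of the affine crystal to move $\overline{\pi}$ to a representative whose content equals $\mu$, where the lower bound $I_k\ge\mu_1+\cdots+\mu_k$ is read off from horizontal strips (\cref{lemma:gamma_less_mu_tilde}, \cref{lemma:gamma_equal_mu}). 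If you want to salvage your outline, you should either supply the missing path bijection (nontrivial) or replace step (i) by the $D_k$ conservation plus the crystal-symmetry bridge.
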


The reader can check the validity of \cref{thm:intro_Greene_invariants} comparing $\mu$ of \eqref{eq:intro_greene_inv_example} and last passage times in \cref{fig:matrix_intro_paths}. Motivated by \eqref{eq:intro_Greene_invariants}, in the text we will often refer to the partition $\mu(P,Q)$ as \emph{Greene invariant}. We will see in \cref{sec:Greene_invariants} that the statistic $\mu$ is indeed invariant with respect to a number of operations on $(P,Q)$ including (generalized) Knuth relations, Kashiwara operators, skew $\RSK$ map. In \cref{thm:asymptotic_shape_RSK}, an additional characterization of $\mu(P,Q)$ is given, in terms of maximizing closed loops on the twisted cylinder. This represents an additional generalization of the Greene's theorem \cite[Chapter 3]{sagan2001symmetric}.


The following result is an extension in skew setting of the classical Schensted's theorem \cite{Schensted1961}. It is an immediate corollary of \cref{thm:intro_Greene_invariants} along with the fact that the skew $\RSK$ map does not modify the length of the first row of the tableaux.

\begin{corollary}\label{cor:intro_schensted}
    Consider $(\overline{M};\nu) \xleftrightarrow[]{\skwRSK} (P,Q)$ and let $\lambda/\rho$ be the common shape of skew tableaux $(P,Q)$. Then
    \begin{equation}
        \lambda_1 = \nu_1 + I_1(\overline{M}).
    \end{equation}
\end{corollary}
\noindent 
A similar statement was reported in \cite{Betea_etal2019} in the context of ``free boundary Schur processes", although in that paper the quantity $I_1$ was just the last passage time and it was not related to the asymptotic increment of the corresponding pair $(P,Q)$. \Cref{thm:intro_Greene_invariants} and this corollary represent a partial answer to questions (1),(3) of \cite[Section 9]{sagan1990robinson}.

To prove \cref{thm:intro_Greene_invariants} we regard fillings of the twisted cylinder $\mathscr{C}_n$ as ``particle" configurations. We then define a deterministic dynamics $\mathbf{V}:\overline{M} \mapsto \overline{M}'$, transporting particles from sources of Viennot's shadow lines to the sinks. For the definition of the map $\mathbf{V}$ see \cref{def:Viennot_map} while for a quick view of rules of this dynamics see \cref{fig:Viennot_map} at page \pageref{fig:Viennot_map} in the text, where sources (resp. sinks) are denoted as black (resp. red) dots. 
This \emph{Viennot dynamics} is in a sense, ``dual" to the skew $\RSK$ dynamics, but conservation laws are more transparent in this picture. Indeed we will show, in \cref{thm:subsequences_Viennot} below, that last passage times are conserved quantities, i.e., $I_k(\overline{M}) = I_k(\overline{M}')$ holds. To prove this we will utilize a number of well known relations between insertion algorithms and other common operations in combinatorics, as the jeu de taquin, Knuth relations, Kashiwara operators and so on. For the sake of clarity of the exposition these prerequisites will be covered, although not extensively, in \cref{sec:Knuth_and_crystals} and \cref{app:Knuth_rel}. Translating this into the language of the skew $\RSK$ dynamics leads to the proof of \cref{thm:intro_Greene_invariants}.

\subsubsection{The bijection $\Upsilon$: statement of results} \label{subsub:bijection}

We come now to present the main result of this paper: a bijection between pairs of semi-standard tableaux and pairs of vertically strict tableaux equipped with additional weights. For any partition $\mu$ we define the set
\begin{equation}
    \mathcal{K}(\mu) = \{ \kappa=(\kappa_1,\dots,\kappa_{\mu_1}) \in \mathbb{N}_0^{\mu_1} : \kappa_i\ge \kappa_{i+1} \text{ if } \mu_{i}' = \mu_{i+1}' \}.
\end{equation}

\begin{theorem}[\Cref{thm:new_bijection} in the text] \label{thm:main_intro}
    There exists a bijection $(P,Q) \xleftrightarrow[]{\Upsilon \,} (V,W;\kappa;\nu)$ between the set of pairs $(P,Q)$ of semi-standard Young tableaux with same skew shape and quadruples $(V,W;\kappa;\nu)$, with the following properties:
    \begin{enumerate}
        \item[(i)] $V,W$ are a pair of vertically strict tableaux of shape $\mu$ and $\Phi(P,Q)=(V,W)$;
        \item[(ii)] $\kappa \in \mathcal{K}(\mu)$ and $\nu$ is a partition;
        \item[(iii)] if $P,Q$ have skew shape $\lambda/\rho$, then
        \begin{equation} \label{eq:intro_weight_preserving}
            |\rho| = \mathscr{H}(V) + \mathscr{H}(W) + |\kappa| + |\nu|,
        \end{equation}
        where $|\rho|=\rho_1+\rho_2 +\cdots$ and $\mathscr{H}$ is the intrinsic energy function; see \cref{def:energy} in the text.
    \end{enumerate}
\end{theorem}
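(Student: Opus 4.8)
The plan is to realize $\Upsilon$ as the passage to \emph{action--angle} coordinates for the skew $\RSK$ dynamics, exploiting the stabilization phenomenon of \cref{subs:examples}. For $t$ large the configuration $\RSK^t(P,Q)$ is stable: its columns are sorted by decreasing length, the $i$-th column hosts $\mu_i'$ labeled cells, and every further application of $\RSK$ simply translates that column rigidly downward by $\mu_i'$ rows. Reading off the eventual contents of these columns defines the projection $\Phi(P,Q)=(V,W)$ into a pair of vertically strict tableaux. Property (i) then holds by construction: the stabilized shape, with columns sorted by length, is precisely $\mu$, and \cref{thm:intro_Greene_invariants} furnishes its intrinsic characterization as the Greene invariant.

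To recover the information discarded by $\Phi$ I would record the asymptotic \emph{positions} of the columns. Viewing the stable state as a superposition of freely translating solitons, each column has a well-defined offset relative to the position predicted from its speed alone; collecting these offsets yields the vector $\kappa=(\kappa_1,\dots,\kappa_{\mu_1})$. The constraint defining $\mathcal{K}(\mu)$, namely $\kappa_i\ge\kappa_{i+1}$ whenever $\mu_i'=\mu_{i+1}'$, is forced because columns of equal length are indistinguishable solitons: only the multiset of their offsets is intrinsic, and sorting within each block of equal speeds provides a canonical representative. The single remaining degree of freedom, the overall vertical placement of the stabilized configuration, is recorded as the partition $\nu$.

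Bijectivity I would establish by constructing the inverse explicitly and invoking linearization. Given $(V,W;\kappa;\nu)$ one builds a stable configuration by stacking the columns prescribed by $V$ and $W$ at the heights dictated by $\kappa$ and $\nu$, and then applies $\RSK^{-t}$ for $t$ large to flow back into a fixed fundamental domain of genuine pairs of skew semi-standard tableaux. That this is well defined and inverse to $\Upsilon$ rests on the linearization of the dynamics: by the affine bicrystal symmetry together with the connectedness of the relevant Demazure crystals, the evolution becomes a rigid translation in the coordinates $(V,W;\kappa;\nu)$, so running the dynamics forward and backward from the stable state is unambiguous. The points requiring care are that every admissible stable configuration arises from an actual pair $(P,Q)$ and that the fundamental-domain normalization is compatible with the ranges imposed on $\kappa$ and $\nu$.

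The weight identity (iii) is where I expect the real work. The quantity $|\rho|$ behaves transparently under $\RSK$ in the stable regime, each rigid shift incrementing the inner shape in a controlled way, so $|\rho|$ can be split into an internal contribution from each soliton, an inter-soliton contribution equal to $|\kappa|$, and a global contribution equal to $|\nu|$. The crux is to show that the internal contribution is \emph{exactly} $\mathscr{H}(V)+\mathscr{H}(W)$ for the intrinsic energy $\mathscr{H}$ of \cref{def:energy}, rather than some a priori different statistic of $V$ and $W$. I expect this to follow from the detailed description of phase shifts in \cref{sec:scattering_rules}: the cumulative displacement suffered by solitons during collisions should match the energy function term by term, and verifying this combinatorial identity, reconciling the scattering data with the charge-type statistic $\mathscr{H}$ underlying the $q$-Whittaker connection, is the principal obstacle of the proof.
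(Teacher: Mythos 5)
There is a genuine gap, and it sits precisely where you define $\kappa$. In the stable regime the offset of the $i$-th column relative to free propagation is not $\kappa_i$ but $\kappa_i+\mathscr{H}_i(V)+\mathscr{H}_i(W)$, where $\mathscr{H}_i$ are the local energies of \cref{def:energy} (this is the content of \cref{thm:phase_shift}, see \eqref{eq:shape_rho_t}). So if you take $\kappa$ to be the raw asymptotic offsets, the weight identity \eqref{eq:intro_weight_preserving} cannot hold with $\kappa$ ranging freely over $\mathcal{K}(\mu)$: the energy is already absorbed into your coordinates, and the range of the offsets depends on $(V,W)$ (the constraint is roughly $\tau_i\ge\mathscr{H}_i(V)+\mathscr{H}_i(W)$, not $\tau_i\ge 0$). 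The hard part of the theorem is exactly to exhibit an intrinsic mechanism that strips the local energies off the offsets and shows the remainder is (a) independent of all choices, (b) non-negative, and (c) a free coordinate on the fibers of $\Phi$. In the paper this is done not by inspecting the stable state but by applying a leading map built from Demazure arrows of the affine bicrystal structure — using the new $0$-th operators \eqref{eq:intro_0_th_operators} — to flow $(P,Q)$ to a pair $(T,T)$ of leading tableaux (\cref{def:leading_tableaux}), whose parametrization $T=T(\mu,\kappa;\nu)$ (\cref{prop:bijection_leading_tab_kappa}) \emph{defines} $\kappa$ and $\nu$; the energy bookkeeping is \cref{prop:cells_and_local_energy} together with \cref{prop:local_energies_demazure}, well-definedness is \cref{thm:leading_map_and_leading_tableaux}, and non-negativity of $\kappa$ is \cref{lemma:0_Demazure_arrows_preserve_tableaux}. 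You invoke ``linearization by the bicrystal symmetry'' but never introduce the object (the leading tableau) on which the dynamics actually linearizes (\cref{thm:leading_tab_have_no_phase_shift}), so your $\kappa$ remains either ill-defined or equal to the wrong quantity.

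Two secondary points. First, the constraint $\kappa_i\ge\kappa_{i+1}$ for $\mu_i'=\mu_{i+1}'$ does not come from ``indistinguishability of equal-speed solitons'' — columns of equal length carry distinct contents from $V$ and $W$ and are perfectly distinguishable; the constraint is forced by the structure of leading tableaux. Second, $\nu$ is not an ``overall vertical placement'' (a single integer): it is the full kernel partition $\ker(P,Q)$, recording for every $j$ how far the first $j$ rows can be squeezed horizontally, and it enters the inner shape as $\rho=(\kappa^+)'+\nu$. Consequently your proposed inverse — stacking columns ``at the heights dictated by $\kappa$'' and flowing back with $\RSK^{-t}$ — would reconstruct the wrong pair unless the local energies are added back in, and proving that the result lands in a classical (positive-row) pair is again the Demazure-arrow analysis you have not supplied.
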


Note that composing $\Upsilon$ with the Sagan-Stanley correspondence allows to factor out the partition $\nu$ yielding a bijection $\overline{M}\xleftrightarrow[]{\tilde{\Upsilon}\,} (V,W;\kappa)$, more similar to the classical RSK correpondence. In the text such bijection will be denoted by $\tilde{\Upsilon}$.

\medskip

Equality \eqref{eq:intro_weight_preserving} represents the most nontrivial property of $\Upsilon$. The \emph{intrinsic energy} $\mathscr{H}$, discussed more in \cref{subsub:symmetries} and at length in \cref{subs:combinatorial_R}, is a grading statistic on the set of vertically strict tableaux, which was originally introduced in the theory of crystals \cite{Hatayama_et_al_paths_crystals,Okado_Schilling_Schimozono_virtual}. Its precise definition requires the notion of combinatorial $R$-matrix and is not reported here in the introduction, but morally it measures how much a vertically strict tableaux needs to be ``modified" to produce a semi-standard tableaux.

Although the algorithmic definition of the skew $\RSK$ dynamics is not very complicated to apply to specific examples, as we did in \cref{subs:examples}, proving its various properties using only the defining rules poses serious difficulties. To circumvent these issues we will implement a more powerful machinery based on symmetries. More precisely we will show that the skew $\RSK$ dynamics possesses an affine bicrystal symmetry associated with the affine Lie algebra $\widehat{\mathfrak{sl}}_n$. This will allow us to linearize the dynamics, resulting in the precise construction of bijection $\Upsilon$ and in the proof of its various properties. 

\subsubsection{Crystal structure} \label{subsub:symmetries}

In order to establish \cref{thm:main_intro} we import ideas from the theory of crystals \cite{Hong_Kang_book_crystals,BumpSchilling_crystal_book}, which was introduced by Kashiwara and Lusztig \cite{Kashiwara_crystalizing,Kashiwara_On_Crystal_bases,Luszting_crystals} to study representations of quantum groups. In this paper we will only deal with the simple case of the affine Lie algebra $\widehat{\mathfrak{sl}}_n$. Applications of crystals are also common in the context of the Box and Ball system (BBS), which was mentioned after Question 1 in \cref{subs:examples}. For example conserved quantities, scattering rules and phase shifts of the BBS can be studied using affine crystals \cite{fukuda_okado_yamada_BBS_energy,Inoue_2012}. We will apply these ideas to precisely analyze the skew $\RSK$ dynamics.

Many of the combinatorial objects we deal with possess a natural crystal structure. For instance it is a well known fact that many properties of the RSK correspondence can be understood in the language of $\mathfrak{sl}_n$ crystals \cite{lothaire_2002,BumpSchilling_crystal_book}.
In fact, as recalled in \cite{Shimozono_dummies}, even the original algorithm by Robinson \cite{robinson1938representations}, could be stated in terms of crystals. The idea, implicit in \cite{robinson1938representations}, is to assign a permutation $\pi$ to a pair of tableaux in such a way that the assignment commutes with certain transformations, which are nothing but Kashiwara operators $\E{i},\F{i},i=1,\dots,n-1$ in today's language. Kashiwara operators act on a word by changing its content according to certain rules; for instance $\E{i}$ would change a letter $i+1$ into $i$, see \cref{subs:classical_crystals}. In this way, starting from $\pi$, through successive applications of $\E{i},\F{i}$ one would transform it into a Yamanouchi word $\pi_{\mathrm{Yam}}$ whose corresponding tableaux are canonically determined. Then to deduce the pair $(P,Q)$ associated to $\pi$ one would apply in reverse order the inverse of each Kashiwara operator, whose corresponding action on tableaux is defined through their column reading word (see \cref{subs:classical_crystals}).

An example of a well-known $\widehat{\mathfrak{sl}}_n$ crystals, which is relevant for our discussions, is the one of vertically strict tableaux. Here, in addition to $\E{i},\F{i}$ with $i=1,\dots, n-1$ one has to consider two more operators $\E{0},\F{0}$, which act by replacing $1$-labels into $n$-labels and viceversa and they are defined conjugating $\E{1},\F{1}$ by an operation called promotion \cite{shimozono_affine}.
On the set of pairs of vertically strict tableaux $(V,W)$ we may define two commuting families of Kashiwara operators 
\begin{equation} \label{eq:intro_bicrystal}
    \widetilde{E}_i^{(1)} = \E{i} \times \mathbf{1}, 
    \qquad
    \widetilde{E}_i^{(2)} =  \mathbf{1} \times \E{i},
    \qquad
    \widetilde{F}_i^{(1)} = \F{i} \times \mathbf{1}, 
    \qquad
    \widetilde{F}_i^{(2)} =  \mathbf{1} \times \F{i},
\end{equation}
letting $\E{i},\F{i}$ act on single components and this defines an example of $\widehat{\mathfrak{sl}}_n$ \emph{bicrystal}.


To study the skew $\RSK$ dynamics, we want to equip also the space of pairs $(P,Q)$ of semi-standard tableaux of skew shape with an $\widehat{\mathfrak{sl}}_n$ bicrystal structure, with the requirement that projection $\Phi:(P,Q)\mapsto (V,W)$ commutes with the action of respective Kashiwara operators. 
It turns out, however, that a naive action of Kashiwara operators $\E{i},\F{i}$ used above for vertically strict tableaux is not appropriate on skew tableaux. This is because, while $\widetilde{E}_i^{(\epsilon)}, \widetilde{F}_i^{(\epsilon)}$ for $i=1,\dots, n-1$ and $\epsilon=1,2$ commute with the skew $\RSK$ map, the same is not true for the 0-th operators $\widetilde{E}_0^{(\epsilon)}, \widetilde{F}_0^{(\epsilon)}$. One of the key novelties in this paper is that the desired 0-th Kashiwara operators, which commute with the skew $\RSK$ map and make of $\Phi$ a morphism of bicrystal graphs in the sense of \cref{def:morphism_of_crystal_graphs}, can be defined using the operation $\iota_2$. They are given by
\begin{equation} \label{eq:intro_0_th_operators}
    \begin{split}
        &\widetilde{E}_0^{(2)} = \iota_2 \circ ( \mathbf{1} \times \E{1} ) \circ \iota_2^{-1},
        \qquad
        \widetilde{F}_0^{(2)} = \iota_2 \circ ( \mathbf{1} \times \F{1} ) \circ \iota_2^{-1},
        \\
        &\widetilde{E}_0^{(1)} = \iota_1 \circ ( \E{1} \times \mathbf{1} ) \circ \iota_1^{-1},
        \qquad
        \widetilde{F}_0^{(1)} = \iota_1 \circ ( \F{1} \times \mathbf{1} ) \circ \iota_1^{-1},
    \end{split}
\end{equation}
where $\iota_1$ is defined through $\iota_2$ inverting roles of $P,Q$, that is $\iota_1(P,Q) = \mathrm{swap} \circ \iota_2 \circ \mathrm{swap} (P,Q)$. In this way, as we will show in \eqref{subs:pairs_of_tab_affine_bicrystals}, the set of pairs of semi-standard tableaux possess an $\widehat{\mathfrak{sl}}_n$ bicrystal structure. 
    
\subsubsection{The bijection $\Upsilon$: construction}
With these preparations we may now precisely define the correspondence $\Upsilon$ of \cref{thm:main_intro}. For this we study the skew $\RSK$ dynamics for a generic tableaux $(P,Q)$ by generalizing the idea by Robinson. Namely we first bring the pair $(P,Q)$ into a certain canonical form $(T,T)$ through the action of affine crystal operators, then we run the dynamics on such canonical pair, and finally transforms the result back applying inverse crystal transformations. Schematically this procedure is summarized by the following commuting diagram. 
\begin{equation} \label{eq:intro_commuting_diagram}
    \begin{tikzcd}
      (P,Q) \arrow{r}{\mathcal{L}} \arrow[swap]{d}{\RSK} & (T,T) \arrow{d}{ \RSK } \\%
        (P',Q') \arrow{r}{\mathcal{L}}& (T',T')
    \end{tikzcd}    
\end{equation}
Here $(T,T)$ is a pair of identical skew tableaux consisting of generalizations in skew setting of Yamanouchi tableaux\footnote{A Yamanouchi tableaux is a tableaux where content and shape are equal, i.e. each cell in the $i$-th row is an $i$-cell. In the text we denote the Yamanouchi tableau of shape $\mu'$ by $\mu^{\mathrm{lv}}$ and we will call it \emph{leading vector}; see \cref{subs:VST_affine_crystals}}. In the text we will call them \emph{leading tableaux}; see \cref{def:leading_tableaux}. The definition of the canonical transformation $\mathcal{L}$ is delicate and owes to deep results in the theory of affine crystals. If $V$ is a vertically strict tableau with intrinsic energy $\mathscr{H}(V)$, then a result of \cite{schilling_tingley} guarantees that through the action of Kashiwara operators $\E{i},\F{i}$ one can always transform $V$ into a Yamanouchi tableau of the same shape in such a way that the difference $\# \F{0} - \# \E{0}$ of 0-th operators used equals $\mathscr{H}(V)$. We call such transformation \emph{leading map} $\mathcal{L}_V$ 
and in \cref{subs:demazure_subgraph} we construct it in terms of the so-called Demazure arrows.
When $V,W$ are the vertically strict tableaux corresponding to $P,Q$, i.e., when $(V,W)=\Phi(P,Q)$, they can be both transformed into the same Yamanouchi tableau. For instance the ones of \eqref{eq:intro_vst} are transformed to
\begin{equation}
    \begin{ytableau}
        1 & 1 & 1 & 1 \\ 2 & 2 & 2 \\ 3
    \end{ytableau}
\end{equation}
and the respective leading maps are given by the slightly complicated expressions
\begin{equation} \label{eq:leading_maps_VW_intro}
\begin{split}
    \mathcal{L}_{V} &= \E{2} \circ \E{3} \circ \E{4} \circ \E{1} \circ \E{2} \circ \E{3} \circ \E{1} \circ \E{2},
    \\
    \mathcal{L}_{W} &= \E{3} \circ \E{4} \circ \E{1} \circ \F{0} \circ \F{4} \circ \F{3} \circ \F{1}^{\,\, 2} \circ \E{2} \circ \E{1}^{\,\, 3} \circ \E{2}.
\end{split}
\end{equation}
Using the affine bicrystal structure for $(P,Q)$, which is homomorphic to the one for $(V,W)$, we 
can simultaneously lift up the leading maps $\mathcal{L}_V,\mathcal{L}_W$
and define the map $\mathcal{L}$ on $(P,Q)$.
Moreover, our new 0-th operators \eqref{eq:intro_0_th_operators} allow to transport the result of \cite{schilling_tingley} at the level of pairs of skew tableaux.
In particular
the variation in intrinsic energy at the level of vertically strict tableaux yields the removal of $\mathscr{H}(V) + \mathscr{H}(W)$ empty boxes from the skew shape of $(P,Q)$. In the text we will call such map $\mathcal{L}$ the \emph{leading map} of the pair $(P,Q)$. To give an idea of the result of the application of a leading map we consider the pair $(P,Q)$ of \eqref{eq:intro_RSK_iota_2} and we have
\begin{equation} \label{eq:leading_maps_PQ_intro}
    \left(
        \,
            \begin{ytableau}
                \, & & & 1
                \\
                & 2 & 3 & 4
                \\
                1 & 3 & 5
                \\
                2
            \end{ytableau}
            \,\, , \,\,
            \begin{ytableau}
                \, & & & 2
                \\
                & 1 & 3 & 3
                \\
                2 & 2 & 5
                \\
                3
            \end{ytableau}
            \,
        \right)
        \xrightarrow[]{\hspace{.5cm} \mathcal{L} \hspace{.5cm}}
        \left(
        \,
            \begin{ytableau}
                \, & & & 1
                \\
                1 & 1 & 1 & 2
                \\
                2 & 2 & 3
            \end{ytableau}
            \,\, , \,\,
            \begin{ytableau}
                \, & & & 1
                \\
                1 & 1 & 1 & 2
                \\
                2 & 2 & 3
            \end{ytableau}
            \,
        \right).
\end{equation}
For more details consult \cref{sec:Knuth_and_crystals} and \cref{sec:linearization} in the text. From the computation above one can observe how the value $\mathscr{H}(V)+\mathscr{H}(W)=1$, which follows from \eqref{eq:leading_maps_VW_intro} counting the number of $\F{0}$ operators, coincides with the size difference between skew shapes in \eqref{eq:leading_maps_PQ_intro}.

The leading tableau $T$ resulting from the application of a leading map $\mathcal{L}$, as we will prove in \cref{subs:leading_tableaux}, turns out to be uniquely characterized by a triple of data $(\mu,\kappa;\nu)$. Here, $\mu$ is a partition recording the content of $T$ and it is equal to the shape of $V,W$.
$\nu$ is also a partition and it can be easily determined by ``squeezing" $T$, i.e. moving its rows as much as possible to the left without breaking the semi-standard property; see \cref{subs:kernels}.
Finally $\kappa$ is an element of $\mathcal{K}(\mu)$ and it encodes the empty shape of $T$ after the removal of $\nu$. For the tableau $T$ in the right hand side of \eqref{eq:leading_maps_PQ_intro} we have $\nu=\varnothing$ and $\kappa=(0,1,1,1)$.

A crucial observation that motivates such a long construction is that on leading tableaux, the effect of the skew $\RSK$ map becomes purely linear and it modifies the tableaux $T(\mu,\kappa;\nu)$ by just adding $\mu'$ to $\kappa$ as   
\begin{equation}
     T=T(\mu,\kappa;\nu)  \xrightarrow[]{\hspace{.5cm} \RSK \hspace{.5cm}} T'=T(\mu,\kappa+\mu';\nu).
\end{equation}
The reader familiar with discrete integrable systems might notice that the linearization given by map $\mathcal{L}$ resembles the Kerov-Kirillov-Reshetikhin (KKR) algorithm for BBS \cite{Kuniba_et_al_KKR_BBS}, although the precise connections will be explored in future works. 

This parameterization of the leading tableau $T=T(\mu,\kappa;\nu)$, along with the pair $(V,W)$ completes the construction of bijection $\Upsilon$. 
Notice that equality \eqref{eq:intro_weight_preserving} can be understood by carefully analyzing the change of number of empty boxes at each step in the description.

Concluding the example considered throughout the section, we write
\begin{equation} \label{eq:example_correspondence_from_intro}
        \left(
        \,
            \begin{ytableau}
                \, & & & 1
                \\
                & 2 & 3 & 4
                \\
                1 & 3 & 5
                \\
                2
            \end{ytableau}
            \,\, , \,\,
            \begin{ytableau}
                \, & & & 2
                \\
                & 1 & 3 & 3
                \\
                2 & 2 & 5
                \\
                3
            \end{ytableau}
            \,
        \right)
        \xleftrightarrow[]{ \hspace{.4cm} \Upsilon \hspace{.4cm} }
        \left(
        \,
        \begin{ytableau}
            1 & 1 & 3 & 4
            \\
            2 & 2 & 5
            \\
            3
        \end{ytableau}
        \,\, , \,\,
        \begin{ytableau}
            1 & 2 & 2 & 3
            \\
            2 & 5 & 3
            \\
            3
        \end{ytableau}
        ;
        (0,1,1,1); \varnothing
        \right).
    \end{equation}

\subsubsection{Summation identities} 
Finally we present some of the immediate consequences of \cref{thm:main_intro}. We use the well known fact \cite{Sanderson_Demazure_q_Whittaker,Nakayashiki_Yamada,schilling_tingley} that the generating function of vertically strict tableaux of assigned shape $\mu$ and weighted by $\mathscr{H}$ is the $q$-Whittaker polynomial
\begin{equation}
\label{qWVST}
    \sum_{V \in VST (\mu)} q^{\mathscr{H}(V)} x^V = \mathscr{P}_\mu (x;q).
\end{equation}
Bijection $\Upsilon$, or more precisely the one between $\overline{M}$ and $(V,W,\kappa)$ mentioned below \cref{thm:main_intro}, allows us to establish the Cauchy identity for $q$-Whittaker polynomials. \eqref{eq:Cauchy_id_qW_intro}.

\begin{theorem}[Bijective proof of Cauchy identity for $q$-Whittaker polynomials] 
Fix $|q|<1$ and set of variables $x=(x_1,\dots,x_n)$, $y=(y_1,\dots,y_n)$ such that $|x_iy_j|<1$. Then \eqref{eq:Cauchy_id_qW_intro} holds and the normalization term is $\mathdutchcal{b}_\mu (q) = \sum_{\kappa \in \mathcal{K}(\mu)} q^{|\kappa|}$ and is equal to \eqref{eq:b_mu}.
\end{theorem}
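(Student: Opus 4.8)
The plan is to read both sides of \eqref{eq:Cauchy_id_qW_intro} as generating functions over the same index set — compactly supported fillings $\overline{M}$ of the twisted cylinder — and to match them through the bijection $\tilde{\Upsilon}:\overline{M}\leftrightarrow(V,W;\kappa)$ obtained by composing $\Upsilon$ with the Sagan--Stanley correspondence. First I would expand the right hand side. Since $|x_iy_j|<1$ and $|q|<1$ guarantee absolute convergence, each factor is
\begin{equation*}
    \frac{1}{(x_iy_j;q)_\infty}=\prod_{k\ge0}\frac{1}{1-q^k x_iy_j}=\prod_{k\ge0}\sum_{m\ge0}q^{km}(x_iy_j)^m ,
\end{equation*}
so that, reading the exponent $m$ of $q^k x_iy_j$ in the factor indexed by $(i,j,k)$ as the entry $\overline{M}_{i,j}(k)\in\mathbb{N}_0$, the product over $i,j$ becomes
\begin{equation*}
    \prod_{i,j=1}^n\frac{1}{(x_iy_j;q)_\infty}=\sum_{\overline{M}}q^{\|\overline{M}\|}\,x^{r(\overline{M})}y^{c(\overline{M})},\qquad \|\overline{M}\|\coloneqq\sum_{i,j,k}k\,\overline{M}_{i,j}(k),
\end{equation*}
where $r_i(\overline{M})=\sum_{j,k}\overline{M}_{i,j}(k)$ and $c_j(\overline{M})=\sum_{i,k}\overline{M}_{i,j}(k)$ are the row and column sums.

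Next I would transport this weight through $\tilde{\Upsilon}$. Writing $(\overline{M};\nu)\xleftrightarrow{\skwRSK}(P,Q)$ and $\Upsilon(P,Q)=(V,W;\kappa;\nu)$, two facts are needed. On the one hand the Sagan--Stanley correspondence records the content of $P$ (resp.\ $Q$) as the row (resp.\ column) sums of $\overline{M}$; since the skew $\RSK$ map is content preserving and $\Phi(P,Q)=(V,W)$ preserves content, this gives $x^{r(\overline{M})}=x^{V}$ and $y^{c(\overline{M})}=y^{W}$. On the other hand the same correspondence is weight preserving for the $q$-grading — this is precisely the weight bookkeeping underlying the bijective proof of the skew-Schur Cauchy identity \eqref{eq:Cauchy_id_Schur} — giving $|\rho|=|\nu|+\|\overline{M}\|$, where $\lambda/\rho$ is the common shape of $(P,Q)$. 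Combining this with property (iii) of \cref{thm:main_intro}, namely $|\rho|=\mathscr{H}(V)+\mathscr{H}(W)+|\kappa|+|\nu|$, the term $|\nu|$ cancels and I obtain the crucial identity $\|\overline{M}\|=\mathscr{H}(V)+\mathscr{H}(W)+|\kappa|$. Hence under $\tilde{\Upsilon}$ the monomial $q^{\|\overline{M}\|}x^{r(\overline{M})}y^{c(\overline{M})}$ equals $q^{\mathscr{H}(V)+\mathscr{H}(W)+|\kappa|}\,x^{V}y^{W}$.

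I would then reorganize the sum according to the common shape $\mu$ of $V,W$ (property (i)). Because $\tilde{\Upsilon}$ is a bijection onto triples $(V,W;\kappa)$ with $V,W\in VST(\mu)$ and $\kappa\in\mathcal{K}(\mu)$, the generating function factorizes as
\begin{equation*}
    \prod_{i,j=1}^n\frac{1}{(x_iy_j;q)_\infty}=\sum_{\mu}\Big(\sum_{\kappa\in\mathcal{K}(\mu)}q^{|\kappa|}\Big)\Big(\sum_{V\in VST(\mu)}q^{\mathscr{H}(V)}x^{V}\Big)\Big(\sum_{W\in VST(\mu)}q^{\mathscr{H}(W)}y^{W}\Big).
\end{equation*}
Identifying the last two factors with $\mathscr{P}_\mu(x;q)$ and $\mathscr{P}_\mu(y;q)$ via \eqref{qWVST} and setting $\mathdutchcal{b}_\mu(q)=\sum_{\kappa\in\mathcal{K}(\mu)}q^{|\kappa|}$ yields exactly \eqref{eq:Cauchy_id_qW_intro}. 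To check that this $\mathdutchcal{b}_\mu(q)$ agrees with \eqref{eq:b_mu}, I would evaluate $\sum_{\kappa\in\mathcal{K}(\mu)}q^{|\kappa|}$ directly: since $\mu'$ is weakly decreasing, the columns split into maximal runs of equal height, the defining constraint of $\mathcal{K}(\mu)$ forces $\kappa$ to be weakly decreasing within each run and imposes nothing between runs, so the sum factorizes over runs as $\prod_{h\ge1}(q;q)_{m_h(\mu')}^{-1}$ with $m_h(\mu')=\mu_h-\mu_{h+1}$, which is the desired closed form.

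The main obstacle is the $q$-grading bookkeeping: establishing $|\rho|=|\nu|+\|\overline{M}\|$ for the Sagan--Stanley correspondence and combining it with property (iii) to reach $\|\overline{M}\|=\mathscr{H}(V)+\mathscr{H}(W)+|\kappa|$. Everything else is a geometric-series expansion, a content match, and the factorization of a fibered sum. In effect all the genuinely hard work — the construction of $\Upsilon$, the energy identity (iii), and the $q$-Whittaker generating formula \eqref{qWVST} — is already absorbed into \cref{thm:main_intro} and the cited crystal-theoretic results, so the Cauchy identity drops out as a bijective corollary.
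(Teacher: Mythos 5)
Your proposal is correct and follows essentially the same route as the paper: the paper's proof expands $\mathscr{P}_\mu$ via \eqref{qWVST}, invokes the restricted bijection $\overline{\pi}\xleftrightarrow{\tilde{\Upsilon}}(V,W;\kappa)$ of \cref{cor:bijection_pi_V1_V2_kappa} (which packages exactly the two ingredients you re-derive, namely the Sagan--Stanley weight relation \eqref{eq:SS_weight_preserving} and property (iii) of \cref{thm:main_intro}, to get $\wt(\overline{\pi})=\mathscr{H}(V)+\mathscr{H}(W)+|\kappa|$), and then sums the geometric series, while the closed form $\mathdutchcal{b}_\mu(q)=\sum_{\kappa\in\mathcal{K}(\mu)}q^{|\kappa|}=\prod_i(q;q)_{\mu_i-\mu_{i+1}}^{-1}$ is obtained by the same run-by-run factorization you describe. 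The only cosmetic difference is the direction of the computation (you expand the product side first; the paper starts from the $q$-Whittaker side).
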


The following identity, that refines the Cauchy identity for both skew Schur polynomials \eqref{eq:Cauchy_id_Schur} and $q$-Whittaker polynomials \eqref{eq:Cauchy_id_qW_intro}, was stated and proven in our very recent work \cite{IMS_matching} using integrable probabilistic techniques.

\begin{theorem} \label{thm:intro_summation_qW_skew_Schur}
    Fix $|q|<1$ and sets of variables $x=(x_1,\dots,x_n)$, $y=(y_1,\dots,y_n)$. Then, for all $k=0,1,2,\dots$, we have
    \begin{equation} \label{eq:intro_summation_qW_skew_Schur}         \sum_{\ell=0}^k \frac{q^\ell}{(q;q)_\ell} \sum_{ \mu: \mu_1 = k - \ell} \mathdutchcal{b}_\mu(q) \mathscr{P}_\mu (x;q) \mathscr{P}_\mu (y;q) = \sum_{\lambda,\rho : \lambda_1= k} q^{|\rho|} s_{\lambda / \rho}(x) s_{\lambda / \rho}(y).
    \end{equation}
\end{theorem}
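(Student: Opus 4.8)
The plan is to give a bijective proof by expanding the right-hand side of \eqref{eq:intro_summation_qW_skew_Schur} as a weighted sum over pairs $(P,Q)$ of semistandard skew tableaux and then transporting that sum through the bijection $\Upsilon$ of \cref{thm:main_intro}. First I would use the combinatorial formula $s_{\lambda/\rho}(x) = \sum_{P} x^P$, the sum running over semistandard tableaux $P$ of shape $\lambda/\rho$, to write
\begin{equation}
    \sum_{\lambda,\rho:\,\lambda_1=k} q^{|\rho|} s_{\lambda/\rho}(x)\, s_{\lambda/\rho}(y) = \sum_{(P,Q):\,\lambda_1 = k} q^{|\rho|}\, x^P y^Q,
\end{equation}
where the right-hand sum ranges over all pairs $(P,Q)$ of semistandard tableaux sharing a common skew shape $\lambda/\rho$ with $\lambda_1 = k$, and $\rho,\lambda$ denote the inner and outer shapes determined by $(P,Q)$.

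Next I would apply $\Upsilon:(P,Q)\mapsto(V,W;\kappa;\nu)$ term by term. Three properties make the substitution clean. The energy identity \eqref{eq:intro_weight_preserving} converts the power of $q$, giving $q^{|\rho|} = q^{\mathscr{H}(V)}\, q^{\mathscr{H}(W)}\, q^{|\kappa|}\, q^{|\nu|}$. Since $\Phi$ merely reorganizes the labeled cells that survive in the large-$t$ limit of a content-preserving dynamics, the content of $V$ (resp.\ $W$) equals that of $P$ (resp.\ $Q$), so $x^P y^Q = x^V y^W$. Finally, to transport the constraint $\lambda_1=k$ I would combine \cref{cor:intro_schensted}, namely $\lambda_1 = \nu_1 + I_1(\overline{M})$, with the Greene-invariant identity $I_1(\overline{M}) = \mu_1$ from \cref{thm:intro_Greene_invariants}, obtaining $\lambda_1 = \nu_1 + \mu_1$; hence $\lambda_1 = k$ is equivalent to $\nu_1 + \mu_1 = k$, where $\mu$ is the common shape of $V,W$. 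This turns the right-hand side into
\begin{equation}
    \sum_{\substack{(V,W;\kappa;\nu)\\ \nu_1 + \mu_1 = k}} q^{\mathscr{H}(V)+\mathscr{H}(W)+|\kappa|+|\nu|}\, x^V y^W .
\end{equation}

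Then I would factor this sum by grouping according to $\ell := \nu_1$, so that $\mu_1 = k-\ell$ and $0 \le \ell \le k$. For fixed $\ell$ the sum over $\nu$ with $\nu_1 = \ell$ decouples and equals $q^\ell/(q;q)_\ell$: for $\ell\ge 1$ this is the difference $1/(q;q)_\ell - 1/(q;q)_{\ell-1}$ of the generating functions counting partitions with largest part $\le \ell$ and $\le \ell-1$, while $\ell=0$ contributes only the empty partition. For each $\mu$ with $\mu_1=k-\ell$, the independent sums over $V,W \in VST(\mu)$ yield $\mathscr{P}_\mu(x;q)\,\mathscr{P}_\mu(y;q)$ by \eqref{qWVST}, and the sum over $\kappa \in \mathcal{K}(\mu)$ yields $\mathdutchcal{b}_\mu(q) = \sum_{\kappa \in \mathcal{K}(\mu)} q^{|\kappa|}$. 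Assembling these factors reproduces exactly the left-hand side of \eqref{eq:intro_summation_qW_skew_Schur}, completing the proof.

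The main obstacle is not any single computation but justifying the two transport statements precisely: that $\Phi$ preserves content cell-by-cell, so that the monomials in $x$ and $y$ match, and that the partition $\nu$ produced by $\Upsilon$ is the same Sagan-Stanley partition entering \cref{cor:intro_schensted}, so that $\lambda_1 = \nu_1 + \mu_1$ genuinely applies to $\Upsilon$. Both hinge on the compatibility of $\Upsilon$ with the Sagan-Stanley correspondence recorded after \cref{thm:main_intro}; once that compatibility is established, the remaining steps are the routine decoupling of independent summations and the elementary partition generating-function identity.
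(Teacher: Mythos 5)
Your proposal is correct and follows essentially the same route as the paper's proof of this identity (Theorem 9.6 in the text): expand the right-hand side via the combinatorial formula for skew Schur polynomials, transport through $\Upsilon$ using the weight identity \eqref{eq:intro_weight_preserving} and the generalized Schensted relation $\lambda_1=\nu_1+\mu_1$, then decouple and close the sums over $\nu$, $\kappa$, $V$, $W$. The paper merely inserts the weighted biword $\overline{\pi}$ (via Sagan--Stanley) as an intermediate bookkeeping device, and the compatibility worries you raise are exactly the ones settled by Corollary 8.2, which records that the $\nu$ produced by $\Upsilon$ coincides with the Sagan--Stanley kernel partition.
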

\noindent
Indeed the original motivation of this work was to find a bijective proof of this identity, which 
is now accomplished by the bijection $\Upsilon$ in \cref{thm:main_intro}. 
A number of analogous identities, such as Littlewood-like identities involving summations of single $q$-Whittaker polynomials, will be presented in \cref{subs:summations_qW,subs:summations_qW_Schur}. They all follow from \cref{thm:main_intro} and can be proven bijectively. See \cref{thm:Littlewood_like_id}.

\begin{remark}
    Summation identities of \cref{thm:intro_summation_qW_skew_Schur}, as well as \eqref{eq:Cauchy_id_Schur}, \eqref{eq:Cauchy_id_qW_intro} have been reported assuming set of variables $x,y$ to be $n$-tuples of complex numbers. Such assumption is not necessary to our results and only serves the purpose of keeping the notation as simple as possible. In general $x$ and $y$ in \eqref{eq:intro_summation_qW_skew_Schur} can be arbitrary specializations of the algebra of symmetric functions; see \cref{remark:general_specializations}.
    Analogously we have assumed that matrices $\overline{M}_{i,j}(k)$ are squared; i.e. $i,j \in \{1,\dots,n\}$. This is also not necessary and our results and constructions hold also for rectangular matrices. 
\end{remark}

 Identities such as \eqref{eq:intro_summation_qW_skew_Schur} or \eqref{eq:qW_and_Schur_2} in the text have important consequences in the realm of integrable probability which we will develop in a forthcoming paper \cite{IMS_KPZ_free_fermions}. In fact they provide a new way of solving stochastic integrable systems in the KPZ class connecting $q$-Whittaker polynomials with manifestly determinantal processes, as the ones related to skew Schur polynomials \cite{borodin2007periodic,betea_bouttier_periodic,Betea_etal2019}. This accomplishes a generalization in $q$-deformed setting of the original techniques employed by Johansson to solve the Totally Asymmetric Simple Exclusion Process \cite{johansson2000shape}.  Moreover properties of bijection $\Upsilon$, resulting in identity \eqref{eq:qW_and_Schur_2}, allow us to generalize ideas of Baik and Rains who used symmetries of the RSK correspondence to study, on the same footing, asymptotics of random permutations with various symmetries \cite{baik_rains2001algebraic,baik_rains2001symmetrized}. In particular this will solve, bypassing complicated Bethe Ansatz calculations, the outstanding problem of rigorously establishing pfaffian formulas for solvable models in the KPZ class in restricted environment \cite{borodin_bufetov_corwin_nested,barraquand2018,barraquand_half_space_mac,krajenbrink_le_doussal_half_space}. We will elaborate these results in \cite{IMS_KPZ_free_fermions}.

\subsection{Outline}
    In \cref{sec:preliminary} we fix notation and introduce different useful parameterizations of Young tableaux and other combinatorial objects. In \cref{sec:miscellaneous} we discuss the skew $\RSK$ map in its various formulations both in terms of the insertion algorithms and of edge local rules. In \cref{sec:iterated_RSK} we describe Sagan and Stanley's correspondence and we introduce an integrable dynamics on matrices. We call it \emph{Viennot dynamics}, being based on the shadow line construction. In \cref{sec:Knuth_and_crystals} we endow various combinatorial objects with an affine bicrystal structure.
    In \cref{sec:Greene_invariants} we establish conservation laws for the Viennot dynamics and characterize asymptotic increments $\mu(P,Q)$ as Greene invariants. In \cref{sec:linearization} we discuss the combinatorial $R$-matrix, the intrinsic energy function $\mathscr{H}$. Subsequently we implement a combinatorial transformation that reduces the skew $\RSK$ map to a linear map.
    Such linearization defines a useful class of tableaux, we call \emph{leading tableaux} and which we study in detail. In \cref{sec:bijection} we discuss our bijection $\Upsilon$, proving \cref{thm:main_intro}. 
    In \cref{subs:extensions} we also propose, without entering technical discussion, a few natural extensions of \cref{thm:main_intro}.
    In \cref{sec:scattering_rules} we study the scattering and phase shift of the skew $\RSK$ dynamics. Finally in \cref{sec:summation_identities} we give proofs of a number of summation identities for $q$-Whittaker polynomials and skew Schur polynomials. In \cref{app:Knuth_rel} we review classical notions of Knuth relations and we propose their generalizations in skew setting. In \cref{app:proof_thm} we give a proof of an invariance property of last passage times with respect to crystal operators.

\subsection{Acknowledgments}
    We thank Nikolaos Zygouras and Kirone Mallick for comments and suggestions on an early version of this paper. We are grateful to Shinji Koshida and Ryosuke Sato for discussions and remarks about representation theoretic aspects of this paper. We also thank Rei Inoue 
    for useful remarks 
    about theory of crystals and integrable systems. MM is grateful to Takato Yoshimura for showing interest in this work and to Alexander Garbali for discussions about combinatorics of symmetric polynomials.  
    The work of TS has been supported by JSPS KAKENHI Grants No.JP15K05203, No. JP16H06338, No. JP18H01141, No. JP18H03672, No. JP19L03665, No. JP21H04432. The work of TI has been supported by JSPS KAKENHI Grant Nos. 16K05192, 19H01793,
    and 20K03626.









\section{Preliminary notions} \label{sec:preliminary}
\subsection{Biwords and matrices of integers} \label{subs:biwords}

We introduce the alphabet $\mathcal{A}_n=\{1,\dots,n\}$ and we denote by $\mathcal{A}_n^*$ the set of word of finite length in $\mathcal{A}_n$. The length of a word $p$ is denoted by $\ell(p)$, while its content is recorded by an array $\gamma=(\gamma_1, \dots, \gamma_n)$, where $\gamma_i$ equals the multiplicity of $i$ in $p$.

Given two natural numbers $n,m$ we denote by $\mathbb{A}_{n,m}$ the set of \emph{biwords} in the alphabet $\mathcal{A}_n, \mathcal{A}_m$. A biword $\pi \in \mathbb{A}_{n,m}$ is an array of pairs $\left(\begin{smallmatrix} q_1 & q_2 & \cdots & q_k \\p_1 &p_2 &\cdots &p_k \end{smallmatrix}\right)$, $k\in\mathbb{N}$, where $q_i\in \mathcal{A}_m$, $p_i\in \mathcal{A}_n$ and whose columns are ordered lexicographically. This means that for all $i$ we have $q_i \le q_{i+1}$ and whenever $q_i=q_{i+1}$, then $p_i \le p_{i+1}$. Clearly words $p_1 \dots p_k$ are particular cases of biwords, obtained setting $q_i=i$. Permutations $\sigma \in \mathcal{S}_n$ are also particular cases of biwords where we set $q_i=i$ and $p_i=\sigma_i$. 

Given $\mathcal{A}_n,\mathcal{A}_m$ we consider \emph{weighted biwords} in these alphabets and we denote their set by $\overline{\mathbb{A}}_{n,m}$. Elements of $\overline{\mathbb{A}}_{n,m}$ are arrays of triplets\footnote{In \cite{sagan1990robinson} columns $\left( \begin{smallmatrix} q_i \\ p_i \\ w_i \end{smallmatrix} \right)$ were denoted by $\left( \begin{smallmatrix} q_i \\ p_i^{(w_i)} \end{smallmatrix} \right)$.}
\begin{equation} \label{eq:weighted_biword}
    \overline{\pi} = \left(\begin{matrix} q_1 & q_2 & \cdots & q_k \\ p_1 & p_2 & \cdots & p_k \\w_1 & w_2 &\cdots &w_k \end{matrix}\right),
\end{equation}
where again $q_i\in \mathcal{A}_m$, $p_i \in \mathcal{A}_n$ and weights $w_i \in \mathbb{Z}$. Columns of a weighted biword are arranged so that the biword composed by $q,p$ is lexicographically ordered, with top entries taking precedence, while if $q_i=q_{i+1}$ and $p_i=p_{i+1}$, then $w_i \ge w_{i+1}$. The total weight of a weighted biword $\overline{\pi}$ is the sum of the $w_i$ entries and we denote it by $\wt(\overline{\pi})$, i.e., $\wt(\overline{\pi}) = \sum_i w_i$. In this text weighted biwords will always be denoted by overlined greek letters $\overline{\pi},\overline{\sigma},\overline{\xi},\dots$ so to distinguish them from biwords, whose symbols are never overlined.

For later use we also present an alternative format to express a weighted biword, which we call \emph{timetable ordering}. Given $\overline{\pi}$ its timetable ordering $\overline{\pi}^{\na}$ is the array consisting of the same triplets of $\overline{\pi}$ arranged in such a way that $w_i^{\na} \ge w_{i+1}^{\na}$ and in case $w_i^{\na} = w_{i+1}^{\na}$ then $q_i^{\na} \le q_{i+1}^{\na}$ and if also $q_i^{\na} = q_{i+1}^{\na}$ then $p_i^{\na} \le p_{i+1}^{\na}$. Examples of a weighted biword and of its timetable ordering are 
\begin{equation}
    \overline{\pi}=
    \left( \begin{matrix} 
    1 & 1 & 1 & 1 & 2 & 3 & 3 & 3 & 4 \\
    2 & 3 & 3 & 3 & 1 & 3 & 4 & 4 & 2
    \\
    0 & 2 & 1 & 1 & 0 & 1 & 2 & -1 & 1 \end{matrix} \right),
    \qquad
    \overline{\pi}^{\na}=
    \left( \begin{matrix} 
    1 & 3 & 1 & 1 & 3 & 4 & 1 & 2 & 3 \\
    3 & 4 & 3 & 3 & 3 & 2 & 2 & 1 & 4
    \\
    2 & 2 & 1 & 1 & 1 & 1 & 0 & 0 & -1 \end{matrix} \right).
\end{equation}
Particular cases of weighted biwords are \emph{weighted words}, where we assume $q_i=i$, or \emph{weighted permutations}, where $q_i=i$ and $p_1,\cdots, p_k$ form a permutation of $\{ 1, \dots , k\}$. We also use the notion of \emph{partial (weighted) permutations}, that are weighted biwords where each $q$ and $p$ rows present no repetitions. The set of weighted biwords where all weights are non-negative integers will be important to us and is denoted with $\overline{\mathbb{A}}_{n,m}^+$.

\medskip

Biwords of $\mathbb{A}_{n,m}$ are in natural bijection with the set of rectangular matrices with non-negative integral entries
\begin{equation}
    \mathbb{M}_{n \times m} \coloneqq \{ (M_{i,j}; 1\le i \le n, 1 \le j \le m ): M_{i,j} \in \mathbb{N}_0  \}. 
\end{equation}
Such correspondence is realized assigning to $\pi$ the matrix $m$ with elements 
\begin{equation} \label{eq:matrix_biword}
    M_{i,j}=\# \text{ of } \binom{j}{i} \text{ in } \pi.
\end{equation}
Analogously weighted biwords of $\overline{\mathbb{A}}_{n,m}$ are in correspondence with rectangular matrices whose elements are eventually vanishing sequences of non-negative integers 
\begin{equation}
    \overline{\mathbb{M}}_{n\times m} = \{ (\overline{M}_{i,j}:\mathbb{Z} \to \mathbb{N}_0 : 1\le i \le n, 1 \le j \le m) : \overline{M}_{i,j}(k) = 0 \text{ for } |k| \gg 0  \}.
\end{equation}
Also in this case to a weighted biword $\overline{\pi}$ we assign the matrix $\overline{M}$ defined by
\begin{equation} \label{eq:matrix_weighted_biword}
    \overline{M}_{i,j}(k) = \# \text{ of } \left(
    \begin{matrix}
    j\\i\\k
    \end{matrix}
    \right)
    \text{ in } 
    \overline{\pi}.
\end{equation}
As earlier we will always denote matrices of $\overline{\mathbb{M}}_{n \times n}$ by overlined capital letters, to distinguish them from those of $\mathbb{M}_{n \times n}$.
The subset of $\overline{\mathbb{M}}_{n \times m}$
in bijection with non-negatively weighted biwords $\overline{\mathbb{A}}_{n,m}^+$ is denoted by $\overline{\mathbb{M}}_{n \times m}^+$. 
The weight of a matrix is defined as 
$$
\wt(\overline{M}) = \sum_{k \in\mathbb{Z}}k \sum_{i,j=1}^n M_{i,j}(k),
$$ 
so that under correspondence \eqref{eq:weighted_biword} we have $\wt(\overline{M}) = \wt(\overline{\pi})$.
We will represent matrices $\overline{M}\in \overline{\mathbb{M}}_{n,m}$, with a slight abuse of notation, as compactly supported maps $\overline{M} : \{1, \dots, m\} \times \mathbb{Z} \to \mathbb{N}_0$ via the identification 
\begin{equation}\label{eq:matrices_as_maps}
    \overline{M}(j,i-kn) = \overline{M}_{i,j}(k),
    \qquad
    \text{for all}
    \qquad
    i\in \mathcal{A}_n,\, j\in \mathcal{A}_m, \, k\in \mathbb{Z}.
\end{equation}

Given two weighted biwords $\overline{\pi},\overline{\pi}'$ we consider their disjoint union $\overline{\pi} \cupdot \overline{\pi}'$ formed taking all columns of $\overline{\pi}$ and $\overline{\pi}'$ and rearranging them in the correct order. In case $\overline{M},\overline{M}'$ are the matrix corresponding to $\overline{\pi},\overline{\pi}'$, then naturally $\overline{M} + \overline{M}'$ is the matrix associated to their union.

Throughout the paper we will consider a number of operations on biwords and most of times these will have nice description in the language of matrices.
For instance, if $\overline{M}$ is the matrix corresponding to a weighted biword $\overline{\pi}$, then to its transpose $\overline{M}^T$ it will correspond a biword $\overline{\pi}^{-1}$ obtained from $\overline{\pi}$ swapping the $p$ and the $q$ rows and rearranging the result in prescribed order. This notation is standard and is justified by the fact that when $\overline{\pi}$ is a permutation $\overline{\pi}^{-1}$ is its inverse in the symmetric group.

\subsection{Partitions and Young diagrams} \label{subs:partitions_and_tableaux}
 
A partition $\lambda=(\lambda_1, \lambda_2,\cdots)$ is a weakly decreasing sequence of integers $\lambda_i$ eventually being zero. The number of non-zero parts of $\lambda$ is its \emph{length} and it is denoted by $\ell(\lambda)$. We say that $\lambda$ partitions $k$ if $| \lambda | = \lambda_1 + \lambda_2 + \cdots = k$ and sometimes we write $\lambda \vdash k$. The multiplicative notation $\lambda=1^{m_1(\lambda)} 2^{m_2(\lambda)}\cdots$ is often used and $m_i(\lambda)$ denotes the multiplicity of $i$ in $\lambda$. The set of all partitions is denoted by $\mathbb{Y}$. 
Sometimes we will refer to arrays $\varkappa=(\varkappa_1,\dots,\varkappa_N) \in \mathbb{N}_0^{N}$ of $N$ non-negative integers as \emph{compositions}. Given a composition $\varkappa \in \mathbb{N}_0^{N}$, we denote by $\varkappa^+$ the unique partition that can be generated permuting elements of $\varkappa$.

Partitions are identified by their Young diagrams and we will freely interchange these two notions. Viewing the plane $\mathbb{Z} \times \mathbb{Z}$ with the vertical coordinate increasing downward, the Young diagram of $\lambda$ is the collection of cells $(c,r)$ with
$1 \le r \le \ell(\lambda)$ and $1\le c \le \lambda_r$. 
Reflecting the Young diagram of $\lambda$ with respect to the main diagonal we obtain the transposed partition $\lambda'$ with parts $\lambda_i' = \# \{ j: \lambda_j \ge i \}$. Given two partitions $\mu, \lambda$ we write $\mu \subseteq \lambda$ if $\mu_i \le \lambda_i$ for all $i$ or equivalently if their Young diagrams are encapsulated. When $\mu \subseteq \lambda$ we define the skew Young diagram $\lambda/\mu$ consisting of all all cells in $\lambda$ but not in $\mu$. The number of cells of $\lambda/\mu$ is denoted with $|\lambda/\mu|$.

As hinted in the introduction, for the discussion in this paper, it is essential to allow Young diagrams to have rows at non-positive coordinates. For this we define the upward translation $\mathcal{T}_{-i} : (c,r) \mapsto (c,r-i)$, for any $i\in \mathbb{N}_0$ and the set of \emph{generalized Young diagrams} $\mathbb{Y}_{-i} = \mathcal{T}_{-i} (\mathbb{Y})$. A generalized Young diagram $\lambda \in \mathbb{Y}_{-i}$ is associated to its \emph{generalized partition} $(\lambda_{1-i},\lambda_{2-i},\dots)$. The notion of skew diagrams is defined as always: if $\mu,\lambda \in \mathbb{Y}_{-i}$ then $\lambda/\mu$ is the set of cells in $\lambda$ but not in $\mu$. When drawing generalized Young diagrams we will color cells at non-positive rows in gray, to give a reference. We report an example of a skew Young diagram and an generalized one obtained translating it
\begin{equation}
    \ydiagram{3+3,2+3,1+2,1+2,2,1} 
    \xrightarrow[]{\hspace{.2cm} \mathcal{T}_{-3} \hspace{.2cm}}
    \hspace{.5cm}
    \begin{ytableau}
        \none[{\textcolor{gray}{\scriptstyle -2\hspace{2ex}}}] & \none &\none & \none & *(gray!35) & *(gray!35) &  *(gray!35) 
        \\
        \none[{\textcolor{gray}{\scriptstyle -1\hspace{2ex}}}] & \none & \none & *(gray!35) &  *(gray!35) &  *(gray!35) 
        \\
        \none[{\textcolor{gray}{\scriptstyle 0 \hspace{2ex}}}] & \none & *(gray!35) & *(gray!35) 
        \\
        \none[{\textcolor{gray}{\scriptstyle 1 \hspace{2ex}}}] & \none & &
        \\
        \none[{\textcolor{gray}{\scriptstyle 2 \hspace{2ex}}}] & &
        \\
        \none[{\textcolor{gray}{\scriptstyle 3 \hspace{2ex}}}] &
    \end{ytableau}.
\end{equation}
At times we will need to distinguish generalized Young diagrams from non-generalized ones, that in these circumstances will be called \emph{classical}. Statements and constructions reported in this text often apply the same to classical Young diagrams or to generalized Young diagrams and unless required, we will not stress the difference. Nevertheless we point out that not every operation defined on classical Young diagrams is possible in the generalized case: for instance the notion of transposition $\lambda'$ is only defined if $\lambda \in \mathbb{Y}$.

Given a classical Young diagram $\mu$, its \emph{rectangular decomposition} is given by indeces $0=R_0<R_1<\cdots < R_N=\mu_1$ such that
\begin{equation} \label{eq:rect_dec}
        \mu_{R_{i-1}+1}' = \cdots = \mu_{R_i}' > \mu_{R_i+1}', 
\end{equation}
for $i=1,\dots,N$; see \cref{fig:fundamental_shifts}. When using the notion of rectangular decomposition we will denote by $r_i=R_{i-1}- R_{i}$ the base of each rectangle of $\mu$.
    
\begin{figure}[ht]
     \centering
     \includegraphics{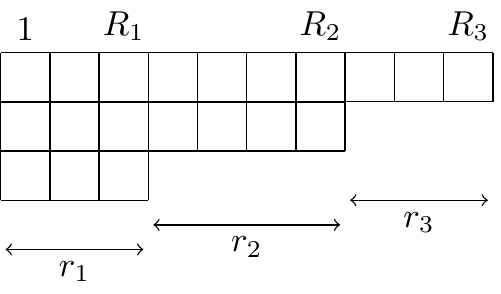}
     \caption{$R_i$'s and $r_i$'s for a partition $\mu$.}
     \label{fig:fundamental_shifts}
\end{figure}

\subsection{Young tableaux}

A Young tableau, or simply a tableau, $T$ is a filling of cells of a Young diagram with natural numbers. The label assigned to a specific cell $(c,r) \in \lambda$ is indicated with $T(c,r)$ and in this case $\lambda$ is called shape of $T$. If a cell has label $\ell$ we call that an $\ell$-cell. The content of a tableau $T$ is recorded by an array $\gamma=(\gamma_1,\gamma_2,\dots)$, where $\gamma_i = \# i$-cells in $T$. We will mainly deal with two types of tableaux. \emph{Semi-standard} tableaux have entries strictly increasing column-wise and weakly increasing row-wise. When their labels range in the alphabet $\mathcal{A}_n$ and the shape $\lambda / \rho$ is fixed we denote their set by $SST(\lambda /\rho ,n)$. In such cases we call $\lambda$ the \emph{external shape} and $\rho$ the \emph{empty shape}. It could happen that the shape $\lambda/\rho$ of $T$ is a generalized skew Young diagram, as in the examples in \cref{subs:examples} and in such cases $T$ is a \emph{generalized semi-standard tableau}. As before, we will not stress the generalized property unless required. A particular class of semi-standard tableaux is that of \emph{standard tableaux}, defined by the property of having content $\gamma_i=1$ for all $i=1,\dots, |\lambda / \rho|$. Their set is denoted by $ST(\lambda/\rho)$.

The other class of tableaux, which will play an important role in this paper, are the \emph{vertically strict tableaux} that have labels strictly increasing column-wise and no additional conditions. These are sometimes called ``column strict fillings" \cite{Lenart_Schilling_charge}, or when entries have no repetitions ``column tabloids" \cite{sagan2001symmetric}. Shapes of vertically strict tableaux will always be straight (i.e. non-skew) classical Young diagrams $\mu \in \mathbb{Y}$ and we denote their set by $VST(\mu,n)$, when entries range in the alphabet $\mathcal{A}_n$. Examples of semi-standard and vertically strict tableaux are 
\begin{equation} \label{eq:examples_sst_and_VST}
    \begin{ytableau}
        \, & & & 2 & 4
        \\
        & 1 & 3 & 3 & 5
        \\
        1 & 2 & 5
    \end{ytableau}
    \qquad
    \text{and}
    \qquad
    \begin{ytableau}
        2 & 4 & 1 & 1 & 3
        \\
        3 & 5 & 3 & 2
        \\
        5
    \end{ytableau},
\end{equation}
with content respetively equal to $(2,2,2,1,2)$ and $(2,2,3,1,2)$.

Given a tableau $T$ we define its \emph{row reading word} $\pi^{\mathrm{row}}_T$ concatenating rows of $T$ starting from the last. Alternatively, the \emph{column reading word} $\pi^{\mathrm{col}}_T$ is formed reading entries of $T$ column by column from the last row up and from left to right. For instance, if $T$ is the semi-standard tableau in \eqref{eq:examples_sst_and_VST} we have
\begin{equation}
    \pi^{\mathrm{row}}_T = 1\,\,2\,\,5\,\,1\,\,3\,\,3\,\,5\,\,2\,\,4
    \qquad
    \textrm{and}
    \qquad
    \pi^{\mathrm{col}}_T = 1\,\,2\,\,1\,\,5\,\,3\,\,3\,\,2\,\,5\,\,4.
\end{equation}

\subsection{Kernels of tableaux} \label{subs:kernels}
We now define a useful statistic of a semi-standard Young tableau of classical shape.
\begin{definition} \label{def:kernel_P}
    Given a classical skew tableau $P\in SST(\lambda/\rho,n)$, its $\emph{kernel}$ is the partition $\varkappa = \ker(P) \in \mathbb{Y}$ such that $\varkappa_j - \varkappa_{j+1}$ is the maximal number of boxes one can shift the first $j$ rows of $P$ to the left without breaking the semi-standard property, for all $j=1,2,\dots$.
\end{definition}

For example if
\begin{equation}
    P =\begin{ytableau}
        \, & & & & & 2 & 4
        \\
        & & 1 & 3 & 3 & 5
        \\
        1 & 2 & 5
    \end{ytableau},
    \qquad
    \text{then}
    \qquad
    \ker(P) = \ydiagram{2,1}.
\end{equation}
In fact shifting the second and first row of $P$  respectively one and two cells to the left we obtain the semi-standard tableau of \eqref{eq:examples_sst_and_VST}, which cannot be ``squeezed" anymore. 

In order to describe more precisely the partition $\ker(P)$ we introduce the notion of \emph{overlap} of two weakly increasing words $A,B$. This is defined as $\ell(B)$ minus the size of the empty shape of the minimal semi-standard tableaux having first and second row content given by $A$ and $B$ respectively. In formulas we have
\begin{equation}
    \overlap(A,B) = \max_{L \in \{0,\dots, \ell(A) \wedge \ell(B) \}}  \left\{ L  : B_{\ell(B)-L+i} > A_{i}, \text{ for all } i=1,\dots,L \right\}.
\end{equation}
For instance for $A=1 \,\, 3 \,\, 3 \,\, 5$ and $B= 1 \,\, 2 \,\, 2 \,\, 3 \,\, 4$ we have $\overlap(A,B) = 2$, since the minimal semi-standard tableaux with first row $A$ and second row $B$ is
\begin{equation}
    \begin{ytableau}
        \, & & & 1 & 3 & 3 & 5
        \\
        1 & 2 & 2 & 3 & 4
    \end{ytableau}
    \,\,.
\end{equation}

Recording the $j$-th row of a tableau $P \in SST(\lambda/\rho,n)$ in a weakly increasing word $p^{(j)}$ of length $\theta_j=\lambda_j-\rho_j$ and setting $\varkappa=\ker(P)$ we have
\begin{equation} \label{eq:nu_PQ}
    \varkappa_j - \varkappa_{j+1} = \rho_j - \lambda_{j+1} +  
    \overlap(p^{(j)}, p^{(j+1)}).
\end{equation} 

Given a pair $(P,Q)$ of semi-standard tableaux with same shape we can define $\ker(P,Q)$ in the same way as in \cref{def:kernel_P}. If $\nu=\ker(P,Q)$ then $\nu_j-\nu_{j+1}$ is the maximal amount of cells we can shift the first $j$ rows of $P$ and $Q$ simultaneously to the left without breaking the semi-standard property. In this case if $\varkappa=\ker(P), \kappa=\ker(Q)$ and $\nu=\ker(P,Q)$. then it is clear that for all $j=1,2\dots$, we have
\begin{equation}
    \nu_{j}-\nu_{j+1} = \min \{ \varkappa_{j} -\varkappa_{j+1} , \kappa_{j} -\kappa_{j+1} \}.
\end{equation}

\subsection{Row coordinate parameterization} \label{subs:row_coordinates}

To any generalized semi-standard tableau $P\in SST (\lambda/\rho, n)$ we can assign its \emph{row coordinate matrix} $\alpha = \rc(P)$, defined by
\begin{equation} \label{eq:alpha_ij_def}
    \alpha_{i,j} = \# \text{ of  $i$-cells at row } j \text{ of } P.
\end{equation}
The set of such infinite matrices is 
\begin{equation}
    \mathbb{M}_{n \times \infty} \coloneqq \{ (\alpha_{i,j}\in \mathbb{N}_0: 1\le i\le n, j\in \mathbb{Z}) :\, \alpha_{i,j}\neq 0 \text{ for finitely many } i,j \}.
\end{equation}
Such encoding of tableaux was defined already in \cite{Danilov_2005}. In case a tableau $P$ is standard we condensate in an array $\mathpzc{a}$ information contained in the row coordinate matrix. Define the \emph{row coordinate array} $\mathpzc{a}$ of a standard tableaux $P$ as
\begin{equation} \label{eq:row_coordinate_array}
    \mathpzc{a}_{\,i} = \text{row with the unique $i$-cell of $P$}.
\end{equation}
We will abuse of the notation and write $\rc(P)=\mathpzc{a}\in \mathbb{Z}^k$ rather than $\rc(P)= \alpha \in \mathbb{M}_{n \times \infty}$, when it is clear from the context that $P$ is standard.

\medskip

Map $\rc:P \mapsto \alpha$ is not bijective since shifting rows of $P$ laterally does not change its row coordinate matrix. It can nevertheless be refined into a bijection recording in a certain way relative positions of rows of $P$.  We do so in the next definition, where, for the sake of a simpler description, we only consider the case of tableaux with classical shape. We will use the notation $\rc(P,Q) = (\rc(P), \rc(Q))$.

\begin{definition}
    Let $\lambda,\rho \in \mathbb{Y}$ and $P,Q \in SST(\lambda /\rho,n)$. The \emph{row coordinate parameterization} of $(P,Q)$ is the triple $(\alpha,\beta;\nu)$ such that $(\alpha, \beta)= \rc(P,Q)$ and $\nu=\ker(P,Q)$. In this case we use the notation $(P,Q) \xleftrightarrow[]{\rc\,}(\alpha,\beta;\nu)$. Choosing $P=Q$ we also define $P \xleftrightarrow[]{\rc} (\alpha;\nu)$ setting $\alpha=\rc(P)$, $\nu=\ker(P)$.
\end{definition}

In the definition above we have assumed that tableaux $P,Q$ have same classical shape $\lambda/\rho$ and that their labels belong to the same alphabet $\mathcal{A}_n$. This forces their row coordinate matrices to belong to set
\begin{equation} \label{eq:M_n_plus}
    \mathcal{M}_n^+ = \left\{ (\alpha,\beta) \in \mathbb{M}_{n \times \infty}^+ \times \mathbb{M}_{n \times \infty}^+ : \sum_{i=1}^n ( \alpha_{i,j} - \beta_{i,j} )= 0 \text{ for all } j\in \mathbb{Z}  \right\},
\end{equation}
where $\mathbb{M}_{n \times \infty}^+$ is the subspace of $\mathbb{M}_{n \times \infty}$ of matrices $\alpha$ such that $\alpha_{i,j} = 0$ if $j \le 0$. 

\begin{proposition} \label{prop:row_coordinate_classical_pair}
    The correspondence $(P,Q) \xleftrightarrow[]{\rc} (\alpha,\beta;\nu)$ is a bijection between the set of pairs $(P,Q)$ of classical semi-standard tableaux with labels in $\mathcal{A}_n$ and $\mathcal{M}_n^+ \times \mathbb{Y}$.
\end{proposition}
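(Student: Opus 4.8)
The plan is to construct the inverse map explicitly and to check that the two compositions are the identity. The forward map packages two independent pieces of information: the pair $(\alpha,\beta)=\rc(P,Q)$ records, at each \emph{absolute} row index $j$, the multisets of entries in the $j$-th rows of $P$ and of $Q$, while $\nu=\ker(P,Q)$ records how far these rows sit to the right of their maximally squeezed positions. Before inverting I would check that the forward map indeed lands in $\mathcal{M}_n^+\times\mathbb{Y}$: since $P,Q$ are classical their cells sit at strictly positive rows, so $\alpha,\beta\in\mathbb{M}_{n\times\infty}^+$; the identity $\sum_{i=1}^n(\alpha_{i,j}-\beta_{i,j})=0$ of \eqref{eq:M_n_plus} is exactly the statement that the $j$-th rows of $P$ and $Q$ have a common length $\theta_j$, which holds because $P,Q$ share the skew shape $\lambda/\rho$; and $\nu\in\mathbb{Y}$ by \cref{def:kernel_P}.

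For the inverse, I would first reconstruct the row words. Because rows of a semi-standard tableau are weakly increasing, the $j$-th row $p^{(j)}$ of $P$ is \emph{forced}: it is the unique weakly increasing word containing $\alpha_{i,j}$ copies of each letter $i$, and likewise $q^{(j)}$ is determined by $\beta$, with $\ell(p^{(j)})=\ell(q^{(j)})=\theta_j$ by the $\mathcal{M}_n^+$ constraint. The only remaining datum is the empty shape $\rho$, i.e.\ the offsets $\rho_j$. Here I would apply \eqref{eq:nu_PQ} to both $P$ and $Q$ and combine it with the joint-kernel formula $\nu_j-\nu_{j+1}=\min\{\varkappa_j-\varkappa_{j+1},\kappa_j-\kappa_{j+1}\}$, where $\varkappa=\ker(P)$, $\kappa=\ker(Q)$. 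Since the two instances of \eqref{eq:nu_PQ} share the term $\rho_j-\lambda_{j+1}=(\rho_j-\rho_{j+1})-\theta_{j+1}$, taking the minimum and solving yields the closed expression
\[
\rho_j-\rho_{j+1}=(\nu_j-\nu_{j+1})+\theta_{j+1}-\min\bigl\{\overlap(p^{(j)},p^{(j+1)}),\overlap(q^{(j)},q^{(j+1)})\bigr\},
\]
whose right-hand side depends only on $(\alpha,\beta;\nu)$. All differences vanish for $j$ large, so telescoping $\rho_j=\sum_{k\ge j}(\rho_k-\rho_{k+1})$ recovers $\rho$ uniquely, hence $\lambda=\rho+\theta$ and the entire pair $(P,Q)$.

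It then remains to verify that this reconstruction is legitimate and mutually inverse to $\rc$ and $\ker$. Each summand on the right above is nonnegative — $\nu_j-\nu_{j+1}\ge 0$ since $\nu\in\mathbb{Y}$, and $\overlap(A,B)\le\ell(B)$ gives $\theta_{j+1}-\min\{\cdots\}\ge 0$ — so $\rho$ is weakly decreasing, while a parallel estimate using $\overlap(A,B)\le\ell(A)\wedge\ell(B)\le\theta_j$ yields $\lambda_j-\lambda_{j+1}\ge 0$; thus both $\lambda,\rho$ are partitions and the shape is a genuine classical skew diagram. Because $\lambda,\rho$ are partitions, the cells of any fixed column occupy a contiguous block of rows, so column strictness of the whole filling follows by transitivity from the condition on each pair of consecutive rows; the latter, by the very definition of $\overlap$, amounts precisely to $\lambda_{j+1}-\rho_j\le\overlap(p^{(j)},p^{(j+1)})$, i.e.\ to $\varkappa_j-\varkappa_{j+1}\ge 0$, which the displayed relation guarantees (and symmetrically for $Q$). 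Finally, substituting the reconstructed shape back into \eqref{eq:nu_PQ} returns $\alpha,\beta$, while the joint-kernel formula returns $\nu$, since at each $j$ at least one of the two overlaps attains the minimum and the $\min$ collapses to $\nu_j-\nu_{j+1}$.

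The subtle point, and the step I expect to demand the most care, is the treatment of an \emph{arbitrary} $\nu\in\mathbb{Y}$ against the number $N$ of nonempty rows determined by $\alpha,\beta$. For $j\ge N$ the displayed relation degenerates to $\rho_j-\rho_{j+1}=\nu_j-\nu_{j+1}$, so parts $\nu_{N+1},\nu_{N+2},\dots$ materialize as empty bottom rows carrying positive offsets $\rho_j=\nu_j$. I would need to confirm that these still produce a valid partition shape (they do, since $\lambda_j=\nu_j\le\nu_N\le\lambda_N$ and $\rho$ stays weakly decreasing) and that they correspond to genuinely distinct classical pairs, so that the map is onto all of $\mathcal{M}_n^+\times\mathbb{Y}$ with no collapsing; dually, one checks that the kernel of such a padded pair reproduces the full tail of $\nu$, the empty rows acting as ``walls'' that cap the admissible leftward shift. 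Verifying that the consecutive-overlap formula, through the contiguity of columns, captures \emph{all} semistandardness constraints is the conceptual crux that makes the explicit inverse well defined.
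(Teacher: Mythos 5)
Your proposal is correct and follows essentially the same route as the paper: the paper's proof also reconstructs the row words $p^{(j)},q^{(j)}$ from $(\alpha,\beta)$ and recovers the shape via $\rho=\eta+\nu$ with $\eta_j-\eta_{j+1}=\theta_{j+1}-\min\{\overlap(p^{(j)},p^{(j+1)}),\overlap(q^{(j)},q^{(j+1)})\}$, which is exactly your displayed telescoping formula for $\rho_j-\rho_{j+1}$. The paper declares the remaining verifications ``straightforward,'' whereas you carry them out explicitly (well-definedness of the partitions, column strictness via contiguity of columns, recovery of $\nu$, and the padding by empty bottom rows for large $\nu$); all of these checks are sound.
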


\begin{proof}
    We need to construct the inverse map $(\alpha,\beta;\nu) \to (P,Q)$. 
    For this define weakly increasing words $p^{(j)},q^{(j)}$ as 
    \begin{equation} \label{eq:p_j_q_j}
        p^{(j)} = 1^{\alpha_{1,j}} 2^{\alpha_{2,j}} \cdots,
        \qquad
        q^{(j)} = 1^{\beta_{1,j}} 2^{\beta_{2,j}} \cdots.
    \end{equation}
    Since $(\alpha,\beta) \in \mathcal{M}_n^+$,  $p^{(j)},q^{(j)}$ have the same length denoted by $\theta_j$. Define also partition $\eta$ through relations
    \begin{equation} \label{eq:empty_shape_eta}
        \eta_i-\eta_{i+1} = \theta_{j+1} - \min \left\{ 
     \overlap(p^{(j)}, p^{(j+1)}) , \overlap(q^{(j)}, q^{(j+1)} )   \right\}.
    \end{equation}
    Then $P,Q$ are the tableaux of shape $\lambda/\rho$ with $\lambda = \eta + \theta + \nu$ and $\rho= \eta + \nu$ and with $j$-th rows given by words $p^{(j)},q^{(j)}$. It is straightforward to check that maps $(P,Q)\mapsto (\alpha,\beta;\nu)$ and $(\alpha,\beta;\nu) \mapsto (P,Q)$ are mutual inverses.
\end{proof}

\begin{example}\label{ex:rc}
    Consider the pair of semi-standard tableaux
    \begin{equation}
        (P,Q) = \left( 
        \,
        \begin{ytableau}
            \, & & & 2 \\ & & 1 & 3 \\ 1 & 2
        \end{ytableau}
        \,,\,
        \begin{ytableau}
            \, & & & 1 \\ & & 2 & 2 \\ 1 & 3
        \end{ytableau}
        \,
        \right).
    \end{equation}
    Then we have $(P,Q) \xleftrightarrow[]{\rc \,} (\alpha, \beta ; \nu)$, where 
    \begin{equation}
        \alpha = 
        \left( 
        \begin{matrix}
            0 & 1 & 1 & 0 & \cdots
            \\
            1 & 0 & 1 & 0 & \cdots
            \\
            0 & 1 & 0 & 0 & \cdots
        \end{matrix} 
        \right)
        ,
        \qquad
        \beta = 
        \left( 
        \begin{matrix}
            1 & 0 & 1 & 0 & \cdots
            \\
            0 & 2 & 0 & 0 & \cdots
            \\
            0 & 0 & 1 & 0 & \cdots
        \end{matrix} 
        \right),
        \qquad
        \nu = \ydiagram{1,1}
        \,\,.
    \end{equation}
\end{example}

For later use we also introduce the set 
\begin{equation}\label{eq:Z_n_plus}
    \mathcal{Z}^+_n = \left\{ (\mathpzc{a},\mathpzc{b}) \in \mathbb{N}^{n} \times \mathbb{N}^{n} : \mathpzc{b}_{\, i}=\mathpzc{a}_{\,\sigma(i)} \text{ for some } \sigma \in \mathcal{S}_n \right\},
\end{equation}
consisting on all pairs $(\mathpzc{a},\mathpzc{b})$ that are row coordinate arrays of pairs of standard tableaux. At times in the text we will also use sets $\mathcal{M}_n$, $\mathcal{Z}_n$ defined as in \eqref{eq:M_n_plus}, \eqref{eq:Z_n_plus} removing ``+" superscripts and ${}_{\ge 1}$ subscripts.



\subsection{Standardization} \label{subs:standardization}

We define the operation of \emph{standardization} \cite{schutzenberger_RS} of semi-standard tableaux
\begin{equation}
    \std : P \in SST(\lambda / \rho , n) \mapsto P' \in ST(\lambda / \rho).
\end{equation}
Let $\gamma = (\gamma_1 ,\dots, \gamma_n)$ be the content of tableau $P$ and define $\Gamma_i=\gamma_1 + \cdots + \gamma_i$ for $i=1,\dots,n$, where $\Gamma_0=0$ by convention. Then cells of $P'=\std(P)$ are labeled replacing $i$-cells of $P$, from the leftmost to the right with $\Gamma_{i-1} + 1 , \dots , \Gamma_i$. For instance we have
\begin{equation} \label{eq:std_example}
    \begin{ytableau}
        \, & & & 2 \\ & & 1 & 3 \\ 1 & 2
    \end{ytableau}
    \xrightarrow[]{\hspace{.4cm} \std \hspace{.4cm}}
    \begin{ytableau}
        \, & & & 4 \\ & & 2 & 5 \\ 1 & 3
    \end{ytableau}
    \,\,.
\end{equation}
It is clear that, remembering the content $\gamma$ of the original tableaux $P$, one can recover $P$ from its standardization $P'$.

We present the analog of standardization in the language of matrices. Rows of matrices in $\mathbb{M}_{n\times \infty}$ are compactly supported infinite arrays of non-negative integers and we denote their set by
\begin{equation} \label{eq:set_V}
    \mathbb{V}=\bigg\{ (v_j)_{j\in \mathbb{Z}} \subset \mathbb{N}_0^{\infty}: |v| = \sum_{j\in \mathbb{Z}} v_j < +\infty \bigg\}.
\end{equation}
We will write elements $v \in \mathbb{V}$ via the expansion $v= \sum_{k \in \mathbb{Z}} v_k \mathbb{e}_k $ where $\mathbb{e}_k$ is the standard basis of $\mathbb{C}^\infty$. Introducing the Weyl chamber
\begin{equation} \label{eq:Weyl_chamber}
    \mathbb{W}^k = \{ (\mathpzc{a}_{\,1},\dots,\mathpzc{a}_{\,k}) \in \mathbb{Z}^k: \mathpzc{a}_{\,1} \ge \cdots \ge \mathpzc{a}_{\,k} \},
\end{equation}
we define the natural correspondence $\mathbb{W}^k \leftrightarrow \{ v \in \mathbb{V} : |v| = k\},$ by the invertible mapping 
\begin{equation} \label{eq:from_v_to_a}
    \mathpzc{a} \mapsto v(\mathpzc{a})=\sum_{i=1}^n \mathbb{e}_{\mathpzc{a}_{\,i}}.
\end{equation}
Given $\alpha \in \mathbb{M}_{n \times \infty}$ and denoting its rows by $\alpha_1,\alpha_2 ,\dots$, we define the standardization $\mathpzc{a}=\std(\alpha)$ as the array 
\begin{equation}
    \mathpzc{a} = ( \mathpzc{a}^{(1)}_{\,1},\dots, \mathpzc{a}^{(1)}_{\,|\alpha_1|}, \dots , \mathpzc{a}^{(n)}_{\,1},\dots, \mathpzc{a}^{(n)}_{\,|\alpha_n|} ),
\end{equation}
obtained joining smaller arrays $\mathpzc{a}^{(1)}\in \mathbb{W}^{|\alpha_1|}, \dots, \mathpzc{a}^{(n)} \in \mathbb{W}^{|\alpha_n|}$ such that $v(\mathpzc{a}^{(i)}) = \alpha_i$, under correspondence \eqref{eq:from_v_to_a}. One can check that standardization of matrices is compatible with the row coordinate parameterization, or in other words
\begin{equation} 
    \rc \circ \std (P) = \std \circ \rc (P)
\end{equation}
for all semi-standard tableaux $P$. An example of such commutation relation can be observed considering the semi-standard tableau $P$ on the left hand side of \eqref{eq:std_example}, whose row coordinate matrix $\alpha=\rc(P)$ was reported in \cref{ex:rc}. Then we see that both $\std(\alpha)$ and $\rc(\std(P))$ give as a result the array $\mathpzc{a}=(3,2,3,1,2)$.

\section{Skew $\RSK$ map and edge local rules} \label{sec:miscellaneous}

We revisit a combinatorial operation
introduced by Sagan and Stanley in \cite{sagan1990robinson}. In order to fully describe its properties we will present different formulations of this construction. 

\subsection{Skew $\RSK$ map of tableaux} \label{subs:RSK_product_tableaux}
In this subsection we define the skew $\RSK$ map, as the result of consecutive operations on pairs of tableaux $P,Q$. In case $P,Q$ share the same shape this is equivalent to the definition $\RSK=\iota_2^n$ given in the introduction, as proven in \cref{prop:RSK_from_n_int_ins} below.

Let $P$ be a semi-standard tableau of generalized shape $\lambda/ \rho$. A cell $(c,r) \in \lambda/\rho$ is a \emph{corner cell} if $(c-1,r),(c,r-1) \notin \lambda/\rho$. Consider an integer $r$ such that $P$ has a corner cell $(c,r)$ at row $r$. The internal insertion $\mathcal{R}_{[r]}$, first introduced in \cite{sagan1990robinson}, is the operation that constructs tableau $P'=\mathcal{R}_{[r]}(P)$ vacating cell $(c,r)$ of $P$ and inserting value $P(c,r)$ at row $r+1$ following Schensted's bumping algorithm. For this we first find $\overline{c} = \min\{k : P(k,r+1) > P(c,r) \}$. If $\overline{c}$ does not exist, we simply add a $P(c,r)$-cell at the right of row $r+1$ of $P$. Alternatively, if $\overline{c}$ exists, we assign cell $(\overline{c},r+1)$ label $P(c,r)$ and we insert, following the same mechanism $P(\overline{c},r+1)$ at row $r+2$. Eventually this algorithm stops and the result is the tableau $P'$. It could also happen that we try the internal insertion $\mathcal{R}_{[r]}$ at some row $r$ with only empty cells. In that case the result is a tableaux with an extra empty cell at row $r$.

The definition of the skew $\RSK$ map of tableaux is given below. 

\begin{definition}[Skew $\RSK$ map of tableaux] \label{def:RSK_product_tableaux}

Let $P \in SST(\lambda / \rho ,n)$, $Q \in SST(\mu / \rho ,m)$, for some generalized Young diagrams $\lambda,\mu,\rho$. Define the skew $\RSK$ map of $P,Q$
\begin{equation}
    \RSK(P,Q) = (P',Q') \in SST(\varkappa / \mu,n) \times SST(\varkappa / \lambda , m),
\end{equation}
    via the following algorithm. Set $P^{(0)} = P$ and $Q^{(0)} = \lambda/\lambda$, i.e. $Q^{(0)}$ has empty shape and external shape equal to $\lambda$. For $j=1,\dots,m$, let $r^{(j)}_1\ge \cdots \ge r^{(j)}_{k_j}$, be the row coordinates of all $j$-cells of $Q$ and define
    \begin{equation}
        P^{(j)} = \mathcal{R}_{[r^{(j)}_{k_j}]} \circ \cdots \circ \mathcal{R}_{[r^{(j)}_{1}]} (P^{(j-1)}).
    \end{equation}
    Then define $Q^{(j)}$ adding to $Q^{(j-1)}$ $j$-cells so that the external shape of $Q^{(j)}$ matches that of $P^{(j)}$. Finally set $P' = P^{(m)}$ and $Q'=Q^{(m)}$. Sometimes we will consider the skew $\RSK$ map between \emph{standard} tableaux $(P,Q)$ and in such case we may call this operation \emph{skew $\RS$ map}.
\end{definition}

The reader can check the definition of the skew $\RSK$ map of tableaux with the following example, where for simplicity we have taken $P,Q$ of equal shape
\begin{equation} \label{eq:RSK_prod_example}
    \left( 
    \,
    \begin{ytableau}
        \, & & & 2 \\ & & 1 & 3 \\ 1 & 2
    \end{ytableau}
    \,,\,
    \begin{ytableau}
        \, & & & 1 \\ & & 2 & 2 \\ 1 & 3
    \end{ytableau}
    \,
    \right)
    \xrightarrow[]{\RSK}
    \left( 
    \,
    \begin{ytableau}
        \, & & & \\ & & & \\ & & 2 \\ 1 & 1 & 3 \\ 2
    \end{ytableau}
    \,,\,
    \begin{ytableau}
        \, & & & \\ & & & \\ & & 1 \\ 1 & 2 & 2 \\ 3
    \end{ytableau}
    \,
    \right).
\end{equation}
We also report step by step calculations 
\begin{align*}
    \left(P^{(0)}, Q^{(0)} \right) 
    = 
    \left( \,\,
    \begin{ytableau}
        \, & & & 2 \\ & & 1 & 3 \\ 1 & 2
    \end{ytableau}
    \,\, , \,\,
    \begin{ytableau}
        \, & & & \\ & & & \\ &
    \end{ytableau}
    \,\,
    \right)
    \rightsquigarrow 
    \left(P^{(1)}, Q^{(1)} \right) 
    = 
    \left( \,\,
    \begin{ytableau}
        \, & & & \\ & & 1 & 2 \\ & 2 & 3 \\ 1
    \end{ytableau}
    \,\, , \,\, 
    \begin{ytableau}
        \, & & & \\ & & & \\ & & 1 \\ 1
    \end{ytableau}
    \,\, \right)
    \\
    \rightsquigarrow 
    \left(P^{(2)}, Q^{(2)} \right) 
    = 
    \left( \,\, 
    \begin{ytableau}
        \, & & & \\ & & & \\ & 1 & 2 \\ 1 & 2 & 3
    \end{ytableau}
    \,\, , \,\, 
    \begin{ytableau}
        \, & & & \\ & & & \\ & & 1 \\ 1 & 2 & 2
    \end{ytableau}
    \,\,
    \right)
    \rightsquigarrow 
    \left(P^{(3)}, Q^{(3)} \right) 
    = 
    \left( \,\,
    \begin{ytableau}
        \, & & & \\ & & & \\ & & 2 \\ 1 & 1 & 3 \\ 2
    \end{ytableau}
    \,\, , \,\,
    \begin{ytableau}
        \, & & & \\ & & & \\ & & 1 \\ 1 & 2 & 2 \\ 3
    \end{ytableau}
    \,\, \right)
\end{align*}
Additional examples are given in \cref{fig:RS_product} right panel and in \cref{subs:examples}.

\begin{remark}
The skew $\RSK$ map is essentially the same as the \emph{skew Knuth map} in \cite{sagan1990robinson}, with one difference. In \cite{sagan1990robinson} authors allowed the ``external" insertion of new cells, prescribed by a biword $\pi$, in the original pair of \emph{classical} tableaux $(P,Q)$, so that the skew Knuth map had the form $(P,Q;\pi) \mapsto(P',Q')$. In our case we don't consider external insertions, as we imagine that cells of the biword $\pi$ are already contained in generalized tableaux $(P,Q)$, although ``hidden" at non-positive rows. Therefore external insertions in the skew Knuth map correspond, in the skew $\RSK$ map, to cells that, following some internal insertion, move from row 0 to row 1.
\end{remark}

\medskip

Next we present a symmetry of the skew $\RSK$ map of tableaux that was proven in \cite{sagan1990robinson} and that will be useful in a number of cases.

\begin{proposition}[\cite{sagan1990robinson}, Theorem 3.3] \label{prop:RSK_swap_symmetry}
    If $\RSK(P,Q) = (P',Q')$, then $\RSK(Q,P) = (Q',P')$.
\end{proposition}

\Cref{prop:RSK_swap_symmetry} says that, like $P'$ and $P$, also the recording tableau $Q'$ is obtained from $Q$ following a number of internal insertion. This fact is not obvious from \cref{def:RSK_product_tableaux}. 

\medskip

The operation of standardization defined at the end of  \cref{subs:partitions_and_tableaux} is well behaved with respect to the skew $\RSK$ map, as stated in the next proposition. This was already observed in \cite{sagan1990robinson} and such natural reduction simplifies proofs of many statements.

\begin{proposition}
    We have $\std \circ \RSK (P,Q) = \RS \circ \std (P,Q)$.
\end{proposition}
\begin{proof}
    From \cref{def:RSK_product_tableaux} it is clear that if $(P',Q')=\RSK(P,Q)$, then 
    \begin{equation}
        \RSK(P, \std (Q)) = (P',\std(Q')).
    \end{equation}
    Combining this with symmetry of \cref{prop:RSK_swap_symmetry}, we conclude the proof.
\end{proof}

\subsection{Operations $\iota_1,\iota_2$: internal insertion with cycling}

Here we introduce two operations $\iota_1,\iota_2$ on pairs of tableaux sharing the same shape. They are the same as in the Introduction.

\begin{definition}[Internal insertion with cycling] \label{def:iota}
    Let $(P,Q)$ be a pair of semi-standard tableaux with same shape. Let $r_1\ge \cdots \ge r_k$ be the row-coordianates of all $1$-cells of $Q$. Define
    \begin{equation}
        \iota_2(P,Q) = (P', Q')
    \end{equation}
    where $P' = \mathcal{R}_{[r_k]} \cdots \mathcal{R}_{[r_1]}(P)$ and $Q'$ is obtained from $Q$ vacating first all $1$-cells, subtracting 1 from the remaining entries and finally adding $n$-cells so that the final shape equals that of $P'$.  For an example, see \eqref{ex_iota2}.
    We also define $\iota_1=\mathrm{swap} \circ \iota_2 \circ \mathrm{swap}$, where $\mathrm{swap}(x,y)=(y,x)$.
\end{definition}

The next proposition states that both  $\iota_1$ and $\iota_2$ represent refinements 
of the skew $\RSK$ map and that the definition of the skew $\RSK$ map of tableaux given in  \cref{def:RSK_product_tableaux} is consistent 
with the one given in \cref{subs:examples},
under the assumption that $P,Q$ have same shape.

\begin{proposition} \label{prop:RSK_from_n_int_ins}
    For $P,Q\in SST(\lambda / \rho ,n)$, we have
    \begin{equation}
        \iota_1^n(P,Q) = \RSK(P,Q) = \iota_2^n (P,Q).
    \end{equation}
\end{proposition}
\begin{proof}
    Denote $(\tilde{P}^{(j)},\tilde{Q}^{(j)}) = \iota_2^j(P,Q)$ for $1\leq j\leq n$.     
    Comparing \cref{def:RSK_product_tableaux}
    and \cref{def:iota}, it is easy to see that $P^{(j)} = \tilde{P}^{(j)}$ and that, for all $k=1,\dots,j$, $k$-cells of $Q^{(j)}$ correspond to $(k+n-j)$-cells in $\tilde{Q}^{(j)}$.
    Taking $j=n$ 
    proves $\iota_2^n = \RSK$. The complementary statement $\iota_1^n = \RSK$ follows now from the swap symmetry of \cref{prop:RSK_swap_symmetry}.
\end{proof}


\begin{proposition} \label{prop:iota_std}
    Let $P,Q\in SST(\lambda/\rho,n)$. Recalling the content $\gamma$, define $N_\epsilon = \gamma_1(P)$ if $\epsilon=1$ or $N_\epsilon = \gamma_1(Q)$ if $\epsilon=2$. Then $\std \circ \iota_\epsilon (P,Q) = \iota_\epsilon^{N_\epsilon} \circ \std (P,Q)$.
\end{proposition}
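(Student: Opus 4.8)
The plan is to prove the statement for $\epsilon=2$ and then deduce $\epsilon=1$ from the defining relation $\iota_1=\mathrm{swap}\circ\iota_2\circ\mathrm{swap}$. For the reduction, note that standardization acts componentwise, $\std(P,Q)=(\std(P),\std(Q))$, so it commutes with $\mathrm{swap}$. Writing $\iota_1(P,Q)=\mathrm{swap}\,\iota_2(Q,P)$ and observing that the count relevant for $\iota_2(Q,P)$ is the number of $1$-cells of its \emph{second} argument $P$, i.e.\ $N_1=\gamma_1(P)$, the $\epsilon=2$ identity applied to $(Q,P)$ gives $\std\,\iota_2(Q,P)=\iota_2^{N_1}\std(Q,P)$. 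Conjugating by $\mathrm{swap}$ and using $\iota_1^{N_1}=\mathrm{swap}\,\iota_2^{N_1}\,\mathrm{swap}$ yields $\std\,\iota_1(P,Q)=\iota_1^{N_1}\std(P,Q)$, as required.

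For $\epsilon=2$ the key is an ordering lemma: in a (generalized) semistandard tableau $Q$ the $1$-cells occupy pairwise distinct columns and, listed by increasing column, their rows are weakly decreasing. Indeed a $1$-cell at $(c,r)$ forces $(c,r-1)$ to be empty, so $r=\rho'_c+1$ where $\rho'_c$ is the height of column $c$ of the empty shape $\rho$; since $\rho'$ is weakly decreasing, the claim follows. Consequently standardization, which relabels equal entries from the leftmost to the right (that is, by increasing column), attaches the labels $1,2,\dots,N_2$ to the $1$-cells in exactly the order $r_1\ge r_2\ge\cdots\ge r_k$, $k=N_2$, in which \cref{def:iota} performs the internal insertions $\mathcal{R}_{[r_1]},\dots,\mathcal{R}_{[r_k]}$ on $P$ (bottom-to-top, and left-to-right inside a fixed row).

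Next I would compare the two processes step by step. On the first component a single $\iota_2$ applies $\mathcal{R}_{[r_k]}\cdots\mathcal{R}_{[r_1]}$ to $P$, while $\iota_2^{N_2}$ applied to $\std(P,Q)$ applies, at its $i$-th step, the internal insertion at the row of the current $1$-cell. Since relabelings of the recording tableau do not move cells and the labels $1,\dots,N_2$ sit precisely on the former $1$-cells in the order above, this is the same sequence $\mathcal{R}_{[r_1]},\dots,\mathcal{R}_{[r_k]}$, now applied to $\std(P)$. Because internal insertion is Schensted bumping, whose route depends only on the relative order of entries together with the left-to-right tie-breaking built into $\std$, the first components agree up to standardization, which is the compatibility of $\std$ with bumping already exploited in \cref{subs:standardization}. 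On the recording side I would then track the bookkeeping: over the $N_2$ steps the former $1$-cells are vacated one at a time, every surviving entry is decremented a total of $N_2$ times, and $N_2$ fresh maximal-label cells are appended to match the evolving shape; this is exactly $\std$ of the one-shot update of $Q$ in \cref{def:iota}, where all $1$-cells are vacated at once, the remaining entries are lowered by $1$, and $\gamma_1(Q)$ new $n$-cells are appended.

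The main obstacle is this last, recording-tableau bookkeeping step: one must verify that interleaving the vacate/decrement/append operation $N_2$ times with the insertions reproduces precisely $\std(Q')$. The delicate points are that no former $2$-cell is promoted to a $1$-cell before all $N_2$ genuine $1$-cells have been processed (which is guaranteed by the distinctness of the standardized labels) and that the recording shape stays synchronized with the first component at every intermediate step. By contrast the insertion side is essentially immediate once the ordering lemma identifies the two sequences of internal insertions.
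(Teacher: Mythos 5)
Your argument is correct and follows the same route as the paper: the paper's entire proof of this proposition is the single sentence that it "follows from the sequential definition of $\iota_1,\iota_2$ given in \cref{def:iota}", and your write-up simply supplies the details that sentence leaves implicit (the ordering lemma for the $1$-cells, the compatibility of internal insertion with standardization, the swap reduction for $\epsilon=1$, and the recording-tableau bookkeeping). The point you flag as the remaining obstacle — that the newly created maximal cells appear in left-to-right order so the iterated relabeling reproduces $\std(Q')$ — is the standard bumping-path fact already implicit in \cref{def:RSK_product_tableaux}, so there is no real gap.
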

\begin{proof}
    This follows from the sequential definition of $\iota_1,\iota_2$ given in \cref{def:iota}.
\end{proof}

\begin{proposition} \label{prop:iota_preserves_ker}
    Let $P,Q\in SST(\lambda/\rho,n)$ for some $\lambda,\rho \in \mathbb{Y}$ and define $(\tilde{P},\tilde{Q})=\iota_\epsilon(P,Q)$ for $\epsilon$ being either $1$ or $2$. Then $\ker(P,Q)=\ker(\tilde{P},\tilde{Q})$.
\end{proposition}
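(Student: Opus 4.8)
The plan is to strip away two symmetries, reduce to a single internal insertion on a standard pair, and then verify the invariance of $\ker$ through a local analysis of the overlap formula \eqref{eq:nu_PQ}.

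First I would eliminate the two nuisances. Because the definition of $\ker(P,Q)$ shifts the first $j$ rows of $P$ and $Q$ simultaneously, it is symmetric, $\ker(P,Q)=\ker(Q,P)$; together with $\iota_1=\mathrm{swap}\circ\iota_2\circ\mathrm{swap}$ this reduces the case $\epsilon=1$ to the case $\epsilon=2$, so I treat only $\iota_2$. Next, \cref{prop:iota_std} gives $\std\circ\iota_2(P,Q)=\iota_2^{\,N_2}\circ\std(P,Q)$, so it is enough to prove (a) that standardization does not change the kernel, $\ker(\std P,\std Q)=\ker(P,Q)$, and (b) the statement for a standard pair. Claim (a) follows from \eqref{eq:nu_PQ} and the identity $\nu_j-\nu_{j+1}=\min\{\varkappa_j-\varkappa_{j+1},\kappa_j-\kappa_{j+1}\}$: standardization preserves the shape $\lambda/\rho$, hence the terms $\rho_j-\lambda_{j+1}$, and it preserves each $\overlap(p^{(j)},p^{(j+1)})$ because it is order preserving and, by the left-to-right reading convention of \cref{subs:standardization}, breaks ties between equal entries of consecutive rows consistently with the strict inequalities that $\overlap$ tests.

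For a standard pair $Q$ has a single $1$-cell, necessarily at a corner, say at row $r$; thus $\iota_2$ performs one internal insertion $P'=\mathcal{R}_{[r]}(P)$ and cycles $Q$ by deleting that corner, lowering all labels by $1$, and creating one new maximal label at the cell produced at the bottom $r'$ of the bumping cascade. The effect on the shape is sharp: the cascade enlarges the empty shape at row $r$ ($\rho_r\mapsto\rho_r+1$) and appends one box at row $r'$ ($\lambda_{r'}\mapsto\lambda_{r'}+1$), while the rows strictly between $r$ and $r'$ only have entries bumped and keep their lengths. Writing $\nu_j-\nu_{j+1}=(\rho_j-\lambda_{j+1})+\min\{\overlap(p^{(j)},p^{(j+1)}),\overlap(q^{(j)},q^{(j+1)})\}$, I would then localize the problem: the $Q$-rows change only by a global decrement of all labels, which leaves every $\overlap(q^{(j)},q^{(j+1)})$ invariant, plus the displacement of a single cell from row $r$ to row $r'$; and the $P$-rows are untouched outside the cascade. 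Hence for every $j$ outside the finite window $\{r-1,\dots,r'\}$ both the shape term and the two overlaps are unchanged, and $\nu_j-\nu_{j+1}$ is automatically preserved there.

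The heart of the argument, and the main obstacle, is the window $\{r-1,\dots,r'\}$. Here the shape term changes only at the two edges of the cascade — it increases by $1$ at $j=r$ (from $\rho_r$) and decreases by $1$ at $j=r'-1$ (from $\lambda_{r'}$), with the obvious modification when $r'=r+1$ — so the claim is equivalent to showing that the active minimum $\min\{\overlap(p^{(j)},p^{(j+1)}),\overlap(q^{(j)},q^{(j+1)})\}$ compensates exactly these jumps and is unchanged at the interior indices. I would verify this by following the cascade step by step: each bump replaces an entry of some row by a strictly smaller one and pushes the ejected entry one row down, and I would track how the top-left/bottom-right alignment measured by $\overlap$ is rigidly transported along the chain of bumps, the delicate point being to confirm that whenever the $P$-overlap is the active one its change matches the shape jump, and likewise for the $Q$-overlap. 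The conceptual reason that this must work is that $\iota_2$ is equivariant under \emph{un-squeezing}: sliding the top rows outward within the available slack alters neither the corner that is vacated nor the value comparisons that drive the bumping, so $\iota_2$ can neither create nor absorb slack. Making this equivariance precise is an alternative way to close the window computation and finish the proof.
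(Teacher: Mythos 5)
Your reduction steps are exactly the paper's: use $\iota_1=\mathrm{swap}\circ\iota_2\circ\mathrm{swap}$ and the swap-invariance of $\ker$ to treat only $\epsilon=2$, then use \cref{prop:iota_std} to reduce to a standard pair, so that $\iota_2$ becomes a single internal insertion $\mathcal{R}_{[r]}$ together with the cycling of $Q$. Your bookkeeping of where the shape term $\rho_j-\lambda_{j+1}$ and the $Q$-overlaps can change (only at $j=r$ and $j=\bar r-1$, via formula \eqref{eq:iota_ker_claim_2} in the paper) is also correct. But the proof stops precisely where the work begins. The statement you must establish is \eqref{eq:iota_ker_claim_1}, an identity for the \emph{minimum} of the two overlaps, and you only assert that you ``would verify this by following the cascade step by step,'' flagging the delicate point yourself. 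This is not a routine verification: the paper devotes a seven-case analysis (parametrized by the position of $j$ relative to $r$ and $\bar r$, using the notion of a blocking pair $(x,y)$ of depth $L$) to it, and in at least two of those cases the $P$-overlap genuinely fails to compensate the shape jump on its own --- e.g.\ at $j=r-1$ when $y=0$ (so $L=\ell(b)$) and at $j=\bar r$ when $\ell(a)=L$ --- and the identity for the minimum is rescued only because one separately shows the $Q$-overlap is the smaller of the two there, using $1\in q^{(r)}$ and $\ell(q^{(\bar r)})\ge\overlap(q^{(\bar r)},q^{(\bar r+1)})$. Your sketch does not anticipate this interplay between the two overlaps, and without it the claim ``the active minimum compensates exactly these jumps'' is unproven.

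The fallback you offer --- equivariance of $\iota_2$ under un-squeezing --- is an appealing alternative route, but as stated it is only a heuristic. To make it a proof you would need to show that if $(P,Q)$ can be simultaneously shifted left by $k$ boxes in the first $j$ rows, then so can $\iota_2(P,Q)$, \emph{and} the converse (so that $\iota_2$ neither creates nor absorbs slack); the converse direction is the nontrivial one, since after the insertion the cell created at row $\bar r$ changes which shifts are admissible, and one would essentially have to redo the same local comparison of rows $r-1$ through $\bar r$ to rule out new slack appearing there. So either way the window computation cannot be avoided, and as written the proposal has a genuine gap at that step.
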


\begin{proof}
    We will only prove our claim for pairs of standard tableaux $P,Q$ and for $\epsilon=2$. This implies the more general case with pairs of semi-standard tableaux by \cref{prop:iota_std}. On the other hand the case $\epsilon=1$ follows by the $\epsilon=2$ case since the kernel of a pair of tableaux is invariant under swap, i.e. $\ker(P,Q)=\ker(Q,P)$.
    
    For any $j$ let $p^{(j)}, q^{(j)}, \tilde{p}^{(j)}, \tilde{q}^{(j)}$ be weakly increasing words recording respectively the $j$-th rows of $P, Q, \tilde{P}, \tilde{Q}$. Assume that the $1$-cell of $Q$ lies at row $r$ and that the $n$-cell of $\tilde{Q}$ lies at row $\bar{r}$, i.e. $1\in p^{(r)}$ and $n\in \tilde{p}^{(\bar{r})}$. Then we have $\tilde{P}=\mathcal{R}_{[r]}(P)$ and during the internal insertion a new cell gets created at row $\bar{r}$. Let $\nu=\ker(P,Q)$ and $\tilde{\nu} = \ker(\tilde{P},\tilde{Q})$. We aim to show that for each $j \ge 1$ we have $\nu_j - \nu_{j+1} = \tilde{\nu}_j - \tilde{\nu}_{j+1}$. For this we use the explicit expression of the kernel of a pair of tableaux discussed in \cref{subs:kernels} and we have
    \begin{equation}
    \begin{split}
        \nu_j-\nu_{j+1} &= \rho_j - \lambda_{j+1} + \overlap(p^{(j)},p^{(j+1)}) \wedge \overlap(q^{(j)},q^{(j+1)}) 
        \\
        \tilde{\nu}_j-\tilde{\nu}_{j+1} &= \tilde{\rho}_j - \tilde{\lambda}_{j+1} + \overlap(\tilde{p}^{(j)},\tilde{p}^{(j+1)}) \wedge \overlap(\tilde{q}^{(j)},\tilde{q}^{(j+1)}),
    \end{split}
    \end{equation}
    where $\tilde{\lambda} / \tilde{\rho}$ is the the shape of $\tilde{P}, \tilde{Q}$. Since
    \begin{equation}
        \tilde{\rho}_j = \rho_j + \delta_{j,r}
        \qquad
        \text{and}
        \qquad
        \tilde{\lambda}_j = \lambda_j + \delta_{j,\bar{r}},
    \end{equation}
    to prove our proposition we need to show that
    \begin{equation} \label{eq:iota_ker_claim_1}
        \overlap(\tilde{p}^{(j)},\tilde{p}^{(j+1)}) \wedge \overlap(\tilde{q}^{(j)},\tilde{q}^{(j+1)} ) = \overlap(p^{(j)},p^{(j+1)}) \wedge \overlap(q^{(j)},q^{(j+1)}) - \delta_{j,r} + \delta_{j,\bar{r}-1}.
    \end{equation}
    We start by comparing overlaps between rows of $Q$ and $\tilde{Q}$. We find that
    \begin{equation} \label{eq:iota_ker_claim_2}
        \overlap(\tilde{q}^{(j)},\tilde{q}^{(j+1)}) = \overlap(q^{(j)},q^{(j+1)}) - \mathbf{1}_{1\in q^{(j)}} + \mathbf{1}_{n \in \tilde{q}^{(j+1)}},
    \end{equation}
    which follows from a simple inspection of cycling of letters in the $Q$ tableau and we only need to take care of rows where a cell is vacated or created. The comparison between $\overlap(p^{(j)}, p^{(j+1)})$ and $\overlap(\tilde{p}^{(j)}, \tilde{p}^{(j+1)})$ is more laborious and we need to check for all different choices of $j$. To simplify our notation we set $a=p^{(j)}$, $b=p^{(j+1)}$, $\tilde{a}=\tilde{p}^{(j)}$, $\tilde{b}=\tilde{p}^{(j+1)}$. If $L=\overlap(a,b)$ then $a_i<b_{\ell(b) - L +i}$ for all $i=1,\dots,L$ and there exists $x$ such that $a_x > b_{\ell(b) - L +x-1}$. Assume that $x$ is the smallest of such indices and call $y=\ell(b) - L +x-1$, as in \cref{fig:two_rows}. We call $(x,y)$ a \emph{blocking pair of depth $L$}. It is clear that the existence of a blocking pair of depth $L$ is equivalent to say that $\overlap(a,b)=L$. Notice that such notation also covers extremal cases when $L=\ell(a)$, and $L=\ell(b)$ where we set respectively $x=\ell(a)+1$ and $y=0$. Let us now confirm \eqref{eq:iota_ker_claim_1} for all cases. 
    \begin{figure}
        \centering
        \includegraphics{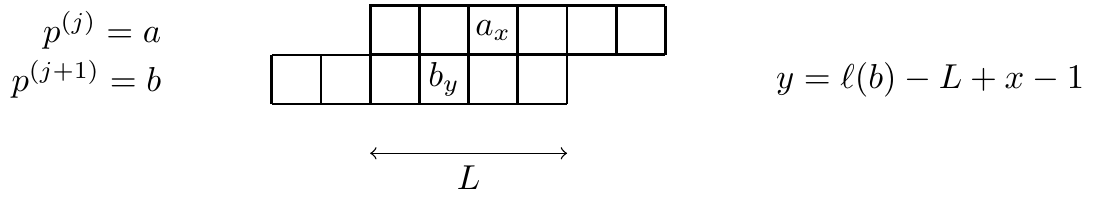}
        \caption{Notation used in the proof of \cref{prop:iota_preserves_ker}. Here $L=\overlap(a,b)$ and $a_x>b_y$ so that $x$ and $y$ form a blocking pair of depth $L$.}
        \label{fig:two_rows}
    \end{figure}
    \begin{enumerate}
        \item[$j<r-1$.] In this case $a=\tilde{a}, b=\tilde{b}$ and by \eqref{eq:iota_ker_claim_2}, \eqref{eq:iota_ker_claim_1} holds.
        \item[$j=r-1$.] Here we have $\tilde{a}=a$ and $\tilde{b}=b_2 \cdots b_{\ell(b)}$. We set $\tilde{x}=x$ and $\tilde{y} = y-1$ and this is a blocking pair of depth $L$ for $\tilde{a},\tilde{b}$, whenever $y \neq 0$. If on the other hand $y=0$, we have $L=\ell(b)$ and since $1 \in q^{(r)}$, then $L>\overlap(q^{(r-1)},q^{(r)}) = \overlap(\tilde{q}^{(r-1)},\tilde{q}^{(r)})$, by \eqref{eq:iota_ker_claim_2}. In both cases \eqref{eq:iota_ker_claim_1} holds.
        \item[$j=r$.] In this case $\tilde{a}=a_2 \dots a_{\ell(a)}$ and $\tilde{b}=b_1 \cdots b_{\bar{k}} \, a_1 \, b_{\bar{k}+1} \cdots b_{\ell(b)}$ for an index $\bar{k} \in \{1,\dots,\ell(b)\}$. If $\bar{k}=\ell(b)$, then and $\overlap(\tilde{a},\tilde{b})=L=0$ and by \eqref{eq:iota_ker_claim_2}, \eqref{eq:iota_ker_claim_1} holds. Assume now that $\bar{k}<\ell(b)$. If $x=1$, then necessarily $\bar{k} = y$ and $\tilde{x}=1$, $\tilde{y}=y+1$ is a blocking pair of depth $L-1$ for $\tilde{a},\tilde{b}$. If on the other hand $x\neq 1$, then $\bar{k} \le y$ and in such case we set $\tilde{x}=x-1, \tilde{y}=y$, which again is a blocking pair of depth $L-1$ for $\tilde{a},\tilde{b}$. Overall we have shown that $\overlap(\tilde{a},\tilde{b}) = \overlap(a,b) -1 +\delta_{\bar{k} , \ell(b)}$, which confirms \eqref{eq:iota_ker_claim_1}.
        \item[$r<j<\bar{r}-1$.] We have $\tilde{a}=a_1 \cdots a_{\bar{m}-1} \, z \, a_{\bar{m}+1} \cdots a_{\ell(a)}$ for some letter $z$ and some index $\bar{m}$. Similarly we have $\tilde{b}=b_1 \cdots b_{\bar{k}} \, a_{\bar{m}} \, b_{\bar{k}+1} \cdots b_{\ell(b)}$ for an index $\bar{k} \in \{1,\dots,\ell(b)\}$. If $\bar{m}>x$ then $\bar{k} \ge y$ and we set $\tilde{x}=x,\tilde{y}=y$. If $\bar{m}=x$, then $\bar{k}=y$ and we set $\tilde{x}=x+1,\tilde{y}=y+1$. Lastly, if $\bar{m}<x$ necessarily $\tilde{b}_y < \tilde{a}_x$ and we set $\tilde{x}=x,\tilde{y}=y$. In all cases $\tilde{x},\tilde{y}$ form a blocking pair of depth $L$ for $\tilde{a},\tilde{b}$, confirming \eqref{eq:iota_ker_claim_1}.
        \item[$r<j=\bar{r}-1$.] Observe that the case $r=j=\bar{r}-1$ was already treated above. We have $\tilde{a}=a_1 \cdots a_{\bar{m}-1} \, z \, a_{\bar{m}+1} \cdots a_{\ell(a)}$ for some letter $z$ and some index $\bar{m}$ and $\tilde{b}=b_1\cdots b_{\ell(b)} \, a_{\bar{m}}$. Since $a_{\bar{m}}>b_k$ for all $k$ we necessarily have $\bar{m}>L$ and $\tilde{x}=x,\tilde{y}=y$ becomes a blocking pair of depth $L+1$ for $\tilde{a},\tilde{b}$. Hence, from \eqref{eq:iota_ker_claim_2}, \eqref{eq:iota_ker_claim_1} holds.
        \item[$j=\bar{r}$.] In this case $\tilde{a}=a_1\cdots a_{\ell(a)} z$ for some letter $z$ greater than all entries of $a$ and $\tilde{b}=b$. If $\ell(a)<L$ then necessarily $\overlap(\tilde{a},\tilde{b}) = \overlap(a,b)$, which implies \eqref{eq:iota_ker_claim_2}. If on the other hand $\ell(a) = L$, then it could happen that $\overlap(\tilde{a},\tilde{b}) =L+1$, if entries of $a$ are small and $z<b_{\ell(b)}$. Nevertheless, since 
        \begin{equation}
            \ell(a) = \ell(q^{(\bar{r})}) \ge \overlap(q^{(\bar{r})}, q^{(\bar{r}+1)}) = \overlap( \tilde{q}^{(\bar{r})}, \tilde{q}^{(\bar{r}+1)}),
        \end{equation}
        we always have $\tilde{q}^{(\bar{r})}, \tilde{q}^{(\bar{r}+1)}) \le L$ and \eqref{eq:iota_ker_claim_1} holds.
        \item[$j>\bar{r}$.] Here $\tilde{a}=a$ and $\tilde{b}= b$ so that $\eqref{eq:iota_ker_claim_1}$ trivially holds.
    \end{enumerate}
    The previous list of checks exhausts all the cases and completes the proof.
\end{proof}

\subsection{The skew RS map of arrays} \label{subs:RS_product}

In this subsection we introduce the skew $\RS$ map of pairs of arrays, following  \cite{viennot_edge_rules}. Due to the correspondence between arrays and standard tableaux, it provides a diagrammatic realization of the skew $\RS$ map for standard tableaux, as will be seen in \cref{cor:RS_rc}.
It represents a reformulation of the shadow line construction by Viennot \cite{Viennot_une_forme_geometrique},
as we see now. 

\begin{definition}[Shadow line construction] \label{def:shadow_line}
Let $\mathpzc{a} \in \mathbb{Z}^n,\mathpzc{b} \in \mathbb{Z}^m$ be a pair of arrays and $\Lambda_{m,n}=\{1,\dots, m\} \times \{ 1,\dots , n\}$ be a finite rectangular lattice.
From the left edge of each cell $(1,i)$ for $i=1,\ldots,n$ (resp. bottom edge of each cell $(j,1)$ for $j=1,\ldots, m$), start drawing a line of color $\mathpzc{a}_{\,i}$ to the right (resp. of color $\mathpzc{b}_{\,j}$ to the top). 
We draw the line configuration until every cell of $\Lambda_{m,n}$ is crossed both vertically and horizontally, using the following rules. Lines of different colors cross each other, while if two lines of the same color $C$ meet, then from the intersection point they will proceed in their rightward and upward run with their color upgraded to $C+1$. The ensemble of lines on $\Lambda_{m,n}$ generated with this procedure is called \emph{shadow line construction}.

\end{definition}

\begin{figure}[ht]
    \centering
    \includegraphics{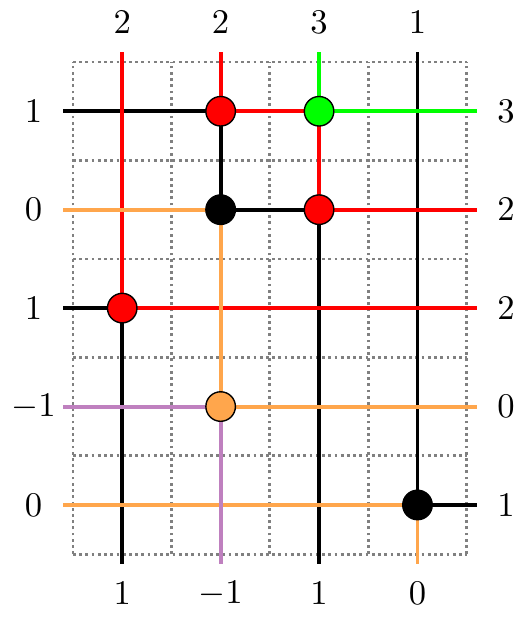}
    \caption{
    The shadow line construction for $\mathpzc{a}=(0,-1,1,0,1)$ and $\mathpzc{b}=(1,-1,1,0)$, equivalent to the skew $\RS$ map in the left panel of \cref{fig:RS_product}
    }
    \label{fig:shadow_lines_example}
\end{figure}

For an example of a shadow line construction, see \cref{fig:shadow_lines_example}. There in correspondence to intersection points of lines with the same color $C$ we drew bullets of color $C+1$.
The rules of assigning colors to lines in this construction can be translated into local rules of configurations on edges of the lattice \cite{Fomin1986,viennot_edge_rules}. 

\begin{definition}[$\mathbb{Z}$-valued edge configurations] \label{def:Z_edge_config}
    On a planar lattice $\Lambda \subseteq \mathbb{Z}\times \mathbb{Z}$, \emph{$\mathbb{Z}$-valued edge configurations} $\mathcal{E}$ are quadruples of functions $(\mathsf{W},\mathsf{S},\mathsf{E},\mathsf{N}):\Lambda \to \mathbb{Z}$ such that $\mathsf{E}(c) = \mathsf{W}(c+\mathbf{e}_1)$ and $\mathsf{N}(c) = \mathsf{S}(c+\mathbf{e}_2)$ for all points $c\in \Lambda$ where these conditions make sense. We say that $\mathcal{E}$ is \emph{admissible} if it satisfies the \emph{local rules}
\begin{equation} \label{eq:local_rules}
    \begin{minipage}{.9\linewidth}
        \begin{enumerate}
            \item $\mathsf{E}(c) = \mathsf{W}(c)$  and  $\mathsf{N}(c) = \mathsf{S}(c)$, if $\mathsf{W}(c) \ne \mathsf{S}(c)$;
            \item $\mathsf{E}(c) = \mathsf{N}(c) = \mathsf{S}(c)+1$, if $\mathsf{S}(c)=\mathsf{W}(c)$. 
        \end{enumerate}
    \end{minipage}
\end{equation}
\end{definition}

In this language we define the skew 
$\RS$ map of arrays as follows. 

\begin{definition}[Skew $\RS$ map of arrays] \label{def:RS}
    Let $\mathpzc{a} \in \mathbb{Z}^n,\mathpzc{b} \in \mathbb{Z}^m$ be a pair of arrays. Consider the unique admissible edge configuration on $\Lambda_{m,n}$ with $\mathsf{W}(1,i)=\mathpzc{a}_{\,i}$, $\mathsf{S}(j,1)=\mathpzc{b}_{\,j}$ for all $i=1,\dots,n$, $j=1,\dots,m$. The skew $\RS$ map  of $\mathpzc{a},\mathpzc{b}$ is the pair $(\mathpzc{a}',\mathpzc{b}')=\RS(\mathpzc{a},\mathpzc{b})$ given by $\mathpzc{a}_{\, i}'=\mathsf{E}(m,i)$ and $\mathpzc{b}_{\, j}'=\mathsf{N}(j,n)$ for $i=1,\dots,n$, $j=1,\dots,m$.
\end{definition}

\begin{figure}[ht]
    \centering
    \includegraphics[scale=.9]{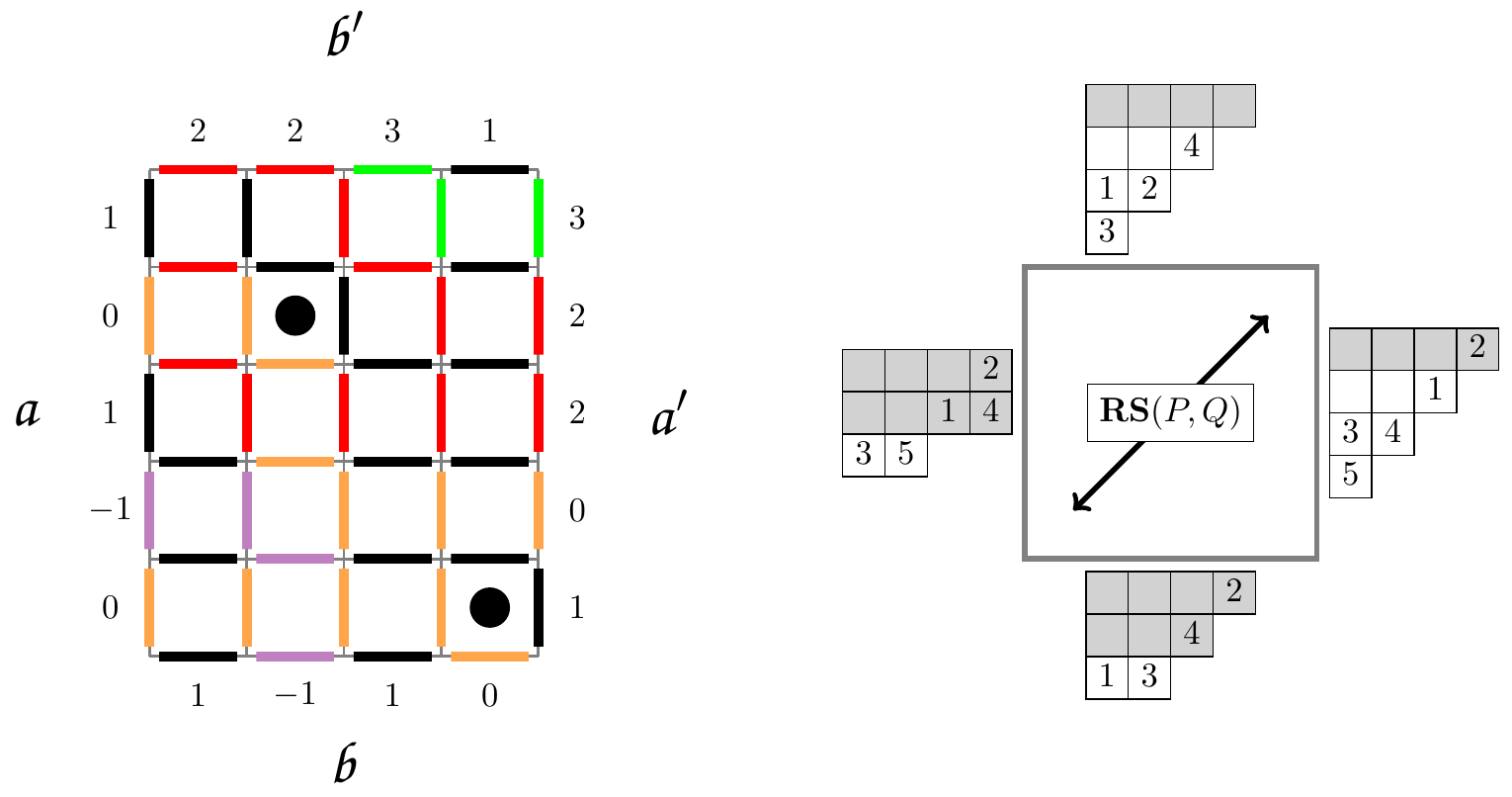}
    \caption{In the left panel we see the graphical representation of the skew $\RS$ map  $(\mathpzc{a}',\mathpzc{b}')=\RS(\mathpzc{a},\mathpzc{b})$ between $\mathpzc{a}=(0,-1,1,0,1) \in \mathbb{Z}^5$, $\mathpzc{b}=(1,-1,1,0) \in \mathbb{Z}^4$. We have colored each edge of the grid based on its value .
    Black bullets denote faces where north and east edges take simultaneously value 1 defining partial permutation $\pi^{(1)}=\left( \begin{smallmatrix} 2 & 4 \\ 4 & 1 \end{smallmatrix} \right)$.
    \\
    \noindent
    In the right panel we reported on the left and bottom sides the tableaux $(P,Q)$ and on the right and top sides tableaux $(P',Q') = \RS(P,Q)$. One can check that $\rc(P,Q)=(\mathpzc{a},\mathpzc{b})$ and $\rc(P',Q')=(\mathpzc{a}',\mathpzc{b}')$. 
    }
    \label{fig:RS_product}
\end{figure}

For an example of edge configurations 
and an evaluation of the skew $\RS$ map of two arrays, corresponding to 
\cref{fig:shadow_lines_example}, 
see \cref{fig:RS_product} left panel.
Note that positions $(j,i)$ of bullets of 
color $k$ are determined by the condition 
$\mathsf{N}(j,i) = \mathsf{W}(j,i) =k$. We will encode the positions $(j_1,i_1),(j_2,i_2),\dots$ of bullets of color $k$ in the partial permutation 
\begin{equation} \label{eq:pi_k}
    \pi^{(k)} = \left( \begin{matrix} j_1 & j_2 & \cdots \\ i_1 & i_2 & \cdots \end{matrix} \right).
\end{equation}

Using edge configurations we also define operators of $\iota_1,\iota_2$ on pairs of arrays.

\begin{definition}[$\iota_1,\iota_2$ on arrays] \label{def:iota_arrays}
    Let $(\mathpzc{a},\mathpzc{b}) \in \mathcal{Z}_n$ be a pair of arrays that are permutations of each other and consider on $\Lambda_{n,n}$ the unique admissible edge configuration with $\mathsf{W}(1,i)=\mathpzc{a}_{\,i},\mathsf{S}(i,1)=\mathpzc{b}_{\,i}$ for all $i=1,\dots,n$. Define  $\iota_2(\mathpzc{a},\mathpzc{b})=(\tilde{\mathpzc{a}}, \tilde{\mathpzc{b}})$ setting
    \begin{equation} \label{eq:iota_2_arrays}
        \tilde{\mathpzc{a}}_{\,i} = \mathsf{W}(2,i) \qquad \text{for } i=1\dots,n
        \qquad 
        \text{and}
        \qquad
        \tilde{\mathpzc{b}}_{\,i} = \begin{cases} \mathsf{S}(i+1,1) \quad &\text{if } i=1,\dots,n-1,
       \\
       \mathsf{N}(1,n) \quad &\text{if } i=n.
       \end{cases}
    \end{equation}
    Analogously define $\iota_1(\mathpzc{a},\mathpzc{b}) = \mathrm{swap}\circ \iota_2 \circ \mathrm{swap} (\mathpzc{a},\mathpzc{b})$.
\end{definition}

Comparing \ref{def:RS} and \ref{def:iota_arrays}, we see 
that $\RS=\iota_1^n=\iota_2^n$
holds as operations on pairs of arrays. 

The next proposition shows that $\iota_1,\iota_2$, both as operations on pairs of standard tableaux and on pairs of arrays, coincide, modulo row coordinate parameterization.

\begin{proposition} \label{prop:iota_rc_commute}
    Let $P,Q \in ST(\lambda/\rho)$ and $(P,Q) \xleftrightarrow[]{\rc \,} (\mathpzc{a},\mathpzc{b};\nu)$. Then $\iota_\epsilon (P,Q) \xleftrightarrow[]{\rc \,} (\iota_\epsilon (\mathpzc{a},\mathpzc{b});\nu)$, for both $\epsilon=1,2$.
\end{proposition}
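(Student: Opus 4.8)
The plan is to prove the statement for $\epsilon=2$ and to deduce $\epsilon=1$ formally. Since $\rc(P,Q)=(\rc(P),\rc(Q))$ intertwines $\mathrm{swap}$ with the swap of arrays, since $\ker$ is swap-invariant, and since $\iota_1=\mathrm{swap}\circ\iota_2\circ\mathrm{swap}$ holds both on tableaux (\cref{def:iota}) and on arrays (\cref{def:iota_arrays}), the $\epsilon=1$ case is immediate once $\epsilon=2$ is settled. For $\epsilon=2$ the $\nu$-component needs nothing new: \cref{prop:iota_preserves_ker} already gives $\ker(\iota_2(P,Q))=\ker(P,Q)=\nu$. So everything reduces to checking that the pair of row-coordinate arrays transforms correctly, i.e. that $\rc(\iota_2(P,Q))=\iota_2(\rc(P),\rc(Q))$ with the right-hand $\iota_2$ computed by the edge configuration of \cref{def:iota_arrays} on $\Lambda_{n,n}$, where $n=|\lambda/\rho|$ is the common length of the arrays.

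The core is then a direct comparison of the first column of faces with a single internal insertion. Writing $(\mathpzc{a},\mathpzc{b})=(\rc(P),\rc(Q))$, the $1$-cell of $Q$ sits at row $r=\mathpzc{b}_{\,1}$, and because $1$ is minimal in the standard tableau $Q$ this cell is automatically a corner, so $\iota_2$ performs the single insertion $P'=\mathcal{R}_{[r]}(P)$. I would read the first column of faces from bottom to top as a single ``carry'' value $v$, initialized to $\mathsf{S}(1,1)=\mathpzc{b}_{\,1}=r$: at face $(1,i)$ the west input is $\mathsf{W}(1,i)=\mathpzc{a}_{\,i}$, the row of the $i$-cell of $P$, and the local rules \eqref{eq:local_rules} pass $v$ and the output $\tilde{\mathpzc{a}}_{\,i}=\mathsf{E}(1,i)=\mathpzc{a}_{\,i}$ unchanged unless $\mathpzc{a}_{\,i}=v$, in which case both are upgraded to $v+1$. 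I would match this carry against the bumping chain of $\mathcal{R}_{[r]}$: since faces are scanned in increasing order of the entry $i$, the carry first meets the smallest value at row $r$, which is exactly the extracted corner value $v_0=P(c,r)$, raising its row to $r+1$; the defining property of Schensted insertion (bump the smallest entry exceeding the inserted one) then identifies the next match with the smallest value of row $r+1$ exceeding $v_0$, namely the bumped value $v_1$, and so on along $v_0<v_1<\cdots<v_{m-1}$, each moving from row $r+j$ to $r+j+1$. When $v_{m-1}$ is finally appended, creating the new cell at row $\bar r=r+m$, there is no entry exceeding it in that row, so the carry finds no further match and exits with $\mathsf{N}(1,n)=\bar r$.

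This shows simultaneously that $\mathsf{E}(1,\cdot)$ is the row-coordinate array of $P'$ and that $\mathsf{N}(1,n)$ is the row of the newly created cell. It then remains to match the $\tilde{\mathpzc{b}}$-component: by the cycling in \cref{def:iota} the $i$-cell of $Q'$ is the old $(i+1)$-cell of $Q$ for $i<n$, at row $\mathpzc{b}_{\,i+1}=\mathsf{S}(i+1,1)$, while the new $n$-cell lies at row $\bar r=\mathsf{N}(1,n)$, which is exactly formula \eqref{eq:iota_2_arrays}. I expect the only delicate point to be the carry/bumping correspondence itself: verifying that the values successively upgraded by the carry are, in order, the successive entries of the bumping path, and that the carry halts precisely when and where a new cell is appended. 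Both hinge on the elementary observation that scanning the faces in increasing order of $i$ makes ``the smallest entry of a row exceeding the current one'' selected by Schensted insertion coincide with the next face the carry can interact with; the remaining cases (no match at a given row, values passing through unchanged) are routine bookkeeping.
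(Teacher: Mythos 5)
Your proposal is correct and follows essentially the same route as the paper's proof: reduce the kernel component to Proposition \ref{prop:iota_preserves_ker}, then track a single internal insertion against the first column of faces, identifying the successive upgrades of the south-to-north value with the Schensted bumping chain (the paper phrases this via the indices $p_0<p_1<\cdots$ and the observation that $\mathpzc{a}_{\,p_j+\ell}\neq r+j+1$ for intermediate $\ell$, which is exactly your "carry meets the smallest entry of the next row exceeding the current one"), and finally match $\tilde{\mathpzc{b}}$ through the cycling and the exit value $\mathsf{N}(1,n)$. The only cosmetic difference is that you derive $\epsilon=1$ formally from the swap symmetry while the paper simply declares that case analogous.
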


\begin{proof}
    We prove our statement only for the case $\epsilon=2$, as the remaining case is analogous. The fact that $\nu=\ker(P,Q)$ does not change after the application of $\iota_2$ was shown in \cref{prop:iota_preserves_ker}. Therefore we need to show that if $(\tilde{P},\tilde{Q})=\iota_2(P,Q)$ and $(\tilde{\mathpzc{a}},\tilde{\mathpzc{b}})=\iota_2(\mathpzc{a},\mathpzc{b})$, we have $(\tilde{\mathpzc{a}},\tilde{\mathpzc{b}})=\rc(\tilde{P},\tilde{Q})$. 
    
    Call $r=\mathpzc{b}_{\,1}$, so that $\tilde{P}=\mathcal{R}_{[r]}(P)$. Suppose that during the internal insertion bumping happens $k$ times and cells of $P$ at location $(c_j,r+j)$ move to $(c_{j+1},r+j+1)$ for $j=0,\dots,k-1$. Denoting $p_j=P(c_j,r+j)$, we have $c_0 \ge c_1 \ge \cdots \ge c_k$ and $p_0 < p_1 < \cdots < p_{k-1}$.
    In the row coordinate array $\mathpzc{a}$ this implies that 
    $$\mathpzc{a}_{\, p_j} = r+j$$ 
    and importantly $\mathpzc{a}_{\, p_j + \ell} \neq r+j+1$ for $\ell=1,\dots, p_{j+1} - p_j$. We now draw the edge configuration on $\Lambda_{n,n}$ corresponding to the pair $\mathpzc{a},\mathpzc{b}$ as in \cref{def:iota_arrays} and following the notation of \eqref{eq:iota_2_arrays} we find
    \begin{equation}
        \tilde{\mathpzc{a}}_{\, i} = \begin{cases}
            \mathpzc{a}_{\, i} \qquad & \text{if } i\notin \{p_0,\dots, p_{k-1} \},
            \\
            \mathpzc{a}_{\, i}+1 \qquad & \text{if } i\in \{p_0,\dots, p_{k-1} \},
        \end{cases}
    \end{equation}
    which implies that $\rc(\tilde{P}) = \tilde{\mathpzc{a}}$. 
    
    To show that $\rc(\tilde{Q}) = \tilde{\mathpzc{b}}$ notice that, from the cycling of labels, the $i$-cell of $\tilde{Q}$ lies at row $\mathpzc{b}_{\, i+1} = \tilde{\mathpzc{b}}_{\,i}$ for $i=1,\dots,n-1$. On the other hand the $n$-cell of $\tilde{Q}$ lies at row $r+k$, which, by the computation above is also the value of $\mathsf{N}(1,n)=\tilde{\mathpzc{b}}_{\,n}$. This completes the proof.
\end{proof}

The coincidence extends to the skew $\RS$ maps. 
\begin{corollary} \label{cor:RS_rc}
    Let $P,Q \in ST(\lambda/\rho)$ and $(P,Q) \xleftrightarrow[]{\rc \,} (\mathpzc{a},\mathpzc{b};\nu)$. Then $\RS (P,Q) \xleftrightarrow[]{\rc \,} (\RS (\mathpzc{a},\mathpzc{b});\nu)$.
\end{corollary}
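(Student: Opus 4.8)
The plan is to deduce the statement from the single-step commutation already established in \cref{prop:iota_rc_commute} by iterating it $n$ times, where $n=|\lambda/\rho|$ is the number of cells. Two ingredients make this work, both factorizing the skew $\RS$ map into internal insertions with cycling: on pairs of standard tableaux, \cref{prop:RSK_from_n_int_ins} gives $\RS(P,Q)=\iota_2^n(P,Q)$ (the standard-tableau specialization of $\RSK=\iota_2^n$), while on pairs of arrays the remark following \cref{def:iota_arrays} gives $\RS(\mathpzc{a},\mathpzc{b})=\iota_2^n(\mathpzc{a},\mathpzc{b})$. For standard tableaux of shape $\lambda/\rho$ the alphabet is $\{1,\dots,|\lambda/\rho|\}$ and the arrays $\mathpzc{a},\mathpzc{b}\in\mathbb{Z}^n$ have length $|\lambda/\rho|$, so the two factorizations iterate the same operator the same number of times.

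First I would set up an induction on $j$, proving that $\iota_2^j(P,Q)\xleftrightarrow[]{\rc\,}(\iota_2^j(\mathpzc{a},\mathpzc{b});\nu)$ for every $0\le j\le n$. The base case $j=0$ is exactly the hypothesis $(P,Q)\xleftrightarrow[]{\rc\,}(\mathpzc{a},\mathpzc{b};\nu)$. For the inductive step I would apply \cref{prop:iota_rc_commute} to the pair $\iota_2^j(P,Q)$, which is again a pair of standard tableaux of a common (possibly generalized) skew shape, together with its row coordinate parameterization $(\iota_2^j(\mathpzc{a},\mathpzc{b});\nu)$ supplied by the inductive hypothesis; this yields $\iota_2^{j+1}(P,Q)\xleftrightarrow[]{\rc\,}(\iota_2^{j+1}(\mathpzc{a},\mathpzc{b});\nu)$. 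Two small points license the step: the kernel component $\nu$ is genuinely unchanged, which is precisely \cref{prop:iota_preserves_ker}; and $\iota_2$ keeps us inside the class of pairs of standard tableaux of equal shape (equivalently inside $\mathcal{Z}_n$, where $\mathpzc{a}$ and $\mathpzc{b}$ remain permutations of one another), so that both \cref{prop:iota_rc_commute} and the array-side operator $\iota_2$ continue to apply. Taking $j=n$ and invoking the two factorizations then gives $\RS(P,Q)=\iota_2^n(P,Q)\xleftrightarrow[]{\rc\,}(\iota_2^n(\mathpzc{a},\mathpzc{b});\nu)=(\RS(\mathpzc{a},\mathpzc{b});\nu)$, as claimed.

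I do not expect a genuine obstacle here: essentially all of the combinatorial content, namely the delicate bookkeeping of overlaps and blocking pairs showing that a single internal insertion with cycling commutes with the edge-local-rule operator, was already carried out in the proofs of \cref{prop:iota_preserves_ker} and \cref{prop:iota_rc_commute}. The only thing requiring attention is that the row coordinate parameterization must be read in its generalized form throughout the iteration, since the intermediate pairs $\iota_2^j(P,Q)$ typically have cells at non-positive rows; this is why I phrase the induction over $\mathcal{Z}_n$ rather than $\mathcal{Z}_n^+$, relying on the fact that \cref{prop:iota_rc_commute} is stated for arbitrary (not necessarily classical) standard skew tableaux. With those conventions fixed, the corollary is a formal consequence of the $n$-fold iteration.
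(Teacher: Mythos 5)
Your proposal is correct and follows exactly the paper's route: the paper's own proof of this corollary is a one-line appeal to \cref{prop:iota_rc_commute} together with the factorization $\RS=\iota_\epsilon^n$ (with $n=|\lambda/\rho|$) holding both for pairs of arrays and for pairs of tableaux. Your induction merely spells out the iteration, including the correct observations that $\nu$ is preserved by \cref{prop:iota_preserves_ker} and that intermediate pairs must be read as generalized tableaux.
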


\begin{proof}
    This follows from \cref{prop:iota_rc_commute} and from the fact that $\RS=\iota_\epsilon^n$, with $n=|\lambda / \rho|$ and $\epsilon=1,2$, both as operations on pairs of arrays or on pairs of tableaux.
\end{proof}

\begin{corollary} \label{cor:meaning_matrix_M_k_standard}
    Let $P,Q \in ST(\lambda/\rho)$ and $(P,Q) \xleftrightarrow[]{\rc \,} (\mathpzc{a},\mathpzc{b};\nu)$. Consider the admissible edge configuration on $\Lambda_{n,n}$ corresponding to the pair $\mathpzc{a},\mathpzc{b}$ as in \cref{def:RS}. Fix $i,j\in \{1,\dots,n\}$ and $k\in \mathbb{Z}$. Then, $\mathsf{N}(j,i)=\mathsf{E}(j,i) = k$ if and only if during the evaluation of the skew $\RS$ map $(P,Q) \to (P',Q')$, at the step corresponding to internal insertions of $j$-cells of $Q$, the $i$-cell of the $P$-tableau moves from row $k-1$ to row $k$. 
\end{corollary}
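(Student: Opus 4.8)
The plan is to reduce the statement to the single-step analysis already carried out in the proof of \cref{prop:iota_rc_commute}, and then to chain it across the $n$ columns of the edge configuration by means of the identity $\RS=\iota_2^n$ from \cref{prop:iota_rc_commute,cor:RS_rc}. Throughout, the target value in the statement is renamed so as not to clash with internal counters.

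First I would establish a \emph{column decomposition} of the edge configuration. Write $(\mathpzc{a}^{(0)},\mathpzc{b}^{(0)})=(\mathpzc{a},\mathpzc{b})$ and $(\mathpzc{a}^{(j)},\mathpzc{b}^{(j)})=\iota_2(\mathpzc{a}^{(j-1)},\mathpzc{b}^{(j-1)})$. By \cref{def:iota_arrays}, one application of $\iota_2$ sets $\mathpzc{a}^{(1)}_{\,i}=\mathsf{W}(2,i)=\mathsf{E}(1,i)$ and $\mathpzc{b}^{(1)}_{\,1}=\mathsf{S}(2,1)$, following \eqref{eq:iota_2_arrays}. Since an admissible edge configuration is uniquely determined by its west and south boundary data (\cref{def:RS}) and the local rules \eqref{eq:local_rules} are local, the first column of the configuration built from $(\mathpzc{a}^{(1)},\mathpzc{b}^{(1)})$ coincides edge by edge with the second column of the configuration built from $(\mathpzc{a},\mathpzc{b})$, as both columns are generated from the same west input array $\mathsf{W}(2,\cdot)$ and the same south input $\mathsf{S}(2,1)$. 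Iterating, the first column of the configuration associated with $(\mathpzc{a}^{(j-1)},\mathpzc{b}^{(j-1)})$ is identical to the $j$-th column of the original one; in particular $\mathsf{N}(j,i)$ and $\mathsf{E}(j,i)$ equal the corresponding north and east edges of that first column.

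Next I would match this with the tableau dynamics. By \cref{prop:iota_rc_commute} we have $\iota_2^{j-1}(P,Q)\xleftrightarrow[]{\rc\,}(\mathpzc{a}^{(j-1)},\mathpzc{b}^{(j-1)};\nu)$, so $\mathpzc{a}^{(j-1)}=\rc(P^{(j-1)})$, where $P^{(j-1)}$ is the $P$-component of $\iota_2^{j-1}(P,Q)$; since internal insertions preserve the labels of $P$, the cell labelled $i$ is tracked consistently along the whole evolution. By the proof of \cref{prop:RSK_from_n_int_ins}, the $j$-th application of $\iota_2$ performs precisely the internal insertion associated with the original $j$-cell of $Q$, which is the step referred to in the statement. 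Now I would invoke the single-column computation from the proof of \cref{prop:iota_rc_commute}, applied to the first column of the $(\mathpzc{a}^{(j-1)},\mathpzc{b}^{(j-1)})$ configuration: the faces $(1,i)$ where local rule (2) of \eqref{eq:local_rules} fires are exactly the faces with $\mathsf{N}(1,i)=\mathsf{E}(1,i)$, and these are indexed by the labels $p_0<\cdots<p_{s-1}$ of the cells bumped during this insertion, the face at $p_\ell$ carrying value $r+\ell+1$ while the cell $p_\ell$ moves from row $r+\ell$ to row $r+\ell+1$. Setting $k=r+\ell+1$, this reads: $\mathsf{N}(1,i)=\mathsf{E}(1,i)=k$ (with $i=p_\ell$) if and only if the $i$-cell moves from row $k-1$ to row $k$ at this step. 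Transporting the equality back to the $j$-th column via the column decomposition finishes the argument.

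The conceptual content—the identification of a face of value $k$ with the single bumping move from row $k-1$ to row $k$—is already contained in the proof of \cref{prop:iota_rc_commute}, so the only genuinely new point to verify with care will be the column decomposition: that applying $\iota_2$ to the arrays and redrawing the configuration reproduces the next column of the original configuration. This is the step I expect to be the main obstacle, and it rests entirely on the uniqueness of admissible edge configurations in \cref{def:RS} together with the cycling rule for $\mathpzc{b}$ in \eqref{eq:iota_2_arrays}.
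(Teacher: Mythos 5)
Your proposal is correct and follows the same route as the paper: the paper's own proof simply observes that the $j=1$ case is the computation in \cref{prop:iota_rc_commute} and that iterating $\iota_2$ gives general $j$, which is precisely your column decomposition plus the single-column bumping analysis. The extra care you take in verifying that the first column of the $(\mathpzc{a}^{(j-1)},\mathpzc{b}^{(j-1)})$-configuration reproduces the $j$-th column of the original (via uniqueness of admissible configurations and the cycling rule) is exactly the content the paper leaves implicit.
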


\begin{proof}
    For $j=1$ and $i\in\{1,\dots,n\}$ this follows from the computations reported in the \cref{prop:iota_rc_commute}. Iterating the operations $\iota_2$ yields the general $j$ case.
\end{proof}

We finally report a simple ``restriction property" of 
the shadow line construction. For the next proposition we need the notion of \emph{partial arrays}, which are elements of $(\mathbb{Z} \cup \{\varnothing\})^n$. Given an array $\mathpzc{a}\in\mathbb{Z}^n$ we denote by $\mathpzc{a}^{[\ge k]}$ the partial array with entries $\mathpzc{a}^{[\ge k]}_i=\mathpzc{a}_{\,i}$ if $\mathpzc{a}_{\,i}\ge k$ and $\mathpzc{a}^{[\ge k]}_i=\varnothing$ otherwise. 

\begin{proposition} \label{prop:restriction_standard}
    Let $(\mathpzc{a},\mathpzc{b})\in \mathcal{Z}_n$ and $(\tilde{\mathpzc{a}},\tilde{\mathpzc{b}})=\RS(\mathpzc{a},\mathpzc{b})$. For a fixed $k$ let $\pi^{(k)}$ be the partial permutation defined by \eqref{eq:pi_k}. Then the partial arrays $\tilde{\mathpzc{a}}^{[\ge k]},\tilde{\mathpzc{b}}^{[\ge k]}$ depend uniquely on $\mathpzc{a}^{[\ge k]},\mathpzc{b}^{[\ge k]}$ and $\pi^{(k)}$.
\end{proposition}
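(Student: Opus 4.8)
\emph{Proof proposal.} The plan is to work with the \emph{truncation at level $k$} of the admissible edge configuration underlying the skew $\RS$ map: for each edge I track only whether its value is $\ge k$ and, when it is, its exact value. I will show that this truncated configuration propagates through $\Lambda_{n,n}$ by a local update which depends only on the truncated values of the incoming (west and south) edges, together with one bit recording whether the current face carries a bullet of color $k$. Since that bit is exactly the data of $\pi^{(k)}$ and the truncated boundary values are $\mathpzc{a}^{[\ge k]},\mathpzc{b}^{[\ge k]}$, propagating from the southwest corner determines the truncation of every edge, in particular of the output edges $\mathsf{E}(n,i)$ and $\mathsf{N}(j,n)$, which are $\tilde{\mathpzc{a}}^{[\ge k]}$ and $\tilde{\mathpzc{b}}^{[\ge k]}$.

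First I would recall that the configuration of \cref{def:RS} is obtained by propagating the local rules \eqref{eq:local_rules} from the boundary values $\mathsf{W}(1,i)=\mathpzc{a}_{\,i}$, $\mathsf{S}(j,1)=\mathpzc{b}_{\,j}$ in any order for which $\mathsf{W}$ and $\mathsf{S}$ of a face are known before $\mathsf{E}$ and $\mathsf{N}$ (for instance by increasing $i+j$). The core of the argument is a face-by-face case analysis at a face $c=(j,i)$ according to whether $\mathsf{W}(c)$ and $\mathsf{S}(c)$ lie $\ge k$ or $<k$. If at least one incoming value is $\ge k$, then when one is $\ge k$ and the other $<k$ the two are unequal, so rule~(1) copies the $\ge k$ value straight through, whereas if both are $\ge k$ their exact (known) values decide between rules~(1) and~(2); in all these subcases the truncated outputs are functions of the truncated inputs alone.

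The hard part, and the only place where $\pi^{(k)}$ enters, is the case $\mathsf{W}(c)<k$ and $\mathsf{S}(c)<k$. Here the truncated inputs carry no numerical information, yet the outputs can still reach level $k$: this happens exactly when rule~(2) fires with $\mathsf{W}(c)=\mathsf{S}(c)=k-1$, producing $\mathsf{E}(c)=\mathsf{N}(c)=k$, which by \eqref{eq:pi_k} is precisely a bullet of color $k$, i.e. $c\in\pi^{(k)}$. Conversely, if $c\notin\pi^{(k)}$ while both inputs are $<k$, then either rule~(1) applies or rule~(2) fires with common value strictly less than $k-1$, and in both situations the two outputs remain $<k$. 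Thus the indicator $\mathbf{1}_{c\in\pi^{(k)}}$ determines the truncated outputs in this final case as well.

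Combining the cases, the truncated update is a deterministic function of the truncated west and south edge values at $c$ together with $\mathbf{1}_{c\in\pi^{(k)}}$. An induction on the faces, initialized by the truncated boundary data $\mathpzc{a}^{[\ge k]},\mathpzc{b}^{[\ge k]}$, then shows that the truncation reconstructed from $(\mathpzc{a}^{[\ge k]},\mathpzc{b}^{[\ge k]},\pi^{(k)})$ coincides with the actual truncation of the whole configuration; reading off the eastern and northern boundaries gives the claim. I expect the only delicate bookkeeping to be confirming that bullets of color $>k$, which arise when two equal values $\ge k$ merge, never need to be supplied externally, since they are already forced by the exact $\ge k$ inputs, so that $\pi^{(k)}$ alone, and not the higher $\pi^{(k')}$, suffices.
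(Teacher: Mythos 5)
Your proposal is correct and is essentially the paper's own argument: the paper proves this proposition by observing that edge values $\ge k$ either enter from the west/south boundary (hence are given by $\mathpzc{a}^{[\ge k]},\mathpzc{b}^{[\ge k]}$) or are created inside the grid at the color-$k$ bullets recorded by $\pi^{(k)}$, and it explicitly notes this is "immediate \ldots from local rules \eqref{eq:local_rules}", which is exactly the truncated face-by-face propagation you carry out. Your version just spells out the case analysis (including the observation that bullets of color $>k$ are forced by the exact $\ge k$ inputs) that the paper leaves implicit.
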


\begin{proof}
    This is immediate from the definition of the shadow line construction or equivalently from local rules \eqref{eq:local_rules}. Entries of arrays $\tilde{\mathpzc{a}}^{[\ge k]},\tilde{\mathpzc{b}}^{[\ge k]}$ correspond to shadow lines with colors greater or equal then $k$. Such lines either enter the lattice from west or south and hence are given by $\mathpzc{a}^{[\ge k]},\mathpzc{b}^{[\ge k]}$ or alternatively are created within the grid, in which case are determined by the knowledge of $\pi^{(k)}$.
\end{proof}

\begin{remark}
\label{shrs}
    In case $\mathpzc{a}^{[\ge k]}=\mathpzc{b}^{[\ge k]}=\varnothing$, the partial permutation $\pi^{(k)}$  determines completely $\tilde{\mathpzc{a}}^{[\ge k]},\tilde{\mathpzc{b}}^{[\ge k]}$. Setting $k=1$, this corresponds to the shadow line construction of the classical Robinson-Schensted correspondence \cite{sagan2001symmetric}. 
\end{remark}

\subsection{The skew RSK map of matrices} \label{subs:RSK_prod_matrices}
We extend the operation of skew $\RS$ map to include infinite matrices of integers. 
Thanks to the correspondence between matrices and tableaux of \cref{prop:row_coordinate_classical_pair}, this provides a diagrammatic realization of the skew $\RSK$ map for tableaux. By the standardization most of their properties follow directly from the ones of the skew $\RS$ map. 

We can generalize the shadow line construction of \cref{def:shadow_line} by allowing edges of the lattice $\Lambda_{m,n}$ to be crossed by arbitrary many lines. 
We impose that on each cell for any two lines crossing the same horizontal (resp. vertical) edge the one with higher color stays at the left (resp. below) of the one with lower color; see \cref{fig:shadow_lines_fused}. 
This implies also that for each color bullets at a cell lie strictly to the right of those at cells $c-k \mathbf{e}_2$ and strictly above those at cells $c-k \mathbf{e}_1$ for all $k\in \mathbb{N}_0$. 
We record the list of colored lines crossing a specific edge in an array $v\in\mathbb{V}$ and for any $C\in \mathbb{Z}$ the value $v_C$ will count the number of $C$-colored lines; see \cref{fig:RSK_local_rule}. Such discussion justifies the following definition.

\begin{figure}
    \centering
    \includegraphics[scale=.9]{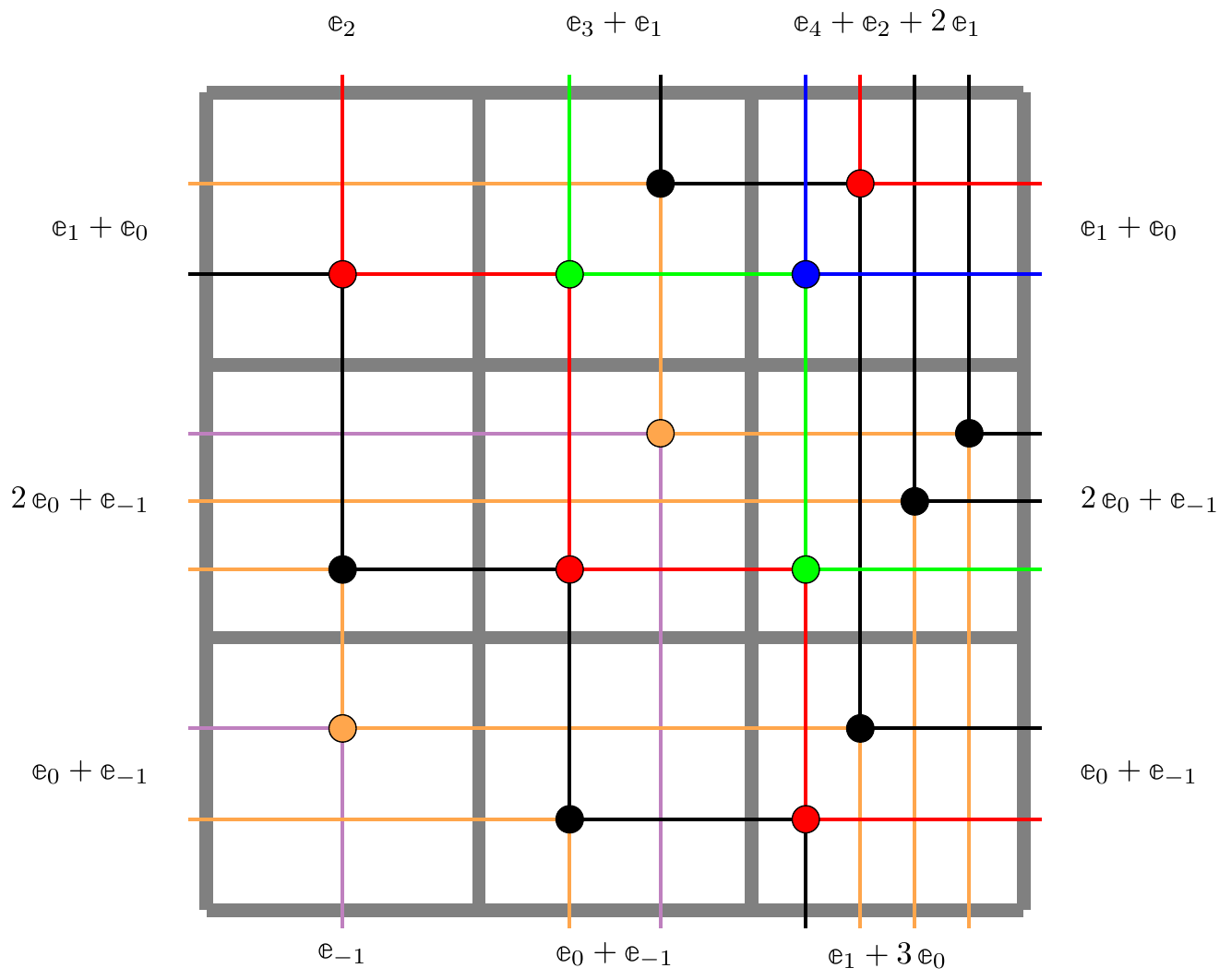}
    \caption{Generalized shadow line construction on the lattice $\Lambda_{3,3}$.}
    \label{fig:shadow_lines_fused}
\end{figure}


\begin{figure}[ht]
    \centering
    \subfloat[]{{\includegraphics[width=.55\linewidth]{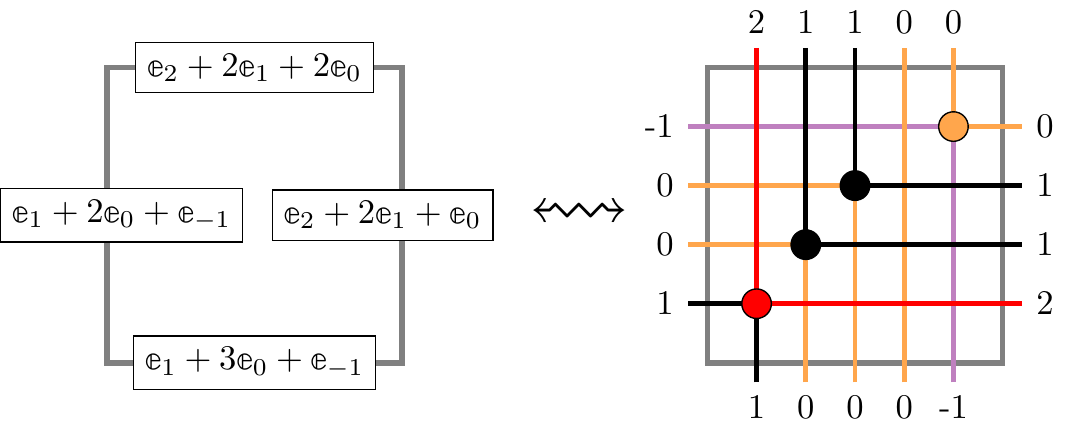}} }%
    \hspace{1cm}
    \subfloat[]{{\includegraphics[width=.35\linewidth]{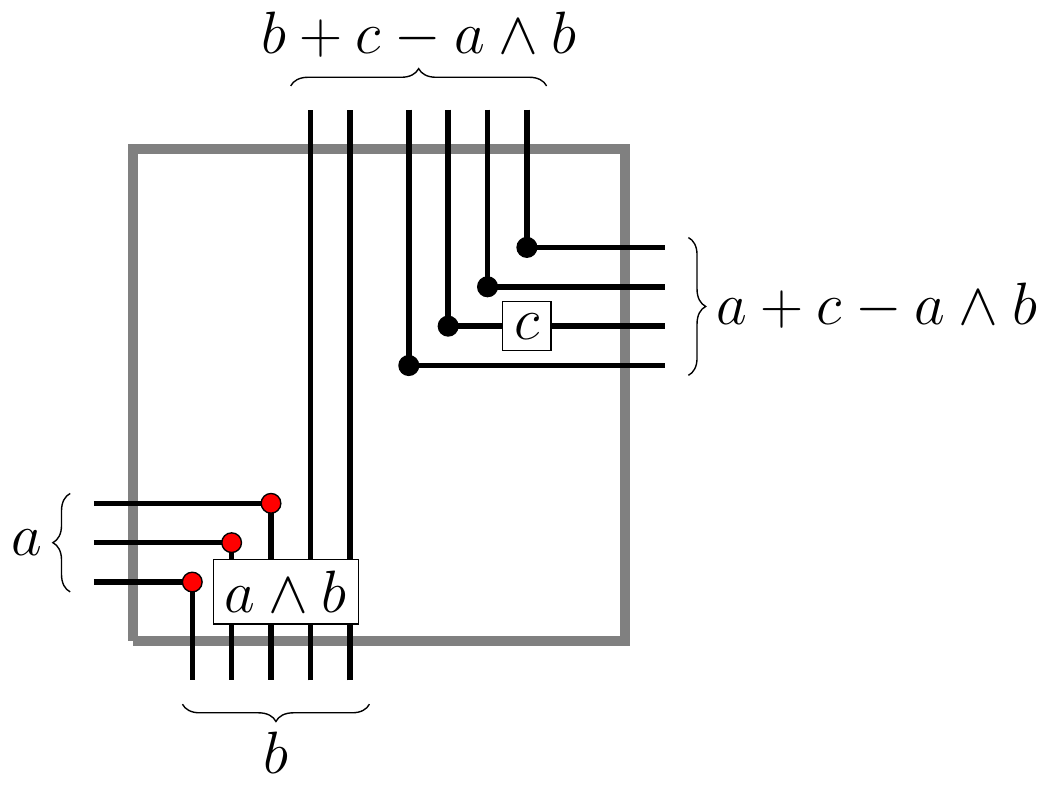} }}%
    \caption{On the left panel we see the equivalence between $\mathbb{V}$-valued edge configurations configuration of colored lines through a face. On the right panel a graphical interpretation of local rules \eqref{eq:RSK_local_rules}}
    \label{fig:RSK_local_rule}
\end{figure}


\begin{definition}[$\mathbb{V}$-valued edge configurations] \label{def:V_edge_conf}
    For a planar lattice $\Lambda\subseteq \mathbb{Z}\times \mathbb{Z}$ define $\mathbb{V}$-valued edge configurations $\mathcal{E}$ as quadruples of functions $(\mathsf{W},\mathsf{S},\mathsf{E},\mathsf{N}):\Lambda \to \mathbb{V}$ such that $\mathsf{E}(c)=\mathsf{W}(c+\mathbf{e}_1)$ and $\mathsf{N}(c)=\mathsf{S}(c+\mathbf{e}_2)$ for all $c\in \Lambda$. An edge configuration is \emph{admissible} if it satisfies the \emph{local rules}
    \begin{equation} \label{eq:RSK_local_rules}
    \begin{minipage}{.9\linewidth}
        \begin{enumerate}
            \item $\mathsf{E}_j(c) = \mathsf{W}_j(c) - \mathsf{S}_j(c) \wedge \mathsf{W}_j(c) + \mathsf{S}_{j-1}(c) \wedge \mathsf{W}_{j-1}(c)$,
            \item $\mathsf{N}_j(c) = \mathsf{S}_j(c) - \mathsf{S}_j(c) \wedge \mathsf{W}_j(c) + \mathsf{S}_{j-1}(c) \wedge \mathsf{W}_{j-1}(c)$,
        \end{enumerate}
    \end{minipage}
\end{equation}
    for all $c\in \Lambda$, $j\in \mathbb{Z}$. Here $a\wedge b=\min(a,b)$.
\end{definition}

Local rules \eqref{eq:RSK_local_rules} describe the arrangement of lines of the each color $j$ around single faces of the lattice. Namely $\mathsf{W}_j(c)$ is the number of $j$-colored lines entering face $c$ from the left and similarly for $\mathsf{S}_j, \mathsf{E}_j, \mathsf{N}_j$. We report this statement in the next proposition and we refer to \cref{fig:RSK_local_rule} for the graphical interpretation of such rules. 

\begin{proposition}  \label{prop:equivalence_local_rules_RS_RSK}
    On the lattice $\Lambda_{1,1}$, consisting of a single face, consider the admissible $\mathbb{V}$-valued edge configuration $\mathsf{W}=a, \mathsf{S}=b, \mathsf{E}=a', \mathsf{N}=b'$. Consider arrays $\mathpzc{a},\mathpzc{a}'\in \mathbb{W}^{|a|}$,$\mathpzc{b},\mathpzc{b}'\in \mathbb{W}^{|b|}$ such that $v(\mathpzc{a})=a,v(\mathpzc{b})=b,v(\mathpzc{a}')=a',v(\mathpzc{b}')=b'$ under the map $v:\mathbb{W}^k \to \mathbb{V}$ defined in \eqref{eq:from_v_to_a}. Then $(\mathpzc{a}',\mathpzc{b}' ) = \RS (\mathpzc{a} , \mathpzc{b})$.
\end{proposition}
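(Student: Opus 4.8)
The plan is to read the statement as asserting that the $\mathbb{V}$-valued local rule \eqref{eq:RSK_local_rules} is the \emph{fusion} of the $\mathbb{Z}$-valued local rules \eqref{eq:local_rules} over an entire rectangle. Concretely, I would unfold the single $\mathbb{V}$-face into the lattice $\Lambda_{m,n}$ with $m=|b|$, $n=|a|$, impose the boundary data $\mathsf{W}(1,i)=\mathpzc{a}_i$ and $\mathsf{S}(j,1)=\mathpzc{b}_j$ (both weakly decreasing, since $\mathpzc{a}\in\mathbb{W}^{|a|}$, $\mathpzc{b}\in\mathbb{W}^{|b|}$), and fill it according to \cref{def:RS}. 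Writing $(\mathpzc{a}',\mathpzc{b}')$ for the resulting $\RS$ output, it then suffices to prove two things: (i) the arrays $\mathpzc{a}',\mathpzc{b}'$ are weakly decreasing, so that they are the canonical representatives in $\mathbb{W}^{|a|},\mathbb{W}^{|b|}$; and (ii) their colour multiplicities $v(\mathpzc{a}'),v(\mathpzc{b}')$ under \eqref{eq:from_v_to_a} equal the $a',b'$ produced by \eqref{eq:RSK_local_rules}.

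The observation that makes this tractable is that under \eqref{eq:local_rules} lines never change direction: a horizontal edge stays horizontal, a vertical edge stays vertical, and at a \emph{collision} (a face with $\mathsf{W}=\mathsf{S}=c$) both the outgoing east and north values are upgraded from $c$ to $c+1$. Hence along row $i$ the colour starts at $\mathpzc{a}_i$, increases by exactly one at each collision, and terminates at $\mathpzc{a}'_i=\mathsf{E}(m,i)$; similarly column $j$ runs from $\mathpzc{b}_j$ up to $\mathpzc{b}'_j=\mathsf{N}(j,n)$. Each row and each column can collide at a given colour at most once. Writing $K_c$ for the total number of colour-$c$ collisions and $u_c=\sum_{j\ge c}a_j$, $w_c=\sum_{j\ge c}b_j$, counting collisions row-wise and column-wise yields
\[
\sum_{j\ge c}a'_j=u_c+K_{c-1}, \qquad \sum_{j\ge c}b'_j=w_c+K_{c-1},
\]
independently of any sortedness. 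On the other hand, telescoping the fusion rule \eqref{eq:RSK_local_rules} gives $\sum_{j\ge c}a'_j=u_c+(a_{c-1}\wedge b_{c-1})$ and likewise for $b'$. Comparing the two reduces the whole multiplicity statement (ii) to the single identity $K_c=a_c\wedge b_c$ for all $c$, after which $a'_c=a_c-a_c\wedge b_c+a_{c-1}\wedge b_{c-1}$ and its $b$-analogue follow by differencing.

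For (i) I would prove a monotonicity lemma: with weakly decreasing boundary data, a double induction over the faces of $\Lambda_{m,n}$ using \eqref{eq:local_rules} shows that $\mathsf{E}(j,\cdot)$ is weakly decreasing in $i$ and $\mathsf{N}(\cdot,i)$ weakly decreasing in $j$ at every face; reading off the top and right boundaries gives that $\mathpzc{a}'$ and $\mathpzc{b}'$ are weakly decreasing. This step is routine.

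The main obstacle is the collision count $K_c=a_c\wedge b_c$, which I would prove by induction on the colour $c$ via the shadow-line geometry of \cref{def:shadow_line}. For the lowest colour $c_0$ present in the input there are no edges born from below, so the $a_{c_0}$ horizontal and $b_{c_0}$ vertical colour-$c_0$ edges occupy a solid bottom-left block and pair off greedily along its diagonal, giving exactly $\min(a_{c_0},b_{c_0})$ collisions. The delicate point is the inductive step: at colour $c$ the edges are of two kinds, those entering from the boundary and those \emph{born} at colour-$(c-1)$ collisions, and one must show that the born edges never create collisions beyond the $a_c\wedge b_c$ produced by boundary edges. This is exactly where sortedness is essential: it confines the colour-$c$ edges to a monotone staircase region and places every born colour-$c$ edge strictly to the upper-right of the boundary colour-$c$ edges, so born edges can only \emph{cross} surviving colour-$c$ partners and never collide with them. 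Making this placement precise, either by propagating an explicit description of the staircase occupied by each colour and checking it is preserved by \eqref{eq:local_rules}, or by peeling off the highest colours through the restriction property of \cref{prop:restriction_standard}, is the heart of the argument. Once the identity $K_c=a_c\wedge b_c$ is established, (i) and (ii) combine to give $(\mathpzc{a}',\mathpzc{b}')=\RS(\mathpzc{a},\mathpzc{b})$, completing the proof.
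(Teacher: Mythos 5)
Your overall strategy --- unfold the single $\mathbb{V}$-valued face into the $|b|\times|a|$ lattice of $\mathbb{Z}$-valued faces with the sorted arrays as boundary data, then match colour multiplicities against the telescoped fusion rule --- is exactly the comparison the paper has in mind (its own proof is a one-line assertion that this comparison is straightforward). Your reduction of the multiplicity statement to the collision count $K_c=a_c\wedge b_c$ is correct, as is the bookkeeping $\sum_{j\ge c}a_j'=u_c+K_{c-1}$, which relies on the (true, but worth stating) fact that a line participates in at most one collision per colour.

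The difficulty is that you stop at precisely the step you call the heart of the argument: $K_c=a_c\wedge b_c$ is supported only by a geometric picture of ``born'' versus ``boundary'' edges and two candidate strategies, neither carried out; moreover the picture is not quite right as stated (a surviving boundary colour-$c$ vertical edge and a born colour-$c$ horizontal edge never share a face at all, since the former lies strictly to the left of the collision that creates the latter, so there is nothing to ``cross''). The lemma closes much more cheaply than your staircase or peeling proposals. Sweep the lattice one row at a time in the order $i=1,2,\dots$, which by sortedness of $\mathpzc{a}$ is the order of weakly decreasing colour, and maintain the invariant that the column data entering row $i$ from below is weakly decreasing. Row $i$, of colour $c=\mathpzc{a}_{\,i}$, then collides at the \emph{first} column currently carrying value $c$ (if any); after the upgrade every column to its right carries value $\le c<c+1$, so each row collides at most once, and the updated column data is again weakly decreasing because the incremented column was the first one carrying its value. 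Consequently: (a) when the block of colour-$c$ rows is reached, no earlier collision has touched a column of value $c$ (all earlier collisions involve colours $>c$) and no column has yet been upgraded \emph{to} $c$ (colour-$(c-1)$ rows come later), so exactly $b_c$ columns carry value $c$ at that moment; (b) the $a_c$ rows of colour $c$ consume these columns greedily, one per row, giving $K_c=a_c\wedge b_c$; and (c) since each line changes colour at most once, $a'_c=a_c-K_c+K_{c-1}$ and likewise for $b'$, which is \eqref{eq:RSK_local_rules}. The same sweep yields your monotonicity claim for the output arrays. With this substitution for the central step, your argument is complete.
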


\begin{proof}
    This is straightforward after a comparison of local rules \eqref{eq:RSK_local_rules} with the skew $\RS$ map of weakly decreasing arrays $\mathpzc{a},\mathpzc{b}$ corresponding to the generic edge values $a,b$.
\end{proof}

In line with \cref{subs:standardization}, we can define the standardization of an admissible $\mathbb{V}$-valued edge configuration $\mathcal{E}$ on a lattice $\Lambda$. This will be the admissible $\mathbb{Z}$-valued edge configuration $\mathcal{E}'=\std(\mathcal{E})$ on the lattice $\Lambda'$ obtained from $\mathcal{E}$ ``blowing up" faces of $\Lambda$ as prescribed by \cref{prop:equivalence_local_rules_RS_RSK}.

\begin{definition}[Skew $\RSK$ map of matrices] \label{def:RSK_corr}
    Let $\alpha \in \mathbb{M}_{n\times \infty},\beta \in \mathbb{M}_{m\times \infty}$ and consider the unique $\mathbb{V}$-valued admissible edge configuration on $\Lambda_{m,n}$ such that $\mathsf{W}_k(1,i)=\alpha_{i,k}$, $\mathsf{S}_k(j,1)=\beta_{j,k}$, for all $i=1,\dots,n$, $j=1,\dots,m$, $k\in \mathbb{Z}$.
    We define the skew $\RSK$ map 
    \begin{equation*}
        \RSK(\alpha,\beta)=(\alpha',\beta') \in \mathbb{M}_{n\times \infty} \times \mathbb{M}_{m \times \infty},    
    \end{equation*} 
    as the pair of matrices $\alpha_{i,k}'=\mathsf{E}_k(m,i)$, $\beta_{j,k}'=\mathsf{N}_k(j,n)$, for all $i=1,\dots,n$, $j=1,\dots,m$, $k\in \mathbb{Z}$. From the configuration define also the family of matrices $M^{(k)}$ as
    \begin{equation} \label{eq:num_bullets}
        M^{(k)}(j,i) = \mathsf{N}_k(j,i) \wedge \mathsf{E}_k(j,i).
    \end{equation}
\end{definition}


\begin{example} \label{example:RSK_product}
Define matrices
\begin{equation} \label{eq:alpha_beta_example}
    \alpha = 
    \,
    \begin{blockarray}{ccccc}
    \textcolor{gray}{\cdots} & \textcolor{gray}{-1} & \textcolor{gray}{0} & \textcolor{gray}{1} & \textcolor{gray}{\cdots} \\
    \begin{block}{(ccccc)}
        \cdots & 1 & 1 & 0 & \cdots \\
        \cdots & 0 & 2 & 1 & \cdots \\
        \cdots & 0 & 1 & 1 & \cdots \\
    \end{block}
    \end{blockarray}
    \,\,\,
    ,
    \,\,\,\,
    \beta = 
    \,
    \begin{blockarray}{ccccc}
    \textcolor{gray}{\cdots} & \textcolor{gray}{-1} & \textcolor{gray}{0} & \textcolor{gray}{1} & \textcolor{gray}{\cdots} \\
    \begin{block}{(ccccc)}
        \cdots & 1 & 0 & 0 & \cdots \\
        \cdots & 1 & 1 & 1 & \cdots \\
        \cdots & 0 & 3 & 1 & \cdots \\
    \end{block}
    \end{blockarray}
    \,\,\,.
\end{equation}

We evaluate $(\alpha',\beta') = \RSK(\alpha, \beta)$ computing the corresponding $\mathbb{V}$-valued edge configuration on $\Lambda_{3,3}$. The result is reported in \cref{fig:shadow_lines_fused} and we have
\begin{equation} \label{eq:alpha_prime_beta_prime_example}
    \alpha' = 
    \,
    \begin{blockarray}{cccccc}
    \textcolor{gray}{\cdots} & \textcolor{gray}{1} & \textcolor{gray}{2} & \textcolor{gray}{3} & \textcolor{gray}{4} & \textcolor{gray}{\cdots} \\
    \begin{block}{(cccccc)}
        \cdots & 1 & 1 & 0 & 0 & \cdots \\
        \cdots & 2 & 0 & 1 & 0 & \cdots \\
        \cdots & 0 & 1 & 0 & 1 & \cdots \\
    \end{block}
    \end{blockarray}
    \,\,\,
    ,
    \,\,\,\,
    \beta' = 
    \,
    \begin{blockarray}{cccccc}
    \textcolor{gray}{\cdots} & \textcolor{gray}{1} & \textcolor{gray}{2} & \textcolor{gray}{3} & \textcolor{gray}{4} & \textcolor{gray}{\cdots} \\
    \begin{block}{(cccccc)}
        \cdots & 0 & 1 & 0 & 0 & \cdots \\
        \cdots & 1 & 0 & 1 & 0 & \cdots \\
        \cdots & 2 & 1 & 0 & 1 & \cdots \\
    \end{block}
    \end{blockarray}
    \,\,\,.
\end{equation}
To configuration of \cref{fig:shadow_lines_fused} we associate matrices $M^{(k)}$ as described by \eqref{eq:num_bullets}. For instance from the same figure one can check that $M^{(1)}=\left( \begin{smallmatrix} 0 & 1 & 0 \\ 1 & 0 & 2 \\ 0 & 1 & 1 \end{smallmatrix} \right)$ or $M^{(2)}=\left( \begin{smallmatrix} 1 & 0 & 1 \\ 0 & 1 & 0 \\ 0 & 0 & 1 \end{smallmatrix} \right)$.
\end{example}

\begin{definition} \label{def:iota_matrices}
    For a pair of matrices $(\alpha,\beta) \in \mathcal{M}_n$ construct on $\Lambda_{n,n}$ the corresponding $\mathbb{V}$-valued edge configuration as in \cref{def:RSK_corr}. We define $\iota_2(\alpha,\beta) = (\tilde{\alpha}, \tilde{\beta})$, setting for all $k\in\mathbb{Z}$
    \begin{equation} \label{eq:iota_2_matrices}
        \tilde{\alpha}_{i,k} = \mathsf{W}_k(2,i) \qquad \text{for } i=1\dots,n
        \qquad 
        \text{and}
        \qquad
        \tilde{\beta}_{i,k} = \begin{cases} \mathsf{S}_k(i+1,1) \quad &\text{if } i=1,\dots,n-1,
       \\
       \mathsf{N}_k(1,n) \quad &\text{if } i=n.
       \end{cases}
    \end{equation}
    Operator $\iota_1$ is defined by duality $\iota_1(\alpha,\beta)=\mathrm{swap} \circ \iota_2 \circ \mathrm{swap} (\alpha,\beta)$.
\end{definition}

Notice that, in the previous definition, also the pair $(\tilde{\alpha},\tilde{\beta})$ belongs to the set $\mathcal{M}_n$.

\begin{proposition} \label{prop:iota_rc_commute_semi}
    Let $P,Q \in SST(\lambda/\rho,n)$ and $(P,Q) \xleftrightarrow[]{\rc \,} (\alpha,\beta;\nu)$. Then $\iota_\epsilon (P,Q) \xleftrightarrow[]{\rc \,} (\iota_\epsilon (\alpha,\beta);\nu)$, for both $\epsilon=1,2$ and $\RSK (P,Q) \xleftrightarrow[]{\rc \,} (\RSK (\alpha,\beta);\nu)$.
\end{proposition}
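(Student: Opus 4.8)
The plan is to reduce the statement to its already established analog for standard tableaux and arrays (\cref{prop:iota_rc_commute} and \cref{cor:RS_rc}) by means of standardization, the only genuinely new ingredient being a matrix-level version of \cref{prop:iota_std}. First I dispose of the partition $\nu$: since $\iota_\epsilon$ acts on pairs of matrices without touching the $\nu$-component of the row coordinate parameterization, and since $\ker(\iota_\epsilon(P,Q))=\ker(P,Q)=\nu$ by \cref{prop:iota_preserves_ker}, it suffices to prove the commutation for the matrix part alone, namely $\rc(\iota_\epsilon(P,Q)) = \iota_\epsilon(\rc(P,Q))$ with $\rc$ now extracting only the pair $(\alpha,\beta)$. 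The case $\epsilon=1$ follows from $\epsilon=2$ through the defining relation $\iota_1 = \mathrm{swap}\circ\iota_2\circ\mathrm{swap}$, valid both on tableaux (\cref{def:iota}) and on matrices (\cref{def:iota_matrices}), together with $\rc\circ\mathrm{swap} = \mathrm{swap}\circ\rc$. So I concentrate on $\epsilon=2$.

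The heart of the argument is the matrix-level identity
\begin{equation}
\std\circ\iota_2 = \iota_2^{\,N_2}\circ\std,
\qquad N_2 = |\beta_1| = \gamma_1(Q),
\end{equation}
where the left $\iota_2$ is the matrix operation of \cref{def:iota_matrices}, the right one the array operation of \cref{def:iota_arrays}, and $\std$ is the standardization of \cref{subs:standardization}. To prove it I work with the $\mathbb{V}$-valued edge configuration on $\Lambda_{n,n}$ attached to $(\alpha,\beta)$ and its standardization. Since the local rules \eqref{eq:RSK_local_rules} conserve the horizontal (resp.\ vertical) line count across each face, the blow-up prescribed by the standardization of edge configurations is globally consistent: horizontal strip $i$ opens into $|\alpha_i|$ sub-rows and vertical strip $j$ into $|\beta_j|$ sub-columns. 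In particular the first column of faces, carrying the $\mathbb{V}$-input $\beta_1$, blows up into exactly $N_2=|\beta_1|$ single-line sub-columns. By \cref{prop:equivalence_local_rules_RS_RSK} applied face-by-face up this column, processing the $\mathbb{V}$-column agrees with the skew $\RS$ map on the standardized arrays, which is precisely $\iota_2^{\,N_2}$ read off the corresponding sub-columns. Matching the output edges of \cref{def:iota_matrices} ($\tilde\alpha_{i,k}=\mathsf{W}_k(2,i)$, and $\tilde\beta_i=\beta_{i+1}$ for $i<n$, $\tilde\beta_n=\mathsf{N}_\bullet(1,n)$) with those read by the $N_2$-fold array iteration then yields the identity.

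With the matrix identity in hand the proposition follows by a diagram chase. Using that $\std$ commutes with $\rc$ on tableaux (\cref{subs:standardization}), then \cref{prop:iota_std}, then \cref{prop:iota_rc_commute} iterated $N_2$ times (again with the commuting square), and finally the matrix identity, I obtain
\begin{equation}
\std\bigl(\rc(\iota_2(P,Q))\bigr)
= \rc\bigl(\iota_2^{\,N_2}(\std(P,Q))\bigr)
= \iota_2^{\,N_2}\bigl(\std(\rc(P,Q))\bigr)
= \std\bigl(\iota_2(\rc(P,Q))\bigr).
\end{equation}
Both $\rc(\iota_2(P,Q))$ and $\iota_2(\rc(P,Q))$ have the same content — the second components share row sizes $(|\beta_2|,\dots,|\beta_n|,|\beta_1|)$ by line conservation, matching $\gamma_i(\tilde Q)=\gamma_{i+1}(Q)$ and $\gamma_n(\tilde Q)=\gamma_1(Q)$ — so injectivity of matrix standardization at fixed content gives $\rc(\iota_2(P,Q)) = \iota_2(\rc(P,Q))$. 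The $\RSK$ statement follows by iterating the $\iota_2$ commutation $n$ times, since $\RSK=\iota_2^{\,n}$ holds both on tableaux (\cref{prop:RSK_from_n_int_ins}) and on matrices (by the same comparison of \cref{def:iota_matrices} with \cref{def:RSK_corr} that gives $\RS=\iota_2^{\,n}$ for arrays), with $\nu$ preserved at each step by \cref{prop:iota_preserves_ker}.

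The main obstacle is the matrix standardization identity displayed above: the two diagram chases are formal, but verifying that the $\mathbb{V}$-valued cycling of an entire column decomposes, under the blow-up, into the correctly ordered sequence of $N_2$ single-line cyclings requires care with the $\mathbb{W}$-ordering conventions underlying \cref{prop:equivalence_local_rules_RS_RSK}, in particular checking that the $N_2$ recording outputs emerge in the weakly decreasing order dictated by standardization.
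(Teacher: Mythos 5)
Your proposal is correct and follows essentially the same route as the paper: the paper's proof is a one-line reduction to the standard case via \cref{prop:iota_rc_commute} together with the face-level standardization statement \cref{prop:equivalence_local_rules_RS_RSK}, which is exactly the skeleton you flesh out. The extra details you supply (the matrix-level identity $\std\circ\iota_2=\iota_2^{N_2}\circ\std$, the diagram chase, and injectivity of standardization at fixed content) are all implicit in the paper's appeal to those two propositions and are carried out correctly.
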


\begin{proof}
    This is consequence of the analogous statement for standard tableaux and arrays stated in \cref{prop:iota_rc_commute} and of \cref{prop:equivalence_local_rules_RS_RSK}.
\end{proof}

Geometric interpretation of operators $\iota_1,\iota_2$ provided by \cref{prop:iota_rc_commute_semi} yields a visual proof of the \emph{nontrivial} fact that they commute with each other.

\begin{proposition} \label{prop:iota_commute}
    We have $\iota_1 \circ \iota_2 = \iota_2 \circ \iota_1$, both as operations on pairs of tableaux, or on pairs of matrices.
\end{proposition}

\begin{proof}
    We first prove that $\iota_1$ and $\iota_2$ commute when they act on pairs of matrices. Consider the admissible $\mathbb{V}$-valued edge configuration on the lattice $\Lambda_{n+1,n+1}$ such that for all $k\in \mathbb{Z}$
    \begin{align*}
        \mathsf{W}_k(1,i) = \alpha_{i,k},
        \qquad
        \mathsf{S}_k(1,i) = \beta_{i,k},
        \qquad
        \text{for } i=1,\dots,n
        \\
        \text{and}
        \qquad
        \mathsf{W}_k(1,n+1) = \mathsf{E}_k(n,1)
        \qquad
        \mathsf{S}_k(n+1,1) = \mathsf{N}_k(1,n).
    \end{align*}
    In this lattice $\iota_1$ and $\iota_2$ act respectively as upward and rightward shift, so that defining $\tilde{\alpha}$, $\tilde{\beta}$ as
    \begin{equation}
        \tilde{\alpha}_{i,k} = \mathsf{W}_k(2,i+1),
        \qquad
        \tilde{\beta}_{i,k} = \mathsf{N}_k(i+1,2),
        \qquad
        \text{for } i=1,\dots,n \text{ and } k\in \mathbb{Z}
    \end{equation}
    we easily see that $(\tilde{\alpha} , \tilde{\beta}) = \iota_1 \circ \iota_2 (\alpha,\beta) = \iota_2 \circ \iota_1 (\alpha,\beta)$. This proves the proposition for the case of action on pairs of matrices. By \cref{prop:iota_rc_commute_semi} the same commutation holds when $\iota_1,\iota_2$ act on pair of semi-standard tableaux.
\end{proof}

\begin{proposition} \label{prop:meaning_M_k}
    Let $P,Q \in SST(\lambda/\rho,n)$ and $(P,Q) \xleftrightarrow[]{\rc \,} (\alpha,\beta;\nu)$. Consider the admissible $\mathbb{V}$-valued edge configuration on $\Lambda_{n,n}$ corresponding to the pair $\alpha,\beta$ as in \cref{def:RSK_corr}. Fix $i,j\in \{1,\dots,n\}$ and $k\in \mathbb{Z}$. Then, $\mathsf{N}_k(j,i) \wedge \mathsf{W}_k(j,i) = m$ if and only if during the evaluation of the skew $\RSK$ map $(P,Q) \to (P',Q')$, at the step corresponding to internal insertions of $j$-cells of $Q$, exactly $m$ $i$-cells of the $P$-tableau moves from row $k-1$ to row $k$. 
\end{proposition}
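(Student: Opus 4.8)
The plan is to reduce the statement to its already‑proven standard counterpart, \cref{cor:meaning_matrix_M_k_standard}, by standardizing both the tableaux and the $\mathbb{V}$‑valued edge configuration and then matching the two pictures face by face. Write $\gamma$ for the content of $P$ and $\Gamma_i=\gamma_1+\cdots+\gamma_i$ as in \cref{subs:standardization}, and let $\{\Gamma_{i-1}+1,\dots,\Gamma_i\}$ be the block of consecutive standard labels into which the $i$‑cells of $P$ expand under $\std$; denote the analogous block associated with the $j$‑cells of $Q$ by $J_j$.

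First I would pass to standard objects. Since $\rc\circ\std=\std\circ\rc$ (\cref{subs:standardization}) and, by the paragraph following \cref{prop:equivalence_local_rules_RS_RSK}, the admissible $\mathbb{V}$‑valued configuration on $\Lambda_{n,n}$ standardizes to an admissible $\mathbb{Z}$‑valued configuration obtained by ``blowing up'' each face $(j,i)$ into a sub‑lattice, the standardized pair $(\std P,\std Q)$ has row‑coordinate data given exactly by this blown‑up $\mathbb{Z}$‑valued configuration. Thus every semistandard $i$‑cell of $P$ corresponds to a unique standard label in $\{\Gamma_{i-1}+1,\dots,\Gamma_i\}$.

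Second I would align the time steps. By \cref{prop:iota_std}, $\std\circ\iota_2=\iota_2^{N_2}\circ\std$, so the single semistandard step effecting the internal insertions of all $j$‑cells of $Q$ corresponds, in the standard dynamics, to the consecutive block of $\iota_2$‑steps indexed by $J_j$; geometrically this block is exactly the strip of sub‑lattices blown up from the $\mathbb{V}$‑valued faces $(j,\cdot)$. Consequently an $i$‑cell of $P$ moves from row $k-1$ to row $k$ during the $j$‑th semistandard step if and only if its standardized label moves from row $k-1$ to row $k$ during the corresponding standard step, and distinct $i$‑cells account for distinct standardized labels. By \cref{cor:meaning_matrix_M_k_standard} each such move is recorded by a face of the blown‑up sub‑lattice of $(j,i)$ at which the $\mathbb{Z}$‑valued configuration satisfies $\mathsf{N}=\mathsf{E}=k$, i.e. a colour‑$k$ bullet. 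Hence $m$ equals the number of colour‑$k$ bullets inside the blow‑up of the single face $(j,i)$. Finally, restricting \cref{prop:equivalence_local_rules_RS_RSK} to that face, whose blow‑up realizes the skew $\RS$ map of the weakly decreasing arrays encoding $\mathsf{W}(j,i)$ and $\mathsf{S}(j,i)$, a direct count on the single face shows this number is $\mathsf{N}_k(j,i)\wedge\mathsf{W}_k(j,i)$, matching \eqref{eq:num_bullets}. Combining the three steps yields $\mathsf{N}_k(j,i)\wedge\mathsf{W}_k(j,i)=m$.

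I expect the main obstacle to lie in the bookkeeping of the second and third steps: one must verify that the blow‑up of the single $\mathbb{V}$‑valued face $(j,i)$ is precisely the portion of the standardized diagram traversed during the $\iota_2$‑steps indexed by $J_j$ and restricted to the labels $\{\Gamma_{i-1}+1,\dots,\Gamma_i\}$, and that the count of colour‑$k$ bullets there is exactly $\mathsf{N}_k\wedge\mathsf{W}_k$ rather than a neighbouring minimum arising from the local rules \eqref{eq:RSK_local_rules}. Once \cref{prop:equivalence_local_rules_RS_RSK,prop:iota_std,cor:meaning_matrix_M_k_standard} are in place together with the commutation $\rc\circ\std=\std\circ\rc$, the remaining content is routine.
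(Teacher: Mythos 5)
Your proposal is correct and takes essentially the same route as the paper, whose proof is exactly the reduction you describe: combine the standard-tableaux statement \cref{cor:meaning_matrix_M_k_standard} with the standardization equivalence \cref{prop:equivalence_local_rules_RS_RSK}; your write-up merely makes the face-by-face and time-step bookkeeping explicit. One small caveat: the quantity your count actually produces is the number of colour-$k$ bullets in the blown-up face, which by the local rules \eqref{eq:RSK_local_rules} equals $\mathsf{N}_k(j,i)\wedge\mathsf{E}_k(j,i)$ as in \eqref{eq:num_bullets} and not $\mathsf{N}_k(j,i)\wedge\mathsf{W}_k(j,i)$ in general, so the $\mathsf{W}$ appearing in the statement (and echoed in your last step) is a typo inherited from the paper rather than an error of your argument.
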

\begin{proof}
    The analogous property for standard tableaux was given in \cref{cor:meaning_matrix_M_k_standard}. Combining this with \cref{prop:equivalence_local_rules_RS_RSK} yields the statement for pairs of semi-standard tableaux $P,Q$.
\end{proof}

The next proposition gives a restriction property analogous to \cref{prop:restriction_standard}. For any matrix $\alpha\in\mathbb{M}_{n,\infty}$ we define its truncation $\delta_{\ge k}(\alpha) = ( \mathbf{1}_{j\ge k} \alpha_{i,j} :i=1,\dots,n,j\in_{\mathbb{Z}})$

\begin{proposition} \label{prop:restriction_matrices}
    Let $(\alpha,\beta)\in \mathcal{M}_n$ and $(\tilde{\alpha},\tilde{\beta})=\RSK(\alpha,\beta)$. For a fixed $k$ let $M^{(k)}$ be the partial permutation defined by \eqref{eq:num_bullets}. Then the truncated matrices $\delta_{\ge k}(\tilde{\alpha}), \delta_{\ge k}(\tilde{\beta})$ depend uniquely on $\delta_{\ge k}(\alpha), \delta_{\ge k}(\beta)$ and $M^{(k)}$. Moreover for all $k$ we have $(\delta_{\ge k}(\tilde{\alpha}), \delta_{\ge k}(\tilde{\beta}) ) \in \mathcal{M}_n$.
\end{proposition}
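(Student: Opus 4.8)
The plan is to mirror the proof of \cref{prop:restriction_standard}, but to read the restriction property directly off the local rules \eqref{eq:RSK_local_rules} governing the $\mathbb{V}$-valued edge configuration attached to $(\alpha,\beta)$. The conceptual point is that lines of color $\ge k$ evolve autonomously: such a line is never destroyed, and a new line of color $\ge k$ is always born as a color-$k$ line at a face where two color-$(k-1)$ lines collide. Once the positions and multiplicities of these births are prescribed, the colors $<k$ play no further role in the propagation of the colors $\ge k$.

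First I would record the interpretation of $M^{(k)}$ as the birth count of color-$k$ lines. Fixing a face $c$ and writing $A=\mathsf{S}_{k-1}(c)\wedge\mathsf{W}_{k-1}(c)$ and $m=\mathsf{S}_k(c)\wedge\mathsf{W}_k(c)$, the rules \eqref{eq:RSK_local_rules} give $\mathsf{E}_k(c)=\mathsf{W}_k(c)-m+A$ and $\mathsf{N}_k(c)=\mathsf{S}_k(c)-m+A$, so that from \eqref{eq:num_bullets}
\begin{equation}
    M^{(k)}(c)=\mathsf{N}_k(c)\wedge\mathsf{E}_k(c)=\big(\mathsf{S}_k(c)\wedge\mathsf{W}_k(c)\big)-m+A=A=\mathsf{S}_{k-1}(c)\wedge\mathsf{W}_{k-1}(c).
\end{equation}
Thus $M^{(k)}(c)$ is exactly the number of color-$k$ lines created at $c$ by the merging of color-$(k-1)$ lines.

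Next I would exhibit the closed subsystem. Restricting \eqref{eq:RSK_local_rules} to colors $j\ge k$, every term on the right-hand side for $j>k$ already involves only color-$\ge k$ data, while for $j=k$ the only term reaching below color $k$, namely $\mathsf{S}_{k-1}(c)\wedge\mathsf{W}_{k-1}(c)$, equals the prescribed quantity $M^{(k)}(c)$. Hence the relations
\begin{equation}
    \mathsf{E}_k(c)=\mathsf{W}_k(c)-\mathsf{S}_k(c)\wedge\mathsf{W}_k(c)+M^{(k)}(c),\qquad \mathsf{N}_k(c)=\mathsf{S}_k(c)-\mathsf{S}_k(c)\wedge\mathsf{W}_k(c)+M^{(k)}(c),
\end{equation}
together with the unaltered rules for $j>k$, form a closed recursion in the variables $\{\mathsf{W}_j,\mathsf{S}_j:j\ge k\}$. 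With west/south boundary data given by $\delta_{\ge k}(\alpha),\delta_{\ge k}(\beta)$, propagating this recursion face by face through $\Lambda_{n,n}$ determines all color-$\ge k$ edge values, and in particular the outputs $\delta_{\ge k}(\tilde{\alpha})_{i,c}=\mathsf{E}_c(n,i)$ and $\delta_{\ge k}(\tilde{\beta})_{j,c}=\mathsf{N}_c(j,n)$ for $c\ge k$. This shows that $\delta_{\ge k}(\tilde{\alpha}),\delta_{\ge k}(\tilde{\beta})$ depend only on $\delta_{\ge k}(\alpha),\delta_{\ge k}(\beta)$ and $M^{(k)}$; alternatively the same conclusion follows by standardizing the configuration as in \cref{prop:equivalence_local_rules_RS_RSK} and applying \cref{prop:restriction_standard} color by color.

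Finally, for the membership $(\delta_{\ge k}(\tilde{\alpha}),\delta_{\ge k}(\tilde{\beta}))\in\mathcal{M}_n$, I would use that $\RSK=\iota_2^n$ preserves $\mathcal{M}_n$ (iterating the remark recorded after \cref{def:iota_matrices}), so that $\sum_i\tilde{\alpha}_{i,c}=\sum_i\tilde{\beta}_{i,c}$ for every column $c$; multiplying this equality by $\mathbf{1}_{c\ge k}$ preserves both the column-sum condition and non-negativity, giving the claim. I expect the only genuinely substantive step to be the identification of $M^{(k)}(c)$ with the birth term $\mathsf{S}_{k-1}(c)\wedge\mathsf{W}_{k-1}(c)$, since this is precisely what decouples the color-$\ge k$ dynamics from the lower colors; everything else is bookkeeping of the local rules and of the column-sum invariant.
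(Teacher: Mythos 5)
Your proof is correct and follows essentially the same route as the paper, whose one-line argument standardizes the $\mathbb{V}$-valued configuration and invokes \cref{prop:restriction_standard} together with \cref{prop:equivalence_local_rules_RS_RSK}; your explicit computation $M^{(k)}(c)=\mathsf{N}_k(c)\wedge\mathsf{E}_k(c)=\mathsf{S}_{k-1}(c)\wedge\mathsf{W}_{k-1}(c)$ just makes concrete, at the level of the fused local rules, the same "births of colour-$k$ lines close the colour-$\ge k$ subsystem" idea that underlies the shadow-line proof of the standard case. You additionally spell out the $\mathcal{M}_n$-membership via the column-sum invariant and truncation, which the paper leaves implicit.
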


\begin{proof}
    This is again consequence of \cref{prop:restriction_standard} and \cref{prop:equivalence_local_rules_RS_RSK}. 
\end{proof}

\section{Skew $\RSK$ and Viennot dynamics} \label{sec:iterated_RSK}

In this section we first introduce the skew $\RSK$ dynamics for a pair of tableaux, by iterations of the skew $\RSK$ maps studied in the previous section. A related dynamics, this time on the set of matrices $\overline{\mathbb{M}}_{n \times n}$, which we call the Viennot dynamics, is also defined and it will play an important role when describing conservation laws in \cref{sec:Greene_invariants}.

\subsection{The skew $\RSK$ dynamics} The following definition was sketched in the introductory chapter.

\begin{definition}[Skew $\RSK$ dynamics]
    We define a deterministic dynamics on the space of pairs of generalized tableaux by iterating the skew $\RSK$ map. Fix $P,Q\in SST(\lambda / \rho,n)$ and define the \emph{skew $\RSK$ dynamics} with initial data $(P,Q)$ as
    \begin{equation} \label{eq:RSK_ca}
        \begin{split}
            \begin{cases}
                (P_{t+1},Q_{t+1})=\RSK(P_t,Q_t), 
                \qquad t \in \mathbb{Z},
                \\
                (P_1,Q_1)=(P,Q).
            \end{cases}
        \end{split}
    \end{equation}
    Analogously, we define the skew $\RSK$ dynamics on the space of pairs of matrices. For a fixed initial state $(\alpha,\beta)\in\mathcal{M}_n$ we have
    \begin{equation}
        \begin{split}
            \begin{cases}
                (\alpha^{(t+1)},\beta^{(t+1)})=\RSK(\alpha^{(t)},\beta^{(t)}), 
                \qquad t \in \mathbb{Z},
                \\
                (\alpha^{(1)},\beta^{(1)})=(\alpha,\beta).
            \end{cases}
        \end{split}
    \end{equation}
\end{definition}

Since the skew $\RSK$ map is invertible, we see that each realization of the dynamics is uniquely characterized by its initial state. This observation is very powerful as it implies that specific properties of tableaux $(P,Q)$ can be deduced observing their state at an arbitrary time of the skew $\RSK$ dynamics. As already pointed out in the \cref{subs:examples}, dynamics \eqref{eq:RSK_ca} presents conservation laws, that we will characterize in \cref{sec:Greene_invariants} and \cref{sec:scattering_rules} below. Moreover, in \cref{sec:linearization} we will devise a linearization technique of the dynamics, which one can regard as a combinatorial variant of the inverse scattering method for classical integrable systems. The study of asymptotic states of dynamics will be exceptionally revealing.

\subsection{Edge configurations on the twisted cylinder} \label{subs:edge_config_on_cyl}

In the \cref{sec:miscellaneous} we have seen the equivalence between the two versions of the skew $\RSK$ maps on tableaux and on matrices through edge local rules on a finite rectangular lattice. Here we introduce an infinite lattice with certain periodicity and edge configurations compatible with it. This will lead us to define a dynamics on the same lattice, which is closely related to the skew $\RSK$ dynamics. 

\begin{definition}[Twisted cylinder] \label{def:twisted_cylinder}
     The \emph{twisted cylinder} is the periodic lattice $\mathscr{C}_n= \mathbb{Z}^2 / \sim_n$, where $(j,i)\sim_n(j', i')$ if $(j',i') = (j+kn,j-kn)$ for some $k\in\mathbb{Z}$. Natural representations of $\mathscr{C}_n$ we will use are (see \cref{fig:twisted_cylinder}):
     \begin{equation} \label{eq:twisted_cyl_vertical}
         \begin{minipage}{.9\linewidth}
             the infinite vertical strip $\{ 1, \dots , n\} \times \mathbb{Z}$, where we impose faces $(n,i)$ and $(1,i+n)$ to be adjacent for all $j$.
         \end{minipage}
     \end{equation}
     \begin{equation}\label{eq:twisted_cyl_horizontal}
         \begin{minipage}{.9\linewidth}
             the infinite horizontal strip $\mathbb{Z} \times \{ 1, \dots , n\}$ where we impose faces $(j,n)$ and $(j+n,1)$ to be adjacent for all $j$. 
         \end{minipage}
     \end{equation}
\end{definition}

\begin{figure}[t]
    \centering
    \includegraphics[scale=.8]{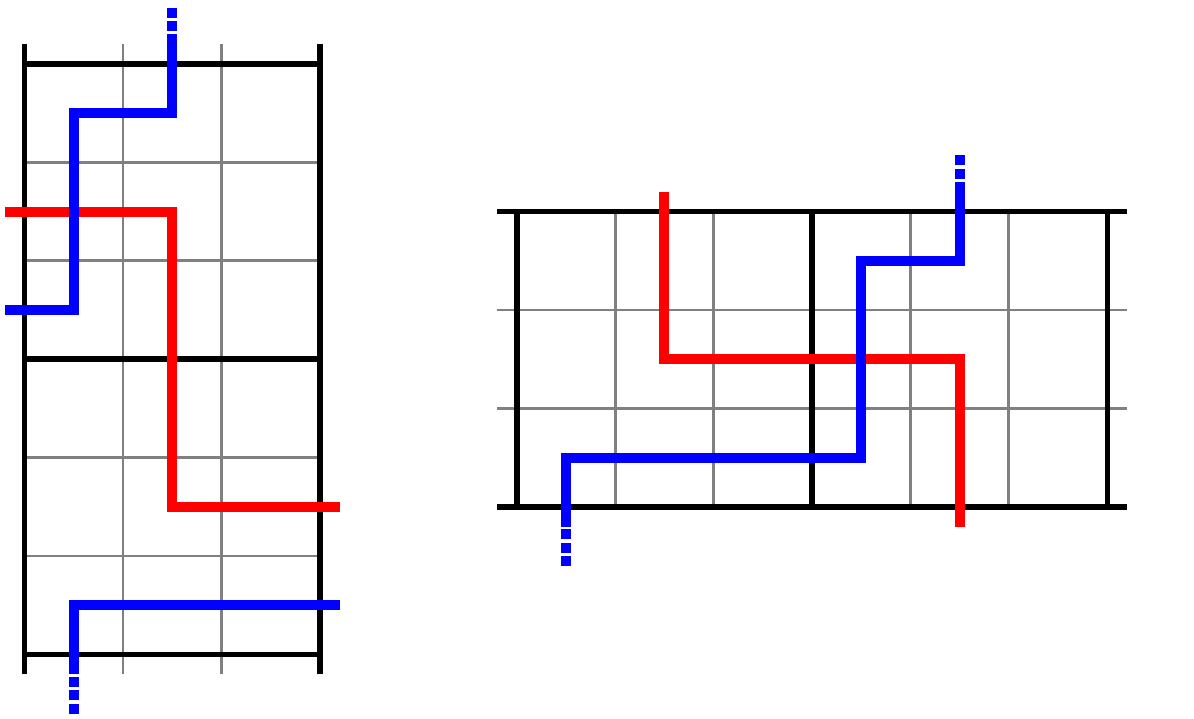}
    \caption{Two graphical representation of the twisted cylinder $\mathscr{C}_3$. Blue line represents an up-right path, while red line a down-right loop.}
    \label{fig:twisted_cylinder}
\end{figure}

A \emph{down-right loop} $\xi$ on $\mathscr{C}_n$ is a sequence $(\xi_k: k\in \{1,\dots,2n\}) \subset \mathscr{C}_n$, such that 
\begin{equation}
    \xi_{k+1} \sim_n \xi_{k} + \mathbf{e}_1,
    \qquad
    \text{or}
    \qquad
    \xi_{k+1} \sim_n \xi_{k} - \mathbf{e}_2,
\end{equation}
for $k=1,\dots, 2n$, where indices are taken $\mod 2n$. An example of a down-right loop is the red path drawn in \cref{fig:twisted_cylinder}, having the form
$$
c \to c+\mathbf{e}_1 \to \cdots \to c+n\mathbf{e}_1 \to c+n\mathbf{e}_1-\mathbf{e}_2 \to \cdots \to c+n\mathbf{e}_1-(n-1)\mathbf{e}_2 \to c.
$$
for some $c\in \mathscr{C}_n$. Assigning edge values along a down right loop $\xi$ automatically determines the full configuration $\mathcal{E}$ on $\mathscr{C}_n$ as a result of local rules \eqref{eq:RSK_local_rules}. We use this to visualize the skew $\RSK$ dynamics on the set of matrices as an edge configuration on $\mathscr{C}_n$. To any pair
$(\alpha, \beta) \in \mathcal{M}_n$ we associate the edge configuration $\mathcal{E}$ on $\mathscr{C}_n$ identified by the assignment
    \begin{equation} \label{eq:map_matrices_config}
        (\alpha , \beta) \mapsto \mathcal{E}
        \qquad
        \colon
        \qquad
        \mathsf{E}_k(n, i) = \alpha_{i,k},
        \qquad
        \mathsf{N}_k (i,n) = \beta_{i,k},
        \qquad
        \text{for } i=1,\dots,n, \,\,k\in \mathbb{Z}. 
    \end{equation}
The subclass of admissible edge configurations accessible through mapping \eqref{eq:map_matrices_config} is defined next.

\begin{definition} \label{def:complete_edge_conf}
    Let $\mathcal{E}$ be a $\mathbb{V}$-valued admissible edge configurations on $\mathscr{C}_n$ and for any fixed $c\in \mathscr{C}_n$ define the pair $(\alpha, \beta)_c$ as $\alpha_{i,k} = \mathsf{W}_k(c+ (i-1) \mathbf{e}_2)$ and $\beta_{i,k} = \mathsf{S}_k(c+ (i-1) \mathbf{e}_1)$ for $i=1,\dots,n$, $k\in \mathbb{Z}$. We define the set $\mathfrak{E}_n$ consisting of all configurations $\mathcal{E}$ such that $(\alpha,\beta)_c \in \mathcal{M}_n$ for all $c\in \mathscr{C}_n$.
\end{definition}

\begin{proposition} \label{prop:local_completeness}
    The sets $\mathcal{M}_n$ and $\mathfrak{E}_n$ are in bijection.
\end{proposition}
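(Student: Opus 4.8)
The plan is to exhibit the two maps explicitly and check they are mutually inverse. Define the \emph{construction map} $\Psi:\mathcal{M}_n \to \{\text{configurations on }\mathscr{C}_n\}$ by the prescription \eqref{eq:map_matrices_config}, and the \emph{reading map} $\Omega:\mathfrak{E}_n\to\mathcal{M}_n$ by $\Omega(\mathcal{E})=(\alpha,\beta)_{c_0}$, where $c_0$ is the base point for which the reading of \cref{def:complete_edge_conf} records exactly the edges appearing in \eqref{eq:map_matrices_config} (the $\mathsf{E}$- and $\mathsf{N}$-edges prescribed there coincide, by the periodic identification of $\mathscr{C}_n$, with the $\mathsf{W}$- and $\mathsf{S}$-edges of the shifted faces). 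By the very definition of $\mathfrak{E}_n$ the output of $\Omega$ lies in $\mathcal{M}_n$, so $\Omega$ is well defined; the real work is to show $\Psi$ is well defined, i.e.\ that the loop data coming from a pair in $\mathcal{M}_n$ extends to a genuine element of $\mathfrak{E}_n$, and then that $\Psi,\Omega$ are inverse to one another.

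First I would record the two facts about the local rules \eqref{eq:RSK_local_rules} that drive everything. The first is that the rule is a bijection between the pair $(\mathsf{W},\mathsf{S})$ and the pair $(\mathsf{E},\mathsf{N})$ at a single face: after standardization (\cref{subs:standardization}, \cref{prop:equivalence_local_rules_RS_RSK}) this reduces to the $\mathbb{Z}$-valued rule \eqref{eq:local_rules}, which one inverts by hand (if $\mathsf{E}\ne\mathsf{N}$ then $\mathsf{W}=\mathsf{E}$, $\mathsf{S}=\mathsf{N}$; if $\mathsf{E}=\mathsf{N}$ then $\mathsf{W}=\mathsf{S}=\mathsf{E}-1$). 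Consequently one may propagate an admissible configuration both up-right and down-left, and, together with the observation made after \cref{def:twisted_cylinder} that the values on a down-right loop determine the whole configuration, this yields that an admissible $\mathcal{E}$ on $\mathscr{C}_n$ is uniquely reconstructed from its restriction to any single down-right loop. The second fact is the pointwise identity read off directly from \eqref{eq:RSK_local_rules}, namely
\begin{equation}
    \mathsf{E}_k(c) - \mathsf{W}_k(c) = \mathsf{N}_k(c) - \mathsf{S}_k(c) \qquad \text{for every face } c \text{ and every } k,
\end{equation}
which is a color-by-color conservation of flux across a face.

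With these in hand, the construction of $\Psi$ proceeds as follows. Given $(\alpha,\beta)\in\mathcal{M}_n$, place its entries on the down-right loop bounding the fundamental domain as in \eqref{eq:map_matrices_config} and propagate by the invertible local rules to obtain an admissible configuration $\mathcal{E}$ on all of $\mathscr{C}_n$; since the local rules descend to the quotient $\mathbb{Z}^2/\!\sim_n$ and the prescribed loop is a cycle on the cylinder, propagation is consistent around the periodic direction, while conservation of total flux keeps the configuration compactly supported. It then remains to check $\mathcal{E}\in\mathfrak{E}_n$, i.e.\ that $(\alpha,\beta)_c\in\mathcal{M}_n$ for \emph{every} $c$. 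Finiteness of support for each $c$ is immediate from compactness of the global support; the nontrivial point is the balance condition $\sum_i(\alpha_{i,k}-\beta_{i,k})=0$. Here I would argue that the quantity $\sum_i\alpha_{i,k}-\sum_i\beta_{i,k}$ attached to a down-right loop (color-$k$ values on the vertical edges minus those on the horizontal edges crossed by the loop) is invariant under the elementary move that slides the loop across one face $c$: such a move replaces the local contribution $\mathsf{W}_k(c)-\mathsf{S}_k(c)$ by $\mathsf{E}_k(c)-\mathsf{N}_k(c)$, and these are equal by the flux identity above. Since any two down-right loops are joined by a sequence of elementary moves, the balance is the same on every loop; it vanishes on the initial loop because $(\alpha,\beta)\in\mathcal{M}_n$, hence on all loops, giving $(\alpha,\beta)_c\in\mathcal{M}_n$.

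Finally I would verify that $\Psi$ and $\Omega$ are mutually inverse. The composite $\Omega\circ\Psi$ is the identity because $\Psi$ by construction leaves the prescribed loop edges unchanged and $\Omega$ merely reads them back. The composite $\Psi\circ\Omega$ is the identity because any $\mathcal{E}\in\mathfrak{E}_n$ is admissible and therefore, by the loop-determination fact, is the unique configuration agreeing with $\Omega(\mathcal{E})$ on the chosen loop, which is exactly $\Psi(\Omega(\mathcal{E}))$. The main obstacle in the whole argument is precisely the well-definedness of $\Psi$: one must simultaneously control the consistency of the propagation around the periodic cylinder and the invariance of the balance condition across loops, and it is the single flux identity $\mathsf{E}_k-\mathsf{W}_k=\mathsf{N}_k-\mathsf{S}_k$ that makes both of these work.
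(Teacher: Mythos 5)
Your proof is correct, but it follows a genuinely different route from the paper's. The paper's argument is a two-line reduction to machinery already in place: it observes that $(\alpha,\beta)_{(1+N_2,\,n+N_1+1)}=\iota_1^{N_1}\circ\iota_2^{N_2}(\alpha,\beta)$ (by \cref{def:iota_matrices} and periodicity), and then invokes \cref{prop:iota_rc_commute_semi} — the operations $\iota_1,\iota_2$ come from honest operations on pairs of tableaux, hence preserve $\mathcal{M}_n$ — to conclude that every reading $(\alpha,\beta)_c$ lies in $\mathcal{M}_n$. You instead work directly at the level of the local rules: the per-color flux identity $\mathsf{E}_k(c)-\mathsf{W}_k(c)=\mathsf{N}_k(c)-\mathsf{S}_k(c)$, which indeed follows immediately by subtracting the two lines of \eqref{eq:RSK_local_rules}, makes the balance $\sum_i(\alpha_{i,k}-\beta_{i,k})$ invariant under sliding a down-right loop across a face, and connectivity of the loops under such slides does the rest. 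Your approach buys self-containedness and makes the conservation mechanism transparent without ever mentioning tableaux; the paper's buys brevity and reuses results it needs anyway. Two small presentational points: (i) the elementary slide only applies at a corner face of the staircase, so the intermediate loops between the one based at $c$ and the one based at $c+\mathbf{e}_1$ are general staircases rather than L-shapes — worth saying explicitly, though the argument goes through; (ii) the finiteness of support of $(\alpha,\beta)_c$ is automatic because each edge value already lies in $\mathbb{V}$, so no appeal to compactness of the global support is needed there.
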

\begin{proof}
    We only need to show that configuration $\mathcal{E}$ defined by \eqref{eq:map_matrices_config} belongs to $\mathfrak{E}_n$. Notice first that, in the notation of \cref{def:complete_edge_conf} we have $(\alpha,\beta) = (\alpha,\beta)_{(1,n+1)}$.
    For fixed $N_1,N_2 \in \mathbb{Z}$, let $(\widetilde{\alpha},\widetilde{\beta})=\iota_1^{N_1} \circ \iota_2^{N_2} (\alpha,\beta)$. Then, by \cref{def:iota_matrices} and taking into account periodicity of $\mathscr{C}_n$ we have $(\tilde{\alpha},\tilde{\beta})=(\alpha,\beta)_{(1+N_2, n+N_1+1)}$. Since $(\tilde{\alpha},\tilde{\beta}) \in \mathcal{M}_n$ for all $N_1,N_2$, by \cref{prop:iota_rc_commute_semi}, then $\mathcal{E} \in \mathfrak{E}_n$.
\end{proof}

\begin{corollary}
    Let $(\alpha^{(t)},\beta^{(t)})$ the skew $\RSK$ dynamics with initial data $(\alpha^{(1)},\beta^{(1)}) =(\alpha,\beta) \in \mathcal{M}_n$ and let $\mathcal{E}$ be the configuration associated to $(\alpha,\beta)$ by \eqref{eq:map_matrices_config}. Then, for all $i=1,\dots,n$ and $t \in \mathbb{Z}$ we have $(\alpha^{(t)},\beta^{(t)}) = (\alpha,\beta)_{(1,tn+1)}$;
    see \cref{fig:twisted_cyl_matrices}.
\end{corollary}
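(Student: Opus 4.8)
The plan is to reduce the statement to the shift identity established inside the proof of \cref{prop:local_completeness}, so that essentially no new computation is required. First I would record that, by the very definition of the skew $\RSK$ dynamics on matrices together with the invertibility of the skew $\RSK$ map, one has $(\alpha^{(t)},\beta^{(t)})=\RSK^{t-1}(\alpha,\beta)$ for every $t\in\mathbb{Z}$, with $\RSK^{-1}$ understood for negative exponents. The key rewriting is then to express this iterate through the elementary cycling operation $\iota_2$: since $\RSK=\iota_2^{\,n}$ on pairs of matrices (a consequence of \cref{prop:RSK_from_n_int_ins} transported to $\mathcal{M}_n$ via the compatibility of $\iota_\epsilon$ and $\RSK$ with the row coordinate parameterization in \cref{prop:iota_rc_commute_semi}), I obtain $(\alpha^{(t)},\beta^{(t)})=\iota_2^{\,n(t-1)}(\alpha,\beta)$.

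Next I would specialize the identity proved inside \cref{prop:local_completeness}, namely $\iota_1^{N_1}\circ\iota_2^{N_2}(\alpha,\beta)=(\alpha,\beta)_{(1+N_2,\,n+N_1+1)}$, to the values $N_1=0$ and $N_2=n(t-1)$. That identity holds for all integers $N_1,N_2$ (including negative ones, since $\iota_1,\iota_2$ are invertible and act as the horizontal and vertical unit shifts of the edge configuration on $\mathscr{C}_n$), and it immediately yields $(\alpha^{(t)},\beta^{(t)})=(\alpha,\beta)_{(1+n(t-1),\,n+1)}$.

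Finally I would invoke the periodicity of the twisted cylinder. Because $(\alpha,\beta)_c$ depends only on the class of $c$ in $\mathscr{C}_n=\mathbb{Z}^2/\!\sim_n$, and the defining relation $(j,i)\sim_n(j+kn,\,i-kn)$ with $k=1-t$ gives
\begin{equation}
    \bigl(1+n(t-1),\,n+1\bigr)\sim_n\bigl(1,\;n+1+n(t-1)\bigr)=(1,\,tn+1),
\end{equation}
I conclude $(\alpha^{(t)},\beta^{(t)})=(\alpha,\beta)_{(1,\,tn+1)}$. Conceptually the corollary just says that one application of $\RSK$ advances the base face $c$ from which $(\alpha,\beta)_c$ is read off by $n\mathbf{e}_1$, which the twist converts into an advance by $n\mathbf{e}_2$; this is precisely the content of reading $\iota_2$ as a horizontal shift. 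I do not anticipate a genuine obstacle: the only points needing care are the bookkeeping of the periodic identification that matches $(1+n(t-1),n+1)$ with $(1,tn+1)$, and confirming that the shift identity from \cref{prop:local_completeness} remains valid for the negative exponents arising when $t<1$, which is guaranteed by the invertibility of $\iota_2$.
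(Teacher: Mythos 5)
Your proof is correct and is essentially the argument the paper intends: the corollary is stated without proof precisely because it is the specialization $N_1=0$, $N_2=n(t-1)$ of the shift identity $\iota_1^{N_1}\circ\iota_2^{N_2}(\alpha,\beta)=(\alpha,\beta)_{(1+N_2,\,n+N_1+1)}$ established inside the proof of \cref{prop:local_completeness}, combined with $\RSK=\iota_2^{\,n}$ and the periodic identification on $\mathscr{C}_n$ (your bookkeeping of which is right). The only cosmetic simplification available is to use $\RSK=\iota_1^{\,n}$ instead, i.e.\ $N_1=n(t-1)$, $N_2=0$, which lands on $(1,tn+1)$ directly without the final $\sim_n$ step.
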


\begin{figure}[t]
    \centering
    \includegraphics{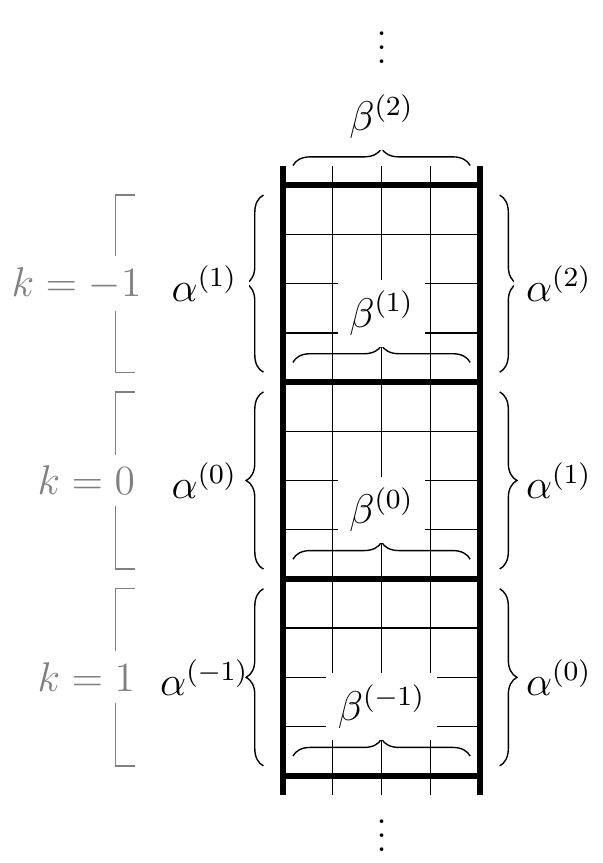}
    \caption{A visualization of the skew $\RSK$ dynamics of matrices $(\alpha^{(t)},\beta^{(t)})$ as edges of a configuration $\mathcal{E}$. Faces of $\mathscr{C}_n$ have coordinates $(j,i-kn)$.}
    \label{fig:twisted_cyl_matrices}
\end{figure}

\subsection{Periodic shadow line construction and Viennot dynamics} \label{subs:periodic_shadow_line}
Edge configurations $\mathcal{E} \in \mathfrak{E}_n$, in analogy with the finite case, identify families of compactly supported maps $\overline{M}^{(t)}:\mathscr{C}_n \to \mathbb{N}_0$ assigning
\begin{equation}\label{eq:from_matrix_to_conf}
    \overline{M}^{(t)}(c) = \mathsf{N}_t(c) \wedge \mathsf{E}_t(c).
\end{equation}
On the other hand, as proven in \cref{thm:matrices_configurations} below and for any map $\overline{M} \in \overline{\mathbb{M}}_{n\times n}$ we can construct the family $\overline{M}^{(t)}$, with the convention that $\overline{M}^{(1)}=\overline{M}$ and the configuration $\mathcal{E}$. Such procedure constitutes a periodic variant of Viennot shadow line construction \cite{Viennot_une_forme_geometrique} or of Fulton's matrix ball construction \cite{fulton1997young}.

\begin{definition}[Viennot map] \label{def:Viennot_map}
    
    Let $\overline{M}\in \overline{\mathbb{M}}_{n\times n}$. 
    At each face $c \in \mathscr{C}_n$ allocate $\overline{M}(c)$ black bullets and apply the shadow line construction explained at the beginning of section \ref{subs:RSK_prod_matrices}, letting each bullet emanate two black rays in the north and east direction.
    By periodicity of $\mathscr{C}_n$ and the fact that there are only a finite number of black bullets, each ray terminates somewhere intersecting with another. The collection of such mutual intersections of rays determines a new generation of red bullets, see \cref{fig:Viennot_map} right panel, and we define
    \begin{equation}
        \overline{M}'(c) = \#  \text{ of new generation bullets at cell $c$ }.
    \end{equation}
    The map $\mathbf{V} :\overline{M} \mapsto \overline{M}'$ takes the name of \emph{Viennot map}.
    Using correspondence \eqref{eq:matrix_weighted_biword} we define the action of Viennot map also on weighted biwords $\mathbf{V}(\overline{\pi}) = \overline{\pi}'$, imposing $\overline{M}(\overline{\pi}')= \mathbf{V}(\overline{M}(\overline{\pi}))$.
\end{definition}

An example of evaluation of map $\mathbf{V}$ is reported in \cref{fig:Viennot_map}. There, we see the transition
\begin{equation}\label{eq:Viennot_map_example_2}
    \overline{\pi} = 
    \left(
    \begin{matrix}
        1 & 1 & 2 & 2 & 2 & 2 & 3
        \\
        1 & 2 & 1 & 1 & 1 & 1 & 3
        \\
        0 & 1 & 1 & 1 & 0 & 0 & 1
    \end{matrix}
    \right)
    \xrightarrow[]{\hspace{.6cm}}
    \V(\overline{\pi}) = 
    \left(
        \begin{matrix}
        1 & 1 & 2 & 2 & 2 & 2 & 3
        \\
        1 & 3 & 1 & 1 & 1 & 2 & 1
        \\
        0 & 0 & 1 & -1 & -1 & 0 & 0
    \end{matrix}
    \right).
\end{equation}
The shadow lines
produced by the computation of $\overline{\pi} \mapsto \mathbf{V}(\overline{\pi})$ consist in a sequence of connected broken lines as we see in \cref{fig:Viennot_map}. Recording the cells of $\mathscr{C}_n$ visited by each of these broken lines we naturally define a sequence of down right loops $\xi^{(1)},\xi^{(2)},\dots$. We will use these loops later in \cref{sec:Greene_invariants}.For instance in \cref{fig:Viennot_map} (b) the two topmost loops have the same form
\begin{equation}
    (1,4) \to (2,4) \to (2,3) \to (2,2) \to (2,1) \to (3,1) \to (1,4).
\end{equation}

Map $\mathbf{V}$ can be thought as a generator of a deterministic dynamics on the space $\overline{\mathbb{M}}_{n\times n}$.

\begin{definition}[Viennot dynamics]
  \label{def:Vdyn}
    Fix $\overline{M} \in \overline{\mathbb{M}}_{n\times n}$ and define the \emph{Viennot dynamics} with initial data $\overline{M}$ as
    \begin{equation} \label{eq:viennot_ca}
        \begin{cases}
            \overline{M}^{(t+1)} = \mathbf{V}(\overline{M}^{(t)}),\qquad t\in \mathbb{Z}
            \\
            \overline{M}^{(1)} = \overline{M}.
        \end{cases}
    \end{equation}
    Analogously, for $\overline{\pi}\in \overline{\mathbb{A}}_{n,n}$, one defines the Viennot dynamics $\overline{\pi}^{(t+1)}=\mathbf{V}(\overline{\pi}^{(t)})$ with initial data $\overline{\pi}^{(1)}=\overline{\pi}$.
\end{definition}

\begin{remark}
    This may be considered a generalization of shadow line construction on $\Lambda_{n,n}$ of classical RS correspondence, see Remark 
    \ref{shrs}. 
\end{remark}

\begin{figure}[ht]
        \centering
        \subfloat[]{{\includegraphics[scale=1.4]{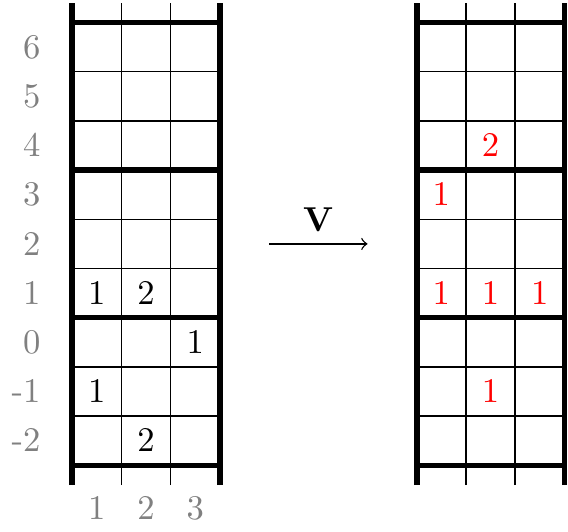} }}%
        \hspace{2cm}
        \subfloat[]{{\includegraphics[scale=0.7]{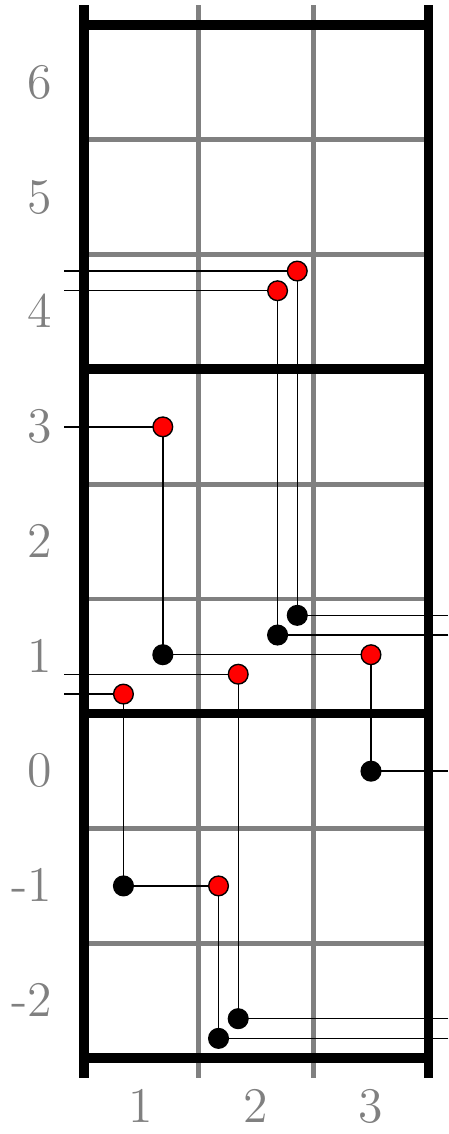}}}%
        \caption{In the left panel we see the evaluation of Viennot map transforming $\overline{\pi}$ of \eqref{eq:Viennot_map_example_2}. This is done through the periodic shadow line construction explained in 
        \cref{def:Viennot_map} and
        represented in the right panel. We made cells of $\mathscr{C}_n$ fattier in order to allocate 
        multiple bullets while letting them keep the correct relative positions.}
        \label{fig:Viennot_map}
    \end{figure}

\begin{proposition} \label{thm:matrices_configurations}
    The sets $\mathfrak{E}_n$, $\mathcal{M}_n$ and $\overline{\mathbb{M}}_{n
        \times n}$ are in bijection
\end{proposition}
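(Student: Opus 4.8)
The plan is to prove the three-fold bijection by composing two bijections, the first of which is already in hand. By \cref{prop:local_completeness} the sets $\mathcal{M}_n$ and $\mathfrak{E}_n$ are in bijection, so it suffices to construct a bijection between $\mathfrak{E}_n$ and $\overline{\mathbb{M}}_{n\times n}$. In one direction I would use \eqref{eq:from_matrix_to_conf} at $t=1$: send a configuration $\mathcal{E}\in\mathfrak{E}_n$ to the filling $\Theta(\mathcal{E})=\overline{M}$ with $\overline{M}(c)=\mathsf{N}_1(c)\wedge\mathsf{E}_1(c)$. The first thing to check is that $\Theta(\mathcal{E})$ is compactly supported, hence a genuine element of $\overline{\mathbb{M}}_{n\times n}$. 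This follows from the shadow-line picture: the color-$1$ lines of $\mathcal{E}$ are closed down-right loops on $\mathscr{C}_n$ (each of length $2n$, as in \cref{fig:Viennot_map}), there are only finitely many of them because the reading pairs $(\alpha,\beta)_c\in\mathcal{M}_n$ have finite support, and the color-$1$ bullets sit at their mutual intersections, of which there are therefore finitely many.

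For the inverse map $\Xi:\overline{\mathbb{M}}_{n\times n}\to\mathfrak{E}_n$ I would run the periodic shadow-line construction of \cref{def:Viennot_map} in both time directions. Starting from the black bullets $\overline{M}=\overline{M}^{(1)}$, the construction draws the color-$1$ loops and, at their inner corners, the color-$2$ bullets $\overline{M}^{(2)}=\V(\overline{M})$; iterating the Viennot map produces the bullets and shadow lines of every color $t\ge 1$. The colors $t\le 0$ are recovered by running the same construction backwards, which is legitimate because the local rules \eqref{eq:RSK_local_rules} constitute, face by face, an explicit bijection between the incoming edge data $(\mathsf{W},\mathsf{S})$ and the outgoing edge data $(\mathsf{E},\mathsf{N})$; reversing all arrows yields the same kind of compactly supported closed-loop construction and hence a well-defined generation $\overline{M}^{(0)}=\V^{-1}(\overline{M}^{(1)})$, and inductively all $\overline{M}^{(t)}$ with $t<0$. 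Superimposing the shadow lines of all colors defines the edge values $\mathsf{E}_t,\mathsf{N}_t,\mathsf{W}_t,\mathsf{S}_t$ and hence a $\mathbb{V}$-valued admissible configuration $\mathcal{E}=\Xi(\overline{M})$; that $\mathcal{E}$ lies in $\mathfrak{E}_n$, i.e.\ that every reading pair $(\alpha,\beta)_c$ satisfies the balance condition of \eqref{eq:M_n_plus}, is automatic since along each shadow loop the number of lines of a given color entering from the west and south matches that leaving to the east and north.

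Finally I would verify $\Theta\circ\Xi=\mathrm{id}$ and $\Xi\circ\Theta=\mathrm{id}$. The first is immediate: by construction the color-$1$ bullets of $\Xi(\overline{M})$ are exactly the corners prescribed by $\overline{M}$, so $\mathsf{N}_1\wedge\mathsf{E}_1$ returns $\overline{M}$. The second holds because an admissible configuration is uniquely determined by its bullets of all colors together with compact support, and \eqref{eq:from_matrix_to_conf} recovers those bullets, after which the shadow-line rule reproduces precisely the lines of $\mathcal{E}$. The main obstacle in all of this is the well-posedness of the backward half of the construction on the twisted cylinder: on a finite lattice the reversibility of the local rule is classical, but the infinite periodic geometry forces one to argue separately that the color-$t$ loops remain compactly supported and that the iteration of $\V^{-1}$ never produces infinitely many bullets. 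This is where the closed down-right loop structure of \cref{fig:Viennot_map} and the finiteness of the support of the pairs $(\alpha,\beta)\in\mathcal{M}_n$ do the essential work.
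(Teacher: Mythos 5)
Your architecture coincides with the paper's: both proofs invoke \cref{prop:local_completeness} for $\mathcal{M}_n\leftrightarrow\mathfrak{E}_n$ and then realize the remaining bijection $\overline{\mathbb{M}}_{n\times n}\leftrightarrow\mathfrak{E}_n$ via the periodic shadow-line construction, with $\overline{M}(c)=\mathsf{N}_1(c)\wedge\mathsf{E}_1(c)$ in one direction and the Viennot dynamics in the other. The forward half of your argument (compact support of $\Theta(\mathcal{E})$, and $\Theta\circ\Xi=\mathrm{id}$) is fine. The gap is in the verification that the superposition of all generations of shadow lines defines an element of $\mathfrak{E}_n$. You correctly flag this as the main obstacle, but then only assert that ``the closed down-right loop structure and the finiteness of the support do the essential work.'' Two things are left unproved: (i) that each edge of $\mathscr{C}_n$ is crossed by lines of only finitely many colors $t$ — needed both for the edge labels to lie in $\mathbb{V}$ and for the reading pairs $(\alpha,\beta)_c$ to have the finite support demanded by \eqref{eq:M_n_plus}; and (ii) the balance condition in \eqref{eq:M_n_plus} at every $c$. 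Point (i) is the serious one: it amounts to showing that the bullets $\overline{M}^{(t)}$ drift off to infinity as $t\to\pm\infty$, which is essentially \cref{prop:Viennot_asymptotic} — a result proved later and whose proof rests on the correspondence $(P,Q)\xrightarrow{\skwRSK\,}\overline{M}$ that the present proposition is needed to establish. So the most natural way of closing your gap is circular at this stage of the development.

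The paper sidesteps the issue with a forward-only induction in the RSK-time (spatial) direction, powered by the restriction property of \cref{prop:restriction_matrices}. Compact support of $\overline{M}$ gives $k^*$ with $\delta_{\ge1}(\alpha^{(-k^*)})=\delta_{\ge1}(\beta^{(-k^*)})=0$, and the truncation $\delta_{\ge1}(\alpha^{(t+1)},\beta^{(t+1)})$ is determined by $\delta_{\ge1}(\alpha^{(t)},\beta^{(t)})$ together with the relevant slice of $\overline{M}$, remaining in $\mathcal{M}_n$ at each step; after $2k^*$ steps the truncation is the full pair, so $(\alpha^{(k^*)},\beta^{(k^*)})\in\mathcal{M}_n$ is uniquely determined. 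This single induction delivers existence, uniqueness and $\mathfrak{E}_n$-membership at once, with no control of the $t\to\pm\infty$ behaviour of the Viennot dynamics and no need to invert $\mathbf{V}$ separately. To repair your proof, replace the appeal to ``finiteness of the support'' by this restriction argument, or supply an independent monotonicity statement for the drift of bullets under $\mathbf{V}^{\pm1}$ that does not rely on the asymptotic results of \cref{subs:Viennot_asymptotics}.
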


\begin{proof}
    The bijection between $\mathfrak{E}_n$ and $\mathcal{M}_n$ is proven in \cref{prop:local_completeness}. We then need to prove that the assignment 
    \begin{equation}
        \overline{M}(c) = \mathsf{N}_1(c) \wedge \mathsf{E}_1(c)
    \end{equation}
    defines a bijection $\overline{M} \mapsto \mathcal{E}$ between $\overline{\mathbb{M}}_{n \times n}$ and $\mathfrak{E}_n$. 
    Let $\overline{M}^{(t)}$ be the Viennot dynamics with initial data $\overline{M}$. Then for any $c\in \mathscr{C}_n$ we set $\mathsf{W}_t(c)$ as the number of lines of the construction $\overline{M}^{(t)} \to \overline{M}^{(t+1)}$ entering the face $c$ from west and similarly for $\mathsf{S}_t,\mathsf{E}_t,\mathsf{N}_t$. From such edge configuration define $(\alpha^{(t)},\beta^{(t)}) = (\alpha,\beta)_{(1,tn+1)}$, following the notation of \cref{def:complete_edge_conf} and recall the truncation operator $\delta_{\ge1}$ from \cref{prop:restriction_matrices}.
    Since $\overline{M}$ is compactly supported there exists $k^* > 0$ such that $\overline{M}_{i,j}(k)=0$ for all $i,j\in\{1,\dots,n\}$ and $|k|>k^*$. This implies that $\delta_{\ge 1}(\alpha^{(-k^*)}) = \delta_{\ge 1}(\beta^{(-k^*)}) = 0$ and $\delta_{\ge 1}(\alpha^{(k^*)}) = \alpha^{(k^*)}$,  $\delta_{\ge 1}(\beta^{(k^*)}) = \beta^{(k*)}$. By recursive application of \cref{prop:restriction_matrices} we find that $(\alpha^{(k^*)},\beta^{(k^*)}) \in \mathcal{M}_n$ and they are uniquely determined. This implies that $\mathcal{E}\in\mathfrak{E}_n$ completing the proof. 
    Now that we have constructed the correspondence $\overline{M} \leftrightarrow \mathcal{E}$ we can associate to the matrix $\overline{M}$ the pair $(\alpha,\beta)$ as in \eqref{eq:map_matrices_config}.
    
\end{proof}

\begin{remark}
    The notation $\overline{M}^{(t)}$ 
    appeared already in \eqref{eq:from_matrix_to_conf}
    before \cref{def:Vdyn} but they
    are consistent because, 
    if a matrix $\overline{M}$ and a configuration $\mathcal{E}$ are in correspondence and $\overline{M}^{(t)}$ is the Viennot dynamics with initial data $\overline{M}$, then \eqref{eq:from_matrix_to_conf} holds.
\end{remark}

If a pair $(\alpha,\beta) \in \mathcal{M}_n$ and a matrix $\overline{M} \in \overline{\mathbb{M}}_{n \times n}$ are in correspondence through the bijection described in \cref{thm:matrices_configurations} we will use the notation $(\alpha,\beta) \xleftrightarrow[]{\skwRSK\,} \overline{M}$. Moreover, composing such correspondence with the row-coordinate parameterization $(P,Q) \xrightarrow[]{\rc} (\alpha,\beta)$ defines a projection denoted by
\begin{equation} \label{eq:map_PQ_M}
    (P,Q) \xrightarrow[]{\skwRSK \,} \overline{M}.
\end{equation}
An analogous projection $(P,Q) \xrightarrow[]{\skwRSK \,} \overline{\pi} \in \overline{\mathbb{A}}_{n,n}$ is defined taking advantage of mapping \eqref{eq:matrix_weighted_biword}.

By the same arguments as in proof of  \cref{thm:matrices_configurations}, 
$\mathcal{M}_n^+$ and $\overline{\mathbb{M}}_{n \times n}^+$
are also in bijection. Combining this with the bijection between $\mathcal{M}_n^+\times\mathbb{Y}$ and pairs $(P,Q)$ of classical semi-standard tableaux reported in \cref{prop:row_coordinate_classical_pair} gives the Sagan-Stanley correspondence stated below. 

\begin{theorem}[\cite{sagan1990robinson}, Theorem 6.6] \label{thm:SS}
    There exists a canonical bijection
    \begin{equation}
        \overline{\mathbb{M}}_{n \times n}^+ \times \mathbb{Y}
        \xleftrightarrow[]{\hspace{.4cm}\skwRSK\hspace{.4cm}} \bigcup_{\rho,\lambda \in \mathbb{Y}} SST(\lambda / \rho ,n) \times SST(\lambda / \rho ,n),
    \end{equation}
    which we denote by $(\overline{M},\nu) \xleftrightarrow[]{\skwRSK\,}(P,Q)$. Moreover the property
    \begin{equation} \label{eq:SS_weight_preserving}
        |\rho| = \wt(\overline{M}) + |\nu|,
    \end{equation}
    holds.
\end{theorem}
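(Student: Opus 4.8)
The plan is to assemble the bijection from correspondences already in place and then isolate the one genuinely new ingredient, the weight identity. First I would record the non-negative refinement of \cref{thm:matrices_configurations}: the recursion in its proof, which peels off the configuration color by color through repeated use of the restriction \cref{prop:restriction_matrices} and the compact support of $\overline{M}$, never uses negativity of entries, so the very same argument restricts to a bijection $\mathcal{M}_n^+ \longleftrightarrow \overline{\mathbb{M}}_{n\times n}^+$. Composing this with the row-coordinate bijection $\mathcal{M}_n^+ \times \mathbb{Y} \xleftrightarrow{\rc} \bigcup_{\rho,\lambda} SST(\lambda/\rho,n) \times SST(\lambda/\rho,n)$ of \cref{prop:row_coordinate_classical_pair} produces the map $\skwRSK$ and settles the bijection claim; the intermediate data is exactly $(\alpha,\beta) = \rc(P,Q)$ together with $\nu = \ker(P,Q)$.

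It then remains to prove $|\rho| = \wt(\overline{M}) + |\nu|$. By the shape formula extracted inside the proof of \cref{prop:row_coordinate_classical_pair}, the empty shape decomposes as $\rho = \eta + \nu$, where $\eta$ is the minimal empty shape determined through the overlap relations; hence $|\rho| - |\nu| = |\eta|$ and the identity is equivalent to $|\eta| = \wt(\overline{M})$. Because standardization is compatible with $\rc$ and with the edge-local-rule formulation of \cref{prop:equivalence_local_rules_RS_RSK}, and because it merely splits a bullet of multiplicity $m$ and weight $k$ into $m$ standard bullets of the same weight $k$, all three quantities $|\rho|$, $|\nu|$ and $\wt(\overline{M})$ are unchanged under $\std$. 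I would therefore reduce to standard $(P,Q) \xleftrightarrow{\rc} (\mathpzc{a},\mathpzc{b};\nu)$, with $\overline{M}$ a weighted partial permutation and $\wt(\overline{M}) = \sum_i w_i$.

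For the standard case one must match $|\eta|$ against $\wt(\overline{M})$. Under the identification $\overline{M}(j,i-kn) = \overline{M}_{i,j}(k)$, the weight of a source bullet is precisely its winding number below the fundamental strip $\{1,\dots,n\}\times\{1,\dots,n\}$ of $\mathscr{C}_n$, so that $\wt(\overline{M})$ is the total winding of all sources; on the other side $(\alpha,\beta)$ is read off the boundary edges of that strip, and $|\eta|$ counts the empty cells forced when the rows recorded by $\mathpzc{a},\mathpzc{b}$ are squeezed to the left. The plan is to show these two integers agree by an induction that strips off the sources of maximal weight — equivalently, by shifting the fundamental strip upward one period at a time, using that $(\alpha^{(t)},\beta^{(t)}) = (\alpha,\beta)_{(1,tn+1)}$ moves the window of the Viennot dynamics of \cref{def:Vdyn} by $n$ rows. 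The base case of all weights zero reduces to classical $\RSK$, where $\rho=\varnothing$ and $\eta = \varnothing$, so the identity holds. Verifying that each such period-shift decreases $\wt(\overline{M})$ and $|\eta|$ by the same amount — i.e. that the shadow lines climbing through one period each account for exactly one empty cell — is the technical heart of the argument, and I expect this bookkeeping to be the main obstacle; it is precisely the count underlying Sagan and Stanley's Theorem~6.6, so as a fallback one may transport the statement to the original biword formulation and invoke \cite{sagan1990robinson} directly.
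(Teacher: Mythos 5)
Your first paragraph reproduces the paper's own derivation of the bijection: the text immediately preceding the theorem observes exactly that the arguments proving \cref{thm:matrices_configurations} restrict to a bijection between $\mathcal{M}_n^+$ and $\overline{\mathbb{M}}_{n\times n}^+$, which composed with \cref{prop:row_coordinate_classical_pair} yields $\skwRSK$; that part of your proposal is correct and identical in approach. Note also that the paper offers no proof of the weight identity \eqref{eq:SS_weight_preserving} at all --- the theorem is attributed to Sagan and Stanley --- so your fallback of invoking their Theorem~6.6 is in effect what the paper does.

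For the weight identity, your reduction to $|\eta|=\wt(\overline{M})$ via $\rho=\eta+\nu$ and the passage to the standard case are sound, but the induction you sketch does not close as stated, and the remaining step is not mere bookkeeping. Shifting the fundamental strip up by one period is the application of $\RSK^{-1}$: it subtracts $1$ from every weight, so the weight-zero cells acquire weight $-1$ and drop to non-positive rows, and the resulting object is a generalized pair of tableaux whose matrix no longer lies in $\overline{\mathbb{M}}_{n\times n}^+$. Your induction hypothesis (base case ``all weights zero, classical RSK'') therefore does not apply to the object produced by a single inductive step; the induction has to be run on generalized tableaux, tracking the positive part $\rho_+$ of the empty shape. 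The per-step identity one actually needs is that if $(P,Q)=\RSK(\tilde P,\tilde Q)$ then the number of empty cells at positive rows grows by the number of labeled cells already sitting at positive rows --- this is immediate from \cref{def:RSK_product_tableaux}, since the new empty shape is the old outer shape --- combined with the fact that the cells crossing from row $0$ to row $1$ at step $-k$ of the backward dynamics are precisely the cells of weight $k$ of $\overline{M}$ (cf.\ \cref{prop:Viennot_and_RSK_ca}). Summing these increments from the time at which no cell is yet visible (this is where $\nu=\varnothing$ enters) up to time $1$ gives $|\rho|=\sum_{k\ge 0} k\cdot(\#\text{ cells of weight }k)=\wt(\overline{M})$. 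Your heuristic that ``shadow lines climbing through one period each account for exactly one empty cell'' is the right picture, but as written it is an unproved claim standing in for the actual argument.
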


\subsection{Relations between skew $\RSK$ and Viennot dynamics}
    
The Viennot dynamics enjoys a very simple relations with the skew $\RSK$ dynamics which we describe in the two propositions below.

\begin{proposition} \label{prop:Viennot_and_RSK_ca}
    Let $\overline{M}^{(t)}, (P_t,Q_t)$ be respectively the Viennot and the skew $\RSK$ dynamics with initial data $\overline{M}^{(1)}=\overline{M}$ and $(P_1,Q_1)=(P,Q)$. Additionally assume that $(P,Q) \xrightarrow[]{\skwRSK \,} \overline{M}$. Then $\overline{M}_{i,j}^{(t)}(k)=m$ if and only if, during the update $(P_{-k},Q_{-k}) \to (P_{-k+1},Q_{-k+1})$, exactly $m$ $i$-cells of $P_{-k}$ move from row $t-1$ to row $t$ during the internal insertion of $j$-cells of $Q_{-k}$.
\end{proposition}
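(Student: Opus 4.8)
The plan is to translate both dynamics into statements about a single edge configuration $\mathcal{E}\in\mathfrak{E}_n$ on the twisted cylinder and to read the two sides of the claimed equivalence off the \emph{same} faces of $\mathcal{E}$. Let $\mathcal{E}$ be the configuration put in correspondence with $\overline{M}$, equivalently with $(\alpha,\beta)=\rc(P,Q)$, by \cref{thm:matrices_configurations}. I would first dispose of the Viennot side: by the consistency remark following \cref{thm:matrices_configurations} together with \eqref{eq:from_matrix_to_conf}, the Viennot dynamics with initial datum $\overline{M}$ satisfies $\overline{M}^{(t)}(c)=\mathsf{N}_t(c)\wedge\mathsf{E}_t(c)$ at every face $c$. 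Using the identification \eqref{eq:matrices_as_maps} this becomes
\begin{equation*}
    \overline{M}^{(t)}_{i,j}(k)=\overline{M}^{(t)}(j,i-kn)=\mathsf{N}_t(j,i-kn)\wedge\mathsf{E}_t(j,i-kn),
\end{equation*}
so the left-hand side of the proposition is exactly the number of color-$t$ bullets of $\mathcal{E}$ sitting at the global face $(j,i-kn)$, in the sense of \eqref{eq:num_bullets}.

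Next I would locate the skew $\RSK$ step $(P_{-k},Q_{-k})\to(P_{-k+1},Q_{-k+1})$ inside the same configuration. By \cref{prop:iota_rc_commute_semi} the row-coordinate parameterization intertwines the two dynamics, so $(P_{-k},Q_{-k})\xleftrightarrow[]{\rc\,}(\alpha^{(-k)},\beta^{(-k)};\nu)$, where $(\alpha^{(t)},\beta^{(t)})$ is the skew $\RSK$ dynamics on matrices and $\nu=\ker(P,Q)$ is constant in $t$. The corollary to \cref{prop:local_completeness} then identifies $(\alpha^{(-k)},\beta^{(-k)})=(\alpha,\beta)_{(1,-kn+1)}$; that is, this pair is read from the boundary edges of the window of $\mathscr{C}_n$ whose faces are $(j,-kn+i)$ for $i,j\in\{1,\dots,n\}$. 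The key structural point, which I would justify via the restriction property \cref{prop:restriction_matrices} together with local completeness, is that the computation of $\RSK(\alpha^{(-k)},\beta^{(-k)})$ on $\Lambda_{n,n}$ coincides with the restriction of $\mathcal{E}$ to this window: under the embedding the local face $(j,i)$ becomes the global face $(j,-kn+i)=(j,i-kn)$, and all edge values, including the color labels, agree.

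With the windows matched I would invoke \cref{prop:meaning_M_k} for the pair $(P_{-k},Q_{-k})$ with cell value $i$, insertion value $j$ and row index $t$: the number of $i$-cells of $P_{-k}$ that move from row $t-1$ to row $t$ during the internal insertion of the $j$-cells equals the number of color-$t$ bullets of the local configuration at the face $(j,i)$, which by \eqref{eq:num_bullets} is precisely $\mathsf{N}_t(j,i-kn)\wedge\mathsf{E}_t(j,i-kn)$ in $\mathcal{E}$. Comparing with the Viennot expression of the first paragraph, both quantities equal the same bullet count, which proves the proposition. The only genuinely delicate part is the bookkeeping in the second paragraph: checking that advancing the skew $\RSK$ dynamics by one unit of time corresponds exactly to translating the reading window by $n$ rows on $\mathscr{C}_n$, so that the time index $-k$, the level $k$ in $\overline{M}_{i,j}(k)$, and the color $t$ line up as asserted. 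Everything else is a direct application of results already established in \cref{sec:miscellaneous} and \cref{subs:edge_config_on_cyl}.
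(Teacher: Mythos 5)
Your proposal is correct and follows essentially the same route as the paper, whose entire proof is the single sentence that the statement is an immediate consequence of \cref{prop:meaning_M_k}. What you have written is a careful unpacking of that deduction — identifying the time-$(-k)$ skew $\RSK$ update with the window of faces $(j,i-kn)$ on $\mathscr{C}_n$ via \cref{prop:local_completeness} and its corollary, and matching the colour-$t$ bullet count there against $\overline{M}^{(t)}(j,i-kn)=\mathsf{N}_t\wedge\mathsf{E}_t$ from \eqref{eq:from_matrix_to_conf} — so the bookkeeping you flag as delicate is exactly the content the paper leaves implicit, and your indices line up correctly.
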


\begin{proof}
    This is an immediate consequence of \cref{prop:meaning_M_k}.
\end{proof}

\begin{proposition}\label{prop:RSK_ca_and_Viennot_ca}
    Let $P,Q\in SST(\lambda/\rho,n)$ and define $(P,Q) \xrightarrow[]{\skwRSK\,} \overline{M}$. Construct tableaux $P',Q'$ shifting up by one unit each cell of $P,Q$; i.e. $P'(c,r)=P(c,r+1)$ and same for $Q,Q'$. Then $(P',Q')\xrightarrow[]{\skwRSK\,} \overline{M}' = \mathbf{V}(\overline{M})$.
\end{proposition}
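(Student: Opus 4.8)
The plan is to pass from tableaux to row-coordinate matrices and then to the edge configuration on the twisted cylinder $\mathscr{C}_n$, where the upward row-shift on tableaux becomes a uniform shift of the color index. First I would record the effect of $P'(c,r)=P(c,r+1)$ on row-coordinate matrices. By the defining formula \eqref{eq:alpha_ij_def}, the number of $i$-cells at row $k$ of $P'$ equals the number of $i$-cells at row $k+1$ of $P$; hence, writing $(\alpha,\beta)=\rc(P,Q)$ and $(\alpha',\beta')=\rc(P',Q')$, one gets $\alpha'_{i,k}=\alpha_{i,k+1}$ and $\beta'_{i,k}=\beta_{i,k+1}$ for all $i,k$. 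Thus $(\alpha',\beta')$ is nothing but $(\alpha,\beta)$ with its color index shifted by one.

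Next, let $\mathcal{E},\mathcal{E}'\in\mathfrak{E}_n$ be the admissible $\mathbb{V}$-valued configurations associated to $(\alpha,\beta)$ and $(\alpha',\beta')$ via \eqref{eq:map_matrices_config}; their existence and uniqueness are guaranteed by \cref{prop:local_completeness} and \cref{thm:matrices_configurations}. The key observation is that the local rules \eqref{eq:RSK_local_rules} are equivariant under a global shift of the color index: at each face the color-$j$ output edges depend only on the color-$j$ and color-$(j-1)$ input edges, through a formula whose shape is independent of $j$. Consequently, defining the color-shifted configuration $\mathcal{E}^{(s)}$ by $\mathcal{E}^{(s)}_k:=\mathcal{E}_{k+1}$ (applied simultaneously to $\mathsf{W},\mathsf{S},\mathsf{E},\mathsf{N}$) again yields an admissible configuration, and it still lies in $\mathfrak{E}_n$ because the completeness requirement $(\alpha,\beta)_c\in\mathcal{M}_n$ of \cref{def:complete_edge_conf} is preserved by a uniform relabeling of colors. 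On the boundary edges one computes $\mathsf{E}^{(s)}_k(n,i)=\mathsf{E}_{k+1}(n,i)=\alpha_{i,k+1}=\alpha'_{i,k}$ and similarly $\mathsf{N}^{(s)}_k(i,n)=\beta'_{i,k}$, so $\mathcal{E}^{(s)}$ carries exactly the boundary data that defines $\mathcal{E}'$. By the uniqueness in the bijection $\mathcal{M}_n\leftrightarrow\mathfrak{E}_n$, I conclude $\mathcal{E}'=\mathcal{E}^{(s)}$.

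Finally I would read off the matrix. By \eqref{eq:from_matrix_to_conf} together with the remark following \cref{thm:matrices_configurations}, the matrix attached to $(P',Q')$ is $\overline{M}'(c)=\mathsf{N}'_1(c)\wedge\mathsf{E}'_1(c)$, and substituting $\mathcal{E}'=\mathcal{E}^{(s)}$ gives $\overline{M}'(c)=\mathsf{N}_2(c)\wedge\mathsf{E}_2(c)=\overline{M}^{(2)}(c)$, where $\overline{M}^{(2)}=\V(\overline{M}^{(1)})$ is exactly one step of the Viennot dynamics \eqref{eq:viennot_ca} applied to $\overline{M}^{(1)}=\overline{M}$. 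Hence $\overline{M}'=\V(\overline{M})$, as claimed. I expect the only genuine point requiring care to be the middle step: showing that the configuration determined by the shifted boundary data coincides with the color-shift of $\mathcal{E}$. This reduces to the color-equivariance of the local rules and to the uniqueness furnished by \cref{prop:local_completeness}; the remaining arguments are pure bookkeeping of indices.
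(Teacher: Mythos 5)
Your proposal is correct and follows essentially the same route as the paper's proof: pass to row-coordinate matrices to see that the upward shift becomes a shift of the color index, use the color-equivariance of the local rules \eqref{eq:RSK_local_rules} together with the uniqueness of the admissible configuration to identify $\mathcal{E}'$ with the color-shift of $\mathcal{E}$, and read off $\overline{M}'=\mathbf{V}(\overline{M})$ via \eqref{eq:from_matrix_to_conf}. The extra care you take in justifying the middle step is a fair elaboration of what the paper dismisses as "clear," but it is not a different argument.
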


\begin{proof}
    Let $(\alpha,\beta)=\rc(P,Q)$ and $(\alpha',\beta')=\rc(P',Q')$. Then, by definition, we have
    \begin{equation} \label{eq:relation_alpha_beta_and_primed}
        \alpha_{i,j}' = \alpha_{i,j+1},
        \qquad
        \beta_{i,j}' = \beta_{i,j+1}.
    \end{equation}
    We now construct edge configurations $(\alpha,\beta) \mapsto \mathcal{E}$, $(\alpha',\beta') \mapsto \mathcal{E}'$ as in \eqref{eq:map_matrices_config}. Thanks to the relation \eqref{eq:relation_alpha_beta_and_primed}, it is clear that all edge values of $\mathcal{E}'$ will differ by those of $\mathcal{E}$ by the same shift, or more precisely
    $$
    (\mathsf{W}_j',\mathsf{S}_j',\mathsf{E}_j',\mathsf{N}_j')(c) = (\mathsf{W}_{j+1},\mathsf{S}_{j+1},\mathsf{E}_{j+1},\mathsf{N}_{j+1})(c),
    \qquad
    \text{for all } c \in \mathscr{C}_n, j\in\mathbb{Z}.
    $$
    By \eqref{eq:from_matrix_to_conf} this implies that $\overline{M}' = \mathbf{V}(\overline{M})$.
\end{proof}

\subsection{Asymptotic states of skew $\RSK$ dynamics} 

We describe pairs of tableaux $\RSK^t(P,Q)$ when $t$ becomes large. Contents discussed in this subsection were introduced, along with examples in \cref{subs:examples}. 

\begin{definition}[Asymptotic increments]
    Let $P,Q\in SST(\lambda/\rho,n)$ and consider the skew $\RSK$ dynamics $(P_t,Q_t)$ with initial data $(P,Q)$. We define the \emph{asymptotic increment} $\mu(P,Q)$, through its transpose $\mu'$ as
    \begin{equation} \label{eq:asymptotic_increment}
        \mu_j' = \lim_{t \to \infty} (\lambda^{t})'_j/t
    \end{equation}
    where $\lambda^{t}/\rho^{t}$ is the shape of $P_t,Q_t$.
\end{definition}

\begin{proposition}
    Numbers $\mu_j'$ defined by \eqref{eq:asymptotic_increment} form a weakly decreasing sequence of integers and define a partition. 
\end{proposition}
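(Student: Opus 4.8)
The plan is to separate the two assertions---existence together with integrality of the limits, and the weak monotonicity---since only the former carries genuine content. For the monotonicity I would exploit the fact that at every time $t$ the pair $(P_t,Q_t)$ is an honest pair of semi-standard tableaux of a common (generalized) skew shape $\lambda^t/\rho^t$; in particular the external shape $\lambda^t$ is a generalized Young diagram, i.e. a vertical translate of a classical partition, so its column-length sequence $\big((\lambda^t)'_j\big)_j$ (the number of cells per column, which is translation invariant) is weakly decreasing: $(\lambda^t)'_j \ge (\lambda^t)'_{j+1}$ for all $j$ and all $t$. Dividing by $t$ and letting $t\to\infty$---once the limits are known to exist---immediately yields $\mu'_j \ge \mu'_{j+1}$, so $\mu'$, and hence $\mu$, is a partition.

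The core of the argument is therefore to show that each $(\lambda^t)'_j$ is eventually an affine-linear function of $t$ with integer slope. This is the \emph{stabilization phenomenon} described in \cref{subs:examples}: I would establish that there exist a time $T$ and nonnegative integers $h_1,h_2,\dots$ such that for all $t\ge T$ the map $\RSK$ sends $(P_t,Q_t)$ to the pair obtained by shifting the $j$-th column rigidly downward by $h_j$ boxes, where $h_j$ is the (then $t$-independent) number of labeled cells in the $j$-th column. Granting this, both the bottom of the empty part and the bottom of column $j$ descend by $h_j$ at each step, so $(\lambda^t)'_j = (\lambda^T)'_j + (t-T)\,h_j$ for $t\ge T$, whence $\mu'_j = \lim_{t}(\lambda^t)'_j/t = h_j \in \mathbb{N}_0$. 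Only finitely many columns are occupied, so $h_j=0$ for large $j$; thus the limits exist and are nonnegative integers that eventually vanish.

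The hard part is the stabilization itself, which I would extract from the Viennot picture on the twisted cylinder. Passing through $(P,Q)\xrightarrow{\skwRSK}\overline{M}$ (\cref{thm:SS}) and the dictionary of \cref{prop:Viennot_and_RSK_ca,prop:RSK_ca_and_Viennot_ca}, the skew $\RSK$ dynamics is encoded by a single compactly supported filling $\overline{M}\in\overline{\mathbb{M}}_{n\times n}$ together with its Viennot iterates, and the number of cells crossing from one row to the next during each time step is read off from the finitely many black bullets of $\overline{M}$. Because $\overline{M}$ is compactly supported, those bullets occupy a bounded band of $\mathscr{C}_n$, and I would argue that under iteration of $\mathbf{V}$ the associated shadow lines separate into finitely many pairwise non-interacting down-right loops propagating at fixed integer speeds; translated back through \cref{prop:RSK_ca_and_Viennot_ca}, this is precisely the statement that the columns of $(P_t,Q_t)$ eventually move as rigid, independent blocks. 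Making the eventual separation of the loops and the constancy of their speeds precise is the true obstacle, and it is exactly what the later analysis of conservation laws (\cref{sec:Greene_invariants}) and of the linearization (\cref{sec:linearization}) makes rigorous. For the present proposition, however, it is enough to know that such a $T$ and such integer speeds exist, after which the existence, integrality, nonnegativity, and weak monotonicity of the $\mu'_j$ all follow as above.
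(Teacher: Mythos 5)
Your split of the statement into two halves is sensible, and the monotonicity half is fine: at every finite time $\lambda^t$ is a (generalized) Young diagram, so $(\lambda^t)'_j \ge (\lambda^t)'_{j+1}$, and dividing by $t$ and passing to the limit gives $\mu'_j \ge \mu'_{j+1}$. This is actually a cleaner justification than the paper's remark that ``longer columns travel faster''.

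The existence/integrality half, however, has a genuine gap. You reduce it to the full stabilization phenomenon (columns eventually shift rigidly and independently at fixed integer speeds), but you do not prove that; you explicitly defer ``the true obstacle'' --- the eventual separation of the shadow-line loops and the constancy of their speeds --- to the conservation-law and linearization sections. That is circular at this point in the paper: the asymptotic analysis of the Viennot dynamics (e.g.\ \cref{prop:Viennot_asymptotic}) and the linearization both start from a time $T$ at which the pair is already $\RSK$-stable with asymptotic increment $\mu$, i.e.\ they presuppose exactly what this proposition is meant to supply. The paper avoids the detour entirely with a monovariant argument on individual cells: after standardizing, follow the $i$-cell of $P_t$, with coordinates $(c_t,r_t)$. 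The bumping algorithm moves a cell strictly downward and never to the right, so $r_1<r_2<\cdots$ while $c_1\ge c_2\ge\cdots>0$; a weakly decreasing sequence of positive integers is eventually constant, so every cell's column freezes after some finite time. Once all columns are frozen the content of each column is fixed and the growth of $(\lambda^t)'_j$ is eventually linear in $t$ with nonnegative integer slope, which gives existence and integrality of the limits without any input about non-interaction of solitons. If you want to keep your structure, you should replace the appeal to full stabilization by this (or an equivalent) elementary monovariant; as written, the central claim of the proposition is assumed rather than proved.
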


\begin{proof}
    Assume that tableaux $P,Q$ are standard and follow the evolution of the $i$-cell in $P_t$, for $t=1,2,\dots$, which has coordinate $(c_t,r_t)$. From the bumping algorithm it follows that $r_1<r_2<\cdots$ and also $c_1 \ge c_2 \ge \cdots >0$. Such weak monotonicity of column coordinates implies that from a certain $t$ onward $c_t=c_{t+1}=\cdots$ and proves that $\mu_j'$ are integers. Moreover $\mu_j'$ must also form a weakly decreasing sequence since longer columns ``travel faster" under the skew $\RSK$ dynamics.
\end{proof}

\begin{definition}[Stable states] \label{def:stable_states}
    Consider a pair of semi-standard tableaux $(P,Q)$ with shape $\lambda / \rho$. For $t \ge 0$, let $(P_t,Q_t) = \RSK^t (P,Q)$ and denote their shape by $\lambda^{(t)} / \rho^{(t)} $ and by $\mu=\mu(P,Q)$ the asymptotic increment. We say that the pair $(P,Q)$ is $\RSK$-\emph{stable} if for all $t\ge 0$ we have
    \begin{equation}
        (\lambda^{(t)})'= \lambda ' + t \times \mu' 
        \qquad
        \text{and}
        \qquad
        (\rho^{(t)})' = \rho' + t \times \mu'.
    \end{equation}
\end{definition}

Reading off columns of pairs of $\RSK$-stable tableaux we can associate pairs of vertically strict tableaux.

\begin{definition}[Asymptotic vertically strict tableaux] \label{def:asymptotic_VST}
    Let $P,Q \in SST(\lambda/\rho,n)$ and consider the skew $\RSK$ dynamics $(P_t,Q_t)$ with initial data $(P,Q)$. Denote by $\mu=\mu(P,Q)$ the asymptotic increment. The \emph{asymptotic vertically strict tableaux} $V,W \in VST(\mu,n)$ associated to $(P,Q)$ have $j$-th column entries given by
    \begin{gather}
        V(j,i) = \lim_{t\to \infty} P_t( j, \rho^{(t) \prime} +i ), 
        \\
        W(j,i) = \lim_{t\to \infty} Q_t( j, \rho^{(t) \prime} +i ),
    \end{gather}
    where $\lambda^{(t)}/\rho^{(t)}$ denotes the shape of $P_t,Q_t$.
\end{definition} 

\begin{definition}
    The projection map 
    \begin{equation} \label{eq:Phi}
        \Phi : \bigcup_{\rho,\lambda} SST(\lambda/\rho,n) \times SST(\lambda/\rho,n) \to \bigcup_{\mu \in \mathbb{Y}} VST(\mu,n) \times VST(\mu,n),
    \end{equation}
    assigns to a pair of (generalized) skew tableaux their asymptotic vertically strict tableaux $\Phi:(P,Q) \mapsto (V,W)$.
\end{definition}

\begin{remark}
    Composing map $\Phi$ with the Sagan-Stanley correspondence would generate a projection $\tilde{\Phi}:\overline{\pi} \mapsto (V,W)$ resembling Pak's asymptotic construction of Shi's affine Robinson-Schensted's correspondence \cite{Shi_affine_RS}. There, pairs of tabloids, along with an array of weights are put in correspondence with \emph{periodic permutations}, which we can see as weighted permutations $\overline{\pi}$ with total weight $\wt(\overline{\pi})=0$.
    In recent works \cite{CPY_affine_matrix_ball,C_Bl_P_monodromy,chmutovEtAl_affine_evacuation} authors studied symmetries of the affine RS correspondence which include Knuth relations and crystals. It would be interesting to clarify similarities between projection $\tilde{\Phi}$, or rather bijection $\tilde{\Upsilon}$ we will introduce in \cref{sec:bijection}, and Shi's affine RS correspondence. We leave this investigation for a future work. 
\end{remark}

\subsection{Asymptotic states of Viennot dynamics} \label{subs:Viennot_asymptotics}
For any fixed weighted biword $\overline{\pi}$, we aim now to characterize $\mathbf{V}^t(\overline{\pi})$ for large $t$. To describe the limiting form of such biwords we need the following definitions.

\begin{definition}[Strict down-right loops]
    A \emph{strict down-right loop} $\varsigma$ is a sequence of points $\varsigma = (\varsigma_j: j = 1,\dots ,J) \subset \mathscr{C}_n$ such that $\varsigma \subset \xi$ for some down-right loop $\xi$ and
    \begin{equation}
        \varsigma_{k+1} \sim_n \varsigma_k +a_k \mathbf{e}_1 - b_k \mathbf{e}_2,
    \end{equation}
    for some numbers $a_k,b_k \in \{1,\dots,n\}$ and $k=1,\dots,J$. Indices here are taken $\mod J$ and $\varsigma_{J+1}=\varsigma_1$. The length of the loop $\varsigma$ is $\ell(\varsigma)=J$. Notice that we necessarily have $J\le n$ and that strict down-right loops are ``localized" in the sense that $\varsigma$ is always contained in a band $\{1,\dots,n\} \times \{ j,\dots,j+n \} \subset \mathscr{C}_n$ for some $j\in \mathbb{Z}$.     
\end{definition}

\begin{figure}[t]
    \centering
    \includegraphics{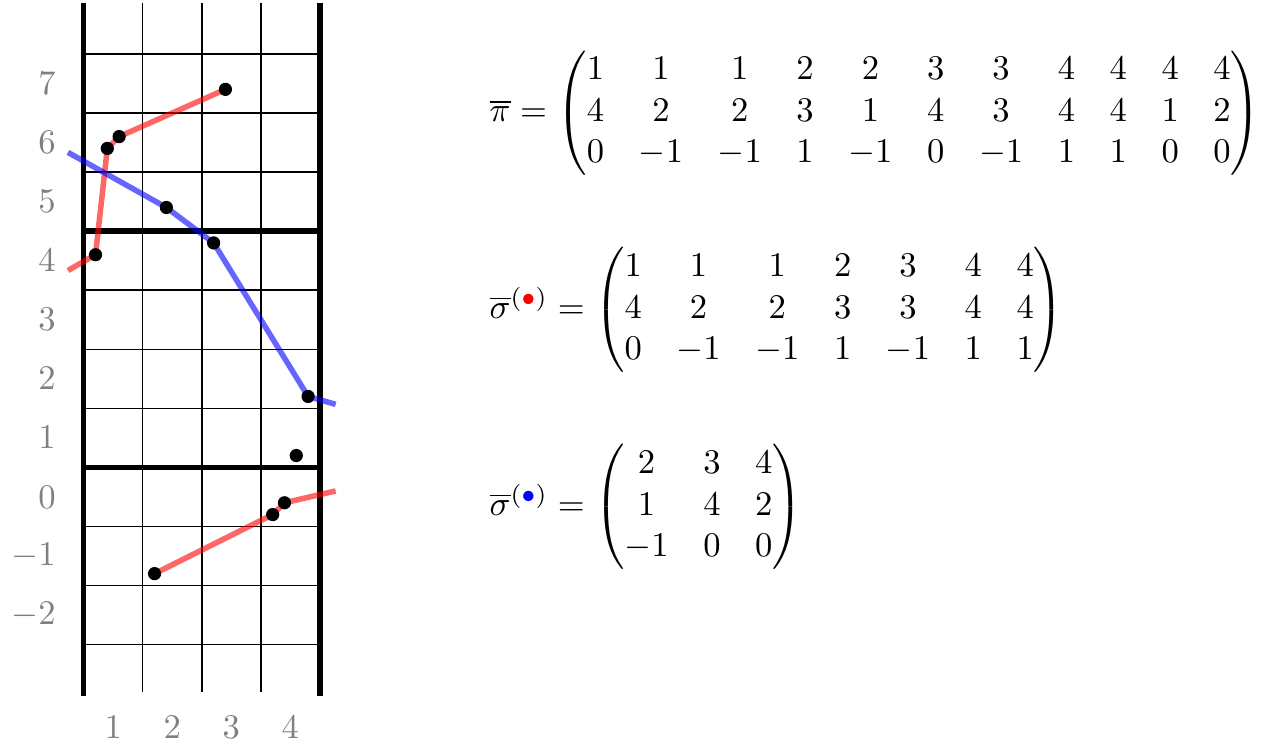}
    \caption{A weighted biword $\overline{\pi}$ viewed as a configuration of points on $\mathscr{C}_n$. Localized decreasing subsequences, as $\overline{\sigma}^{(\textcolor{blue}{\bullet})}$ form down right loops around $\mathscr{C}_n$.
    Increasing subsequences as $\overline{\sigma}^{(\textcolor{red}{\bullet})}$ form up-right path winding around the cylinder.}
    \label{fig:example_incr_decr_subs}
\end{figure}

Given a biword $\overline{\pi}$, written in the usual notation \eqref{eq:weighted_biword}, we will associate to its columns $\overline{\pi}_i$ points on $\mathscr{C}_n$ as
\begin{equation} \label{eq:points_on_cyl}
    [\overline{\pi}_i] = (q_i, p_i - n w_i) \in \mathscr{C}_n.
\end{equation}
We also denote by $[\overline{\pi}]$ the collections of points $[\overline{\pi}_i]$ for $i=1,\dots , \ell(\overline{\pi})$. We will confuse at times points $c \in \mathscr{C}_n$ and elements of a weighted biword and write $c \in \overline{\pi}$ if $[\overline{\pi}_i] \sim_n c$ for some $i$.

\begin{definition}[Localized decreasing sequences] \label{def:lds}
    A weighted biword $\overline{\pi} \in \overline{\mathbb{A}}_{n,n}$ is a \emph{localized decreasing sequence}, LDS for short, if the set of points $([\overline{\pi}_i]:i=1,\dots,\ell(\overline{\pi}))$
    forms a strict down right loop on $\mathscr{C}_n$.
\end{definition}

For the sake of future discussion we also define increasing sequences on $\mathscr{C}_n$. They will be used at length in \cref{sec:Greene_invariants}.

\begin{definition}[Increasing sequences]
    A weighted biword $\overline{\pi} \in \overline{\mathbb{A}}_{n,n}$ is an \emph{increasing sequence}, if the set of points $([\overline{\pi}_i]:i=1,\dots,\ell(\overline{\pi}))$
    is contained in an up-right path of $\mathscr{C}_n$.
\end{definition}

If two weighted biwords are such that $\overline{\sigma} \subseteq \overline{\pi}$ we will say that $\overline{\sigma}$ is a subsequence of $\overline{\pi}$. Analogously we refer to weighted biwords as sequences. An example of a localized decreasing subsequence of a weighted biword is given by $\overline{\sigma}^{(\textcolor{blue}{\bullet})}$ in \cref{fig:example_incr_decr_subs}. In the same figure $\overline{\sigma}^{(\textcolor{red}{\bullet})}$ is an increasing subsequence of $\overline{\pi}$.

Let us now characterize asymptotic states of the Viennot dynamics.

\begin{proposition} \label{prop:Viennot_asymptotic}
    Let $\overline{\pi} \in \overline{\mathbb{A}}_{n,n}$. Then there exist $t^* \in \mathbb{N}_0$ and localized decreasing sequences $\overline{\xi}^{(1)},\dots,\overline{\xi}^{(k)}$, such that, for all $t>0$ we have
    \begin{equation}
        \mathbf{V}^{t^*+t}(\overline{\pi}) = \mathbf{V}^{t}(\overline{\xi}^{(1)}) \cupdot \cdots \cupdot \mathbf{V}^{t}(\overline{\xi}^{(k)}).
    \end{equation}
    Moreover, if $\mu=\mu(P,Q)$ is the asymptotic increment of any pair such that $(P,Q) \xrightarrow[]{\skwRSK\,} \overline{\pi}$, then, listing the $\overline{\xi}^{(i)}$'s  decreasingly in length, we have $\ell( \overline{\xi}^{(i)} ) = \mu_i'$ for $i=1,\dots,\mu_1$ and in particular $k=\mu_1$.
\end{proposition}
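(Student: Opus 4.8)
The plan is to transport the question to the skew $\RSK$ dynamics via \cref{prop:Viennot_and_RSK_ca} and then exploit its stabilization. Fix $(P,Q)$ with $(P,Q)\xrightarrow{\skwRSK}\overline{\pi}$ and set $\mu=\mu(P,Q)$; reducing to standard tableaux by standardization, with which all the maps involved are compatible (cf. \cref{prop:iota_std}), each column of $P_s,Q_s$ carries distinct labels. The argument already used to show that the $\mu_j'$ form a partition establishes that every cell of $P_s$ has strictly increasing row coordinate and weakly decreasing, eventually constant column coordinate; hence there is an $s^*$ with $(P_s,Q_s)$ being $\RSK$-stable for $s\ge s^*$ in the sense of \cref{def:stable_states}, its columns (sorted by height) translating rigidly downward, the $i$-th having height and speed $\mu_i'$. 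The crucial feature of this regime is that consecutive $\RSK$ maps perform no bumping between distinct columns: each column is displaced as a rigid block, so the solitons no longer interact.

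Next I would translate this stable picture into the Viennot configuration on $\mathscr{C}_n$. By \cref{prop:Viennot_and_RSK_ca}, $\overline{M}^{(t)}_{i,j}(k)$ counts the $i$-cells of $P_{-k}$ crossing from row $t-1$ to row $t$. A column of height $h=\mu_i'$ meets a high row $t$ around skew $\RSK$ time $s\approx t/h$, so for large Viennot time $t$ the cells of each column contribute to $\overline{M}^{(t)}$ at a weight level $k\approx -t/\mu_i'$. Columns of distinct heights therefore drift apart linearly in $t$, and I would choose $t^*$ large enough that the groups coming from the several columns occupy pairwise disjoint, sufficiently separated vertical bands of $\mathscr{C}_n$. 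Reading off one such rigidly moving column, a strictly increasing chain of labels at consecutive rows, through the shadow-line construction yields a strict down-right loop, i.e. a localized decreasing sequence $\overline{\xi}^{(i)}$ whose length equals the column height $\mu_i'$. Taking the components of $\mathbf{V}^{t^*}(\overline{\pi})$ in this way produces the $k=\mu_1$ sequences $\overline{\xi}^{(1)},\dots,\overline{\xi}^{(k)}$.

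It then remains to verify the factorization $\mathbf{V}^{t}(\overline{\xi}^{(1)}\cupdot\cdots\cupdot\overline{\xi}^{(k)})=\mathbf{V}^{t}(\overline{\xi}^{(1)})\cupdot\cdots\cupdot\mathbf{V}^{t}(\overline{\xi}^{(k)})$ for all $t>0$. The point is that a single localized decreasing sequence is a self-contained shadow loop: its consecutive points lie south-east of one another, so the north and east rays it emits meet only each other, the new generation stays confined to a band of height at most $n$ about the loop, and $\mathbf{V}$ sends it to a translated loop of the same length, consistent with $\mathbf{V}$ acting as a one-row upward shift by \cref{prop:RSK_ca_and_Viennot_ca}. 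Two loops whose bands are separated by more than this confinement width cannot exchange rays, so the construction factorizes; and since in the stable regime the blocks only move further apart (distinct speeds) or co-translate rigidly without ever bumping (equal speeds), a separation present at time $t^*$ is preserved for all later $t$. The locality required to make these statements precise is supplied by the restriction property \cref{prop:restriction_matrices}. Sorting the loops by length finally gives $\ell(\overline{\xi}^{(i)})=\mu_i'$ and $k=\mu_1$.

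The main obstacle I anticipate is exactly this factorization, that is, showing that once the solitons separate they genuinely decouple under $\mathbf{V}$ and that each block remains a single localized decreasing sequence, neither splitting nor merging, under arbitrarily many further applications of $\mathbf{V}$. On the twisted cylinder the shadow rays wrap around, so the notion of separation must be controlled globally rather than on a fixed finite rectangle, and one must quantify, in terms of the speeds $\mu_i'$, the precise vertical gap beyond which no spurious ray crossing can ever occur. Converting the qualitative cessation of inter-column bumping in the stable regime into this quantitative, permanent non-interaction statement for the periodic shadow-line construction is where the real work lies.
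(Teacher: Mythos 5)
Your overall strategy coincides with the paper's: reduce to standard tableaux, let the skew $\RSK$ dynamics stabilize, and use \cref{prop:Viennot_and_RSK_ca} to read the asymptotic Viennot configuration off the arrival of column cells at a high row $K$. But the two steps that carry the actual content are not closed. First, the identification of each stable column's contribution as a \emph{single} localized decreasing sequence of length $\mu_i'$ is compressed into the sentence ``reading off one such rigidly moving column \dots yields a strict down-right loop.'' This is not immediate: the $m$ cells of a column of height $m$ do not reach row $K$ at one skew $\RSK$ time. The paper's proof tracks which internal insertion (which $q_s$-cell) first pushes the top entry $p_m$ past row $K$ at time $\widetilde{t}$, and shows that the insertions for $q_{s},\dots,q_m$ deliver part of the column at weight level $\widetilde{t}$ while $q_1,\dots,q_{s-1}$ deliver the rest at level $\widetilde{t}+1$, with a cyclic shift in the pairing of $p$'s and $q$'s; it is exactly this two-level split that makes the resulting point set a strict down-right loop on $\mathscr{C}_n$. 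Without that bookkeeping you have not established that the block is an LDS, nor that its length is $\mu_i'$.

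Second, you correctly flag the factorization $\mathbf{V}^t(\overline{\xi}^{(1)}\cupdot\cdots\cupdot\overline{\xi}^{(k)})=\mathbf{V}^t(\overline{\xi}^{(1)})\cupdot\cdots\cupdot\mathbf{V}^t(\overline{\xi}^{(k)})$ as the main obstacle, but the route you sketch — a quantitative ``confinement width'' estimate for shadow rays on the twisted cylinder, preserved for all later times — is both incomplete and unnecessary. The paper sidesteps it: by \cref{prop:Viennot_and_RSK_ca} the entire family $\{\overline{M}^{(t)}\}_t$ is the dictionary image of the single skew $\RSK$ trajectory, and in the stable regime each column's cells move by rigid autonomous shifts, so the points of $\overline{M}^{(t)}$ attributable to distinct columns are determined independently for every $t$. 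The independence is thus inherited from the tableau side rather than re-proved by controlling ray interactions on $\mathscr{C}_n$. As written, your proposal is a correct plan with the same skeleton as the paper's argument, but the LDS identification and the decoupling both remain to be supplied, and for the latter you should switch to the pull-back argument rather than attempt the geometric separation estimate.
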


\begin{proof}
    We are going to use the notion of asymptotic increments for skew tableaux and the relation between skew $\RSK$ dynamics and Viennot dynamics presented in \cref{prop:Viennot_and_RSK_ca}. With no loss of generality we assume that $\overline{\pi}$ is a weighted permutation, since such choice simplifies the notation. The general case $\overline{\pi} \in \overline{\mathbb{A}}_{n,n}$ is, as usual, recovered by standardization. 
    
    Let $(P,Q)$ be a pair of standard tableaux such that $(P,Q)\xrightarrow[]{\skwRSK\,} \overline{\pi}$ and consider $(P_t,Q_t)$ the skew $\RSK$ dynamics with initial data $(P,Q)$. Let $T\in \mathbb{N}$ be large enough so that the pair $(P_T,Q_T)$ is $\RSK$-stable and call $\mu=\mu(P,Q)$ its asymptotic increment. Let $0=R_0, R_1,R_2,\dots $ define a rectangular decomposition of $\mu$ as in \eqref{eq:rect_dec}
    and set $r_i=R_i - R_{i-1}$. Then, when $t \ge T$, during the update $(P_t,Q_t) \to (P_{t+1},Q_{t+1})$ columns $R_{i-1}+1,\dots ,R_{i}$ are shifted down by $\mu_{R_i}'$ cells. If $\lambda/ \rho$ is the skew shape of $(P_T,Q_T)$, the vertical displacement between labeled boxes at columns $R_i$ and $R_i+1$ can be assumed to be arbitrarily large, or in other words $\rho_{R_i} ' \gg \lambda_{R_i+1}'$, choosing $T$ sufficiently large.
        
    Consider an integer $K > \lambda_1'$. Then by choosing $K$ large enough there exist times $T_1 > T_2 >\cdots > T$ such that in the skew shape of $(P_{T_i},Q_{T_i})$ columns $1, \dots, R_i$ have only cells with row coordinate larger than $K$, while cells at columns $R_i+1,R_i+2,\dots$ have row coordinate smaller than $K$. By choosing $K$ large enough, we can assume that the differences $T_{i+1}-T_{i}$ are arbitrarily large. 
    
    Consider now the Viennot dynamics $\overline{\pi}^{(t)}$ with initial data $\overline{\pi}$. Recall that, by \cref{prop:Viennot_and_RSK_ca}, the weighted permutation $\overline{\pi}^{(K)}$ encodes the times at which specific entries of $(P,Q)$ reach the $K$-th row during the skew $\RSK$ dynamics. By the discussion above, taking $T_1,T_2,\dots$ sufficiently large and spread apart we find that $\overline{\pi}^{(K)}$ can be written as $\overline{\pi}^{(K)} = \overline{\sigma}^{(1)} \cupdot \overline{\sigma}^{(2)} \cupdot \cdots$, where $\overline{\sigma}^{(j)}$'s are weighted biwords encoding information about columns of length $\mu_{R_j}'$ of $(P_T,Q_T)$. More in detail, denoting
    \begin{equation}
        \overline{\sigma}^{(j)} = \left( \begin{matrix}
        q^{(j)}_1 & \cdots & q^{(j)}_{I_j}
        \vspace{.1cm}
        \\
        p^{(j)}_1 & \cdots & p^{(j)}_{I_j}
        \vspace{.1cm}
        \\
        w^{(j)}_1 & \cdots & w^{(j)}_{I_j}
        \end{matrix} \right),
    \end{equation}
    we have $I_j = \mu_{R_j}' r_j$ and $\max_{1\le i \le I_j} w_i^{(j)} \ll \min_{1\le i \le I_{j-1}} w_i^{(j-1)}$. By stability of $(P_T,Q_T)$ we can also conclude that under the action of Viennot map, $\overline{\sigma}^{(1)}, \overline{\sigma}^{(2)}, \dots$ evolve independently from each other, or in other words
    \begin{equation} \label{eq:sigma_j_evolve_independently}
        \mathbf{V}^{s}\left(\overline{\pi}^{(K)} \right) = \mathbf{V}^s \left(\overline{\sigma}^{(1)} \right) \cupdot \mathbf{V}^s \left(\overline{\sigma}^{(2)} \right) \cupdot \cdots,
    \end{equation}
    for all $s \ge 0$. To prove our proposition we need to show that there exist LDS's $\overline{\xi}^{(j,1)} ,\dots, \overline{\xi}^{(j,r_j)}$ such that  $\overline{\sigma}^{(j)} = \overline{\xi}^{(j,1)} \cupdot \cdots \cupdot \overline{\xi}^{(j,r_j)}$, having length $\ell( \overline{\xi}^{(j,k)} ) = \mu_{R_j}'$ for $1\leq k\leq r_j$ and that evolve autonomously under Viennot dynamics
    \begin{equation}
        \mathbf{V}^s \left(\overline{\sigma}^{(j)} \right) = \mathbf{V}^s \left(\overline{\xi}^{(j,1)} \right) \cupdot \cdots \cupdot \mathbf{V}^s \left(\overline{\xi}^{(j,r_j)} \right).
    \end{equation}

    We use \cref{prop:Viennot_and_RSK_ca}. For any $j$ such that $r_j>0$ let $c\in\{R_{j-1}+1 ,\dots, R_j \}$ and denote $\mu_{R_j}'=m$. Let $p_1<\cdots <p_m$ and $q_1 <\dots < q_m$ be entries of $c$-th columns of $P_t,Q_t$ for $t$ large enough. Let $\widetilde{t}\in \{ T_{j-1}+1,\dots,T_j \}$ be the first time $p_m$ reaches a row greater than $K$. Since columns evolve autonomously in stable states, the only possibility is that $p_m$ reached row $K$ as a result of the internal insertion at cell corresponding to the $q_s$-cell in $Q_{\widetilde{t}-1}$ for some $s\in\{1,\dots,m\}$.  Moreover, internal insertion corresponding to $q_{s+1},\dots, q_m$ during the update will result in $p_2,\dots,p_{m-s+1}$ reaching row $K$. This implies that $(q_S,p_{m+s-S} + n \widetilde{t} \,) \in [\overline{\sigma}^{(j)}]$ for all $S=s,\dots, m$. During the update $(P_{\, \widetilde{t}},Q_{ \, \widetilde{t}}) \to (P_{\, \widetilde{t}+1},Q_{\, \widetilde{t}+1})$ the remaining cells $p_1,\dots,p_{s-1}$ will reach row $K$ and they will do so in correspondence of internal insertion of $q_1,\dots, q_{s-1}$-cells in $Q_{\widetilde{t}}$. Therefore also $(q_S,p_S + n (\widetilde{t}+1) \,) \in [\overline{\sigma}^{(j)}]$ for all $S=1,\dots, s-1$ and this implies that
    \begin{multline*}
        [\overline{\xi}^{(j,1)}] = (q_1,p_1 + n (\widetilde{t}+1)) \to \cdots \to (q_{s-1},p_{s-1} + n (\widetilde{t}+1)) 
        \\
        \to (q_s,p_1 + n \widetilde{t}) \to \cdots \to (q_m , p_{s} + n \widetilde{t})
    \end{multline*}
    is a strict down right loop identifying a subsequence of $\overline{\sigma}^{(j)}$. Repeating the argument for all columns $c=R_{j-1}+1,\dots, R_k$ we find disjoint localized decreasing subsequences of $\overline{\sigma}^{(j)}$, which we might denote by $\overline{\xi}^{(j,1)},\dots, \overline{\xi}^{(j,r_j)}$, all having length equal to $\mu_{R_j}'$. One also easily sees that since columns of tableaux $(P_t,Q_t)$ evolve autonomously, then also the corresponding LDS's $\xi^{(j,r)}$ evolve independently under Viennot map and this completes the proof.
\end{proof}

\section{Affine crystal structures} \label{sec:Knuth_and_crystals}
In this section we first review basic notions in the theory of Kashiwara crystals, focusing only on the type $A^{(1)}_{n-1}$ case. Many of the objects encountered in the previous sections possess affine bicrystal graph structures, such as the set of pairs $(V,W)$ of vertically strict tableaux, the set of pairs $(P,Q)$ of semi-standard tableaux, or the set of matrices $\overline{\mathbb{M}}_{n\times n}$. Vertically strict tableaux are a standard combinatorial model of an affine (bi)crystal, whereas the affine bicrystal structure on pairs $(P,Q)$ or on matrices $\overline{M}$ described in \cref{subs:pairs_of_tab_affine_bicrystals,subs:matrices_affine_bicrystal} is new. The main result of this section is given by \cref{thm:symmetries_RSK}, which asserts the $\widehat{\mathfrak{sl}}_n$ bicrystal symmetry of the skew $\RSK$ map. This symmetry will allow to characterize the skew $\RSK$ dynamics completely in later sections, 
see \cref{rem:symmetries_RSK}. 

\subsection{Crystals and bicrystals} \label{subs:crystals_and_bi_crystals}

Most of the material in this subsection is contained in 
\cite{Hong_Kang_book_crystals,BumpSchilling_crystal_book}.
For a short introductory account on the subject the reader may consult \cite{Shimozono_dummies}.

\medskip

An $\widehat{\mathfrak{sl}}_n$-\emph{crystal graph}\footnote{also called affine crystal graph of type $A^{(1)}_{n-1}$ in the Dynkin diagram language}, or equivalently for us an \emph{affine crystal graph}, is a set of vertices $B$, equipped with a function $\gamma : B \to \mathbb{N}_0^n$ (commonly referred to as \emph{weight}, but here called \emph{content}), and colored directed edges $b \xrightarrow[]{i} b'$, with colors $i$ ranging in $\{ 0,\dots,n-1 \}$ satisfying the following two properties.
\begin{enumerate}
    \item There are no multiple edges. In case $b \xrightarrow[]{i} b'$ we write 
    \begin{equation*}
        b' = \F{i}(b),
        \qquad
        \text{or}
        \qquad
        b=\E{i}(b'),
    \end{equation*}
    where $\F{i},\E{i}$ are respectively the $i$-th \emph{lowering and raising Kashiwara operators}. When $\F{i}$ is not defined for an element $b$ we will write $\F{i}(b)=\varnothing$ and similarly for $\E{i}$. Kashiwara operators define numbers $\varphi_i,\varepsilon_i : B \to \mathbb{N}_0$ as
    \begin{gather*}
        \varphi_i(b) = \max\{m : \F{i}^m(b) \neq \varnothing \},
        \\
        \varepsilon_i(b) = \max\{m : \E{i}^{\,m}(b) \neq \varnothing \}.
    \end{gather*}
    \item Let $h_0=\mathbf{e}_n - \mathbf{e}_{1}$ and $h_i = \mathbf{e}_i - \mathbf{e}_{i+1}$, $i=1,\dots,n-1$. Then, for all $b\in B$ we have
    \begin{equation*}
        \langle h_i , \gamma(b) \rangle = \varphi_i(b) - \varepsilon_i (b)
    \end{equation*}
    and, whenever $\F{i}(b)\neq \varnothing$, we have
    \begin{equation*}
        \gamma (\F{i}(b)) = \gamma(b) - h_i.
    \end{equation*}
    Here $\mathbf{e}_i$ and $\langle \cdot , \cdot \rangle$ are respectively the standard basis and the standard scalar product of $\mathbb{C}^n$.
\end{enumerate}

The set $B$ in this case is called \emph{crystal}, but 
unless necessary we will not distinguish between the notion of crystal and its graph. An example of an affine crystal graph is reported in \cref{fig:demazure_subgraph} below. We also define $\mathfrak{sl}_n$ crystals graphs, which we refer to as \emph{classical crystals graphs}, removing from the description above all statements concerning $0$-edges. For instance, in \cref{fig:demazure_subgraph} the classical crystal graph is given erasing red and blue edges.

Lowering and raising operators $\E{i},\F{i}$ are partial mutual inverses; i.e. if $\E{i}(b) \neq \varnothing$, then $\F{i}\circ \E{i}(b)=b$ and same for the opposite case. For this reason, in case $h$ is an operator written as
\begin{equation*}
    h=(\E{i_1})^{N_1} \circ (\F{i_2})^{N_2} \circ \cdots \circ (\E{i_{k-1}})^{N_{k-1}} \circ (\F{i_k})^{N_k},
\end{equation*}
then we will denote by $h^{-1}$ the operator
\begin{equation*}
    h^{-1} = (\E{i_k})^{N_k} \circ (\F{i_{k-1}})^{N_{k-1}} \circ \cdots \circ (\E{i_2})^{N_2} \circ (\F{i_1})^{N_1}.
\end{equation*}
Clearly, if $h(b)\neq \varnothing$, then $h^{-1}\circ h(b) =b$. 

\medskip

An $\widehat{\mathfrak{sl}}_n$ \emph{bicrystal graph} is a set of vertices $B$ possessing two commuting  $\widehat{\mathfrak{sl}}_n$ crystal graph structures, i.e. two commuting families of Kashiwara operators. We will denote the two sets of Kashiwara operators for bicrystals with the notation $\widetilde{E}^{(1)}_i, \widetilde{F}^{(1)}_i$ and $\widetilde{E}^{(2)}_i, \widetilde{F}^{(2)}_i$, $i=0,\dots,n-1$. For instance, if $B$ is an $\widehat{\mathfrak{sl}}_n$ crystal, then the cartesian product $B \times B$ is an $\widehat{\mathfrak{sl}}_n$ bicrystal setting
    \begin{equation}\label{eq:bicrystal_operators}
        \widetilde{E}^{(1)}_i = \E{i} \times \mathbf{1},
        \qquad
        \widetilde{E}^{(2)}_i = \mathbf{1} \times \E{i},
        \qquad
        \widetilde{F}^{(1)}_i = \F{i} \times \mathbf{1},
        \qquad
        \widetilde{F}^{(2)}_i = \mathbf{1} \times \F{i},
    \end{equation}
and letting content function $\gamma$ act independently on single components. We will introduce below a more elaborate example of bicrystal.
Also in the case of bicrystals we will adopt the same convention as above for inverse operators. If $h$ is a combinations of Kashiwara operators $\widetilde{E}^{(\epsilon)}_i, \widetilde{F}^{(\epsilon)}_i$, then $h^{-1}$ will be the operator obtained reading the expansion of $h$ backward and substituting each $\widetilde{E}^{(\epsilon)}_i$ with $\widetilde{F}^{(\epsilon)}_i$ and viceversa.

\begin{definition} \label{def:morphism_of_crystal_graphs}
    Let $\mathfrak{g}$ be either $\mathfrak{sl}_n$ or $\widehat{\mathfrak{sl}}_n$ and $B,B'$ be $\mathfrak{g}$ crystals. A map $\phi : B \to B'$ is a \emph{morphism of crystals} if
    \begin{equation}
        \phi \circ \E{i} = \E{i} \circ \phi,
        \qquad
        \phi \circ \F{i} = \F{i} \circ \phi
        \qquad
        \text{for all }i=0,\dots,n-1
    \end{equation}
    and $\gamma(\phi(b)) = \gamma(b)$ for all $b\in B$. An \emph{isomorphism} of crystals is a bijective morphism of crystals whose inverse is also a morphism of crystals. We use the convention that $\phi(\varnothing)=\varnothing$.
    
    Analogously we define a morphism of affine bicrystals $B,B'$ as a map $\phi:B\to B'$ that is a morphism of crystal for both crystal graphs structures. If $\phi$ is invertible and its inverse is a morphism of bicrystals then $\phi$ is an isomorphism of bicrystals.
\end{definition}

\subsection{Classical Kashiwara operators} \label{subs:classical_crystals}

On the set of words $\mathcal{A}_n^*$ we can define the action of Kashiwara operators $\E{i}, \F{i}$ for $i=1,\dots , n-1$. The raising operator $\E{i}$ acts replacing an entry $i+1$ with $i$ following the so-called \emph{signature rule}. It goes as follows
    \begin{equation} \label{alg:e_i}
        \begin{minipage}{.9\linewidth}
            \begin{enumerate}
                \item Replace every $i$ in $\pi$ with the $``)"$ symbol and every $i+1$ with $``("$;.
                \item Sequentially match all pairs of consecutive symbols $``("$, $``)"$. At the end of this procedure the subword made of unmatched parenteses will have the form $) \cdots ) ( \cdots ($.
                \item Replace the leftmost unmatched $``("$ parenthesis with $``)"$.
                \item Substitute back $``)"$ with $i$'s and $``("$ with $i+1$. 
            \end{enumerate}
        \end{minipage}
    \end{equation}
Sometimes this operation is impossible (eg. when $\pi\in\mathcal{A}_n^*$ has no $(i+1)$'s) and in that case we impose $\E{i}(\pi) = \varnothing$.

The lowering operator $\F{i}$ is defined analogously to $\E{i}$ with the difference that the third step of \eqref{alg:e_i} becomes
    \begin{equation}
        \begin{minipage}{.9\linewidth}
            \begin{enumerate}
                \item[(3')]  Replace the rightmost unmatched $``)"$ parenthesis with $``("$.
            \end{enumerate}
        \end{minipage}
    \end{equation}
As before when the procedure is impossible we set $\F{i}(\pi)=\varnothing$. It is easy to see that $\E{i}$ and $\F{i}$ are mutual inverses, when both their operations are defined. We illustrate the action of Kashiwara operators with an example. Here $i=2$:
    \begin{equation} \label{eq:example_Kashiwara}
        \begin{matrix}
        \pi = & 4 & 2 & 3 & 2 & 1 & 2 & 3 & 1 & 4 & 3 & 3 & 2 & 1 & 2 & 4 & 1 & 2 & 3 & 3
        \\
         && \boldsymbol{)} & \boldsymbol{(} & \boldsymbol{)} &  & \boldsymbol{)} & \boldsymbol{(} &  &  & \boldsymbol{(} & \boldsymbol{(} & \boldsymbol{)} &  & \boldsymbol{)} &  &  & \boldsymbol{)} & \boldsymbol{(} & \boldsymbol{(}
         \\
         && \boldsymbol{)} & \textcolor{gray!60}{(} & \textcolor{gray!60}{)} &  & \boldsymbol{)} & \textcolor{gray!60}{(} &  &  & \textcolor{gray!60}{(} & \textcolor{gray!60}{(} & \textcolor{gray!60}{)} &  & \textcolor{gray!60}{)} &  &  & \textcolor{gray!60}{)} & \boldsymbol{(} & \boldsymbol{(}
         \\
         \E{2} (\pi) = & 4 & 2 & 3 & 2 & 1 & 2 & 3 & 1 & 4 & 3 & 3 & 2 & 1 & 2 & 4 & 1 & 2 & \textcolor{red}{2} & 3
         \\
         \F{2} (\pi) = & 4 & 2 & 3 & 2 & 1 & \textcolor{red}{3} & 3 & 1 & 4 & 3 & 3 & 2 & 1 & 2 & 4 & 1 & 2 & 3 & 3
        \end{matrix}
    \end{equation}
    In the third line matched parentheses were drawn in light gray, while in the last two lines we highlighted in red the letters of $\pi$ that were changed by $\E{2}$ and $\F{2}$.

    The action of Kashiwara operators endows the set $\mathcal{A}_n^*$ with an $\mathfrak{sl}_n$ crystal graph structure, the content $\gamma$ being defined as in \cref{subs:biwords}. For any word $\pi$, $\varphi_i(\pi)$ counts the number of unmatched $``)"$ parentheses we find while computing the action of $i$-th Kashiwara operators on $\pi$ and $\varepsilon_i(\pi)$ is the number of unmatched $``("$ parenteses. For instance in \eqref{eq:example_Kashiwara}, $\varphi_2(\pi) = \varepsilon_2(\pi) =2$.
    
    \medskip
    
The action of Kashiwara operators extends to the set of semi-standard tableaux. If $P\in SST(\lambda/ \mu, n)$, denoting $\pi_P$ its column reading word, we define $\E{i}(P)$ as the tableaux with shape $\lambda/\mu$ and reading word $\E{i}(\pi_P)$. The action of $\F{i}$ is defined equivalently. For example we have
\begin{equation}
    \begin{ytableau} 
    \, & & & 1 & 2 \\ 
     & 1 & 2 & 2 \\
     2 & 3 & 3
    \end{ytableau}
    \xrightarrow[]{ \hspace{1cm} {\LARGE \E{1}} \hspace{1cm} }
    \begin{ytableau} 
    \, & & & 1 & 2 \\ 
     & 1 & \textcolor{red}{1} & 2 \\
     2 & 3 & 3
    \end{ytableau},
\end{equation}
where we highlighted in red the cell that changed its label.
    
\medskip
    
The study of $\mathfrak{sl}_n$ crystal structure of the set of semi-standard tableaux has a long history rooted in the seminal paper \cite{robinson1938representations}. In fact, as recalled in \cite{Shimozono_dummies}, the first instance of the RSK correspondence, was realized by Robinson imposing that map $\pi \mapsto P$ would commute with the action of $\E{i},\F{i}$. 
In general, Kashiwara operators are \emph{coplactic} transformations, which means that they commute with \emph{jeu de taquin} moves; see \cref{subs:jdt}. This known result is stated in the following proposition. In order to make the text self-contained all ingredients necessary to its proof, along with a short review of theory of Knuth relations, are reported in \cref{app:Knuth_rel}.

\begin{proposition} \label{prop:commutation_jdt_kashiwara}
    Let $h$ be anyone between $\E{i},\F{i},i=1,\dots,n$ and $P$ be a semi-standard tableau. Then for any transformation $J$ that performs a sequence of jeu de taquin slides we have $h(J(P)) = J(h(P))$.
\end{proposition}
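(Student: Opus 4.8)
The plan is to reduce the statement to a single elementary slide and then transport everything to the level of reading words, where the Kashiwara operators are actually defined. Since any $J$ is by definition a finite composition of single jeu de taquin slides, and since a composition of maps each commuting with $h$ again commutes with $h$, it suffices to prove $h(J_0(P)) = J_0(h(P))$ when $J_0$ is a single (forward or backward) slide. Here $h$ is one of $\E{i},\F{i}$ and, by the definition recalled in \cref{subs:classical_crystals}, $h(P)$ is the semi-standard tableau of the same shape as $P$ whose column reading word is $h(\pi^{\mathrm{col}}_P)$, with $h$ acting on words through the signature rule \eqref{alg:e_i}.

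The mechanism I would use is the classical dictionary between jeu de taquin and the plactic (Knuth) monoid, whose ingredients are collected in \cref{app:Knuth_rel}. A single slide $J_0$ acts on the column reading word as a prescribed composition of elementary Knuth transformations, the precise sequence being dictated by the comparisons made along the slide path (see \cref{subs:jdt}). The core input is then the compatibility of the Kashiwara operators with each elementary Knuth transformation: for words $u,u'$ differing by one such move, $h(u)$ and $h(u')$ differ by the corresponding move, with the convention that one equals $\varnothing$ exactly when the other does. Applying this step by step along the sequence realizing $J_0$, and checking that the slide performed on $h(P)$ proceeds through the matching sequence of transformations, identifies the reading word of $h(J_0(P))$ with that of $J_0(h(P))$; since $h$ preserves shape and the two resulting shapes coincide, the tableaux themselves are equal.

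The hard part will be establishing this compatibility in the strong, \emph{position-tracked} form that yields literal equality of reading words rather than mere Knuth equivalence. This refinement is genuinely necessary: distinct skew tableaux of the same shape can be Knuth equivalent (with multiplicity a Littlewood--Richardson coefficient), and the output shape of a single slide can itself depend on the filling, so controlling only the Knuth class does not pin down the tableau. The refined statement reduces to a local case analysis: each elementary Knuth move rearranges three adjacent letters, and one must verify, against the bracketing prescribed by the signature rule for the pair $\{i,i+1\}$, that the matched and unmatched parentheses --- in particular the leftmost unmatched $``("$ selected by $\E{i}$, respectively the rightmost unmatched $``)"$ selected by $\F{i}$ --- are relocated consistently with that same Knuth move. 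This bracketing bookkeeping, carried out over the finitely many relative orderings of the three letters involved, is the crux of the argument, and it is precisely what \cref{app:Knuth_rel} is designed to supply.
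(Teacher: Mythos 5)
Your strategy---reduce to a single slide and verify commutation directly on reading words via elementary Knuth moves---is the classical ``hands-on'' route to coplacticity, and it is genuinely different from the paper's proof. It could in principle be completed, but as written it has a real gap at exactly the point you call the crux. First, the ``position-tracked'' compatibility is subtler than ``$h(u)$ and $h(u')$ differ by the corresponding move'': take $i=1$, $u=211$ and $u'=121$ (an elementary move of the first type transposing positions $1$ and $2$); then $\F{1}(u)=212$ and $\F{1}(u')=221$ differ by a move of the \emph{second} type transposing positions $2$ and $3$. So both the type of the move and the adjacent pair it swaps can change under $h$, and your bookkeeping must accommodate this. Second, you must also show that the slide \emph{path} of $h(P)$ matches that of $P$: $h$ changes one entry from $i+1$ to $i$ (or back), and if that entry is adjacent to the hole the comparison governing the slide direction could a priori flip, changing the output shape; ruling this out is part of the case analysis, not a given. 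Third, and most concretely, \cref{app:Knuth_rel} does not supply the refined statement you lean on: \cref{thm:kash_op_equiv} only gives the equivalence-class versions ($h(\pi)\stackrel{*}{\simeq}\pi$, and $\pi\simeq\pi'\Rightarrow h(\pi)\simeq h(\pi')$), which, as you yourself correctly observe, do not pin down a skew tableau.

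The paper's proof avoids the entire local analysis by combining exactly those soft statements. From \cref{thm:Knuth_equiv_jdt} and \cref{thm:kash_op_equiv}(2) one gets $h(J(P))\simeq J(h(P))$; from \cref{thm:haiman_dual_eq} and \cref{thm:kash_op_equiv}(1) one gets $h(J(P))\stackrel{*}{\simeq}J(h(P))$, and moreover the two tableaux share the same skew shape (since $P\stackrel{*}{\simeq}h(P)$ forces $J(P)$ and $J(h(P))$ to have equal shapes by Haiman's characterization); then \cref{thm:uniqueness_dual_eq_non_dual_eq}---a skew shape carries at most one tableau in a given pair of Knuth and dual equivalence classes---yields equality. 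To salvage your route you would have to carry out the full bracketing case analysis against each Knuth move \emph{and} against the slide comparisons; alternatively, adopt the equivalence-class argument, which is what the appendix actually provides the ingredients for.
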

\begin{proof}
    Combining \cref{thm:haiman_dual_eq} and \cref{thm:kash_op_equiv} we find that $h(J(P)) \stackrel{*}{\simeq} J(h(P))$. On the other hand thanks to \cref{thm:Knuth_equiv_jdt} and \cref{thm:kash_op_equiv} we have $h(J(P)) \simeq J(h(P))$. Therefore $h(J(P)) = J(h(P))$ by \cref{thm:uniqueness_dual_eq_non_dual_eq}.
\end{proof}

We will utilize result of \cref{prop:commutation_jdt_kashiwara} in a slightly weaker form reported next. Recall that $\mathcal{R}_{[r]}$ denotes the internal insertion.

\begin{corollary} \label{prop:commutation_kashiwara_int_ins}
    Let $h$ be anyone between $\E{i},\F{i},i=1,\dots,n-1$ and $P$ be a semi-standard tableau. Then for all $r\in \mathbb{Z}$ we have $h \circ \mathcal{R}_{[r]}(P) = \mathcal{R}_{[r]} \circ h (P)$.
\end{corollary}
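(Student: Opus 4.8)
The plan is to realize the internal insertion $\mathcal{R}_{[r]}$ as a transformation built out of jeu de taquin slides, and then to invoke the coplacticity statement of \cref{prop:commutation_jdt_kashiwara} verbatim. The point is that $\mathcal{R}_{[r]}$ is \emph{coplactic}: there is a sequence $J$ of jeu de taquin slides with $\mathcal{R}_{[r]}(P)=J(P)$, this being the classical identification of Schensted's bumping with sliding (the material collected in \cref{subs:jdt} and \cref{app:Knuth_rel}). Granting such a realization, the corollary is immediate, since for $h$ any of $\E{i},\F{i}$ with $i=1,\dots,n-1$ one gets
\[
h \circ \mathcal{R}_{[r]}(P) \;=\; h\bigl(J(P)\bigr) \;=\; J\bigl(h(P)\bigr) \;=\; \mathcal{R}_{[r]} \circ h(P),
\]
where the middle equality is exactly \cref{prop:commutation_jdt_kashiwara}.

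First I would make the realization precise. The vacated corner $(c,r)$ is an inner corner of the skew shape $\lambda/\rho$, and inserting its value into the rows below performs, cell by cell, the same local moves as jeu de taquin sliding on the column reading word; concretely one can exhibit $\mathcal{R}_{[r]}(P)$ as $J(P)$ by first rectifying $P$ with forward slides and then re-growing the rectification, with reverse slides, to the skew shape produced by $\mathcal{R}_{[r]}$. To keep the bookkeeping transparent I would carry this out first for standard tableaux, where each bump is a single cell move and the slide path is unambiguous, and only afterwards restore multiplicities: since both $h$ and the slides are defined through reading words, the semistandard case reduces to the standard one (this is also where \cref{prop:iota_std}-type compatibility of $\mathcal{R}_{[r]}$ with standardization can be used to import the slide realization).

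The hard part is precisely the identification $\mathcal{R}_{[r]} = J$ by a slide sequence that is valid \emph{uniformly in the entries} of $P$. The delicate feature is that the cell where the bumping chain terminates — equivalently the external shape of $\mathcal{R}_{[r]}(P)$ — a priori depends on the actual values of $P$, whereas the composition argument above needs the very same $J$ to compute $\mathcal{R}_{[r]}(h(P))$, and $h(P)$ could in principle bump to a different terminal cell. This is exactly the obstruction that coplacticity dissolves: \cref{prop:commutation_jdt_kashiwara} asserts that each slide commutes with $h$ \emph{together with} the agreement of output shapes whenever $h(P)\neq\varnothing$, so that the bumping terminals for $P$ and for $h(P)$ necessarily coincide and $J(h(P)) = \mathcal{R}_{[r]}(h(P))$. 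Once the coplactic realization of $\mathcal{R}_{[r]}$ is established, no further case analysis on the signature rule, nor on how the single cell altered by $h$ meets the bumping route, is required; everything is subsumed by the commutation of Kashiwara operators with jeu de taquin.
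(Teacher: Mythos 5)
Your proposal is exactly the paper's argument: the published proof of this corollary consists of the single observation that $\mathcal{R}_{[r]}$ can be realized by a sequence of jeu de taquin slides (a consequence of \cref{prop:Knuth_eq_int_ins} and \cref{thm:Knuth_equiv_jdt}) and therefore commutes with $\E{i},\F{i}$, $i=1,\dots,n-1$, by \cref{prop:commutation_jdt_kashiwara}. Your closing discussion of the $P$-dependence of the slide sequence and of the bumping terminal addresses a subtlety the paper leaves entirely implicit, so the proposal is, if anything, slightly more careful than the proof in the text.
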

\begin{proof}
    Internal insertion transformation $P\mapsto \mathcal{R}_{[r]}(P)$ can be realized through jeu de taquin moves, as a consequence of \cref{prop:Knuth_eq_int_ins}, \cref{thm:Knuth_equiv_jdt} and hence it commutes with $\E{i},\F{i}, i=1,\dots, n-1$ by \cref{prop:commutation_jdt_kashiwara}.
\end{proof}

\subsection{Vertically strict tableaux as affine crystals} \label{subs:VST_affine_crystals}

Denote by $B^{r,1}$ the set of semi-standard Young tableaux of single column shape $1^r$ and entries from $\mathcal{A}_n$. Following \cite{shimozono_affine}, on $B^{r,1}$ we define the $0$-th Kashiwara operators $\E{0}, \F{0}$ as
\begin{equation} \label{eq:zero_Kashiwara_op}
    \E{0} = \pr^{-1} \circ \E{1} \circ \pr
    \qquad
    \text{and}
    \qquad
    \F{0} = \pr^{-1} \circ \F{1} \circ \pr,
\end{equation}
where $\pr$ is the \emph{promotion operator}. For any $b \in B^{r,1}$, its promotion $b'=\pr(b) \in  B^{r,1}$ is the only tableau with content $\gamma_i(b') = \gamma_{i-1}(b)$, where indices $i$ are taken $\mod n$. In words $\E{0}(b)$ is obtained replacing the 1-cell of $b$ with an $n$-cell and reordering the result. For example, if $n=6$, we have
\begin{equation}
    \begin{ytableau}
        1 \\ 3 \\ 4 \\ 5
    \end{ytableau}
    \xrightarrow[]{\hspace{.4cm}\E{0} \hspace{.4cm}}
    \begin{ytableau}
        3 \\ 4 \\ 5 \\ 6
    \end{ytableau}
    .
\end{equation}
Such operation is impossible if $b$ has an $n$-cell or if it does not have a $1$-cell, in which cases we set $\E{0}(b)=\varnothing$. An analogous description may be given for $\F{0}$.
By convention we define $B^{0,1} = \{ \mathbf{0} \}$ and we assume $\E{i},\F{i}:\mathbf{0}\to \varnothing$, for all $i=0,\dots,n-1$ and $\pr(\mathbf{0}) = \mathbf{0}$. Naturally the column word of $\mathbf{0}$ is the empty word.

For any composition $\varkappa=(\varkappa_1, \dots , \varkappa_N)$ define $B^\varkappa = B^{\varkappa_1,1} \otimes \cdots \otimes B^{\varkappa_N,1}$. Classical Kashiwara operators $\E{i},\F{i}, i=1,\dots ,n-1$ are defined on any element $b \in B^{\varkappa}$ by their action on the column word of $b$. The action of $\E{0},\F{0}$ is also well posed forcing $\pr(b_1 \otimes \cdots \otimes b_n) = \pr(b_1) \otimes \cdots \otimes \pr(b_N)$ and the same for $\pr^{-1}$. Notice that we always have
\begin{equation}
    \E{i}(b_1\otimes \cdots \otimes b_N) = b_1\otimes \cdots \otimes b_{k-1} \otimes \E{i}(b_k) \otimes b_{k+1} \otimes \cdots \otimes b_N,
\end{equation}
for some $k$, which is prescribed by the signature rule. Naturally the same holds for operators $\F{i}$. The content function is given by $\gamma(b) = (\gamma_1,\dots,\gamma_n)$, where $\gamma_i$ counts the total number of $i$-cells in the different tensor factors of $b$. We define the \emph{affine crystal graph} $\widehat{B}(\varkappa)$ as the graph having set of vertices $B^\varkappa$ and edges defined by operators $\E{i},\F{i}, i=0,\dots,n-1$. Denote with $B(\varkappa)$ the subgraph of $\widehat{B}(\varkappa)$ obtained erasing all edges generated by $\E{0},\F{0}$. We refer to $B(\varkappa)$ as the \emph{classical crystal subgraph} and its connected components are called \emph{classical connected components}. When $\varkappa$ is a partition we identify $B^\varkappa$ with $VST(\varkappa\,',n)$ and so the set of vertically strict tableaux possesses an affine crystal graph structure. Moreover, for any partition $\mu$, we endow the set $VST(\mu, n) \times VST (\mu,n)$ of an $\widehat{\mathfrak{sl}}_n$ bicrystal structure defining families of Kasiwara operators as in \eqref{eq:bicrystal_operators}.

\medskip

A remarkable property of the affine crystal graph $\widehat{B}(\varkappa)$ is that it is connected. Such result holds in much broader generality and for Kirillov-Reschetikhin crystals of type $A^{(1)}$ was proven in \cite{Akasaka_Kashiwara}. An algorithmic proof of this statement can be found, for instance, in \cite{Fourier_Schilling_Shimozono_demazure,schilling_tingley}.

\begin{proposition}[\cite{Akasaka_Kashiwara}] \label{prop:connectedness_crystal_graph}
    For any composition $\varkappa$, the affine crystal graph $\widehat{B}(\varkappa)$ is connected. 
\end{proposition}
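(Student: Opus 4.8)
My plan is to establish connectedness by combining the known connectedness of each classical component with an explicit analysis of the $0$-arrows. First I would invoke standard $\mathfrak{sl}_n$-crystal theory: erasing the $0$-edges decomposes $\widehat{B}(\varkappa)$ into its classical crystal subgraph $B(\varkappa)$, whose connected components are each isomorphic to a highest weight crystal $B(\lambda)$ for a dominant weight (partition) $\lambda$, and are therefore internally connected through the operators $\E{i},\F{i}$ with $1\le i\le n-1$. Since every vertex is classically connected to the unique classical highest weight vector of its own component — the element annihilated by all $\E{i}$, $i\ge 1$ — the connectedness of $\widehat{B}(\varkappa)$ reduces to showing that any two classical highest weight vectors lie in a common affine-connected component.

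Next I would single out a distinguished target vertex. Among the classical weights $\lambda$ occurring in $B^{\varkappa}$ there is a unique dominance-maximal one, namely $\Lambda=\omega_{\varkappa_1}+\cdots+\omega_{\varkappa_N}$ (the highest weight of the Cartan component), and it carries a single classical highest weight vector $u_\Lambda$; in the tableau model this is a Yamanouchi tableau. The problem thus becomes: connect every classical highest weight vector $u$, of weight $\lambda$, to $u_\Lambda$ by a sequence of Kashiwara operators, now allowed to include the affine ones $\E{0},\F{0}$.

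The key step is to show that the $0$-operators allow one to climb the dominance order. Concretely, I would prove that for a classical highest weight vector $u\ne u_\Lambda$ there is a sequence $h$ built from $\F{0}$ (or $\E{0}$ when $\F{0}$ is not applicable) together with the classical raising operators $\E{i}$ ($i\ge1$) such that $h(u)$ is again a classical highest weight vector whose weight $\lambda'$ strictly dominates $\lambda$. The operator $\F{0}$, defined in \eqref{eq:zero_Kashiwara_op} through promotion, replaces an $n$-cell by a $1$-cell and hence shifts the content by $\mathbf{e}_1-\mathbf{e}_n$; re-extracting the classical highest weight vector of the resulting classical component via $\E{i}$ ($i\ge1$) then lands on a dominance-larger weight. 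Since the set of weights occurring in $B^{\varkappa}$ is finite with unique maximum $\Lambda$, iterating this procedure must terminate at $u_\Lambda$, so every classical highest weight vector — and hence every vertex — is connected to $u_\Lambda$, giving connectedness.

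The main obstacle is precisely this climbing lemma: making rigorous that a single well-chosen application of $\F{0}$ followed by classical raising yields a \emph{strictly} dominance-larger classical weight, and in particular that the procedure never stalls before reaching $\Lambda$. This requires a careful combinatorial analysis of how promotion and the tensor-product signature rule interact with the lattice-word (Yamanouchi) condition characterizing classical highest weight vectors, and the delicate point is to pin down a genuine monovariant for termination. A convenient alternative is to run the same argument with the intrinsic energy $\mathscr{H}$ in place of the dominance order, using that its minimal value is attained at a unique ground state toward which the $0$-arrows steer every vertex; verifying that $\mathscr{H}$ strictly decreases along the chosen moves is the technical heart and is exactly the content of the algorithmic arguments of \cite{Fourier_Schilling_Shimozono_demazure,schilling_tingley}.
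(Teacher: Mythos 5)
The paper does not actually prove this proposition: it is quoted from \cite{Akasaka_Kashiwara}, with a pointer to the algorithmic proofs in \cite{Fourier_Schilling_Shimozono_demazure,schilling_tingley}. Your outline — reduce to classical highest weight vectors via connectedness of the classical components, then ``climb'' to the leading vector $\varkappa^{\mathrm{lv}}$ using the $0$-arrows — is indeed the strategy of those algorithmic proofs, so the overall architecture is sound.

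However, as written the argument has a genuine gap, and it sits exactly where you place the ``technical heart.'' Your climbing lemma is false in the form stated. It is true that if $\F{0}$ is applicable at the classical highest weight vector $u_\lambda$ itself, the resulting weight $\lambda+\mathbf{e}_1-\mathbf{e}_n$ strictly dominates $\lambda$ after sorting; but $\F{0}$ need not be applicable there (one only knows $\varphi_0(u_\lambda)\le\varepsilon_0(u_\lambda)$), and applying $\F{0}$ at an \emph{arbitrary} element of the component can strictly \emph{decrease} the dominance order. Concretely, take $n=3$ and $\varkappa=(1,1,1,1,1)$, and let $u$ be the classical highest weight vector of weight $\lambda=(3,1,1)$ in $(B^{1,1})^{\otimes 5}$. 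At the classical lowest weight vector of this component the weight is $(1,1,3)$ and $\varphi_0\ge 2$, so $\F{0}$ is applicable; the image has weight $(2,1,2)$, whose dominant rearrangement is $(2,2,1)\lhd(3,1,1)$, and the new classical component may well be the one of highest weight $(2,2,1)$. So ``re-extracting the classical highest weight vector lands on a dominance-larger weight'' fails without a careful choice of where to apply $\F{0}$, and you have not shown that a good choice always exists, nor that the process cannot stall at a component where neither $\F{0}$ nor $\E{0}$ produces progress. Your fallback — replacing dominance by the intrinsic energy $\mathscr{H}$ — faces the same issue (one must exhibit, in every non-ground-state component, a Demazure $0$-arrow that strictly lowers $\mathscr{H}$, and show $\mathscr{H}$ has a unique minimum), and establishing this is precisely the content of \cite{Fourier_Schilling_Shimozono_demazure,schilling_tingley}. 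Since you defer exactly that step to the references, the proposal is a correct roadmap but not an independent proof; to close it you would need to specify the element at which the $0$-operator is applied (e.g., via the explicit promotion/signature-rule combinatorics) and verify the monovariant decreases there.
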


Proposition \ref{prop:connectedness_crystal_graph} will be very important to us as it allows to prove general statements about affine crystal graphs by simply checking that special properties hold for particular elements. With this purpose we introduce the \emph{leading vector}, or \emph{dominant extremal vector} \cite{kashiwara_on_level_zero}, $\varkappa^{\mathrm{lv}} \in B^\varkappa$ as
\begin{equation}
    \varkappa^{\mathrm{lv}} = \varkappa_1^{\mathrm{lv}} \otimes \cdots \otimes \varkappa_N^{\mathrm{lv}},
    \qquad
    \text{with}
    \qquad
    k^{\mathrm{lv}}
    =
    \begin{ytableau}
        1
        \\
        2
        \\
        \smallvdots
        \\
        k
    \end{ytableau}
    \in
    B^{k,1}.
\end{equation}
We observe that $\varkappa^{\mathrm{lv}}$ is the unique element of the crystal $B^\varkappa$ with content equal to $\varkappa^+$.

An immediate consequence of connectedness of affine crystal graphs is that, if two affine crystals $\widehat{B}(\varkappa),\widehat{B}(\eta)$ are isomorphic, then such isomorphism $\phi$ is unique and moreover $\eta^+=\varkappa^+$.

\begin{proposition} \label{prop:unique_crystal_graph_isomorphism}
    For two compositions $\varkappa$ and $\eta$, let
    $\phi:B^\varkappa \to B^\eta$ be an isomorphism between the crystal graphs,
    $B^\varkappa$ and $B^\eta$. Then $\phi$ is unique and can be expressed as
    \begin{equation} \label{eq:unique_crystal_graph_isomorphism}
        \phi(b) = h_b^{-1}( \eta^{\mathrm{lv}} )
    \end{equation}
    where $h_b$ is any composition of Kashiwara operators such that $h_b:b \mapsto \varkappa^{\mathrm{lv}}$.
\end{proposition}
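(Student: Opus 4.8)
The plan is to prove both the uniqueness of the isomorphism $\phi$ and the explicit formula \eqref{eq:unique_crystal_graph_isomorphism} by leveraging the connectedness of the affine crystal graph (\cref{prop:connectedness_crystal_graph}). The key structural fact is that a morphism of crystals commutes with all Kashiwara operators and preserves content (\cref{def:morphism_of_crystal_graphs}), so once the image of a single vertex is pinned down, the image of every vertex is forced by connectivity.

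First I would establish that $\phi$ must send the leading vector $\varkappa^{\mathrm{lv}}$ to the leading vector $\eta^{\mathrm{lv}}$. Since $\phi$ is a morphism of crystals it preserves content, so $\gamma(\phi(\varkappa^{\mathrm{lv}})) = \gamma(\varkappa^{\mathrm{lv}}) = \varkappa^+$. For $\phi$ to be well-defined as a map $B^\varkappa \to B^\eta$ with $\phi$ an isomorphism, the target $B^\eta$ must contain an element of content $\varkappa^+$; but as observed right after the definition of the leading vector, $\eta^{\mathrm{lv}}$ is the \emph{unique} element of $B^\eta$ with content $\eta^+$, and an element of content $\varkappa^+$ exists in $B^\eta$ only if $\varkappa^+ = \eta^+$. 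This simultaneously forces $\eta^+ = \varkappa^+$ and identifies $\phi(\varkappa^{\mathrm{lv}}) = \eta^{\mathrm{lv}}$, since $\eta^{\mathrm{lv}}$ is the only candidate of the correct content.

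Next I would use connectedness to extend this to all of $B^\varkappa$. Given any $b \in B^\varkappa$, \cref{prop:connectedness_crystal_graph} guarantees a composition $h_b$ of Kashiwara operators with $h_b(b) = \varkappa^{\mathrm{lv}}$. Because $\phi$ commutes with each $\E{i}$ and $\F{i}$ (and hence with $h_b$ and its inverse $h_b^{-1}$), applying $\phi$ to the identity $b = h_b^{-1}(\varkappa^{\mathrm{lv}})$ gives
\begin{equation*}
    \phi(b) = \phi\bigl(h_b^{-1}(\varkappa^{\mathrm{lv}})\bigr) = h_b^{-1}\bigl(\phi(\varkappa^{\mathrm{lv}})\bigr) = h_b^{-1}(\eta^{\mathrm{lv}}),
\end{equation*}
which is exactly \eqref{eq:unique_crystal_graph_isomorphism}. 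This formula makes no reference to any choice beyond $\phi(\varkappa^{\mathrm{lv}}) = \eta^{\mathrm{lv}}$, which we have already shown is forced; hence $\phi$ is unique.

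The one point requiring genuine care—and the step I expect to be the main obstacle—is verifying that the right-hand side $h_b^{-1}(\eta^{\mathrm{lv}})$ is \emph{independent of the choice} of $h_b$, so that the formula defines $\phi(b)$ unambiguously. If $h_b$ and $h_b'$ are two compositions both sending $b$ to $\varkappa^{\mathrm{lv}}$, one must check $h_b^{-1}(\eta^{\mathrm{lv}}) = (h_b')^{-1}(\eta^{\mathrm{lv}})$. For the uniqueness half of the statement this is automatic: we are \emph{given} an isomorphism $\phi$, so $\phi(b)$ is already a fixed element and the computation above simply shows it equals $h_b^{-1}(\eta^{\mathrm{lv}})$ for any valid $h_b$, whence all such expressions agree. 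Thus I would phrase the argument as deriving the formula from the hypothesized isomorphism rather than using the formula to construct one, which neatly sidesteps the well-definedness issue and yields uniqueness directly from commutation with Kashiwara operators plus connectedness.
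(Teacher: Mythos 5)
Your proposal is correct and follows essentially the same route as the paper: pin down $\phi(\varkappa^{\mathrm{lv}}) = \eta^{\mathrm{lv}}$ by content considerations, then use connectedness and commutation with Kashiwara operators to write $\phi(b) = h_b^{-1}(\eta^{\mathrm{lv}})$. Your closing remark that well-definedness is automatic because $\phi$ is given in advance is exactly the (implicit) logic of the paper's one-line computation, so nothing is missing.
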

\begin{proof}
    The image of the leading vector $\varkappa^{\mathrm{lv}}$ must be $\eta^{\mathrm{lv}}$ by content considerations and this uniquely determines $\phi(b)$ for all $b\in B^\varkappa$.
    Let $b'=\phi(b)$ and consider a map $h_b$. Since $\phi$ is a morphism we have 
    \begin{equation}
        b' = \phi(b) = h_b^{-1} \circ h_b \circ \phi(b) = h_b^{-1} \circ \phi \circ h_b(b) = h_b^{-1} \circ \phi ( \varkappa^{\mathrm{lv}} ) =  h_b^{-1}( \eta^{\mathrm{lv}} ).
    \end{equation}
    This proves the proposition.
\end{proof}

\subsection{Pairs of tableaux as affine bicrystals} \label{subs:pairs_of_tab_affine_bicrystals}
In this subsection, we present a novel realization of an affine bicrystal structure 
on the set of pairs of (generalized) semi-standard Young tableaux. 
Let us define the action of two families of Kashiwara operators $\widetilde{E}_i^{(\epsilon)}, \widetilde{F}_i^{(\epsilon)}$, $\epsilon=1,2$
on a pair of semi-standard Young tableaux $(P,Q)$ as follows. For $i=1,\dots, n-1$, 
they are given by \eqref{eq:bicrystal_operators}, whereas for $i=0$ we set
\begin{equation} \label{eq:0_th_operators}
    \begin{split}
        &\widetilde{E}_0^{(1)} = \iota_1 \circ ( \E{1} \times \mathbf{1} ) \circ \iota_1^{-1},
        \qquad
        \widetilde{F}_0^{(1)} = \iota_1 \circ ( \F{1} \times \mathbf{1} ) \circ \iota_1^{-1},
        \\
        &\widetilde{E}_0^{(2)} = \iota_2 \circ ( \mathbf{1} \times \E{1} ) \circ \iota_2^{-1},
        \qquad
        \widetilde{F}_0^{(2)} = \iota_2 \circ ( \mathbf{1} \times \F{1} ) \circ \iota_2^{-1} .
    \end{split}
\end{equation}
Compare (\ref{eq:0_th_operators}) with (\ref{eq:zero_Kashiwara_op}). Below in Corollary 
\ref{cor:Phi_is_morphism} we will show a consistency of these under the 
projection $\Phi$ \eqref{eq:Phi}.

\begin{proposition}
      The two families of Kashiwara operators defined above equip the set
    \begin{equation}
        \bigcup_{\rho , \lambda} SST( \lambda / \rho , n ) \times SST( \lambda / \rho , n ),
    \end{equation}
    with an $\widehat{\mathfrak{sl}}_n$ bicrystal structure.  
\end{proposition}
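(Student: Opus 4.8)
The plan is to verify, for each of the two operator families separately, the crystal-graph axioms of \cref{subs:crystals_and_bi_crystals}, and then to check that the two families commute. By the swap symmetry built into the definitions (recall $\iota_1 = \mathrm{swap}\circ\iota_2\circ\mathrm{swap}$ and $\widetilde{E}_i^{(1)} = \E{i}\times\mathbf{1}$, $\widetilde{E}_i^{(2)} = \mathbf{1}\times\E{i}$), it suffices to treat the family $\{\widetilde{E}_i^{(2)},\widetilde{F}_i^{(2)}\}_{i=0}^{n-1}$ in full and then transport every statement to the $\epsilon=1$ family. For this family I would take the content function to be $\gamma(P,Q) = \gamma(Q)$, the content of the second tableau, which lies in $\mathbb{N}_0^n$ since $Q$ has entries in $\mathcal{A}_n$. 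For $i = 1,\dots,n-1$ the operators act only on $Q$ through its column reading word, so the no-multiple-edges property, the mutual-inverse relation $\F{i}\E{i} = \mathrm{id}$ (where defined), and the weight relations $\langle h_i,\gamma\rangle = \varphi_i - \varepsilon_i$ and $\gamma(\widetilde{F}_i^{(2)}(P,Q)) = \gamma - h_i$ are all inherited from the $\mathfrak{sl}_n$ crystal structure on semi-standard tableaux recalled in \cref{subs:classical_crystals}. Closure of the set under all operators is clear since $\E{i}$ preserves shapes and $\iota_2$ is a shape-to-shape bijection of the ambient set.

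The only genuinely new axioms are those for $i=0$. Since $\widetilde{E}_0^{(2)}$ and $\widetilde{F}_0^{(2)}$ are conjugates of the honest crystal operators $\mathbf{1}\times\E{1}$ and $\mathbf{1}\times\F{1}$ by the bijection $\iota_2$ of \cref{def:iota}, absence of multiple edges and the mutual-inverse relation transfer immediately. The content of the proof is the weight axiom, and the key lemma I would establish is that $\iota_2$ cyclically shifts the content of the $Q$-tableau: if $(P',Q') = \iota_2(P,Q)$ then $\gamma_k(Q') = \gamma_{k+1}(Q)$ for $k<n$ and $\gamma_n(Q') = \gamma_1(Q)$. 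This follows because each internal insertion $\mathcal{R}_{[r]}$ preserves the number of filled cells (it vacates one corner cell while the bumping cascade appends exactly one cell lower down), so $P'$, hence $Q'$, has the same cell count as $Q$, while the cycling step of \cref{def:iota} sends $k$-cells to $(k-1)$-cells and creates exactly $\gamma_1(Q)$ new $n$-cells. Granting this, writing $(P'',Q'') = \iota_2^{-1}(P,Q)$ one has $\varphi_0(P,Q) = \varphi_1(Q'')$ and $\varepsilon_0(P,Q) = \varepsilon_1(Q'')$ by the conjugation, whence $\varphi_0 - \varepsilon_0 = \gamma_1(Q'') - \gamma_2(Q'') = \gamma_n(Q) - \gamma_1(Q) = \langle h_0,\gamma(Q)\rangle$, the middle equality being the cyclic shift applied to $(P,Q) = \iota_2(P'',Q'')$; the same bookkeeping gives $\gamma(\widetilde{F}_0^{(2)}(P,Q)) = \gamma - h_0$.

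For commutativity, the pairs with both indices in $\{1,\dots,n-1\}$ commute trivially because $\E{i}\times\mathbf{1}$ and $\mathbf{1}\times\E{j}$ act on disjoint components. The remaining cases I would reduce to four commuting ``atoms'': $\iota_1$, $\iota_2$, the $P$-side operators $\E{i}\times\mathbf{1}$, and the $Q$-side operators $\mathbf{1}\times\E{i}$ (with their $\F$ analogues). Three facts drive everything: $\iota_1\iota_2 = \iota_2\iota_1$ (\cref{prop:iota_commute}); $\iota_2$ commutes with $\E{i}\times\mathbf{1}$ and $\F{i}\times\mathbf{1}$ for $i=1,\dots,n-1$; and, by swap, $\iota_1$ commutes with $\mathbf{1}\times\E{i}$ and $\mathbf{1}\times\F{i}$. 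The middle fact is where \cref{prop:commutation_kashiwara_int_ins} enters: since the insertion rows $r_1\ge\cdots\ge r_k$ in $\iota_2$ are read off $Q$ --- which $\E{i}\times\mathbf{1}$ leaves untouched --- and $\E{i}$ preserves the shape of $P$ and commutes with each internal insertion, one gets $\iota_2\circ(\E{i}\times\mathbf{1}) = (\E{i}\times\mathbf{1})\circ\iota_2$. Feeding these relations into the conjugate definitions \eqref{eq:0_th_operators}, a short rearrangement shows $\widetilde{E}_0^{(2)}$ commutes with every $\widetilde{E}_i^{(1)},\widetilde{F}_i^{(1)}$; the most delicate case $\widetilde{E}_0^{(1)}\widetilde{E}_0^{(2)} = \widetilde{E}_0^{(2)}\widetilde{E}_0^{(1)}$ collapses after moving both conjugating copies of $\iota_1,\iota_2$ to the outside and using that $\E{1}\times\mathbf{1}$ and $\mathbf{1}\times\E{1}$ commute.

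The main obstacle is the weight axiom at $i=0$, which rests entirely on the content-cycling lemma for $\iota_2$; I would prove it cleanly from cell-count preservation under internal insertion. A secondary point requiring care is the commutation $\iota_2\leftrightarrow(\E{i}\times\mathbf{1})$: one must check not merely that $\E{i}$ commutes with a single $\mathcal{R}_{[r]}$ (which is \cref{prop:commutation_kashiwara_int_ins}) but that the entire composed sequence remains applicable after applying $\E{i}$ --- which holds because $\E{i}$ leaves every intermediate shape unchanged --- and that the $\varnothing$ cases line up on both sides.
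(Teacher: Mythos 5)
Your proposal is correct and follows essentially the same route as the paper: the paper dismisses the crystal axioms for each family as "straightforward to verify" and devotes its (brief) proof to commutativity, using exactly your three atoms — $\iota_1\iota_2=\iota_2\iota_1$ (\cref{prop:iota_commute}), $\iota_2$ commuting with $\widetilde{E}^{(1)}_i$ for $i=1,\dots,n-1$ via \cref{prop:commutation_kashiwara_int_ins}, and the swap-symmetric statement for $\iota_1$. Your content-cycling lemma for $\iota_2$ and the resulting verification of the $i=0$ weight axiom are a correct filling-in of the part the paper leaves implicit.
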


\begin{proof}
    It is straightforward to verify that each of the families $\widetilde{E}^{(\epsilon)}_i,\widetilde{F}^{(\epsilon)}_i$ satisfy hypothesis listed in \cref{subs:crystals_and_bi_crystals}, so that they both endow the set of pairs $(P,Q)$ of an affine crystal structure. It remains to show that these two families are commuting. Clearly, for all $i,j=1,\dots,n-1$, we have
    \begin{equation} \label{eq:commutation_kashiwara_bi_crystal}
        \widetilde{E}^{(1)}_i \circ \widetilde{E}^{(2)}_j = \widetilde{E}^{(2)}_j \circ \widetilde{E}^{(1)}_i, 
    \end{equation}
    and similarly for other relations involving also $\widetilde{F}^{(1)}_i, \widetilde{F}^{(2)}_j$. Let us now show that \eqref{eq:commutation_kashiwara_bi_crystal} holds for $j=0$ and $i=0,1,\dots,n-1$. Following \cref{prop:iota_commute}, $\iota_1,\iota_2$ commute, so we need to show that $\iota_2$ commutes with $\widetilde{E}^{(1)}_i$ for $i=1,\dots,n-1$. This last statement is a consequence of \cref{prop:commutation_kashiwara_int_ins} yielding the proof.
\end{proof}

The following theorem gives a characterization of symmetries of the skew $\RSK$ map.

\begin{theorem} \label{thm:symmetries_RSK}
    The skew $\RSK$ map is an isomorphism of $\widehat{\mathfrak{sl}}_n$ bicrystals.
\end{theorem}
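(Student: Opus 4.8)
The plan is to check that $\RSK$ fulfils \Cref{def:morphism_of_crystal_graphs} for \emph{both} crystal structures carried by the set of pairs $(P,Q)$, and that its inverse is of the same type. Since the skew $\RSK$ map is already known to be a bijection, the task reduces to verifying that $\RSK$ preserves the content $\gamma$ of each component and that it commutes with every Kashiwara operator $\widetilde E_i^{(\epsilon)},\widetilde F_i^{(\epsilon)}$ with $\epsilon\in\{1,2\}$ and $i\in\{0,\dots,n-1\}$. I will write the argument only for the raising operators $\widetilde E_i^{(\epsilon)}$, the lowering case being word-for-word identical, and I will lean throughout on the two structural identities $\RSK=\iota_1^n=\iota_2^n$ (\Cref{prop:RSK_from_n_int_ins}) and the swap symmetry $\RSK\circ\mathrm{swap}=\mathrm{swap}\circ\RSK$ (\Cref{prop:RSK_swap_symmetry}).

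First I would treat the classical colours $i=1,\dots,n-1$. The key claim is that $\iota_2$ commutes with $\widetilde E_i^{(1)}=\E i\times\mathbf 1$. Indeed, applying $\E i$ to $P$ changes neither its shape nor the tableau $Q$, so the sequence of internal insertions and the cycling of $Q$ prescribed in \Cref{def:iota} are unaffected; as each internal insertion $\mathcal R_{[r]}$ commutes with $\E i$ by \Cref{prop:commutation_kashiwara_int_ins}, the two operations commute, and passing to $n$-th powers yields $\RSK\circ\widetilde E_i^{(1)}=\widetilde E_i^{(1)}\circ\RSK$. For $\epsilon=2$ the naive analogue fails, since $\E 1$ acting on $Q$ can turn a $2$-cell into a $1$-cell and thereby alter the very internal insertions defining $\iota_2$. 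Instead I would exploit the identity $\widetilde E_i^{(2)}=\mathrm{swap}\circ\widetilde E_i^{(1)}\circ\mathrm{swap}$, which together with the swap symmetry immediately upgrades the $\epsilon=1$ commutation to $\epsilon=2$.

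Next I would dispatch the affine colour $i=0$ by conjugation. By definition $\widetilde E_0^{(\epsilon)}=\iota_\epsilon\circ\widetilde E_1^{(\epsilon)}\circ\iota_\epsilon^{-1}$, where $\widetilde E_1^{(\epsilon)}$ is the classical operator just handled. Since $\RSK=\iota_\epsilon^n$ trivially commutes with $\iota_\epsilon$ and with $\iota_\epsilon^{-1}$, and since it commutes with $\widetilde E_1^{(\epsilon)}$ by the previous step, I obtain
\begin{equation*}
\RSK\circ\widetilde E_0^{(\epsilon)}=\iota_\epsilon\circ\RSK\circ\widetilde E_1^{(\epsilon)}\circ\iota_\epsilon^{-1}=\iota_\epsilon\circ\widetilde E_1^{(\epsilon)}\circ\iota_\epsilon^{-1}\circ\RSK=\widetilde E_0^{(\epsilon)}\circ\RSK.
\end{equation*}
Content preservation is then checked directly: each internal insertion preserves the multiset of entries of $P$, while a single application of $\iota_\epsilon$ only cyclically shifts the content of the other tableau, so after $n$ steps $\RSK$ restores the full content $\gamma$ of both components. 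Finally, because $\RSK$ is a content-preserving bijection commuting with all the (partially defined, invertible) Kashiwara operators, the same holds automatically for $\RSK^{-1}$, so $\RSK$ is an isomorphism of $\widehat{\mathfrak{sl}}_n$ bicrystals.

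The main obstacle, conceptually, is precisely the asymmetry between the two components in the $\epsilon=2$ classical case, where $\iota_2$ does \emph{not} commute with $\mathbf 1\times\E 1$ on the nose because the operator perturbs the $1$-cells of $Q$ that drive the insertions. The swap symmetry of \Cref{prop:RSK_swap_symmetry} is exactly the device that circumvents this, and once it is in place the entire theorem becomes a formal consequence of the single genuinely combinatorial input, namely the coplacticity of Kashiwara operators with respect to internal insertion recorded in \Cref{prop:commutation_kashiwara_int_ins}.
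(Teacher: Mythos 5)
Your proof is correct and follows essentially the same route as the paper's: both rest on $\RSK=\iota_1^n=\iota_2^n$, the swap symmetry of \Cref{prop:RSK_swap_symmetry}, and the commutation of internal insertion with the classical Kashiwara operators (\Cref{prop:commutation_kashiwara_int_ins}), with the $0$-th operators handled by conjugation through $\iota_\epsilon$. The only difference is cosmetic: you conjugate by $\mathrm{swap}$ to treat $\epsilon=2$, whereas the paper invokes the swap symmetry to say that $Q'$ is itself obtained from $Q$ by a sequence of internal insertions — the same fact in different clothing.
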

\begin{proof}
    The skew $\RSK$ map clearly commutes with $\iota_1,\iota_2$ as a result of \cref{prop:RSK_from_n_int_ins}. If $P,Q\in SST(\lambda / \rho ,n)$ and $(P',Q')=\RSK(P,Q)$, again by \cref{prop:RSK_from_n_int_ins}, $P',Q'$ are obtained respectively from $P,Q$ after a sequence of internal insertions. By \cref{prop:commutation_kashiwara_int_ins}, this implies that the skew $\RSK$ map commutes with classical operators $\widetilde{E}^{(\epsilon)}_i, \widetilde{F}^{(\epsilon)}_i$ for $i=1,\dots,n-1$, $\epsilon=1,2$. This shows that $\RSK$ is a morphism of bicrystals. Analogously one can show that the inverse $\RSK^{-1}$, which is always defined, is a morphism of bicrystals concluding the proof.
\end{proof}

In the example reported in \cref{fig:symmetries_RSK} the statement of \cref{thm:symmetries_RSK} is expressed in the form of a commutative diagram, that the reader can easily check.

\begin{remark} \label{rem:symmetries_RSK}
    We will show in \cref{subs:linearization} that, modulo symmetries prescribed by \cref{thm:symmetries_RSK}, the skew $\RSK$ map is in fact a linear transformation. This implies that the affine bicrystal structure completely characterizes the skew $\RSK$ map and hence the Sagan-Stanley correspondence of \cref{thm:SS}.
\end{remark}

\begin{figure}[ht]
    \centering
    \includegraphics[scale=.9]{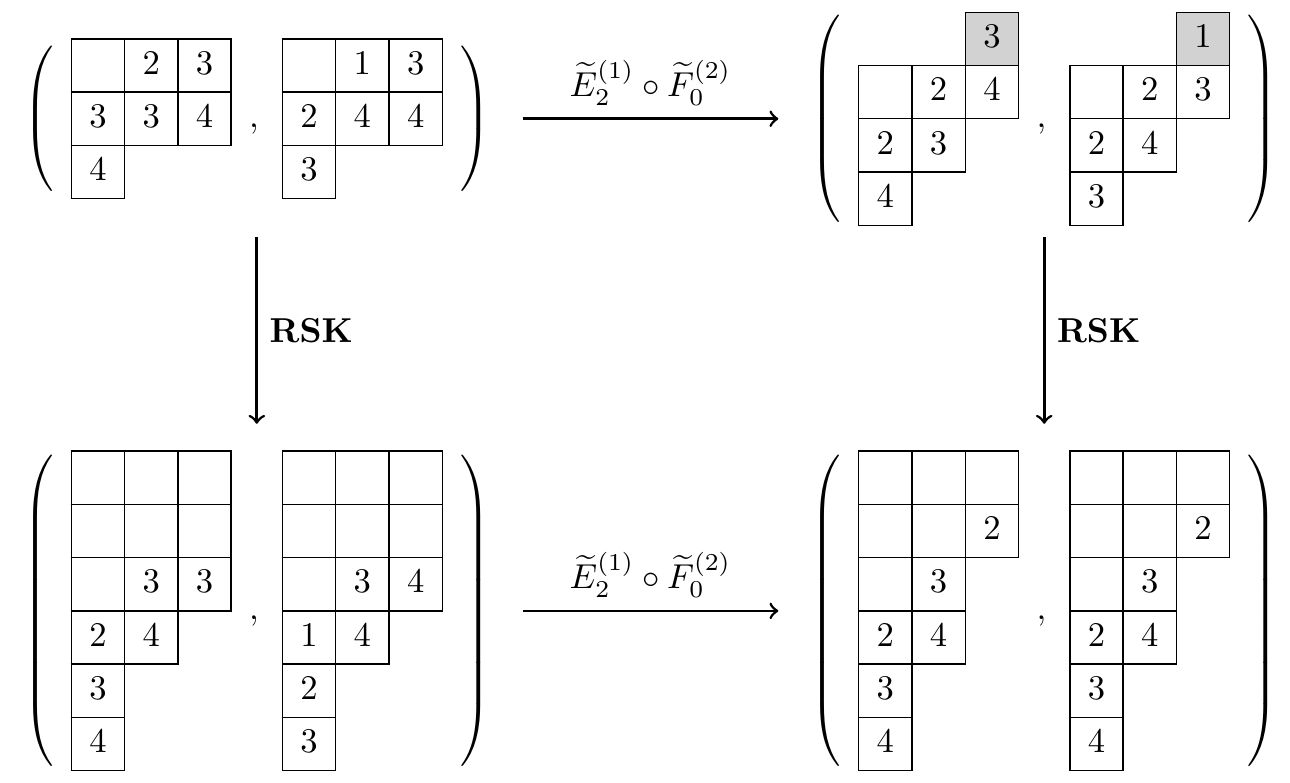}.
    \caption{The skew $\RSK$ map commutes with the two families of Kashiwara operators $\widetilde{E}^{(\epsilon)}_i, \widetilde{F}^{(\epsilon)}_i, i=0,\dots,n-1$, $\epsilon=1,2$.}
    \label{fig:symmetries_RSK}
\end{figure}

\begin{corollary} \label{cor:Phi_is_morphism}
    Projection $\Phi:(P,Q) \mapsto (V,W)$ defined by \eqref{eq:Phi} is a morphism of affine crystal graphs.
\end{corollary}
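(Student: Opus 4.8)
The plan is to verify directly that $\Phi$ preserves content and commutes with every Kashiwara operator $\widetilde{E}_i^{(\epsilon)},\widetilde{F}_i^{(\epsilon)}$ for $\epsilon=1,2$ and $i=0,\dots,n-1$, so that it is a morphism of the affine crystal graph for either structure. Content preservation is immediate: the skew $\RSK$ map is content preserving and is a bijection on pairs of fixed skew-shape size, so the stable tableaux $\RSK^t(P,Q)=(P_t,Q_t)$ carry the content of $(P,Q)$ for every $t$, and all their labelled cells are read off into $(V,W)=\Phi(P,Q)$ by \cref{def:asymptotic_VST}; hence $\gamma(V)=\gamma(P)$ and $\gamma(W)=\gamma(Q)$.

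For the classical operators $i=1,\dots,n-1$ I would argue through the column reading word in the stable regime. By \cref{thm:symmetries_RSK} the map $\RSK^t$ commutes with $\widetilde{E}_i^{(\epsilon)}$, so $\RSK^t(\widetilde{E}_i^{(1)}(P,Q))=(\E{i}(P_t),Q_t)$, and similarly for $\epsilon=2$ and for $\F{i}$. In an $\RSK$-stable state the labelled cells of $P_t$ organise into strictly increasing columns which are exactly the columns of $V$, so the column reading word of $P_t$ stabilises to that of $V$. Since $\E{i}$ acts on semi-standard tableaux and on vertically strict tableaux through the same signature rule applied to the column reading word, applying $\E{i}$ to $P_t$ alters the entry of the same column as applying $\E{i}$ to $V$; letting $t\to\infty$ gives $\Phi(\widetilde{E}_i^{(1)}(P,Q))=(\E{i}V,W)=\widetilde{E}_i^{(1)}\Phi(P,Q)$, and likewise for the other classical operators.

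The crux is the case $i=0$, for which the definitions \eqref{eq:0_th_operators} and \eqref{eq:zero_Kashiwara_op} reduce everything to the single intertwining relation
\begin{equation*}
    \Phi\circ\iota_2=(\mathbf{1}\times\pr^{-1})\circ\Phi,
\end{equation*}
together with its $\iota_1$-analogue $\Phi\circ\iota_1=(\pr^{-1}\times\mathbf{1})\circ\Phi$. Granting this, and using $\Phi\circ\iota_2^{-1}=(\mathbf{1}\times\pr)\circ\Phi$ with the commutation of $\Phi$ and $\mathbf{1}\times\E{1}$ proved above, one computes
\begin{equation*}
    \Phi\circ\widetilde{E}_0^{(2)}=\Phi\circ\iota_2\circ(\mathbf{1}\times\E{1})\circ\iota_2^{-1}=(\mathbf{1}\times\pr^{-1}\E{1}\pr)\circ\Phi=(\mathbf{1}\times\E{0})\circ\Phi=\widetilde{E}_0^{(2)}\circ\Phi,
\end{equation*}
and the same manipulation handles $\widetilde{F}_0^{(2)}$. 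The $\epsilon=1$ identities then follow from the swap symmetry $\Phi\circ\mathrm{swap}=\mathrm{swap}\circ\Phi$, a consequence of \cref{prop:RSK_swap_symmetry}, together with $\iota_1=\mathrm{swap}\circ\iota_2\circ\mathrm{swap}$.

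To prove the intertwining relation I would analyse $\iota_2$ on an $\RSK$-stable pair, which is legitimate since $\iota_2$ commutes with $\RSK=\iota_2^n$ (\cref{prop:RSK_from_n_int_ins}), giving $\RSK^t(\iota_2(P,Q))=\iota_2(P_t,Q_t)$, so $\Phi(\iota_2(P,Q))$ is obtained by reading the columns of $\iota_2(P_t,Q_t)$ for large $t$. For a soliton column of $P_t$ whose $Q_t$-partner carries a $1$-cell, the internal insertion triggered by that cell propagates down the strictly increasing column and merely lowers it rigidly by one row while preserving its entries; columns without a $1$-cell are untouched. In either case the per-column $P$-content is unchanged, so the limiting first tableau is again $V$. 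On the $Q$-side the cycling of $\iota_2$ deletes the $1$-cell, lowers the remaining labels by one and creates an $n$-cell, which is exactly the column-wise action of $\pr^{-1}$; since promotion on $VST(\mu,n)=B^{\mu'}$ is defined tensor-factor (i.e. column) wise, the limiting second tableau is $\pr^{-1}(W)$, establishing the relation. The main obstacle is precisely this last step when the stable state contains adjacent solitons of equal length, whose triggered internal insertions can a priori interfere; here one must use the detailed geometry of $\RSK$-stable states (\cref{def:stable_states}, \cref{prop:Viennot_asymptotic}) to verify that the bumping paths still act column by column, so that the column contents evolve independently and the computation above goes through unchanged.
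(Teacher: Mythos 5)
Your proposal is correct and follows essentially the same route as the paper: classical operators are handled via the bicrystal symmetry of the skew $\RSK$ map (\cref{thm:symmetries_RSK}) together with the fact that $\E{i},\F{i}$ act identically on column words, and the $i=0$ case is reduced to the intertwining of $\iota_2$ (resp.\ $\iota_1$) with the promotion operator on $\RSK$-stable states, exactly as in the paper's proof. The only difference is cosmetic — you spell out the conjugation computation $\pr^{-1}\E{1}\pr=\E{0}$ and flag the column-independence of insertions in stable states explicitly, whereas the paper absorbs both into the definition of $\RSK$-stability and the statement $\Phi\circ\iota_2^{-1}(P,Q)=(V,\pr(W))$.
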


\begin{proof}
    Composition of morphisms of crystals is clearly a morphism of crystals and so is $\RSK^t$ for any $t$. This implies that $\widetilde{E}^{(\epsilon)}_i \circ \RSK^t(P,Q) = \RSK^t \circ \widetilde{E}^{(\epsilon)}_i (P,Q)$ and hence 
    \begin{equation} \label{eq:phi_kashiwara}
        \widetilde{E}^{(\epsilon)}_i \circ \Phi (P,Q) = \Phi \circ \widetilde{E}^{(\epsilon)}_i(P,Q),    
    \end{equation} 
    for all $i=1,\dots,n-1$, $\epsilon=1,2$, since classical operators $\widetilde{E}^{(\epsilon)}_i$ are defined in the same way for semi-standard tableaux and vertically strict tableaux. To prove that \eqref{eq:phi_kashiwara} holds also for $i=0$, we compare the action of $\iota_1,\iota_2$ and the promotion operator. We call $(P_t,Q_t)=\RSK^{t-1}(P,Q)$, and $(\tilde{P}_t,\tilde{Q}_t)=\iota_2^{-1}(P_t,Q_t)$. Also denote by $\mu$ the asymptotic increment of $(P,Q)$. Then, when $t$ is very large we see that 
    \begin{enumerate}
        \item The contents at column $j$ of $P_t$ and $\tilde{P}_t$ are equal;
        \item Defining $b_j,\tilde{b}_j \in B^{\mu_j',1}$ as $j$-th columns respectively of $Q_t,\tilde{Q}_t$, we have $\tilde{b}_j = \pr (b_j)$, for all $j=1,2,\dots$. 
    \end{enumerate}
    Therefore assuming $\Phi(P,Q)=(V,W)$, we have $\Phi \circ \iota_2^{-1}(P,Q) = (V,\pr (W))$ and by a similar argument $\Phi \circ \iota_1^{-1}(P,Q) = (\pr (V), W)$. Comparing the definition of the 0-th Kashiwara operators for pairs of semi-standard tableaux and vertically strict tableaux we can now conclude that \eqref{eq:phi_kashiwara} holds also for $i=0$ and $\Phi$ is a morphism of $\widehat{\mathfrak{sl}}_n$ bicrystals.
\end{proof}

Result of \cref{cor:Phi_is_morphism} establishes consistency between bicrystal structure for pairs of vertically strict tableaux and that of pairs of semi-standard tableaux. This consideration justifies the following definition.

\begin{definition}
    Let $(V,W) = \Phi (P,Q)$ and consider an operator
    \begin{equation}
        h=\left( \widetilde{E}^{(\epsilon_1)}_{i_1}\right)^{N_1} \circ \left(\widetilde{F}^{(\epsilon_2)}_{i_2}\right)^{N_2} \circ \cdots,
    \end{equation}
    that is an arbitrary composition of Kashiwara operators such that $h(V,W) \neq \varnothing$. Then the action of the operator $h$ is defined also on the pair $(P,Q)$, replacing Kashiwara operators for vertically strict tableaux by the corresponding operators for skew tableaux. Under these assumption $h(P,Q) \neq \varnothing$ and we call such map the \emph{$\Phi$-pullback} of $h$. For simplicity we will not introduce a special notation to denote pullback maps.    
\end{definition}

\begin{remark}
    The $\widehat{\mathfrak{sl}}_n$ bicrystal structure on the set of pairs $(P,Q)$ defines an $\widehat{\mathfrak{sl}}_n$ crystal structure on the set of single semi-standard Young tableaux of generalized shape. This is done defining Kashiwara operators $\widetilde{E}_i(P) = P'$, if $\widetilde{E}_i^{(1)} \circ \widetilde{E}_i^{(2)}  (P,P) = (P',P')$. This yields an affine crystal structure different than the one described in \cite{shimozono_affine}, whose 0-operators were given by \eqref{eq:zero_Kashiwara_op}, where $\mathrm{pr}$ becomes the Lascoux-Sch{\"u}tzenberger promotion operator. For instance we can check that 
    \begin{equation}
        \widetilde{E}_0 : \begin{ytableau}
            \, & & 1 \\ 1 & 2 \\ 2 & 3
        \end{ytableau}
        \mapsto
        \begin{ytableau}
            \, & & 1 \\ & 2 \\ & 3 \\ 2 \\ 3
        \end{ytableau}.
    \end{equation}
\end{remark}

In general it is not simple to describe concretely the effect of the action of $0$-th Kashiwara operators on the shape of skew tableaux $(P,Q)$. The next proposition does this in the case of pairs that are $\RSK$-stable.

\begin{proposition}\label{prop:cells_and_local_energy}
    Let $P,Q \in SST(\lambda/\rho ,n)$ such that $\iota_1^{-1}(P,Q)$ is an $\RSK$-stable pair of tableaux. Define also $\Phi(P,Q)=(V,W)$. Identify $V$ with the tensor product of its columns $v_1 \otimes \cdots \otimes v_N$ and assume that 
    \begin{equation}
        \F{0}(V) = v_1 \otimes \cdots \otimes \F{0}(v_k) \otimes \cdots \otimes v_N, 
    \end{equation}
    for some $k$. Define $(\widetilde{P},\widetilde{Q})=\widetilde{F}_0^{(1)}(P,Q)$ and let $\widetilde{\lambda} / \widetilde{\rho}$ be the shape of $\widetilde{P},\widetilde{Q}$. Then
    \begin{equation} \label{eq:action_0_op_on_shape}
        \widetilde{\lambda}'_j = \lambda'_j + \mathbf{1}_{j=k}
        \qquad
        \text{and}
        \qquad
        \widetilde{\rho}'_j = \rho'_j + \mathbf{1}_{j=k}.
    \end{equation}
    Analogous statements hold for $\widetilde{F}_0^{(2)}, \widetilde{E}^{(1)}_0, \widetilde{E}^{(2)}_0$.
\end{proposition}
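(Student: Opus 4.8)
The plan is to prove the statement for $\widetilde{F}_0^{(1)}$ and to deduce the remaining three cases formally: the case of $\widetilde{F}_0^{(2)}$ follows by conjugating with $\mathrm{swap}$, which exchanges the two commuting crystal structures and the roles of $P$ and $Q$, while the cases of $\widetilde{E}_0^{(1)},\widetilde{E}_0^{(2)}$ follow because these are the inverses of $\widetilde{F}_0^{(1)},\widetilde{F}_0^{(2)}$, so that the corresponding shape change is the reverse one, now localized at the column on which $\E{0}$ acts. Throughout I set $(\hat P,\hat Q)=\iota_1^{-1}(P,Q)$, which is $\RSK$-stable by hypothesis; then by the definition \eqref{eq:0_th_operators} of the $0$-th operators we have $\widetilde{F}_0^{(1)}(P,Q)=\iota_1\bigl(\F{1}(\hat P),\hat Q\bigr)$, where $\F{1}$ is the classical Kashiwara operator acting on the first tableau through its column reading word.

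The first step is to locate the cell affected by $\F{1}$. Since $\F{1}$ is a classical operator with $i=1\neq 0$, it preserves the shape of $\hat P$ and merely relabels a single entry, so $(\F{1}(\hat P),\hat Q)$ has the same skew shape as $(\hat P,\hat Q)$ and differs from it in one cell only. To identify that cell I would invoke \cref{cor:Phi_is_morphism}: since $\Phi(\hat P,\hat Q)=(\pr(V),W)$ and $\Phi$ is a morphism of $\widehat{\mathfrak{sl}}_n$ bicrystals, the action of $\F{1}$ on $\hat P$ corresponds to the action of $\F{1}$ on $\pr(V)$. Combining the identity $\F{0}=\pr^{-1}\circ\F{1}\circ\pr$ of \eqref{eq:zero_Kashiwara_op} with the hypothesis $\F{0}(V)=v_1\otimes\cdots\otimes\F{0}(v_k)\otimes\cdots\otimes v_N$, one sees that $\F{1}$ acts precisely on the $k$-th column of $\pr(V)$. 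Because $(\hat P,\hat Q)$ is $\RSK$-stable, the labelled columns of $\hat P$ are exactly $\pr(v_1),\dots,\pr(v_N)$, and therefore $\F{1}(\hat P)$ differs from $\hat P$ only by a single relabelling inside its $k$-th column.

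The core of the argument is then to compare the shapes of $\iota_1(\hat P,\hat Q)=(P,Q)$ and $\iota_1(\F{1}(\hat P),\hat Q)=(\widetilde P,\widetilde Q)$. Recall that $\iota_1$ cycles the first tableau — vacating its $1$-cells, decrementing the remaining labels and re-inserting to create $n$-cells via \cref{def:iota} — while performing the induced internal insertions on the second tableau. I would exploit stability, together with the autonomous evolution of the soliton columns established in \cref{prop:Viennot_asymptotic}, to argue that under this single cyclic step the columns evolve independently, so that the sole discrepancy between the two computations is confined to column $k$. Modifying the single entry of column $k$ changes, by exactly one, the number of $1$-cells that $\iota_1$ cycles, and hence changes by one both the cell that is vacated during the cycling and the cell that the associated internal insertion adjoins; tracking these two events through \cref{prop:meaning_M_k} (which records, face by face on the twisted cylinder, the rows from which cells are vacated and at which new cells are created) shows that the external shape and the empty shape each change by a single cell, and that this cell lies in column $k$. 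This yields the equalities \eqref{eq:action_0_op_on_shape}.

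The main obstacle I anticipate is this last step: converting the purely local change of one label in column $k$ into a local change of the skew shape, and showing it amounts to a single cell in column $k$ for both the external and the empty shapes. The difficulty is that $\iota_1$ is defined through a cascade of Schensted bumpings whose effect on the skew shape is a priori global, so that the implication ``local change of content $\Rightarrow$ local change of shape'' is not automatic. Resolving it requires the autonomous-column structure of stable states (\cref{def:stable_states}), which I would make precise by passing to the edge-configuration picture on the twisted cylinder of \cref{subs:edge_config_on_cyl}, where $\iota_1$ acts as a rigid vertical shift and the mutual separation of the columns in a stable configuration is manifest; this is exactly the setting in which \cref{prop:meaning_M_k} pins the row of each vacated and created cell to the intended column index $k$.
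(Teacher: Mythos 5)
Your overall strategy is the same as the paper's: write $\widetilde{F}_0^{(1)}(P,Q)=\iota_1\bigl(\F{1}(\widehat{P}),\widehat{Q}\bigr)$ with $(\widehat{P},\widehat{Q})=\iota_1^{-1}(P,Q)$, use stability to identify the columns of $\widehat{P}$ with $\pr(v_1),\dots,\pr(v_N)$ so that $\F{1}$ touches only column $k$, and then compare the shape changes produced by $\iota_1$ on the two pairs. Where you differ is in how you propose to control that last comparison. You want to track the two bumping cascades through the edge-configuration picture (\cref{prop:meaning_M_k}, \cref{prop:Viennot_asymptotic}); the paper avoids this entirely by observing that for \emph{any} $\RSK$-stable pair $(A,B)$ the map $\iota_1$ adds to both $\lambda'$ and $\rho'$ the indicator vector $\theta^{(1)}(A)$ of the columns of $A$ containing a $1$-cell, and then simply evaluates this closed formula twice --- once on $(\widehat{P},\widehat{Q})$ and once on $(\F{1}(\widehat{P}),\widehat{Q})$, which is again stable --- and subtracts, giving $\widetilde{\lambda}'=\lambda'-\theta^{(1)}(\widehat{P})+\theta^{(1)}(\F{1}(\widehat{P}))$. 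I recommend that route: your worry that ``a local change of content need not give a local change of shape'' dissolves because you never compare the two cascades cell by cell, you only apply the same stable-shift rule to two different inputs.

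The genuine gap is that you never compute the sign of the one-cell change, and that sign is the only quantitative content of the statement. Carrying it out: $\F{0}(v_k)\neq\varnothing$ forces $v_k$ to contain an $n$-cell, hence $\pr(v_k)$ --- the $k$-th column of $\widehat{P}$ --- contains a (unique) $1$-cell, and $\F{1}$ turns that $1$ into a $2$. Thus $\theta^{(1)}(\F{1}(\widehat{P}))_k=0$ while $\theta^{(1)}(\widehat{P})_k=1$, so column $k$ receives one \emph{fewer} downward shift, and the computation lands on $\widetilde{\lambda}'_j=\lambda'_j-\mathbf{1}_{j=k}$ and $\widetilde{\rho}'_j=\rho'_j-\mathbf{1}_{j=k}$, the opposite of \eqref{eq:action_0_op_on_shape}; the $+$ sign is what $\widetilde{E}_0^{(1)}$ produces, since there $\E{1}$ \emph{creates} a $1$-cell in column $k$ of $\widehat{P}$. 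You can confirm this with $n=2$ and $P=Q$ a single $2$-cell at row $3$: one finds that $\widetilde{F}_0^{(1)}$ moves it up to row $2$. The sign $-1$ for $\F{0}$ is also the one used downstream (each $\F{0}$ Demazure arrow of a leading map removes an empty cell from column $k$, cf. $\widehat{\rho}'_k=\widetilde{\rho}'_k-\mathscr{H}_k(V)-\mathscr{H}_k(W)$ in the proof of \cref{thm:leading_map_and_leading_tableaux}), so the proposition as printed appears to have the roles of $\E{0}$ and $\F{0}$ interchanged. Once you supply the missing sign computation your argument establishes the corrected statement rather than \eqref{eq:action_0_op_on_shape} verbatim; you should say so explicitly instead of asserting that the bookkeeping ``yields the equalities''.
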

\begin{proof}
    Let $(\widehat{P},\widehat{Q})=\iota_1^{-1}(P,Q)$ and call $\widehat{\lambda}/ \widehat{\rho}$ the shape of the pair. Define $\theta^{(i)}(P)=(\theta_1^{(i)},\dots \theta_{\lambda_1}^{(i)})$ as 
    \begin{equation}
        \theta_c^{(i)}(P) = \mathbf{1}_{\{ \textrm{there is an $i$-cell at column $c$ of $P$} \} }.
    \end{equation}
    Then, by definition of $\iota_1^{-1}$ and by the property of $\RSK$-stability of $(\widehat{P},\widehat{Q})=\iota_1^{-1}(P,Q)$, we have
    \begin{equation}
        \widehat{\lambda}'=\lambda' - \theta^{(n)}(P)
        \qquad
        \text{and}
        \qquad
        \widehat{\rho}'=\rho' - \theta^{(n)}(P).
    \end{equation}
    Clearly if $(\widehat{P},\widehat{Q})$ is $\RSK$-stable so is $(\F{i}(\widehat{P}),\widehat{Q})$ whenever $i=1,\dots,n-1$ and $\F{i}(\widehat{P})\neq \varnothing$. Then as above we have 
    \begin{equation}
        \widetilde{\lambda}' = \widehat{\lambda}' + \theta^{(1)}(\F{i}(\widehat{P})) = \lambda' - \theta^{(n)}(P) + \theta^{(1)}(\F{i}(\widehat{P}))
    \end{equation}
    and similarly for $\widetilde{\rho}$, which is exactly the claim \eqref{eq:action_0_op_on_shape}.
\end{proof}
    
\subsection{Matrices $\overline{\mathbb{M}}_{n\times n}$ as affine bicrystals} \label{subs:matrices_affine_bicrystal}

We equip the set of matrices $\overline{\mathbb{M}}_{n\times n}$ of an $\widehat{\mathfrak{sl}}_n$ bicrystal structure, transporting, via the Sagan-Stanley correspondence, the structure on the set of pairs of tableaux discussed in \cref{subs:pairs_of_tab_affine_bicrystals}. Similar investigations were recently pursued in \cite{Gerber_Lecouvey_bicrystals}, where authors considered the set of infinite binary matrices as affine bicrystals. For the case of integral matrices $\mathbb{M}_{n\times n}$ classical bicrystal structure had been discussed in \cite{Danilov_Kolshevoy_bicrystals,vanLeeuwen_crystal_matrices}. 

\medskip

We start defining the action of two families of classical Kashiwara operators on the set of matrices $\overline{\mathbb{M}}_{n\times n}$. For a given $\overline{M}\in \overline{\mathbb{M}}_{n \times n}$ and $i=1,\dots,n-1$, we set
\begin{equation}
    \widetilde{E}_i^{(1)} ( \overline{M} ) = \overline{M}',
\end{equation}
where $\overline{M}'$, as a map on $\mathscr{C}_n$, is 
\begin{equation} \label{eq:signature_rule_matrix}
    \overline{M}' (c) = 
    \begin{cases}
        \overline{M} (\widehat{k} , i+1) - 1 \qquad & \text{if } c \sim_n (\widehat{k},i+1),
        \\
        \overline{M} (\widehat{k} , i) + 1 \qquad & \text{if } c \sim_n (\widehat{k},i),
        \\
        \overline{M} (c) \qquad & \text{else }.
    \end{cases}
\end{equation}
The value of $\widehat{k}$ is determined by the signature rule, that in this case reads
\begin{equation} \label{eq:signature_rule_matrix_2}
    \widehat{k} = \min \left\{ k : \sum_{j = k}^{s} \overline{M} (j,i+1) > \sum_{j = k+1}^{s+1} \overline{M} (j,i), \,\, \forall s \ge k \right\}.
\end{equation}
See \cref{fig:Kashiwara_op_matrices} for an example.
\begin{figure}
    \centering
    \includegraphics{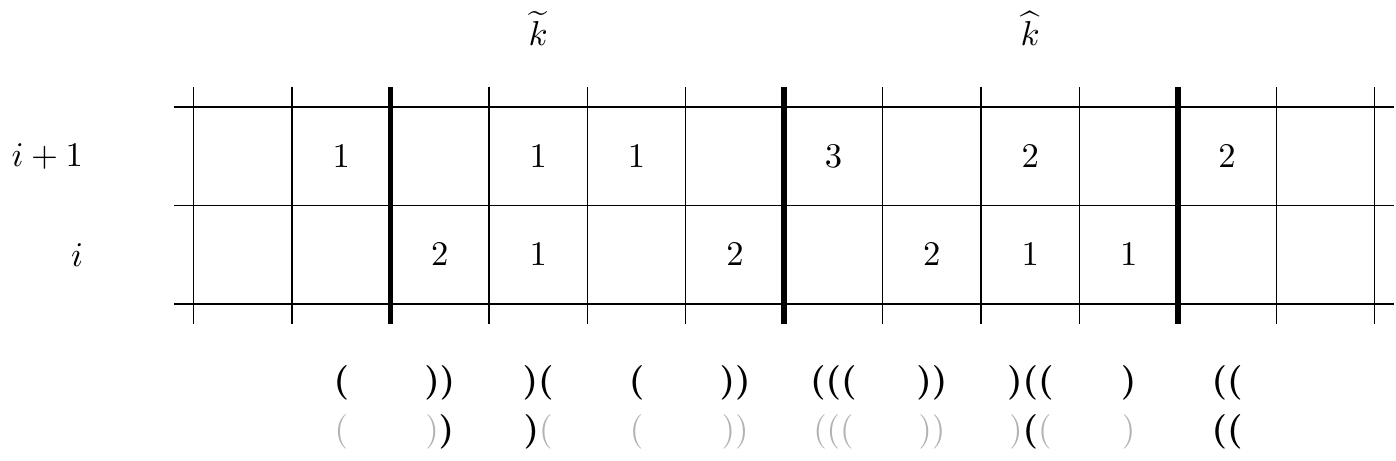}
    \caption{An example of the signature rule determining $\widehat{k}$ and $\widetilde{k}$ as in \eqref{eq:signature_rule_matrix_2}, \eqref{eq:signature_rule_matrix_3}.}
    \label{fig:Kashiwara_op_matrices}
\end{figure}
Analogously we define $\widetilde{F}^{(1)}_i$ as the (partial) inverse of $\widetilde{E}^{(1)}_i$, i.e., 
\begin{equation}
    \widetilde{F}_i^{(1)} ( \overline{M} ) = \overline{M}',
    \qquad
    \overline{M}' (c) = 
    \begin{cases}
        \overline{M} (\widetilde{k} , i+1) + 1 \qquad & \text{if } c \sim_n (\widetilde{k},i+1),
        \\
        \overline{M} (\widetilde{k} , i) - 1 \qquad & \text{if } c \sim_n (\widetilde{k},i),
        \\
        \overline{M} (c) \qquad & \text{else },
    \end{cases}
\end{equation}
where this time 
\begin{equation} \label{eq:signature_rule_matrix_3}
    \widetilde{k} = \max \left\{ k : \sum_{j =s}^k \overline{M} (j,i) > \sum_{j = s-1}^{k-1} \overline{M} (j,i+1), \,\, \forall s \le k \right\}.
\end{equation}
In order to define the $0$-th Kashiwara operators we introduce the shifts
\begin{equation} \label{eq:shift_T}
    T_\epsilon(f)(c) = f(c-\mathbf{e}_\epsilon),
\end{equation}
for $\epsilon=1,2$, acting on any map $f$ from the twisted cylinder $\mathscr{C}_n$. We set
\begin{equation}
    \widetilde{E}^{(1)}_0 = T_2 \circ \widetilde{E}^{(1)}_1 \circ T_2^{-1},
    \qquad
    \widetilde{F}^{(1)}_0 = T_2 \circ \widetilde{F}^{(1)}_1 \circ T_2^{-1},
\end{equation}
The second family of Kashiwara operators is defined by duality,
\begin{equation} \label{eq:Kashiwara_op_matrices_second}
    \widetilde{E}^{(2)}_i(\overline{M}^T) = \widetilde{E}^{(1)}_i(\overline{M})^T,
    \qquad
    \widetilde{F}^{(2)}_i(\overline{M}^T) = \widetilde{F}^{(1)}_i(\overline{M})^T.
\end{equation}

\begin{remark} \label{rem:weighted_biwords_bi_crystals}
    We can translate the definitions for matrices above to those for weighted biwords through identification \eqref{eq:matrix_weighted_biword}.  
    For instance classical Kashiwara operators become
    \begin{gather}
        \widetilde{E}^{(1)}_i (\overline{\pi}) = \overline{\sigma},
        \qquad
        \colon
        \qquad
        q(\overline{\sigma}) = q(\overline{\pi}), 
        \qquad
        w(\overline{\sigma}) = w(\overline{\pi}),
        \qquad
        p(\overline{\sigma}^{\na}) = \E{i} (p(\overline{\pi}^{\na} ) )
    \end{gather}
    and analogously for the $\widetilde{F}^{(1)}_i$ operator for $i=1,\dots, n-1$. The second family $\widetilde{E}^{(2)}_i,\widetilde{F}^{(2)}_i$, again for $i=1,\dots,n-1$, is defined by duality 
    \begin{equation} \label{eq:Kashiwara_op_weighted_biwords_second}
        \widetilde{E}^{(2)}_i (\overline{\pi}^{-1}) = \widetilde{E}^{(1)}_i (\overline{\pi})^{-1},
    \end{equation}
    and similarly for the $F^{(2)}_i$ operators.
\end{remark}

\begin{proposition}
    The two families $\{\widetilde{E}^{(\epsilon)}_i, \widetilde{F}^{(\epsilon)}_i : i=0,\dots,n-1 \}$, $\epsilon=1,2$ defined above equip the set $\overline{\mathbb{M}}_{n \times n}$ of an $\widehat{\mathfrak{sl}}_n$ bicrystal structure.
\end{proposition}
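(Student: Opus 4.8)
The strategy is to deduce the statement from the bicrystal structure on pairs of tableaux established in \cref{subs:pairs_of_tab_affine_bicrystals}, by transporting it across the bijection $\overline{\mathbb{M}}_{n\times n}\leftrightarrow \mathcal{M}_n$ of \cref{thm:matrices_configurations} composed with the row-coordinate parameterization $\rc$. Concretely, fixing the partition $\nu$ (which labels the free $\mathbb{Y}$-factor of the Sagan--Stanley correspondence and is untouched by all operators), each matrix $\overline{M}\in\overline{\mathbb{M}}_{n\times n}$ corresponds to a pair $(P,Q)$, and it suffices to prove that the two families $\{\widetilde{E}^{(\epsilon)}_i,\widetilde{F}^{(\epsilon)}_i\}$ defined here on $\overline{M}$ are intertwined with the families $\widetilde{E}^{(\epsilon)}_i,\widetilde{F}^{(\epsilon)}_i$ acting on $(P,Q)$. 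Since an $\widehat{\mathfrak{sl}}_n$ bicrystal structure is preserved under a content-preserving bijection intertwining the operators, the two crystal-graph axioms together with the commutativity of the two families then follow at once from the corresponding properties of pairs of tableaux, and no axiom needs to be checked by hand on matrices. One has only to verify in addition that the operators map $\overline{\mathbb{M}}_{n\times n}$ into itself, i.e.\ preserve non-negativity and compact support; this is immediate from \eqref{eq:signature_rule_matrix} and the fact that the shifts $T_\epsilon$ are bijections.

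For the classical colors $i=1,\dots,n-1$ I would work in the weighted-biword model of \cref{rem:weighted_biwords_bi_crystals}, where $\widetilde{E}^{(1)}_i$ is defined by applying the word operator $\E{i}$ to $p(\overline{\pi}^{\na})$. The first step is a direct combinatorial lemma identifying the matrix signature rule \eqref{eq:signature_rule_matrix_2}--\eqref{eq:signature_rule_matrix_3} with the parenthesis-matching rule \eqref{alg:e_i} applied to $p(\overline{\pi}^{\na})$: the cumulative sums $\sum_j \overline{M}(j,i)$ and $\sum_j \overline{M}(j,i+1)$ are exactly the running counts of the two matched symbols read in timetable order, so the index $\widehat{k}$ of \eqref{eq:signature_rule_matrix_2} marks the column whose $p$-entry is lowered from $i+1$ to $i$. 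Granting this, $\widetilde{E}^{(1)}_i$ on $\overline{M}$ is literally the word-crystal operator; on the tableau side this matches $\E{i}\times\mathbf 1$ acting on $P$ with $Q$ unchanged, by coplacticity of Kashiwara operators with insertion (\cref{prop:commutation_kashiwara_int_ins}), and the second family is obtained via the transpose-swap correspondence \eqref{eq:Kashiwara_op_matrices_second} together with \cref{prop:RSK_swap_symmetry}. A point requiring care here is that these operators preserve $\nu=\ker(P,Q)$, so that the correspondence is internal to a fixed $\nu$-slice; this is exactly guaranteed by the matching mechanism, since the signature rule only lets $\E{i}$ act on an \emph{unmatched} cell, whose modification leaves the row overlaps of \eqref{eq:nu_PQ} intact.

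For color $0$ I would use that the shifts $T_1,T_2$ on $\mathscr{C}_n$ correspond, under $\overline{\mathbb{M}}_{n\times n}\leftrightarrow\mathcal{M}_n$, to the maps $\iota_2,\iota_1$ respectively --- this is the geometric content of \cref{def:iota_matrices} and the identification used in \cref{prop:local_completeness}. Consequently the conjugated definition $\widetilde{E}^{(1)}_0=T_2\circ\widetilde{E}^{(1)}_1\circ T_2^{-1}$ and its analogues match precisely the tableau definitions $\widetilde{E}^{(1)}_0=\iota_1\circ(\E{1}\times\mathbf 1)\circ\iota_1^{-1}$ of \eqref{eq:0_th_operators}, with the transpose duality again handling the second family. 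Preservation of $\nu$ at color $0$ is automatic, since $\iota_1,\iota_2$ preserve $\ker$ by \cref{prop:iota_preserves_ker}. Combining the classical and the $0$-th matchings yields a content-preserving bijection intertwining all four operator families, which completes the transport.

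I expect the main obstacle to be the signature-rule lemma of the second paragraph. Reconciling the matrix form \eqref{eq:signature_rule_matrix_2} with the parenthesis-matching of \eqref{alg:e_i} on the timetable-ordered word $p(\overline{\pi}^{\na})$ requires tracking carefully how the periodic geometry of $\mathscr{C}_n$ and the weight-then-lexicographic ordering defining $\overline{\pi}^{\na}$ conspire to reproduce the global matching, and it is here, rather than in the formal transport, that the genuine work lies; once the operators are identified with their tableau counterparts the bicrystal axioms are a formal consequence.
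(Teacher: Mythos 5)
Your proposal is correct in substance but follows a genuinely different route from the paper. The paper's own proof is a terse direct verification on the matrix side: it introduces the content functions $\gamma^{(1)},\gamma^{(2)}$, checks the crystal axioms of \cref{subs:crystals_and_bi_crystals} straight from the signature-rule definitions, and disposes of the commutativity of the two families by a direct check, citing the analogous computation for finite integral matrices in the classical ($\mathfrak{sl}_n$) setting. You instead transport the structure from the bicrystal of pairs of tableaux through the Sagan--Stanley/row-coordinate correspondence, which amounts to proving the intertwining statement of \cref{prop:PQ_M_morphism} \emph{first} and then deducing the axioms and the commutativity for free. This is not circular, since the tableau bicrystal of \cref{subs:pairs_of_tab_affine_bicrystals} is established independently, and your identification of the matrix signature rule with the word signature rule on $p(\overline{\pi}^{\na})$ is exactly what the paper asserts (without proof) in \cref{rem:weighted_biwords_bi_crystals} and then exploits in the proof of \cref{prop:PQ_M_morphism}. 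What your route buys is that commutativity comes automatically and that the morphism property is on the table from the start; what it costs is that you must invoke the whole Knuth/coplacticity machinery and settle the timetable-ordering lemma before the bicrystal structure on matrices is even available, whereas the paper keeps this proposition self-contained and cheap and proves the morphism afterwards. One point in your argument deserves a sturdier justification: the claim that $\widetilde{E}^{(\epsilon)}_i$ for $i=1,\dots,n-1$ preserves $\nu=\ker(P,Q)$ (needed so that the correspondence restricts to a bijection on a fixed $\nu$-slice) does not follow quite so directly from ``the modified cell is unmatched,'' since relabelling a single cell can in principle alter a row overlap in \eqref{eq:nu_PQ}; the clean argument is that $\E{i}(P)\stackrel{*}{\simeq}P$ by \cref{thm:kash_op_equiv}, the squeezing that computes the kernel is realized by jeu de taquin slides, and dual equivalent tableaux have identical shapes under any sequence of slides by \cref{thm:haiman_dual_eq}. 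For $i=0$ your appeal to \cref{prop:iota_preserves_ker} is the right one.
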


\begin{proof}
    Defining the content functions $\gamma^{(1)},\gamma^{(2)}$ as 
    \begin{equation}
        \gamma^{(1)}_i (\overline{M}) = \sum_{j\in \mathbb{Z}} \overline{M}(j,i),
        \qquad
        \gamma^{(2)}_i (\overline{M}) = \sum_{j\in \mathbb{Z}} \overline{M}(i,j),
    \end{equation}
    it is straightforward to verify that $\widetilde{E}^{(1)}_i, \widetilde{F}^{(1)}_i$ and $\widetilde{E}^{(2)}_i, \widetilde{F}^{(2)}_i$ fulfill hypothesis enumerated in \cref{subs:crystals_and_bi_crystals}. Commutativity of the two families of Kashiwara operators can also be checked directly. This was done for finite integral matrices in \cite{Danilov_Kolshevoy_bicrystals,vanLeeuwen_crystal_matrices}.
\end{proof}

The affine bicrystal structure we impose on the set of matrices $\overline{\mathbb{M}}_{n \times n}$ is by design compatible with the bicrystal structure defined on set of pairs of tableaux. 
    
\begin{proposition} \label{prop:PQ_M_morphism}
    The map \eqref{eq:map_PQ_M} $(P,Q) \xrightarrow[]{\skwRSK\,} \overline{M}$ is a morphism of $\widehat{\mathfrak{sl}}_n$ bicrystals.
\end{proposition}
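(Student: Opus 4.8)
The plan is to reduce the claim to the fact, established earlier in this section, that the projections and parameterizations already encountered are each morphisms of $\widehat{\mathfrak{sl}}_n$ bicrystals, and then to check compatibility of the two remaining combinatorial layers. Recall that the map $(P,Q) \xrightarrow{\skwRSK} \overline{M}$ is by construction the composition of the row-coordinate parameterization $(P,Q) \mapsto (\alpha,\beta)$ (forgetting $\nu$) with the bijection $(\alpha,\beta) \leftrightarrow \overline{M}$ of \cref{thm:matrices_configurations}. So the natural strategy is to verify that both families of Kashiwara operators $\widetilde{E}^{(\epsilon)}_i, \widetilde{F}^{(\epsilon)}_i$, for $\epsilon=1,2$ and $i=0,\dots,n-1$, are intertwined by this composite map, and that content is preserved. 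Because the bicrystal structure on $\overline{\mathbb{M}}_{n\times n}$ was \emph{defined} by transporting the one on pairs of tableaux through exactly this correspondence (see the opening of \cref{subs:matrices_affine_bicrystal}), the content-preservation is immediate and the core of the work is to confirm that the explicit formulas \eqref{eq:signature_rule_matrix}--\eqref{eq:Kashiwara_op_matrices_second} on matrices genuinely agree with the transported operators.

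First I would treat the classical operators $i=1,\dots,n-1$ for the first family $\epsilon=1$. Here I would invoke \cref{prop:iota_rc_commute_semi} to pass from tableaux to row-coordinate matrices, then use the standardization reduction of \cref{subs:standardization} to reduce to standard tableaux and $\mathbb{Z}$-valued edge configurations. The signature rule \eqref{eq:signature_rule_matrix_2} on the columns of $\overline{M}$ should be recognized as precisely the signature rule \eqref{alg:e_i} computed on the relevant reading word, once one uses the geometric meaning of $\overline{M}$ via \cref{prop:meaning_M_k}: entries $\overline{M}(j,i+1)$ and $\overline{M}(j,i)$ count the $(i+1)$- and $i$-cells entering at each step, and the min-condition in \eqref{eq:signature_rule_matrix_2} is the parenthesis-matching that locates the leftmost unmatched ``('' symbol. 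The duality \eqref{eq:Kashiwara_op_matrices_second} then gives $\epsilon=2$ by transposition, matching the swap symmetry of \cref{prop:RSK_swap_symmetry}.

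Next I would handle the $0$-th operators. On the tableaux side these are defined in \eqref{eq:0_th_operators} by conjugating the classical $i=1$ operator with $\iota_1$ or $\iota_2$; on the matrix side they are defined in \eqref{eq:shift_T} by conjugating $\widetilde{E}^{(1)}_1$ with the shift $T_2$. The key input is that, under the correspondence $\overline{M}\leftrightarrow\mathcal{E}\leftrightarrow(\alpha,\beta)$, the operations $\iota_1,\iota_2$ act as the shifts $T_2, T_1$ on the twisted cylinder; this is exactly the content of the proof of \cref{prop:local_completeness} together with \cref{def:iota_matrices}. Thus conjugation by $\iota_\epsilon$ on pairs of tableaux corresponds to conjugation by the appropriate shift on $\mathscr{C}_n$, so the two definitions of $\widetilde{E}^{(\epsilon)}_0,\widetilde{F}^{(\epsilon)}_0$ match once the $i=1$ case is known. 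I expect the main obstacle to be precisely this identification of $\iota_1,\iota_2$ with shifts at the level of the $0$-th operators: one must check carefully that the shift $T_2$ used in the matrix definition corresponds to $\iota_1$ (and not $\iota_2$) and that the periodic wrap-around of $\mathscr{C}_n$ correctly reproduces the promotion built into $\iota_\epsilon$, so that the conjugated signature rule on the shifted columns indeed implements the $0$-edges. Once this correspondence between $\{\iota_1,\iota_2\}$ and $\{T_2,T_1\}$ is pinned down, the proposition follows, completing the verification that \eqref{eq:map_PQ_M} is a morphism of $\widehat{\mathfrak{sl}}_n$ bicrystals.
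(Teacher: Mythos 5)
Your handling of the $0$-th operators coincides with the paper's: both arguments rest on \cref{prop:local_completeness} and \cref{thm:matrices_configurations} to identify $\iota_1,\iota_2$ with the shifts $T_2,T_1$ on $\mathscr{C}_n$, after which the conjugated definitions \eqref{eq:0_th_operators} on tableaux and $\widetilde{E}^{(1)}_0=T_2\circ\widetilde{E}^{(1)}_1\circ T_2^{-1}$ on matrices match essentially by construction. The reduction of the second family to the first via transposition and \cref{prop:RSK_swap_symmetry} is also as in the paper.

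The gap is in the classical case $i=1,\dots,n-1$. The operator $\E{i}$ on $P$ is defined by the signature rule applied to the \emph{column reading word} of $P$, whereas $\widetilde{E}^{(1)}_i(\overline{M})$ applies the signature rule to the letters $i,i+1$ read in the order dictated by the twisted cylinder, i.e.\ to the word $p(\overline{\pi}^{\na})$ of \cref{rem:weighted_biwords_bi_crystals}. These are two \emph{different orderings} of the same multiset of letters, and parenthesis matching is order-sensitive: ``recognizing'' \eqref{eq:signature_rule_matrix_2} as the same matching as \eqref{alg:e_i} does not by itself show that the two computations select corresponding letters. What makes them agree is that the two words are Knuth equivalent and that Kashiwara operators are coplactic; the paper supplies exactly this, proving $\pi_P\simeq p(\overline{\pi}^{\na})$ via \cref{prop:jdt_of_PQ}, realizing the passage between them by jeu de taquin slides, and invoking \cref{prop:commutation_jdt_kashiwara} to get the intertwining. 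Your substitutes do not bridge this: \cref{prop:iota_rc_commute_semi} concerns commutation of $\iota_\epsilon$ and $\RSK$ with $\rc$, not with Kashiwara operators (indeed the paper never equips $(\alpha,\beta)$ with a crystal structure), \cref{prop:meaning_M_k} only interprets the matrix entries dynamically, and standardization changes the alphabet so it interacts nontrivially with $\E{i}$. To close the argument you need the Knuth-equivalence/coplacticity step, or an equivalent direct proof that the signature rule is unchanged under the reordering relating the two words.
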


\begin{proof}
We first show that $(P,Q) \xrightarrow[]{\skwRSK\,} \overline{M}$ is a morphism of $\mathfrak{sl}_n$ bicrystals. For this it is convenient to use the formalism of weighted biwords $\overline{\pi}(\overline{M})$, whose bicrystal structure was given in \cref{rem:weighted_biwords_bi_crystals}, rather than matrices. If $(P,Q) \xrightarrow[]{\skwRSK\,} \overline{\pi}$, call $\pi_P$ the row reading word of $P$ and $p=p(\overline{\pi}^{\na})$. By \cref{prop:jdt_of_PQ} we have $\pi_P \simeq p$, where $\simeq$ denotes the Knuth equivalence. Notice that, the statement in \cref{prop:jdt_of_PQ} requires that tableau $P$ is of classical shape for simplicity, but it is straightforward to understand this assumption is not needed. Since $\pi_P \simeq p$, the map $P \mapsto p$ can be realized as a sequence of jeu de taquin moves identifying the word $p$ with its anti diagonal strip tableau
\begin{equation}
    \begin{ytableau}
        \, & & & p_k
        \\
        & & \dots
        \\
        & p_2
        \\
        p_1
    \end{ytableau}
    .
\end{equation}
This implies, by \cref{prop:commutation_jdt_kashiwara} that $\E{i}(P) \mapsto \E{i}(p)$ and hence
\begin{equation}
    \widetilde{E}^{(1)}(P,Q) = \widetilde{E}^{(1)}(\overline{\pi}),
\end{equation}
for $i=1,\dots,n-1$. The same can be said for family $\widetilde{E}^{(2)}_i, i=1,\dots ,n-1$ and hence $(P,Q) \xrightarrow[]{\skwRSK\,} \overline{\pi}$ is a morphism of $\mathfrak{sl}_n$ bicrystals and so is \eqref{eq:map_PQ_M}. To prove that $(P,Q) \xrightarrow[]{\skwRSK\,} \overline{M}$ is a morphism of affine bicrystals we use \cref{prop:local_completeness}, which along with \cref{thm:matrices_configurations} implies that $\iota_1(P,Q) \xrightarrow[]{\skwRSK\,} T_2 (\overline{M}), \iota_2(P,Q) \xrightarrow[]{\skwRSK\,} T_1 (\overline{M})$. This proves that for both $\epsilon=1,2$,  $\widetilde{E}^{(\epsilon)}_i(P,Q) \mapsto \widetilde{E}^{(\epsilon)}_i(\overline{M})$ also for $i=0$.
\end{proof}

\begin{theorem} \label{thm:symmetries_Viennot}
    The Viennot map $\mathbf{V}$ is an isomorphism of $\widehat{\mathfrak{sl}}_n$ bicrystals.
\end{theorem}

\begin{proof}
    This is a consequence of \cref{prop:PQ_M_morphism}. For any matrix $\overline{M}$ there always exists a pair of tableaux $(P,Q)$ such that, $(P,Q) \xrightarrow[]{\skwRSK \,} \overline{M}$ under map \eqref{eq:map_PQ_M}. This is a consequence of \cref{thm:matrices_configurations} and of construction reported (only for tableaux of classical shape for brevity) in \cref{prop:row_coordinate_classical_pair}. Let $(P',Q')$ be tableaux obtained rigidly shifting $P,Q$ one row up. Then, by \cref{prop:RSK_ca_and_Viennot_ca} we have $(P',Q') \xrightarrow[]{\skwRSK \,} \mathbf{V}(\overline{M})$. Since the transformation $(P,Q) \mapsto (P',Q')$ is realized through a sequence of jeu de taquin moves we have $\widetilde{E}^{(\epsilon)}_i(P,Q) \mapsto \widetilde{E}^{(\epsilon)}_i(P',Q')$ and using \cref{prop:PQ_M_morphism}, $\widetilde{E}^{(\epsilon)}_i \circ \mathbf{V}(\overline{M}) = \mathbf{V} \circ \widetilde{E}^{(\epsilon)}_i(\overline{M})$.
\end{proof}

In \cref{fig:example_symmetries_Viennot} we report an example of commutation relations prescribed by \cref{thm:symmetries_Viennot}.

\begin{figure}[ht]
    \centering
    \includegraphics[scale=.9]{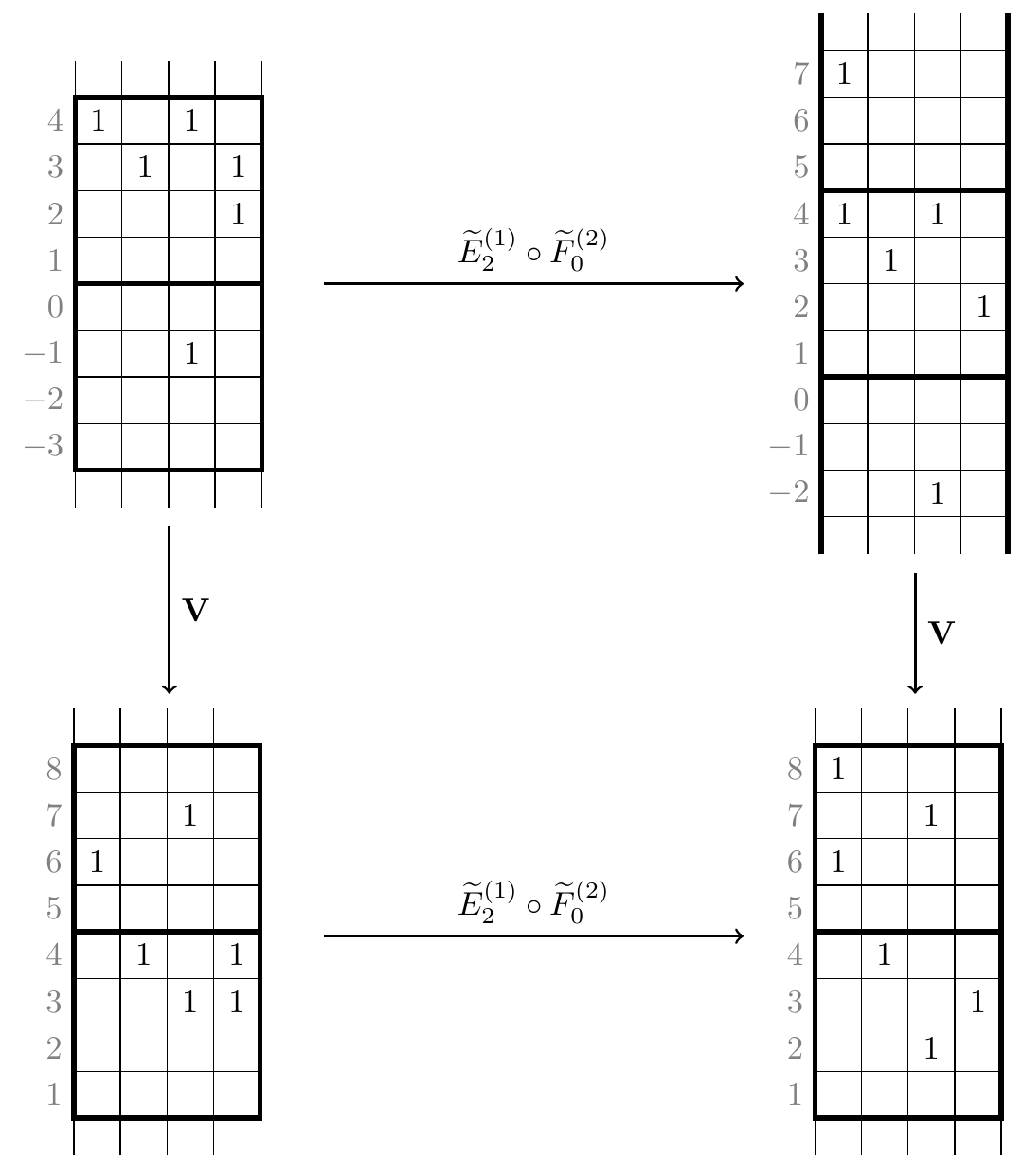}
    \caption{The Viennot map commutes with the two families of Kashiwara operators $\widetilde{E}^{(\epsilon)}_i, \widetilde{F}^{(\epsilon)}_i, i=0,\dots,n-1$, $\epsilon=1,2$.}
    \label{fig:example_symmetries_Viennot}
\end{figure}

\medskip

We could extend the description of crystal operators to matrices of integers $(\alpha, \beta)$. Such considerations will not play important role in this paper and therefore we do not discuss them here. The interested reader can consult \cite{Danilov_2005}, where similar ideas were investigated by the authors.

\section{Generalized Greene invariants} \label{sec:Greene_invariants}

In this section we study increasing subsequences and localized decreasing subsequences of weighted biwords, or of infinite matrices. These were defined in \cref{subs:edge_config_on_cyl} and represent generalizations of the classical increasing and decreasing subsequences which in the RSK correspondence capture the shape of the tableaux.
We show that the maximal increasing and localized decreasing subsequences are invariant under the action of Kashiwara operators in \cref{thm:subsequences_crystals} and that they are preserved by the Viennot map in \cref{thm:subsequences_Viennot}. Their interpretation in the language of tableaux is given in \cref{thm:asymptotic_shape_RSK} and they describe the asymptotic increment of $(P,Q)$ under skew $\RSK$ dynamics. From this last fact we deduce a generalization of Schensted's theorem describing of the first row of the shape of a pair of skew tableaux $(P,Q) \leftrightarrow (\overline{\pi} ; \nu)$ in terms of the longest increasing subsequence of $\overline{\pi}$ and of $\nu_1$.

\subsection{Passage times and subsequences} \label{subs:subsequences}
In \cref{subs:Viennot_asymptotics} we defined increasing and localized decreasing subsequences of a weighted biword $\overline{\pi}$. We now extend these definitions considering decompositions of $\overline{\pi}$ into multiple subsequences.

\begin{definition}
A weighted biword $\overline{\pi} \in \overline{\mathbb{A}}_{n,n}$ is $k$-\emph{increasing} if it can be written as a disjoint union of $k$ increasing weighted biwords $\overline{\pi} = \overline{\pi}^{(1)} \cupdot \cdots \cupdot \overline{\pi}^{(k)}$.
Analogously, $\overline{\pi}$ is \emph{$k$-localized decreasing} if it can be written as a disjoint union of $k$ localized decreasing weighted biwords $\overline{\pi} = \overline{\pi}^{(1)} \cupdot \cdots \cupdot \overline{\pi}^{(k)}$.
\end{definition}

\begin{definition}[Greene invariants] \label{def:Greene_invariants}
     For a weighted biword $\overline{\pi}\in \overline{\mathbb{A}}_{n,n}$ we define statistics
    \begin{align*}
        I_k (\overline{\pi}) & \coloneqq \text{length of the longest $k$-increasing subsequence of $\overline{\pi}$},
        \\
        D_k (\overline{\pi}) & \coloneqq \text{length of the longest $k$-localized decreasing subsequence of $\overline{\pi}$}.    
    \end{align*}
    If $\overline{M} \in \overline{\mathbb{M}}_{n \times n}$ is the matrix corresponding to $\overline{\pi}$ we will denote $I_k (\overline{M})=I_k (\overline{\pi})$ and same for $D_k (\overline{M}) = D_k(\overline{\pi})$.
\end{definition}

It is straightforward to notice that the notion of $I_k(\overline{M})$, defined in \cref{subs:results} in terms of last passage times of up-right paths is equal to the one given in \cref{def:Greene_invariants}.

\begin{remark}
    In case $\overline{\pi}$ is such that $w_i(\overline{\pi}) = 0$ for all $i$ notions of increasing and localized decreasing become respectively the usual notions of increasing and decreasing for words \cite[Chapter 3.3]{sagan2001symmetric}.
\end{remark}

Statistics $I_k(\overline{\pi}), D_k(\overline{\pi})$, as we will prove in \cref{thm:subsequences_crystals} are invariants under the action of Kashiwara operators. Moreover, in \cref{subs:generalized_Knuth_rel} we will define a generalized notion of Knuth relations and we will prove in \cref{thm:Greene_invariants_gen_knuth_rel}, that also with respect to these transformations $I_k,D_k$ are invariants. Therefore, they represent generalization in skew setting of Greene invariants, which should justify the terminology used.

\subsection{Greene invariants and crystal operators}

In classical setting Greene invariants of a biword $\pi$, or equivalently of an integral matrix $M$, are known to be invariant under the action of classical Kashiwara operators. 
This can be proven either using the fact that the RSK algorithm $M \mapsto (P,Q)$ is a morphism of classical crystals and leveraging Greene's theorem \cite{BumpSchilling_crystal_book}, or through a direct check \cite{Danilov_Kolshevoy_bicrystals, vanLeeuwen_crystal_matrices}. In skew/affine setting we find that analogous invariances hold, as stated in the next theorem.

\begin{theorem} \label{thm:subsequences_crystals}
    Let $\overline{\pi} \in \overline{\mathbb{A}}_{n,n}$. Let $h=\widetilde{E}^{(\epsilon)}_i$ or $h=\widetilde{F}^{(\epsilon)}_i$ for some $\epsilon=1,2$, $i=0,1,\dots,n-1$ and assume $h(\overline{\pi}) \ne \varnothing$. Then, for all $k$, we have
    \begin{equation} \label{eq:I_k_D_k_invariants}
        I_k(h(\overline{\pi})) = I_k(\overline{\pi})
        \qquad
        \text{and}
        \qquad
        D_k(h(\overline{\pi})) = D_k(\overline{\pi}).
    \end{equation}
\end{theorem}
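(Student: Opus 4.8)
The plan is to reduce the statement, by two elementary symmetries of the invariants, to the single family of classical operators $\widetilde{E}^{(1)}_i$ with $i\in\{1,\dots,n-1\}$, and then to establish invariance there by a direct analysis of disjoint up-right paths and strict down-right loops. First I would record that both $I_k$ and $D_k$ are unchanged by the transposition $\overline{\pi}\mapsto\overline{\pi}^{-1}$ (i.e. $\overline{M}\mapsto\overline{M}^T$): the involution $(j,i)\mapsto(i,j)$ of $\mathscr{C}_n$ is well defined, exchanges the two step directions $\mathbf{e}_1,\mathbf{e}_2$, and therefore carries up-right paths to up-right paths and strict down-right loops to strict down-right loops (traversed in the opposite sense, with length $\le n$ preserved). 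Since $\widetilde{E}^{(2)}_i,\widetilde{F}^{(2)}_i$ are defined by duality in \eqref{eq:Kashiwara_op_weighted_biwords_second}, the case $\epsilon=2$ will follow from $\epsilon=1$. Likewise the shifts $T_1,T_2$ of \eqref{eq:shift_T} permute the faces of $\mathscr{C}_n$ rigidly and hence preserve $I_k$ and $D_k$; because $\widetilde{E}^{(1)}_0=T_2\circ\widetilde{E}^{(1)}_1\circ T_2^{-1}$ and similarly for $\widetilde{F}^{(1)}_0$, the case $i=0$ will reduce to the case $i=1$. This leaves exactly the classical operators $\widetilde{E}^{(1)}_i$, $i\in\{1,\dots,n-1\}$.

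Next I would halve the remaining work by invertibility. Since $\widetilde{F}^{(1)}_i$ is the partial inverse of $\widetilde{E}^{(1)}_i$ in the sense of \cref{subs:crystals_and_bi_crystals}, it suffices to prove the one-sided bounds $I_k(\overline{\pi})\le I_k(\widetilde{E}^{(1)}_i(\overline{\pi}))$ and $D_k(\overline{\pi})\le D_k(\widetilde{E}^{(1)}_i(\overline{\pi}))$ whenever $\widetilde{E}^{(1)}_i(\overline{\pi})\ne\varnothing$; applying these same bounds with $\widetilde{F}^{(1)}_i(\overline{\pi})$ in place of $\overline{\pi}$ and using $\widetilde{E}^{(1)}_i\circ\widetilde{F}^{(1)}_i=\mathrm{id}$ (where defined) then gives the reverse inequalities, hence the equalities \eqref{eq:I_k_D_k_invariants}. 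The geometric content I would exploit is that $\widetilde{E}^{(1)}_i$ acts on $\overline{\pi}$ through the signature rule, changing a single letter of value $i+1$ into $i$; under the point assignment \eqref{eq:points_on_cyl} this relocates exactly one point of $[\overline{\pi}]$ by $-\mathbf{e}_2$, from a face $c$ to the adjacent face $c-\mathbf{e}_2$, and fixes every other point.

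For the core step I would fix a maximal $k$-increasing subsequence $\overline{\pi}^{(1)}\cupdot\dots\cupdot\overline{\pi}^{(k)}$ realizing $I_k(\overline{\pi})$ and track the single relocated point. If that point is not selected, the very same family of up-right paths still carries all of its points in $\widetilde{E}^{(1)}_i(\overline{\pi})$, since moving an unused point one step in the $-\mathbf{e}_2$ direction cannot violate the up-right condition along any chosen path; so $I_k$ does not decrease. If the point is selected, I would reroute the path through it using the bracketing underlying the signature rule: the operator changes precisely the $i+1$ matched to the leftmost unmatched opening bracket, which is exactly the choice ensuring that the lowered point still admits an up-right continuation compatible with the other $k-1$ paths. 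The argument for $D_k$ runs in parallel, with up-right paths replaced by strict down-right loops and the signature rule by its dual, the extra bookkeeping being that each loop must stay \emph{localized}, i.e. of length $\le n$ and winding once around $\mathscr{C}_n$.

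The hard part will be exactly this rerouting on the twisted cylinder. Unlike the planar matrix situation treated in \cite{Danilov_Kolshevoy_bicrystals,vanLeeuwen_crystal_matrices}, up-right paths and down-right loops may wind around $\mathscr{C}_n$, so I would need to verify that the substitution dictated by the bracketing respects the periodic identification $\sim_n$ and, for $D_k$, that it never converts a localized loop into a non-localized one. A convenient way to control the winding is to cut $\mathscr{C}_n$ open along a strict down-right loop disjoint from the (compactly supported) point sets of both $\overline{\pi}$ and $\widetilde{E}^{(1)}_i(\overline{\pi})$, unrolling a bounded window into a finite planar lattice on which increasing subsequences reduce to ordinary ones; the classical invariance of Greene invariants under Kashiwara operators then applies directly to $I_k$, while the localized-decreasing count $D_k$ still demands the direct loop argument above. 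I expect the verification that the bracketing substitution preserves localization to be the only genuinely cylinder-specific point, everything else being a transcription of the classical signature-rule bookkeeping.
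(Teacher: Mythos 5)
Your three reductions are exactly the ones the paper uses: invariance under transposition $\overline{\pi}\mapsto\overline{\pi}^{-1}$ handles $\epsilon=2$ (\cref{lemma:inversion_prererves_is_lds}), invariance under the shifts $T_1,T_2$ handles $i=0$ (\cref{lemma:iota_preserves_I_D}), and invertibility of the Kashiwara operators halves the remaining work. The gap is in the core step. For $I_k$, your fallback of cutting $\mathscr{C}_n$ open along a down-right loop and invoking the classical planar invariance of Greene invariants does not go through: an up-right path on the twisted cylinder winds around it once per $n$ rows of ascent, so inside the band containing the (compactly supported) point configuration a single up-right path generically decomposes into \emph{several} disjoint increasing segments of the unrolled planar picture. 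Hence $k$-increasing on $\mathscr{C}_n$ does not reduce to $k$-increasing in the plane, and the winding is precisely the new difficulty rather than something one can quotient away. The paper instead proves the inequality $I_k(\overline{\pi})\le I_k(\E{i}(\overline{\pi}))$ by a direct rerouting on the cylinder (\cref{lemma:Kashiwara_is}): when the relocated point $\widehat{c}$ sits at the end of a run of selected points on the strip $\mathbb{Z}\times\{i+1\}$, the signature rule supplies a witness set $U$ of points on the strip $\mathbb{Z}\times\{i\}$, and one must split into cases according to whether $U$ meets another of the $k$ chosen paths, exchanging tails between two paths when it does. Your sketch names the right mechanism (the leftmost-unmatched-bracket choice) but does not confront this interaction between distinct paths, which is where the actual content lies.

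The $D_k$ half is not a ``parallel transcription'' with loops in place of paths, and this is the second place your plan underestimates the work. The paper needs three further ingredients: a count showing the shadow-line construction of a $k$-LDS produces at most $k$ down-right loops (\cref{lemma:shadow_lines_number_loops}, itself proved by exhibiting an increasing subsequence transverse to the loops); a self-contained analysis of the case of a $2$-localized-decreasing sequence (\cref{lemma:kashiwara_2lds}), in which the failure of the naive replacement $\widehat{c}\mapsto\widehat{c}-\mathbf{e}_2$ to stay \emph{localized} or \emph{strictly} down-right forces a re-partition of the points obtained by running a shadow-line construction restricted to a sector of $\mathscr{C}_n$; and a transport argument (\cref{lemma:Kashiwara_lds}) that repeatedly reshuffles pairs of LDS's to move a witness point into the window where the $2$-LDS lemma applies. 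None of this is visible in your outline, and the localization constraint you correctly flag as ``the only genuinely cylinder-specific point'' is in fact resolved only through this multi-lemma structure. So the architecture of your proof is the paper's, but the two analytic cores — the path-exchange case analysis for $I_k$ and the $2$-LDS/shadow-line machinery for $D_k$ — are missing, and the proposed planar shortcut for $I_k$ is incorrect as stated.
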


Proof of \cref{thm:subsequences_crystals} is reported in \cref{app:proof_thm}. Arguments we use are rather straightforward and consist in direct checks of conservation laws \eqref{eq:I_k_D_k_invariants}. Similar strategies were elaborated in \cite{Danilov_Kolshevoy_bicrystals,vanLeeuwen_crystal_matrices} in classical setting. Compared to these previous works, our approach is conceptually equivalent, although technically more involved.

\subsection{Greene invariants, Viennot map and skew $\RSK$ dynamics}

Here we describe two main results of this section. The first, given in \cref{thm:subsequences_Viennot}, illustrates fundamental conservation laws of the Viennot map $\mathbf{V}$. The second, presented in \cref{thm:asymptotic_shape_RSK}, characterizes the asymptotic shape $\mu(P,Q)$ of a pair of skew tableaux $(P,Q)$ in terms of the Greene invariants $I_k,D_k$. 
This second result gives a generalization of Greene's theorem \cite{Greene1974} in skew setting. Proofs of \cref{thm:subsequences_Viennot} and \cref{thm:asymptotic_shape_RSK} are reported in \cref{subs:proofs_of_two_thm} below.

\begin{theorem} \label{thm:subsequences_Viennot}
    Let $\overline{\pi} \in \overline{\mathbb{A}}_{n,n}$. Then for all $k$ we have
    \begin{equation} 
        I_k(\mathbf{V}(\overline{\pi})) = I_k(\overline{\pi})
        \qquad
        \text{and}
        \qquad
        D_k(\mathbf{V}(\overline{\pi})) = D_k(\overline{\pi}).
    \end{equation}
\end{theorem}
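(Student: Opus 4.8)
The plan is to deduce the invariance of $I_k$ and $D_k$ under $\mathbf{V}$ from the already-established invariance under Kashiwara operators (Theorem \ref{thm:subsequences_crystals}), by exploiting the fact that the Viennot map is an isomorphism of $\widehat{\mathfrak{sl}}_n$ bicrystals (Theorem \ref{thm:symmetries_Viennot}). Since every $\widehat{B}(\varkappa)$ is connected (Proposition \ref{prop:connectedness_crystal_graph}), the bicrystal generated by the two commuting families $\widetilde{E}^{(\epsilon)}_i,\widetilde{F}^{(\epsilon)}_i$ acts transitively on each fiber of fixed content, so it should suffice to verify the identity $I_k(\mathbf{V}(\overline{\pi})) = I_k(\overline{\pi})$ on a single convenient representative in each bicrystal component and then transport it everywhere using Theorems \ref{thm:subsequences_crystals} and \ref{thm:symmetries_Viennot}.

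Concretely, first I would reduce to weighted permutations by standardization, as is done throughout the paper; the passage times $I_k,D_k$ and the map $\mathbf{V}$ all commute with $\std$, so the general case in $\overline{\mathbb{A}}_{n,n}$ follows. Next, fix a bicrystal component and choose as representative a pair $(P,Q)$ that is $\RSK$-stable (such pairs exist after finitely many iterations of $\RSK$, by the stabilization discussion preceding Definition \ref{def:asymptotic_increment}). On a stable pair the geometry is transparent: by Proposition \ref{prop:RSK_ca_and_Viennot_ca}, applying $\mathbf{V}$ to the corresponding matrix $\overline{M}$ corresponds to a rigid upward shift of $(P,Q)$, and by Proposition \ref{prop:Viennot_asymptotic} the weighted biword decomposes into localized decreasing sequences $\overline{\xi}^{(1)},\dots,\overline{\xi}^{(k)}$ whose lengths are exactly $\mu_1',\dots,\mu_{\mu_1}'$. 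Because a rigid shift of the tableaux merely translates all of $[\overline{\pi}]$ on the twisted cylinder $\mathscr{C}_n$, and because translation manifestly preserves the existence of disjoint families of up-right paths and of strict down-right loops, both $I_k$ and $D_k$ are unchanged for the representative. Then, for an arbitrary $\overline{\pi}$ in the same component, write $\overline{\pi} = h(\overline{\pi}_0)$ for a composition $h$ of Kashiwara operators with $\overline{\pi}_0$ stable; using $\mathbf{V}\circ h = h\circ \mathbf{V}$ (Theorem \ref{thm:symmetries_Viennot}) and the Kashiwara-invariance of Theorem \ref{thm:subsequences_crystals}, I would compute
\begin{equation}
  I_k(\mathbf{V}(\overline{\pi})) = I_k(\mathbf{V}(h(\overline{\pi}_0))) = I_k(h(\mathbf{V}(\overline{\pi}_0))) = I_k(\mathbf{V}(\overline{\pi}_0)) = I_k(\overline{\pi}_0) = I_k(\overline{\pi}),
\end{equation}
and identically for $D_k$, which closes the argument.

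The step I expect to be the main obstacle is establishing, cleanly and without circularity, that each bicrystal component does contain an $\RSK$-stable representative reachable by Kashiwara operators alone — that is, that I may move to stability \emph{within} a fixed bicrystal fiber. The subtlety is that $\RSK$ and the $0$-th operators $\widetilde{E}_0^{(\epsilon)},\widetilde{F}_0^{(\epsilon)}$ are defined through $\iota_1,\iota_2$, so I must be careful that the stabilization procedure (iterating $\RSK$) keeps me inside the same bicrystal component, which follows because $\RSK$ is itself a bicrystal automorphism (Theorem \ref{thm:symmetries_RSK}) and hence acts as the identity on components after passing to the quotient by the content-preserving action. An alternative, perhaps cleaner, route that sidesteps the stability representative altogether is to prove the conservation directly and geometrically: given a maximizing family of $k$ disjoint up-right paths for $\overline{M}$, exhibit a canonical corresponding family for $\mathbf{V}(\overline{M})$ of the same total passage time by tracking how Viennot shadow lines redistribute the bullets, and symmetrically for down-right loops. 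This direct check mirrors the classical proof that the RSK shape is invariant under the shadow line construction; the genuinely new difficulty here is purely the periodic geometry of $\mathscr{C}_n$, where paths wind around the cylinder and loops must close up, so I would need to argue that maximizers can always be taken to respect the periodicity. I would present the crystal-theoretic argument as the primary proof and relegate the direct geometric verification to a remark.
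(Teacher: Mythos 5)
Your overall strategy --- prove invariance under Kashiwara operators first, then use the commutation of $\mathbf{V}$ with the bicrystal action to transfer the claim from special configurations to general ones --- is genuinely close in spirit to what the paper does for the $I_k$ half. But the specific reduction you propose has a gap sitting exactly at the hard point of the theorem. You claim that on an $\RSK$-stable representative the map $\mathbf{V}$ ``merely translates all of $[\overline{\pi}]$ on the twisted cylinder,'' so that $I_k$ and $D_k$ are manifestly preserved. This conflates two different operations: \cref{prop:RSK_ca_and_Viennot_ca} says that $\mathbf{V}$ on matrices corresponds to shifting the \emph{tableaux} up by one row, whereas the operations that translate the point configuration $[\overline{\pi}]$ on $\mathscr{C}_n$ are the shifts $T_1,T_2$ coming from $\iota_1,\iota_2$ --- not $\mathbf{V}$. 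Even on a stable configuration, $\mathbf{V}$ rotates each localized decreasing sequence one step along its down-right loop (only $\mathbf{V}^{\ell}$ applied to an LDS of length $\ell$ is a translation), so while $D_k$-invariance is plausible there, $I_k$-invariance is not ``manifest'': to evaluate $I_k$ of a union of well-separated LDS's one still needs the min--max duality between increasing subsequences and LDS decompositions (an increasing subsequence meets each LDS in at most one point, giving $I_k \le \sum_i \min(k,\ell(\overline{\sigma}^{(i)}))$, and one must also exhibit disjoint increasing subsequences achieving this bound). That duality is the actual content of the proof, and your proposal never supplies it.

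The paper's route is: (i) prove $D_k(\mathbf{V}(\overline{\pi}))=D_k(\overline{\pi})$ directly and geometrically from the shadow-line construction (\cref{lemma:Viennot_preserves_Dk}) --- this is your ``alternative route,'' which works for $D_k$ but, as the paper remarks, is not elementary for $I_k$; (ii) introduce the dual statistics $G_k$ and show they are determined by the $D_k$'s (\cref{lemma:equivalence_partition_mu_and_dual}); (iii) deduce $\widetilde\mu\trianglelefteq\mu$ from $I_k\le G_k$; and (iv) use \cref{thm:subsequences_crystals} together with connectedness of the affine crystal graph to move $\overline{\pi}$ to a configuration whose $p$-word has content $\mu$, forcing $\mu\trianglelefteq\widetilde\mu$ and hence $\mu=\widetilde\mu$; the $I_k$-invariance then follows from the $D_k$-invariance. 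If you try to salvage your reduction-to-a-representative scheme, you will still need steps (ii)--(iii) to evaluate $I_k$ on the representative, at which point you have reproduced the paper's argument. Two smaller imprecisions: the bicrystal components of $\overline{\mathbb{A}}_{n,n}$ are not the fibers of fixed content, so transitivity on such fibers is not available; and $\RSK$-stability is reached by iterating $\RSK$, which corresponds to a translation of $\overline{M}$ on $\mathscr{C}_n$ rather than to Kashiwara operators, so your transport chain needs translations added to the toolkit --- this part is repairable, since translations preserve $I_k,D_k$ and commute with $\mathbf{V}$, but it should be said.
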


For our next statement associate to each weighted biword $\overline{\pi}$ two a priori different partitions $\mu(\overline{\pi})$ and $\widetilde{\mu}(\overline{\pi})$. They are defined through the Greene invariants of $\overline{\pi}$ as
\begin{gather}
    \mu_1' + \cdots + \mu_k' = D_k(\overline{\pi}), \label{eq:mu_Greene}
    \\
    \widetilde{\mu}_1 + \cdots + \widetilde{\mu}_k = I_k(\overline{\pi}). \label{eq:mu_Greene_2}
\end{gather} 
    
\begin{theorem} \label{thm:asymptotic_shape_RSK}
    Let $P,Q \in SST (\lambda/\rho , n)$ and consider the projection $(P,Q) \xrightarrow[]{\skwRSK \,} \overline{\pi}$. Denote by $\mu(P,Q)$ the asymptotic increment of $(P,Q)$ under skew $\RSK$ dynamics. Then $\mu(P,Q) = \mu(\overline{\pi}) = \widetilde{\mu}(\overline{\pi})$.
\end{theorem}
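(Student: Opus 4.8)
The statement asserts the equality of three partitions attached to $(P,Q)$: the asymptotic increment $\mu(P,Q)$ of the skew $\RSK$ dynamics, and the two partitions $\mu(\overline{\pi}), \widetilde{\mu}(\overline{\pi})$ read off from the localized-decreasing and increasing Greene invariants $D_k, I_k$ of the associated weighted biword $\overline{\pi}$. The strategy is to prove the two equalities $\mu(P,Q) = \mu(\overline{\pi})$ and $\mu(P,Q) = \widetilde{\mu}(\overline{\pi})$ separately, exploiting the conservation laws established earlier.

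\textbf{Step 1: the equality $\mu(P,Q) = \mu(\overline{\pi})$.} This is the combinatorially transparent identity and I would extract it directly from the asymptotic analysis of the Viennot dynamics in \cref{prop:Viennot_asymptotic}. That proposition shows that, for $t$ large, $\mathbf{V}^t(\overline{\pi})$ decomposes into a disjoint union of localized decreasing sequences $\overline{\xi}^{(i)}$ whose lengths, sorted decreasingly, are exactly $\mu_i'$ where $\mu = \mu(P,Q)$ is the asymptotic increment. Since each $\overline{\xi}^{(i)}$ is an LDS and they are disjoint, the union $\overline{\xi}^{(1)} \cupdot \cdots \cupdot \overline{\xi}^{(k)}$ is $k$-localized decreasing, giving the lower bound $D_k(\mathbf{V}^{t^* + t}(\overline{\pi})) \ge \mu_1' + \cdots + \mu_k'$. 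Conversely one must check there is no longer $k$-LDS; this follows because the $\overline{\xi}^{(i)}$ evolve autonomously and are maximally packed into the down-right loop geometry, so any LDS is confined to a single band $\{1,\dots,n\} \times \{j,\dots,j+n\}$ and can intersect the propagating blocks in at most one full loop each. Combining both bounds yields $D_k(\mathbf{V}^{t^*+t}(\overline{\pi})) = \mu_1' + \cdots + \mu_k'$. Finally, by \cref{thm:subsequences_Viennot}, $D_k$ is invariant under $\mathbf{V}$, so $D_k(\overline{\pi}) = \mu_1' + \cdots + \mu_k'$, which is precisely the defining relation \eqref{eq:mu_Greene} for $\mu(\overline{\pi})$. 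Hence $\mu(P,Q) = \mu(\overline{\pi})$.

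\textbf{Step 2: the equality $\mu(P,Q) = \widetilde{\mu}(\overline{\pi})$.} Here I would invoke the affine bicrystal symmetry. By \cref{cor:Phi_is_morphism} the projection $\Phi$ is a morphism of affine crystals, so $\mu(P,Q)$ — being the common shape of the asymptotic vertically strict tableaux $(V,W) = \Phi(P,Q)$ — is invariant under all Kashiwara operators $\widetilde{E}^{(\epsilon)}_i, \widetilde{F}^{(\epsilon)}_i$. On the other hand, by \cref{thm:subsequences_crystals} the increasing Greene invariant $I_k(\overline{\pi})$, and hence $\widetilde{\mu}(\overline{\pi})$, is likewise invariant under all these operators, and by \cref{thm:symmetries_Viennot} it is conserved by $\mathbf{V}$ as well. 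The plan is then to reduce to a canonical representative in each bicrystal connected component where the identity $\mu = \widetilde{\mu}$ can be verified by inspection. Using \cref{prop:connectedness_crystal_graph} and the leading-map construction, I would bring $(V,W)$ to a Yamanouchi form; for such a canonical pair the corresponding $\overline{\pi}$ is an explicit union of straight increasing sequences whose lengths realize the rows $\mu_i$ of the content, making $I_k(\overline{\pi}) = \mu_1 + \cdots + \mu_k$ immediate. Both invariances then propagate the equality back to the original $(P,Q)$.

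\textbf{Main obstacle.} The genuinely delicate point is the upper bound in Step 1 — showing that no $k$-localized decreasing subsequence can exceed $\mu_1' + \cdots + \mu_k'$. The lower bound is an explicit construction, but ruling out longer configurations requires the twisted-cylinder geometry: one must argue that the winding and the $\mod 2n$ constraint on strict down-right loops prevent a single LDS from ``reusing'' the same propagating block across multiple bands, and that disjointness forces distinct LDS's onto distinct solitons. A clean way to handle this is to run the dynamics to a stable regime where the $\overline{\xi}^{(i)}$ are spread arbitrarily far apart (as in \cref{prop:Viennot_asymptotic}), so that any optimal $k$-LDS decouples into contributions confined to individual blocks; then the per-block bound is elementary. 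Carrying out this decoupling rigorously, together with the analogous maximality argument for $I_k$ in Step 2, is where the real work lies.
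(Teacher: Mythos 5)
Your architecture is the same as the paper's: prove $\mu(P,Q)=\mu(\overline{\pi})$ via the asymptotic decomposition of $\mathbf{V}^t(\overline{\pi})$ into LDS's plus the $\mathbf{V}$-invariance of $D_k$, and prove the equality with $\widetilde{\mu}(\overline{\pi})$ by crystal invariance and reduction to a canonical representative. However, the two steps you flag as "where the real work lies" are not merely technical; in both cases the argument you sketch for them would not go through as stated, and the paper fills them with genuinely different ideas.

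First, the per-block upper bound $D_1(\overline{\sigma}^{(j)})\le \mu_{R_j}'$ is not elementary once the blocks are decoupled. A block $\overline{\sigma}^{(j)}$ contains $r_j\mu_{R_j}'$ points in a single band, and nothing in the counting or in the "one full loop each" heuristic prevents a single strict down-right loop from threading through points of several of the $r_j$ constituent loops to form an LDS longer than $\mu_{R_j}'$. The paper rules this out dynamically: an LDS $\overline{\eta}$ of length $L$ satisfies $\mathbf{V}^L(\overline{\eta})=\mathcal{T}(\overline{\eta})$, i.e.\ it translates by one period every $L$ steps, so a hypothetical LDS with $L>\mu_{R_j}'$ would travel \emph{strictly slower} than the block containing it; iterating $\mathbf{V}$ and using that $D_k$ is conserved (so an LDS of length $L$ must persist, lying weakly below the translates of $\overline{\eta}$) forces the persistent LDS to separate from the block it is supposed to live in, a contradiction. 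You need this speed-separation argument or an equivalent; decoupling alone does not finish Step 1.

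Second, on the canonical (Yamanouchi) representative the identity $I_k=\mu_1+\cdots+\mu_k$ is not "immediate": the rows of content give increasing sequences of lengths $\mu_i$ and hence only the lower bound $\mu_1+\cdots+\mu_k\le I_k$ (this is the paper's \cref{lemma:gamma_less_mu_tilde}, i.e.\ $\gamma^+\trianglelefteq\widetilde{\mu}$). The matching upper bound is obtained in the paper not by inspecting the canonical word but by proving the general dominance inequality $\widetilde{\mu}\trianglelefteq\mu$ for \emph{every} $\overline{\pi}$ (\cref{lemma:mu_tilde_less_mu}): an increasing sequence meets each LDS in at most one point, so $I_k(\overline{\pi})\le g_k(\mathfrak{d})$ for any decomposition $\mathfrak{d}$ into LDS's, and then the dual statistic $G_k=\min_{\mathfrak{d}}g_k(\mathfrak{d})$ must be identified with $\mu_1+\cdots+\mu_k$ — that identification (\cref{lemma:equivalence_partition_mu_and_dual}) is a separate, nontrivial combinatorial lemma. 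The two one-sided inequalities, one valid everywhere and one valid at the canonical point, are then combined using the invariance of both $\mu$ and $\widetilde{\mu}$ under Kashiwara operators. Without the dual-statistic lemma (or some substitute giving the upper bound on $I_k$), your Step 2 only proves $\mu(\overline{\pi})\trianglelefteq\widetilde{\mu}(\overline{\pi})$.
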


We will see, in \cref{lemma:Viennot_preserves_Dk} below, that establishing invariance of statistics $D_k$ is relatively straightforward and it follows from an intuitive graphical argument. Proving invariance of the length of longest $k$-increasing subsequences $I_k$ from the shadow line construction seems to be less elementary. Therefore we will prove the slightly less direct fact that partitions $\mu = \widetilde{\mu}$. For this we will take advantage of symmetries with respect to crystal operators, provided by \cref{thm:subsequences_crystals} and we will also evoke the connectedness property of the affine crystal graph $\widehat{B}(\varkappa)$ recalled in \cref{prop:connectedness_crystal_graph}.  

\subsection{An extension of Schensted's theorem}

We present a generalization of Schensted's theorem \cite{Schensted1961}. In classical setting this relates the length of the first row of a pair of straight standard tableaux $(P,Q)$ with the longest increasing subsequence of the corresponding permutation $\pi$.

\begin{theorem}\label{thm:Schensted}
    Let $\overline{\pi} \in \overline{\mathbb{A}}_{n,n}^+$ be a weighted biword and $\nu$ a partition. Let $\lambda/\rho$ be the skew shape of tableaux $(P,Q) \xleftrightarrow[]{\skwRSK \,}(\overline{\pi};\nu)$. Then $\lambda_1 = \nu_1 + I_1(\overline{\pi})$.
\end{theorem}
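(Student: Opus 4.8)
\textbf{Proof plan for \cref{thm:Schensted}.}

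The plan is to deduce this statement as a direct corollary of \cref{thm:asymptotic_shape_RSK}, together with the elementary observation that the skew $\RSK$ map does not alter the length of the first row of the tableaux. First I would invoke \cref{thm:asymptotic_shape_RSK} to identify $\mu(P,Q) = \widetilde{\mu}(\overline{\pi})$, where $\widetilde{\mu}$ is the partition defined through the increasing Greene invariants by \eqref{eq:mu_Greene_2}. Setting $k=1$ in that relation gives $\widetilde{\mu}_1 = I_1(\overline{\pi})$, so that the first part of the asymptotic increment equals the length of the longest increasing subsequence of $\overline{\pi}$. Thus $\mu_1(P,Q) = I_1(\overline{\pi})$.

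Next I would connect $\mu_1$ to the first-row length $\lambda_1$. Recall from \cref{def:asymptotic_increment} that $\mu_i' = \lim_{t\to\infty}(\lambda^t)_i'/t$ records the asymptotic speeds of the columns; equivalently $\mu_1$ is the number of columns that eventually persist, i.e.\ the number of columns carrying at least one labeled cell in the $\RSK$-stable regime, which is precisely the length of the first row of the stabilized shape. Since by \cref{prop:RSK_from_n_int_ins} the skew $\RSK$ map is built from internal insertions following Schensted's bumping, the length of the first row never changes under a single application of $\RSK$ \emph{once one accounts for the $\nu$-part}: the ``squeezing'' kernel $\nu = \ker(P,Q)$ contributes $\nu_1$ cells to the first row that are carried along rigidly and do not participate in the asymptotic column count. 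This is the content of \cref{prop:iota_preserves_ker}, which guarantees $\ker$ is invariant under $\iota_\epsilon$ and hence under $\RSK$. The remaining $\lambda_1 - \nu_1$ cells of the first row are exactly those captured by the longest increasing subsequence, giving $\lambda_1 = \nu_1 + I_1(\overline{\pi})$.

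The main obstacle, and the step requiring the most care, is making rigorous the bookkeeping that separates the $\nu_1$ contribution from the $\mu_1 = I_1(\overline{\pi})$ contribution to $\lambda_1$. Concretely, one must argue that in an $\RSK$-stable pair the first row decomposes as a rigidly translated block of size $\nu_1$ (the kernel) plus a block of size $\mu_1$ coming from the persistent columns, with no overlap or double-counting. I would handle this by passing through the row-coordinate parameterization $(P,Q)\xleftrightarrow{\rc}(\alpha,\beta;\nu)$ of \cref{prop:row_coordinate_classical_pair}, where $\nu = \ker(P,Q)$ appears as an explicit additive summand in the reconstruction formula $\lambda = \eta + \theta + \nu$, and then tracking how $\theta$ and $\eta$ evolve toward their stable form under iteration of $\RSK$. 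Since \cref{prop:iota_rc_commute_semi} shows $\RSK$ acts compatibly on $(\alpha,\beta)$ while fixing $\nu$, the $\nu_1$ term is genuinely inert, and the asymptotic first-row length of the $(\alpha,\beta)$-part is governed by \cref{thm:asymptotic_shape_RSK}. Combining these yields the claimed equality.
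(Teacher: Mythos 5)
Your proposal is correct and follows essentially the same route as the paper's proof: both deduce the statement from \cref{thm:asymptotic_shape_RSK} by iterating the skew $\RSK$ map to a stable pair, using \cref{prop:iota_rc_commute_semi} (equivalently \cref{prop:iota_preserves_ker}) to keep $\nu=\ker(P,Q)$ fixed, observing that $\lambda_1$ is unchanged under $\RSK$, and reading off $\widetilde{\lambda}_1=\nu_1+\mu_1$ with $\mu_1=I_1(\overline{\pi})$. The extra bookkeeping you flag via the row-coordinate parameterization is exactly how the paper handles it, so no further work is needed.
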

\begin{proof}
    This is a simple corollary of \cref{thm:asymptotic_shape_RSK}. By \cref{prop:iota_rc_commute_semi} the application of skew $\RSK$ map does not change partition $\nu=\ker(P,Q)$. Let $(P,Q)\xleftrightarrow[]{\rc\,} (\alpha,\beta;\nu)$, then if $(\widetilde{P},\widetilde{Q})=\RSK^t(P,Q)$ and $(\widetilde{\alpha}, \widetilde{\beta})=\RSK^t(\alpha, \beta)$, we have $(\widetilde{P},\widetilde{Q})\xleftrightarrow[]{\rc\,} (\widetilde{\alpha}, \widetilde{\beta};\nu)$. When $t$ is large enough the pair $(\widetilde{P},\widetilde{Q})$ becomes $\RSK$-stable and calling $\widetilde{\lambda} / \widetilde{\rho}$ its shape and $\mu$ the asymptotic increment, we have $\widetilde{\lambda}_1 = \nu_1 + \mu_1$. Since the skew $\RSK$ map does not modify the length of the first row of tableaux $P,Q$ we conclude that $\lambda_1 = \nu_1+\mu_1$ and hence the claim of the theorem by \cref{thm:asymptotic_shape_RSK}.
\end{proof}

\subsection{Proofs of \cref{thm:subsequences_Viennot} and of \cref{thm:asymptotic_shape_RSK}} \label{subs:proofs_of_two_thm}

We start by proving that the Viennot map preserves the length of the longest localized decreasing subsequences.

\begin{lemma} \label{lemma:Viennot_preserves_Dk}
    Let $\overline{\pi} \in \overline{\mathbb{A}}_{n,n}$. Then for all $k$ we have $D_k(\mathbf{V}(\overline{\pi})) = D_k(\overline{\pi}).$
\end{lemma}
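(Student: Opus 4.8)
We need to prove that the Viennot map $\mathbf{V}$ preserves $D_k$, the length of the longest $k$-localized decreasing subsequence of a weighted biword $\overline{\pi}$. Recall that localized decreasing sequences correspond to strict down-right loops on the twisted cylinder $\mathscr{C}_n$, and that the Viennot map is defined geometrically via the periodic shadow line construction: black bullets (the points $[\overline{\pi}_i]$) emit north and east rays, and the intersections of these rays form the new generation of bullets comprising $\mathbf{V}(\overline{\pi})$.

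Let me sketch the proof plan.

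**The approach.** The plan is to exploit the geometry of the shadow line construction directly, since $D_k$ is the statistic most transparent in this picture. First I would recall that a single localized decreasing subsequence is exactly a set of points lying along a strict down-right loop, i.e. a sequence of points $(\varsigma_j)$ going weakly right and strictly down around the cylinder. The key observation is the classical fact (adapted to the periodic geometry) that the shadow lines themselves are down-right broken lines, and that the first shadow line passes through a longest decreasing subsequence: the bullets produced on a given shadow line record a decreasing subsequence, and the number of shadow lines equals $D_1(\overline{\pi})$. More precisely, I would first establish the $k=1$ case: the length of the longest localized decreasing subsequence of $\overline{\pi}$ equals the number of shadow lines in the construction $\overline{\pi}\mapsto\mathbf{V}(\overline{\pi})$, and this same number equals $D_1(\mathbf{V}(\overline{\pi}))$ because the new-generation (red) bullets lie on the same broken lines, one red bullet per ``bend'' of each shadow line, hence again forming a localized decreasing subsequence of maximal length along each line.

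**Key steps in order.** The steps I would carry out are: (1) Show that each shadow line, being a down-right broken path on $\mathscr{C}_n$, supports the source black bullets along it as a localized decreasing subsequence, so that a decomposition into $k$ shadow lines gives a $k$-localized decreasing subsequence; conversely any localized decreasing subsequence meets each shadow line in at most one point (since two points on a common down-right loop with the same ``color''/line are forced by the upgrade rule), giving $D_k(\overline{\pi}) = \sum_{j\le k}(\text{lengths of the } k \text{ longest shadow lines})$. (2) Observe that the red bullets produced by $\mathbf{V}$ sit at the intersections governing each shadow line, and that the collection of red bullets on the $j$-th shadow line again forms a localized decreasing subsequence of the \emph{same} length (the number of bends/intersections matches the number of source bullets on adjacent lines in the periodic setting). (3) Conclude that the shadow line structure of $\mathbf{V}(\overline{\pi})$ yields the same multiset of loop-lengths, so $D_k$ is preserved for every $k$. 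For the general $k$, I would invoke the greedy/nesting structure of the shadow lines: peeling off the outermost shadow line reduces $\overline{\pi}$ to a configuration whose longest localized decreasing subsequence is $D_1$ shorter, and induction on the shadow-line decomposition handles all $k$ simultaneously via the standard ``the first $k$ shadow lines carry a maximal $k$-localized decreasing subsequence'' argument.

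**The main obstacle.** The hard part will be making the periodic (twisted-cylinder) bookkeeping rigorous, since unlike the classical finite lattice the shadow lines here are infinite broken loops wrapping around $\mathscr{C}_n$, and I must verify that ``length'' of a localized decreasing subsequence is correctly matched between the source bullets along a shadow line and the red bullets it produces. In particular I would need to check carefully that no decreasing subsequence can ``cheat'' by using the periodic identification to visit a single shadow line twice, and that the upgrade rule (same-color lines merging to color $+1$) correctly prevents two points of one localized decreasing subsequence from lying on one shadow line. The remaining conservation statement for $I_k$ (the increasing invariant) is \emph{not} proved here directly from the shadow construction but deferred, via \cref{thm:subsequences_crystals} and \cref{thm:asymptotic_shape_RSK}, to the identification $\mu=\widetilde{\mu}$; so this lemma only needs the decreasing half, which keeps the geometric argument self-contained.
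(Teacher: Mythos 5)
Your plan has a genuine gap at its core: step (1) is false. The identity you rely on, namely that $D_k(\overline{\pi})$ equals the sum of the lengths of the $k$ longest shadow lines of $\overline{\pi}$, does not hold, and the justification you offer --- ``any localized decreasing subsequence meets each shadow line in at most one point'' --- is the duality statement for \emph{increasing} subsequences. A shadow line is itself a down-right broken line, so its source bullets form an LDS and an LDS can certainly meet it in many points; it is increasing subsequences that meet each shadow line at most once (this is exactly what the paper uses later, in \cref{lemma:mu_tilde_less_mu}, to bound $I_k$). The shadow-line decomposition of $\overline{\pi}$ is a decomposition into LDS's with the \emph{minimal number of parts}, but its part sizes need not realize the Greene maxima. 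Already in one fundamental domain of $\mathscr{C}_n$ (take $n\ge 4$, all weights zero) the points $(1,2),(2,4),(3,3),(4,1)$ have shadow lines $\{(1,2),(4,1)\}$ and $\{(2,4),(3,3)\}$, both of length $2$, whereas $D_1=3$, witnessed by the localized decreasing subsequence $(2,4)\to(3,3)\to(4,1)$. Hence steps (2)--(3), which only track the multiset of shadow-loop cardinalities through $\mathbf{V}$ (a multiset that $\mathbf{V}$ does preserve, since a loop with $m$ sources produces exactly $m$ new-generation bullets), cannot conclude anything about $D_k$, because $D_k$ is not a function of that multiset. The concluding ``peel off the outermost shadow line'' induction suffers from the same confusion: greedy antichain peeling controls the number of antichains, an increasing-subsequence statistic, not the decreasing Greene invariants.

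The repair is to decouple the loops used in the argument from the shadow lines of $\overline{\pi}$ itself, and to give up on proving an exact equality in one pass. Take an \emph{arbitrary} $k$-LDS $\overline{\sigma}=\overline{\sigma}^{(1)}\cupdot\cdots\cupdot\overline{\sigma}^{(k)}$ of $\overline{\pi}$ and run the shadow line construction on the points of $\overline{\sigma}$ \emph{alone}; by \cref{lemma:shadow_lines_number_loops} this produces at most $k$ down-right loops $\overline{\varsigma}^{(1)},\dots,\overline{\varsigma}^{(k)}$, and one may assume $\overline{\sigma}^{(j)}=\overline{\sigma}\cap\overline{\varsigma}^{(j)}$. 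For two cyclically consecutive points $(a_1,a_2),(b_1,b_2)$ of $\overline{\sigma}^{(j)}$ the L-shaped portion $(a_1,a_2)\to(b_1,a_2)\to(b_1,b_2)$ of the loop must contain at least one point of $\mathbf{V}(\overline{\pi})$, so $\overline{\xi}^{(j)}:=\mathbf{V}(\overline{\pi})\cap\overline{\varsigma}^{(j)}$ is an LDS with $\ell(\overline{\xi}^{(j)})\ge\ell(\overline{\sigma}^{(j)})$. This yields only the monotonicity $D_k(\overline{\pi})\le D_k(\mathbf{V}(\overline{\pi}))$; the equality then follows by running the identical argument for the inverse shadow line construction to get $D_k(\overline{\pi}')\le D_k(\mathbf{V}^{-1}(\overline{\pi}'))$ and combining the two inequalities. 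Your proposal is missing both the ``argue from an arbitrary maximizer'' step and this two-sided squeeze.
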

\begin{figure}
    \centering
    \includegraphics{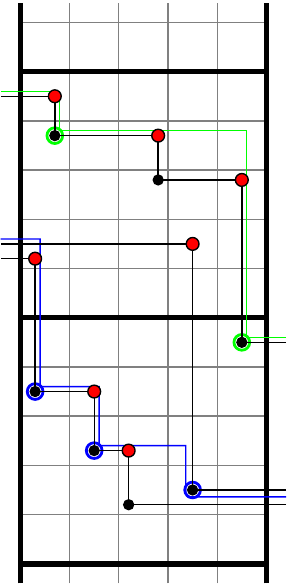}
    \caption{An example of the construction described in the proof of \cref{lemma:Viennot_preserves_Dk}. Green circled black bullets correspond to LDS $\overline{\sigma}^{(1)}$, while blue circled correspond to $\overline{\sigma}^{(2)}$. Red bullets falling on green and blue broken lines determine LDS's $\overline{\xi}^{(1)}$ and $\overline{\xi}^{(2)}$ of $\mathbf{V}(\overline{\pi})$.}
    \label{fig:LDS_and_Viennot_Map}
\end{figure}
\begin{proof}
    Let $\overline{\sigma}=\overline{\sigma}^{(1)}\cupdot \cdots \cupdot \overline{\sigma}^{(k)}$ be a $k$-LDS of $\overline{\pi}$. Consider the shadow line construction of $\overline{\sigma}$, which by \cref{lemma:shadow_lines_number_loops} consists of at most $k$ down right loops $\overline{\varsigma}^{(1)}, \dots, \overline{\varsigma}^{(k)}$
    (note the last few of them could be empty). With no loss of generality we can assume $\overline{\sigma}^{(j)}$ consists of points in $\overline{\sigma} \cap \overline{\varsigma}^{(j)}$ without multiplicity. For instance in \cref{fig:LDS_and_Viennot_Map} black bullets correspond to $\overline{\pi}$ and  $\overline{\sigma}=\overline{\sigma}^{(1)}\cupdot \overline{\sigma}^{(2)}$, where $\overline{\sigma}^{(1)}$ (resp. $\overline{\sigma}^{(2)}$) corresponds to green (resp. blue) circled bullets, while $\overline{\varsigma}^{(1)}$ (resp. $\overline{\varsigma}^{(2)}$) identifies the green (resp. blue) broken line. We now compute the shadow line construction of $\overline{\pi}$ to determine $\mathbf{V}(\overline{\pi})$ and we define $\overline{\xi}^{(j)} = \mathbf{V}(\overline{\pi}) \cap \overline{\varsigma}^{(j)}$, again without multiplicity. If $[\overline{\sigma}^{(j)}_m]=(a_1,a_2), \overline{\sigma}^{(j)}_{m+1}=(b_1,b_2)$ are two consecutive points of $\overline{\sigma}^{(j)}$, then the union of two segments $[\overline{\sigma}^{(j)}_m] \to (b_1,a_2) \to [\overline{\sigma}^{(j)}_{m+1}]$ necessarily hosts at least one point of $\mathbf{V}(\overline{\pi})$. Since there are $\ell ( \overline{\sigma}^{(j)} )$ such pairs (the last and the first point are consecutive by periodicity) we conclude that $\ell ( \overline{\sigma}^{(j)} ) \le \ell ( \overline{\xi}^{(j)} )$. See the example of \cref{fig:LDS_and_Viennot_Map} where $\overline{\xi}^{(1)}$ and $\overline{\xi}^{(2)}$ are given by red bullets lying respectively on the green and blue broken lines. Therefore defining $\overline{\xi}=\overline{\xi}^{(1)} \cupdot \cdots \cupdot \overline{\xi}^{(k)}$ we have $\ell ( \overline{\sigma} ) \le \ell ( \overline{\xi} )$ and in general $D_k(\overline{\pi}) \le D_k(\mathbf{V}(\overline{\pi}))$ for all $\overline{\pi}$.
    An analogous argument shows that the same monotonicity property holds for the map $\mathbf{V}^{-1}$, which is realized through a shadow line construction inverse to that of $\mathbf{V}$. Therefore we have $D_k(\overline{\pi}') \le D_k(\mathbf{V}^{-1}(\overline{\pi}'))$ for all $\overline{\pi}'$. Combining the two inequalities we find $D_k(\overline{\pi}) \le D_k(\mathbf{V}(\overline{\pi})) \le D_k(\overline{\pi})$, which completes the proof.
\end{proof}

\begin{proposition} \label{prop:mu_equals_mu}
    Adopting the notation of \cref{thm:asymptotic_shape_RSK}, we have $\mu(P,Q)=\mu(\overline{\pi})$.
\end{proposition}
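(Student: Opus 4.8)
The statement to prove is $\mu(P,Q)=\mu(\overline{\pi})$, where $\mu(P,Q)$ is the asymptotic increment of the skew $\RSK$ dynamics and $\mu(\overline{\pi})$ is the partition encoding the longest $k$-localized-decreasing subsequences through \eqref{eq:mu_Greene}. My strategy is to transport the asymptotic description of the Viennot dynamics, already obtained in \cref{prop:Viennot_asymptotic}, into a statement about the Greene invariant $D_k$, using the fact that $D_k$ is conserved under $\mathbf{V}$ (\cref{lemma:Viennot_preserves_Dk}). The point is that both sides of the claimed equality can be read off from a sufficiently large power of the Viennot map applied to $\overline{\pi}$.

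\textbf{Key steps.} First I would invoke \cref{prop:Viennot_asymptotic}: there exist $t^*$ and localized decreasing sequences $\overline{\xi}^{(1)},\dots,\overline{\xi}^{(k)}$ with $k=\mu_1$ (here $\mu=\mu(P,Q)$) such that for all $t>0$,
\begin{equation}
    \mathbf{V}^{t^*+t}(\overline{\pi}) = \mathbf{V}^{t}(\overline{\xi}^{(1)}) \cupdot \cdots \cupdot \mathbf{V}^{t}(\overline{\xi}^{(k)}),
\end{equation}
and, sorting the $\overline{\xi}^{(i)}$ by decreasing length, $\ell(\overline{\xi}^{(i)})=\mu_i'$ for $i=1,\dots,\mu_1$. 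Since each $\overline{\xi}^{(i)}$ is an LDS (a strict down-right loop) and the sequences evolve autonomously under $\mathbf{V}$, for $t$ large each block $\mathbf{V}^t(\overline{\xi}^{(i)})$ remains a single LDS of the same length $\mu_i'$, well-separated in the weight coordinate from the others. Second, I would argue that for such a decomposition the longest $j$-localized-decreasing subsequence is obtained precisely by taking the $j$ longest blocks: because the blocks are mutually far apart in weight, no LDS (which is localized to a band of height $n$) can pick up points from two different blocks, so $D_j(\mathbf{V}^{t^*+t}(\overline{\pi})) = \mu_1' + \cdots + \mu_j'$. Third, by \cref{lemma:Viennot_preserves_Dk}, $D_j$ is invariant under $\mathbf{V}$, hence $D_j(\overline{\pi}) = D_j(\mathbf{V}^{t^*+t}(\overline{\pi})) = \mu_1'+\cdots+\mu_j'$. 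Comparing with the defining relation \eqref{eq:mu_Greene}, $\mu_1'+\cdots+\mu_j' = D_j(\overline{\pi}) = \mu(\overline{\pi})_1'+\cdots+\mu(\overline{\pi})_j'$ for every $j$, which gives $\mu(P,Q)=\mu(\overline{\pi})$.

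\textbf{The main obstacle.} The delicate step is the second one: proving that the $j$ longest blocks actually realize the \emph{maximal} $j$-localized-decreasing subsequence, i.e. the upper bound $D_j(\mathbf{V}^{t^*+t}(\overline{\pi})) \le \mu_1'+\cdots+\mu_j'$. The lower bound is immediate since the union of the $j$ longest blocks is itself a $j$-LDS. For the upper bound I must use that each LDS is genuinely localized within a band $\{1,\dots,n\}\times\{m,\dots,m+n\}$, so once the blocks are separated by arbitrarily large weight gaps (which \cref{prop:Viennot_asymptotic} guarantees by choosing $t^*$ large), any individual localized decreasing loop lies entirely inside a single block. Hence a $j$-LDS decomposes block by block, and within each block the maximal LDS length is bounded by the block's structure of $r_i$ parallel solitons of speed $\mu_{R_i}'$; a counting argument over which blocks the $j$ loops visit then yields the bound. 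I would need to make precise that no advantage is gained by letting a decreasing loop straddle the periodic seam between two temporally distinct blocks, which is exactly where the localization property of strict down-right loops (length $\le n$) is essential.
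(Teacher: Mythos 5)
Your overall strategy coincides with the paper's: decompose $\mathbf{V}^{t^*}(\overline{\pi})$ via \cref{prop:Viennot_asymptotic}, use separation of the blocks to localize any LDS inside a single block, and pull the computation of $D_k$ back to $\overline{\pi}$ via \cref{lemma:Viennot_preserves_Dk}. However, there is a genuine gap in your second step, and you have located the obstacle in the wrong place. You conflate two levels of the decomposition from \cref{prop:Viennot_asymptotic}: the coarse blocks $\overline{\sigma}^{(1)},\overline{\sigma}^{(2)},\dots$ (one per rectangle of $\mu$) are indeed arbitrarily far apart in the weight coordinate, so no single localized decreasing loop can straddle two of them — that part is fine and is exactly what the paper says. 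But each coarse block $\overline{\sigma}^{(j)}$ is itself a union of $r_j$ LDS's $\overline{\xi}^{(j,1)},\dots,\overline{\xi}^{(j,r_j)}$ of length $\mu_{R_j}'$, and these are \emph{not} separated from one another: they correspond to columns of equal length travelling at the same speed and can sit right next to each other on $\mathscr{C}_n$. Knowing that a point set is a disjoint union of $r_j$ LDS's of length $m$ does not bound the length of the longest LDS contained in that union by $m$ (a single strict down-right loop of length $2m$ can be partitioned into two LDS's of length $m$). So the inequality $D_1(\overline{\sigma}^{(j)})\le \mu_{R_j}'$ — which is what your "counting argument over which blocks the $j$ loops visit" silently assumes — is precisely the nontrivial claim, and your proposal offers no proof of it. Worrying about a loop "straddling the periodic seam between two temporally distinct blocks" addresses only the already-handled coarse separation.

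The paper closes this gap with a dynamical (soliton-speed) contradiction that is absent from your proposal. Suppose $\overline{\eta}\subseteq\overline{\sigma}^{(j)}$ is an LDS of length $L>\mu_{R_j}'$. A free LDS of length $L$ satisfies $\mathbf{V}^{L}(\overline{\eta})=\mathcal{T}(\overline{\eta})$, i.e. it is translated up by $n$ only once every $L$ iterations, whereas the stable block satisfies $\mathbf{V}^{\mu_{R_j}'}(\overline{\sigma}^{(j)})=\mathcal{T}(\overline{\sigma}^{(j)})$, so it moves strictly faster. By \cref{lemma:Viennot_preserves_Dk} an LDS of length $L$ must persist inside $\mathbf{V}^{s}(\overline{\sigma}^{(j)})$ for all $s$, and by the shadow-line construction it can be chosen to lie weakly below $\mathbf{V}^{s}(\overline{\eta})$; after $NL\mu_{R_j}'$ steps the block has been translated by $NLn$ while $\mathbf{V}^{NL\mu_{R_j}'}(\overline{\eta})=\mathcal{T}^{N\mu_{R_j}'}(\overline{\eta})$ has only been translated by $N\mu_{R_j}'n$, and for $N$ large these regions are disjoint — a contradiction. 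You would need to supply this argument (or an equivalent one) to make your step two rigorous; without it the upper bound $D_j\le\mu_1'+\cdots+\mu_j'$ is unsupported.
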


\begin{proof}
    We will match $\mu(P,Q)$ with $\mu(\mathbf{V}^t(\overline{\pi}))$ a large enough $t$. This will prove our claim since by \cref{lemma:Viennot_preserves_Dk} we have $\mu(\overline{\pi}) = \mu(\mathbf{V}^t(\overline{\pi}))$ for all $t$. Let $\overline{\pi}^{(t)}$ be the Viennot dynamics with initial data $\overline{\pi}$ and analogously let $(P_t,Q_t)$ be the skew $\RSK$ dynamics with initial data $(P,Q)$. In \cref{prop:Viennot_asymptotic} we have proven that for a large enough $t^*$, there exists weighted biwords $\overline{\sigma}^{(1)},\overline{\sigma}^{(2)},\dots$ such that
    \begin{equation}
        \mathbf{V}^{s}\left(\overline{\pi}^{(t^*)} \right) = \mathbf{V}^s(\overline{\sigma}^{(1)}) \cupdot \mathbf{V}^s(\overline{\sigma}^{(2)}) \cupdot \cdots,
    \end{equation}
    for all $s \ge 0$. Moreover $\overline{\sigma}^{(j)}$'s can be further decomposed as $\overline{\sigma}^{(j)}=\overline{\xi}^{(j,1)}\cupdot \overline{\xi}^{(j,2)} \cupdot \cdots$, where $\overline{\xi}^{(j,r)}$'s are localized decreasing sequences of length $\ell ( \overline{\xi}^{(j,r)} ) =\mu_{R_j}'$ and evolve idependently under Viennot dynamics. To prove our theorem we need to show that the longest LDS of $\overline{\sigma}^{(j)}$ is no longer than $\mu_{R_j}'$, implying that $D_s(\overline{\sigma}^{(j)}) = s \times \mu_{R_j}'$, for $s=1,\dots,r_j$.
    Since point configurations corresponding to each $\overline{\sigma}^{(j)}$ are far apart in $\mathscr{C}_n$, each LDS of $\overline{\pi}$ is necessarily contained in one of the $\overline{\sigma}^{(j)}$. This implies that
    \begin{equation}
        D_k(\overline{\pi}) = \sum_{i=1}^j D_{r_i}(\overline{\sigma}^{(i)})+D_{k-R_j}(\overline{\sigma}^{(j)})
    \end{equation}
    when $R_j < k \leq R_{j+1}$, which proves that $\mu(P,Q) = \mu(\overline{\pi})$. 
        
    To prove the claimed bound on the lenght of LDS's of $\overline{\sigma}^{(j)}$ we utilize an argument similar to the one presented in the proof of \cref{lemma:Viennot_preserves_Dk}. The guiding principle here is that in the Viennot dynamics longer LDS's are ``slower" than shorter ones.
    Define the upward translation in $\mathscr{C}_n$ as 
    \begin{equation}
        \mathcal{T}:(a,b) \to (a, b+n). 
    \end{equation}
    By the fact that columns of tableaux $P_t,Q_t$, for $t$ large enough evolve autonomously and during any update their $c$-th columns receive a downward shift of $\mu_c'$ cells we have
    \begin{equation} \label{eq:free_ev_sigma_j}
        \mathbf{V}^{\mu_{R_j}'} (\overline{\sigma}^{(j)}) = \mathcal{T}(\overline{\sigma}^{(j)}).
    \end{equation}
    Assume that there exists an LDS $\overline{\eta}$ of $\overline{\sigma}^{(j)}$ of length $\ell ( \overline{\eta} ) = L > \mu_{R_j}'$. Then, by \cref{lemma:Viennot_preserves_Dk} there will exist an LDS $\overline{\eta}^{(s)} \subset \mathbf{V}^{s}(\overline{\sigma}^{(j)})$ such that $\ell ( \overline{\eta}^{(s)} ) = L$, for all $s \ge 1$. Additionally elements of $\overline{\eta}^{(1)}$ can be assumed to lie on the only down right loop $\varsigma$ resulting from the shadow line construction of points of $\overline{\eta}$. In particular $\overline{\eta}^{(1)}$ lies weakly ``below'' $\mathbf{V}(\overline{\eta})$ in the sense that if
    \begin{equation}
        \overline{\eta}^{(1)} = (a_1',b_1') \to \cdots \to (a_L',b_L')
        \qquad
        \text{and}
        \qquad
        \overline{\eta} = (a_1,b_1) \to \cdots \to (a_L,b_L),
    \end{equation}
    then $b_1'\le b_1, \dots, b_L'\le b_L$. Inductively one can show that for any $s$, $\overline{\eta}^{(s)}$ lies ``below" $\mathbf{V}^{s}(\overline{\eta})$ in the same sense. We can now compare the evolution of $\overline{\sigma}^{(j)}$ with that of $\overline{\eta}$ under iteration of the Viennot map. Since $\overline{\eta}$ is a localized decreasing subsequence it is true that
    \begin{equation}
        \mathbf{V}^L(\overline{\eta}) = \mathcal{T}(\overline{\eta}).
    \end{equation}
    Combining this last equality with \eqref{eq:free_ev_sigma_j} we have
    \begin{equation}
        \mathbf{V}^{N L \mu_{R_j'}} (\overline{\sigma}^{(j)}) = \mathcal{T}^{N L} (\overline{\sigma}^{(j)}),
        \qquad
        \mathbf{V}^{N L \mu_{R_j'}} (\overline{\eta}) = \mathcal{T}^{N \mu_{R_j'}} (\overline{\eta}),
    \end{equation}
    which for $N$ large enough implies that $\mathcal{T}^{N L} (\overline{\sigma}^{(j)})$ and $\mathcal{T}^{N \mu_{R_j'}} (\overline{\eta})$ lie far apart from each other, since $L> \mu_{R_j}'$. This is a contradiction since elements of $\overline{\eta}^{(NL \mu_{R_j'})}$ should lie below $\mathcal{T}^{N \mu_{R_j'}} (\overline{\eta})$. Therefore there cannot exist any LDS of $\overline{\sigma}^{(j)}$ strictly longer than $\mu_{R_j}'$. This shows that $D_s(\overline{\sigma}^{(j)}) = s \times \mu_{R_j}'$ and hence completes the proof of our proposition.
\end{proof}

In the following proposition we prove that partitions $\mu, \widetilde{\mu}$ defined through Greene invariants $D_k, I_k$ are in fact equal.

\begin{proposition} \label{prop:mu_equals_mu_tilde}
    Let $\mu, \widetilde{\mu}$ be as in \eqref{eq:mu_Greene}, \eqref{eq:mu_Greene_2}.
    Then, for all $\overline{\pi} \in \mathbb{A}_{n,n}$, we have $\mu(\overline{\pi}) = \widetilde{\mu}(\overline{\pi})$.
\end{proposition}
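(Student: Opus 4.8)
The plan is to exploit the fact that both partitions $\mu(\overline{\pi})$ and $\widetilde{\mu}(\overline{\pi})$ are defined purely through the Greene invariants $D_k$ and $I_k$, which by \cref{thm:subsequences_crystals} are constant along the edges of the affine bicrystal graph on $\overline{\mathbb{A}}_{n,n}$. Hence $\mu$ and $\widetilde{\mu}$ are invariants of the connected component of $\overline{\pi}$, and it suffices to verify $\mu(\overline{\pi}_0)=\widetilde{\mu}(\overline{\pi}_0)$ for a single conveniently chosen representative $\overline{\pi}_0$ of each component.

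To produce a good representative, first I would fix a pair of tableaux $(P,Q)$ with $(P,Q)\xrightarrow{\skwRSK}\overline{\pi}$ and set $(V,W)=\Phi(P,Q)\in VST(\mu,n)\times VST(\mu,n)$, where $\mu=\mu(\overline{\pi})$ is the common shape (using \cref{prop:mu_equals_mu}). Since $\Phi$ and the Sagan--Stanley projection are morphisms of $\widehat{\mathfrak{sl}}_n$ bicrystals (\cref{cor:Phi_is_morphism}, \cref{prop:PQ_M_morphism}), applying Kashiwara operators to $\overline{\pi}$ corresponds to applying them to $(V,W)$. Now $VST(\mu,n)=B^{\mu'}$ is a connected affine crystal (\cref{prop:connectedness_crystal_graph}), so $B^{\mu'}\times B^{\mu'}$ is a connected bicrystal, and there is a composition $h$ of Kashiwara operators sending $(V,W)$ to the leading pair $((\mu')^{\mathrm{lv}},(\mu')^{\mathrm{lv}})$. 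Taking the $\Phi$-pullback of $h$ yields $\overline{\pi}_0=h(\overline{\pi})$ with $\mu(\overline{\pi}_0)=\mu(\overline{\pi})$ and $\widetilde{\mu}(\overline{\pi}_0)=\widetilde{\mu}(\overline{\pi})$, whose associated asymptotic vertically strict tableaux are both equal to the Yamanouchi tableau $(\mu')^{\mathrm{lv}}$ of shape $\mu$.

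It then remains to compute the Greene invariants of this canonical $\overline{\pi}_0$ directly. By \cref{prop:mu_equals_mu} we already have $D_k(\overline{\pi}_0)=\mu_1'+\cdots+\mu_k'$. For the increasing statistic I would describe the point configuration $[\overline{\pi}_0]$ on $\mathscr{C}_n$ explicitly: because the stable state of the skew $\RSK$ dynamics started from the leading pair consists of columns filled by $1,2,\dots,\mu_j'$, the points of $\overline{\pi}_0$ arrange into the Yamanouchi staircase of the diagram $\mu$. On such a configuration the longest $k$-increasing subsequence is obtained by collecting the $k$ longest rows of $\mu$, giving $I_k(\overline{\pi}_0)=\mu_1+\cdots+\mu_k$ and hence $\widetilde{\mu}(\overline{\pi}_0)=\mu$. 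Combining with the first paragraph gives $\widetilde{\mu}(\overline{\pi})=\mu(\overline{\pi})$.

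The main obstacle I anticipate is exactly this last, explicit evaluation of $I_k$ at the leading element. Whereas the identity $D_k(\overline{\pi}_0)=\mu_1'+\cdots+\mu_k'$ follows transparently from the shadow-line decomposition of \cref{prop:Viennot_asymptotic} together with \cref{lemma:Viennot_preserves_Dk}, the invariance of $I_k$ under the Viennot map is precisely the part of \cref{thm:subsequences_Viennot} \emph{not} yet available at this stage, so to avoid circularity I must argue $I_k(\overline{\pi}_0)=\mu_1+\cdots+\mu_k$ intrinsically from the geometry of the leading configuration rather than by iterating the dynamics. The delicate point will be the upper bound: showing that no collection of $k$ disjoint up-right paths on $\mathscr{C}_n$ can exceed the total length of the $k$ longest rows of $\mu$, which requires a careful inspection of how up-right paths may traverse the staircase of $\mu$.
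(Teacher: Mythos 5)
Your reduction to a canonical representative via the bicrystal symmetry is sound and matches one half of the paper's strategy (its Lemma \ref{lemma:gamma_equal_mu} does the same thing, acting only on the first family of operators to make $\gamma(p(\overline{\pi}'))=\mu$; sending $W$ to the leading vector as well is harmless). The lower bound $I_k(\overline{\pi}_0)\ge\mu_1+\cdots+\mu_k$ at that representative is then immediate, exactly as in the paper's Lemma \ref{lemma:gamma_less_mu_tilde}: the points with $p$-coordinate $i$ lie on the up-right path $\mathbb{Z}\times\{i\}$, so the $k$ largest content classes give $k$ disjoint increasing subsequences of total length $\mu_1+\cdots+\mu_k$.

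The gap is the upper bound, which you explicitly defer. This is not a technicality to be checked later; it is the essential combinatorial content of the inequality $\widetilde{\mu}\trianglelefteq\mu$, and your proposed route (``careful inspection of how up-right paths traverse the staircase'') is both uncarried-out and built on a shaky premise: the canonical configuration $[\overline{\pi}_0]$ is \emph{not} in general a clean staircase, since the leading tableau $T(\mu,\kappa;\nu)$ still carries arbitrary data $\kappa,\nu$ that spread the points over the cylinder in a $\kappa$-dependent way. The paper instead proves the upper bound for \emph{arbitrary} $\overline{\pi}$, with no reduction needed, via a Dilworth-type duality: any increasing subsequence meets any localized decreasing subsequence in at most one point, so for every decomposition $\mathfrak{d}=(\overline{\sigma}^{(1)},\overline{\sigma}^{(2)},\dots)$ of $\overline{\pi}$ into LDS's one gets $I_k(\overline{\pi})\le g_k(\mathfrak{d})=\sum_i\min\{k,\ell(\overline{\sigma}^{(i)})\}$; minimizing over $\mathfrak{d}$ gives $I_k\le G_k(\overline{\pi})$, and a separate elementary lemma (\ref{lemma:equivalence_partition_mu_and_dual}) identifies $G_k(\overline{\pi})$ with $\mu_1+\cdots+\mu_k$ using a maximizing $k$-LDS. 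If you attempt your ``staircase inspection'' you will in effect be forced to rediscover this duality, since the only uniform way to bound $k$ disjoint up-right paths against the column structure is to intersect them with the LDS decomposition. You should either import that argument or supply an equally general replacement; as written, the proof is incomplete at its central step.
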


For the sake of the proof of \cref{prop:mu_equals_mu_tilde}, we introduce now statistics of weighted biwords $\overline{\pi}$ which are ``dual'' to the Greene invariants $D_k$. Let $\mathfrak{D}(\overline{\pi})$ denote the set of decompositions of $\overline{\pi}$ into localised decreasing subsequences
\begin{equation}
    \mathfrak{D}(\overline{\pi}) = \{ \mathfrak{d}=(\overline{\sigma}^{(1)} ,\overline{\sigma}^{(2)}, \dots ):\, \overline{\pi}=\overline{\sigma}^{(1)} \cupdot \overline{\sigma}^{(2)} \cupdot \cdots \text{ and } \overline{\sigma}^{(j)} \text{ is LDS for all } j   \}.
\end{equation}
Given $\mathfrak{d} = (\overline{\sigma}^{(1)} ,\overline{\sigma}^{(2)}, \dots ) \in \mathfrak{D}(\overline{\pi})$  define
\begin{equation}
    g_k (\mathfrak{d}) = \sum_{i\ge 1} \min \left\{ k , \ell ( \overline{\sigma}^{(i)} ) \right\}
\end{equation}
and
\begin{equation}
    G_k(\overline{\pi}) = \min_{\mathfrak{d} \in \mathfrak{D}(\overline{\pi}) } g_k (\mathfrak{d}).
\end{equation}
In words $g_k$ tells us how ``spread out" the decomposition $\mathfrak{d}$ is, as in the summation localized decreasing subsequences longer than $k$ contribute with a penalized weight. On the other hand statistics $G_k$ record how likely it is to decompose $\overline{\pi}$ in the least number of localized decreasing subsequences. We have the following.

\begin{lemma} \label{lemma:equivalence_partition_mu_and_dual}
    Let $\overline{\pi} \in \overline{\mathbb{A}}_{n,n}$, take $\mu$ as in \eqref{eq:mu_Greene} and define $\varkappa$ as
    \begin{gather}
        \varkappa_1 + \cdots + \varkappa_k =  G_k(\overline{\pi}).
    \end{gather}
    Then $\mu = \varkappa$.
\end{lemma}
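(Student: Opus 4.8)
The statement to prove is Lemma~\ref{lemma:equivalence_partition_mu_and_dual}: that the partition $\mu$ defined by the Greene invariants $D_k(\overline{\pi}) = \mu_1' + \cdots + \mu_k'$ coincides with the partition $\varkappa$ defined by the dual statistics $G_k(\overline{\pi}) = \varkappa_1 + \cdots + \varkappa_k$. The natural strategy is to recognize this as an instance of the classical duality between the two ways of reading off a Young diagram from a family of chains in a poset (the Greene--Kleitman duality between $D_k$, counting the longest union of $k$ chains, and the minimal ``spread out'' decompositions $G_k$). Concretely, I would show that $\mu$ and $\varkappa$ are transposes of a common shape, or rather that they are \emph{equal} because of how the two families of linear combinations are set up. The key is that $D_k$ records partial sums of $\mu'$ (the transpose), whereas $g_k(\mathfrak{d}) = \sum_i \min\{k,\ell(\overline{\sigma}^{(i)})\}$ is precisely the expression that, upon minimization, reconstructs the \emph{conjugate} partial sums, so that the two conjugations cancel and one lands back on $\mu$ itself.

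\textbf{Key steps.} First I would record the elementary combinatorial identity relating the two statistics. Given any partition $\lambda$, if one writes $\sum_i \min\{k, \lambda_i\}$, this equals $\sum_{j \le k} \lambda_j' $ is \emph{not} quite it; the correct identity is $\sum_i \min\{k,\lambda_i\} = \lambda_1' + \cdots + \lambda_k'$, i.e.\ summing $\min\{k,\cdot\}$ over the parts of $\lambda$ yields the sum of the first $k$ parts of the conjugate $\lambda'$. I would prove this by the box-counting argument: $\min\{k,\lambda_i\}$ counts boxes of row $i$ lying in the first $k$ columns. Second, I would argue that the minimizing decomposition $\mathfrak{d}^* \in \mathfrak{D}(\overline{\pi})$ for $G_k$ can be taken, \emph{simultaneously for all $k$}, to be one whose multiset of lengths $(\ell(\overline{\sigma}^{(i)}))$ equals $\mu'$; this is where one uses the characterization of $\mu'$ via $D_k$, namely that $\mu_k'$ is the maximum number of boxes one can add by a $k$-th localized decreasing chain beyond the optimal $(k-1)$-family. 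Third, combining these, $G_k(\overline{\pi}) = \sum_i \min\{k, \mu_i'\} = (\mu')_1' + \cdots + (\mu')_k' = \mu_1 + \cdots + \mu_k$, giving $\varkappa = \mu$.

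\textbf{Main obstacle.} The hard part will be establishing that a single decomposition $\mathfrak{d}^*$ with length-multiset equal to $\mu'$ is optimal for \emph{every} $k$ at once, i.e.\ the existence of a ``universally optimal'' decomposition. This is the content of the Greene--Kleitman theorem in the poset-theoretic setting, and in the present affine/cylindrical context one cannot simply cite the classical result because the ambient object is the twisted cylinder $\mathscr{C}_n$ with its nonstandard notion of localized decreasing chains and up-right paths. I would attempt to reduce to the already-proven asymptotic picture: by Proposition~\ref{prop:Viennot_asymptotic} and Proposition~\ref{prop:mu_equals_mu}, after applying $\mathbf{V}^{t^*}$ the biword splits into widely separated blocks $\overline{\sigma}^{(j)}$, each of which decomposes into $r_j$ localized decreasing sequences $\overline{\xi}^{(j,r)}$ of exactly length $\mu_{R_j}'$. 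Since $G_k$ is invariant under $\mathbf{V}$ (because $D_k$ is, by Lemma~\ref{lemma:Viennot_preserves_Dk}, and $G_k$ is determined by the same chain data), I would use this canonical block decomposition as the explicit candidate $\mathfrak{d}^*$, verify that its length-multiset is exactly $\mu'$, and check it minimizes $g_k$ by matching the lower bound coming from $D_k$ against the upper bound $g_k(\mathfrak{d}^*)$ via the min-max inequality $G_k \ge$ (the appropriate partial sum forced by $D_k$). The delicate point to get right is the direction of the inequality and confirming that no cleverer decomposition beats the block one, which should follow from the separation of blocks forcing every localized decreasing subsequence to stay within a single block.
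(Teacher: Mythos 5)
Your high-level duality strategy is sound: the box-counting identity $\sum_i\min\{k,\lambda_i\}=\lambda_1'+\cdots+\lambda_k'$ is exactly the computation underlying the paper's proof, and the lower bound $G_k(\overline{\pi})\ge\mu_1+\cdots+\mu_k$ can be extracted, as you indicate, from the constraint that the $m$ longest pieces of any decomposition form an $m$-LDS and hence have total length at most $D_m(\overline{\pi})$ (this is what the paper's contradiction argument amounts to). The genuine gap is in the upper bound. You need some $\mathfrak{d}\in\mathfrak{D}(\overline{\pi})$ with $g_k(\mathfrak{d})\le\mu_1+\cdots+\mu_k$, and your candidate --- the block decomposition from \cref{prop:Viennot_asymptotic} --- is a decomposition of $\mathbf{V}^{t^*}(\overline{\pi})$, not of $\overline{\pi}$. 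To carry it back you invoke $\mathbf{V}$-invariance of $G_k$, justified only by saying $G_k$ is ``determined by the same chain data'' as $D_k$. That is not a proof: the only evident reason $G_k$ would be $\mathbf{V}$-invariant is the identity $G_k=\mu_1+\cdots+\mu_k$ you are trying to establish, so the argument is circular as written. Nor does the mechanism of \cref{lemma:Viennot_preserves_Dk} transfer: that proof sends a $k$-family of LDS's of $\overline{\pi}$ to LDS's of $\mathbf{V}(\overline{\pi})$ supported on the associated loops, but these images need not be disjoint nor exhaust $\mathbf{V}(\overline{\pi})$, so a full decomposition is not carried along with controlled $g_k$. (A side remark: your claim that $\mu_k'$ is ``the maximum number of boxes one can add by a $k$-th chain beyond the optimal $(k-1)$-family'' is false in general; the paper explicitly warns that maximizers need not extend one another.)

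The repair is the paper's much more economical construction, which avoids a universally optimal decomposition altogether. For each $k$ separately, take a maximal $k$-LDS $\overline{\sigma}^{(1)}\cupdot\cdots\cupdot\overline{\sigma}^{(k)}$ of $\overline{\pi}$ itself; maximality forces $\ell(\overline{\sigma}^{(j)})\ge\mu_k'$ for each $j$ and forces every LDS in the complement to have length at most $\mu_k'$. Padding the complement with arbitrary LDS's and evaluating $g_{\mu_k'}$ of the resulting decomposition gives $G_{\mu_k'}(\overline{\pi})\le k\mu_k'+\mu_{k+1}'+\mu_{k+2}'+\cdots=\mu_1+\cdots+\mu_{\mu_k'}$, with no recourse to the Viennot dynamics. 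Together with the lower bound at these indices this pins down $\varkappa=\mu$. If you want to keep your asymptotic route, you must first prove (not assert) that $\mathbf{V}^{-1}$ preserves $G_k$, or directly construct a decomposition of $\overline{\pi}$ with length multiset $\mu'$ --- neither of which is currently in your argument.
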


\begin{proof}
    Let $\overline{\sigma}^{(1)} \cupdot \cdots \cupdot \overline{\sigma}^{(k)}$ be a maximising $k$-LDS of $\overline{\pi}$, or in other words assume that $\ell ( \overline{\sigma}^{(1)} ) + \cdots + \ell ( \overline{\sigma}^{(k)} ) = \mu_1' + \cdots + \mu_k'$. It is clear that $\ell ( \overline{\sigma}^{(j)} ) \ge \mu_k'$ for all $j$. Otherwise, say $\ell ( \overline{\sigma}^{(k)} ) \le \mu_k'-1$, then this would imply that $\ell ( \overline{\sigma}^{(1)} ) + \cdots + \ell ( \overline{\sigma}^{(k-1)} ) \ge \mu_1' + \cdots + \mu_{k-1}' +1$, which contradicts the definition of $\mu$. Let $\mathfrak{d} = (\overline{\sigma}^{(1)},\dots,\overline{\sigma}^{(k)},\overline{\eta}^{(k+1)},\dots)
    \in\mathfrak{D}(\overline{\pi})$ where $\overline{\eta}^{(l)}, l>k$ are LDS's 
    formed with elements of $\overline{\pi} \setminus \overline{\sigma}$ and note
    $\ell ( \mathfrak{d} ) = |\mu|$.
    Similarly as above each $\overline{\eta}^{(l)}$ has length which is no longer than $\mu_k'$. Then
    \begin{equation}
    \begin{split}
        g_{\mu_k'}(\mathfrak{d}) 
        &= \sum_{i=1}^{k} \min\{ \mu_k' , \ell ( \overline{\sigma}^{(i)} ) \}  + \sum_{i>k} \min\{ \mu_k' , \ell ( \overline{\eta}^{(i)} ) \}
        \\
        &=  k \mu_k' + \mu_{k+1}' + \mu_{k+2}' + \cdots
        \\
        &
        =
        \mu_1 + \cdots + \mu_{\mu_k'}, 
    \end{split}
    \end{equation}
    which implies $\varkappa_1 + \cdots \varkappa_{\mu_k'} \le \mu_1 + \cdots \mu_{\mu_k'}$. Assume that this last inequality can be made strict. This means that we can find $\mathfrak{d}' = (\overline{\xi}^{(1)},\overline{\xi}^{(2)},\dots)$, with LDS's arranged decreasingly in length, such that
    \begin{equation}
        g_{\mu_k'}(\mathfrak{d}') = m \mu_k' + \ell ( \overline{\xi}^{(m + 1)} ) + \ell ( \overline{\xi}^{(m +2)} ) +\cdots <  k \mu_k' + \mu_{k+1}' + \mu_{k+2}' + \cdots,
    \end{equation}
    where $m$ is the number of LDS's $\overline{\xi}^{(j)}$ with length greater or equal to $\mu_{k}'$. Since $\ell ( \overline{\xi}^{(1)} ) + \cdots + \ell ( \overline{\xi}^{(m)} ) \le \mu_1' + \cdots +\mu_m' $ we have
    \begin{equation}
        \mu_{m +1 }' + \mu_{m+2}' +\cdots \le \ell ( \overline{\xi}^{(m +1)} ) + \ell ( \overline{\xi}^{(m +2)} ) +\cdots,
    \end{equation}
    which implies the inequality
    \begin{equation} \label{eq:inequality_mu_varkappa}
        (\mu_{m +1 }' + \mu_{m+2}' +\cdots ) - (\mu_{k +1 }' + \mu_{k+2}' +\cdots ) < (k -m) \mu_k'.
    \end{equation}
    In case $m<k$, \eqref{eq:inequality_mu_varkappa} becomes $\mu_{m + 1 }'+ \cdots + \mu_{k}' < (k - m) \mu_k'$, which is impossible since terms of $\mu$ are weakly decreasing. Alternatively in case $m \ge k$, \eqref{eq:inequality_mu_varkappa} becomes $(m - k) \mu_k' < \mu_{k+1}' +\cdots + \mu_m'$, which is also impossible for the same reason. Therefore $\varkappa_1 + \cdots + \varkappa_{\mu_k'} = \mu_1 + \cdots + \mu_{\mu_k'}$ for all $k$ and this completes the proof.
\end{proof}

\begin{lemma} \label{lemma:mu_tilde_less_mu}
    Let $\overline{\pi} \in \overline{\mathbb{A}}_{n,n}$ and consider partitions $\mu, \widetilde{\mu}$ as in \eqref{eq:mu_Greene}, \eqref{eq:mu_Greene_2}. Then $\widetilde{\mu} \trianglelefteq \mu$, where $``\trianglelefteq"$ is the dominance order $\widetilde{\mu}_1+\cdots + \widetilde{\mu}_k \le \mu_1+\cdots + \mu_k$ for all $k$.
\end{lemma}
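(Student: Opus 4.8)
The plan is to establish the inequality $\widetilde{\mu}_1 + \cdots + \widetilde{\mu}_k \le \mu_1 + \cdots + \mu_k$ by producing, from a single maximizing $k$-increasing subsequence and a suitable localized-decreasing decomposition, a counting bound based on how many points any increasing subsequence can share with a localized decreasing subsequence. The guiding observation is the classical intersection principle: an increasing sequence (a subsequence lying along an up-right path on $\mathscr{C}_n$) and a localized decreasing sequence (a subsequence lying along a strict down-right loop) can overlap in at most one point. This is the skew/affine analog of the elementary fact that an increasing and a decreasing subsequence of a word meet in at most one letter, and it should follow directly from the incompatibility of the up-right and down-right monotonicity conditions together with the fact that a strict down-right loop is ``localized'' in a band $\{1,\dots,n\}\times\{j,\dots,j+n\}$ while an up-right path winds monotonically around the cylinder.

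First I would fix $k$ and choose a $k$-increasing subsequence $\overline{\tau} = \overline{\tau}^{(1)} \cupdot \cdots \cupdot \overline{\tau}^{(k)}$ of $\overline{\pi}$ realizing $I_k(\overline{\pi}) = \widetilde{\mu}_1 + \cdots + \widetilde{\mu}_k$. Next I would invoke \cref{lemma:equivalence_partition_mu_and_dual}, which identifies $\mu$ with the partition $\varkappa$ defined through the dual statistics $G_k$; concretely this gives a decomposition $\mathfrak{d} = (\overline{\sigma}^{(1)}, \overline{\sigma}^{(2)}, \dots) \in \mathfrak{D}(\overline{\pi})$ of the \emph{entire} biword into localized decreasing subsequences that is optimal for $G_k$, so that $g_{k'}(\mathfrak{d}) = \mu_1 + \cdots + \mu_{k'}$ for the relevant threshold $k'$. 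The key step is then the counting estimate: each point of each increasing piece $\overline{\tau}^{(a)}$ lies in exactly one of the $\overline{\sigma}^{(i)}$, and since $\overline{\tau}^{(a)}$ meets each $\overline{\sigma}^{(i)}$ in at most one point, the number of points of $\overline{\tau}$ contained in $\overline{\sigma}^{(i)}$ is at most $\min\{k, \ell(\overline{\sigma}^{(i)})\}$. Summing over $i$ gives
\begin{equation}
    \widetilde{\mu}_1 + \cdots + \widetilde{\mu}_k = \ell(\overline{\tau}) = \sum_{i \ge 1} \#\{ \text{points of } \overline{\tau} \text{ in } \overline{\sigma}^{(i)} \} \le \sum_{i \ge 1} \min\{ k, \ell(\overline{\sigma}^{(i)}) \} = g_k(\mathfrak{d}).
\end{equation}
Choosing $\mathfrak{d}$ to be $G_k$-optimal yields $g_k(\mathfrak{d}) = G_k(\overline{\pi}) = \mu_1 + \cdots + \mu_k$, which is precisely the desired bound.

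The main obstacle I expect is making the intersection principle rigorous in the twisted-cylinder geometry. On $\mathscr{C}_n$ an up-right path and a down-right loop are both infinite/periodic objects, and the phrase ``meet in at most one point'' must be interpreted carefully: I would argue that if an increasing subsequence and a localized decreasing subsequence shared two distinct points, the order relations forced by the up-right path (weakly increasing in both the $\mathbf{e}_1$ and $\mathbf{e}_2$ directions after unwinding the periodicity via $\sim_n$) would contradict the strictly decreasing $\mathbf{e}_2$-step structure of the strict down-right loop. The localization of strict down-right loops to a band of height $n$ is what prevents pathologies from the winding of the up-right path, so I would lean on that property to reduce the comparison to a finite window and then invoke the elementary planar argument. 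A secondary point requiring care is the bookkeeping when $\ell(\overline{\sigma}^{(i)}) < k$, where the bound $\min\{k, \ell(\overline{\sigma}^{(i)})\}$ is governed by the length of the loop rather than by $k$; this is exactly what the $\min$ in the definition of $g_k$ accounts for, so the estimate is tight and the argument closes.
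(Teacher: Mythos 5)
Your proposal is correct and follows essentially the same route as the paper: the paper likewise bounds the total length of $k$ disjoint increasing subsequences by $\sum_i \min\{k,\ell(\overline{\sigma}^{(i)})\} = g_k(\mathfrak{d})$ using the one-point intersection principle, then minimizes over $\mathfrak{d}$ to get $I_k(\overline{\pi}) \le G_k(\overline{\pi})$ and invokes \cref{lemma:equivalence_partition_mu_and_dual} to identify $G_k(\overline{\pi})$ with $\mu_1+\cdots+\mu_k$. The only cosmetic difference is that you fix optimal choices of $\overline{\tau}$ and $\mathfrak{d}$ up front rather than optimizing at the end, and you spell out the intersection principle that the paper states without proof.
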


\begin{proof}
    Let $\mathfrak{d}=(\overline{\sigma}^{(1)},\overline{\sigma}^{(2)}, \dots ) \in \mathfrak{D}(\overline{\pi})$ and consider $k$ disjoint increasing subsequences $\overline{\eta}^{(1)} ,\dots, \overline{\eta}^{(k)}$ of $\overline{\pi}$. Any intersection $\overline{\eta}^{(i)} \cap \overline{\sigma}^{(j)}$ has at most one element, therefore we have
    \begin{equation}
        \ell ( \overline{\eta}^{(1)} ) + \cdots + \ell ( \overline{\eta}^{(k)} ) \le \sum_{i \ge 1} \min\{ k, \ell ( \overline{\sigma}_i ) \} = g_k(\mathfrak{d}).
    \end{equation}
    Minimizing the right hand side over $\mathfrak{d}$ we obtain that $\ell ( \overline{\eta}^{(1)} ) + \cdots + \ell ( \overline{\eta}^{(k)} ) \le G_k (\overline{\pi})$ and hence $I_k(\overline{\pi}) \le G_k (\overline{\pi})$ maximizing over the choice of increasing subsequences $\overline{\eta}^{(i)}$. We can now use \cref{lemma:equivalence_partition_mu_and_dual} to identify $G_k(\overline{\pi})$ with $\mu_1 + \cdots + \mu_k$, completing the proof.
\end{proof}

\begin{lemma} \label{lemma:gamma_less_mu_tilde}
    Consider $\overline{\pi} \in \overline{\mathbb{A}}_{n,n}$ such that the content of its $p$-word $\gamma=\gamma(p(\overline{\pi}))$. Then, in the dominance order, $\gamma^+ \trianglelefteq \widetilde{\mu}$, where $\widetilde{\mu}$ is given by \eqref{eq:mu_Greene_2}. 
\end{lemma}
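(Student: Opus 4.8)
The plan is to produce, for every $k$, an explicit $k$-increasing subsequence of $\overline{\pi}$ of length $\gamma^+_1 + \cdots + \gamma^+_k$. Since by definition $I_k(\overline{\pi})$ is the length of the longest $k$-increasing subsequence and $\widetilde{\mu}_1 + \cdots + \widetilde{\mu}_k = I_k(\overline{\pi})$ by \eqref{eq:mu_Greene_2}, such a construction immediately yields $\gamma^+_1 + \cdots + \gamma^+_k \le \widetilde{\mu}_1 + \cdots + \widetilde{\mu}_k$ for all $k$, which is exactly the claimed dominance $\gamma^+ \trianglelefteq \widetilde{\mu}$.

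First I would fix a letter $a \in \mathcal{A}_n$ and let $\overline{\pi}^{[a]}$ denote the subsequence of $\overline{\pi}$ consisting of all columns whose $p$-entry equals $a$, so that $\ell(\overline{\pi}^{[a]}) = \gamma_a$. The key geometric claim is that $\overline{\pi}^{[a]}$ is an \emph{increasing} sequence. To verify it, recall from \eqref{eq:points_on_cyl} that the point attached to a column is $[\overline{\pi}_i] = (q_i, p_i - n w_i)$, and that on $\mathscr{C}_n$ one has $(j,i) \sim_n (j + mn, i - mn)$. For a column with $p_i = a$, I would choose the representative obtained with $m = -w_i$, namely $(q_i - n w_i,\, a)$. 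All these representatives share the second coordinate $a$, hence lie on the single bi-infinite horizontal line $\{(t,a) : t \in \mathbb{Z}\}$ in the cover $\mathbb{Z}^2$, which is an up-right path made entirely of steps in direction $\mathbf{e}_1$ (winding around $\mathscr{C}_n$ and rising by $n$ with each loop). Thus the points of $\overline{\pi}^{[a]}$ are contained in an up-right path of $\mathscr{C}_n$, so $\overline{\pi}^{[a]}$ is increasing.

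Next I would record that columns with distinct $p$-values occupy disjoint points of $\mathscr{C}_n$: if $(q_i, a - n w_i) \sim_n (q_j, a' - n w_j)$ with $q_i, q_j \in \mathcal{A}_n$, then necessarily $q_i = q_j$ and $a - a' = n(w_i - w_j)$, forcing $a = a'$ since $|a - a'| < n$. Consequently, for any choice of $k$ distinct values $a_1, \dots, a_k$, the disjoint union $\overline{\pi}^{[a_1]} \cupdot \cdots \cupdot \overline{\pi}^{[a_k]}$ is a genuine $k$-increasing subsequence of $\overline{\pi}$ of length $\gamma_{a_1} + \cdots + \gamma_{a_k}$. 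Selecting the $k$ values of largest multiplicity gives $I_k(\overline{\pi}) \ge \gamma^+_1 + \cdots + \gamma^+_k$; since $\gamma$ has at most $n$ nonzero parts, the inequality persists trivially for $k \ge n$ (both sides equalling $\ell(\overline{\pi})$), completing $\gamma^+ \trianglelefteq \widetilde{\mu}$.

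The only genuinely nontrivial step is the geometric claim that equal-valued columns form an increasing sequence; the remainder is bookkeeping. I expect the main care to be needed in handling the cylinder identification $\sim_n$ — specifically, making explicit that the horizontal up-right path winds around $\mathscr{C}_n$ and that collecting several columns at a single face (multiplicities) is compatible with the definition of an increasing sequence, which requires only that the underlying \emph{set} of points lie on one up-right path.
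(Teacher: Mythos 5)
Your proof is correct and follows essentially the same route as the paper: the paper likewise takes the subwords $\overline{\sigma}^{(i)}$ of columns with fixed $p$-value $i$, observes that each is increasing because its points lie on the horizontal up-right path $\varpi^{(i)}=\mathbb{Z}\times\{i\}$ of $\mathscr{C}_n$, and sums the $k$ largest multiplicities to get $\gamma_1^+ + \cdots + \gamma_k^+ \le I_k(\overline{\pi})$. Your extra verifications (the explicit cylinder representatives, disjointness of the point sets, and the case $k \ge n$) are harmless elaborations of what the paper leaves implicit.
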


\begin{proof}
    Consider the subwords $\overline{\sigma}^{(i)} \subseteq \overline{\pi}$ formed by all elements of $\overline{\pi}$ of the form $\left( \begin{smallmatrix} q_j \\ i \\ w_j \end{smallmatrix} \right)$. Then $\overline{\sigma}^{(i)}$ are increasing subsequences as each cell $[\overline{\sigma}^{(i)}_j]$ is contained in the up-right path $\varpi^{(i)}=\mathbb{Z} \times \{i\}$. Since $\ell ( \overline{\sigma}^{(i)} ) = \gamma_i$ we have, for all $k$
    \begin{equation}
        \gamma_1^+ + \cdots + \gamma_k^+ \le I_k(\overline{\pi})
    \end{equation}
    and hence $\gamma^+ \trianglelefteq \widetilde{\mu}$.
\end{proof}

\begin{lemma} \label{lemma:gamma_equal_mu}
    Consider $\overline{\pi} \in \overline{\mathbb{A}}_{n,n}$ and let $\mu=\mu(\overline{\pi})$ be as in \eqref{eq:mu_Greene}. Then there exists a transformation $h$, which is composition of Kashiwara operators $\widetilde{E}^{(1)}_i,\widetilde{F}^{(1)}_i$ for $i=0,\dots ,n-1$ such that, denoting $\overline{\pi'}= h(\overline{\pi})$, we have $\gamma(p(\overline{\pi}')) = \mu$.
\end{lemma}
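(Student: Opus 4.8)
The plan is to realize $\overline{\pi}$ as the image of a pair of semi-standard tableaux under the Sagan--Stanley projection, push the problem to the crystal of vertically strict tableaux where connectedness is available, and then transport the answer back along the bicrystal morphisms. First I would fix a pair $(P,Q)$ of semi-standard tableaux with $(P,Q)\xrightarrow{\skwRSK}\overline{\pi}$ (such a pair exists, e.g.\ with $\ker(P,Q)=\varnothing$, by \cref{thm:SS}), write $\psi$ for the projection \eqref{eq:map_PQ_M}, and set $(V,W)=\Phi(P,Q)$. By \cref{prop:mu_equals_mu} the asymptotic increment $\mu(P,Q)$ coincides with $\mu(\overline{\pi})=\mu$, so that $V\in VST(\mu,n)=B^{\mu'}$. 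The key structural inputs are that both $\psi$ and $\Phi$ are morphisms of $\widehat{\mathfrak{sl}}_n$ bicrystals, by \cref{prop:PQ_M_morphism} and \cref{cor:Phi_is_morphism}: each intertwines the first family $\widetilde{E}^{(1)}_i,\widetilde{F}^{(1)}_i$ and, being a morphism, preserves the first content function $\gamma^{(1)}$, which on $\overline{\pi}$ equals $\gamma(p(\overline{\pi}))$, on a pair equals the content of its first tableau, and on $(V,W)$ equals the content of $V$.

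Next I would invoke the connectedness of the affine crystal graph $\widehat{B}(\mu')$ (\cref{prop:connectedness_crystal_graph}) to produce a composition $h$ of the Kashiwara operators $\E{i},\F{i}$, $i=0,\dots,n-1$, carrying $V$ to the leading vector $(\mu')^{\mathrm{lv}}\in B^{\mu'}$ introduced in \cref{subs:VST_affine_crystals}. This leading vector is the Yamanouchi tableau of shape $\mu$, hence has content exactly $\mu$. Reading each $\E{i},\F{i}$ as the corresponding first-family operator $\widetilde{E}^{(1)}_i,\widetilde{F}^{(1)}_i$, the same $h$ acts on the pair by $h(V,W)=\big((\mu')^{\mathrm{lv}},W'\big)$; since $h(V)\neq\varnothing$, the $\Phi$-pullback $h(P,Q)$ is well defined and non-empty, and therefore $\overline{\pi}'=h(\overline{\pi})=\psi(h(P,Q))\neq\varnothing$, so the statement of the lemma is meaningful for this $h$.

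Finally I would read off the content. Because $\psi$ and $\Phi$ commute with $h$ we have $\psi(h(P,Q))=h(\overline{\pi})=\overline{\pi}'$ and $\Phi(h(P,Q))=h(V,W)=\big((\mu')^{\mathrm{lv}},W'\big)$, and because both morphisms preserve $\gamma^{(1)}$, evaluating this content function at $h(P,Q)$ gives
\[
\gamma(p(\overline{\pi}'))=\gamma^{(1)}(h(P,Q))=\gamma\big((\mu')^{\mathrm{lv}}\big)=\mu ,
\]
where the first equality uses $\overline{\pi}'=\psi(h(P,Q))$ together with content preservation by $\psi$, and the second uses $\Phi(h(P,Q))=\big((\mu')^{\mathrm{lv}},W'\big)$ together with content preservation by $\Phi$. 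This is precisely the claim, and no reduction to weighted permutations via standardization is needed, since every ingredient above is stated for general semi-standard tableaux and weighted biwords.

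The step I expect to require the most care is the content bookkeeping, rather than any single hard estimate. One must verify that the one content function $\gamma^{(1)}$ is transported consistently by both $\psi$ and $\Phi$, so that the content of the $p$-word, the content of $P$, and the content of $V$ genuinely coincide, and that the leading vector of $VST(\mu,n)=B^{\mu'}$ has content $\mu$ (and not $\mu'$). The only non-formal inputs are the connectedness of $\widehat{B}(\mu')$ from \cref{prop:connectedness_crystal_graph} and the identification $\mu(P,Q)=\mu(\overline{\pi})$ from \cref{prop:mu_equals_mu}; granting these, the remainder is a diagram chase through the bicrystal morphisms, with the notion of $\Phi$-pullback doing the work of guaranteeing that the operators chosen to rectify $V$ are legitimately applicable to $\overline{\pi}$.
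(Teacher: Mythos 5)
Your proposal is correct and follows essentially the same route as the paper: fix $(P,Q)$ projecting to $\overline{\pi}$, use \cref{prop:mu_equals_mu} to identify the Greene invariant, rectify $V$ to the leading vector via connectedness of the affine crystal graph, and transport the resulting composition of first-family Kashiwara operators through the $\Phi$-pullback and the bicrystal morphism $(P,Q)\xrightarrow{\skwRSK}\overline{\pi}$ to read off the content. The only difference is cosmetic: you make the content bookkeeping through $\gamma^{(1)}$ explicit, which the paper leaves implicit.
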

\begin{proof}
    Let $(P,Q)$ be a pair of tableaux such that $(P,Q) \xrightarrow[]{\skwRSK \,} \overline{\pi}$ and denote by $\mu$ the asymptotic increment. By \cref{prop:mu_equals_mu} we have $\mu=\mu(\pi)$. The projection $\Phi$ acts on such pair as $\Phi(P,Q) = (V,W)$ with $V,W \in VST(\mu,n)$. Viewed as an affine crystal graph, $CTS(\mu,n)$ is connected, by \cref{prop:connectedness_crystal_graph} and hence there exists a map $h_V = \E{i_1}^{N_1} \circ \F{i_2}^{N_2} \circ \cdots$ such that $h_V(V) = \mu^{\mathrm{lv}}$, where as in \cref{subs:VST_affine_crystals}, $\mu^{\mathrm{lv}}$ denotes the unique vertically strict tableau of shape $\mu$ and content $\mu$. We can lift the action of the map $h_V$ to the set of pairs $(V',W')$ defining $h=h_V \times \mathbf{1} : (V',W') \mapsto (h_V(V'),W')$. Further, as in \cref{subs:pairs_of_tab_affine_bicrystals} we consider the $\Phi$-pullback map of $h$ that acts on pairs of skew tableaux as $h=\left(\widetilde{E}^{(1)}_{i_1}\right)^{N_1} \circ \left(\widetilde{F}^{(1)}_{i_2}\right)^{N_2} \circ \cdots$. Then, by \cref{cor:Phi_is_morphism}, we have $h(P,Q)=(P',Q')$ with $\gamma(P') = \mu$. By the fact that projection $(P,Q)\xrightarrow[]{\skwRSK\,} \overline{\pi}$ is a morphism of bicrystals we define $(P',Q') \xrightarrow[]{\skwRSK\,} \overline{\pi}'$ and $\gamma(p(\overline{\pi}')) = \mu$.
\end{proof}

This leads up to the proof of \cref{prop:mu_equals_mu_tilde}.

\begin{proof}[Proof of \cref{prop:mu_equals_mu_tilde}]
    By \cref{lemma:mu_tilde_less_mu} we have $\widetilde{\mu}(\overline{\pi}) \trianglelefteq \mu (\overline{\pi})$. Moreover, by \cref{lemma:gamma_equal_mu}, we can always find a composition of Kashiwara operators $h$ such that $h(\overline{\pi})=\overline{\pi}'$ and $\gamma(p(\overline{\pi}')) = \mu(\overline{\pi})$. By \cref{thm:subsequences_crystals} we have $\mu(\overline{\pi}') = \mu(\overline{\pi})$, so that by \cref{lemma:gamma_less_mu_tilde} we can conclude that also $\mu(\overline{\pi}) \trianglelefteq \widetilde{\mu} (\overline{\pi})$. Therefore $\mu(\overline{\pi}) = \widetilde{\mu} (\overline{\pi})$.
\end{proof}

We can finally prove our main results of the section.

\begin{proof}[Proof of \cref{thm:subsequences_Viennot}]
    The fact that the Viennot map preserves statistics $D_k$ is the result of \cref{lemma:Viennot_preserves_Dk}. This also implies that the partition $\mu(\overline{\pi})$ defined as in \eqref{eq:mu_Greene} is invariant. The fact that also $I_k$'s are invariant under $\mathbf{V}$ follows from \cref{prop:mu_equals_mu_tilde}, which implies the chain of equalities
    \begin{equation}
        \widetilde{\mu}(\mathbf{V}(\overline{\pi})) = \mu(\mathbf{V}(\overline{\pi})) = \mu (\overline{\pi}) = \widetilde{\mu}(\overline{\pi}).
    \end{equation}
    This concludes the proof.
\end{proof}

\begin{proof}[Proof of \cref{thm:asymptotic_shape_RSK}]
    By \cref{prop:mu_equals_mu} asymptotic increment $\mu(P,Q)$ is always equal to the partition $\mu(\overline{\pi})$, whenever $(P,Q) \xrightarrow[]{\skwRSK\,} \overline{\pi}$. \Cref{prop:mu_equals_mu_tilde} then provides the equalities $\mu(P,Q) = \mu(\overline{\pi}) = \widetilde{\mu}(\overline{\pi})$.
\end{proof}

\section{Energy function, Demazure crystals and linearization of dynamics} \label{sec:linearization}

\subsection{Combinatorial $R$ matrix and energy function} \label{subs:combinatorial_R}

For any $r_1,r_2 \in \mathbb{N}$, crystal graphs $\widehat{B}(r_1,r_2)$ and $\widehat{B}(r_2,r_1)$ are isomorphic, via a unique isomorphism of crystal graphs called \emph{combinatorial $R$-matrix} 
\begin{equation}
    R:B^{r_1,1} \otimes B^{r_2,1} \to B^{r_2,1} \otimes B^{r_1,1}.
\end{equation}
There exists a number of equivalent definitions of $R$ (see \cite{Kang_Kashiwara_et_al,Nakayashiki_Yamada,shimozono_affine}) and the one reported below is a reformulation of the original algorithm by Nakayashiki and Yamada as in Rule 3.10 of \cite{Nakayashiki_Yamada}. Consider $b_i \in B^{r_i,1}$ for $i=1,2$ and we want to find $\tilde{b}_i \in B^{r_i,1}$ such that 
\begin{equation}
    R(b_1 \otimes b_2) = \tilde{b}_2 \otimes \tilde{b}_1,
\end{equation}
by shifting cells from one column to the other. The procedure goes as follows.

\begin{equation} \label{eq:combinatorial_R_def}
    \begin{minipage}{.9\linewidth}
        \begin{enumerate}
            \item Prepare the word $w = 1^{m_1(b_1)} 1^{m_1(b_2)} 2^{m_2(b_1)} 2^{m_2(b_2)} \cdots $ writing in increasing order all entries, with multiplicities, appearing in $b_1$ and $b_2$. Associate to each entry of $b_1$ an opening parenthesis ``(" and to each entry of $b_2$ a closing parenthesis ``)". In case a letter $i$ appears twice in $w$ we follow the convention that the leftmost one belongs to $b_1$.
            \item Sequentially match all pairs of consecutive symbols ``(", ``)". At the end of this process the subword made of unmatched parentheses will have the form $)\cdots ) ( \cdots ($. 
            \item Assuming periodic boundary conditions sequentially match all leftmost unmatched symbols ``)" with rightmost unmatched ``(". Call these \emph{winding pairs} of $b_1 \otimes b_2$.
            \item After matching winding pairs, the list of unmatched parentheses will form a sequence of $r_1 - r_2$ symbols ``(" or $r_2 - r_1$ symbols ``)", depending on whether $r_1 \ge r_2$ or viceversa. Either way swap the orientation of all remaining unmatched parentheses.
            \item Construct $\tilde{b}_1$ from letters of $w$ associated with ``(" symbols and $\tilde{b}_2$ from those associated with ``)".
        \end{enumerate}
    \end{minipage}
\end{equation}

We also define the \emph{energy function} 
\begin{equation}
    H(b_1 \otimes b_2) = \text{number of winding pairs of }b_1 \otimes b_2.
\end{equation}

To have a better understanding of the algorithm for $R$ consider the following example.
\begin{example}\label{example:combinatorial_R}
    In the evaluation below we have $r_1=3$ and $r_2=4$:
\begin{equation} \label{eq:example_combinatorial_R}
    \begin{ytableau}
        2\\3\\6
    \end{ytableau} 
    \,\,
    \otimes
    \,\,
    \begin{ytableau}
        1\\2\\4\\5
    \end{ytableau}
    \xrightarrow[]{\hspace{.3cm} R \hspace{.3cm}}
    \begin{ytableau}
        2\\3\\5\\6
    \end{ytableau} 
    \,\,
    \otimes
    \,\,
    \begin{ytableau}
        1\\2\\4
    \end{ytableau}
    \,\,\,,
\end{equation}
This follows from the construction
    \begin{equation} \label{eq:combinatorial_R_example}
        \begin{matrix}
         1 & 2 & 2 & 3 & 4 & 5 & 6
        \\
         \boldsymbol{)} & \boldsymbol{(} & \boldsymbol{)} & 
         \boldsymbol{(} & \boldsymbol{)} & \boldsymbol{)} &
         \boldsymbol{(}
         \\
         \boldsymbol{)} & \textcolor{gray}{(} & \textcolor{gray}{)} & 
         \textcolor{gray}{(} & \textcolor{gray}{)} & \boldsymbol{)} &
         \boldsymbol{(}
         \\
         \textcolor{gray}{)} & \textcolor{gray}{(} & \textcolor{gray}{)} & 
         \textcolor{gray}{(} & \textcolor{gray}{)} & \boldsymbol{)} &
         \textcolor{gray}{(}
         \\
         \textcolor{gray}{)} & \textcolor{gray}{(} & \textcolor{gray}{)} & 
         \textcolor{gray}{(} & \textcolor{gray}{)} & \boldsymbol{(} &
         \textcolor{gray}{(}
         \\
         \boldsymbol{)} & \boldsymbol{(} & \boldsymbol{)} & 
         \boldsymbol{(} & \boldsymbol{)} & \boldsymbol{(} &
         \boldsymbol{(}
        \end{matrix}
        \,\,\,.
    \end{equation}
Notice that in the forth line we matched the only winding pair of parenthesis as for (3) of \eqref{eq:combinatorial_R_def}. This in particular implies that in this case we have $H(b_1\otimes b_2)=1$.
\end{example}

\begin{remark}
The combinatorial $R$-matrix, as the name suggests, satisfies the Yang-Baxter equation
\begin{equation}
    (R \otimes \mathbf{1}) (\mathbf{1} \otimes R) (R \otimes \mathbf{1}) = (\mathbf{1} \otimes R) (R \otimes \mathbf{1}) (\mathbf{1} \otimes R)
\end{equation}
and can be also defined as the $q \to 0$ limit of the fused $R$-matrix of $\mathcal{U}_q(\widehat{sl}_{n})$. We will not make use of this fact in this paper, but the interested reader can consult \cite{Kang_Kashiwara_et_al,shimozono_affine}.
\end{remark}

We use the notation $R_i$ to denote the $R$-matrix acting only on the $i$-th and $(i+1)$-th component of a tensor product $b_1\otimes \cdots \otimes b_N$ 
\begin{equation} \label{eq:R_matrix_local}
    R_i = \mathbf{1}^{\otimes (i-1)}  \otimes R \otimes \mathbf{1}^{\otimes (N-i)}.
\end{equation}

\begin{proposition} \label{prop:unique_isomorphism}
    Consider a permutation $\sigma = \sigma_{i_1} \cdots \sigma_{i_M}$ written as a product of elementary transpositions $\sigma_i$ that exchange $i$ and $i+1$.
    Fix compositions $\varkappa=(\varkappa_1,\dots,\varkappa_N)$, $\eta=(\varkappa_{\sigma(1)},\dots,\varkappa_{\sigma(N)})$. Then, 
    \begin{equation}
        R_\sigma = R_{i_1} \cdots R_{i_M} : B^\varkappa \to B^\eta
    \end{equation}
    is the unique isomorphism of crystal graphs $B^\varkappa \to B^\eta$.
\end{proposition}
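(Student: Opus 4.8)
The plan is to deduce the statement from the uniqueness of isomorphisms of affine crystal graphs, which was already established in \cref{prop:unique_crystal_graph_isomorphism} as a consequence of the connectedness result \cref{prop:connectedness_crystal_graph}. Indeed, once we know that $R_\sigma$ is \emph{some} isomorphism of $\widehat{\mathfrak{sl}}_n$ crystal graphs from $B^\varkappa$ to $B^\eta$, \cref{prop:unique_crystal_graph_isomorphism} forces it to coincide with the unique such isomorphism; in particular $R_\sigma$ cannot depend on the chosen factorization $\sigma = \sigma_{i_1} \cdots \sigma_{i_M}$, so both well-definedness and uniqueness follow simultaneously. The entire content of the proof is therefore to verify that $R_{i_1} \cdots R_{i_M}$ is a morphism of affine crystal graphs with the correct source and target.

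First I would show that each local operator $R_i$ of \eqref{eq:R_matrix_local}, regarded as a map from $B^\zeta$ to $B^{\zeta'}$ where $\zeta'$ is obtained from the composition $\zeta$ by transposing its $i$-th and $(i+1)$-th entries, is an isomorphism of affine crystal graphs. Since $R : B^{\zeta_i,1} \otimes B^{\zeta_{i+1},1} \to B^{\zeta_{i+1},1} \otimes B^{\zeta_i,1}$ is by construction an isomorphism of affine crystal graphs, hence commutes with $\E{j}, \F{j}$ for all $j = 0, \dots, n-1$ acting on these two factors, the claim reduces to the standard fact that tensoring a crystal isomorphism with the identity on the factors to its left and right again yields a crystal isomorphism; see \cite{Hong_Kang_book_crystals, BumpSchilling_crystal_book}. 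For the classical operators $\E{j}, \F{j}$ with $j = 1, \dots, n-1$ this is immediate from the tensor product signature rule, which localizes the relevant action to a single factor. For the $0$-th operators one uses the definition $\E{0} = \pr^{-1} \circ \E{1} \circ \pr$ with $\pr$ acting factorwise: since $R$ intertwines the promotion operator \cite{shimozono_affine} the embedded $R_i$ commutes with the global $\pr = \pr^{\otimes N}$, and it commutes with the global $\E{1}$ by the previous case, whence it commutes with $\E{0}$ as well.

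Next I would verify that the source and target compositions match. Applying the operators $R_{i_1}, \dots, R_{i_M}$ in the order prescribed by the notation $R_\sigma = R_{i_1} \cdots R_{i_M}$ and recording at each step how the underlying composition of column heights is permuted, a direct induction shows that the intermediate shapes telescope and that the total effect on positions is the permutation $\sigma$; concretely the $k$-th entry of the final composition equals $\varkappa_{\sigma(k)}$, which is exactly $\eta$. This is routine bookkeeping with the action of $\sigma$ on the indices $\{1, \dots, N\}$, and no maximality or reducedness of the word is needed. Consequently $R_\sigma$, being a composition of isomorphisms of affine crystal graphs, is itself an isomorphism of affine crystal graphs $B^\varkappa \to B^\eta$, and \cref{prop:unique_crystal_graph_isomorphism} identifies it with the unique one.

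The main obstacle is the affine part of Step~1, namely the commutation of the embedded $R_i$ with $\E{0}$ and $\F{0}$: unlike the classical operators these are nonlocal, being conjugates by the global promotion $\pr = \pr^{\otimes N}$, so the localization argument must be routed through the compatibility of the combinatorial $R$-matrix with promotion. An alternative, more computational route would be to prove directly that the $R_i$ satisfy the braid relations $R_i R_{i+1} R_i = R_{i+1} R_i R_{i+1}$ and $R_i R_j = R_j R_i$ for $|i-j| \ge 2$ together with $R_i^2 = \mathbf{1}$, i.e. the Yang--Baxter equation quoted after \eqref{eq:combinatorial_R_def}, and then invoke that any two words for $\sigma$ are connected by these relations. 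The uniqueness argument above has the advantage of bypassing this verification entirely, and in fact it yields the braid and involution relations as corollaries.
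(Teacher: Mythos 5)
Your proposal is correct and follows essentially the same route as the paper: the paper's proof simply observes that a composition of isomorphisms of crystal graphs is again an isomorphism, so $R_\sigma : B^\varkappa \to B^\eta$ is one, and then invokes \cref{prop:unique_crystal_graph_isomorphism} for uniqueness. The additional detail you supply — that each embedded $R_i$ remains an affine crystal isomorphism of the full tensor product, including the commutation with $\E{0},\F{0}$ via promotion, and the bookkeeping of how the composition is permuted — is left implicit in the paper but is consistent with its argument.
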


\begin{proof}
    Composition of isomorphisms of crystal graphs is still an isomorphism of crystal graphs. This shows that $R_\sigma:B^\varkappa \to B^\eta$ is an isomorphism of crystal graphs and it is the only one, by \cref{prop:unique_crystal_graph_isomorphism}.
\end{proof}

\medskip

A theorem in \cite{Kang_Kashiwara_et_al} describes how the energy function changes under the action of crystal operators. Assuming that $R(b_1 \otimes b_2)= \tilde{b}_2 \otimes \tilde{b}_1$, we have
\begin{equation} \label{eq:energy_and_kashiwara}
    H(\E{i}(b_1 \otimes b_2))
    =
    \begin{cases}
         H(b_1 \otimes b_2) - 1
        \qquad 
        & \text{if } i=0, \,\varphi_0(b_1) \ge \varepsilon_0(b_2)  \text{ and } \varphi_0(\tilde{b}_2) \ge \varepsilon_0(\tilde{b}_1),
        \\
        H(b_1 \otimes b_2) + 1
        \qquad 
        & \text{if } i=0, \,\varphi_0(b_1) < \varepsilon_0(b_2)  \text{ and } \varphi_0(\tilde{b}_2) < \varepsilon_0(\tilde{b}_1),
        \\
        H(b_1 \otimes b_2) 
        & \text{else}.
    \end{cases}
\end{equation}

Just as we associate an energy function to a tensor product of two elements $b_1 \otimes b_2$, there exists a canonical way of defining an energy on arbitrary finite products $b_1 \otimes \cdots \otimes b_N$. 
\begin{definition}[Intrinsic energy] \label{def:energy}
    Consider a composition $\varkappa=(\varkappa_1,\dots, \varkappa_N)$. For any $b=b_1 \otimes \cdots \otimes b_N \in B^\varkappa$, the \emph{local energies} $\mathscr{H}_i$ and the \emph{intrinsic energy} $\mathscr{H}$ are the functions
    \begin{gather}
        \mathscr{H}_i (b) = \sum_{j=i+1}^N H (b_i^{(j-1)} \otimes b_j),
        \\
        \mathscr{H}(b) = \sum_{i=1}^{N-1} \mathscr{H}_i (b),
    \end{gather}
    where $b^{(i)}_i=b_i$ and $b_i^{(j-1)}$ is defined recursively by 
    \begin{equation}
        R(b_i^{(j-2)} \otimes b_{j-2}) = \tilde{b}_{j-2} \otimes b_i^{(j-1)}.
    \end{equation}
\end{definition}

The intrinsic energy $\mathscr{H}$ is, as all the local energies $\mathscr{H}_i$ are, constant on classical connected components of $\widehat{B}(\varkappa)$. This is a consequence of \eqref{eq:energy_and_kashiwara} and of the fact that the combinatorial $R$-matrix commutes with Kashiwara operators. 

\subsection{Demazure subgraph} \label{subs:demazure_subgraph}

For an affine crystal graph $\widehat{B}(\varkappa)$ we define its \emph{Demazure subgraph} $\widetilde{B}(\varkappa)$. Its set of vertices is $B^\varkappa$, while its edges, called \emph{Demazure arrows}, are defined next.

\begin{definition}[Demazure arrows]
     Let $b \in B^{\varkappa}$. We say that $b \rightarrow \F{i}(b)$ is a Demazure arrow if $i=1,\dots, n-1$, or if $i=0$ and $\varepsilon_0(b) > 0$. Equivalently, $b\rightarrow \E{i}(b)$ is a Demazure arrow if $i=1,\dots,n-1$, or if $i=0$ and $\varepsilon_0(b)>1$.
\end{definition}

\begin{figure}
    \centering
    \includegraphics[width=\linewidth]{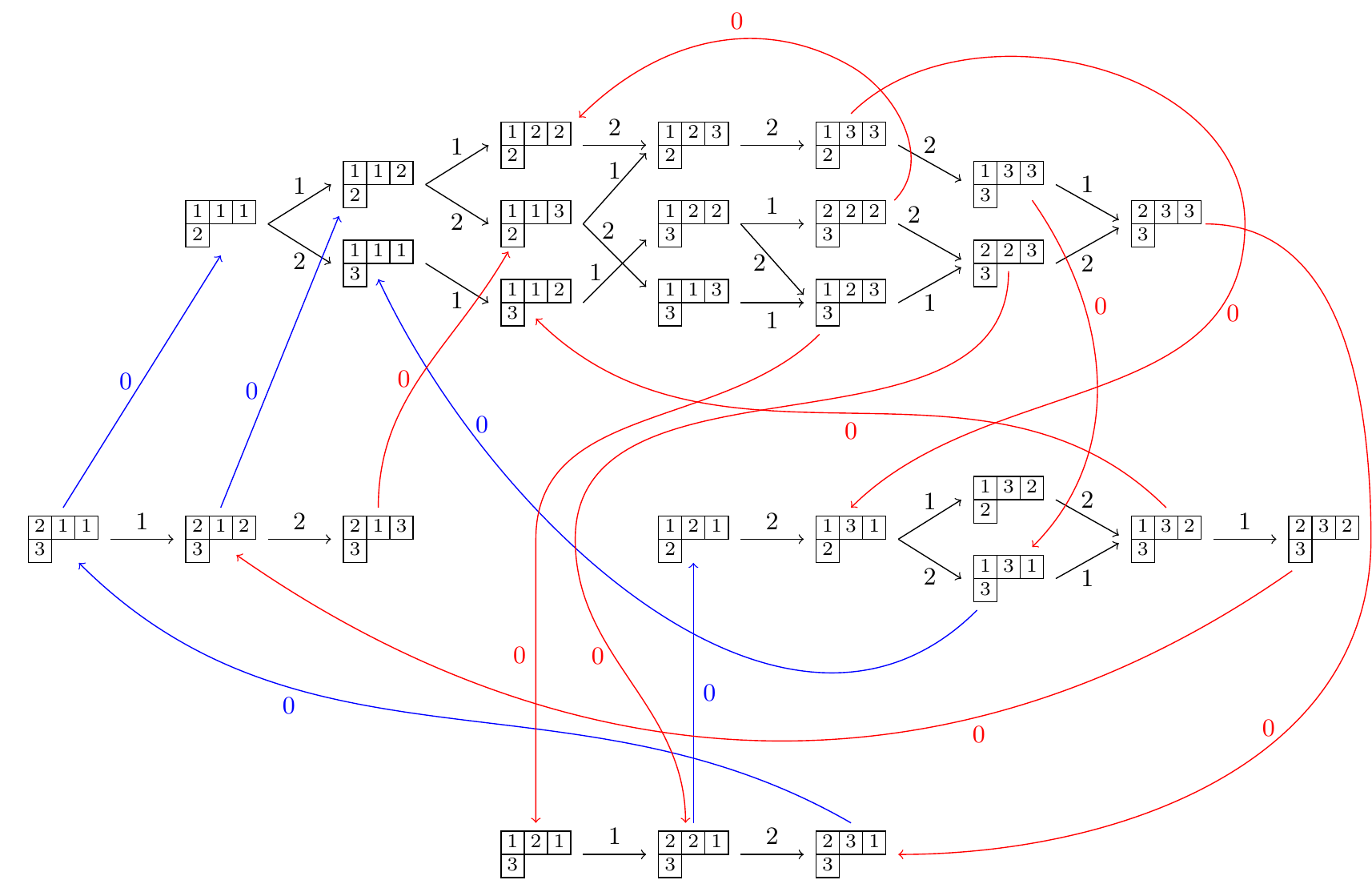}
    \caption{The affine crystal graph $\widehat{B}(\varkappa)$, for $\varkappa=(2,1,1)$. Edges $\xrightarrow[]{i}$ are defined by the action of $\F{i}$. Black arrows define the classical crystal graph $B(\varkappa)$. Blue arrows denote 0-Demazure arrows, so that the Demazure subgraph $\widetilde{B}(\varkappa)$ consists in black and blue edges. Red arrows are 0-arrows that are not Demazure arrows. Notice the defining property of 0-Demazure arrows, that always originate from vertices $b$ that are endpoints of $0$-arrows.}
    \label{fig:demazure_subgraph}
\end{figure}

An example of a Demazure subgraph is shown in \cref{fig:demazure_subgraph}. Notice that the Demazure subgraph is not a crystal graph as in the definition given in \cref{subs:crystals_and_bi_crystals}. In fact $\widetilde{B}(\varkappa)$ is the image under a canonical isomorphsm, denoted by $j$ in \cite{schilling_tingley}, of the corresponding \emph{Demazure crystal} \cite[Chapter 13]{BumpSchilling_crystal_book}. For this reason, $\widetilde{B}(\varkappa)$, just as the affine crystal graph $\widehat{B}(\varkappa)$, is connected, as stated next.

\begin{proposition} \label{prop:Demazure_connected} 
    For any $\varkappa$, the Demazure subgraph $\widetilde{B}(\varkappa)$ is connected.
\end{proposition}

Result of \cref{prop:Demazure_connected} follows from the general result \cite[Theorem 4.4]{Fourier_Schilling_Shimozono_demazure}, \cite[Theorem 6.1]{schilling_tingley}. The structure of subgraph $\widetilde{B}(\varkappa)$ defines a grading function $D:B^{\varkappa}\to \mathbb{Z}$ that associates to any element $b\in B^\varkappa$ the difference between 0-Demazure arrows $\#\F{0} - \#\E{0}$ found in any map $h$ such that $h$ is composition of only Demazure arrows and $h(b)=\varkappa^{\mathrm{lv}}$.
It was found in \cite{schilling_tingley} that such grading function $D(B)$ equals, at least in $A^{(1)}_{n}$ type, the intrinsic energy function $\mathscr{H}(b)$. This is a consequence of the following proposition, which again is a particular case of \cite[Lemma 7.3]{schilling_tingley}

\begin{proposition} \label{prop:Demazure_energy}
    Let $b=b_1\otimes \cdots \otimes b_N \in B^\varkappa$ and 
    \begin{equation}
        b'=\F{0}(b) = b_1 \otimes \cdots \otimes \F{0}(b_k) \otimes \cdots \otimes b_N,
    \end{equation}
   be such that $b\to b'$ is a Demazure arrow. 
   Then, for the local energies $\mathscr{H}_i$, we have
    \begin{equation}
        \mathscr{H}_i(b') = 
        \begin{cases}
            \mathscr{H}_i(b), \qquad &\text{for }i \neq k,
            \\
            \mathscr{H}_k(b)-1, &\text{for }i = k.
        \end{cases}
    \end{equation}
    In particular $\mathscr{H}(b')=\mathscr{H}(b)-1$.
\end{proposition}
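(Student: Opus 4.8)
The plan is to compute the difference $\mathscr{H}_i(b')-\mathscr{H}_i(b)$ for each $i$ separately, using only two ingredients already available: the fact that the combinatorial $R$-matrix commutes with every Kashiwara operator $\E{j},\F{j}$ (noted after \cref{def:energy}), and the two-fold energy change formula \eqref{eq:energy_and_kashiwara}. Recall that $\mathscr{H}_i(b)=\sum_{j=i+1}^N H(b_i^{(j-1)}\otimes b_j)$, where $b_i^{(j-1)}$ is the factor $b_i$ transported to the right past $b_{i+1},\dots,b_{j-1}$ by successive $R$-matrices. Since $b'=\F{0}(b)$ differs from $b$ only in the $k$-th tensor factor, the whole analysis reduces to tracking how the single modification $b_k\mapsto\F{0}(b_k)$ propagates through these transport chains.

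The easy case is $i>k$. Here $\mathscr{H}_i$ is built entirely from the factors $b_i,b_{i+1},\dots,b_N$, all of index $\ge i>k$, none of which is altered by an application of $\F{0}$ at position $k$. Hence every summand $H(b_i^{(j-1)}\otimes b_j)$ is literally unchanged and $\mathscr{H}_i(b')=\mathscr{H}_i(b)$, with no use of \eqref{eq:energy_and_kashiwara} needed.

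The substance of the argument is the range $i\le k$. I would transport the $\F{0}$ along the $R$-chain: because $R$ commutes with $\F{0}$, applying $R(b_i^{(m-1)}\otimes b_m)$ to the modified product is the same as applying $\F{0}$ to the reordered product $\tilde b_m\otimes b_i^{(m)}$, and the tensor-product signature rule then dictates whether $\F{0}$ remains on the transported copy of $b_k$ or jumps onto the moving factor $b_i^{(m)}$. At each crossing the two-fold formula \eqref{eq:energy_and_kashiwara} (read in the $\F{0}$ direction) records a change of $+1$, $-1$, or $0$ in the corresponding $H$-term, governed by the comparison of $\varphi_0$ on the left factor with $\varepsilon_0$ on the right factor. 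The claim is that these contributions telescope: for $i<k$ the $\F{0}$ enters and exits the moving strand so that the net change across the sum over $j$ vanishes, whereas for $i=k$ exactly one uncancelled $-1$ remains, which is precisely the crossing at which $\F{0}$ first passes from the moving factor $b_k$ onto a factor to its right. The Demazure hypothesis $\varepsilon_0(b)>0$ — equivalently that $b\to b'$ is a $0$-Demazure arrow — is what pins down the sign of this surviving term: as in the two-factor model case, where $\F{0}$ acting on the left factor forces $\varphi_0(b_k)>\varepsilon_0(b_{k+1})$ and hence $\varphi_0(\F{0}(b_k))=\varphi_0(b_k)-1\ge\varepsilon_0(b_{k+1})$, it guarantees one lands in the first branch of \eqref{eq:energy_and_kashiwara}, producing the decrease by $1$.

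The main obstacle is exactly this bookkeeping: rigorously controlling, crossing by crossing, which tensor factor carries $\F{0}$ after each $R$-move, and verifying that the auxiliary ``$R$-image'' inequalities on $\tilde b_2,\tilde b_1$ appearing in \eqref{eq:energy_and_kashiwara} stay satisfied along the whole chain so that the cancellations are clean. Once the local identities $\mathscr{H}_i(b')=\mathscr{H}_i(b)$ for $i\ne k$ and $\mathscr{H}_k(b')=\mathscr{H}_k(b)-1$ are established, the final assertion $\mathscr{H}(b')=\mathscr{H}(b)-1$ follows by summing over $i$. As a consistency check one may compute the total change independently, by bringing $b_k$ to the far right through $R$-matrices and applying \eqref{eq:energy_and_kashiwara} once to the resulting reordering; matching this with the $i>k$ and $i<k$ vanishing then forces the full $-1$ to be localized at $\mathscr{H}_k$, as claimed.
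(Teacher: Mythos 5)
The paper does not prove this proposition at all: it is quoted as a particular case of \cite[Lemma 7.3]{schilling_tingley}, so there is no in-paper argument to compare against. Your proposal is therefore an attempt at an independent proof, and as written it is a plan rather than a proof --- you yourself flag the ``bookkeeping'' as the main obstacle, and that bookkeeping is exactly where the mathematical content lies. Two specific gaps remain open.

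First, formula \eqref{eq:energy_and_kashiwara} describes how $H$ changes under the \emph{tensor-product} operator $\E{0}$ or $\F{0}$, which chooses its target factor by the signature rule. What you need to control is the change of $H(b_i^{(j-1)}\otimes b_j)$ when a \emph{prescribed} factor is replaced by its image under $\F{0}$. These coincide only when the signature rule for that particular two-factor product happens to select the prescribed factor. For the very first product affected (e.g.\ $b_i^{(k-1)}\otimes b_k$ when $i<k$), the left factor $b_i^{(k-1)}$ is an $R$-transported object that is not a subword of $b$, so coassociativity of the signature rule applied to $b$ gives you no information about which factor $\F{0}$ of $b_i^{(k-1)}\otimes b_k$ acts on; the hypothesis that the full-product $\F{0}$ selects position $k$ does not transfer. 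Without this identification, \eqref{eq:energy_and_kashiwara} is simply inapplicable and each individual $H$-change is uncontrolled.

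Second, even granting the identification at every crossing, your telescoping for $i<k$ closes only if the perturbation eventually jumps off the moving strand onto some stationary factor $\tilde b_j$, producing the cancelling $-1$. If it instead rides the moving strand to the end of the chain ($j=N$), the net change is $+1$, not $0$; symmetrically for $i=k$ the net change could be $0$ instead of $-1$. Ruling this out is precisely where the Demazure hypothesis $\varepsilon_0(b)>0$ and finer properties of $\varphi_0,\varepsilon_0$ along the $R$-chain must enter, and your sketch does not supply that argument. These are the points that the cited Schilling--Tingley lemma handles with additional machinery (Demazure crystal structure and induction), and until they are filled in the proposal does not establish the statement.
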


We give the following natural definition.

\begin{definition}[Leading map] \label{def:leading_map_VST}
    For any $b\in B^\varkappa$, a \emph{leading map} $\mathcal{L}_b$ is a composition of Demazure arrows $\F{i}, \E{i}, i=0,\dots , n-1$ such that $\mathcal{L}_b (b) = \varkappa^{\, \mathrm{lv}}$.
\end{definition}

\noindent
The fact that for any element $b \in B^\varkappa$ a leading map $\mathcal{L}_b$ exists is a consequence of connectedness of Demazure subgraph stated in \cref{prop:Demazure_connected}.
Leading maps can be visualized as walks on the Demazure subgraph $\widetilde{B}(\varkappa)$ starting at $b$ and terminating at $\varkappa^{\mathrm{lv}}$. For instance, from \cref{fig:demazure_subgraph} we see that a leading map for $b=\begin{ytableau} 2 & 1 & 3 \\ 3 \end{ytableau}$ is given by
\begin{equation} \label{eq:leading_map_example}
    \mathcal{L}_b = \F{0} \circ \E{1} \circ \E{2}.
\end{equation}

\begin{remark} \label{rem:non_uniqueness_leading_map}
     When there are multiple walks from $b$ to $\varkappa^{\mathrm{lv}}$ on $\widetilde{B}(\varkappa)$, the leading map $\mathcal{L}_b$ does not admit a unique expansion in terms of Kashiwara operators.
     However, following the convention on inverse maps established in \cref{subs:crystals_and_bi_crystals}, given $b \in B^{\varkappa}$ and a leading map $\mathcal{L}_b$ it is always true that $b=\mathcal{L}_b^{-1} (\varkappa^{\mathrm{lv}})$.
\end{remark} 

Result of \cref{prop:Demazure_energy} implies that $\mathscr{H}(b)$ is the difference between the number of $\E{0}$ and $\F{0}$ in any leading map $\mathcal{L}_b$. A more precise version of this statement is given by the next proposition, for which we need to prepare some notation. For any $b \in B^\varkappa$, consider a leading map $\mathcal{L}_b = h_m \circ \cdots \circ h_1$, where $h_j$ are Demazure arrows. Denote by $b^{(j)}=h_j \circ \cdots \circ h_1 (b)$ the partial evaluations of $\mathcal{L}_b$ for $j=1,\dots ,m$. Let $u_k(\mathcal{L}_b)$ be the number of 0-Demazure arrows $h_j=\F{0}$ in $\mathcal{L}_b$ such that
    \begin{equation}
        h_j:b^{(j-1)} \mapsto b_1^{(j-1)} \otimes \cdots \otimes \F{0}(b_k^{(j-1)}) \otimes \cdots \otimes b_N^{(j-1)}.
    \end{equation}
    Analogously define $d_k(\mathcal{L}_b)$ as the number of 0-Demazure arrows $h_j=\E{0}$ such that
    \begin{equation}
        h_j:b^{(j-1)} \mapsto b_1^{(j-1)} \otimes \cdots \otimes \E{0}(b_k^{(j-1)}) \otimes \cdots \otimes b_N^{(j-1)}.
    \end{equation}
In other words $u_k(\mathcal{L}_b)$ (resp. $d_k(\mathcal{L}_b)$) counts the number of $\F{0}$ Demazure arrows in $\mathcal{L}_b$ that during the evaluation of $\mathcal{L}_b$ act as $\F{0}$ (resp $\E{0}$) on the $k$-th tensor factor of the argument. The next proposition relates $u_k$ and $d_k$ with the local energy $\mathscr{H}_k$. 
\begin{proposition} \label{prop:local_energies_demazure}
    In the notation introduced above, we have
    \begin{equation}
        \mathscr{H}_k(b) = u_k(\mathcal{L}_b) - d_k(\mathcal{L}_b).
    \end{equation}
\end{proposition}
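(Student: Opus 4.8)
The plan is to prove the refined statement $\mathscr{H}_k(b) = u_k(\mathcal{L}_b) - d_k(\mathcal{L}_b)$ by induction on the length $m$ of the leading map $\mathcal{L}_b = h_m \circ \cdots \circ h_1$, carefully tracking how each local energy $\mathscr{H}_k$ and each counter $u_k, d_k$ change at every step. The base case is $b = \varkappa^{\mathrm{lv}}$ (the empty leading map, $m=0$), where all local energies vanish because $\varkappa^{\mathrm{lv}}$ is the leading vector of content $\varkappa^+$, and all counters are zero by definition, giving $0 = 0 - 0$.

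\medskip

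For the inductive step, I would consider a leading map $\mathcal{L}_b = \mathcal{L}_{b'} \circ h_1$ where $b' = h_1(b)$ and $\mathcal{L}_{b'}$ is a leading map for $b'$ of length $m-1$, so that by the inductive hypothesis $\mathscr{H}_k(b') = u_k(\mathcal{L}_{b'}) - d_k(\mathcal{L}_{b'})$ for all $k$. I would then analyze the single Demazure arrow $h_1$ according to its type. If $h_1 = \F{i}$ or $h_1 = \E{i}$ with $i \in \{1,\dots,n-1\}$, then $h_1$ is a classical Kashiwara operator; since the intrinsic energy and all local energies $\mathscr{H}_k$ are constant on classical connected components (as noted after \cref{def:energy}, a consequence of \eqref{eq:energy_and_kashiwara} and the commutation of $R$ with Kashiwara operators), we have $\mathscr{H}_k(b) = \mathscr{H}_k(b')$ for all $k$; and the counters $u_k, d_k$ are unchanged because a classical arrow is not a $0$-Demazure arrow. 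Hence the identity is preserved. The essential case is $h_1 = \F{0}$ acting on the $k_0$-th tensor factor (so that $b' = \F{0}(b)$ with the $\F{0}$ hitting $b_{k_0}$): here \cref{prop:Demazure_energy} gives precisely $\mathscr{H}_{k_0}(b') = \mathscr{H}_{k_0}(b) - 1$ and $\mathscr{H}_k(b') = \mathscr{H}_k(b)$ for $k \ne k_0$. Rewriting, $\mathscr{H}_{k_0}(b) = \mathscr{H}_{k_0}(b') + 1$. On the counter side, passing from $\mathcal{L}_{b'}$ to $\mathcal{L}_b = \mathcal{L}_{b'} \circ h_1$ adds exactly one $\F{0}$-arrow acting on the $k_0$-th factor, so $u_{k_0}(\mathcal{L}_b) = u_{k_0}(\mathcal{L}_{b'}) + 1$, while $u_k, d_k$ for all other indices (and $d_{k_0}$ itself) are unchanged. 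Combining with the inductive hypothesis yields $\mathscr{H}_{k_0}(b) = u_{k_0}(\mathcal{L}_{b'}) - d_{k_0}(\mathcal{L}_{b'}) + 1 = u_{k_0}(\mathcal{L}_b) - d_{k_0}(\mathcal{L}_b)$, as desired; the case $h_1 = \E{0}$ is symmetric, decrementing $\mathscr{H}_{k_0}$ by the reverse of \cref{prop:Demazure_energy} and incrementing $d_{k_0}$.

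\medskip

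The main subtlety, and the step I expect to require the most care, is the bookkeeping convention relating ``which tensor factor a $0$-arrow acts on'' in the definition of $u_k, d_k$ with the hypotheses of \cref{prop:Demazure_energy}. The definitions of $u_k(\mathcal{L}_b)$ and $d_k(\mathcal{L}_b)$ index by the factor $k$ on which each $0$-Demazure arrow acts \emph{during the evaluation of $\mathcal{L}_b$}, i.e. relative to the partial evaluation $b^{(j-1)}$, and one must verify that this matches the index $k_0$ appearing in \cref{prop:Demazure_energy} when applied to the corresponding step. Because I build $\mathcal{L}_b$ by prepending $h_1$ to a leading map $\mathcal{L}_{b'}$ for the image $b'$, I must check that the factor-index assigned to each arrow of $\mathcal{L}_{b'}$ viewed as part of $\mathcal{L}_b$ coincides with the index assigned when $\mathcal{L}_{b'}$ is considered on its own — which holds because the tensor factors are not permuted by $\F{i}, \E{i}$ (the signature rule selects a single factor and acts within it), so the labelling of factors is stable along the evaluation. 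Once this alignment is established, the non-uniqueness of the leading map (\cref{rem:non_uniqueness_leading_map}) causes no difficulty: since $\mathscr{H}_k(b)$ is intrinsic to $b$ and the right-hand side is shown equal to it for \emph{every} choice of leading map via the induction, the formula is independent of the chosen expansion.
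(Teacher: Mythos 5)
Your proof is correct and follows essentially the same route as the paper: the paper's own argument is a one-line appeal to \cref{prop:Demazure_energy} together with the fact that $\mathscr{H}_k(\varkappa^{\mathrm{lv}})=0$, which is exactly the telescoping you make explicit via induction on the length of $\mathcal{L}_b$ (using that classical arrows preserve each $\mathscr{H}_k$ and that each $0$-Demazure arrow shifts exactly one local energy by $\pm 1$). The only cosmetic point is the phrase ``decrementing $\mathscr{H}_{k_0}$'' in the $\E{0}$ case, where the sign convention should read $\mathscr{H}_{k_0}(b)=\mathscr{H}_{k_0}(b')-1$; your bookkeeping with $d_{k_0}$ is consistent with this, so the argument stands.
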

\begin{proof}
    This follows from \cref{prop:Demazure_energy} and from the fact that $\mathscr{H}_k(\varkappa^{\mathrm{lv}})=0$ for all $k$.
\end{proof}

One can verify \cref{prop:local_energies_demazure} looking at \cref{fig:demazure_subgraph}. Setting $b=\begin{ytableau} 2 & 2 & 1 \\ 3 \end{ytableau}$, local energies can be computed as
\begin{equation}
    \mathscr{H}_1(b) = 1,
    \qquad
    \mathscr{H}_2(b) = 1,
    \qquad
    \mathscr{H}_3(b) = 0,
\end{equation}
either following \cref{def:energy} or checking the action of 0-Demazure arrows throughout any path on the Demazure subgraph connecting $b$ to $\begin{ytableau} 1 & 1 & 1 \\ 2 \end{ytableau}$.

\begin{remark}
    It is not true that for any $b\in B^\varkappa$ there always exist a choice of a leading map $\mathcal{L}_b$ that does not contain $\E{0}$ Demazure arrows. For example, taking
    \begin{equation}
        b=
        \begin{ytableau}
            1
            \\
            2
        \end{ytableau}
        \otimes
        \begin{ytableau}
            1
            \\
            3
        \end{ytableau}
        \otimes
        \begin{ytableau}
            1
            \\
            2
        \end{ytableau}
        \otimes
        \begin{ytableau}
            1
            \\
            2
        \end{ytableau}
        \,
        ,
    \end{equation}
    one can verify that in the classical connected component of $b$ there does not exist $b'$ such that $\varepsilon_0(b')\neq 0$ and $\varphi_0(b') \neq 0$.  Hence in this case $\mathcal{L}_b$ must contain at least one $\E{0}$ operator.
\end{remark}

Given a pair of vertically strict tableaux $(V,W)$ we define the leading map of the pair $\mathcal{L}_{V,W}$ as
\begin{equation} \label{eq:leading_map_pair_VST}
    \mathcal{L}_{V,W} : (V',W') \mapsto (\mathcal{L}_V(V') , \mathcal{L}_{W}(W')),
\end{equation}
whenever the operation is defined. Clearly maps $\mathcal{L}_V$ and $\mathcal{L}_W$ are defined through the usual identification of vertically strict tableaux with crystals.

\subsection{Leading map for pairs of skew tableaux} \label{subs:leading_map_for_tab}

On the affine bicrystal graph of pairs of skew semi-standard tableaux of generalized shape
    \begin{equation}
        \bigcup_{\rho,\lambda} SST(\lambda/\rho,n) \times SST(\lambda/\rho,n),
    \end{equation}
we define the Demazure subgraph similarly as in \cref{subs:demazure_subgraph}. For either $\epsilon=1,2$ we say that $(P,Q)\to \widetilde{F}^{(\epsilon)}_i(P,Q)$ is a Demazure arrow if $i=1,\dots,n-1$ or if $i=0$ and $\widetilde{E}^{(\epsilon)}_0(P,Q) \neq \varnothing$. Analogously $(P,Q)\to \widetilde{E}^{(\epsilon)}_i(P,Q)$ is a Demazure arrow if $i=1,\dots,n-1$ or if $i=0$ and $\left(\widetilde{E}^{(\epsilon)}_0\right)^2(P,Q) \neq \varnothing$. 

We extend the notion of leading map presented for single vertically strict tableaux in \cref{def:leading_map_VST} to pairs of semi-standard tableaux. This produces a new transformation which, as the other notions related to the affine bicrystal structure of pairs of skew tableaux, is new in this paper.

\begin{definition}[Leading map for skew tableaux] \label{def_map_G_PQ}
    Let $P,Q\in SST(\lambda / \rho ,n)$ and consider the projection $(V,W) = \Phi(P,Q)$. A \emph{leading map for the pair} $(P,Q)$, denoted by $\mathcal{L}_{P,Q}$, is defined as the $\Phi$-pullback of a leading map $\mathcal{L}_{V,W}$.
\end{definition}

We report an example of a leading map for a simple pair of skew tableaux.
\begin{example}
    Consider the pair of skew tableaux and the projection 
    \begin{equation}
        (P,Q) = \left( \,\, \begin{ytableau} \, & 1 & 3 \\ & 2 \\ \\ 3 \end{ytableau} \,\, , \,\, \begin{ytableau} \, & 1 & 1 \\ & 3 \\ \\ 3 \end{ytableau} \,\, \right),
        \qquad
        \Phi(P,Q) = (V,W) = \left( \,\, \begin{ytableau} 2 & 1 & 3 \\ 3 \end{ytableau} \,\, , \,\, \begin{ytableau} 1 & 3 & 1 \\ 3 \end{ytableau} \,\,  \right),
    \end{equation}
    which can be easily computed. A possible leading map for $V$ was computed in \eqref{eq:leading_map_example}, whereas a leading map for $W$ is given by $\E{2} \circ \F{0}$ as it can be seen from \cref{fig:demazure_subgraph}. This defines the leading map $\mathcal{L}_{P,Q}$ as
    \begin{equation}
        \mathcal{L}_{P,Q} = \widetilde{E}^{(2)}_2 \circ \widetilde{F}^{(2)}_0 \circ \widetilde{F}^{(1)}_0 \circ \widetilde{E}^{(1)}_1 \circ \widetilde{E}^{(1)}_2,
    \end{equation}
    so that 
    \begin{equation}
        \mathcal{L}_{P,Q} (P,Q) = \left( \,\, \begin{ytableau} \, & 1 & 1 \\ 1 & 2 \end{ytableau} \,\, , \,\, \begin{ytableau} \, & 1 & 1 \\ 1 & 2 \end{ytableau} \,\, \right).
    \end{equation}
\end{example}

\begin{remark} \label{rem:non_uniqueness_leading_map_pair}
    As pointed out in \cref{rem:non_uniqueness_leading_map} a leading map $\mathcal{L}_{P,Q}$ for a pair $(P,Q)$ always exists, but its expression in terms of Kashiwara operators is not unique.
    The non-uniqueness of leading map for a pair $(P,Q)$ might allow, in principle, that the evaluation of two different leading maps $\mathcal{L}_{P,Q}(P,Q)$ and $\mathcal{L}_{P,Q}'(P,Q)$ could give different results. This is indeed not the case as proven in \cref{thm:leading_map_and_leading_tableaux} below. As a result of this fact, the image $(T,T)$ of any leading map $\mathcal{L}_{P,Q}$ does not depend on the realization of $\mathcal{L}_{P,Q}$ and moreover $(P,Q) = \mathcal{L}_{P,Q}^{-1}(T,T)$. We recall that the convention on inverse maps was discussed in \cref{subs:crystals_and_bi_crystals}.
\end{remark}

\subsection{Leading tableaux} \label{subs:leading_tableaux}
In this subsection we aim to characterize the image of a pair $(P,Q)$ of skew tableaux under a leading map $\mathcal{L}_{P,Q}$. This result is reported in \cref{thm:leading_map_and_leading_tableaux}, while its proof is given later in \cref{subs:linearization} as it uses the concept of linearization of the skew $\RSK$ map discussed in the same section. For our description we define the following class of tableaux. 

\begin{definition}[Leading tableaux] \label{def:leading_tableaux}
    A semi-standard tableau $T$ is \emph{leading} if, whenever $T$ has $k$ $i$-cells at row $r$, then it has at least $k$ $(i-1)$-cells at row $r-1$ for all $r$ and $i = 2,3,\dots$. The content of a leading tableau is hence a partition. We denote the set of leading tableau with classical skew shape and with fixed content $\mu$ as $\mathrm{LdT}(\mu)$.
\end{definition}

An example of a leading tableaux is given below in \eqref{eq:leading_tableaux_example}.

\begin{remark}
    To keep the notation simple, below we will focus only on the case where $P,Q$ are tableaux of classical skew shape, leaving the case when their shape is a generalized skew Young diagram as an easy exercise.
\end{remark}

The notion of leading can be translated to matrices, recalling \eqref{eq:alpha_ij_def}. 
A matrix $\alpha\in\mathbb{M}_{n,+\infty}$ is called {\it leading} when it satisfies 
\begin{equation} \label{eq:leading_tab_row_coordinate}
        \alpha_{1,j} \ge \alpha_{2,j+1} \ge \alpha_{3,j+2} \ge \cdots
        \qquad 
        \text{for all } j \in \mathbb{N}.
\end{equation}
The set of leading matrices is denoted by $\mathcal{M}^{\rm Ld}$. To get a more explicit description of leading tableaux, we introduce the following notion.

\begin{definition} \label{def:fundamental_shifts}
    For any partition $\mu$ define the set
    \begin{equation}
        \mathcal{K}(\mu) = \{ (\kappa_1,\dots,\kappa_{\mu_1}) \in \mathbb{N}_0^{\mu_1} : \kappa_i \ge \kappa_{i+1} \text{ if } \mu_i \ge \mu_{i+1} \}.
    \end{equation}
    If $0=R_0,R_1,R_2,\dots$ is a rectangular decomposition of $\mu$ as defined around \cref{eq:rect_dec}, then we will sometimes write elements of $\mathcal{K}(\mu)$ as lists of subarrays $(\kappa^{(1)},\kappa^{(2)},\dots)$, gathering together the weakly decreasing components $\kappa^{(i)}=(\kappa_{R_{i-1}+1}, \dots \kappa_{R_i})$. 
    As usual $\kappa^+$ will denote the unique partition that can be formed sorting elements of $\kappa$ and $|\kappa|=\sum_i \kappa_i$.
\end{definition}

In the following, we will construct a bijection between $\mathcal{K}(\mu) \times \mathbb{Y}$ and 
$\mathrm{LdT}(\mu)$. 
This is most conveniently done through row-coordinate matrices. 
For numbers $k \in \mathbb{N}_0$, $m\in \{1,\dots ,n\}$, define matrices $A(m,k) \in \mathbb{M}_{n,+\infty}$ as 
    \begin{equation}
        A(m,k)_{i,j} = \delta_{i,j-k} \delta_{i\le m}.
    \end{equation}
That is, $A(m,k)$'s only non-zero values are the first $m$ entries in the $(k+1)$-th upper diagonal $A(m,k)_{1,k+1} = \cdots = A(m,k)_{m,k+m} = 1$.
For a given $\kappa \in \mathcal{K}(\mu)$, construct a matrix 
\begin{equation}
\label{eqn:alphaA}
        \alpha = \alpha_{\mu}(\kappa)=\sum_{i=1}^{\mu_1} A(\mu_i', \kappa_i).
\end{equation}        
This is obviously leading, but in fact the opposite is also true as stated in the next proposition. For a given $\mu$, let us denote by $\mathcal{M}^{\rm Ld}(\mu)$ the set of leading matrices $\alpha$ such that $\alpha_{i,1}+\alpha_{i,2}+\cdots=\mu_i$ for all $i$. In other words $\mathcal{M}^{\rm Ld}(\mu)$ represents the set of row coordinate matrices of leading tableaux $T$ with content $\gamma(T)=\mu$. Then we have the following. 

\begin{proposition} \label{lem:alphaA}
   For a given $\mu$, the map $\alpha_{\mu}$ defined in (\ref{eqn:alphaA}) is a bijection between $\mathcal{K}(\mu)$ and $\mathcal{M}^{\rm Ld}(\mu)$. 
\end{proposition}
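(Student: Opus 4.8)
The plan is to factor $\alpha_\mu$ through an intermediate set of ``multiset data'' indexed by column heights and shifts, onto which both $\mathcal{K}(\mu)$ and $\mathcal{M}^{\rm Ld}(\mu)$ project bijectively. First I would record the explicit entries of $\alpha=\alpha_\mu(\kappa)$: from \eqref{eqn:alphaA} and the definition of $A(m,k)$, for all $i,j\ge 1$,
\begin{equation*}
    \alpha_{i,j}=\#\{\, l\in\{1,\dots,\mu_1\}:\ \kappa_l=j-i\ \text{ and }\ \mu_l'\ge i\,\}.
\end{equation*}
In particular $\alpha_{i,j}=0$ unless $j\ge i$. This support property also holds for every element of $\mathcal{M}^{\rm Ld}(\mu)$: a leading tableau has all its $i$-cells in rows $\ge i$ (iterate the defining property of \cref{def:leading_tableaux}), so its row-coordinate matrix vanishes for $j<i$. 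Hence it is natural to pass to diagonal coordinates $(h,d)$ via $i=h$, $j=h+d$ with $h\ge 1$, $d\ge 0$, and to introduce $m_h:=\mu_h-\mu_{h+1}$, the number of columns of $\mu$ of height exactly $h$ (equivalently the multiplicity of $h$ as a part of $\mu'$, equivalently the base $r_i$ of the block of $\mu$ of height $h$ in the rectangular decomposition of \cref{def:fundamental_shifts}).

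Next I would set up the matrix-side bijection. For any $\alpha\in\mathbb{M}_{n\times\infty}$ supported on $j\ge i$, define $c_{h,d}(\alpha)=\alpha_{h,h+d}-\alpha_{h+1,h+1+d}$. Since $\alpha$ is finitely supported, $\alpha_{h,h+d}=\sum_{h'\ge h}c_{h',d}(\alpha)$, so the assignment $\alpha\mapsto(c_{h,d})$ is invertible. The leading inequalities \eqref{eq:leading_tab_row_coordinate} say exactly that $\alpha_{h,h+d}\ge\alpha_{h+1,h+1+d}$, i.e. $c_{h,d}\ge 0$; and a telescoping computation gives $\sum_j\alpha_{i,j}=\sum_{d\ge 0}\alpha_{i,i+d}=\sum_{h\ge i}\sum_d c_{h,d}$, so the row-sum condition $\sum_j\alpha_{i,j}=\mu_i$ for all $i$ is equivalent to $\sum_{d\ge 0}c_{h,d}=m_h$ for all $h$. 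Therefore $\alpha\mapsto(c_{h,d})$ restricts to a bijection from $\mathcal{M}^{\rm Ld}(\mu)$ onto
\begin{equation*}
    \mathcal{C}(\mu):=\Big\{(c_{h,d})_{h\ge 1,\,d\ge 0}:\ c_{h,d}\in\mathbb{N}_0,\ \textstyle\sum_{d\ge 0}c_{h,d}=m_h\ \text{ for all }h\ge 1\Big\}.
\end{equation*}

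Then I would treat the $\kappa$-side. Map $\kappa\in\mathcal{K}(\mu)$ to $\tilde{c}_{h,d}(\kappa)=\#\{\,l:\mu_l'=h,\ \kappa_l=d\,\}$; this lies in $\mathcal{C}(\mu)$ because $\sum_d\tilde c_{h,d}=\#\{l:\mu_l'=h\}=m_h$. Using the rectangular decomposition of \cref{def:fundamental_shifts}, the columns of $\mu$ of a fixed height $h$ form a single block on which $\kappa$ is weakly decreasing; hence the restriction of $\kappa$ to that block is the unique weakly decreasing word in which $d$ occurs with multiplicity $c_{h,d}$, and conversely any element of $\mathcal{C}(\mu)$ reconstitutes $\kappa$ block by block, giving $\kappa\mapsto\tilde c(\kappa)$ as a bijection $\mathcal{K}(\mu)\to\mathcal{C}(\mu)$. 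Finally, the explicit entry formula above yields $c_{h,d}(\alpha_\mu(\kappa))=\#\{l:\kappa_l=d,\mu_l'\ge h\}-\#\{l:\kappa_l=d,\mu_l'\ge h+1\}=\tilde c_{h,d}(\kappa)$, so the triangle commutes and $\alpha_\mu=(\text{matrix-side bijection})^{-1}\circ(\kappa\mapsto\tilde c)$ is a composition of bijections; this simultaneously proves well-definedness, since $\tilde c(\kappa)\in\mathcal{C}(\mu)$ corresponds to a leading matrix with the correct row sums.

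All of the computations are elementary; there is no deep obstacle. The only point requiring care is the bookkeeping that (i) identifies the diagonal-difference data $c_{h,d}$ of $\alpha$ with the blockwise multiset of shifts of $\kappa$, and (ii) recognizes that the leading condition \eqref{eq:leading_tab_row_coordinate} and the defining condition of $\mathcal{K}(\mu)$ are precisely the two nonnegativity/monotonicity constraints making the respective projections onto $\mathcal{C}(\mu)$ bijective. A minor preliminary to get right is the support claim ($\alpha_{i,j}=0$ for $j<i$), which legitimizes the passage to diagonal coordinates and which I would justify directly from the leading property of tableaux.
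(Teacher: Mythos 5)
Your proof is correct, but it takes a genuinely different route from the paper's. The paper argues by a greedy peeling algorithm: it locates the first nonzero entry $\alpha_{1,k_1+1}$ of the top row, strips off the maximal diagonal $A(m_1,k_1)$ through it, checks that the remainder is still leading, iterates until the matrix is exhausted, and then sorts the resulting pairs $(m_j,k_j)$ against $\mu'$ to read off $\kappa$. You replace this algorithm by an explicit change of coordinates — the diagonal differences $c_{h,d}=\alpha_{h,h+d}-\alpha_{h+1,h+1+d}$ — identifying both $\mathcal{M}^{\rm Ld}(\mu)$ and $\mathcal{K}(\mu)$ with the same multiplicity data (your set $\mathcal{C}(\mu)$). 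This gives injectivity, surjectivity and well-definedness in one stroke, and it sidesteps the two mildly delicate points of the peeling proof: that subtracting the maximal diagonal preserves the leading property, and that the ordering ambiguity among diagonals attached to columns of equal height is resolved consistently. What the paper's version buys instead is an effective inversion procedure. One point where you are in fact more careful than the paper: the displayed matrix condition \eqref{eq:leading_tab_row_coordinate} is only imposed for $j\in\mathbb{N}$, so by itself it does not force $\alpha_{i,j}=0$ for $j<i$ (e.g.\ the matrix with $\alpha_{1,1}=\alpha_{2,1}=\alpha_{3,1}=1$ and all other entries zero satisfies it, has row sums $(1,1,1)$, yet is not in the image of $\alpha_\mu$); your appeal to the tableau definition of leading (\cref{def:leading_tableaux}) to establish the support property $j\ge i$ is exactly what is needed for the statement to hold as intended, and the paper's peeling argument relies on the same property tacitly when it assumes the first nonzero diagonal can always be found from row $1$.
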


\begin{proof}
We will indeed establish the bijection between the set 
$\{ (\mu,\kappa),\mu\in\mathbb{Y}, \kappa\in\mathcal{K}(\mu)\}$
and $\mathcal{M}^{\rm Ld}$. Restriction to a fixed $\mu$ gives the bijection in the statement of the lemma. 
We only have to show that any leading matrix $\alpha\in\mathbb{M}_{n,+\infty}$ can be uniquely written in the form of (\ref{eqn:alphaA}). 
We peel off matrix $\alpha$ with the help of the $A(m,k)$'s removing maximal diagonals of non-zero entries. 
Define numbers $k_1,m_1$ as
    \begin{equation}
        k_1=\min \{ k : \alpha_{1,k+1} > 0 \}
        \qquad
        \text{and}
        \qquad
        m_1 = \max\{ m : \alpha_{m,k_1 + m} >0 \}.
    \end{equation}
    and let 
    \begin{equation}
        \alpha^{(1)} = \alpha - A(m_1,k_1).
    \end{equation}
    Then, by \eqref{eq:leading_tab_row_coordinate} and by the fact that $m_1$ is maximal, also $\alpha^{(1)}$ is a leading matrix. We can now recursively construct 
    \begin{equation}
        k_j = \min \{ k : \alpha^{(j-1)}_{1,k+1} > 0 \},
        \qquad
        m_j = \max\{ m : \alpha^{(j-1)}_{m,k_j + m} >0 \}
    \end{equation}
    and $\alpha^{(j)} = \alpha^{(j-1)} - A(m_j,k_j)$, until for some $j'$ we exhaust all positive entries and $\alpha^{(j')}=0$. This proves that there exist $k_1,k_2,\dots $ and $m_1,m_2,\dots$ such that
    \begin{equation}
        \alpha = A(m_1,k_1) + A(m_2,k_2) + \cdots.
    \end{equation}
    It is clear that $\mu_i' = m_{j_i}$ for some $j_1, j_2, \cdots$. To avoid ambiguity we choose $j_i$ such that $k_{j_i} > k_{j_{i+1}}$ whenever $\mu_i' = \mu_{i+1}'$. Defining the new sequence $\tilde{k}_1 = k_{j_1}, \tilde{k}_2 = k_{j_2} , \dots $, we can finally identify $\kappa =(\kappa^{(1)},\kappa^{(2)}, \dots )$ as
    \begin{equation}
        \kappa^{(i)} = ( \tilde{k}_{R_{i-1}+1}, \dots , \tilde{k}_{R_i} ).
    \end{equation}
\end{proof}
As we mentioned below \cref{ex:rc}, there is a bijection $(\alpha; \nu) \xleftrightarrow[]{\rc \,} P$
between a pair $(\alpha,\nu)$ of a row-coordinate matrix $\alpha$ and a partition $\nu$, and a classical tableau $P$.
Restriction to leading ones gives a bijection between $\mathcal{M}^{\rm Ld}(\mu)\times \mathbb{Y}$ and $\mathrm{LdT}(\mu)$. 
Combining this with the bijection between $\mathcal{K}(\mu)$ and $\mathcal{M}^{\rm Ld}(\mu)$ in 
\cref{lem:alphaA}, we get the desired characterization of the set of leading tableaux.   

\begin{proposition} \label{prop:bijection_leading_tab_kappa}
    For a given $\mu\in\mathbb{Y}$, the map 
    \begin{equation} \label{eq:map:kappa_to_T}
        T(\mu, \cdot; \cdot) :\mathcal{K}(\mu) \times \mathbb{Y} \longrightarrow \mathrm{LdT}(\mu),
    \end{equation}
    defined by 
    \begin{equation}
    T(\mu,\kappa ; \nu)\xleftrightarrow{\rc\,}(\alpha;\nu) \quad {\rm with} 
    \quad \alpha = \alpha_{\mu}(\kappa)= \sum_{i=1}^{\mu_1} A(\mu_i', \kappa_i),    
    \end{equation}
    where $\kappa\in\mathcal{K}(\mu), \nu\in\mathbb{Y}$, 
    is a bijection. Moreover, if $\lambda/\rho$ is the shape of $T(\mu,\kappa;\nu)$, then $\rho = (\kappa^+)' + \nu$.
\end{proposition}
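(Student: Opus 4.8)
The plan is to obtain the bijection by composing two bijections already available and to reduce the shape identity to a single overlap computation.

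First, note that the definition of a leading tableau in \cref{def:leading_tableaux} translates verbatim into the defining inequality \eqref{eq:leading_tab_row_coordinate} for its row-coordinate matrix: the condition ``$k$ many $i$-cells at row $r$ force at least $k$ many $(i-1)$-cells at row $r-1$'' reads $\alpha_{i-1,r-1}\ge\alpha_{i,r}$, i.e. $\alpha_{1,j}\ge\alpha_{2,j+1}\ge\cdots$ with $j=r-1$. Hence $\rc$ restricts to a bijection between $\mathrm{LdT}(\mu)$ and $\mathcal{M}^{\rm Ld}(\mu)\times\mathbb{Y}$, the single-tableau specialization of \cref{prop:row_coordinate_classical_pair} (content $\mu$ matching the condition $\sum_j\alpha_{i,j}=\mu_i$ defining $\mathcal{M}^{\rm Ld}(\mu)$). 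Composing this with the bijection $\alpha_\mu:\mathcal{K}(\mu)\to\mathcal{M}^{\rm Ld}(\mu)$ of \cref{lem:alphaA} (times the identity on $\mathbb{Y}$) gives exactly the map $T(\mu,\cdot\,;\cdot)$, which is therefore a bijection. This settles the first assertion.

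For the shape, I would use the inverse construction in the proof of \cref{prop:row_coordinate_classical_pair}: writing $p^{(j)}=1^{\alpha_{1,j}}2^{\alpha_{2,j}}\cdots$ and $\theta_j=\ell(p^{(j)})$, the empty shape of the squeezed tableau ($\nu=0$) is the partition $\eta$ determined by $\eta_j-\eta_{j+1}=\theta_{j+1}-\overlap(p^{(j)},p^{(j+1)})$, with $\rho=\eta+\nu$ in general. Thus it suffices to show $\eta=(\kappa^+)'$. The key step is the identity $\overlap(p^{(j)},p^{(j+1)})=\theta_{j+1}-\alpha_{1,j+1}$, valid for any leading $\alpha$. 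To prove it, I would check directly from the definition of $\overlap$ in \cref{subs:kernels} that the value $L=\sum_{i\ge2}\alpha_{i,j+1}$ is admissible and $L+1$ is not. Admissibility amounts to pointwise domination of the first $L$ entries of $p^{(j)}$ by the last $L$ entries of $p^{(j+1)}$, which after passing to cumulative value-counts becomes $\sum_{i=2}^{v}\alpha_{i,j+1}\le\sum_{i=1}^{v-1}\alpha_{i,j}$ for all $v$; this is immediate from the leading inequalities $\alpha_{i+1,j+1}\le\alpha_{i,j}$ summed term by term. Maximality of $L$ follows because the extra, smallest entry taken from $p^{(j+1)}$ when passing to $L+1$ is a $1$, which cannot strictly exceed the entry $a_1\ge1$ of $p^{(j)}$.

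Granting this identity, $\eta_j-\eta_{j+1}=\alpha_{1,j+1}$. Since $\alpha=\sum_{c}A(\mu_c',\kappa_c)$ gives $\alpha_{1,j+1}=\#\{c:\kappa_c=j\}$ (every column has length $\mu_c'\ge1$), a telescoping sum yields $\eta_j=\sum_{j'\ge j}\#\{c:\kappa_c=j'\}=\#\{c:\kappa_c\ge j\}=(\kappa^+)'_j$, the last equality because the count depends only on the multiset $\{\kappa_c\}$. Hence $\rho=\eta+\nu=(\kappa^+)'+\nu$.

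The only genuinely non-formal point is the overlap identity; everything else is bookkeeping through the already-established bijections. I expect the main care to be needed in the boundary cases of that identity (for instance $\alpha_{1,j+1}=0$, or rows shorter than the proposed overlap), which are controlled by the estimate $L=\sum_{i\ge2}\alpha_{i,j+1}\le\sum_i\alpha_{i,j}=\theta_j$ — again a leading inequality — guaranteeing $L\le\min(\theta_j,\theta_{j+1})$ throughout.
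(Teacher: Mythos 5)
Your proposal is correct and takes essentially the same route as the paper: the bijection is obtained by composing the $\rc$-correspondence restricted to leading objects with the bijection of \cref{lem:alphaA}, and the shape identity is reduced to $\nu=\varnothing$ and rests on the same key fact, namely that for a leading tableau $\overlap(p^{(j)},p^{(j+1)})=\ell(p^{(j+1)})-m_1(p^{(j+1)})$ (your $\theta_{j+1}-\alpha_{1,j+1}$), from which $\eta=(\kappa^+)'$ follows by telescoping. The only difference is presentational: you spell out the cumulative-count verification of the overlap identity and its boundary cases, which the paper asserts directly from the leading property.
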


\begin{proof}
The first part has already been shown in the arguments above. We are left to check that $(\kappa,\nu) \mapsto T(\mu,\kappa;\nu)$ satisfies relation $\rho=(\kappa^+)' + \nu$. By the fact that $\nu = \ker (T(\mu,\kappa;\nu)) $ we only need to show such property holds for $\nu=\varnothing$. For this notice that if $p^{(1)},p^{(2)},\dots$ are the $1$st, $2$nd, ... row words of $T=T(\mu,\kappa;\varnothing)$, then, by the fact that $T$ is leading we have
    \begin{equation}
        \overlap(p^{(j+1)},p^{(j)}) = \ell( p^{(j+1)} ) - m_1(p^{(j+1)}), 
    \end{equation}
    where $\ell, m_1$ denote respectively the length and the multiplicity of letter $1$ in the word $p^{(j+1)}$. Calling $\eta$ the empty shape of $T$, we have, by \eqref{eq:empty_shape_eta}
    \begin{equation}
        \eta_j - \eta_{j+1} = m_1 (p^{j+1}).
    \end{equation}
    This implies that $\eta = (\kappa^+)'$.
\end{proof}

\begin{example}
    First let us construct a leading tableaux for a given $\mu,\kappa,\nu$. 
    Consider the case $\mu=(4, 2 , 2, 1), \kappa = ((1),(3),(2,1)), \nu = (1,1)$ and for the sake of a better visualization we present these quantities in a colored form as
    \begin{equation}
        \mu= 
        \begin{ytableau}
            *(red!35) & *(blue!35) & *(green!35) & *(orange!35)
            \\
            *(red!35) & *(blue!35)
            \\
            *(red!35) & *(blue!35)
            \\
            *(red!35)
        \end{ytableau}
        \qquad 
        \kappa = ((\textcolor{red}{1}),(\textcolor{blue}{3}),(\textcolor{green}{2},\textcolor{orange}{1})),
        \qquad
        \nu=\begin{ytableau}
            *(gray!35) 
            \\
            *(gray!35)
        \end{ytableau}.
    \end{equation}
    Then we determine, using \eqref{eq:map:kappa_to_T}, the row-coordinate matrix $\alpha$ as
    \begin{equation}
        \alpha = 
        \left( 
        \begin{matrix}
         0 & \textcolor{red}{1} + \textcolor{orange}{1} & \textcolor{green}{1} & \textcolor{blue}{1} & 0 & 0 & 0 & \cdots
         \\
         0 & 0 & \textcolor{red}{1} & 0 & \textcolor{blue}{1} & 0 & 0 & \cdots
         \\
         0 & 0 & 0 & \textcolor{red}{1} & 0 & \textcolor{blue}{1} & 0 & \cdots
         \\
         0 & 0 & 0 & 0 & \textcolor{red}{1} & 0 & 0 & \cdots
        \end{matrix}
        \right)
        =
        \left( 
        \begin{matrix}
         0 & 2 & 1 & 1 & 0 & 0 & 0 & \cdots
         \\
         0 & 0 & 1 & 0 & 1 & 0 & 0 & \cdots
         \\
         0 & 0 & 0 & 1 & 0 & 1 & 0 & \cdots
         \\
         0 & 0 & 0 & 0 & 1 & 0 & 0 & \cdots
        \end{matrix}
        \right),
    \end{equation}
    resulting in the tableau $T \xleftrightarrow[]{\rc\,} (\alpha;\nu)$
    \begin{equation} \label{eq:leading_tableaux_example}
        T(\mu,\kappa;\nu) =
        \begin{ytableau}
            *(blue!35) & *(green!35) & *(gray!35) & *(orange!35) & *(red!35)
            \\
            *(blue!35) & *(green!35) & *(gray!35) & \textcolor{orange}{1} & \textcolor{red}{1}
            \\
            *(blue!35) & \textcolor{green}{1} & \textcolor{red}{2}
            \\
            \textcolor{blue}{1} & \textcolor{red}{3}
            \\
            \textcolor{blue}{2} & \textcolor{red}{4}
            \\
            \textcolor{blue}{3}
        \end{ytableau}
        =
        \begin{ytableau}
            \, &  &  &  &
            \\
            &  &  & 1 & 1
            \\
            & 1 & 2
            \\
            1 & 3
            \\
            2 & 4
            \\
            3
        \end{ytableau}
        .
    \end{equation}
    One can also check that the procedure can be reversed to recover $\mu,\kappa,\nu$
    from the leading tableau above. 
\end{example}

Before ending the subsection, we state a result for the image of $(P,Q)$ under a leading map 
$\mathcal{L}$. 
\begin{theorem} \label{thm:leading_map_and_leading_tableaux}
    Let $P,Q \in SST(\lambda / \rho ,n)$, for $\lambda,\rho \in \mathbb{Y}$. Then for any leading map $\mathcal{L}_{P,Q}$ we have $(T,T)=\mathcal{L}_{P,Q}(P,Q)$, where $T$ is a leading tableau independent of the particular choice of $\mathcal{L}_{P,Q}$. Moreover $T\in \mathrm{LdT}(\mu)$, where $\mu$ is the Greene invariant corresponding to $P,Q$.
\end{theorem}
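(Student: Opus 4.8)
The plan is to deduce the statement from the fact that $\Phi$ is a morphism of affine bicrystals together with the linearization of the skew $\RSK$ map on leading tableaux. Write $(V,W)=\Phi(P,Q)$ and recall that, by \cref{thm:asymptotic_shape_RSK}, the common shape of $V,W$ is exactly the Greene invariant $\mu=\mu(P,Q)$. Since a leading map $\mathcal{L}_{P,Q}$ is by definition the $\Phi$-pullback of a leading map $\mathcal{L}_{V,W}=(\mathcal{L}_V,\mathcal{L}_W)$, and $\Phi$ intertwines the two bicrystal structures by \cref{cor:Phi_is_morphism}, setting $(T,T')=\mathcal{L}_{P,Q}(P,Q)$ we immediately get $\Phi(T,T')=\mathcal{L}_{V,W}(V,W)=(\mu^{\mathrm{lv}},\mu^{\mathrm{lv}})$, because any leading map sends a vertically strict tableau of shape $\mu$ to the unique content-$\mu$ vector $\mu^{\mathrm{lv}}$. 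Thus the whole proof reduces to understanding pairs that project to $(\mu^{\mathrm{lv}},\mu^{\mathrm{lv}})$ and sit at the bottom of the Demazure subgraph.

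First I would establish the symmetry $T=T'$. Iterating $\RSK$ until the pair becomes $\RSK$-stable and using that $\RSK$ commutes with $\Phi$ (a consequence of \cref{thm:symmetries_RSK}), the stable descendant of $(T,T')$ has asymptotic vertically strict tableaux both equal to $\mu^{\mathrm{lv}}$; since in a stable state each tableau is determined by its shape and by its column readings, which are precisely the asymptotic vertically strict tableaux, the two components coincide from some time onward. The swap symmetry $\RSK(Q,P)=(Q',P')$ of \cref{prop:RSK_swap_symmetry}, together with injectivity of $\RSK$, then propagates this equality backwards to $(T,T')$, forcing $T=T'$.

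The core of the argument is to show that $T$ is leading with content $\mu$, i.e. $T\in\mathrm{LdT}(\mu)$. Here I would use the linearization developed in \cref{subs:linearization}: leading tableaux form an $\RSK$-invariant family on which $\RSK$ acts by $T(\mu,\kappa;\nu)\mapsto T(\mu,\kappa+\mu';\nu)$, and for such tableaux the diagonals $A(\mu_i',\kappa_i)$ eventually separate and are read off as the Yamanouchi columns of $\mu^{\mathrm{lv}}$, so that $\Phi(T(\mu,\kappa;\nu),T(\mu,\kappa;\nu))=(\mu^{\mathrm{lv}},\mu^{\mathrm{lv}})$. To see that the specific $T$ produced by $\mathcal{L}_{P,Q}$ has leading row-coordinate matrix I would pass to the $\RSK$-stable regime, reached by conjugating $\mathcal{L}_{P,Q}$ with a large power of $\RSK$ (which commutes with all crystal operators by \cref{thm:symmetries_RSK}), and there track the shape during the evaluation: the classical arrows $\widetilde{E}_i^{(\epsilon)},\widetilde{F}_i^{(\epsilon)}$ with $1\le i\le n-1$ leave the shape untouched, while each $0$-Demazure arrow alters a single column of the shape according to \cref{prop:cells_and_local_energy}. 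By \cref{prop:local_energies_demazure} the net number of such modifications on the $k$-th tensor factor is governed by the intrinsic local energy $\mathscr{H}_k(V)$ (and $\mathscr{H}_k(W)$), which is independent of the chosen path; accumulating these column-wise modifications forces the row-coordinate matrix of $T$ into the staircase form $\sum_i A(\mu_i',\kappa_i)$ characterised in \cref{lem:alphaA}, which is exactly the leading condition \eqref{eq:leading_tab_row_coordinate}. The content of $T$ equals $\mu$, being the shape of the asymptotic vertically strict tableaux, so $T\in\mathrm{LdT}(\mu)$.

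Finally, independence of the choice of $\mathcal{L}_{P,Q}$ follows from the same bookkeeping: the image $\mu^{\mathrm{lv}}$ is path-independent by the content argument, the column-wise shape modifications depend only on the path-independent local energies $\mathscr{H}_k$, and $\nu=\ker(T,T)$ is determined intrinsically; then \cref{prop:bijection_leading_tab_kappa} identifies $T$ uniquely as $T(\mu,\kappa;\nu)$ for a well-defined $\kappa\in\mathcal{K}(\mu)$ and $\nu\in\mathbb{Y}$. The main obstacle I anticipate is precisely the third step: translating the abstract Demazure/energy bookkeeping into the concrete staircase structure of the row-coordinate matrix, i.e. proving that the accumulated $0$-arrow moves assemble exactly into the diagonals $A(\mu_i',\kappa_i)$ rather than into some non-leading configuration, and that they pin down the relative positions $\kappa$ rather than merely their total $|\kappa|$. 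This is where the linearization of the skew $\RSK$ map is indispensable, as it is what guarantees that the soliton decomposition survives the entire procedure.
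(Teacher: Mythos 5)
Your proposal follows essentially the same route as the paper's proof: conjugate the leading map by a large power of $\RSK$ using \cref{thm:symmetries_RSK}, observe that on the resulting $\RSK$-stable pair the classical arrows fix the shape while the $0$-Demazure arrows shift single columns as in \cref{prop:cells_and_local_energy}, deduce path-independence from the local energies via \cref{prop:local_energies_demazure} and from $\nu=\ker(\cdot)$ being preserved, and pull back through the linearization \cref{thm:leading_tab_have_no_phase_shift}. The third step you flag as the main obstacle is actually immediate in the stable regime: there each column of height $\mu_i'$ is turned into $(\mu_i')^{\mathrm{lv}}=\{1,\dots,\mu_i'\}$ and only translated vertically, which is exactly the staircase form $\sum_i A(\mu_i',\kappa_i)$. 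The one verification you omit is that the final $\kappa=\widetilde{\kappa}-t\mu'$ has non-negative entries, i.e.\ that $T$ has classical skew shape as required by $\mathrm{LdT}(\mu)$; the paper secures this with \cref{lemma:0_Demazure_arrows_preserve_tableaux}, showing that $0$-Demazure arrows never push cells to non-positive rows.
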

\noindent
A proof of this theorem will be given in the next subsection because it uses a linearization of 
the skew RSK map for leading tableaux which is explained below.

\subsection{Linearization} \label{subs:linearization}
The property of being leading for a tableau is preserved by the skew $\RSK$ map. Moreover the action of the skew $\RSK$ map on leading tableaux reduces to a simple shift of $\kappa$ by $\mu'$. This is the main result we state and prove in this subsection.

\begin{theorem}[Skew $\RSK$ map on leading tableaux] \label{thm:leading_tab_have_no_phase_shift}
    Let $T=T(\mu,\kappa;\nu)\in \mathrm{LdT}(\mu)$ and $(T',T')=\RSK(T,T)$. Then $T'$ is also a leading tableau and we have $T'=T(\mu,\kappa+ \mu';\nu)$.
\end{theorem}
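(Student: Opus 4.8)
The plan is to transport the statement to the level of row-coordinate matrices, where the skew $\RSK$ map is governed by the edge local rules, and then to exhibit the evolution of $\alpha_\mu(\kappa)$ as a rigid downward translation of its diagonal strips. First I would reduce to matrices. Writing $(T,T)\xleftrightarrow[]{\rc\,}(\alpha,\alpha;\nu)$ with $\alpha=\alpha_\mu(\kappa)\in\mathcal{M}^{\rm Ld}(\mu)$ and $\nu=\ker(T)$, \cref{prop:iota_rc_commute_semi} gives $\RSK(T,T)\xleftrightarrow[]{\rc\,}(\RSK(\alpha,\alpha);\nu)$, the kernel being preserved. Thus it suffices to prove the single identity
\[
\RSK\bigl(\alpha_\mu(\kappa),\alpha_\mu(\kappa)\bigr)=\bigl(\alpha_\mu(\kappa+\mu'),\,\alpha_\mu(\kappa+\mu')\bigr),
\]
where the two output matrices coincide by the swap symmetry of \cref{prop:RSK_swap_symmetry} since $\alpha=\beta$. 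Granting this, $\alpha_\mu(\kappa+\mu')$ is leading by \cref{lem:alphaA}, so $T'$ is a leading tableau; and since $\ker(T',T')=\nu$ and $\kappa+\mu'\in\mathcal{K}(\mu)$ --- on each rectangular block $R_{i-1}<j\le R_i$ of $\mu$ one adds the same constant $\mu'_{R_i}$ to the weakly decreasing entries $\kappa_j$, so monotonicity within the block is preserved --- \cref{prop:bijection_leading_tab_kappa} identifies $T'=T(\mu,\kappa+\mu';\nu)$. Finally, $\RSK$ preserves content, consistent with $T'$ again having content $\mu$.

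The heart is the matrix identity, which I would prove through the refinement $\RSK=\iota_2^{\,n}$ of \cref{prop:RSK_from_n_int_ins}, analysing one application of $\iota_2$ at a time. Decompose $\alpha=\sum_{i=1}^{\mu_1}A(\mu'_i,\kappa_i)$ as in \eqref{eqn:alphaA}; each summand is a diagonal strip carrying labels $1,2,\dots,\mu'_i$ at tableau-rows $\kappa_i+1,\dots,\kappa_i+\mu'_i$, i.e.\ a copy of the leading-vector column of height $\mu'_i$ with its $1$-cell at row $\kappa_i+1$. A single step $\iota_2$ processes the $1$-cells of the $Q$-component by internal insertions $\mathcal{R}_{[r]}$ taken in decreasing row order, and then cyclically relabels $Q$. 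I would show that on a leading configuration each internal insertion of a strip's $1$-cell triggers a bumping cascade running exactly down that strip, translating it rigidly one row downward, while leaving the other strips fixed; the base case is already visible for a single column, where inserting $1$ into the row below bumps the $2$, which bumps the $3$, and so on to the bottom. Consequently $\iota_2$ shifts down by one row precisely those strips whose current top $Q$-label equals $1$.

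Iterating over one full period of $n$ applications of $\iota_2$, the cyclic relabelling of $Q$ brings each of the labels $1,\dots,\mu'_i$ of strip $i$ to the value $1$ exactly once, so strip $i$ is translated downward exactly $\mu'_i$ times and ends with its $1$-cell at row $\kappa_i+\mu'_i+1$. This is precisely the replacement $\kappa_i\mapsto\kappa_i+\mu'_i$, yielding $\alpha'=\alpha_\mu(\kappa+\mu')$ and proving the identity.

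The main obstacle is the locality of the cascade: I must rule out interactions, showing that the bumping route of one strip's $1$-cell never enters another strip and that distinct strips never collide as they are shifted. This is exactly where the leading hypothesis \eqref{eq:leading_tab_row_coordinate} is indispensable, and I expect to make it rigorous either by tracking the $\mathbb{V}$-valued edge configuration directly on $\Lambda_{n,n}$ --- so that the claim becomes the statement that the colored shadow lines of $\alpha$ are pairwise non-interfering --- or, more robustly, by induction on the number of strips, using the restriction property of \cref{prop:restriction_matrices} to peel off one diagonal at a time. A secondary point to check is that the configuration remains leading after every intermediate $\iota_2$, so that the cascade analysis applies afresh at each of the $n$ steps.
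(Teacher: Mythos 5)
Your high-level picture is right, and your route is genuinely different from the paper's. The paper does not track the dynamics one $\iota_2$ at a time, nor does it decompose $\alpha$ into the diagonal strips $A(\mu_i',\kappa_i)$. Instead it reduces to $\nu=\varnothing$, standardizes, groups the row-coordinate array by \emph{letter} into weakly decreasing blocks $\mathpzc{a}^{(1)},\dots,\mathpzc{a}^{(n)}$ (the rows of the $i$-cells, which as sets are $\{\kappa_1+i,\dots,\kappa_{\mu_i}+i\}$), and computes the whole skew $\RS$ map in one pass by induction on $n$: it peels off $\mathpzc{a}^{(n)}$ and repeatedly applies \cref{lemma:RS_increasing_arrays}, which says that the skew $\RS$ map of a weakly decreasing array against a sub-array of itself adds $1$ to the sub-array and to the matched positions of the ambient array. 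That single lemma replaces all of your cascade bookkeeping; the ``rigid shift of each strip by $\mu_i'$'' then falls out of the grid decomposition rather than being tracked step by step.

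The genuine gap in your version is that the non-interference lemma is not a secondary verification --- it \emph{is} the theorem, and neither of your two repair strategies is adequate as stated. The claim you need is an interlocking induction: after $j$ steps, $P^{(j)}=\sum_i A(\mu_i',\kappa_i+\min(j,\mu_i'))$; the $1$-cells of $Q^{(j)}$ sit exactly at the rows $\kappa_i+j+1$ of the current tops of the not-yet-exhausted strips; and, crucially, \emph{within} a single $\iota_2$ the later (higher) insertions' cascades pass through rows where some strips have already been shifted and some have not, so the ``leftmost entry $>v$'' that gets bumped must be shown to carry label exactly $v+1$ in every such mixed configuration (including when a shifted strip's new offset collides with an unshifted strip's offset, making their staircases coincide from some row on). None of this is automatic from the leading hypothesis on the \emph{initial} tableau. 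Of your two proposed fixes, the induction ``peeling off one diagonal at a time via \cref{prop:restriction_matrices}'' does not work as described: that proposition truncates by row index $k$ (the operator $\delta_{\ge k}$), which cuts across every strip rather than isolating one, so it cannot separate the $A(\mu_i',\kappa_i)$ summands. The shadow-line route is viable in principle but is essentially equivalent to the unproven claim. If you want to salvage a strip-based argument, the cleanest repair is to abandon the step-by-step $\iota_2$ analysis and prove the one-shot matrix identity directly, which is what the paper's letter-block induction accomplishes.
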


The proof of \cref{thm:leading_tab_have_no_phase_shift} is based on a direct inspection of the skew $\RSK$ map on a pair $(T,T)$, for a leading tableau $T$. We will need the following preliminary result.

\begin{lemma} \label{lemma:RS_increasing_arrays}
    Fix a weakly decreasing array $\mathpzc{a}=(\mathpzc{a}_1,\dots,\mathpzc{a}_N) \in \mathbb{W}^N$. Let $\mathpzc{b}=(\mathpzc{b}_1,\dots ,\mathpzc{b}_M)$ be a sub-array of $\mathpzc{a}$ with $\mathpzc{b}_i=\mathpzc{a}_{j_i}$ for $j_1 > \cdots > j_M$. Consider the skew $\RS$ map $(\mathpzc{a}',\mathpzc{b}') = \RS (\mathpzc{a},\mathpzc{b})$. Then we have
    \begin{equation}
        \mathpzc{b}'=(\mathpzc{b}_1+1 ,\dots , \mathpzc{b}_M+1 )
    \end{equation}
    and there exists indices $J'=\{j_1', \dots ,j_M'\}$ such that
    \begin{equation}
        \mathpzc{a}' = (\mathpzc{a}_1',\dots,\mathpzc{a}_N')
        \qquad
        \text{with} 
        \qquad
        \mathpzc{a}_k' = \begin{cases}
            \mathpzc{a}_k \qquad & \text{if } k\notin J'
            \\
            \mathpzc{a}_{j_i}+1 \qquad & \text{if } k = j_i'.
        \end{cases}
    \end{equation}
\end{lemma}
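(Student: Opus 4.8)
The plan is to reduce the skew $\RS$ map to a column-by-column computation and then track, one column at a time, how a single South-value interacts with the current West-array. Since the admissible edge configuration on $\Lambda_{M,N}$ (with $M=\ell(\mathpzc{b})$ columns and $N=\ell(\mathpzc{a})$ rows) is determined by its boundary data through the local rules \eqref{eq:local_rules}, I would first observe that it can be evaluated processing columns $j=1,\dots,M$ from left to right, and within each column the cells $i=1,\dots,N$ from bottom to top. Writing $\mathpzc{c}^{(j)}$ for the array of East-values produced by column $j$ (so $\mathpzc{c}^{(0)}=\mathpzc{a}$ and $\mathpzc{c}^{(M)}=\mathpzc{a}'$) and $r=\mathpzc{b}_j$ for the single South input to column $j$, the basic ingredient is the \emph{single-column operation}: carrying a value $s$ upward, initialized as $s=r$, the local rules say that at row $i$ one leaves the entry and $s$ unchanged when $\mathpzc{c}^{(j-1)}_i\neq s$, and increments both the entry and $s$ by one when $\mathpzc{c}^{(j-1)}_i=s$; the North output at the top becomes $\mathpzc{b}_j'$. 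This is the $m=1$ instance of \cref{def:RS} and matches the bumping analysis in the proof of \cref{prop:iota_rc_commute}.

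Next I would record the behaviour of the single-column operation on a \emph{weakly decreasing} input $\mathpzc{c}$. As $i$ increases the entries decrease, so the carried value $s=r$ first passes entries $>r$ (no change), then — if an entry equal to $r$ exists — meets the topmost such entry, increments it to $r+1$ and raises $s$ to $r+1$; thereafter every entry is $\le r<s$ and no further match occurs. Hence, if $r$ occurs in $\mathpzc{c}$, the operation increments exactly the topmost occurrence of $r$ to $r+1$ and outputs $r+1$; if $r$ is absent it changes nothing and outputs $r$. In particular the output is again weakly decreasing, so this description applies at \emph{every} column of the computation.

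Then I would run the columns in order. Because $\mathpzc{b}$ is weakly increasing, the values $r=\mathpzc{b}_1\le\cdots\le\mathpzc{b}_M$ are processed in non-decreasing order and group into blocks of equal value. The key invariant, proved by induction on the value $w$ being processed, is: the block of operations of value $w$ increments precisely the topmost $t_w$ \emph{original} positions of value $w$ in $\mathpzc{a}$, where $t_w$ is the multiplicity of $w$ in $\mathpzc{b}$. In the inductive step one checks that at the start of the $w$-block the entries currently equal to $w$ are exactly the untouched original $w$-positions together with original positions of value $<w$ that were bumped up to $w$, and that the latter all lie strictly below the former (a direct consequence of $\mathpzc{a}$ being weakly decreasing). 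Since $\mathpzc{b}$ is a \emph{subarray} of $\mathpzc{a}$, we have $t_w\le$ (multiplicity of $w$ in $\mathpzc{a}$), so the $t_w$ topmost-first operations consume only original $w$-positions and never reach the bumped-up entries. I expect this to be the main obstacle: it is exactly here that both hypotheses are indispensable — $\mathpzc{b}$ weakly increasing so that values are processed in order, and $\mathpzc{b}\subseteq\mathpzc{a}$ so that there is always enough room — and it is what rules out a position being incremented twice or an increment cascading to a higher value.

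Finally I would assemble the statement. Each operation of value $\mathpzc{b}_i$ outputs $\mathpzc{b}_i+1$, giving $\mathpzc{b}'=(\mathpzc{b}_1+1,\dots,\mathpzc{b}_M+1)$. Setting $j_i'$ to be the original position incremented by the $i$-th operation, the invariant shows that the $j_i'$ are pairwise distinct, that $\mathpzc{a}'_{j_i'}=\mathpzc{b}_i+1=\mathpzc{a}_{j_i}+1$, and that every position outside $J'=\{j_1',\dots,j_M'\}$ is incremented by no operation and hence satisfies $\mathpzc{a}'_k=\mathpzc{a}_k$. This is precisely the claimed form of $\mathpzc{a}'$, completing the proof.
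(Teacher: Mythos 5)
Your argument is correct and follows essentially the same route as the paper: both evaluate the admissible edge configuration one pass of a single $\mathpzc{b}$-value at a time, using that a pass of value $r$ through a weakly decreasing array bumps exactly the first occurrence of $r$, outputs $r+1$, and preserves weak monotonicity, with the sub-array hypothesis supplying enough occurrences so that no position is bumped twice (your induction on the value $w$ makes this bookkeeping more explicit than the paper does). The only quibble is terminological: with your stated bottom-to-top processing ($i=1$ at the bottom) and $\mathpzc{a}$ weakly decreasing, the bumped occurrence is the one of \emph{smallest index}, so ``topmost'' and ``lie strictly below'' should read ``first in index order'' and ``at strictly larger indices''; the substance of the argument is unaffected.
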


\begin{proof}
    Define index $j_1'$ as the position of the leftmost element of $\mathpzc{a}$ equal to $\mathpzc{b}_1$ and then, sequentially define $j_{i+1}' = \min \{ j > j_i : \mathpzc{a}_j = \mathpzc{b}_{i+1} \}$. Since $\mathpzc{b}$ is a sub-array of $\mathpzc{a}$ and $\mathpzc{a}$ is weakly decreasing indices $j_1',\dots, j_M'$ exist. To compute the skew $\RS$ map of $\mathpzc{a}$ and $\mathpzc{b}$ we produce the edge configuration in the rectangle $\{1,\dots ,N \} \times \{1 ,\dots, M \}$ with boundary data 
    \begin{equation}
        \mathsf{W}(1,j) = \mathpzc{b}_j \qquad \text{and} \qquad \mathsf{S}(i,1) = \mathpzc{a}_i.
    \end{equation}
    Consider the edge configuration along the bottom row of the rectangle. From local rules \eqref{eq:local_rules}, and from the fact that $\mathpzc{a}$ is weakly decreasing, it is clear that 
    \begin{gather*}
        \mathsf{S}(1,j) = \mathsf{N}(1,j) = \mathpzc{a}_j,
        \qquad
        \mathsf{E}(1,j) = \mathsf{W}(1,j) = \mathpzc{b}_1 \qquad \text{for } j=1,\dots, j_1'-1,
        \\
        \mathsf{S}(1,j_1') = \mathsf{N}(1,j_1') = \mathpzc{a}_{j_1'} +1,
        \qquad
        \mathsf{E}(1,j_1') = \mathsf{W}(1,j_1') +1  = \mathpzc{b}_1 + 1 ,
        \\
        \mathsf{S}(1,j) = \mathsf{N}(1,j) = \mathpzc{a}_j,
        \qquad
        \mathsf{E}(1,j) = \mathsf{W}(1,j) = \mathpzc{b}_1 +1 \qquad \text{for } j=j_1' + 1,\dots, N,
    \end{gather*}
    where in the second line we used the fact that $\mathpzc{b}_1=\mathpzc{a}_{j_1'}$; see \cref{fig:RS_one_row}. Moreover, by definition of index $j_1'$ the array $(\mathpzc{a}_1,\dots, \mathpzc{a}_{j_1'-1},\mathpzc{a}_{j_1'}+1, \mathpzc{a}_{j_1'+1},\dots \mathpzc{a}_N)$ is still weakly decreasing. We now move on with the evaluation of the edge configuration along the second row of the rectangle. Just as for the first row case we have
    \begin{gather*}
        \mathsf{S}(2,j) = \mathsf{N}(2,j) = \mathpzc{a}_j,
        \qquad
        \mathsf{E}(2,j) = \mathsf{W}(2,j) = \mathpzc{b}_2 \qquad \text{for } j=1,\dots, j_2'-1,
        \\
        \mathsf{S}(2,j_2') = \mathsf{N}(2,j_2') = \mathpzc{a}_{j_2'} +1,
        \qquad
        \mathsf{E}(2,j_2') = \mathsf{W}(2,j_2') +1  = \mathpzc{b}_2 + 1 ,
        \\
        \mathsf{S}(2,j) = \mathsf{N}(2,j) = \mathpzc{a}_j,
        \qquad
        \mathsf{E}(2,j) = \mathsf{W}(2,j) = \mathpzc{b}_2 +1 \qquad \text{for } j=j_2' + 1,\dots, N.
    \end{gather*}
    Repeating the same argument $M$ times yields the proof.
    \begin{figure}[t]
        \centering
        \includegraphics{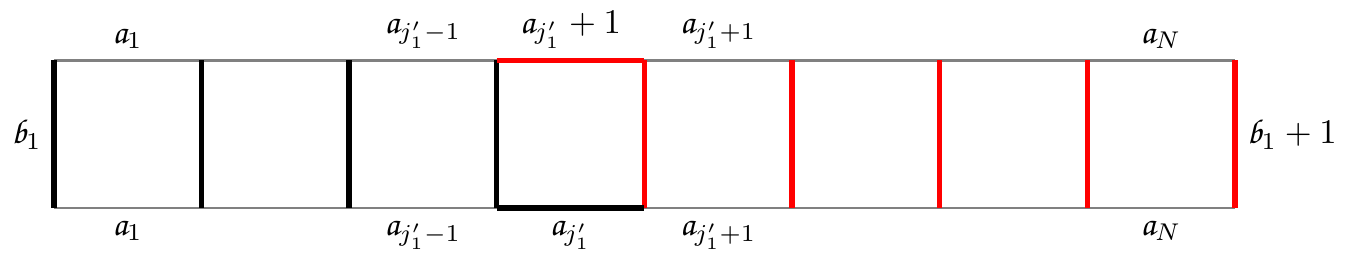}
        \caption{The skew $\RS$ map of a weakly decreasing array $\mathpzc{a}$ with an element $\mathpzc{b}_1 = \mathpzc{a}_{j_1'}$.}
        \label{fig:RS_one_row}
    \end{figure}
\end{proof}

\begin{proof}[Proof of \cref{thm:leading_tab_have_no_phase_shift}]
    We know, by \cref{prop:iota_preserves_ker}, that if $\nu=\ker(T)=\ker(T,T)$, then $\nu=\ker(T')=\ker(T',T')$, so we reduce to prove our statement in the case $\nu = \varnothing$. 
    Let $\alpha=\rc(T)$. To prove our claim we need to compute $(\alpha',\alpha')=\RSK(\alpha,\alpha)$. We use, as usual, standardization and we encode matrix $\alpha$, in an array
    \begin{equation}
        \mathpzc{a}=( \mathpzc{a}^{(1)} ,\dots ,\mathpzc{a}^{(n)} ),
        \qquad
        \mathpzc{a}^{(i)} = ( \mathpzc{a}_{M_{i-1}+1} , \dots , \mathpzc{a}_{M_i}),
    \end{equation}
    with $M_i= \mu_1 + \dots + \mu_i$. Sub-arrays $\mathpzc{a}^{(i)}$ record row-coordinates of $i$-cells of $T$ and are weakly decreasing $\mathpzc{a}_{M_{i-1}+1} \ge  \cdots \ge \mathpzc{a}_{M_i}$. The leading property of $T$ implies that 
    \begin{equation}
        m_r(\mathpzc{a}^{(i)}) \ge m_{r+1}(\mathpzc{a}^{(i+1)}),
    \end{equation}
    for $i,r \ge 1$, where again $m_r$ is the multiplicity of letter $r$ in a word. Moreover, by \eqref{eq:map:kappa_to_T} we have that, as sets
    \begin{equation}
    \begin{aligned}
        &\mathpzc{a}^{(1)} =  \{ \kappa_1 + 1, \dots, \kappa_{\mu_1} + 1\},
        \\
        &\mathpzc{a}^{(2)} =  \{ \kappa_1 + 2, \dots, \kappa_{\mu_2} + 2\},
        \\
        &\dots
        \\
        &\mathpzc{a}^{(n)} =  \{ \kappa_1 + n, \dots, \kappa_{\mu_n} + n\}.
    \end{aligned}
    \end{equation} 
    Let now $\mathpzc{b}= ( \mathpzc{b}^{(1)},\dots ,\mathpzc{b}^{(n)} )$ be the array encoding row-coordinates of cells of $T'$. Or in other words, let $(\mathpzc{b},\mathpzc{b}) = \RS (\mathpzc{a}, \mathpzc{a})$. Then statement of \cref{thm:leading_tab_have_no_phase_shift} reduces to show that, as sets
    \begin{equation} \label{eq:array_b_union_of_d}
        \mathpzc{b}^{(i)} = \{ \kappa_1 + \mu_1 ' + i, \dots, \kappa_{\mu_i} + \mu_{\mu_i}' + i\}
        \qquad
        \text{for } i=1,\dots,n.
    \end{equation}
    We prove \eqref{eq:array_b_union_of_d} by an induction argument over $n$. When $n=1$ we have $\mathpzc{a}=\mathpzc{a}^{(1)}$ and by \cref{lemma:RS_increasing_arrays} $\mathpzc{b}=\mathpzc{b}^{(1)}$ is obtained by adding one to each entry of $\mathpzc{a}$.
    
    Consider now the array $\mathpzc{a}=(\mathpzc{a}^{(1)} ,\dots ,\mathpzc{a}^{(n)})$ and assume, by induction that our theorem holds for any array of $(n-1)$ maximal non-increasing components. In particular define
    \begin{equation}
        \tilde{\mathpzc{a}} = (\mathpzc{a}^{(1)} ,\dots ,\mathpzc{a}^{(n-1)}),
        \qquad
        \tilde{\mathpzc{b}} = (\tilde{\mathpzc{b}}^{(1)} ,\dots ,\tilde{\mathpzc{b}}^{(n-1)}),
        \qquad
        \text{where}
        \qquad
        (\tilde{\mathpzc{b}},\tilde{\mathpzc{b}}) = \RS(\tilde{\mathpzc{a}},\tilde{\mathpzc{a}}).
    \end{equation}
    By inductive hypothesis we have that, as sets
    \begin{equation} 
        \tilde{\mathpzc{b}}^{(i)} = \{ \kappa_1 + \min(\mu_1 ',n-1) + i , \dots, \kappa_{\mu_i} + \min(\mu_{\mu_i}',n-1) + i\}
        \qquad
        \text{for } i=1,\dots,n-1.
    \end{equation}
    
    \begin{figure}[t]
        \centering
        \includegraphics[]{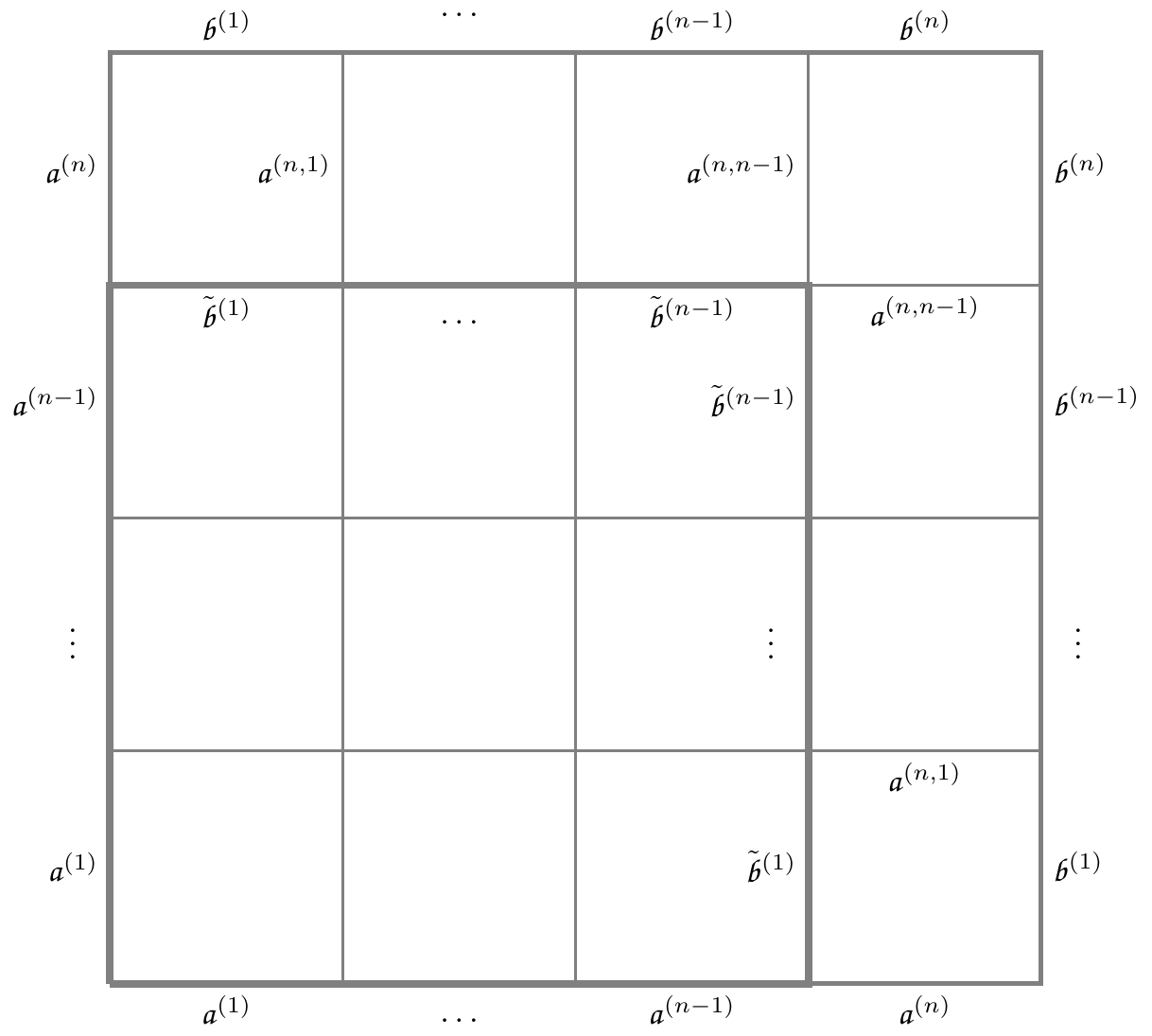}
        \caption{Computation of $(\mathpzc{b}, \mathpzc{b})=\RS(\mathpzc{a},\mathpzc{a})$ by induction over sub-arrays $\mathpzc{a}^{(i)}.$}
        \label{fig:RS_prod_recursive}
    \end{figure}
    
    By definition of skew $\RS$ map, depicted in  \cref{fig:RS_prod_recursive}, it is clear that
    \begin{equation}
    \begin{aligned}
        &\RS(\tilde{\mathpzc{b}}^{(1)},\mathpzc{a}^{(n)}) = (\mathpzc{b}^{(1)},\mathpzc{a}^{(n,1)}),
        \\        &\RS(\tilde{\mathpzc{b}}^{(2)},\mathpzc{a}^{(n,1)}) = (\mathpzc{b}^{(2)},\mathpzc{a}^{(n,2)})
        ,
        \\
        &\dots
        \\
        &\RS(\tilde{\mathpzc{b}}^{(n-1)},\mathpzc{a}^{(n,n-2)}) = (\mathpzc{b}^{(n-1)},\mathpzc{a}^{(n,n-1)})
        \\
        &\RS(\mathpzc{a}^{(n,n-1)},\mathpzc{a}^{(n,n-1)}) = (\mathpzc{b}^{(n)},\mathpzc{b}^{(n)})
        ,
    \end{aligned}
    \end{equation}
    for arrays $\mathpzc{a}^{(n,1)},\dots,\mathpzc{a}^{(n,n-1)}$ and we use these relations to evaluate $\mathpzc{b}^{(1)},\mathpzc{b}^{(2)},\dots$. We start with $\mathpzc{b}^{(1)}$. Since 
    \begin{equation}
        \mathpzc{a}^{(n)} = \{ \kappa_1 + n ,\dots ,\kappa_{\mu_n} + n \}
    \end{equation}
    and
    \begin{equation}
        \tilde{\mathpzc{b}}^{(1)} = \{ \kappa_1 + n ,\dots, \kappa_{\mu_n} + n, \kappa_{\mu_n +1} + \mu_{\mu_n+1}' + 1, \dots , \kappa_{\mu_1} + \mu_{\mu_1}' +1   \},
    \end{equation}
    we can apply \cref{lemma:RS_increasing_arrays} to discover that
    \begin{equation}
        \mathpzc{a}^{(n,1)} = \{ \kappa_1 + n + 1 ,\dots ,\kappa_{\mu_n} + n + 1 \}
    \end{equation}
    and
    \begin{equation}
        \mathpzc{b}^{(1)}=\{ \kappa_1 + \mu_1 ' + 1, \dots, \kappa_{\mu_1} + \mu_{\mu_1}' + 1 \},
    \end{equation}
    which confirms \eqref{eq:array_b_union_of_d} for $i=1$. We can then move to the computation of $\mathpzc{b}^{(2)}$. We have
    \begin{equation}
        \tilde{\mathpzc{b}}^{(2)} = \{ \kappa_1 + n +1,\dots, \kappa_{\mu_n} + n+1, \kappa_{\mu_n +1} + \mu_{\mu_n+1}' + 2, \dots , \kappa_{\mu_1} + \mu_{\mu_1}' +2   \},
    \end{equation}
    so that again, by \cref{lemma:RS_increasing_arrays}, taking the skew $\RS$ map of $\tilde{\mathpzc{b}}^{(2)}$ and $\mathpzc{a}^{(n,1)}$ we find that
    \begin{equation}
        \mathpzc{a}^{(n,2)} = \{ \kappa_1 + n + 2 ,\dots ,\kappa_{\mu_n} + n + 2 \}
    \end{equation}
    and
     \begin{equation}
        \mathpzc{b}^{(2)}=\{ \kappa_1 + \mu_1 ' + 2, \dots, \kappa_{\mu_1} + \mu_{\mu_1}' + 2 \},
    \end{equation}
    confirming \eqref{eq:array_b_union_of_d} for $i=2$. It is clear that we can iterate the same argument for any $i=1,\dots,n-1$ repeatedly using \cref{lemma:RS_increasing_arrays} and obtaining 
     \begin{equation}
        \mathpzc{a}^{(n,i)} = \{ \kappa_1 + n + i ,\dots ,\kappa_{\mu_n} + n + i \},
    \end{equation}
    confirming \eqref{eq:array_b_union_of_d} for all cases except $i=n$. To verify this final case we can easily see, either by direct inspection or through \cref{lemma:RS_increasing_arrays}, that
    \begin{equation}
        \RS(\mathpzc{a}^{(n,n-1)},\mathpzc{a}^{(n,n-1)}) = (\mathpzc{b}^{(n)} , \mathpzc{b}^{(n)})
    \end{equation}
    yields the predicted result \eqref{eq:array_b_union_of_d}. This completes the proof.
\end{proof}

An application of \cref{thm:leading_tab_have_no_phase_shift} gives a proof of \cref{thm:leading_map_and_leading_tableaux}, presented below. We need the following technical lemma stating that 0-Demazure arrows fix the space of pairs of skew tableau with classical shape.

\begin{lemma} \label{lemma:0_Demazure_arrows_preserve_tableaux}
    Let $(P,Q)$ be a pair of skew tableaux with same shape $\lambda / \rho$ with $\lambda,\rho \in \mathbb{Y}$. Define $(\widetilde{P},\widetilde{Q})=h(P,Q)\neq \varnothing$ for $h\in\{\widetilde{E}_0^{(1)},\widetilde{E}_0^{(2)},\widetilde{F}_0^{(1)},\widetilde{F}_0^{(2)} \}$, such that $h$ is a 0-Demazure arrow. Then all cells of $\widetilde{P},\widetilde{Q}$ lie at positive rows and, denoting by $\widetilde{\lambda} / \widetilde{\rho}$ their shape, we have
    \begin{equation} \label{eq:0_Demazure_arrow_last_col}
        \lambda_1= \widetilde{\lambda}_1,
        \qquad
        \rho'_{\lambda_1} = \widetilde{\rho}'_{\lambda_1}.
    \end{equation}
\end{lemma}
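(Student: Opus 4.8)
The plan is to reduce to a single $0$-operator and then to localise the shape change it produces. By the swap symmetry $\iota_1=\mathrm{swap}\circ\iota_2\circ\mathrm{swap}$, which interchanges the families $\widetilde{E}^{(1)}_0,\widetilde{F}^{(1)}_0$ and $\widetilde{E}^{(2)}_0,\widetilde{F}^{(2)}_0$ while fixing the common shape $\lambda/\rho$, it suffices to treat $\epsilon=2$. I would deal with $h=\widetilde{F}^{(2)}_0$ and $h=\widetilde{E}^{(2)}_0$ in parallel, by the same argument with $\F 1$ replaced by $\E 1$, rather than deducing one from the other: a reduction through the inverse relation would need the positivity conclusion for the intermediate pair and would therefore be circular. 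Recalling $\widetilde{F}^{(2)}_0=\iota_2\circ(\mathbf 1\times\F 1)\circ\iota_2^{-1}$, I set $(\hat P,\hat Q)=\iota_2^{-1}(P,Q)$, a pair of generalized tableaux whose cells may occupy non-positive rows.

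The first genuine step is to rewrite the $0$-Demazure hypothesis as a signature condition on $\hat Q$. From the proof of \cref{cor:Phi_is_morphism} we have $\Phi\circ\iota_2^{-1}(P,Q)=(V,\pr(W))$, and since $\E 0=\pr^{-1}\circ\E 1\circ\pr$, the condition $\widetilde{E}^{(2)}_0(P,Q)\neq\varnothing$ defining a $0$-Demazure arrow for $h=\widetilde{F}^{(2)}_0$ becomes $\varepsilon_1(\hat Q)\ge 1$, i.e. the signature word of $\hat Q$ carries an unmatched $2$; for $h=\widetilde{E}^{(2)}_0$ the corresponding condition is $\varepsilon_1(\hat Q)\ge 2$. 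The $1$-cells of $\hat Q$ are exactly the exit points of the reverse internal insertions performed by $\iota_2^{-1}$; the operator $\F 1$ turns the signature-selected one (the rightmost unmatched $1$) into a $2$, and the final $\iota_2$ re-runs the forward internal insertions with this cell no longer driving an insertion. Hence $(\widetilde P,\widetilde Q)$ differs from $(P,Q)$ only along this single insertion thread, and the external shape and the empty shape each change by exactly one cell, as is computed in the stable case by \cref{prop:cells_and_local_energy}.

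The equality $\lambda_1=\widetilde\lambda_1$ I would obtain cleanly and separately: the length of the first row is not altered by the skew $\RSK$ map, and $h$ commutes with $\RSK$ by \cref{thm:symmetries_RSK}. Iterating $\RSK$ until the pair is stable and invoking the stable-case computation of \cref{prop:cells_and_local_energy} (and its $\epsilon=2$ analogue), where the affected column $k$ satisfies $k\le\mu_1=I_1(\overline{\pi})\le\lambda_1$ by \cref{thm:Schensted}, then shows that the first row is preserved. For positivity and the last-column equality $\rho'_{\lambda_1}=\widetilde\rho'_{\lambda_1}$ this descent is unavailable, because $\rho'_{\lambda_1}$ is not an $\RSK$-invariant; here I would argue directly from the thread analysis above. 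It then suffices to show that the column carrying the modified empty cell is different from $\lambda_1$, and that no labelled cell of $\widetilde P,\widetilde Q$ is lifted to row $0$ or below.

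Both of these follow from a single \emph{localisation} statement, which I expect to be the technical heart of the proof: the Demazure hypothesis $\varepsilon_1(\hat Q)\ge 1$ (respectively $\varepsilon_1(\hat Q)\ge 2$) forces the selected extraction point to lie at a strictly positive row and strictly to the left of the rightmost column. Establishing it requires comparing the column reading word of $\hat Q$ with the geometric positions of its $1$-cells and tracking, through the signature rule, how the presence of an unmatched $2$ to the right of the selected $1$ bounds the latter away from the last column and from row $0$. The $\RSK$-stable case of \cref{prop:cells_and_local_energy}, in which positivity and the invariance of $\lambda_1$ are automatic and only the condition $k\neq\lambda_1$ survives, serves both as a template for this computation and as a consistency check on the direction of the shape change.
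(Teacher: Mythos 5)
Your architecture coincides with the paper's: reduce to one family by the swap symmetry, conjugate by $\iota$, and translate the $0$-Demazure hypothesis into a signature condition on the pre-image (the paper works with $\iota_1^{-1}$ and the column reading word of $\widehat{P}$ rather than with $\iota_2^{-1}$ and $\widehat{Q}$; this is immaterial). Your detour through $\RSK$-stability, \cref{prop:cells_and_local_energy} and \cref{thm:Schensted} to get $\lambda_1=\widetilde{\lambda}_1$ is defensible but unnecessary: once one knows that the cell on which $\F{1}$ (or $\E{1}$) acts lies in a column strictly to the left of column $\lambda_1$, both equalities in \eqref{eq:0_Demazure_arrow_last_col} follow at once, because the only difference between $h(P,Q)$ and $(P,Q)$ is whether one internal insertion launched from that cell's column is performed, and an insertion started in column $c$ only ever visits columns $\le c$.

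The genuine gap is that the single claim carrying the whole proof --- your ``localisation'' statement --- is announced as the expected technical heart but never established; in the paper it \emph{is} the proof, and it is short. In the column reading word of the pre-image tableau assign $)$ to each letter $1$ and $($ to each letter $2$ and perform the matching; the reduced word is $)^{\varphi_1}(^{\varepsilon_1}$, so every unmatched $1$ precedes every unmatched $2$. Since the column word scans columns left to right, and a column containing both a $1$ and a $2$ contributes the adjacent pair $()$ (the $2$ sits below the $1$ and is read first), which gets matched, every unmatched $1$-cell lies in a column strictly to the left of every unmatched $2$-cell. The Demazure hypothesis $\varepsilon_1\ge 1$ (resp.\ $\varepsilon_1\ge 2$) then places the cell acted on by $\F{1}$ (resp.\ by $\E{1}$, since two unmatched $2$-cells cannot share a column) strictly to the left of the last column. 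Be aware that this column statement is all the paper records explicitly; your positivity claim additionally needs the row half of your localisation, which follows in the same spirit but requires one further observation (an unmatched $2$-cell of the pre-image is necessarily the topmost cell of its column and sits at a positive row, and the topmost cell of any column to its left sits weakly below it), so if you intend to prove positivity in full you should write that part out rather than leave it inside the same unproved expectation.
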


\begin{proof}
    We prove our statement only for $h$ being a 0-Demazure arrow $\widetilde{E}_0^{(1)}$ or $\widetilde{F}_0^{(1)}$, as the complementary cases are analogous. We focus first on the case $h=\widetilde{F}^{(1)}_0$. Let $(\widehat{P},\widehat{Q})=\iota_1^{-1}(P,Q)$ and 
    let $\widehat{\pi}$ be the column reading word of $\widehat{P}$. Then, by definition of 0-Demazure arrow we have $\E{1}(\widehat{\pi}),\F{1}(\widehat{\pi})\neq \varnothing$ and we want to understand implications of this fact. As in the signature rule \eqref{alg:e_i} we assign parentheses ``)" and ``(" respectively to each occurrence of a 1 and of a 2 letter in $\widehat{\pi}$. Then, matching consecutive pairs of opening and closing unmatched parentheses we reach a reduced word $)^{\varphi_1(\widehat{\pi})} (^{\varepsilon_1(\widehat{\pi})}$ with $\varphi_1(\widehat{\pi}), \varepsilon_1(\widehat{\pi}) >0$. Each of these umatched ``)" parenteses identifies a different 1 letter in $\widehat{\pi}$, which unambiguously identify a 1-cell in $\widehat{P}$. Call $\theta_1$ the set of $\varphi_1(\widehat{\pi})$ such 1-cells in $\widehat{P}$. Analogously call $\theta_2$ the set of 2-cells of $\widehat{P}$ corresponding to the $\varepsilon_1(\widehat{\pi})$ unmatched ``(" parentheses in the reduced word generated from $\widehat{\pi}$. By the definition of the matching procedure it is clear that each cell of $\theta_1$ lies at a column strictly to the left of any cell of $\theta_2$. In particular no cell of $\theta_1$ occupies the rightmost column of $\widehat{P}$. Since the application of $\F{1}:\widehat{P} \mapsto \F{1}(\widehat{P})$ changes the label of a cell of $\theta_1$ from $1\mapsto 2$ it is clear that $\widetilde{F}^{(1)}_0(P,Q)$ cannot modify the rightmost column of $P,Q$ and hence \eqref{eq:0_Demazure_arrow_last_col} holds.
    
    When $h=\widetilde{E}_0^{(1)}$, we have that both $\E{1}(\widehat{P}),\E{1}^{\,2}(\widehat{P})\neq \varnothing$ and also in this case, using signature rule as explained above, operator $\E{1}$ cannot transform any 2-cell lying at the rightmost column of $\widehat{P}$. This confirms \eqref{eq:0_Demazure_arrow_last_col} and completes the proof.
\end{proof}

\begin{proof}[Proof of \cref{thm:leading_map_and_leading_tableaux}]
    
    By \cref{thm:symmetries_RSK} any $\mathcal{L}_{P,Q}$ commutes with the skew $\RSK$ map. We can therefore write, for any $t$,
    \begin{equation}
    \begin{split}
        (T,T) &= \RSK^{-t} \circ \RSK^t \circ \mathcal{L}_{P,Q}(P,Q) 
        \\
        &= \RSK^{-t} \circ \mathcal{L}_{P,Q} \circ \RSK^t (P,Q).
    \end{split}
    \end{equation}
    Let $(\widetilde{P},\widetilde{Q}) = \RSK^t (P,Q)$. 
    It is clear that $\mathcal{L}_{P,Q}$ is a leading map also for $(\widetilde{P},\widetilde{Q})$, since $\Phi(P,Q)=\Phi(\widetilde{P},\widetilde{Q})$. 
    When $t$ is large, the pair $(\widetilde{P},\widetilde{Q})$ becomes $\RSK$-stable. 
    In such cases the action of the leading map $\mathcal{L}_{P,Q}$ deforms the original shape of $\widetilde{P},\widetilde{Q}$ as prescribed by \cref{prop:cells_and_local_energy}, but it does not changes the number of labeled cells lying at each column. Therefore 
    $(\widetilde{T},\widetilde{T}) = \mathcal{L}_{P,Q}(\widetilde{P},\widetilde{Q})$ clearly defines a leading tableau $\widetilde{T}$. This is because labeled cells at the $i$-th column of $\widetilde{T}$ are exactly $(\mu_i')^{\mathrm{lv}}$. By \cref{prop:bijection_leading_tab_kappa} we write $\widetilde{T}=T(\mu, \widetilde{\kappa};\nu)$, for some uniquely determined $\widetilde{\kappa},\nu$. We want to show that $\widetilde{\kappa},\nu$ are independent of the choice of leading map $\mathcal{L}_{P,Q}$ and subsequently derive that $T$ is a leading tableau with $T=T(\mu,\kappa;\nu)$ where, by \cref{thm:leading_tab_have_no_phase_shift}, we have
    \begin{equation} \label{eq:kappa_and_kappa_tilde}
        \kappa = \widetilde{\kappa} - t \times \mu'.
    \end{equation}
    This will imply that tableaux $T$ is independent of the particular choice of $\mathcal{L}_{P,Q}$ yielding the proof.

    We first observe that partition $\nu$ is independent of $\mathcal{L}_{P,Q}$, since  $\nu=\ker(\widetilde{T})=\ker(\widetilde{P},\widetilde{Q})$. The second equality follows from the general fact that Kashiwara operators $\widetilde{E}^{(\epsilon)}_i,\widetilde{F}^{(\epsilon)}_i$ preserve the kernel of any pair of tableaux as implied by \cref{prop:iota_preserves_ker}. To prove independence of $\widetilde{\kappa}$ from $\mathcal{L}_{P,Q}$ we combine \cref{prop:cells_and_local_energy} and \cref{prop:local_energies_demazure}. Let $\widetilde{\lambda}/\widetilde{\rho}$ be the skew shape of $(\widetilde{P},\widetilde{Q})$ and denote by $\widehat{\lambda}/ \widehat{\rho}$ the skew shape of $\widetilde{T}$. As observed above any leading map $\mathcal{L}_{P,Q}$ in addition to modifying the content of $\widetilde{P},\widetilde{Q}$ can only shift columns rigidly upward or downward. By \cref{prop:local_energies_demazure} we can quantify by how many cells each column gets displaced. Calling $(V,W)=\Phi(P,Q)$ and assuming $\mathcal{L}_{P,Q}$ is the $\Phi$-pullback of the leading map $\mathcal{L}_{V,W}$, we have
    \begin{equation}
        \widehat{\rho}_k' = \widetilde{\rho}_k' - \mathscr{H}_k(V) - \mathscr{H}_k(W).
    \end{equation}
    Such expression for $\widehat{\rho}'$ is independent of $\mathcal{L}_{P,Q}$. By \cref{prop:bijection_leading_tab_kappa} $\widetilde{\kappa}$ is also independent of $\mathcal{L}_{P,Q}$, since it is determined by $\widehat{\rho} = (\widetilde{\kappa}^+)'+\nu$.
    
    In order to complete the proof we want to check that $\kappa$, defined by \eqref{eq:kappa_and_kappa_tilde}, does in fact belong to $\mathcal{K}(\mu)$. Since $\kappa^{(i)}_k = \widetilde{\kappa}^{(i)}_k - t \mu_k'$ it is clear that $\kappa^{(i)}_1 \ge \kappa^{(i)}_2 \ge \cdots $ for all $i=1,2,\dots$, so we only need to verify that $\kappa^{(i)}_k\ge 0$ for all $i$ and $k$. The second statement holds if and only if tableau $T$ does not have cells at non positive rows, condition that is guaranteed by \cref{lemma:0_Demazure_arrows_preserve_tableaux}.
\end{proof}

\section{A new bijection} \label{sec:bijection}

In this section we establish a bijection between a pair of skew tableaux $(P,Q)$ and a quadruple $(V,W;\kappa;\nu)$ consisting of vertically strict tableaux, an array of weights and a partition.  
Combining with the Sagan-Stanley correspondence in \cref{thm:SS}, we also get an RSK type bijection between triples $(V,W;\kappa) \in VST(\mu) \times VST(\mu) \times \mathcal{K}(\mu)$ and weighted permutations $\overline{\pi} \in \overline{\mathbb{A}}_{n,n}^+$, or equivalently matrices $\overline{M}\in \overline{\mathbb{M}}^+_{n \times n}$, with fixed Greene invariant $\mu(\overline{\pi}) = \mu$.

\subsection{The bijection $\Upsilon$} \label{subs:bijection}

We first construct the map $\Upsilon:(P,Q)\mapsto (V,W;\kappa;\nu)$. Subsequently we present the inverse map and in \cref{thm:new_bijection} we prove that the construction is well posed and defines a bijection.

\medskip

\textbf{Map} $\Upsilon : (P,Q) \to (V, W; \kappa;\nu)$
\begin{equation} \label{eq:bijection_from_pi_to_V1_V2_kappa}
    \begin{minipage}{.9\linewidth}
        \begin{enumerate}
            \item Let $\mu$ be the Greene invariant of $(P,Q)$ defined by \cref{def:asymptotic_VST} or equivalently by  \cref{thm:asymptotic_shape_RSK}. Determine vertically strict tableaux $V,W \in VST(\mu)$ iterating the skew $\RSK$ map of $(P,Q)$. In other words set $(V,W) = \Phi (P,Q)$, where projection $\Phi$ was defined in \cref{eq:Phi}.
            \item Let $\mathcal{L}_{P,Q}$ the leading map of the pair $(P,Q)$ as per \cref{def_map_G_PQ} and compute its action
            \begin{equation*}
                 (T,T)=\mathcal{L}_{P,Q}(P,Q).
            \end{equation*}
            \item By \cref{thm:leading_map_and_leading_tableaux}, $T$ is a leading tableau so that $\kappa$ and partition $\nu$ are defined by
            \begin{equation*}
                T=T(\mu, \kappa;\nu),
            \end{equation*}
            following correspondence of \cref{prop:bijection_leading_tab_kappa}.
        \end{enumerate}
    \end{minipage}
\end{equation}

\medskip

\textbf{Map} $\Upsilon^{-1} : (V, W; \kappa;\nu) \to (P,Q)$
\begin{equation} \label{eq:bijection_from_V1_V2_kappa_to_pi}
    \begin{minipage}{.9\linewidth}
        \begin{enumerate}
            \item From $V,W \in VST(\mu)$ define a leading map $\mathcal{L}_{V,W}$ of the pair $(V,W)$, as in \cref{def:leading_map_VST}.
            \item Through correspondence of \cref{prop:bijection_leading_tab_kappa}, from $\kappa, \mu,\nu$ prepare the leading tableau $T=T(\mu,\kappa;\nu)$.
            \item Denoting by $\mathcal{L}$ the $\Phi$-pullback of map $\mathcal{L}_{V,W}$, define skew tableaux $(P,Q)$ as 
            \begin{equation*}
                (P,Q) = \mathcal{L}^{-1} (T,T),
            \end{equation*}
            where the convention on inverse map was discussed in \cref{subs:crystals_and_bi_crystals}.
        \end{enumerate}
    \end{minipage}
\end{equation}

\begin{theorem} \label{thm:new_bijection}
    The map $\Upsilon$ defined by \eqref{eq:bijection_from_pi_to_V1_V2_kappa}, \eqref{eq:bijection_from_V1_V2_kappa_to_pi} is a bijection
    $$\bigcup_{\rho,\lambda \in \mathbb{Y}} SST(\lambda/\rho,n) \times SST(\lambda/ \rho,n)
    \xleftrightarrow[]{\hspace{.4cm} \Upsilon \hspace{.4cm} } \bigcup_{\mu \in \mathbb{Y}} VST(\mu) \times VST(\mu) \times \mathcal{K}(\mu)\times \mathbb{Y}.$$ 
    In particular, if $(P,Q)\xleftrightarrow[]{\Upsilon\,} (V,W;\kappa;\nu)$, and $\rho$ is the empty shape of $P,Q$, we have
    \begin{equation} \label{eq:weight_preserving}
        |\rho| = \mathscr{H}(V) + \mathscr{H}(W) + |\kappa| + |\nu|.
    \end{equation}
\end{theorem}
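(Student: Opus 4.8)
The plan is to verify that $\Upsilon$ and $\Upsilon^{-1}$, as defined in \eqref{eq:bijection_from_pi_to_V1_V2_kappa} and \eqref{eq:bijection_from_V1_V2_kappa_to_pi}, are mutually inverse, and then to establish the weight identity \eqref{eq:weight_preserving}. First I would check that $\Upsilon$ is well defined: the pair $(V,W)=\Phi(P,Q)$ produced in step (1) lies in $VST(\mu)\times VST(\mu)$ by \cref{cor:Phi_is_morphism} and the definition of $\mu$ as the common shape; step (2) is well posed because a leading map $\mathcal{L}_{P,Q}$ exists by connectedness of the Demazure subgraph (\cref{prop:Demazure_connected}) pulled back through $\Phi$; and step (3) produces a genuine leading tableau $T\in\mathrm{LdT}(\mu)$ with well-defined $(\kappa;\nu)$ by \cref{thm:leading_map_and_leading_tableaux} together with the characterization in \cref{prop:bijection_leading_tab_kappa}. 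The crucial point making $\Upsilon$ single-valued is precisely the independence of $T$ from the choice of $\mathcal{L}_{P,Q}$, which is the content of \cref{thm:leading_map_and_leading_tableaux}; I would invoke this explicitly.

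Next I would verify $\Upsilon^{-1}\circ\Upsilon=\mathrm{id}$ and $\Upsilon\circ\Upsilon^{-1}=\mathrm{id}$. For the first direction, starting from $(P,Q)$ we obtain $(V,W;\kappa;\nu)$; applying $\Upsilon^{-1}$ we build a leading map $\mathcal{L}_{V,W}$ and the leading tableau $T=T(\mu,\kappa;\nu)$, then set $(P,Q)'=\mathcal{L}^{-1}(T,T)$ where $\mathcal{L}$ is the $\Phi$-pullback of $\mathcal{L}_{V,W}$. The key observation is that $\mathcal{L}_{P,Q}$ in the forward direction and $\mathcal{L}$ in the backward direction are pullbacks of leading maps of the \emph{same} pair $(V,W)$, so by \cref{rem:non_uniqueness_leading_map_pair} (the convention on inverse maps from \cref{subs:crystals_and_bi_crystals}) we have $(P,Q)=\mathcal{L}_{P,Q}^{-1}(T,T)=\mathcal{L}^{-1}(T,T)=(P,Q)'$; here one must only note that $T$ obtained forward equals $T(\mu,\kappa;\nu)$ reconstructed backward, which holds because $(\kappa;\nu)$ was read off $T$ via the bijection of \cref{prop:bijection_leading_tab_kappa}. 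For the reverse direction, starting from $(V,W;\kappa;\nu)$ one produces $(P,Q)=\mathcal{L}^{-1}(T,T)$; applying $\Upsilon$ recovers $\Phi(P,Q)=(V,W)$ because $\mathcal{L}$ is a bicrystal map intertwining with $\Phi$ (again \cref{cor:Phi_is_morphism}, since $\mathcal{L}$ pulls back $\mathcal{L}_{V,W}$ and $\mathcal{L}_{V,W}(V,W)=(\mu^{\mathrm{lv}},\mu^{\mathrm{lv}})$), and the same tableau $T$ together with \cref{prop:bijection_leading_tab_kappa} returns the original $(\kappa;\nu)$.

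For the weight identity \eqref{eq:weight_preserving} I would track the size of the empty shape through the leading map. By \cref{prop:bijection_leading_tab_kappa}, the empty shape of $T=T(\mu,\kappa;\nu)$ is $(\kappa^+)'+\nu$, so $|\rho_T|=|\kappa|+|\nu|$ where $\rho_T$ is the empty shape of $T$. It remains to compare $|\rho|$ with $|\rho_T|$. The leading map $\mathcal{L}_{P,Q}$ acts by classical Kashiwara operators $\widetilde{E}^{(\epsilon)}_i,\widetilde{F}^{(\epsilon)}_i$ ($i\ge 1$), which preserve the shape, together with $0$-Demazure arrows, each of which changes the empty shape by exactly one cell per \cref{prop:cells_and_local_energy}. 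By \cref{prop:local_energies_demazure} and \cref{prop:Demazure_energy}, the net number of $\widetilde{F}^{(\epsilon)}_0$ minus $\widetilde{E}^{(\epsilon)}_0$ arrows acting on the $V$-component equals $\mathscr{H}(V)$ and likewise $\mathscr{H}(W)$ for the $W$-component. Hence the total reduction in the size of the empty shape from $(P,Q)$ to $(T,T)$ equals $\mathscr{H}(V)+\mathscr{H}(W)$, giving $|\rho|-|\rho_T|=\mathscr{H}(V)+\mathscr{H}(W)$, and combining with $|\rho_T|=|\kappa|+|\nu|$ yields \eqref{eq:weight_preserving}.

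\textbf{Main obstacle.} The genuinely delicate step is the well-definedness and inverse-compatibility hinging on \cref{thm:leading_map_and_leading_tableaux}: because a leading map admits no canonical expansion in Kashiwara operators (\cref{rem:non_uniqueness_leading_map_pair}), one must be careful that both $T$ and the backward reconstruction $\mathcal{L}^{-1}(T,T)$ are genuinely independent of all arbitrary choices. I would lean entirely on \cref{thm:leading_map_and_leading_tableaux} and the inverse-map convention, making explicit that any two leading maps of the same $(V,W)$ differ only by an automorphism fixing $(\mu^{\mathrm{lv}},\mu^{\mathrm{lv}})$, which by connectedness of the bicrystal (via \cref{prop:unique_crystal_graph_isomorphism} applied componentwise) forces the reconstructed pair to agree. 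The weight bookkeeping in the last paragraph is routine once the accounting of $0$-arrows against the local energies $\mathscr{H}_k$ is set up correctly, so the conceptual weight of the proof rests on the uniqueness statement rather than on any computation.
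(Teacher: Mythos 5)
Your overall route is the same as the paper's: both the well-posedness of $\Upsilon$ and the mutual inverse property are made to rest entirely on \cref{thm:leading_map_and_leading_tableaux} (independence of $(T,T)=\mathcal{L}_{P,Q}(P,Q)$ from the realization of the leading map), combined with the parameterization of leading tableaux in \cref{prop:bijection_leading_tab_kappa}. That part of your argument is correct and matches the paper, which in fact writes out less detail than you do.

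There is, however, one step in your proof of \eqref{eq:weight_preserving} that does not go through as written. You apply \cref{prop:cells_and_local_energy} directly to the original pair $(P,Q)$ to conclude that each $0$-Demazure arrow occurring in $\mathcal{L}_{P,Q}$ changes the empty shape by exactly one cell. But that proposition carries the hypothesis that $\iota_1^{-1}(P,Q)$ is $\RSK$-stable, and the paper explicitly warns that for a general pair the effect of the $0$-th operators on the shape is \emph{not} simply described (see the example of $\widetilde{E}_0$ acting on a single tableau just before \cref{prop:cells_and_local_energy}, where a single application changes the empty shape by more than one cell). The correct accounting — which is how the paper proves the shape formula inside \cref{thm:leading_map_and_leading_tableaux} — first uses \cref{thm:symmetries_RSK} to write $\mathcal{L}_{P,Q}(P,Q)=\RSK^{-t}\circ\mathcal{L}_{P,Q}\circ\RSK^{t}(P,Q)$, applies \cref{prop:cells_and_local_energy} and \cref{prop:local_energies_demazure} to the stable pair $\RSK^{t}(P,Q)$ to get $\widehat{\rho}'_k=\widetilde{\rho}'_k-\mathscr{H}_k(V)-\mathscr{H}_k(W)$ there, and then transports the count back through $\RSK^{-t}$ using the linearization on leading tableaux (\cref{thm:leading_tab_have_no_phase_shift}); the $\RSK^{\pm t}$ contributions to $|\rho|$ cancel because the number of labeled cells is unchanged by the leading map. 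You should either route your bookkeeping through this conjugation, or simply quote the column-wise relation between $\rho$ and the empty shape of $T$ that is already established in the proof of \cref{thm:leading_map_and_leading_tableaux}, after which $|\rho_T|=|\kappa|+|\nu|$ from \cref{prop:bijection_leading_tab_kappa} gives \eqref{eq:weight_preserving} as you intend.
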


\begin{proof}
    This theorem is consequence of \cref{thm:leading_map_and_leading_tableaux}, which itself follows from \cref{thm:symmetries_RSK,thm:leading_tab_have_no_phase_shift}. Let us show that $\Upsilon$ is well posed and injective analyzing the three steps in \eqref{eq:bijection_from_pi_to_V1_V2_kappa}. Given a pair $(P,Q)$, the corresponding partition $\mu$ and the asymptotic vertically strict tableaux $(V,W)=\Phi(P,Q)$ are unambiguously defined. Leading map $\mathcal{L}_{P,Q}$ is determined composing leading maps $\mathcal{L}_V$ and $\mathcal{L}_W$ as in \cref{def_map_G_PQ}. As pointed out in \cref{rem:non_uniqueness_leading_map,rem:non_uniqueness_leading_map_pair} the expression of $\mathcal{L}_{P,Q}$ as a combination of Kashiwara operators is not unique. Nevertheless, thanks to \cref{thm:leading_map_and_leading_tableaux}, the tableaux $T$ such that $\mathcal{L}_{P,Q}(P,Q)=(T,T)$ is independent of the particular realization of the leading map and it is a leading tableaux that uniquely identifies the remaining data $\kappa \in \mathcal{K}(\mu)$ and $\nu \in \mathbb{Y}$. This shows that $\Upsilon:(P,Q) \mapsto (V,W;\kappa,\nu)$ is injective.
    
    On the other hand given $(V,W;\kappa,\nu)$ and constructed the leading tableau $T=T(\mu,\kappa;\nu)$ we know, again from \cref{thm:leading_map_and_leading_tableaux}, that the action of the map $\mathcal{L}^{-1}$, defined by (3) of \eqref{eq:bijection_from_V1_V2_kappa_to_pi}, is independent of the particular realization of leading maps $\mathcal{L}_V, \mathcal{L}_W$. This implies that $(P,Q)$ are uniquely determined by the data $(V,W;\kappa,\nu)$ and one can easily see that this operation if the inverse of $\Upsilon$.
\end{proof}

Restrincting bijection $(P,Q) \xleftrightarrow[]{\Upsilon \,} (V,W;\kappa;\nu)$ to the case $\nu = \ker(P,Q)=\varnothing$ and composing with projection induced by the Sagan-Stanley correspondence $(P,Q) \xrightarrow[]{\skwRSK\,} \overline{\pi}$ yields a map $\overline{\pi} \xleftrightarrow[]{\tilde{\Upsilon} \,} (V,W;\kappa)$ more in the spirit of the RSK correspondence. 

\begin{corollary} \label{cor:bijection_pi_V1_V2_kappa}
    Map defined by \eqref{eq:bijection_from_pi_to_V1_V2_kappa}, \eqref{eq:bijection_from_V1_V2_kappa_to_pi} naturally restricts to a content preserving bijection
    \begin{equation*}
        \overline{\mathbb{A}}_{n,n}^+ \xleftrightarrow[]{ \hspace{.4cm} \tilde{\Upsilon} \hspace{.4cm}} 
        \bigcup_{\mu \in \mathbb{Y} : \ell (\mu) \le n} VST(\mu) \times VST(\mu) \times \mathcal{K}(\mu).
    \end{equation*}
    In case $\overline{\pi} \xleftrightarrow[]{\tilde{\Upsilon} \,} (V,W;\kappa)$, we have
    \begin{equation}
        \wt(\overline{\pi}) = \mathscr{H}(V) + \mathscr{H}(W) + |\kappa|.
    \end{equation}
\end{corollary}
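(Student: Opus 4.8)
The plan is to obtain $\tilde{\Upsilon}$ purely by assembling the two bijections already available — the Sagan–Stanley correspondence of \cref{thm:SS} and the bijection $\Upsilon$ of \cref{thm:new_bijection} — specialized to the case of empty kernel. The key observation that makes this specialization coherent is that the partition $\nu$ appearing in $\Upsilon$ is nothing but $\ker(P,Q)$, which coincides with the partition $\nu$ of the Sagan–Stanley correspondence. Once this identification is in place, the corollary becomes a formal consequence of the two theorems.

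First I would verify this identification of the two $\nu$'s. Tracing the construction of $\Upsilon$ in \eqref{eq:bijection_from_pi_to_V1_V2_kappa}, the partition $\nu$ produced in step (3) equals $\ker(T)=\ker(T,T)$; since $T$ is obtained from $(P,Q)$ by a leading map, a composition of Kashiwara operators, \cref{prop:iota_preserves_ker} gives $\ker(T,T)=\ker(P,Q)$. On the other hand, the Sagan–Stanley bijection $(\overline{M};\nu)\xleftrightarrow{\skwRSK}(P,Q)$ is built, via \cref{prop:row_coordinate_classical_pair} and \cref{thm:matrices_configurations}, so that its $\nu$ is exactly $\ker(P,Q)$. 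Hence setting $\nu=\varnothing$ on both sides is consistent: pairs $(P,Q)$ with $\ker(P,Q)=\varnothing$ correspond under $\skwRSK$ precisely to weighted biwords $\overline{\pi}\in\overline{\mathbb{A}}_{n,n}^+$ (carrying no extra partition), and under $\Upsilon$ precisely to quadruples whose last entry is $\varnothing$, that is, to triples $(V,W;\kappa)$. Composing the two restricted bijections yields $\tilde{\Upsilon}$.

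Next I would pin down the range of $\mu$. Since $V,W\in VST(\mu,n)$ have strictly increasing columns with entries in $\{1,\dots,n\}$, the longest column has length $\mu_1'=\ell(\mu)\le n$; conversely, for any $\mu$ with $\ell(\mu)\le n$ and any $(V,W;\kappa)\in VST(\mu)\times VST(\mu)\times\mathcal{K}(\mu)$, the inverse map \eqref{eq:bijection_from_V1_V2_kappa_to_pi} with $\nu=\varnothing$ produces a legitimate pair $(P,Q)$ with empty kernel, hence a legitimate $\overline{\pi}\in\overline{\mathbb{A}}_{n,n}^+$. This identifies the codomain as $\bigcup_{\mu:\ell(\mu)\le n}VST(\mu)\times VST(\mu)\times\mathcal{K}(\mu)$.

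Finally, the weight and content statements follow immediately. Setting $\nu=\varnothing$ in the weight identity \eqref{eq:SS_weight_preserving} gives $|\rho|=\wt(\overline{M})=\wt(\overline{\pi})$, while setting $\nu=\varnothing$ in \eqref{eq:weight_preserving} of \cref{thm:new_bijection} gives $|\rho|=\mathscr{H}(V)+\mathscr{H}(W)+|\kappa|$; equating the two yields $\wt(\overline{\pi})=\mathscr{H}(V)+\mathscr{H}(W)+|\kappa|$. For content preservation I would invoke that both the projection $(P,Q)\xrightarrow{\skwRSK}\overline{\pi}$ of \cref{prop:PQ_M_morphism} and the projection $\Phi:(P,Q)\mapsto(V,W)$ of \cref{cor:Phi_is_morphism} are morphisms of $\widehat{\mathfrak{sl}}_n$ bicrystals, hence preserve the content functions; consequently $\gamma(V)=\gamma(P)$ matches the $p$-content of $\overline{\pi}$ and $\gamma(W)=\gamma(Q)$ its $q$-content. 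The only step demanding a little care is the bookkeeping identification $\nu_{\Upsilon}=\nu_{\skwRSK}=\ker(P,Q)$ above, and I expect this to be the sole genuine, if mild, obstacle in an otherwise formal argument.
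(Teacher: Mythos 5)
Your proposal is correct and takes essentially the same approach as the paper: the paper's entire proof is the observation that if $(P,Q) \xleftrightarrow[]{\skwRSK\,} (\overline{\pi};\nu)$ then $(P,Q) \xleftrightarrow[]{\Upsilon\,} (V,W;\kappa,\nu)$ with the \emph{same} partition $\nu$ in both correspondences, after which $\nu$ is factored out. Your additional verifications (identifying both $\nu$'s with $\ker(P,Q)$ via \cref{prop:iota_preserves_ker}, the bound $\ell(\mu)\le n$, and the weight identity obtained by equating \eqref{eq:SS_weight_preserving} and \eqref{eq:weight_preserving} at $\nu=\varnothing$) simply make explicit what the paper leaves implicit.
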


\begin{proof}
    One only needs to notice that if $(P,Q) \xleftrightarrow[]{\skwRSK \,} (\overline{\pi};\nu)$ then $(P,Q) \xleftrightarrow[]{\Upsilon \,} (V,W;\kappa,\nu)$ where the partition $\nu$ is equal for both cases. Factoring out information about $\nu$ we are left with the desired bijection.
\end{proof}

\noindent
Clearly this also induces a bijection $\overline{M} \xleftrightarrow[]{\tilde{\Upsilon}} (V,W;\kappa)$, which we denote by the same notation, where  $\overline{M}\in\overline{\mathbb{M}}^+_{n\times n}$.

\subsection{A worked out example} \label{subs:worked_out_example}

In this subsection we present an example of bijection $\Upsilon$ defined in \eqref{eq:bijection_from_pi_to_V1_V2_kappa}. We also take this as an opportunity to review various constructions introduced throughout the text. Let
    \begin{equation}
        P = 
        \begin{ytableau}
            \, & & & 1
            \\
            & 1 & 3
            \\
            2 & 4
        \end{ytableau},
        \qquad
        Q = 
        \begin{ytableau}
            \, & & & 1
            \\
            & 2 & 2
            \\
            1 & 3
        \end{ytableau}
        .
    \end{equation}
    A single iteration of the skew $\RSK$ map yields the pair $(P',Q')=\RSK(P,Q)$ as
    \begin{equation}
        P'
        =
        \begin{ytableau}
            \, & & & 
            \\
            & &
            \\
            & & 1
            \\
            1 & 3
            \\
            2
            \\
            4
        \end{ytableau}
        ,
        \qquad
        Q'
        =
        \begin{ytableau}
            \, & & & 
            \\
            & &
            \\
            & & 2
            \\
            1 & 1
            \\
            2
            \\
            3
        \end{ytableau}
        ,
    \end{equation}
    which is $\RSK$-stable, so that vertically strict tableaux $V,W$ are
    \begin{equation}
        V = 
        \begin{ytableau}
            1 & 3 & 1
            \\
            2
            \\
            4
        \end{ytableau},
        \qquad
        W = 
        \begin{ytableau}
            1 & 1 & 2
            \\
            2
            \\
            3
        \end{ytableau}
        .
    \end{equation}
    A possible leading map for $V$ is 
    \begin{equation}
        \mathcal{L}_{V} = \E{3} \circ \E{2} \circ \F{0} \circ \F{3} \circ \F{2},
    \end{equation}
    since
    \begin{equation} \label{eq:example_leading_map}
        \begin{ytableau}
            1 & 3 & 1
            \\
            2
            \\
            4
        \end{ytableau}
        \xrightarrow[]{\hspace{.2cm}\F{2}\hspace{.2cm}}
        \begin{ytableau}
            1 & 3 & 1
            \\
            3
            \\
            4
        \end{ytableau}
        \xrightarrow[]{\hspace{.2cm}\F{3}\hspace{.2cm}}
        \begin{ytableau}
            1 & 4 & 1
            \\
            3
            \\
            4
        \end{ytableau}
        \xrightarrow[]{\hspace{.2cm}\F{0}\hspace{.2cm}}
        \begin{ytableau}
            1 & 1 & 1
            \\
            3
            \\
            4
        \end{ytableau}
        \xrightarrow[]{\hspace{.2cm}\E{2}\hspace{.2cm}}
        \begin{ytableau}
            1 & 1 & 1
            \\
            2
            \\
            4
        \end{ytableau}
        \xrightarrow[]{\hspace{.2cm}\E{3}\hspace{.2cm}}
        \begin{ytableau}
            1 & 1 & 1
            \\
            2
            \\
            3
        \end{ytableau}
        .
    \end{equation}
    Notice that the $0$-th Kashiwara operators $\F{0}$ is, in this particular case, a Demazure arrow. Leading map for $W$ is even simpler and we can take $\mathcal{L}_{W} = \E{1}$. Combining $\mathcal{L}_{V}$ and $\mathcal{L}_{W}$ we produce the leading map for the pair $(P,Q)$,
    \begin{equation}
        \mathcal{L}_{P,Q} = \widetilde{E}^{(1)}_3 \circ \widetilde{E}^{(1)}_2 \circ \widetilde{F}^{(1)}_0 \circ \widetilde{F}^{(1)}_3 \circ \widetilde{F}^{(1)}_2 \circ \widetilde{E}^{(2)}_1,
    \end{equation}
    whose action can be computed as
    \begin{equation}
        \begin{split}
            &
            \left(
            \begin{ytableau}
            \, & & & 1
            \\
            & 1 & 3
            \\
            2 & 4
            \end{ytableau}
            ,
            \begin{ytableau}
            \, & & & 1
            \\
            & 2 & 2
            \\
            1 & 3
            \end{ytableau}
            \right)
            \xrightarrow[]{\widetilde{F}^{(1)}_2 \circ \widetilde{E}^{(2)}_1}
            \left(
            \begin{ytableau}
            \, & & & 1
            \\
            & 1 & 3
            \\
            3 & 4
            \end{ytableau}
            ,
            \begin{ytableau}
            \, & & & 1
            \\
            & 1 & 2
            \\
            1 & 3
            \end{ytableau}
            \right)
            \xrightarrow[]{ \widetilde{F}^{(1)}_3 }
            \left(
            \begin{ytableau}
            \, & & & 1
            \\
            & 1 & 3
            \\
            3 & 4
            \end{ytableau}
            ,
            \begin{ytableau}
            \, & & & 1
            \\
            & 1 & 2
            \\
            1 & 3
            \end{ytableau}
            \right)
            \\
            &
            \xrightarrow[]{\widetilde{F}^{(1)}_0}
            \left(
            \begin{ytableau}
            \, & & & 1
            \\
            1 & 1 & 3
            \\
            4
            \end{ytableau}
            ,
            \begin{ytableau}
            \, & & & 1
            \\
            1 & 1 & 2
            \\
            3
            \end{ytableau}
            \right)
            \xrightarrow[]{ \widetilde{E}^{(1)}_2 }
            \left(
            \begin{ytableau}
            \, & & & 1
            \\
            1 & 1 & 2
            \\
            4
            \end{ytableau}
            ,
            \begin{ytableau}
            \, & & & 1
            \\
            1 & 1 & 2
            \\
            3
            \end{ytableau}
            \right)
            \xrightarrow[]{ \widetilde{E}^{(1)}_3 }
            \left(
            \begin{ytableau}
            \, & & & 1
            \\
            1 & 1 & 2
            \\
            3
            \end{ytableau}
            ,
            \begin{ytableau}
            \, & & & 1
            \\
            1 & 1 & 2
            \\
            3
            \end{ytableau}
            \right).
        \end{split}
    \end{equation}
    In the right hand side we obtained $(T,T)=\mathcal{L}_{P,Q}(P,Q)$, where $T$ is a leading tableau as it can be checked (more in general this is implied by \cref{thm:leading_map_and_leading_tableaux}). Using correspondence of  \cref{prop:bijection_leading_tab_kappa} we write $T$ as
    \begin{equation}
        \begin{ytableau}
            \, & & & 1
            \\
            1 & 1 & 2
            \\
            3
            \end{ytableau} = T(\mu, \kappa;\nu),
            \qquad
            \text{with}
            \qquad
            \mu = \ydiagram{3,1,1},
            \qquad
            \kappa=(0,1,1),
            \qquad
            \nu=\ydiagram{1} \, .
    \end{equation}
    Therefore correspondence \eqref{eq:bijection_from_pi_to_V1_V2_kappa}, in this case, yields
    \begin{equation}
        \left(
            \begin{ytableau}
            \, & & & 1
            \\
            & 1 & 3
            \\
            2 & 4
            \end{ytableau}
            ,
            \begin{ytableau}
            \, & & & 1
            \\
            & 2 & 2
            \\
            1 & 3
            \end{ytableau}
        \right)
        \xleftrightarrow[]{\hspace{.4cm} \Upsilon \hspace{.4cm}}
        \left(
            \begin{ytableau}
            1 & 3 & 1
            \\
            2
            \\
            4
        \end{ytableau},
        \begin{ytableau}
            1 & 1 & 2
            \\
            2
            \\
            3
        \end{ytableau}
        ;
        (0,1,1)
        ;
        \ydiagram{1}
        \right).
    \end{equation}
    We can finally verify that the relation between empty shape, energies, $\kappa$ and $\nu$ holds, since 
    \begin{equation}
        \mathscr{H}(V)=1,
        \qquad
        \mathscr{H}(W)=0
    \end{equation}
    and
    \begin{equation}
        |\rho| = \mathscr{H}(V) + \mathscr{H}(W) + |\kappa| + |\nu|
        \,\,\,
        \rightsquigarrow
        \,\,\,
        4 = 1+ 0 + 2 + 1.
    \end{equation}
    Clearly, reading backward the example we just presented, gives a realization of the inverse map $\Upsilon^{-1}$.

\subsection{Extensions} \label{subs:extensions}
    Arguments and constructions described throughout this paper admit a few natural extensions. We will outline some of these in the next few paragraphs, although, to keep the exposition concise, we will not enter the details of any of the cases we present. 
    
    \medskip
    
    In order to establish \cref{thm:new_bijection} we have leveraged properties of the skew $\RSK$ map and of theory of affine crystals. In particular our skew $\RSK$ map was defined in \cref{sec:miscellaneous} through a sequence of internal \emph{row} insertions. A natural twist to this story would come from a replacement of row insertions by \emph{column insertions} as described in \cite[Chapter 3.2]{sagan2001symmetric}. Call \emph{skew $\RSK^{\mathrm{col}}$ map} the map defined by switching row and column insertions in \cref{def:RSK_product_tableaux}. Then one could define a \emph{skew $\RSK^{\mathrm{col}}$ dynamics}, which conversely from the skew $\RSK$ dynamics, would evolve the shape of tableaux $(P,Q)$ ``rightward" rather than ``downward". It is natural to expect that repeating arguments developed in this paper one could produce an additional new bijection
    \begin{equation}
        \Upsilon^{\mathrm{col}}:(P,Q) \mapsto (V,W;\kappa,\nu),
    \end{equation}
    analogous to that of \cref{thm:new_bijection}. Here $(P,Q)$ is a pair of semi-standard skew tableaux, while this time $V,W$ are \emph{horizontally weak} tableaux (sometimes called \emph{tabloids} \cite[Chapter 2.1]{sagan2001symmetric}) of same shape $\mu$, $\kappa$ is a suitable adaptation of \cref{def:fundamental_shifts} and $\nu$ is a partition. This would yield a correspondence similar to Shi's affine Robinson-Schensted correspondence \cite{Shi_affine_RS} and a comparison between the two constructions would be of much interest.
    
    \medskip
    
    Another natural extension of our theory comes from replacing the skew $\RSK$ map with its \emph{dual} variant which could be defined following \cite[Section 7]{sagan1990robinson}. Sagan and Stanley used this idea to put in correspondence pairs $(P,Q)$ of semi-standard tableaux with conjugate shape with pairs $(\overline{M};\nu)$ consisting of a \emph{binary} infinite matrix $\overline{M}$ and a partition $\nu$.
    Calling \emph{skew $\RSK^{\vee}$ map} such dual map, we can define a \emph{skew $\RSK^\vee$ dynamics} in which the $P$ tableau evolves in the ``downward" direction, while the $Q$ tableau evolves ``rightward" as a result of the fact that their shapes are one the transpose of the other. In this case it is natural to expect that a reformulation of our arguments would lead to another new bijection
    \begin{equation}
        \Upsilon^\vee : (P,Q) \mapsto (V,W;\kappa,\nu).
    \end{equation}
    In this case $(P,Q)\in SST(\lambda/\rho ,n) \times SST(\lambda' / \rho',n)$, while $V$ and $W$ are respectively a vertically strict and a horizontally weak tableaux (i.e. cells are weakly increasing along rows and no condition is set on columns) of conjugate shapes $\mu,\mu'$, $\kappa$ is again a suitable adaptation of the \cref{def:fundamental_shifts} depending on $\mu$ and $\nu$ is a partition.
    
    We shall consider the two extensions discussed above more precisely in future works.
    
\section{Scattering rules} \label{sec:scattering_rules}

In this section we analyse the skew $\RSK$ dynamics from the viewpoint of discrete classical integrable systems, as outlined in \cref{subs:examples}. Conservation laws stated in \cref{thm:asymptotic_shape_RSK} and symmetries of \cref{thm:symmetries_RSK} reveal analogies with the renowned Box-Ball Systems (BBS) introduced in \cite{Takahashi_Satsuma}; for a review see \cite{Inoue_2012}.

\subsection{Setup}
We have defined a pair $(P,Q)$ to be $\RSK$-stable, if the action of arbitrary many iteration of the skew $\RSK$ map on $(P,Q)$ has the only effect of shifting columns vertically. Analogously we define a pair $(P,Q)$ to be $\RSK^{-1}$-stable if $\RSK^{-t}(P,Q)$ differs from the original pair $(P,Q)$ by vertical shifts of the shape, with no changes in the column content. The natural question we address in this section is the following.

\begin{question} \label{question:scattering_problem}
    Consider a pair $(P,Q)$ of skew semi-standard tableaux and assume that such pair is $\RSK^{-1}$-stable. We know that for $t$ large enough $(\widetilde{P}, \widetilde{Q}) = \RSK^t(P,Q)$ becomes $\RSK$-stable. One can think, for instance of pairs of tableaux depicted in the left and right hand side of \cref{fig:scattering}. Can we precisely describe $(\widetilde{P}, \widetilde{Q})$ purely in terms of $(P,Q)$ and $t$? 
    More specifically:
    \begin{itemize}
        \item Can we predict the content of columns of $(\widetilde{P}, \widetilde{Q})$?
        \item Can we predict the shape of tableaux $\widetilde{P}, \widetilde{Q}$?
    \end{itemize}
\end{question}

It turns out that Question \ref{question:scattering_problem} admits a precise answer, that we present in the two main theorems of this section. In \cref{thm:affine_evacuation} we describe how labeled cells of tableaux $P,Q$ rearrange following the scattering produced by the dynamics of the skew $\RSK$ dynamics. Leveraging on linearization techniques of the skew $\RSK$ map elaborated in \cref{sec:linearization}, the task of describing the shape of $(\widetilde{P},\widetilde{Q})$ becomes then a simple exercise. We present it in \cref{thm:phase_shift} in \cref{subs:phase_shift}.

\begin{figure}[ht]
    \centering
    \includegraphics[scale=.9]{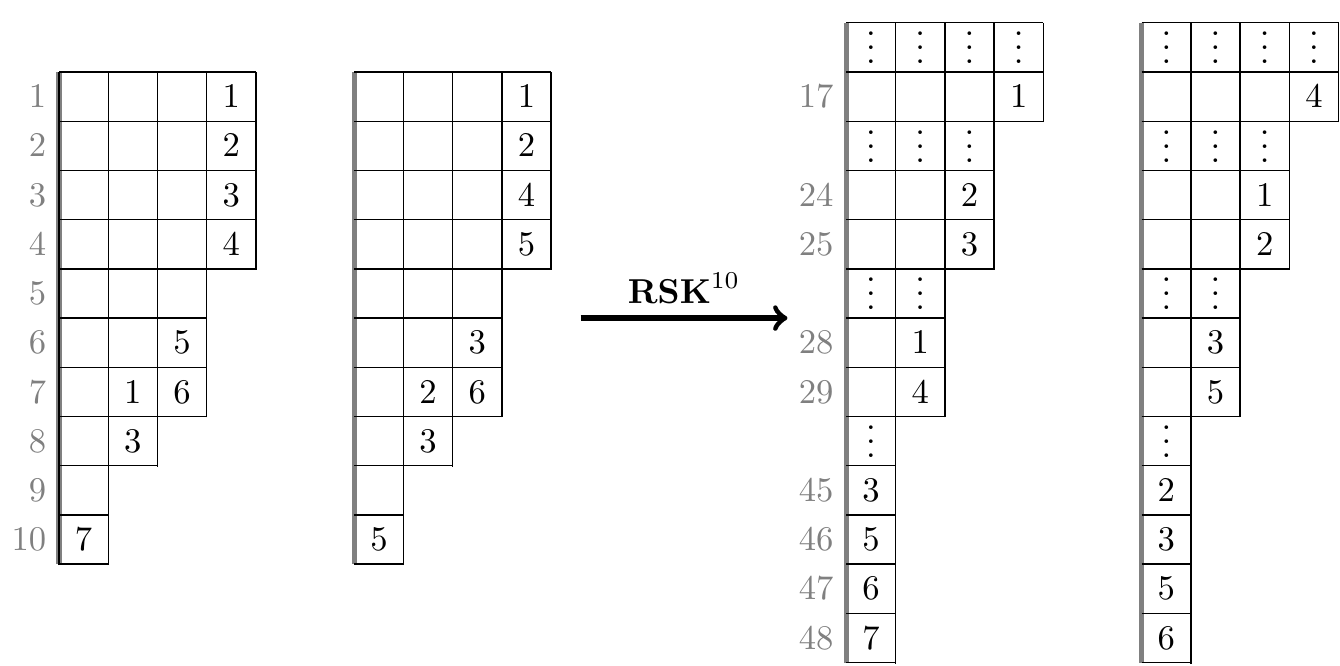}
    \caption{A full scattering from an $\RSK^{-1}$-stable pair on the left to an $\RSK$-stable pair on the right.}
    \label{fig:scattering}
\end{figure}

\subsection{Scattering in the skew $\RSK$ dynamics} \label{subs:scattering_in_the_RSK}

Fix $P,Q\in SST(\lambda/ \rho,n)$ and consider the skew $\RSK$ dynamics $(P_t,Q_t)$ with initial data $(P,Q)$. For any $t$ define $v^{(t)}=v^{(t)}_1 \otimes \cdots \otimes v^{(t)}_{\lambda_1}$ to be the element of the crystal $B^{\varkappa(t)}$ formed by the tensor product of columns of $P_t$. Analogously define $u^{(t)}=u^{(t)}_1 \otimes \cdots \otimes u^{(t)}_{\lambda_1}$ from columns of $Q_t$. Composition $\varkappa(t)$ records then the number of labeled elements at each column of $P_t,Q_t$.  We have seen in \cref{sec:Greene_invariants} that, when $t$ becomes large, $\varkappa(t) \xrightarrow[t\to \infty]{} \mu'$, where $\mu$ is the Greene invariant of $\overline{\pi}$ related to $(P,Q)$ by the Sagan-Stanley correspondence $(P,Q) \xrightarrow[]{\skwRSK \,} \overline{\pi}$. Analogously we can consider $\varkappa(t)$ for $t\to -\infty$. In the following theorem we denote with $\overleftarrow{\eta}=(\eta_N,\dots,\eta_1)$ the reverse ordering of a partition $\eta = (\eta_1,\dots,\eta_N)$.

\begin{theorem} \label{thm:soliton_conservation}
    In the notation introduced above we have $\varkappa(-t) = \overleftarrow{\mu}'$ eventually for $t$ large enough.
\end{theorem}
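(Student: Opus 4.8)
The plan is to transport the statement from the skew $\RSK$ dynamics to the Viennot dynamics, and there exploit a reflection symmetry of the shadow line construction that conjugates the Viennot map $\mathbf{V}$ to its inverse. First I would fix a pair with $(P,Q)\xrightarrow{\skwRSK}\overline{\pi}$ and use Proposition~\ref{prop:Viennot_and_RSK_ca}: the columns of the far-past tableaux $P_{-t},Q_{-t}$ are recorded by the localized decreasing sequences into which $\mathbf{V}^{-s}(\overline{\pi})$ decomposes for $s$ large, and the left-to-right column order, i.e. the composition $\varkappa(-t)$, is dictated by the horizontal ($q$-)coordinates of these sequences on $\mathscr{C}_n$. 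Thus it suffices to analyse the backward Viennot dynamics and keep track of horizontal positions.

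Second, I would introduce the point reflection $\theta\colon (j,i)\mapsto(-j,-i)$, which is well defined on $\mathscr{C}_n$ because it commutes with the identification $\sim_n$, and observe that it exchanges north/east shadow rays with south/west rays. Reading off Definition~\ref{def:Viennot_map}, this means $\theta$ conjugates the Viennot map to the inverse construction, $\mathbf{V}^{-1}=\theta\circ\mathbf{V}\circ\theta$, whence $\mathbf{V}^{-t}=\theta\circ\mathbf{V}^{t}\circ\theta$ for all $t$. Moreover $\theta$ maps strict down-right loops to strict down-right loops (reversing their traversal) and up-right paths to up-right paths, so it preserves the lengths of localized decreasing and of increasing subsequences; in particular $D_k(\theta(\overline{\pi}))=D_k(\overline{\pi})$ for all $k$, so by \eqref{eq:mu_Greene} the Greene invariant of $\theta(\overline{\pi})$ equals $\mu=\mu(\overline{\pi})$.

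Third, applying Proposition~\ref{prop:Viennot_asymptotic} to $\theta(\overline{\pi})$, the forward-stable state $\mathbf{V}^{t}(\theta(\overline{\pi}))$ decomposes into localized decreasing sequences whose lengths, listed by increasing horizontal coordinate, are exactly $\mu'=(\mu_1'\ge\mu_2'\ge\cdots)$; this is precisely the already-established forward statement $\varkappa(t)=\mu'$, together with the fact (from Theorem~\ref{thm:asymptotic_shape_RSK}) that the multiset of lengths is $\mu'$. Since $\mathbf{V}^{-t}(\overline{\pi})=\theta(\mathbf{V}^{t}(\theta(\overline{\pi})))$ and $\theta$ negates the horizontal coordinate, the far-past decomposition of $\overline{\pi}$ has the same multiset of lengths but in the reversed horizontal order, namely $(\mu'_{\mu_1},\dots,\mu'_1)=\overleftarrow{\mu}'$. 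Translating back through Proposition~\ref{prop:Viennot_and_RSK_ca} then gives $\varkappa(-t)=\overleftarrow{\mu}'$ for $t$ large; backward stabilization for large $t$ is guaranteed by the same monotonicity of cell coordinates that yields forward stabilization (equivalently, by reflecting the forward stabilization of $\theta(\overline{\pi})$ through $\theta$).

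The main obstacle I expect lies in the bookkeeping of the first and third steps: precisely matching the left-to-right column order $\varkappa(\pm t)$ of the tableaux with the horizontal ordering of the corresponding localized decreasing sequences on $\mathscr{C}_n$, and verifying that $\theta$ reverses exactly this ordering while preserving each sequence's length. Establishing the conjugation $\mathbf{V}^{-1}=\theta\circ\mathbf{V}\circ\theta$ rigorously from Definition~\ref{def:Viennot_map} — that is, checking that the reflected north/east shadow-line construction is genuinely the inverse south/west construction under the periodic identifications of the twisted cylinder — is conceptually routine but must be carried out with some care precisely because of those identifications.
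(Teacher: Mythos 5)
Your route differs from the paper's: the paper simply observes that the backward asymptotic increments are governed by the conserved quantities of $\mathbf{V}^{-1}$, which coincide with those of $\mathbf{V}$ (this is already Lemma~\ref{lemma:Viennot_preserves_Dk}/Theorem~\ref{thm:subsequences_Viennot}, proved via the inverse shadow line construction), and then concludes. Your reflection $\theta\colon(j,i)\mapsto(-j,-i)$ is indeed well defined on $\mathscr{C}_n$ and gives an alternative way to see the multiset equality of column lengths, though it is somewhat redundant given Theorem~\ref{thm:subsequences_Viennot}, and the conjugation $\mathbf{V}^{-1}=\theta\circ\mathbf{V}\circ\theta$ itself rests on the (nontrivial, but paper-sanctioned) fact that the south/west shadow-line construction inverts $\mathbf{V}$.

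The genuine gap is in your order-reversal step. You assert that the left-to-right column order, i.e.\ the composition $\varkappa(\pm t)$, is dictated by the horizontal ($q$-)coordinates of the localized decreasing sequences on $\mathscr{C}_n$. This is false: a point of $\overline{\pi}$ sits at $(q_i,\,p_i-nw_i)$, so the horizontal coordinates of the LDS attached to a given column are the \emph{labels} appearing in that column of $Q$, not its column index. In the proof of Proposition~\ref{prop:Viennot_asymptotic} the LDSs belonging to different columns are separated in the \emph{weight} (vertical) direction — $\max_i w_i^{(j)}\ll\min_i w_i^{(j-1)}$ — because the column index is encoded in the time at which the column crosses the reference row $K$, not in any horizontal position. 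Hence "$\theta$ negates the horizontal coordinate, so the order is reversed" does not establish $\varkappa(-t)=\overleftarrow{\mu}'$. The conclusion can be rescued in two ways: (i) note that $\theta$ also negates the vertical coordinate and redo the bookkeeping with the temporal ordering of the LDSs; or, much more simply, (ii) once the multiset of backward column speeds is known to be $\{\mu_1',\dots,\mu_{\lambda_1}'\}$, the requirement that the shape of $(P_{-t},Q_{-t})$ remain a generalized Young diagram forces the bottoms of the columns, which sit near row $-t\,\varkappa(-t)_j$, to be weakly decreasing in $j$, i.e.\ $\varkappa(-t)_1\le\varkappa(-t)_2\le\cdots$; together with the multiset equality this pins down $\varkappa(-t)=\overleftarrow{\mu}'$. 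Option (ii) is presumably what the paper means by "one can easily complete the proof."
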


\begin{proof}
    In \cref{thm:asymptotic_shape_RSK} we have related the asymptotic increment $\varkappa(t)$ for $t \gg 0$ with conserved quantities of the Viennot map $\mathbf{V}$, yielding the equality $\varkappa(t)=\mu'$. Analogously we can relate backward asymptotic increments $\varkappa(-t)$ for $t \gg 0$ with conserved quantities of the inverse Viennot map $\mathbf{V}^{-1}$, which are the same as $\mathbf{V}$. Through such argument one can easily complete the proof.
\end{proof}

\begin{remark}
    In view of \cref{thm:soliton_conservation}, columns of tableaux in the skew $\RSK$ dynamics may be seen as solitons. Thanks to conservation laws, they survive after collisions with others and eventually propagate at their own characteristic speeds, similarly to the ones in  BBS.
    In the case of the BBS,  conservation of solitons can be proven in several different ways. These include commutation of transfer matrices \cite{fukuda_okado_yamada_BBS_energy} or bijection with rigged configuration through the Kerov-Kirillov-Reschetikhin (KKR) correspondence \cite{Kuniba_et_al_KKR_BBS}. Similarities between the BBS and the skew $\RSK$ dynamics provided by \cref{thm:soliton_conservation,thm:phase_shift,thm:affine_evacuation} suggest that the framework developed in this paper might provide an alternative, more combinatorial, route to study the BBS. It would be interesting to understand relations between our results and KKR correspondence or even to understand extension of such correspondence in types other than $A^{(1)}_{n-1}$. We plan to pursue these directions in future publications.
\end{remark}

Carrying on with the notation introduced at the beginning of the present subsection we see elements $v^{(t)},u^{(t)}\in B^{\varkappa(t)}$ become, for large $t$ equivalent to the asymptotic vertically strict tableaux of \cref{def:asymptotic_VST}. Analogously, $v^{(-t)},u^{(-t)}$, for large $t$, eventually stabilize and we define the limits
\begin{equation}
    V^-=\lim_{t \to \infty} v^{(-t)},
    \qquad
    W^-=\lim_{t \to \infty} u^{(-t)}.
\end{equation}
By \cref{thm:soliton_conservation} we have $V^-, W^- \in B^{\overleftarrow{\mu}'}$ and we define the \email{backward projection}
\begin{equation}
    \Phi^-:(P,Q) \mapsto (V^-, W^-),
\end{equation}
as the negative time counterpart of $\Phi$ given in \cref{eq:Phi}.

The relation between the backward and forward asymptotic states $(V^-,W^-)$ and $(V,W)$ gives the 
scattering rules of solitons in the skew $\RSK$ dynamics. A typical feature of integrable systems 
is that effects of multi-body scattering are fully determined by the knowledge of the two-body scattering. 
In the case of BBSs, two body scattering is given by the combinatorial $R$-matrix. The following theorem 
claims that the scattering rules relating $(V^-,W^-)$ and $(V,W)$ in our skew $\RSK$ dynamics are also 
described by consecutive applications of the same combinatorial $R$-matrices corresponding to the 
change of orders of solitons. 
In order to give a precise statement we define, following \cref{prop:unique_isomorphism}, the unique isomorphism of crystal graphs $B^\mu \to  B^{\overleftarrow{\mu}'}$. It is expressed, naming $\delta(N)$ the permutation $(N \,\, \, N-1 \,\, \, \cdots \,\,\, 1)$, as
\begin{equation} \label{eq:R_shape_reversing}
    R_{\delta(\mu_1)} \coloneqq 
    R_1 \cdot (R_2  R_1) \cdot (R_3  R_2  R_1) \cdots (R_{\mu_1-1} \cdots R_1) : B^{\mu'} \to B^{\overleftarrow{\mu}'}.
\end{equation}
For an example of the action of $R_{\delta(\mu_1)}$, see \eqref{eq:R_shape_reversing} below.

\begin{theorem}\label{thm:affine_evacuation}
    Let $(P,Q)$ be a pair of tableaux, call $\mu$ the respective Greene invariant and consider the projections
    \begin{equation}
        \Phi(P,Q) = (V, W),
        \qquad
        \Phi^-(P,Q) = (V^-, W^-).
    \end{equation}
    Then the map $\Psi:(V,W) \to (V^-, W^-)$ is well defined, does not depend on the choice of $(P,Q)$ and it is given by
    \begin{equation}
        V^-= R_{\delta(\mu_1)} (V),
        \qquad
        W^-= R_{\delta(\mu_1)} (W).
    \end{equation}
\end{theorem}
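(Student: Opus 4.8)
The goal is to show that the forward-to-backward scattering map $\Psi:(V,W)\mapsto(V^-,W^-)$ is realized by the shape-reversing isomorphism $R_{\delta(\mu_1)}$ acting on each component. The guiding principle, as with the combinatorial $R$-matrix description of the BBS, is that the entire scattering is governed by the two-body exchanges of solitons (columns), and that the order-reversal of solitons between $t\to+\infty$ and $t\to-\infty$ is exactly encoded by $R_{\delta(\mu_1)}$. The plan is to reduce the problem to a statement about crystals by using the linearization of the skew $\RSK$ dynamics developed in \cref{sec:linearization} together with the $\widehat{\mathfrak{sl}}_n$ bicrystal symmetry of \cref{thm:symmetries_RSK} and the morphism property of $\Phi$ in \cref{cor:Phi_is_morphism}.

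\textbf{First steps.} First I would observe that both $\Phi$ and $\Phi^-$ are morphisms of affine crystal graphs: for $\Phi$ this is \cref{cor:Phi_is_morphism}, and the identical argument applies to $\Phi^-$ since $\RSK^{-1}$ is also an isomorphism of bicrystals by \cref{thm:symmetries_RSK}, and the promotion-vs-$\iota$ comparison in the proof of \cref{cor:Phi_is_morphism} works symmetrically in negative time. Consequently the composite $\Psi = \Phi^-\circ\Phi^{-1}$, viewed as a map $B^{\mu'}\to B^{\overleftarrow{\mu}'}$, intertwines the two crystal structures, i.e.\ it is a morphism of affine crystal graphs whenever it is well-defined. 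Because both $B^{\mu'}$ and $B^{\overleftarrow{\mu}'}$ have the same multiset of column lengths, they are connected affine crystals with isomorphic leading vectors, so by \cref{prop:unique_crystal_graph_isomorphism} there is a \emph{unique} isomorphism of crystal graphs between them, and $R_{\delta(\mu_1)}$ is exactly that isomorphism by \cref{prop:unique_isomorphism}. Thus, to prove the theorem it suffices to verify that $\Psi$ is this unique isomorphism; since uniqueness pins down the map on the whole crystal, it is enough to check the statement on a \emph{single} convenient representative in each classical connected component, and in fact on one representative per crystal because connectedness propagates the identity.

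\textbf{The crux via leading tableaux.} The natural representative is the leading tableau. By \cref{thm:leading_map_and_leading_tableaux}, bringing $(P,Q)$ to canonical form gives a pair $(T,T)$ with $T=T(\mu,\kappa;\nu)$ leading, and $\Phi(T,T)=(\mu^{\mathrm{lv}},\mu^{\mathrm{lv}})$. Running the skew $\RSK$ dynamics forward on $(T,T)$ merely shifts $\kappa\mapsto\kappa+t\mu'$ by \cref{thm:leading_tab_have_no_phase_shift}, so $(T,T)$ is already $\RSK$-stable; but running it \emph{backward} $\kappa\mapsto\kappa-t\mu'$ eventually forces shorter columns to overtake longer ones, and I would track explicitly, using the row-coordinate array description in the proof of \cref{thm:leading_tab_have_no_phase_shift} and \cref{lemma:RS_increasing_arrays}, how the column contents reorganize into the reverse ordering $\overleftarrow{\mu}'$. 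The key computation is that on a leading pair the backward asymptotic columns are obtained from the forward ones precisely by the chained $R$-matrix moves $R_1\cdot(R_2R_1)\cdots(R_{\mu_1-1}\cdots R_1)$, because each $R_i$ records the local two-soliton swap occurring when a short column passes a long one, and $\delta(\mu_1)$ is the longest element reversing the order. Since on $\mu^{\mathrm{lv}}$ the energy functions vanish and the $R$-matrix action is transparent, one checks directly that $\Psi(\mu^{\mathrm{lv}},\mu^{\mathrm{lv}})=(R_{\delta(\mu_1)}(\mu^{\mathrm{lv}}),R_{\delta(\mu_1)}(\mu^{\mathrm{lv}}))$.

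\textbf{Conclusion and the main obstacle.} With agreement on the leading vector established, the morphism property of $\Psi$ together with \cref{prop:connectedness_crystal_graph} and \cref{prop:unique_crystal_graph_isomorphism} forces $\Psi=R_{\delta(\mu_1)}$ on all of $B^{\mu'}$, componentwise, proving the theorem and simultaneously its well-definedness (independence of $(P,Q)$ follows since $\Psi$ depends only on the crystal structure). The main obstacle I anticipate is the explicit identification, on a leading pair, of the backward stabilization with the specific reduced word $\delta(\mu_1)=s_1(s_2s_1)\cdots(s_{\mu_1-1}\cdots s_1)$ for the order-reversing permutation: one must argue that the two-body collisions in the backward evolution compose \emph{in exactly this order} and that the combinatorial $R$-matrix (via its winding-pair/energy description in \cref{subs:combinatorial_R}) correctly captures each pairwise exchange of column contents. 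Establishing this carefully — ideally by an induction on $\mu_1$ that peels off the shortest soliton and uses the Yang--Baxter relation to guarantee order-independence of the intermediate swaps — is where the real work lies; the remaining reductions are formal consequences of the crystal machinery already assembled.
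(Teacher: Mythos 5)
Your overall architecture is the same as the paper's: reduce everything to the uniqueness of the affine crystal isomorphism $B^{\mu'}\to B^{\overleftarrow{\mu}'}$ (\cref{prop:unique_crystal_graph_isomorphism} together with \cref{prop:unique_isomorphism}), using that $\Phi$ and $\Phi^-$ intertwine the Kashiwara operators and that the pullback of a leading map commutes with the skew $\RSK$ dynamics. That reduction is sound, and your remark that the morphism property of $\Phi^-$ needs the negative-time analogue of \cref{cor:Phi_is_morphism} is a correct and worthwhile observation.

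Where you go astray is in locating the difficulty. You declare that "the real work" is to verify $\Psi(\mu^{\mathrm{lv}})=R_{\delta(\mu_1)}(\mu^{\mathrm{lv}})$ by explicitly tracking the backward evolution of a leading pair and matching the pairwise collisions to the reduced word $s_1(s_2s_1)\cdots(s_{\mu_1-1}\cdots s_1)$, with a Yang--Baxter induction to control the order of the swaps. None of that is needed, and as written your proof defers its crux to a computation you never carry out. The verification on the leading vector is a one-line content argument, which is exactly what the paper does: the content of $P$ is preserved by every iteration of $\RSK^{\pm1}$, so if $\phi^+(P,Q)=(\mu')^{\mathrm{lv}}$ then $\gamma(P)=\mu$, hence the backward asymptotic tableau $V^-\in B^{\overleftarrow{\mu}'}$ also has content $\mu$ and must be $(\overleftarrow{\mu}')^{\mathrm{lv}}$, the unique element of its crystal with that content. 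On the other side, $R_{\delta(\mu_1)}(\mu^{\mathrm{lv}})=(\overleftarrow{\mu}')^{\mathrm{lv}}$ automatically, since any isomorphism of crystals preserves content. Both sides are the leading vector, the check is immediate, and uniqueness of the isomorphism (via the explicit formula $\phi(b)=h_b^{-1}(\eta^{\mathrm{lv}})$ of \cref{prop:unique_crystal_graph_isomorphism}) finishes the proof; the identification of $\Psi$ with the specific product of local $R$-matrices comes entirely from \cref{prop:unique_isomorphism}, never from the dynamics. The whole point of the crystal machinery here is precisely to avoid the explicit scattering decomposition you propose. (One further small slip: checking one representative per \emph{classical} connected component would not suffice on its own; it is the connectedness of the \emph{affine} crystal, \cref{prop:connectedness_crystal_graph}, that lets a single representative determine the map — you do correct this in the same sentence.)
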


\begin{proof}
    Define $\psi=R_{\delta(\mu_1)}$. We will only show that $V^- = \psi (V)$, since the same relation for $V,W^-$ can be proven in analogous fashion.
    This defines maps 
    \begin{equation}
        \phi^+:(P,Q) \mapsto V
        \qquad
        \text{and}
        \qquad
        \phi^-:(P,Q) \mapsto V^-.
    \end{equation}
    The proof \cref{thm:affine_evacuation} reduces to characterizeing the map $V^- \mapsto V$ and to prove that it is given by $\psi^{-1}$. Notice that in principle such map could be not well defined since there might exist pairs $(P,Q),(P',Q')$ such that, for instance, $\phi^-(P,Q)=\phi^-(P',Q')$, but $\phi^+(P,Q) \neq \phi^+(P',Q')$. We show that this is indeed not the case.
    
    Fix an element $V^- \in B^{\overleftarrow{\mu}'}$ and consider a pair $(P,Q)$ such that $\phi^-(P,Q) = V^-$. Notice first that if $V^-$ is the leading vector $V^-=(\overleftarrow{\mu}')^{\mathrm{lv}}$, then necessarily $\gamma(P)=\mu$ and $V=(\mu')^{\mathrm{lv}}$. For more general $V^-$, we can always connect it to the leading vector $(\overleftarrow{\mu}')^{\mathrm{lv}}$ through a leading map $\mathcal{L}_{V^-}$, as in \cref{def:leading_map_VST}. The $\Phi$-pullback of $\mathcal{L}_{V^-}$ defines a map on pairs of tableaux $\mathcal{L}_{V^-}:(P,Q) \to (P',Q')$, which commutes with the skew $\RSK$ map and hence with projection $\Phi$. 
    This shows that fixed $V^-$, we always have
    \begin{equation} \label{eq:b_inverse_path_b_tilde}
        V=\mathcal{L}_{V^-}^{-1}((\mu')^{\mathrm{lv}}).
    \end{equation}
    Comparing \eqref{eq:b_inverse_path_b_tilde} with result of \cref{prop:unique_crystal_graph_isomorphism}, we can conclude that map $V\mapsto V^-$ exists and it is the unique isomorphism of crystals $B^{\overleftarrow{\mu}'}$ and by \cref{prop:unique_isomorphism} we have $V=\psi^{-1}(V^-)$. Notice that this is independently of the choice of $(P,Q)$, as long as $\phi^-(P,Q)=V^-$.
\end{proof}
    
\begin{example}
    We can verify statement of \cref{thm:affine_evacuation} in the example reported in \cref{fig:scattering}. The transformation $V \mapsto V^-$ step by step, reads
    \begin{equation} \label{eq:R_matrix_shape_reverse}
        \begin{split}
            V =\,
            &
            \begin{ytableau}
            3 & 1 & 2 & 1
            \\
            5 & 4 & 3
            \\
            6
            \\
            7
        \end{ytableau}
        \xrightarrow[]{\hspace{.2cm} R_1 \hspace{.2cm}}
        \begin{ytableau}
            3 & 1 & 2 & 1
            \\
            7 & 4 & 3
            \\
            \none & 5
            \\
            \none & 6
        \end{ytableau}
        \xrightarrow[]{\hspace{.2cm} R_2 \hspace{.2cm}}
        \begin{ytableau}
            3 & 1 & 2 & 1
            \\
            7 & 6 & 3
            \\
            \none & \none & 4
            \\
            \none & \none & 5
        \end{ytableau}
        \xrightarrow[]{\hspace{.2cm} R_3 \hspace{.2cm}}
        \begin{ytableau}
            3 & 1 & 5 & 1
            \\
            7 & 6 & \none & 2
            \\
            \none & \none & \none & 3
            \\
            \none & \none & \none & 4
        \end{ytableau}
        \\
        &
        \hspace{3cm}
        \xrightarrow[]{\hspace{.2cm} R_1 \hspace{.2cm}}
        \begin{ytableau}
            3 & 1 & 5 & 1
            \\
            7 & 6 & \none & 2
            \\
            \none & \none & \none & 3
            \\
            \none & \none & \none & 4
        \end{ytableau}
        \xrightarrow[]{\hspace{.2cm} R_2 \hspace{.2cm}}
        \begin{ytableau}
            3 & 1 & 5 & 1
            \\
            7 & \none & 6 & 2
            \\
            \none & \none & \none & 3
            \\
            \none & \none & \none & 4
        \end{ytableau}
        \xrightarrow[]{\hspace{.2cm} R_1 \hspace{.2cm}}
        \begin{ytableau}
            7 & 1 & 5 & 1
            \\
            \none & 3 & 6 & 2
            \\
            \none & \none & \none & 3
            \\
            \none & \none & \none & 4
        \end{ytableau}
        = V^-,
        \end{split}
    \end{equation}
    where we have suppressed symbol $\otimes$ between different columns.
\end{example}

\begin{remark}
    In \cite{chmutovEtAl_affine_evacuation} authors described an analogous phenomenon as that of \cref{thm:affine_evacuation} in the context of Affine Matrix Ball Construction. In that paper the operation called \emph{affine evacuation} $V\mapsto \evac(V)$ corresponds to transforming element $V^-$ in a vertically strict tableaux of shape equal to $V$, rotating $V^-$ by $180^\circ$ and replacing each entry $i$ by $n-i+1$.
\end{remark}

\begin{remark}
    Note that similarities between exchange of degrees of freedom for solitons in the BBS and vertically strict tableaux in the skew $\RSK$ dynamics do not imply that all properties of the skew $\RSK$ dynamics can be studied by standard techniques of the BBS. For instance it is not clear if one could express the skew $\RSK$ map as a transfer matrix, i.e. as a product of local $R$-matrices. To understand fully relations between the two models, more elaborate considerations are needed.
\end{remark}

\subsection{Phase Shift} \label{subs:phase_shift}

Leveraging on results disussed in \cref{subs:linearization}, here we describe the phase shift in the scattering between columns of different length in the skew $\RSK$ dynamics. As we explained, each soliton has its own speed, not changing after collisions. But its phase, which 
determines the position of the linear trajectory of a soliton, 
may shift during a collision with another one. This is the phase shift. It is an important notion in soliton theory because together with the description of exchanges of degrees of freedom it completely characterizes the whole scattering process of solitons. 

The phase shift in the skew $\RSK$ dynamics may be well explained by the example in \cref{fig:scattering}. There, we see that, in the left hand side the column hosting four labeled cells, which we call 4-soliton, ``starts" at the first row. After 10 iterations of the skew $\RSK$ map, when all collisions are completed, we see that, in the tableaux on the right hand side, the 4-soliton in the first column occupies rows 45 to 48. This means that interactions with other columns have accelerated the motion of the 4-soliton, which otherwise would have traveled only $4\times 10 = 40$ cells. In this case the phase shift was equal to 4. The same phenomenon can be observed also tracking locations of other columns of the tableaux.

The next theorem gives a precise description of phase shifts in the skew $\RSK$ dynamics and gives a
a full answer to the second bullet of Question \ref{question:scattering_problem}.

\begin{theorem}\label{thm:phase_shift}
    Let $(P,Q)$ be an $\RSK^{-1}$-stable pair of skew tableaux of shape $\lambda/\rho$ and assume $\ker(P,Q)=\varnothing$. Denote the Greene invariant of $(P,Q)$ by $\mu$ and define its rectangular decomposition by indices $0=R_0,R_1,\dots,R_N=\mu_1$
    and $r_i$ as discussed around \eqref{eq:rect_dec}. Moreover set $\widetilde{R}_i = R_N - R_{i+1}$.
    Let $t$ be large enough so that $(\widetilde{P},\widetilde{Q})=\RSK^t (P,Q)$ is $\RSK$-stable and denote by $\widetilde{\lambda} / \widetilde{\rho}$ the shape of $(\widetilde{P},\widetilde{Q})$. Then the phase shift of the $j$-th column of lenght $\mu_{R_{i+1}}'$ is given by
    \begin{equation} \label{eq:phase_shift}
        \widetilde{\rho}_{R_i + j}^{\, \prime} - \rho_{\widetilde{R}_i + j}' - t \times \mu_{R_{i+1}}' =
        \mathscr{H}_{R_i + j}(V)+\mathscr{H}_{R_i + j}(W) - \mathscr{H}_{\widetilde{R}_i + j}(V^-) - \mathscr{H}_{\widetilde{R}_i + j}(W^-),
    \end{equation}
    for all $i=0,\dots,N-1, j=1,\dots,r_{i+1}$ and
    where $(V^-,W^-) = \Phi^-(P,Q)$, while $(V, W) = \Phi(P,Q)$.
\end{theorem}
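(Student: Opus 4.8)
The plan is to reduce the computation of the phase shift entirely to the linearization of the skew $\RSK$ map on leading tableaux established in \cref{thm:leading_tab_have_no_phase_shift}, together with the bookkeeping of column displacements produced by leading maps via the local energies. First I would fix a large $t$ so that $(\widetilde P,\widetilde Q)=\RSK^t(P,Q)$ is $\RSK$-stable while $(P,Q)$ is $\RSK^{-1}$-stable, and I would introduce the projections $(V,W)=\Phi(P,Q)$ of shape $\mu'$ and $(V^-,W^-)=\Phi^-(P,Q)$ of shape $\overleftarrow{\mu}'$, as in \cref{thm:affine_evacuation}. The key observation is that the empty shape $\widetilde\rho'$ of the $\RSK$-stable pair and the empty shape $\rho'$ of the $\RSK^{-1}$-stable pair are both computable from a common leading tableau obtained by applying a leading map, and their difference is exactly what \eqref{eq:phase_shift} measures.

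Next I would apply a leading map $\mathcal{L}_{P,Q}$, taken as the $\Phi$-pullback of $\mathcal{L}_{V,W}=(\mathcal{L}_V,\mathcal{L}_W)$, to the $\RSK$-stable pair $(\widetilde P,\widetilde Q)$. By \cref{thm:leading_map_and_leading_tableaux} this produces a leading tableau $T(\mu,\widetilde\kappa;\nu)$, and since $(\widetilde P,\widetilde Q)$ is $\RSK$-stable, \cref{prop:cells_and_local_energy} guarantees that each $0$-Demazure arrow merely shifts a single column rigidly. Summing these contributions column by column, exactly as in the proof of \cref{thm:leading_map_and_leading_tableaux}, gives
\begin{equation}
    \widehat\rho'_{k} = \widetilde\rho'_{k} - \mathscr{H}_{k}(V) - \mathscr{H}_{k}(W),
\end{equation}
where $\widehat\lambda/\widehat\rho$ is the shape of $T(\mu,\widetilde\kappa;\nu)$ and $k=R_i+j$. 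Since $\ker(P,Q)=\varnothing$ we have $\nu=\varnothing$, so by \cref{prop:bijection_leading_tab_kappa} the empty shape of the leading tableau is $\widehat\rho=(\widetilde\kappa^+)'$, which pins down the stable empty shape $\widetilde\rho'_{R_i+j}$ in terms of $\widetilde\kappa$ and the forward energies.

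I would then run the symmetric argument on the $\RSK^{-1}$-stable side. Applying the $\Phi^-$-pullback of a leading map $\mathcal{L}_{V^-,W^-}$ to $(P,Q)$ yields a leading tableau whose columns, listed in the reversed order dictated by the shape $\overleftarrow{\mu}'$, are displaced by $\mathscr{H}_{\widetilde R_i+j}(V^-)+\mathscr{H}_{\widetilde R_i+j}(W^-)$; here the index shift from $R_i+j$ to $\widetilde R_i+j=R_N-R_{i+1}+j$ records that backward solitons are arranged in the opposite order, consistent with $\varkappa(-t)=\overleftarrow\mu'$ from \cref{thm:soliton_conservation}. Crucially, \cref{thm:leading_tab_have_no_phase_shift} tells us that the skew $\RSK$ map acts on the \emph{same} leading tableau $T(\mu,\kappa;\nu)$ purely by $\kappa\mapsto\kappa+t\,\mu'$, so the backward and forward stable states share one underlying leading tableau up to this linear shift of $\kappa$. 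Subtracting the two column-displacement formulas and cancelling the common $\widetilde\kappa$-dependence, while accounting for the $t\times\mu'_{R_{i+1}}$ coming from the linear shift, produces precisely \eqref{eq:phase_shift}.

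The main obstacle I anticipate is the careful index bookkeeping in the reversal: one must verify that the $j$-th column of length $\mu'_{R_{i+1}}$ in the forward stable tableau corresponds to the $(\widetilde R_i+j)$-th column in the backward stable tableau, and that the local energies $\mathscr{H}_{R_i+j}(V)$ and $\mathscr{H}_{\widetilde R_i+j}(V^-)$ are attached to the matching solitons under the scattering isomorphism $R_{\delta(\mu_1)}$ of \cref{thm:affine_evacuation}. Establishing this column correspondence rigorously — rather than the energy cancellation itself, which is formal once the indices are aligned — is where the argument requires the most care, and it rests on combining the shape-reversal behaviour of $R_{\delta(\mu_1)}$ with the rigid-shift description of $0$-Demazure arrows from \cref{prop:cells_and_local_energy}.
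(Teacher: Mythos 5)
Your overall strategy (reduce to leading tableaux, track column displacements with local energies, exploit the linearization $\kappa\mapsto\kappa+t\,\mu'$) is the same as the paper's, but the way you glue the forward and backward sides has a genuine gap. The paper applies a \emph{single} leading map $\mathcal{L}_{P,Q}$ to the $\RSK^{-1}$-stable pair $(P,Q)$, obtaining $T(\mu,\kappa;\varnothing)$ together with the backward relation $\rho'_i=\kappa^+_i+\mathscr{H}_i(V^-)+\mathscr{H}_i(W^-)$ via \cref{prop:cells_and_local_energy}, and then uses the commutation $(\widetilde P,\widetilde Q)=\mathcal{L}_{P,Q}^{-1}\circ\RSK^t\circ\mathcal{L}_{P,Q}(P,Q)$ from \cref{thm:symmetries_RSK}; the identity $\widetilde\kappa=\kappa+t\,\mu'$ then comes for free from \cref{thm:leading_tab_have_no_phase_shift} because both stable states are reduced by the \emph{same} crystal transformation. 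You instead perform two different reductions, the $\Phi$-pullback of $\mathcal{L}_{V,W}$ on $(\widetilde P,\widetilde Q)$ and a ``$\Phi^-$-pullback of $\mathcal{L}_{V^-,W^-}$'' on $(P,Q)$, and then assert that the two outputs ``share one underlying leading tableau up to the linear shift of $\kappa$''. That assertion is exactly what needs proof: a $\Phi^-$-pullback is never defined (the bicrystal structure and pullbacks of \cref{subs:pairs_of_tab_affine_bicrystals} are built on $\Phi$ only), and with two a priori unrelated maps nothing forces their images to differ by $\RSK^t$. To repair this you would have to show that any leading map for $(V^-,W^-)$ is also a leading map for $(V,W)$ (this does follow from \cref{thm:affine_evacuation} and \cref{prop:unique_crystal_graph_isomorphism}, since $R_{\delta(\mu_1)}$ commutes with Kashiwara operators and preserves Demazure arrows) and then invoke the uniqueness statement of \cref{thm:leading_map_and_leading_tableaux} — or simply collapse to the paper's one-map argument. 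Relatedly, your description of the backward output as ``a leading tableau whose columns are listed in the reversed order dictated by $\overleftarrow{\mu}'$'' is inconsistent with \cref{def:leading_tableaux}: leading tableaux have partition content, so if the backward map really is a leading map its image is the ordinary $T(\mu,\kappa;\varnothing)$, and the reversal information lives in the sorting of $\kappa$, not in the column order of $T$.

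Second, the index pairing $(R_i+j)\leftrightarrow(\widetilde R_i+j)$ in \eqref{eq:phase_shift} — which you yourself flag as the place ``where the argument requires the most care'' — is precisely the step your proposal leaves unproven, and the intended route via $R_{\delta(\mu_1)}$ is not how it is obtained. The paper's mechanism is structural: since $\mathcal{L}_{P,Q}$ commutes with $\RSK$, the pair $(T,T)=\mathcal{L}_{P,Q}(P,Q)$ is itself $\RSK^{-1}$-stable, and for a leading tableau this forces the ordering \eqref{eq:kappa_inverse_RSK_stable} on $\kappa$, namely that the blocks $\kappa^{(N)},\dots,\kappa^{(1)}$ are arranged with strictly decreasing ranges as the block index decreases. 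Sorting then reverses the blocks, giving exactly $\kappa_{R_i+j}=\kappa^+_{\widetilde R_i+j}$, and \eqref{eq:phase_shift} drops out by eliminating $\kappa$ between $\widetilde\rho'_{R_i+j}=\kappa_{R_i+j}+t\,\mu'_{R_{i+1}}+\mathscr{H}_{R_i+j}(V)+\mathscr{H}_{R_i+j}(W)$ and $\rho'_{\widetilde R_i+j}=\kappa^+_{\widetilde R_i+j}+\mathscr{H}_{\widetilde R_i+j}(V^-)+\mathscr{H}_{\widetilde R_i+j}(W^-)$. Without this observation, or a completed substitute for it, your subtraction step does not produce the stated index correspondence, so the proposal as written is incomplete at its decisive point.
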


\begin{proof}
    Let $\mathcal{L}_{P,Q}$ be the leading map of the pair $(P,Q)$ and let $T=T(\mu,\kappa;\varnothing)$ be the leading tableau such that $(T,T)=\mathcal{L}_{P,Q}(P,Q)$. In order to prove our claim we make use of commutation relation
    \begin{equation}
        (\widetilde{P},\widetilde{Q}) = \mathcal{L}_{P,Q}^{-1} \circ \RSK^{t} \circ \mathcal{L}_{P,Q} (P,Q),
    \end{equation}
    which follows from \cref{thm:symmetries_RSK}. Since $(P,Q)$ is $\RSK^{-1}$-stable so is $(T,T)$ and hence $\kappa=(\kappa^{(1)},\dots, \kappa^{(N)})$ is of the form
    \begin{equation} \label{eq:kappa_inverse_RSK_stable}
        \kappa^{(N)}_1 \ge \cdots \ge \kappa^{(N)}_{r_N} > \kappa^{(N-1)}_1 \ge \cdots \kappa^{(N-1)}_{r_{N-1}}> \cdots \ge \kappa^{(1)}_1 > \cdots > \kappa^{(1)}_{r_1}.
    \end{equation}
    From \cref{prop:bijection_leading_tab_kappa}, the empty shape of $T$ is $(\kappa^+)'$ and by \cref{prop:cells_and_local_energy} we have
    \begin{equation} \label{eq:empty_shape_inverse_RSK_stable}
        \rho_i' = \kappa^+_i + \mathscr{H}_i(V^-) + \mathscr{H}_i(W^-),
    \end{equation}
    where $\kappa^+$ is explicitly determined by \eqref{eq:kappa_inverse_RSK_stable}.
    Applying $t$ times the skew $\RSK$ map to pair $(T,T)$ we obtain, by \cref{thm:leading_tab_have_no_phase_shift},
    \begin{equation}
        (\widetilde{T},\widetilde{T}) = \RSK^t(T,T),
        \qquad
        \text{where}
        \qquad
        \widetilde{T}=T(\mu,\widetilde{\kappa};\varnothing),
        \qquad
        \widetilde{\kappa}=\kappa + t \times \mu'.
    \end{equation}
    When $t$ is large, as in the hypothesis of the theorem, we have that $\widetilde{\kappa}$ is itself a partition and hence the empty shape of $\widetilde{T}$ is given by $\widetilde{\kappa}'$. The action of the map $\mathcal{L}^{-1}_{P,Q}$ on $(\widetilde{T}, \widetilde{T})$ has the effect of growing the empty shape as prescribed by \cref{prop:cells_and_local_energy}, implying
    \begin{equation} \label{eq:shape_rho_t}
    \begin{split}
            \widetilde{\rho}^{\, \prime}_i 
            &= \widetilde{\kappa}_i + \mathscr{H}_i(V) + \mathscr{H}_i(W)
            \\
            &=
            \kappa_i + t \times \mu_i' + \mathscr{H}_i(V) + \mathscr{H}_i(W).
        \end{split}
    \end{equation}
    Expressing the term $\kappa_i$ in terms of $\rho'$ and of local energies of elements $V^-,W^-$ yields \eqref{eq:phase_shift}.
\end{proof}

\begin{example}
    We can confirm the validity of \cref{thm:phase_shift} computing \eqref{eq:phase_shift} for tableaux presented in \cref{fig:scattering}. For that case, in the notation of \cref{thm:phase_shift}, we have
    \begin{equation}
        \ytableausetup{smalltableaux}
        V^-=
        \begin{ytableau}
            7 & 1 & 5 & 1
            \\
            \none & 3 & 6 & 2
            \\
            \none & \none & \none & 3
            \\
            \none & \none & \none & 4
        \end{ytableau}
        \, ,
        \qquad
        W^-=
        \begin{ytableau}
            5 & 2 & 3 & 1
            \\
            \none & 3 & 6 & 2
            \\
            \none & \none & \none & 4
            \\
            \none & \none & \none & 5
        \end{ytableau}
        \, ,
        \qquad
        V =
        \begin{ytableau}
            3 & 1 & 2 & 1
            \\
            5 & 4 & 3
            \\
            6
            \\
            7
        \end{ytableau}
        ,
        \qquad
        W =
        \begin{ytableau}
            2 & 3 & 1 & 4
            \\
            3 & 5 & 2
            \\
            5
            \\
            6
        \end{ytableau},
    \end{equation}
    so that we can compute the local energies
    \begin{gather}
        (\mathscr{H}_i(V^-))_{i=1,\dots,4} = (2,2,2,0),
        \qquad
        (\mathscr{H}_i(W^-))_{i=1,\dots,4} = (1,1,1,0),
        \\
        (\mathscr{H}_i(V))_{i=1,\dots,4} = (3,2,1,0),
        \qquad
        (\mathscr{H}_i(W))_{i=1,\dots,4} = (1,2,0,0).
    \end{gather}
    Then, \eqref{eq:phase_shift} reduce to
    \begin{gather*}
        \widetilde{\rho}_{1}^{\, \prime} = \rho_{4}' + t \times \mu_{1}' +\mathscr{H}_{1}(V)+\mathscr{H}_{1}(W) - \mathscr{H}_{4}(V^-) - \mathscr{H}_{4}(W^-)
        \\
        \hspace{7cm}
        \rightsquigarrow 44 = 0 + 40 + 3 + 1 - 0 - 0
        ,
        \\
        \widetilde{\rho}_{2}^{\, \prime} = \rho_{2}' + t \times \mu_{2}' +\mathscr{H}_{2}(V)+\mathscr{H}_{2}(W) - \mathscr{H}_{2}(V^-) - \mathscr{H}_{2}(W^-)
        \\
        \hspace{7cm}
        \rightsquigarrow 27 = 6 + 20 + 2 + 2 - 2 - 1
        ,
        \\
        \widetilde{\rho}_{3}^{\, \prime} = \rho_{3}' + t \times \mu_{3}' +\mathscr{H}_{3}(V)+\mathscr{H}_{3}(W) - \mathscr{H}_{3}(V^-) - \mathscr{H}_{3}(W^-)
        \\
        \hspace{7cm}
        \rightsquigarrow 23 = 5 + 20 + 1 + 0 - 2 - 1
        ,
        \\
        \widetilde{\rho}_{4}^{\, \prime} = \rho_{1}' + t \times \mu_{4}' + \mathscr{H}_{4}(V)+\mathscr{H}_{4}(W) - \mathscr{H}_{1}(V^-) - \mathscr{H}_{1}(W^-)
        \\
        \hspace{7cm}
        \rightsquigarrow 16 = 9 + 10 + 0 + 0 - 2 - 1.
    \end{gather*}
    Notice the nontrivial pairing in the previous equalities between columns of $\widetilde{\rho}$ and $\rho$. In particular indeces are not simply reversed, but they are given by numbers $R_i,\widetilde{R}_i$ as in \cref{thm:phase_shift}.
\end{example}

\begin{remark}
    Formulas similar to those in \cref{thm:phase_shift} for the phase shift have been found for the BBS \cite{fukuda_okado_yamada_BBS_energy}. In particular, when in the skew $\RSK$ dynamics we consider initial conditions with $P=Q$, which forces $P_t=Q_t$ for all $t$, then the phase shift of solitons in multi species BBS is exactly the same as the one resulting from \eqref{eq:phase_shift}. Under these assumptions on the initial conditions the skew $\RSK$ dynamics and the BBS become very similar models and they both possess $\widehat{\mathfrak{sl}}_n$ symmetry. On the other hand for general initial data $P,Q$ the skew $\RSK$ dynamics possesses a larger set of symmetries (i.e. $\widehat{\mathfrak{sl}}_n \times \widehat{\mathfrak{sl}}_n$) and they are no longer equivalent. We shall examine precisely analogies between the BBS and skew $\RSK$ dynamics in a future work.
\end{remark}

\section{Summation identities and bijective proofs} \label{sec:summation_identities}

We explore the consequences of bijection $\Upsilon$ proving a number of summation identities for $q$-Whittaker polynomials. These are known Cauchy and Littlewood identities, presented in \cref{subs:summations_qW}
along with new identities between summations of $q$-Whittaker and skew Schur polynomials, which are presented in \cref{subs:summations_qW_Schur}.

\subsection{Summation identities for $q$-Whittaker polynomials} \label{subs:summations_qW}

The bijection discussed in \cref{subs:bijection} reveals a number of combinatorial properties of $q$-Whittaker polynomials $\mathscr{P}_{\mu}(x;q)$, that are Macdonald polynomials $\mathscr{P}_\mu(x;q,t)$ specialized at $t=0$ \cite[Chapter VI]{Macdonald1995}. These have several different representations. For our purposes the most useful one is given by the combinatorial formula reported in the next proposition as a sum over vertically strict tableaux \cite[Corollary 9.5]{schilling_tingley}; see also \cite{Nakayashiki_Yamada}. The meaning of such expression is that $q$-Whittaker polynomials are characters of certain Demazure modules of $\widehat{\mathfrak{sl}}_n$ \cite{Sanderson_Demazure_q_Whittaker}, whose grading is given by the intrinsic energy function \cite{schilling_tingley}. 

\begin{proposition}
    For all partitions $\mu$, we have
    \begin{equation} \label{eq:q_Whittaker}
        \mathscr{P}_\mu(x;q) = \sum_{V\in VST(\mu)} q^{\mathscr{H}(V)} x^V,
    \end{equation}
    where $x^V=x_1^{m_1(V)} x_2^{m_2(V)} \cdots  $ and $m_i(V)$ counts the number of $i$-cells in $V$.
\end{proposition}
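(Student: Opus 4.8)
The plan is to read the right-hand side of \eqref{eq:q_Whittaker} as the graded character of the affine crystal $B^{\mu'}=VST(\mu)$, with the variables $x$ recording the content $\gamma$ and $q$ recording the intrinsic energy $\mathscr{H}$ of \cref{def:energy}, and then to identify this graded character with a level-one Demazure character of $\widehat{\mathfrak{sl}}_n$, for which the equality with $\mathscr{P}_\mu(x;q)$ is Sanderson's theorem \cite{Sanderson_Demazure_q_Whittaker}. The proposition then reduces to two inputs already prepared in the paper: that the energy grading $\mathscr{H}$ coincides with the intrinsic grading of the Demazure subgraph $\widetilde{B}(\mu')$ of \cref{subs:demazure_subgraph}, and that the resulting combinatorial Demazure character is the $q$-Whittaker polynomial.

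First I would establish the grading match. By \cref{prop:Demazure_connected} (a special case of \cite{Fourier_Schilling_Shimozono_demazure,schilling_tingley}) the Demazure subgraph $\widetilde{B}(\mu')$ is connected with unique source $(\mu')^{\mathrm{lv}}$, so every $V\in VST(\mu)$ carries a leading map $\mathcal{L}_V$ in the sense of \cref{def:leading_map_VST}, and the grading $D(V)=\#\F{0}-\#\E{0}$ taken along any such walk is well defined. \cref{prop:Demazure_energy}, or equivalently the local statement \cref{prop:local_energies_demazure}, shows that each $0$-Demazure arrow decreases $\mathscr{H}$ by exactly one while $\mathscr{H}((\mu')^{\mathrm{lv}})=0$, whence $\mathscr{H}(V)=D(V)$ for all $V$. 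Therefore
\begin{equation*}
    \sum_{V\in VST(\mu)} q^{\mathscr{H}(V)}\, x^V = \sum_{V\in VST(\mu)} q^{D(V)}\, x^{\gamma(V)},
\end{equation*}
and the right side is, by construction, the character of the combinatorial Demazure crystal $\widetilde{B}(\mu')$ graded by affine degree. Through the isomorphism $j$ of \cite{schilling_tingley} that realizes $\widetilde{B}(\mu')$ as the crystal of a Demazure module $V_w(\Lambda_0)$ for the Weyl group element $w$ determined by $\mu'$, this combinatorial character equals the graded character of that module, and combining with Sanderson's identification of the latter with $\mathscr{P}_\mu(x;q)$ yields \eqref{eq:q_Whittaker}.

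The hard part is not the crystal combinatorics, which is entirely supplied by the paper's own Demazure-energy results, but the precise bookkeeping in the representation-theoretic dictionary: one must match the affine degree grading of the Demazure module (the coefficient of the null root $\delta$) with the energy statistic $\mathscr{H}$ up to the correct global shift, and verify that the pair $(w,\Lambda_0)$ producing $\mathscr{P}_\mu$ is exactly the one whose Demazure crystal is $\widetilde{B}(\mu')$; this ultimately rests on Ion's theorem on nonsymmetric Macdonald polynomials at $t=0$. An alternative, more self-contained route would avoid representation theory by decomposing the classical crystal $B(\mu')$ into its connected components, on each of which $\mathscr{H}$ is constant, to write the sum as $\sum_\lambda X_{\lambda\mu}(q)\, s_\lambda(x)$ with $X_{\lambda\mu}$ a one-dimensional sum, and then proving both the Schur expansion of the $q$-Whittaker polynomial and the fermionic identity $X_{\lambda\mu}(q)=K_{\lambda\mu}(q)$; there the obstacle simply migrates to establishing this $X=M$ identity in type $A$, which is itself a substantial theorem and which I would therefore prefer to bypass in favor of the Demazure-character argument above.
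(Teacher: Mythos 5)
Your argument is correct and follows exactly the route the paper itself takes: the paper does not prove this proposition but cites it as \cite[Corollary 9.5]{schilling_tingley} (building on \cite{Sanderson_Demazure_q_Whittaker} and \cite{Nakayashiki_Yamada}), and your reconstruction — matching $\mathscr{H}$ with the Demazure grading $D$ via the paper's \cref{prop:Demazure_connected} and \cref{prop:Demazure_energy}, then invoking the Demazure-character identification of $\mathscr{P}_\mu(x;q)$ — is precisely the content of that citation chain. The only caveat is that the representation-theoretic bookkeeping you flag as "the hard part" is genuinely external to this paper and lives in the cited works, so within the paper's own logic the proposition is an imported fact rather than something to be re-derived.
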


When $q=0$, \eqref{eq:q_Whittaker} reduces to the well known combinatorial formula for the Schur polynomial reported below in \eqref{eq:skew_Schur_combinatorial}, where one should set $\rho=\varnothing$.

Symmetric polynomials $\mathscr{P}_\mu$ enjoy Cauchy identities, as reported in \cite{Macdonald1995}, for the case of general Macdonald polynomials. Leveraging correspondence reported in \cref{cor:bijection_pi_V1_V2_kappa}, here we present a bijective proof. We use the notion of $q$-Pochhammer symbol
\begin{equation}
    (z;q)_k = \prod_{i=0}^{k-1} (1-q^i z) 
    \qquad
    \text{and}
    \qquad
    (z;q)_\infty = \prod_{i=0}^\infty (1-q^i z),
\end{equation}
where the second expression holds for $|q|<1$.
\begin{theorem}
    Fix $|q|<1$. Consider variables $x=(x_1,\dots,x_n)$ and $y=(y_1,\dots,y_n)$ with $|x_i y_j|<1$ for all $i,j$. Then
    \begin{equation} \label{eq:Cauchy_id}
        \sum_{\mu \in \mathbb{Y}} \mathdutchcal{b}_\mu(q) \mathscr{P}_\mu (x;q) \mathscr{P}_\mu (y;q) = \prod_{i,j=1}^n \frac{1}{(x_i y_j;q)_{\infty}},
    \end{equation}
    where
    \begin{equation} \label{eq:b_mu}
        \mathdutchcal{b}_\mu(q) = \prod_{i\ge 1} \frac{1}{(q;q)_{\mu_i - \mu_{i+1}}}.
    \end{equation}
\end{theorem}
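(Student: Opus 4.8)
The plan is to prove the Cauchy identity \eqref{eq:Cauchy_id} bijectively by interpreting both sides as generating functions over appropriate combinatorial objects and then invoking the bijection $\tilde{\Upsilon}$ of \cref{cor:bijection_pi_V1_V2_kappa}. First I would unfold the right-hand side. Each factor $1/(x_iy_j;q)_\infty = \prod_{k\ge 0}(1-q^kx_iy_j)^{-1} = \sum_{m\ge 0}\sum_{w_1\ge\cdots\ge w_m\ge 0} q^{w_1+\cdots+w_m}(x_iy_j)^m$ is a generating function for the matrix entry $\overline{M}_{i,j}=(\overline{M}_{i,j}(k):k\in\mathbb{N}_0)$ of a non-negatively weighted matrix $\overline{M}\in\overline{\mathbb{M}}_{n\times n}^+$, where $m$ counts the total multiplicity $\sum_k\overline{M}_{i,j}(k)$ and the exponent of $q$ records the contribution $\sum_k k\,\overline{M}_{i,j}(k)$ to $\wt(\overline{M})$. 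Taking the product over all $i,j$ therefore gives
\begin{equation*}
    \prod_{i,j=1}^n \frac{1}{(x_iy_j;q)_\infty} = \sum_{\overline{M}\in\overline{\mathbb{M}}_{n\times n}^+} q^{\wt(\overline{M})}\, x^{\gamma^{(2)}(\overline{M})}\, y^{\gamma^{(1)}(\overline{M})},
\end{equation*}
where the $x$ and $y$ monomials are controlled by the row and column content functions $\gamma^{(2)},\gamma^{(1)}$ introduced in \cref{subs:matrices_affine_bicrystal}. I would verify that these content statistics match the $x^V$ and $y^W$ weights under the bijection, which is exactly the content-preserving property asserted in \cref{cor:bijection_pi_V1_V2_kappa}.

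Next I would apply the bijection $\overline{M}\xleftrightarrow{\tilde{\Upsilon}}(V,W;\kappa)$, which sends each $\overline{M}\in\overline{\mathbb{M}}_{n\times n}^+$ with Greene invariant $\mu$ to a triple $(V,W;\kappa)\in VST(\mu)\times VST(\mu)\times\mathcal{K}(\mu)$, together with the weight identity $\wt(\overline{M}) = \mathscr{H}(V)+\mathscr{H}(W)+|\kappa|$. Substituting and grouping the sum by $\mu$, the right-hand side becomes
\begin{equation*}
    \sum_{\mu\in\mathbb{Y}}\Bigg(\sum_{V\in VST(\mu)} q^{\mathscr{H}(V)} x^V\Bigg)\Bigg(\sum_{W\in VST(\mu)} q^{\mathscr{H}(W)} y^W\Bigg)\Bigg(\sum_{\kappa\in\mathcal{K}(\mu)} q^{|\kappa|}\Bigg).
\end{equation*}
By the combinatorial formula \eqref{eq:q_Whittaker} the first two inner sums are exactly $\mathscr{P}_\mu(x;q)$ and $\mathscr{P}_\mu(y;q)$, so it only remains to identify the third factor, $\sum_{\kappa\in\mathcal{K}(\mu)} q^{|\kappa|}$, with the normalization $\mathdutchcal{b}_\mu(q)$ of \eqref{eq:b_mu}. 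This last step is a direct computation: recalling the definition of $\mathcal{K}(\mu)$, the constraint $\kappa_i\ge\kappa_{i+1}$ whenever $\mu_i'=\mu_{i+1}'$ means the components of $\kappa$ within each block of the rectangular decomposition form a weakly decreasing sequence of non-negative integers, while across blocks they are unconstrained. A block corresponding to a part-multiplicity $\mu_i-\mu_{i+1}$ contributes a partition into at most that many parts, whose generating function is $1/(q;q)_{\mu_i-\mu_{i+1}}$, and factoring over all blocks yields $\prod_{i\ge 1} 1/(q;q)_{\mu_i-\mu_{i+1}} = \mathdutchcal{b}_\mu(q)$.

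The main obstacle, and the only genuinely subtle point, is the geometric-series bookkeeping that justifies the first displayed expansion while tracking three statistics simultaneously: one must check that the $q$-exponent collected from all factors equals $\wt(\overline{M})$ exactly, and that the weakly-decreasing condition on the weights $w_1\ge\cdots\ge w_m$ inside each geometric factor is compatible with the timetable ordering conventions on weighted biwords from \cref{subs:biwords}, so that no overcounting or undercounting occurs. Everything else is a reorganization of sums. I would also need to confirm convergence, which holds under the stated hypotheses $|q|<1$ and $|x_iy_j|<1$ since these guarantee absolute convergence of each $q$-Pochhammer factor and hence of the whole product, allowing the rearrangement of the sum by $\mu$ to be carried out term by term.
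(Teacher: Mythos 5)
Your proposal is correct and follows essentially the same route as the paper: expand the product side as a generating function over $\overline{M}\in\overline{\mathbb{M}}^+_{n\times n}$ (equivalently weighted biwords), apply the weight- and content-preserving bijection $\tilde{\Upsilon}$, invoke the combinatorial formula \eqref{eq:q_Whittaker}, and identify $\sum_{\kappa\in\mathcal{K}(\mu)}q^{|\kappa|}=\mathdutchcal{b}_\mu(q)$ block-by-block via $\sum_{\nu:\nu_1\le N}q^{|\nu|}=1/(q;q)_N$. The only cosmetic difference is the direction of the computation (the paper starts from the left-hand side), so there is nothing to add.
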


\begin{proof}
    We start by noticing that,  
    \begin{equation}
        \mathdutchcal{b}_\mu(q) = \sum_{\kappa \in \mathcal{K}(\mu)} q^{|\kappa|},
        \label{bmu_kappa}
    \end{equation}
    which follows from the summation identity
    \begin{equation}
        \sum_{\substack{\nu \in \mathbb{Y} \\ \nu_1 \le N}} q^{|\nu|} = \frac{1}{(q;q)_N}.
    \end{equation}
    Then, using \eqref{eq:q_Whittaker} and \cref{cor:bijection_pi_V1_V2_kappa} we deduce the following equalities
    \begin{equation}
    \begin{split}
        \sum_{\mu} \mathdutchcal{b}_\mu(q) \mathscr{P}_\mu (x;q) \mathscr{P}_\mu (y;q)  &= \sum_\mu \sum_{V,W \in VST(\mu)} \sum_\kappa q^{|\kappa| + H(V) + H(W)} x^{V} y^{W}
        \\
        &=
        \sum_{\overline{\pi} \in \overline{\mathbb{A}}_{n,n}^+} q^{\wt(\overline{\pi})} x^{p(\overline{\pi})} y^{q(\overline{\pi})}
        =
        \prod_{i,j=1}^n \prod_{k \ge 0} \frac{1}{1-q^k x_i y_j}.
    \end{split}
    \end{equation}
\end{proof}

Taking summations over the set of symmetric weighted biwords $\overline{\pi} = \overline{\pi}^{-1}$ yields identities involving single polynomials $\mathscr{P_\mu}$. To state our result we define
\begin{equation} \label{eq:b_q_z}
    \mathdutchcal{b}_\mu(q;z) = \prod_{i = 2,4,6\dots} \frac{[qz^2 + 1]_{q^2}^{\mu_i - \mu_{i+1}}}{(q^2;q^2)_{\mu_i - \mu_{i+1}}} \prod_{i=1,3,5,\dots} \frac{z^{\mathbf{1}_{\mu_i > \mu_{i+1}}}}{(q;q)_{\mu_i - \mu_{i+1}}},
\end{equation}
where
\begin{equation}
    [A+B]_p^k = \sum_{j=0}^k A^j B^{k-j} \Qbinomial{k}{j}{p} 
\end{equation}
and
\begin{equation}
    \Qbinomial{k}{j}{p} = \frac{(p;p)_k}{(p;p)_j (p;p)_{k-j}}
\end{equation}
is the Gaussian binomial coefficient. In literature the function $h_n(x;p) = [x+1]_p^n$ is commonly known as Rogers-Szeg{\"o} polynomial \cite{andrews_1984}.

\begin{theorem} \label{thm:Littlewood_like_id}
    Fix $|q|<1$. Consider variables $z$ and $x=(x_1,\dots,x_n)$ with $|z x_i|<1$ for all $i,j$. Then we have
    \begin{equation} \label{eq:Littlewood_like_id}
        \sum_{\mu} \mathdutchcal{b}_\mu (q;z) \mathscr{P}_\mu(x;q^2) = \prod_{i=1}^n \frac{1}{(zx_i ;q)_\infty} \prod_{1\le i < j \le n} \frac{1}{(x_i x_j;q^2)_\infty}.
    \end{equation}
\end{theorem}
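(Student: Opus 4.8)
The plan is to derive \eqref{eq:Littlewood_like_id} by restricting the bijection $\tilde{\Upsilon}$ of \cref{cor:bijection_pi_V1_V2_kappa} to the \emph{symmetric} weighted biwords, i.e. those with $\overline{\pi}=\overline{\pi}^{-1}$, equivalently the symmetric matrices $\overline{M}=\overline{M}^T$ in $\overline{\mathbb{M}}^+_{n\times n}$ (recall from \cref{subs:biwords} that the inverse of a weighted biword corresponds to transposing the matrix \eqref{eq:matrix_weighted_biword}). On this set I would use the single--family weight $q^{\wt(\overline{\pi})}\,x^{p(\overline{\pi})}\,z^{\phi(\overline{\pi})}$, where $\phi(\overline{\pi})=\sum_{i,k}\overline{M}_{i,i}(k)$ records the number of diagonal (fixed) columns $\left(\begin{smallmatrix} i\\ i\\ k\end{smallmatrix}\right)$. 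The first and routine step is to evaluate the product side directly: a symmetric $\overline{M}$ is freely determined by its diagonal entries $\overline{M}_{i,i}(k)$ and its strictly upper entries $\overline{M}_{i,j}(k)$ with $i<j$; a diagonal unit at level $k$ contributes weight $q^{k}z x_i$ and an off--diagonal pair contributes $q^{2k}x_i x_j$, so summing the independent geometric series gives exactly $\prod_i (zx_i;q)_\infty^{-1}\prod_{i<j}(x_ix_j;q^2)_\infty^{-1}$, the right--hand side of \eqref{eq:Littlewood_like_id}.

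Next I would show that the left--hand side equals this same generating function. By \cref{prop:RSK_swap_symmetry} the transpose $\overline{M}\mapsto\overline{M}^T$ corresponds to the swap $(P,Q)\mapsto(Q,P)$, which under $\Phi$ interchanges $V$ and $W$ and, by the symmetry of the construction in \cref{prop:bijection_leading_tab_kappa} (the empty shape $\rho$ and the kernel $\nu$ are swap--invariant, and $\kappa\in\mathcal{K}(\mu)$ is sorted within each rectangle, hence determined by $\kappa^{+}$ and $\mu$), fixes $\kappa$. Thus $\overline{\pi}=\overline{\pi}^{-1}$ holds precisely at the points with $V=W$ and $\kappa\in\mathcal{K}(\mu)$ arbitrary. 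For such points $\wt(\overline{\pi})=\mathscr{H}(V)+\mathscr{H}(W)+|\kappa|=2\mathscr{H}(V)+|\kappa|$ and $x^{p(\overline{\pi})}=x^{V}$, so summing $q^{2\mathscr{H}(V)}x^{V}$ over $V\in VST(\mu)$ produces $\mathscr{P}_\mu(x;q^2)$ by \eqref{eq:q_Whittaker} with $q$ replaced by $q^2$. Consequently the whole restricted sum collapses to $\sum_{\mu}\big(\sum_{\kappa\in\mathcal{K}(\mu)} q^{|\kappa|}z^{\phi(\mu,\kappa)}\big)\mathscr{P}_\mu(x;q^2)$.

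It then remains to identify the inner sum $\sum_{\kappa\in\mathcal{K}(\mu)} q^{|\kappa|}z^{\phi(\mu,\kappa)}$ with $\mathdutchcal{b}_\mu(q;z)$, and this is the heart of the argument and the step I expect to be the main obstacle. The crucial fact to establish is that the fixed--point count $\phi$ of the symmetric matrix $\tilde{\Upsilon}^{-1}(V,V;\kappa)$ depends only on the pair $(\mu,\kappa)$ and not on $V$; I would prove this by reading $\phi$ off the symmetric leading tableau $T(\mu,\kappa;\nu)$ of \cref{prop:bijection_leading_tab_kappa}, whose Sagan--Stanley image is an explicit symmetric matrix and whose diagonal structure is computable directly from the rectangular decomposition of $\mu$ and the components $\kappa^{(i)}$. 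The expected combinatorial mechanism is a skew, $q$--deformed avatar of the classical fact that the fixed points of a symmetric matrix lie on the odd columns of the associated shape: each column of $\mu$ of \emph{odd} length forces a fixed point regardless of its level, yielding the factors of $\mathdutchcal{b}_\mu(q;z)$ indexed by odd $i$, whereas columns of \emph{even} length admit fixed points only in matched pairs at equal levels, so that summing $q^{|\kappa|}z^{\phi}$ over the remaining $\kappa$--freedom produces the Gaussian--binomial / Rogers--Szeg{\"o} factors $[qz^2+1]_{q^2}^{\mu_i-\mu_{i+1}}/(q^2;q^2)_{\mu_i-\mu_{i+1}}$ indexed by even $i$.

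Assembling the two cases over the rectangular decomposition yields $\mathdutchcal{b}_\mu(q;z)$ and completes the proof, exactly parallel to how \eqref{bmu_kappa} was used in the Cauchy case. A useful consistency check on the delicate last step is the specialization $q\to 0$, where only level--zero symmetric matrices survive, $\mathscr{P}_\mu(x;0)=s_\mu$, and the whole identity reduces to the classical refined Littlewood identity $\sum_\mu z^{c(\mu)}s_\mu(x)=\prod_i(1-zx_i)^{-1}\prod_{i<j}(1-x_ix_j)^{-1}$ with $c(\mu)$ the number of odd columns of $\mu$; matching this classical limit pins down the fixed--point bookkeeping and guides the general computation of $\phi(\mu,\kappa)$.
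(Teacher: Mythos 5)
Your overall architecture coincides with the paper's: restrict $\tilde{\Upsilon}$ to symmetric weighted biwords, evaluate the product side entrywise from the free data of a symmetric matrix (diagonal units weighted $q^k z x_i$, off-diagonal pairs weighted $q^{2k}x_ix_j$), observe that symmetry forces $V=W$ so that the $q$-exponent becomes $2\mathscr{H}(V)+|\kappa|$ and the sum over $V$ produces $\mathscr{P}_\mu(x;q^2)$, and reduce everything to the identity $\sum_{\kappa\in\mathcal{K}(\mu)}q^{|\kappa|}z^{\phi(\mu,\kappa)}=\mathdutchcal{b}_\mu(q;z)$. Your parity heuristic — an odd-length column of $\mu$ forces exactly one fixed point irrespective of its level, an even-length column contributes fixed points in pairs governed by the parity of $\kappa_i$ — is exactly the content of \cref{lemma:fixed_pi} and \cref{lemma:b_mu_z_q}, and the Rogers--Szeg\H{o} bookkeeping you anticipate is the paper's computation of $\mathdutchcal{g}_k$ and $\tilde{\mathdutchcal{g}}_k$.

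Where you diverge, and where your plan as stated has a gap, is the justification that $\phi$ depends only on $(\mu,\kappa)$. You propose to read the diagonal off the Sagan--Stanley image of the symmetric leading tableau $T(\mu,\kappa;\nu)$. For that to compute $\phi$ of the \emph{original} $\overline{\pi}=\tilde{\Upsilon}^{-1}(V,V;\kappa)$ you must first know that $\mathrm{fixed}(\overline{\pi})$ is invariant under the leading map $\mathcal{L}_{P,P}$, and this is not automatic: the classical fact that diagonal crystal operators preserve the trace of a symmetric matrix rests on their preserving the shape, whereas the $0$-th Demazure arrows occurring in $\mathcal{L}$ do change the shape (they add or remove a cell in some column of both $\lambda$ and $\rho$), so constancy of the trace along the crystal orbit is essentially equivalent to the claim you are trying to prove. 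The paper sidesteps this by never tracking the diagonal through the crystal action: it invokes Sagan and Stanley's parity identity $\mathrm{odd}(\lambda')+\mathrm{odd}(\rho')=\mathrm{fixed}(\overline{\pi})+2\,\mathrm{odd}(\nu')$ (\cref{lemma:odd_lamda_rho}), notes that $\mathrm{fixed}$ is trivially constant along the skew $\RSK$ dynamics because the weights of $\overline{\pi}$ shift uniformly, and then substitutes the linearized stable shape $(\rho^{(t)})_i'=2\mathscr{H}_i(V)+\kappa_i+t\mu_i'$ from \eqref{eq:shape_rho_t}; the factor $2$ in front of the local energies kills them modulo $2$ and yields $\mathrm{fixed}(\overline{\pi})=\mathrm{odd}(\kappa)+\mathrm{odd}(\kappa+\mu')$, manifestly independent of $V$. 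If you replace your ``read it off the leading tableau'' step by this parity-plus-linearization argument, the rest of your proof goes through as written.
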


Notice that setting $z=0$ in \eqref{eq:Littlewood_like_id} and using the convention $0^0=1$, since $\mathdutchcal{b}_\mu(q,0)=\mathdutchcal{b}_\mu(q^2) 
\prod_{i=1,3,5,\dots} \mathbf{1}_{\mu_i=\mu_{i+1}}$, we obtain the Littlewood identity for $q$-Whittaker polynomials
\begin{equation}
    \sum_{\mu:\mu' \text{ is even}} \mathdutchcal{b}_\mu(q^2) \mathscr{P}_\mu (x;q^2) = \prod_{1\le i < j < \le n} \frac{1}{ (x_i x_j ; q^2)_\infty},
\end{equation}
which becomes (i) of example 4 in chapter VI,7 of \cite{Macdonald1995}, after rescaling $q^2 \to q$. On the other hand, taking $z=1$, we observe that $\mathdutchcal{b}_\mu(q;1)=\mathdutchcal{b}_\mu(q)$ as a result of the known identity $[q+1]_{q^2}^k = (-q;q)_k$ for Rogers-Szeg{\"o} polynomials, see Example 5 in Chapter 3 of \cite{andrews_1984}. Then \eqref{eq:Littlewood_like_id} becomes
    \begin{equation}
        \sum_\mu \mathdutchcal{b}_\mu (q) \mathscr{P}_\mu(x;q^2) = \prod_{i=1}^n \frac{1}{ (  x_i ; q )_\infty} \prod_{1 \le i<j\le n} \frac{1}{ ( x_i x_j ; q^2 )_\infty},
    \end{equation}
which is a special case of an identity for Macdonald polynomials conjectured by Kawanaka \cite{Kawanaka_1999} and proven in \cite{Langer_Schlosser_Warnaar_2009}. When parameter $z$ is general, identity \eqref{eq:Littlewood_like_id} is equivalent, after plethystic substitution, to a Littlewood identity proven by Warnaar in \cite{Warnaar_Rogers_Szego}; see \cref{rem:from_qW_to_HL}. Additional Littlewood identities are presented in \cite{Warnaar_Rogers_Szego,rains_warnaar2021bounded}, although it is not clear if their bijective proof is accessible through the theory developed in this paper.

In order to show \eqref{eq:Littlewood_like_id} we have to relate the left hand side with a summation over symmetric weighted biwords $\overline{\pi}$, where the variable $z$ weights the number of fixed points of $\overline{\pi}$ (i.e. elements $\overline{\pi}_i = \left( \begin{smallmatrix} j \\ j \\ k \end{smallmatrix} \right)$ for some $j\in \mathcal{A}_n$, $k\in \mathbb{Z}$). We need a few preliminary results. In the following lemmas we denote by $\mathrm{odd}(\eta)$ the number of odd elements of an integer sequence $\eta$. For instance if $\lambda$ is a partition $\mathrm{odd}(\lambda')$ is the number of its odd length columns. For a weighted biword $\overline{\pi}$ we also define
\begin{equation}
    \mathrm{fixed}(\overline{\pi}) = \mathrm{tr}(\overline{M}) = \sum_{j=1}^n \sum_{k\in \mathbb{Z}} \overline{M}_{j,j}(k),
\end{equation}
where as usual $\overline{\pi}$ and $\overline{M}$ are related by \cref{eq:matrix_weighted_biword}.

\begin{lemma}[\cite{sagan1990robinson} Corollary 4.6] \label{lemma:odd_lamda_rho}
    Let $P$ be a semi-standard skew tableau of shape $\lambda/ \rho$ and let $(P,P) \xleftrightarrow[]{\skwRSK \,}  (\overline{\pi}; \nu)$. Then
    \begin{equation}
        \mathrm{odd}(\lambda') + \mathrm{odd}(\rho') = \mathrm{fixed}(\overline{\pi}) + 2 \, \mathrm{odd}(\nu').
    \end{equation}
\end{lemma}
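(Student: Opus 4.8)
The plan is to exploit the self-duality of the Sagan--Stanley correspondence under transposition and to reduce the statement to the classical symmetric Robinson--Schensted theorem, treating the empty shape $\rho$ and the kernel $\nu$ as boundary corrections. Since $P=Q$, the swap symmetry of \cref{prop:RSK_swap_symmetry} together with the identification of matrix transpose with biword inverse forces the matrix $\overline{M}$ with $(P,P)\xleftrightarrow[]{\skwRSK\,}(\overline{M};\nu)$ to be symmetric, $\overline{M}=\overline{M}^{T}$. On the twisted cylinder $\mathscr{C}_n$ this transposition is the reflection $\sigma:(j,i)\mapsto(i,j)$, which is a well-defined involution whose fixed cells are exactly those with $i\equiv j \pmod n$, i.e. the cells indexing the diagonal entries $\overline{M}_{j,j}(k)$. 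Hence $\mathrm{fixed}(\overline{\pi})=\sum_{\sigma(c)=c}\overline{M}(c)$ counts the $\sigma$-fixed bullets. When $\rho=\nu=\varnothing$ the statement is the classical fact that a symmetric integer matrix $M$ with straight-shape image $(P,P)$ of shape $\lambda$ satisfies $\mathrm{odd}(\lambda')=\mathrm{tr}(M)$, proven by standardizing and observing that off-diagonal dots are swapped in $\sigma$-pairs (contributing an even number of odd columns) while each diagonal dot toggles the odd-column count by one. The real content is therefore the production of the correction terms $\mathrm{odd}(\rho')$ and $2\,\mathrm{odd}(\nu')$.

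First I would remove the kernel by squeezing. By the row-coordinate parameterization of \cref{prop:row_coordinate_classical_pair}, $\nu=\ker(P,P)=\ker(P)$ enters the shape only as a common row-shift $\lambda=\lambda^{\sqz}+\nu$, $\rho=\rho^{\sqz}+\nu$, where $(P^{\sqz},P^{\sqz})\xleftrightarrow[]{\skwRSK\,}(\overline{\pi};\varnothing)$ is the squeezed pair and $\overline{\pi}$, hence $\mathrm{fixed}(\overline{\pi})$, is unchanged. Thus it suffices to prove the identity for $\nu=\varnothing$ and separately to verify the purely combinatorial partition identity
\begin{equation*}
    \mathrm{odd}\big((\lambda^{\sqz}+\nu)'\big)+\mathrm{odd}\big((\rho^{\sqz}+\nu)'\big)=\mathrm{odd}\big((\lambda^{\sqz})'\big)+\mathrm{odd}\big((\rho^{\sqz})'\big)+2\,\mathrm{odd}(\nu'),
\end{equation*}
which reflects that adding the same inner partition $\nu$ to the two shapes reorganizes the column multiset so that new odd columns are created in matched pairs, one from each shape. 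This identity is elementary once one uses that $\lambda^{\sqz}$ and $\rho^{\sqz}$ differ only by the row-length vector $\theta$ and share the common left envelope $\eta$ produced by the overlap formula \eqref{eq:empty_shape_eta}.

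Second, with $\nu=\varnothing$, I would treat the inner shape $\rho$ using the $\sigma$-symmetric edge configuration $\mathcal{E}$ on $\mathscr{C}_n$ attached to $\overline{M}$ by \cref{thm:SS}. Because $\mathcal{E}=\sigma(\mathcal{E})$, its shadow lines are permuted by $\sigma$, occurring either in $\sigma$-exchanged pairs or as $\sigma$-fixed lines crossing the diagonal. The external and empty profiles $\lambda',\rho'$ are determined by $\mathcal{E}$ through the identification $(\alpha,\beta)\leftrightarrow\mathcal{E}$ and the row-coordinate reconstruction of \cref{prop:row_coordinate_classical_pair}; one then shows that an odd column in $\lambda$ or in $\rho$ is forced precisely by a $\sigma$-fixed line, and that the number of such self-paired lines equals $\mathrm{fixed}(\overline{\pi})$. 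An equivalent and perhaps cleaner route is to unfold a fundamental band of $\mathscr{C}_n$ into a finite rectangle, where the symmetric Sagan--Stanley correspondence becomes an ordinary symmetric RSK with prescribed inner shape, and to apply the classical odd-column theorem there, reading off $\lambda$ as the outer and $\rho$ as the inner profile.

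The main obstacle is this last step. On the twisted cylinder there is no distinguished origin along the diagonal, so the pairing of shadow lines under $\sigma$ and the matching of self-paired lines with diagonal bullets must be carried out around the whole diagonal of $\mathscr{C}_n$, while tracking simultaneously the outer profile $\lambda'$ and the inner profile $\rho'$ --- whereas the classical theorem only controls the outer one. I expect the cleanest resolution is to argue after one squeeze, so that $\rho$ sits as far to the left as possible; this aligns the inner corners with the diagonal and exhibits the self-paired lines responsible for the $\mathrm{odd}(\rho')$ term as exactly the diagonal crossings not already accounted for by the outer shape.
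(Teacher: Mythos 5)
The paper does not prove this lemma at all: it is imported verbatim as Corollary~4.6 of \cite{sagan1990robinson}, so there is no in-paper argument to compare against and your proposal has to stand on its own as a proof.

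Several of your reductions are sound. The symmetry $\overline{M}=\overline{M}^{T}$ for $P=Q$, the well-definedness of the reflection $\sigma$ on $\mathscr{C}_n$, and the identification of its fixed cells with the diagonal entries $\overline{M}_{j,j}(k)$ are all correct. The elimination of $\nu$ also works, and in fact more easily than you suggest: since $\mathrm{odd}(\lambda')=\lambda_1-\lambda_2+\lambda_3-\cdots$ is the alternating sum of the parts, it is \emph{linear} under rowwise addition of partitions, so $\lambda=\lambda^{\sqz}+\nu$ and $\rho=\rho^{\sqz}+\nu$ give $\mathrm{odd}(\lambda')+\mathrm{odd}(\rho')=\mathrm{odd}((\lambda^{\sqz})')+\mathrm{odd}((\rho^{\sqz})')+2\,\mathrm{odd}(\nu')$ term by term; no pairing of odd columns "one from each shape" is needed, and the biword $\overline{\pi}$ is indeed unchanged by squeezing.

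The genuine gap is the case $\nu=\varnothing$ with $\rho\neq\varnothing$, which is the entire content of the lemma beyond the classical straight-shape theorem, and which you explicitly label "the main obstacle" without resolving. Neither of your two suggested routes is viable as stated. For the first, the profiles $\lambda'$ and $\rho'$ are not functions of the shadow-line ensemble on $\mathscr{C}_n$ alone: they are reconstructed from the row-coordinate matrices through the overlap formula \eqref{eq:empty_shape_eta}, so the assertion that "an odd column in $\lambda$ or in $\rho$ is forced precisely by a $\sigma$-fixed line" is unsubstantiated, and no mechanism is given for how the single count $\mathrm{fixed}(\overline{\pi})$ should split into the two separate contributions $\mathrm{odd}(\lambda')$ and $\mathrm{odd}(\rho')$. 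For the second, there is no classical symmetric-RSK theorem "with prescribed inner shape" to unfold onto --- producing exactly such a statement is what Sagan and Stanley's Corollary~4.6 accomplishes, and their argument tracks the parity of column lengths through each step of the insertion procedure. A completion in the spirit of this paper's machinery would induct over $\iota_2^{-1}$ (or $\RSK^{-1}$), checking at each elementary step how $\mathrm{odd}(\lambda')+\mathrm{odd}(\rho')$ changes and matching the change against the diagonal cells of the layer of $\overline{M}$ revealed at that step; that case analysis, which is where all the work lies, is absent from your proposal.
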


\begin{lemma} \label{lemma:fixed_pi}
    Let $\overline{\pi} \xleftrightarrow[]{\tilde{\Upsilon}} (V,V;\kappa)$ with $V \in VST(\mu,n)$. Then
    \begin{equation}
        \mathrm{fixed}(\overline{\pi}) = \mathrm{odd}(\kappa) + \mathrm{odd}(\kappa + \mu').
    \end{equation}
\end{lemma}

\begin{proof}
    Let $P$ be such that $(P,P) \xleftrightarrow[]{\skwRSK\,}(\overline{\pi};\varnothing)$. Consider now the skew $\RSK$ dynamics $(P_t,P_t)$ with initial data $(P,P)$ and let $\lambda^{(t)}/\rho^{(t)}$ be the shape of $P_t$. Then, by \eqref{eq:shape_rho_t},  
    we have, for $t$ large enough
    \begin{gather}
        (\rho^{(t)})_i' = 2 \, \mathscr{H}_i (V) + \kappa_i + t \times \mu_i',
        \\
        (\lambda^{(t)})_i' = 2 \, \mathscr{H}_i (V) + \kappa_i + (t+1) \times \mu_i'.
    \end{gather}
    On the other hand $\mathrm{odd}((\lambda^{(1)})') + \mathrm{odd}((\rho^{(1)})') = \mathrm{odd}((\lambda^{(t)})') + \mathrm{odd}((\rho^{(t)})')$ as a consequence of \cref{lemma:odd_lamda_rho}.
    This is because if $(P_t,P_t) \xleftrightarrow[]{\skwRSK\,} (\overline{\pi}',\varnothing)$, then $\overline{\pi}$ and $\overline{\pi}'$ have the same $q$ and $p$ words and their weights differ only by a constant shift, i.e. $w(\overline{\pi}')_i = w(\overline{\pi})_i+t-1$ for all $i$, implying $\mathrm{fixed}(\overline{\pi}) = \mathrm{fixed}(\overline{\pi}')$.
    Combining these observations we find
    \begin{equation}
        \mathrm{odd}((\lambda^{(1)})') + \mathrm{odd}((\rho^{(1)})') = \mathrm{odd}(\kappa) + \mathrm{odd}(\kappa + \mu'),
    \end{equation}
    where the expression in the right hand side is a result of checking parities of $\kappa_i,\mu_i'$ and $t$ in all cases. 
    
\end{proof}

We now define functions
\begin{gather}
    \mathdutchcal{g}_k(z,q) = \sum_{\nu:\nu_1 = k} z^{2 \mathrm{odd}(\nu')} q^{|\nu|},
    \\
    \tilde{\mathdutchcal{g}}_k(z,q) = \sum_{\nu:\nu_1 \le k} z^{2 \mathrm{odd}(\nu')} q^{|\nu|} = g_0(z,q) + g_1(z,q) + \cdots g_k(z,q). 
\end{gather}

\begin{lemma}
    For $k\ge 0$, we have
    \begin{equation} \label{eq:g_k}
        \mathdutchcal{g}_k(z,q) = \frac{[q z^2 + q^2]^k_{q^2}}{(q^2;q^2)_k}
    \end{equation}
    and
    \begin{equation}\label{eq:G_k}
        \tilde{\mathdutchcal{g}}_k(z;q) = \frac{[qz^2 + 1]_{q^2}^k}{(q^2;q^2)_k}.
    \end{equation}
\end{lemma}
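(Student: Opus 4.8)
The plan is to reduce both sums to generating functions over weakly decreasing sequences of column lengths and then solve a single recursion. First I would reinterpret the partition $\nu$ through its column lengths: writing $a_i = \nu_i'$ for $i=1,\dots,k$, the condition $\nu_1 \le k$ is equivalent to $a_1 \ge a_2 \ge \cdots \ge a_k \ge 0$, while $\nu_1 = k$ forces $a_k \ge 1$; moreover $|\nu| = a_1 + \cdots + a_k$ and $\mathrm{odd}(\nu')$ is exactly the number of odd entries among $a_1,\dots,a_k$. Thus
\begin{equation}
    \tilde{\mathdutchcal{g}}_k(z;q) = \sum_{a_1 \ge \cdots \ge a_k \ge 0} z^{2\,\#\{i:\, a_i \text{ odd}\}}\, q^{a_1 + \cdots + a_k},
    \qquad
    \mathdutchcal{g}_k(z,q) = \sum_{a_1 \ge \cdots \ge a_k \ge 1} z^{2\,\#\{i:\, a_i \text{ odd}\}}\, q^{a_1 + \cdots + a_k}.
\end{equation}

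Next I would derive a recursion for $\tilde{\mathdutchcal{g}}_k$ by conditioning on the smallest column $a_k = m$ and substituting $a_i = b_i + m$ for $i<k$, with $b_1 \ge \cdots \ge b_{k-1} \ge 0$. The shift contributes $q^{(k-1)m}$ to the weight, and the parities of $a_1,\dots,a_{k-1}$ agree with those of $b_1,\dots,b_{k-1}$ when $m$ is even but are all flipped when $m$ is odd; a flip replaces $z^{2\,\mathrm{odd}}$ by $z^{2(k-1)}z^{-2\,\mathrm{odd}}$, i.e. turns $\tilde{\mathdutchcal{g}}_{k-1}(z;q)$ into $z^{2(k-1)}\tilde{\mathdutchcal{g}}_{k-1}(z^{-1};q)$. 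Summing the geometric series $m=2s$ and $m=2s+1$ separately yields
\begin{equation}
    \tilde{\mathdutchcal{g}}_k(z;q) = \frac{1}{1-q^{2k}}\Bigl[\, \tilde{\mathdutchcal{g}}_{k-1}(z;q) + z^{2k} q^k\, \tilde{\mathdutchcal{g}}_{k-1}(z^{-1};q) \,\Bigr].
\end{equation}
I expect the correct bookkeeping of this parity flip together with the $q^{(k-1)m}$ shift factor to be the main delicate point, since dropping the shift gives a wrong recursion.

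I would then prove \eqref{eq:G_k} by induction on $k$, with the trivial base $k=0$. Writing $P_k(z) = [qz^2+1]_{q^2}^k = \sum_{m} \Qbinomial{k}{m}{q^2} q^m z^{2m}$ and using $(q^2;q^2)_k = (1-q^{2k})(q^2;q^2)_{k-1}$, the recursion for the proposed closed form collapses to the polynomial identity $P_k(z) = P_{k-1}(z) + z^{2k} q^k P_{k-1}(z^{-1})$. Extracting the coefficient of $z^{2m}$ (reindexing $m\mapsto k-j$ in the second term) turns this into precisely the $q$-Pascal rule
\begin{equation}
    \Qbinomial{k}{m}{q^2} = \Qbinomial{k-1}{m}{q^2} + q^{2(k-m)}\, \Qbinomial{k-1}{m-1}{q^2},
\end{equation}
which is standard; this closes the induction for $\tilde{\mathdutchcal{g}}_k$.

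Finally, for \eqref{eq:g_k} I would use $\mathdutchcal{g}_k = \tilde{\mathdutchcal{g}}_k - \tilde{\mathdutchcal{g}}_{k-1}$ (already recorded in the definition of $\tilde{\mathdutchcal{g}}_k$). Substituting \eqref{eq:G_k}, the claim $\mathdutchcal{g}_k(z,q) = [qz^2+q^2]_{q^2}^k / (q^2;q^2)_k$ is equivalent to $P_k(z) - (1-q^{2k})P_{k-1}(z) = [qz^2+q^2]_{q^2}^k$, and comparing coefficients of $z^{2m}$ reduces this to the elementary Gaussian-binomial relation $\Qbinomial{k}{m}{q^2}(1-q^{2(k-m)}) = (1-q^{2k})\Qbinomial{k-1}{m}{q^2}$. (Alternatively one can observe that adding one cell to every column gives a bijection showing $\mathdutchcal{g}_k(z,q) = q^k z^{2k}\,\tilde{\mathdutchcal{g}}_k(z^{-1};q)$, and then check directly by homogeneity of $[A+B]_{q^2}^k$ that $q^k z^{2k}[qz^{-2}+1]_{q^2}^k = [qz^2+q^2]_{q^2}^k$.) This completes both formulas.
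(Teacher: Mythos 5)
Your proof is correct, but it follows a genuinely different route from the paper's. The paper evaluates $\mathdutchcal{g}_k$ \emph{first}, by a bijective decomposition: a partition $\nu$ with $\nu_1=k$ is split as $\nu'=\tilde{\nu}'+\eta(\varepsilon;k)'$, where $\tilde{\nu}$ has all even columns (these sum to the factor $1/(q^2;q^2)_k$) and $\eta(\varepsilon;k)$ is a profile partition determined by the binary word $\varepsilon\in\{0,1\}^k$ recording which columns are odd; the profile partitions with $|\varepsilon|=j$ are then counted via the box-counting interpretation of $\Qbinomial{k}{j}{q^2}$, producing the Rogers--Szeg{\"o} factor $[qz^2+q^2]_{q^2}^k$ directly, and \eqref{eq:G_k} is deduced afterwards by induction. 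You instead prove \eqref{eq:G_k} \emph{first}, via the functional recursion
\begin{equation*}
    \tilde{\mathdutchcal{g}}_k(z;q) = \frac{1}{1-q^{2k}}\Bigl[\, \tilde{\mathdutchcal{g}}_{k-1}(z;q) + z^{2k} q^k\, \tilde{\mathdutchcal{g}}_{k-1}(z^{-1};q) \,\Bigr],
\end{equation*}
obtained by peeling off the smallest column together with the parity-flip substitution $z\mapsto z^{-1}$, and you close the induction with the $q$-Pascal rule; \eqref{eq:g_k} then follows by the subtraction $\mathdutchcal{g}_k = \tilde{\mathdutchcal{g}}_k - \tilde{\mathdutchcal{g}}_{k-1}$ and an elementary Gaussian-binomial identity. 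I checked the details: the recursion is right (the total weight per unit of $m$ is $q^{km}$, combining the $q^{(k-1)m}$ shift with $a_k$'s own $q^m$, which is what makes the geometric series produce $1/(1-q^{2k})$ and $q^k$), the coefficient extraction does reduce to the stated $q$-Pascal identity, and your alternative involution $\mathdutchcal{g}_k(z,q)=q^kz^{2k}\tilde{\mathdutchcal{g}}_k(z^{-1};q)$ is also valid. What each approach buys: the paper's argument is bijective and makes the combinatorial origin of each factor transparent (even-column partitions versus odd-column profiles), whereas yours is more elementary and self-contained — it needs nothing beyond $q$-Pascal — at the cost of being a verification of the closed form rather than an explanation of it.
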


\begin{proof}
    Any partition $\nu$ with first row of length $k$ can be written as $\nu'=\tilde{\nu}' + \eta(\varepsilon;k)'$ where $\widetilde{\nu}$ has all even columns and first row $\widetilde{\nu}_1\le k$ and $\eta(\varepsilon;k)$ is the partition defined by
    \begin{equation}
        \eta(\varepsilon;k)'_i-\eta(\varepsilon;k)'_{i+1} = |\varepsilon_i - \varepsilon_{i+1}|,
        \qquad
        \text{for $i=1,\dots,k-1$} 
        \qquad
        \text{and}
        \qquad
        \eta(\varepsilon;k)_k'= 2- \varepsilon_k,
    \end{equation}
    for $\varepsilon\in \{ 0,1\}^k$. The binary sequence $\varepsilon$ encodes location of odd columns of $\nu$. An example, for $k=6$ can be
    \begin{equation}
        \nu=\ydiagram{6,5,5,3,2,2,2,1},
        \qquad
        \widetilde{\nu} = \ydiagram{5,5,2,2},
        \qquad
        \eta(\varepsilon,k) = \ydiagram{6,3,2,1},
    \end{equation}
    with $\varepsilon=(0,1,0,1,1,1)$.
    Since by construction $\eta(\varepsilon;k)_1=k$, we can further decompose $\eta(\varepsilon;k)$ taking away one box from odd length columns and two boxes from even length columns, as 
    $$
    \eta(\varepsilon;k)' = 2 \widetilde{\eta}(\varepsilon;k)' + \varepsilon + 2(1-\varepsilon).
    $$ 
    Notice that for fixed $j=|\varepsilon|=\varepsilon_1+\cdots+\varepsilon_k$, we always have $\widetilde{\eta}(\varepsilon;k)'_1\le j$ and $\widetilde{\eta}(\varepsilon;k)_1\le k$. Moreover for any partition $\lambda$ such that $\lambda_1\le k$ and $\lambda_1' \le j$, there always exists a choice of $\varepsilon$ such that $\widetilde{\eta}(\varepsilon;k)=\lambda$. Consider the generating function of Young diagrams $\eta(\varepsilon;k)$ 
    \begin{equation}
        \mathcal{Z}(\zeta,k) = \sum_{\varepsilon \in \{ 0,1 \}^k } \zeta^{ \, \mathrm{odd}(\eta(\varepsilon;k)')} q^{|\eta(\varepsilon;k)|} = \sum_{\varepsilon \in \{ 0,1 \}^k } (q \zeta)^{|\varepsilon|} q^{2(k-|\varepsilon|) + 2 \, |\widetilde{\eta}(\varepsilon;k)|}.
    \end{equation}
    By a notable combinatorial property of the Gaussian binomial coefficient \cite[Section 10]{AndrewsAskeyRoy2000}, the right hand side becomes, summing over fixed $|\varepsilon|$,
    \begin{equation}
        \mathcal{Z}(\zeta,k) = \sum_{j=0}^k (q \zeta)^j q^{2(k-j)} \Qbinomial{k}{j}{q^2}  = [q \zeta + q^2]_{q^2}^k.
    \end{equation}
    Then the function $\mathdutchcal{g}_k$, becomes, 
    \begin{equation}
        \mathdutchcal{g}_k(z,q) = \sum_{\substack{\widetilde{\nu} : \widetilde{\nu}' \text{ is even } \\ \widetilde{\nu}_1 \le k}} q^{|\widetilde{\nu}|} \mathcal{Z}(z^2,k),
    \end{equation}
    proving \eqref{eq:g_k}. Exact formula \eqref{eq:G_k} easily follows from \eqref{eq:g_k} by induction.
\end{proof}

\begin{lemma} \label{lemma:b_mu_z_q}
    For all $\mu$, we have
    \begin{equation}
        \sum_{\kappa \in \mathcal{K}(\mu) } q^{|\kappa|} z^{\mathrm{odd}(\kappa) + \mathrm{odd}(\mu' + \kappa) } = \mathdutchcal{b}_\mu (q;z).
    \end{equation}
\end{lemma}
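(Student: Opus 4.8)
The plan is to prove the identity $\sum_{\kappa \in \mathcal{K}(\mu)} q^{|\kappa|} z^{\mathrm{odd}(\kappa) + \mathrm{odd}(\mu'+\kappa)} = \mathdutchcal{b}_\mu(q;z)$ by decomposing the sum over $\mathcal{K}(\mu)$ according to the rectangular decomposition of $\mu$, then evaluating the resulting factors using the functions $\mathdutchcal{g}_k$ and $\tilde{\mathdutchcal{g}}_k$ computed in the preceding lemma. Recall from \cref{def:fundamental_shifts} that elements $\kappa \in \mathcal{K}(\mu)$ factor as lists of weakly decreasing subarrays $\kappa = (\kappa^{(1)}, \kappa^{(2)}, \dots)$, one for each rectangle of the decomposition $0 = R_0, R_1, R_2, \dots$ with bases $r_i = R_i - R_{i-1}$. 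Because $|\kappa| = \sum_i |\kappa^{(i)}|$ and the statistics $\mathrm{odd}(\kappa)$, $\mathrm{odd}(\mu'+\kappa)$ both split as sums over the rectangular blocks, the total sum factorizes into a product over rectangles.

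First I would analyze a single rectangle. Within the $i$-th block, the components $\kappa_{R_{i-1}+1} \ge \cdots \ge \kappa_{R_i} \ge 0$ form a partition with at most $r_i$ parts, and the corresponding values of $\mu'$ on this block are all equal to a common value $m = \mu'_{R_i}$. The key observation is that $\mathrm{odd}(\kappa^{(i)})$ counts odd entries of this partition while $\mathrm{odd}(\mu'^{(i)} + \kappa^{(i)})$ counts the entries $\kappa_j$ whose parity differs from that of $m$. Writing the block-partition in its column (transpose) coordinates, the weakly decreasing sequence of $r_i$ nonnegative integers is equivalent to a partition fitting in a strip of width $r_i$; I would match the pair of parities $(\mathrm{odd}(\kappa^{(i)}), \mathrm{odd}(\mu'^{(i)} + \kappa^{(i)}))$ against the two cases in the definition \eqref{eq:b_q_z} of $\mathdutchcal{b}_\mu(q;z)$, which treats even-indexed and odd-indexed rows of $\mu$ differently.

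The main work is recognizing that the per-rectangle generating function is exactly one of the factors appearing in \eqref{eq:b_q_z}. Concretely, summing $q^{|\kappa^{(i)}|} z^{(\dots)}$ over partitions with at most $r_i$ parts produces either $\tilde{\mathdutchcal{g}}_{r_i}(z;q)$ or a closely related expression, and by \eqref{eq:G_k} this equals $[qz^2+1]_{q^2}^{r_i}/(q^2;q^2)_{r_i}$; matching $r_i = \mu_{R_{i-1}+1} - \mu_{R_i+1}$ against the gaps $\mu_i - \mu_{i+1}$ recovers the factors $\tfrac{[qz^2+1]_{q^2}^{\mu_i - \mu_{i+1}}}{(q^2;q^2)_{\mu_i - \mu_{i+1}}}$ and $\tfrac{z^{\mathbf{1}_{\mu_i > \mu_{i+1}}}}{(q;q)_{\mu_i - \mu_{i+1}}}$ appearing in \eqref{eq:b_q_z}. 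The parity bookkeeping is what distinguishes the two product ranges (even $i$ versus odd $i$): the factor $m = \mu'_{R_i}$ being even or odd controls whether $\mathrm{odd}(\mu'^{(i)}+\kappa^{(i)})$ cancels or reinforces $\mathrm{odd}(\kappa^{(i)})$, and thus whether the $z$-dependence collapses to the Rogers–Szeg\H{o} form $[qz^2+1]_{q^2}$ or to the simpler $z^{\mathbf{1}}/(q;q)$ shape.

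The hard part will be handling the parity combinatorics cleanly, in particular the exceptional contribution $z^{\mathbf{1}_{\mu_i > \mu_{i+1}}}$ coming from the lowest row of each odd-type block: whether the smallest part $\kappa_{R_i}$ of a block is strictly positive, combined with the parity of $m$, governs a single ``boundary'' factor of $z$ that must be tracked separately from the bulk Rogers–Szeg\H{o} count. I expect to dispose of this by the same telescoping that gives $\tilde{\mathdutchcal{g}}_k = \mathdutchcal{g}_0 + \cdots + \mathdutchcal{g}_k$, isolating the $\kappa_{R_i} = 0$ term. Once each rectangle's contribution is matched to the correct factor of \eqref{eq:b_q_z}, taking the product over all rectangles completes the proof.
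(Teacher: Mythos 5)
Your overall strategy is the paper's: the sum over $\mathcal{K}(\mu)$ factorizes over the rectangular blocks of $\mu$, and each block is evaluated with \eqref{eq:G_k}; the paper's proof is exactly this one line. The gap is in your treatment of the blocks whose common column length $m=\mu'_{R_i}$ is odd. For such a block \emph{every} one of its $r_i$ entries contributes exactly one factor of $z$: for each $j$ in the block exactly one of $\kappa_j$ and $\kappa_j+m$ is odd, so
\begin{equation*}
\mathbf{1}_{\kappa_j\ \mathrm{odd}}+\mathbf{1}_{\kappa_j+m\ \mathrm{odd}}=1
\end{equation*}
identically, independently of the value of $\kappa_j$. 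Hence the block's $z$-exponent is deterministically $r_i$ and its generating function is $z^{r_i}/(q;q)_{r_i}$, with $r_i=\mu_m-\mu_{m+1}$. There is no ``boundary'' factor governed by whether the smallest part $\kappa_{R_i}$ is positive, and no telescoping of the kind $\tilde{\mathdutchcal{g}}_k=\mathdutchcal{g}_0+\cdots+\mathdutchcal{g}_k$ enters; the mechanism you propose to produce $z^{\mathbf{1}_{\mu_i>\mu_{i+1}}}$ does not exist, so this step of your plan would fail if carried out.

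What you have actually run into is a typo in \eqref{eq:b_q_z}: the odd-$i$ factors should read $z^{\mu_i-\mu_{i+1}}/(q;q)_{\mu_i-\mu_{i+1}}$ rather than $z^{\mathbf{1}_{\mu_i>\mu_{i+1}}}/(q;q)_{\mu_i-\mu_{i+1}}$. You can confirm this independently of the lemma by extracting the coefficient of $x_1^2$ in \cref{thm:Littlewood_like_id} with a single variable $x_1$: the right-hand side gives $z^2/(q;q)_2$, forcing $\mathdutchcal{b}_{(2)}(q;z)=z^2/(q;q)_2=z^{\mu_1-\mu_2}/(q;q)_{\mu_1-\mu_2}$ and not $z^1/(q;q)_2$. (The specializations $z=0$ and $z=1$ discussed after the theorem do not distinguish the two readings, which is why they do not detect the typo.) With the corrected exponent your factorization goes through verbatim: an even-$m$ block gives $\tilde{\mathdutchcal{g}}_{r}(z;q)=[qz^2+1]_{q^2}^{r}/(q^2;q^2)_{r}$ by \eqref{eq:G_k}, after noting that odd parts of the block partition correspond to odd columns of its transpose, and an odd-$m$ block gives $z^{r}/(q;q)_{r}$.
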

\begin{proof}
    Summing over all different components of $\kappa = (\kappa^{(1)},\kappa^{(2)},\dots)$ and utilizing \eqref{eq:G_k} we obtain the claimed result.
\end{proof}

We finally come to the proof of \eqref{eq:Littlewood_like_id}.

\begin{proof}[Proof of \cref{thm:Littlewood_like_id}]
    By making use of computation reported in \cref{lemma:b_mu_z_q} we obtain
    \begin{equation}
    \begin{split}
        \sum_{\mu} \mathdutchcal{b}_\mu (q;z) \mathscr{P}_\mu(x;q^2)
        &= \sum_{\mu\in \mathbb{Y}} \sum_{V \in VST(\mu)} \sum_{\kappa \in \mathcal{K}(\mu)} q^{|\kappa| + 2 H(V)} z^{\mathrm{odd}(\kappa) + \mathrm{odd}(\mu' + \kappa) } x^V
        \\
        &=
        \sum_{\overline{\pi} \in \overline{\mathbb{A}}_{n,n}^+ : \overline{\pi} = \overline{\pi}^{-1}} z^{\mathrm{fixed}(\overline{\pi})}
        q^{\wt(\overline{\pi})}
        x^{p(\overline{\pi})}
        \\
        &=
        \prod_{k \ge 0}
        \prod_{i=1}^n \frac{1}{1-q^k z x_i} \prod_{1 \le i<j\le n} \frac{1}{1-q^{2k} x_i x_j}.
    \end{split}
    \end{equation}
\end{proof}

\begin{remark} \label{remark:general_specializations}
    Identities \eqref{eq:Cauchy_id}, \eqref{eq:Littlewood_like_id} hold both numerically and formally in the algebra of symmetric functions. In this second case variables $x$ can be thought as generic algebra homomorphisms defined on the (algebraic) basis of power sum symmetric functions $\{p_n;n\in \mathbb{N}_0 \}$ as
    \begin{equation}
        x : p_n \mapsto x (p_n).
    \end{equation}
\end{remark}
    
\begin{remark} \label{rem:from_qW_to_HL}
    It is known \cite{Macdonald1995} that the algebra homomorphism
    \begin{equation}
        \omega_{u,v} : p_r \mapsto (-1)^{r-1} \frac{1 - u^r}{1-v^r} p_r,
    \end{equation}
    acts on Macdonald polynomials $\mathscr{P}_\mu(x;q,t),\mathscr{Q}_\mu(x;q,t)$ as
    \begin{equation}\label{eq:symmetry_Macdonald}
        \omega_{q,t} \mathscr{P}_{\mu} (x;q,t) = \mathscr{Q}_{\mu'}(x, t, q),
    \end{equation}
    Then, applying $\omega_{q^2,0}$ to both sides of \eqref{eq:Littlewood_like_id} and renaming parameters $q \mapsto t$ yields the identity for Hall-Littlewood polynomials $\mathscr{Q}_\mu(x;q=0,t)$
    \begin{equation}
        \sum_\mu \mathdutchcal{b}_{\mu'}(t;z) \mathscr{Q}_\mu (x;0,t^2) = \prod_{i=1}^n \frac{(1+ z x_i)(1+t z x_i)}{1-x^2_i} \prod_{1\le i < j \le n} \frac{1-t^2 x_i x_j}{ 1- x_i x_j}.
    \end{equation}
    This identity is a particular case of \cite[Theorem 1.1]{Warnaar_Rogers_Szego}, which in turn interpolates between one of Macdonald's Littlewood identities \cite{Macdonald1995} and Kawanaka's Littlewood identity \cite{Kawanaka_1991}.
\end{remark}

\begin{remark}
    In this paper we have focused our attention on $q$-Whittaker polynomials, which naturally arise as generating functions of vertically strict tableaux. Following recipe outlined in \cref{subs:extensions} it should be possible to study bijectively summation identities involving \emph{modified Hall-Littlewood polynomials} $\mathscr{Q}'_{\mu}(x;q)$; see \cite{Desarmenien_Hall_Littlewood} for a review. They can be defined as generating function of row weak tableaux of fixed shape and weighted by a suitable adaptation of the intrinsic energy function \cite{Nakayashiki_Yamada}.
\end{remark}

\subsection{Identities between summations of $q$-Whittaker and skew Schur functions} \label{subs:summations_qW_Schur}

Bijection presented in \cref{thm:new_bijection}, along with generalization of Schensted's theorem of \cref{thm:Schensted}, reveal correspondences between certain summations of $q$-Whittaker polynomials and skew Schur polynomials. For partitions $\rho \subseteq \lambda$ define the skew Schur polynomial \cite{Macdonald1995} in $n$ variables $x=(x_1,\dots, x_n)$ as
\begin{equation} \label{eq:skew_Schur_combinatorial}
    s_{\lambda/\rho} (x) = \sum_{P \in SST(\lambda/\rho,n)} x^P.
\end{equation}

The following theorem was first proved in \cite{IMS_matching} using methods coming from integrable probability. Here we give its bijective proof.

\begin{theorem} \label{thm:qW_and_Schur_1}
    Fix $|q|<1$ and set of variables $x=(x_1,\dots,x_n)$, $y=(y_1,\dots,y_n)$. Then, for all $k=0,1,2,\dots$, we have
    \begin{equation} \label{eq:qW_and_Schur_1}
        \sum_{\ell=0}^k \frac{q^\ell}{(q;q)_\ell} \sum_{ \mu: \mu_1 = k - \ell} \mathdutchcal{b}_\mu(q) \mathscr{P}_\mu (x;q) \mathscr{P}_\mu (y;q) = \sum_{\lambda,\rho : \lambda_1= k} q^{|\rho|} s_{\lambda / \rho}(x) s_{\lambda / \rho}(y).
    \end{equation}
\end{theorem}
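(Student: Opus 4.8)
The plan is to prove \eqref{eq:qW_and_Schur_1} bijectively, by interpreting both sides as generating functions over suitable combinatorial sets and exhibiting a weight-matching bijection between them via $\Upsilon$ (more precisely $\tilde{\Upsilon}$ of \cref{cor:bijection_pi_V1_V2_kappa}) together with the Sagan--Stanley correspondence of \cref{thm:SS}.

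First I would expand the right-hand side. By the combinatorial formula \eqref{eq:skew_Schur_combinatorial}, $\sum_{\lambda,\rho:\lambda_1=k} q^{|\rho|} s_{\lambda/\rho}(x) s_{\lambda/\rho}(y)$ is a sum over pairs $(P,Q)$ of semi-standard tableaux of common skew shape $\lambda/\rho$ with $\lambda_1=k$, weighted by $q^{|\rho|} x^P y^Q$. Using the Sagan--Stanley bijection $(\overline{M};\nu)\xleftrightarrow{\skwRSK}(P,Q)$, and the weight relation $|\rho|=\wt(\overline{M})+|\nu|$ of \eqref{eq:SS_weight_preserving}, this becomes a sum over $\overline{M}\in\overline{\mathbb{M}}^+_{n\times n}$ and partitions $\nu$, with the constraint $\lambda_1=k$. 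By the extension of Schensted's theorem, \cref{thm:Schensted}, the constraint $\lambda_1=k$ reads $\nu_1+I_1(\overline{M})=k$; and since $I_1(\overline{M})=\mu_1$ where $\mu=\mu(\overline{M})$ is the Greene invariant (\cref{thm:intro_Greene_invariants}), this is $\nu_1+\mu_1=k$. Thus the right-hand side equals
\begin{equation}
    \sum_{\ell=0}^{k}\ \sum_{\substack{\nu:\nu_1=\ell}} q^{|\nu|}
    \sum_{\substack{\overline{M}\in\overline{\mathbb{M}}^+_{n\times n}\\ \mu(\overline{M})_1=k-\ell}} q^{\wt(\overline{M})}\, x^{p(\overline{M})}\, y^{q(\overline{M})}.
\end{equation}

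Next I would evaluate the two factored sums. The $\nu$-sum over partitions with $\nu_1=\ell$ gives $\sum_{\nu:\nu_1=\ell}q^{|\nu|}=q^{\ell}/(q;q)_\ell$, which produces exactly the scalar prefactor on the left. For the $\overline{M}$-sum with fixed $\mu_1=k-\ell$, I would apply $\tilde{\Upsilon}:\overline{M}\mapsto(V,W;\kappa)$ from \cref{cor:bijection_pi_V1_V2_kappa}, which is content-preserving ($x^{p(\overline{M})}=x^V$, $y^{q(\overline{M})}=y^W$) and satisfies $\wt(\overline{M})=\mathscr{H}(V)+\mathscr{H}(W)+|\kappa|$, with $V,W\in VST(\mu)$ and $\kappa\in\mathcal{K}(\mu)$. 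Summing over $V,W$ using the $q$-Whittaker formula \eqref{eq:q_Whittaker} and over $\kappa$ using $\sum_{\kappa\in\mathcal{K}(\mu)}q^{|\kappa|}=\mathdutchcal{b}_\mu(q)$ from \eqref{bmu_kappa}, the inner sum becomes $\sum_{\mu:\mu_1=k-\ell}\mathdutchcal{b}_\mu(q)\mathscr{P}_\mu(x;q)\mathscr{P}_\mu(y;q)$. Combining the two evaluations recovers the left-hand side term by term in $\ell$, completing the proof.

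The main obstacle, and the step requiring the most care, is the reindexing of the shape constraint: I must ensure that the condition $\lambda_1=k$ on the pair $(P,Q)$ translates cleanly and bijectively into the pair of conditions $\nu_1=\ell$ and $\mu_1=k-\ell$ after applying Sagan--Stanley, and that the splitting is consistent with the fact that $\nu_1$ and $I_1(\overline{M})$ are genuinely independent data. This hinges on \cref{thm:Schensted} (equivalently on identifying $I_1$ with the asymptotic increment $\mu_1$ via \cref{thm:asymptotic_shape_RSK}), so I would verify that $\nu$ and $\overline{M}$ vary freely subject only to $\nu_1+\mu(\overline{M})_1=k$, so that the double sum factorizes as written. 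Everything else is a routine assembly of the bijections $\tilde{\Upsilon}$ and Sagan--Stanley together with the generating-function identities \eqref{eq:q_Whittaker} and \eqref{bmu_kappa}.
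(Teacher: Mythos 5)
Your proposal is correct and follows essentially the same route as the paper's proof: expand the right-hand side over pairs $(P,Q)$ via the combinatorial formula for skew Schur polynomials, pass through Sagan--Stanley together with the generalized Schensted theorem to convert the constraint $\lambda_1=k$ into $\nu_1+\mu_1=k$, factor the sum, and evaluate the $\nu$-, $\kappa$-, and $(V,W)$-sums in closed form using $\tilde{\Upsilon}$, \eqref{eq:q_Whittaker} and \eqref{bmu_kappa}. The independence of $\nu$ and $\overline{M}$ that you flag as the delicate point is indeed exactly what the bijectivity of the Sagan--Stanley correspondence (\cref{thm:SS}) supplies, so no gap remains.
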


\begin{proof}
    Right hand side of \eqref{eq:qW_and_Schur_1} can be written, by means of bijection of \cref{thm:new_bijection} and \cref{thm:Schensted}, as
    \begin{equation}
        \begin{split}
            \sum_{\lambda,\rho : \lambda_1= k} q^{|\rho|} s_{\lambda / \rho}(x) s_{\lambda / \rho}(y)
            &=
            \sum_{\rho, \lambda: \lambda_1=k}\sum_{P,Q\in:SST(\lambda/\rho,n) } q^{|\rho|} x^P y^Q
            \\
            &=\sum_{\substack{\nu,\mu \\ \nu_1+\mu_1=k}} \sum_{\substack{\overline{\pi} \in \overline{\mathbb{A}}_{n,n}^+ \\ \mu(\overline{\pi}) = \mu } }
            q^{|\nu| + \wt(\overline{\pi})} x^{p(\overline{\pi})} y^{q(\overline{\pi})}
            \\
            &=
            \sum_{\ell=0}^k \sum_{\mu:\mu_1=k-\ell} \sum_{\nu: \nu_1=\ell} q^{|\nu|} \sum_{\kappa \in \mathcal{K}(\mu)} q^{|\kappa|} \sum_{V,W\in VST(\mu)}  q^{\mathscr{H}(V) + \mathscr{H}(W)} x^{V} y^{W},
        \end{split}
    \end{equation}
    which reduce to the left hand side after putting all summations in closed form.
\end{proof}

Imposing a symmetry to our bijection, we can easily prove the following additional identity.

\begin{theorem} \label{thm:qW_and_Schur_2}
    Fix $|q|<1$ and set of variables $x=(x_1,\dots,x_n)$. Then, recalling notation \eqref{eq:b_q_z}, \eqref{eq:g_k}, we have
    \begin{equation} \label{eq:qW_and_Schur_2}
        \sum_{\ell=0}^k \mathdutchcal{g}_\ell(z,q) \sum_{ \mu: \mu_1 = k - \ell} \mathdutchcal{b}_\mu(q;z) \mathscr{P}_\mu (x;q^2) = \sum_{\lambda,\rho : \lambda_1= k} z^{\mathrm{odd}(\lambda') + \mathrm{odd}(\rho')} q^{|\rho|} s_{\lambda / \rho}(x)
    \end{equation}
     for all $k=0,1,2,\dots$
\end{theorem}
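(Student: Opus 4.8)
The identity \eqref{eq:qW_and_Schur_2} is the ``symmetric'' (single-variable, $\overline{\pi}=\overline{\pi}^{-1}$) analogue of \cref{thm:qW_and_Schur_1}, just as \cref{thm:Littlewood_like_id} is the symmetric analogue of the Cauchy identity \eqref{eq:Cauchy_id}. So the strategy is to repeat the proof of \cref{thm:qW_and_Schur_1} verbatim, but restricting the bijection $\tilde\Upsilon$ of \cref{cor:bijection_pi_V1_V2_kappa} to the fixed locus $\overline{\pi}=\overline{\pi}^{-1}$, which forces $Q=P$, $W=V$, and tracks the extra statistic $z^{\mathrm{fixed}(\overline\pi)}$ through $\mathrm{odd}$-parities using \cref{lemma:odd_lamda_rho,lemma:fixed_pi,lemma:b_mu_z_q} exactly as in the proof of \cref{thm:Littlewood_like_id}. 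First I would expand the right-hand side combinatorially: by \eqref{eq:skew_Schur_combinatorial},
\begin{equation}
    \sum_{\lambda,\rho:\lambda_1=k} z^{\mathrm{odd}(\lambda')+\mathrm{odd}(\rho')} q^{|\rho|} s_{\lambda/\rho}(x)
    = \sum_{\lambda,\rho:\lambda_1=k} \sum_{P\in SST(\lambda/\rho,n)} z^{\mathrm{odd}(\lambda')+\mathrm{odd}(\rho')} q^{|\rho|} x^P.
\end{equation}

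\textbf{Applying the bijection.} Each $P\in SST(\lambda/\rho,n)$ corresponds to the symmetric pair $(P,P)$, which under the Sagan--Stanley correspondence of \cref{thm:SS} maps to $(\overline\pi;\nu)$ with $\overline\pi=\overline\pi^{-1}$ and $|\rho|=\wt(\overline\pi)+|\nu|$. Then \cref{thm:Schensted} gives $\lambda_1=\nu_1+\mu_1$ where $\mu=\mu(\overline\pi)$ is the Greene invariant, so the constraint $\lambda_1=k$ becomes $\nu_1+\mu_1=k$. The key point is to convert the shape statistic $z^{\mathrm{odd}(\lambda')+\mathrm{odd}(\rho')}$ into statistics of $\overline\pi$, $\nu$, and the bijection data. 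Using \cref{lemma:odd_lamda_rho}, $\mathrm{odd}(\lambda')+\mathrm{odd}(\rho')=\mathrm{fixed}(\overline\pi)+2\,\mathrm{odd}(\nu')$. Next, restricting $\tilde\Upsilon$ to the symmetric locus sends $\overline\pi\leftrightarrow(V,V;\kappa)$ with $V\in VST(\mu)$, and \cref{lemma:fixed_pi} evaluates $\mathrm{fixed}(\overline\pi)=\mathrm{odd}(\kappa)+\mathrm{odd}(\mu'+\kappa)$, while \cref{cor:bijection_pi_V1_V2_kappa} gives $\wt(\overline\pi)=2\mathscr{H}(V)+|\kappa|$. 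Putting this together, the right-hand side becomes
\begin{equation}
    \sum_{\ell=0}^k \sum_{\mu:\mu_1=k-\ell}\ \sum_{\nu:\nu_1=\ell} z^{2\,\mathrm{odd}(\nu')}q^{|\nu|}
    \sum_{\kappa\in\mathcal{K}(\mu)} z^{\mathrm{odd}(\kappa)+\mathrm{odd}(\mu'+\kappa)}q^{|\kappa|}
    \sum_{V\in VST(\mu)} q^{2\mathscr{H}(V)}x^V.
\end{equation}

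\textbf{Closing the sums.} The innermost sum is $\mathscr{P}_\mu(x;q^2)$ by \eqref{eq:q_Whittaker} (with $q\mapsto q^2$). The $\kappa$-sum is $\mathdutchcal{b}_\mu(q;z)$ by \cref{lemma:b_mu_z_q}. The $\nu$-sum over partitions with first row exactly $\ell$, weighted by $z^{2\,\mathrm{odd}(\nu')}q^{|\nu|}$, is precisely $\mathdutchcal{g}_\ell(z,q)$ by its definition. Collecting these yields the left-hand side of \eqref{eq:qW_and_Schur_2}, completing the proof. The main obstacle I anticipate is purely bookkeeping: verifying that the symmetry $\overline\pi=\overline\pi^{-1}$ is genuinely preserved by $\tilde\Upsilon$ in the form $W=V$ (so that only a single $V$ and a single energy $2\mathscr{H}(V)$ appear, rather than $\mathscr{H}(V)+\mathscr{H}(W)$), and that the parity statistic $\mathrm{odd}(\lambda')+\mathrm{odd}(\rho')$ decomposes cleanly across the three independent pieces $\nu$, $\kappa$, $V$ without cross terms. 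This relies on the self-duality of the bicrystal structure and on the fact that $\tilde\Upsilon$ intertwines the involution $\overline\pi\mapsto\overline\pi^{-1}$ with $(V,W;\kappa)\mapsto(W,V;\kappa)$, which should follow from \cref{prop:RSK_swap_symmetry} together with the definition of the $\Phi$-pullback leading maps; I would check this compatibility carefully before assembling the sums.
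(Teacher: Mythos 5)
Your proposal is correct and follows essentially the same route as the paper's proof: expand the skew Schur sum over tableaux $P$, pass to symmetric pairs $(P,P)\leftrightarrow(\overline\pi;\nu)$ with $\overline\pi=\overline\pi^{-1}$, convert the shape statistics via \cref{lemma:odd_lamda_rho}, \cref{lemma:fixed_pi} and \cref{thm:Schensted}, and close the $\nu$-, $\kappa$- and $V$-sums into $\mathdutchcal{g}_\ell$, $\mathdutchcal{b}_\mu(q;z)$ and $\mathscr{P}_\mu(x;q^2)$. The compatibility you flag at the end (that $\tilde\Upsilon$ sends the symmetric locus to triples of the form $(V,V;\kappa)$) is exactly the hypothesis built into \cref{lemma:fixed_pi}, so the paper relies on the same fact.
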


\begin{proof}
    Using \cref{lemma:odd_lamda_rho}, \cref{lemma:fixed_pi} and bijection of \cref{thm:new_bijection}, right hand side of \eqref{eq:qW_and_Schur_2} can be written as
    \begin{equation*}
        \begin{split}
            \sum_{\lambda,\rho : \lambda_1= k} z^{\mathrm{odd}(\lambda') + \mathrm{odd}(\rho')} q^{|\rho|} s_{\lambda / \rho}(x)
            &= 
            \sum_{\rho,\lambda:\lambda_1=k} \sum_{P\in SST(\lambda/\rho,n)} 
            z^{\mathrm{odd}(\lambda') + \mathrm{odd}(\rho')} q^{|\rho|}
            x^P
            \\
            &=\sum_{\substack{\nu,\mu \\ \nu_1+\mu_1=k}} \sum_{\substack{\overline{\pi} \in \overline{\mathbb{A}}_{n,n}^+: \overline{\pi} = \overline{\pi}^{-1} \\ \mu(\overline{\pi}) = \mu } }
            q^{|\nu| + \wt(\overline{\pi})}
            z^{\mathrm{fixed}(\overline{\pi}) + 2\mathrm{odd}(\nu')}
            x^{p(\overline{\pi})} 
            \\
            &=
            \sum_{\ell=0}^k \sum_{\nu: \nu_1=\ell} q^{|\nu|} z^{2\mathrm{odd}(\nu')} 
            \\
            &
            \qquad
            \times 
            \sum_{\mu:\mu_1=k-\ell} \sum_{\kappa \in \mathcal{K}(\mu)} q^{|\kappa|} z^{\mathrm{odd}(\kappa)+\mathrm{odd}(\kappa + \mu')} \sum_{V\in VST(\mu)}  q^{2\mathscr{H}(V)} x^{V},
        \end{split}
    \end{equation*}
    which reduces to the left hand side after using \eqref{eq:g_k}, \cref{lemma:b_mu_z_q}.
\end{proof}

\begin{remark}
    Identities stated in \cref{thm:qW_and_Schur_1,thm:qW_and_Schur_2} can be further refined taking advantage of homogeneity of $q$-Whittaker and skew Schur polynomials. For instance, for any fixed $k,N=0,1,2,\dots$ we have
    \begin{equation}
        \sum_{\ell=0}^k \frac{q^\ell}{(q;q)_\ell} \sum_{\substack{ \mu: \mu_1 = k - \ell \\ |\mu|=N}} \mathdutchcal{b}_\mu(q) \mathscr{P}_\mu (x;q) \mathscr{P}_\mu (y;q) = \sum_{\substack{\lambda,\rho : \lambda_1= k \\ |\lambda / \rho |=N}} q^{|\rho|} s_{\lambda / \rho}(x) s_{\lambda / \rho}(y).
    \end{equation}
    A similar refinement can be given for \eqref{eq:qW_and_Schur_2}, fixing the degree $N$ of polynomials in left and right hand side.
\end{remark}

\begin{remark}
    Just as discussed in \cref{remark:general_specializations}, also identities \eqref{eq:qW_and_Schur_1}, \eqref{eq:qW_and_Schur_2} hold both numerically and formally in the algebra of symmetric function. They are therefore still true if variables $x$ are replaced by algebra homomorphisms $x:p_n\mapsto x(p_n)$. An application of this fact is that, through the action of $\omega_{q^2,0}$, \eqref{eq:qW_and_Schur_1}, \eqref{eq:qW_and_Schur_2} turn into summation identities relating Hall-Littlewood symmetric polynomials $\mathscr{P}_\mu(x,q=0,t)$ and Schur functions. This fact has deep consequences in the context of stochastic solvable models related to $q$-Whittaker and Hall-Littlewood symmetric polynomials (see \cite{BorodinCorwin2011Macdonald,vuletic2009generalization,BorodinBufetovWheeler2016,barraquand_half_space_mac,barraquand2018} and we will investigate these aspects in a forthcoming paper \cite{IMS_KPZ_free_fermions}).
\end{remark}

\appendix

\section{Knuth relations and generalizations} \label{app:Knuth_rel}

\subsection{Knuth equivalence and Jeu de taquin} \label{subs:jdt} In this subsection we cover some prerequisites on the theories of Knuth relations and jeu de taquin and on their interplay. The material presented here is standard and for a more detailed expositions on the topic we suggest the interested reader to consult textbooks as \cite{sagan2001symmetric,lothaire_2002}. 

Following \cite{Knuth1970}, on the set of words $\mathcal{A}_n^*$ we define the Knuth relation $\pi \simeq \pi'$ as the equivalence relation generated by the transformations,
\begin{equation} \label{eq:Knuth_rel}
    \alpha \, x \, z \, y \, \beta \rightleftharpoons \alpha \,z \, x \, y \, \beta
    \hspace{.4cm}
    \text{if } x\le y < z,
    \hspace{.4cm}
    \text{and}
    \hspace{.4cm}
    \alpha \, y \, z \, x \, \beta \rightleftharpoons \alpha \, y \, z \,x \, \beta,
    \hspace{.4cm}
    \text{if } x < y \le z,
\end{equation}
where $\alpha,\beta$ are generic words. These are often called elementary transformations or Knuth moves. In practice they represent a realization in the language of words of the Schensted's insertion of a letter in a row of a tableau. To explain this analogy take a word $w=w_1\cdots w_k$ with letters in weakly increasing order $w_1 \le \cdots \le w_k$, which we can interpret as a word formed reading a row of a semi-standard tableau. Then, for any $x < w_k$ we have, applying \eqref{eq:Knuth_rel} repeatedly,
\begin{equation}
    wx \simeq x^* w^*,
\end{equation}
where $x^*$ is the smallest $w_i$ to be strictly bigger than $x$ and $w^*$ is the word obtained from $w$ substituting $x^*$ with $x$. On the other hand if $x \ge w_k$, then no Knuth moves can be applied to transform the word $w^*=wx$. We see that in both cases $w^*$ is the row word after the insertion of $x$ and when $x<w_k$, the letter $x^*$ will be the one inserted in the following row. This idea motivates the characterization of Knuth equivalence classes.

\begin{theorem}[\cite{sagan2001symmetric} Theorem 3.4.3] \label{thm:Knuth_eq_P}
    Two words $\pi,\pi'$ are equivalent if and only if their $P$-tableaux under RSK correspondence is equal $P(\pi)=P(\pi')$.  
\end{theorem}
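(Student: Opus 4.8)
The plan is to prove the two implications separately, leaning on the single-row insertion/Knuth correspondence already set up immediately before the statement. For the forward direction (Knuth equivalent words have equal insertion tableaux) it suffices to check that each of the two elementary Knuth moves in \eqref{eq:Knuth_rel} preserves the $P$-tableau, since $\simeq$ is by definition the equivalence relation generated by these moves. Because Schensted insertion processes a word letter by letter from left to right, a move of the form $\alpha\,x\,z\,y\,\beta \rightleftharpoons \alpha\,z\,x\,y\,\beta$ affects the computation only through the tableau $T$ obtained by inserting the common prefix $\alpha$, after which the common suffix $\beta$ is inserted into the same tableau on both sides. Thus I would reduce the forward direction to the purely local claim that for any semi-standard tableau $T$ the two triple insertions of $xzy$ and $zxy$ (for $x\le y<z$), respectively of $yxz$ and $yzx$ (for $x<y\le z$), yield the same tableau.

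This local claim is a bounded case analysis on the positions at which $x,y,z$ land in the first row of $T$ and on how the bumped letters propagate downward. The three letters interact only in the first one or two rows, and the key structural point is that the order constraint $x\le y<z$ forces $z$ to come to rest strictly to the right of the cells touched by $x$ and $y$, so that interchanging the insertion order of $x$ and $z$ leaves the final occupancy of every cell unchanged; the second relation is handled symmetrically.

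For the backward direction (equal insertion tableaux imply Knuth equivalence), the central lemma is that every word is Knuth equivalent to the row reading word of its own insertion tableau, i.e. $\pi \simeq \pi^{\mathrm{row}}_{P(\pi)}$. Granting this, if $P(\pi)=P(\pi')$ then $\pi \simeq \pi^{\mathrm{row}}_{P(\pi)} = \pi^{\mathrm{row}}_{P(\pi')} \simeq \pi'$, which is the claim. I would prove the lemma by induction on $\ell(\pi)$: writing $\pi=\pi''x$, the inductive hypothesis gives $\pi''\simeq \pi^{\mathrm{row}}_{P(\pi'')}$, so $\pi \simeq \pi^{\mathrm{row}}_{P(\pi'')}\,x$, and it remains to show that appending $x$ to the row reading word of a tableau $S=P(\pi'')$ is Knuth equivalent to the row reading word of the tableau obtained by inserting $x$ into $S$. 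This final step is exactly the row-by-row iteration of the single-row identity $w\,x \simeq x^{*}\,w^{*}$ recalled just above the theorem: the letter $x^{*}$ bumped out of each row is precisely the letter inserted into the next, and each bumping is realized by a sequence of elementary Knuth moves, so an induction on the number of rows of $S$ promotes the single-row statement to the full multi-row statement.

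The main obstacle is the forward direction. While the reduction to triple insertions into an arbitrary tableau is clean, verifying that the two sides genuinely agree requires tracking the bumping chains through several subcases, and one must check that the move stays valid — that the relevant letters really are consecutive in the emergent bumping pattern — precisely when $x,y,z$ collide in the same or in adjacent rows of $T$. The backward direction, by contrast, is essentially formal once the single-row computation $w\,x\simeq x^{*}\,w^{*}$ supplied by the excerpt is upgraded by induction on the number of rows, which is the natural structural argument.
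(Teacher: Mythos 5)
The paper gives no proof of this statement; it is quoted verbatim from \cite{sagan2001symmetric} (Theorem 3.4.3), and your proposal reconstructs exactly the standard textbook argument: forward direction by checking invariance of $P$ under the two elementary Knuth moves, backward direction via the lemma $\pi \simeq \pi^{\mathrm{row}}_{P(\pi)}$. The backward half is essentially complete: the reduction to ``appending $x$ to $\pi^{\mathrm{row}}_S$ is Knuth equivalent to $\pi^{\mathrm{row}}_{S\leftarrow x}$'' and its proof by iterating the single-row identity $w\,x\simeq x^{*}w^{*}$ row by row (each application acting on a contiguous factor $R_j\,x_{j-1}^{*}$ of the ambient word, which is exactly what the Knuth moves permit) is correct and is how the cited source proceeds.

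The one genuine gap is in the forward direction, and your own heuristic for it is actually false as stated. You write that the order constraint forces the result to be insensitive to ``interchanging the insertion order of $x$ and $z$,'' but $T\leftarrow x\leftarrow z$ and $T\leftarrow z\leftarrow x$ are in general \emph{different} tableaux (already for $T=\varnothing$, $x=1$, $z=2$ one gets a row versus a column); it is only after the subsequent insertion of $y$ that the two computations reconverge. So the local lemma really is about the full triple insertions $T\leftarrow xzy$ versus $T\leftarrow zxy$ (and $T\leftarrow yxz$ versus $T\leftarrow yzx$), and the case analysis tracking how $y$'s bumping chain interleaves with those of $x$ and $z$ in the first and second rows, together with an induction pushing the discrepancy down to the rows below, cannot be bypassed by the ``$z$ lands strictly to the right'' observation alone. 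You correctly identify this as the hard step, but as written the proposal asserts it rather than proves it, and the intuition offered would, if taken literally, prove a false statement.
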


The notion of Knuth equivalence extends also at the level of skew shaped semi-standard tableaux. We say that two tableaux $P$ and $P'$ are \emph{Knuth equivalent} or simply equivalent if their row reading words $w_P$ and $w_{P'}$ are, in which case we write $P \simeq P'$. Equivalent tableaux enjoy the property that they can be transformed into each other through the procedure of \emph{jeu de taquin}. This operation is described in terms of \emph{sliding moves}. We say that a semi-standard tableau $P$ is \emph{punctured} if we replace the entry of one or more of its cells with the symbol $\bullet$. From a punctured tableau $P$ we can remove the $\bullet$-cells as follows. 
Assume that cells $(i,j),(i+1,j)$ and $(i,j+1)$ have respectively entries $\bullet, a$ and $b$. Then if $a \le b$ we exchange the labels of cells $(i,j)$ and $(i+1,j)$, while if $a>b$ we swap the labels at $(i,j)$ and $(i,j+1)$. This single move is the \emph{inward sliding} and graphically we have
\begin{equation*}
    \ytableausetup{aligntableaux = center}
    \begin{ytableau}
        \bullet & b  
        \\
        a
    \end{ytableau}
    \xrightarrow{\hspace{10pt} \text{if } a\le b \hspace{10pt}}
    \begin{ytableau}
        a & b  
        \\
        \bullet
    \end{ytableau}
    \hspace{50pt}
    \begin{ytableau}
        \bullet & b  
        \\
        a
    \end{ytableau}
    \xrightarrow{\hspace{10pt} \text{if } a > b \hspace{10pt}}
    \begin{ytableau}
        b & \bullet  
        \\
        a
    \end{ytableau}
    \,
    .
\end{equation*}
When either cell $(i+1,j)$ or $(i,j+1)$ are not part of the shape of the tableaux we think of their value as infinite, while when the $\bullet$-cell reaches an external corner we simply erase it. From a tableau $P$ of shape $\lambda/\mu$ let $c$ be an external corner of $\mu$. The \emph{outward jeu de taquin} $J_{c}(P)$ is the tableau obtained puncturing the cell $c$ of $P$, and sliding out the $\bullet$-cell.  
    
The sliding moves can be also defined in the opposite direction, moving the $\bullet$-cells inwards.
If in a punctured tableaux the cells $(i,j), (i,j-1)$ and $(i-1,j)$ have labels $\bullet, a$ and $b$ we will slide $b$ up in case $a\le b$, while $a$ is shifted rightward when $a > b$, as in
\begin{equation*}
    \ytableausetup{aligntableaux = center}
    \begin{ytableau}
        \none & b  
        \\
        a & \bullet 
    \end{ytableau}
    \xrightarrow{\hspace{10pt} \text{if } a\le b \hspace{10pt}}
    \begin{ytableau}
        \none & \bullet  
        \\
        a & b
    \end{ytableau}
    \hspace{50pt}
    \begin{ytableau}
        \none & b
        \\
        a &\bullet
    \end{ytableau}
    \xrightarrow{\hspace{10pt} \text{if } a > b \hspace{10pt}}
    \begin{ytableau}
        \none & b  
        \\
        \bullet & a
    \end{ytableau}
    \,
    .
\end{equation*}
After a number of inward slides the $\bullet$-cell will reach an inner corner and in that case it is erased. This procedure defines inward jeu de taquin transformations. If $P$ is a skew tableau of shape $\lambda/\mu$ and $c\notin \lambda$ is such that $c-\mathbf{e}_1,c-\mathbf{e}_2 \in \lambda$, then $J_c(P)$ is the tableau obtained from $P$ puncturing the cell $c$ and sliding inward the $\bullet$-cell. We do not differentiate the notation between inward and outward jeu de taquin as the choice of the cell $c$ dictates the direction of sliding.
    
The jeu de taquin can be employed to associate to skew-shaped semi-standard tableaux canonical straight shaped ones. Given a tableau $P$ of shape $\lambda/ \mu$ we fill the empty shape $\mu$ with $\bullet$-cells and subsequently we slide them all out. The result is a tableaux of straight shape $\widetilde{\lambda}$ called \emph{jeu de taquin rectification} of $P$ and denoted as $\mathrm{rect}(P)$. It is a theorem of Sch{\"u}tzenberger \cite[Theorem 3.7.7]{sagan2001symmetric} that the rectification is independent of the order of sliding moves.
    
The following classical theorem states the relation between Knuth equivalence of tableaux and jeu de taquin.
    
\begin{theorem}[\cite{schutzenberger_RS},\cite{sagan2001symmetric} Theorem 3.7.8] \label{thm:Knuth_equiv_jdt}
Two tableaux $P,P'$ are equivalent if and only if their jeu de taquin rectification is equal. In particular $P \simeq P'$ if and only if they can be transformed into each other through a finite sequence of jeu de taquin moves.
\end{theorem}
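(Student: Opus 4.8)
The plan is to funnel both equivalences through a single structural lemma and then let \cref{thm:Knuth_eq_P} do the rest. The lemma I would isolate first is: \emph{a single jeu de taquin slide preserves the Knuth class of the row reading word}, i.e. if $\tilde P$ is obtained from $P$ by one inward or outward sliding move, then $w_{\tilde P}\simeq w_P$. Granting this, the argument becomes almost formal. First I would record the complementary elementary fact that for a \emph{straight}-shaped semi-standard tableau $T$ one has $P(w_T)=T$, where $P(\cdot)$ denotes the RSK insertion tableau; this is exactly the row-insertion reading of the Knuth moves \eqref{eq:Knuth_rel} spelled out just before \cref{thm:Knuth_eq_P}, applied row by row to $w_T$.

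Next I would combine these two inputs to show that rectification is nothing but insertion of the reading word: $\mathrm{rect}(P)=P(w_P)$. Indeed, $\mathrm{rect}(P)$ is reached from $P$ by a finite sequence of slides, so iterating the key lemma gives $w_{\mathrm{rect}(P)}\simeq w_P$; since $\mathrm{rect}(P)$ is of straight shape, applying $P(\cdot)$ and using $P(w_{\mathrm{rect}(P)})=\mathrm{rect}(P)$ together with \cref{thm:Knuth_eq_P} yields $\mathrm{rect}(P)=P(w_{\mathrm{rect}(P)})=P(w_P)$ (well-definedness of $\mathrm{rect}$ is the cited result of Sch\"utzenberger, so the left side is unambiguous). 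The first assertion of the theorem then follows by a chain of equivalences: $P\simeq P'$ means $w_P\simeq w_{P'}$, which by \cref{thm:Knuth_eq_P} is equivalent to $P(w_P)=P(w_{P'})$, which by the identity just proved is equivalent to $\mathrm{rect}(P)=\mathrm{rect}(P')$.

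For the ``in particular'' clause I would argue in both directions. If $P$ and $P'$ are connected by a finite sequence of slides, then the key lemma gives $w_P\simeq w_{P'}$, hence $P\simeq P'$. Conversely, if $P\simeq P'$, then by the first part $\mathrm{rect}(P)=\mathrm{rect}(P')=:T$; since inward and outward slides are mutually inverse operations, the slide sequence rectifying $P'$ to $T$ can be run backwards from $T$ to $P'$, so one obtains a slide path $P\rightsquigarrow T\rightsquigarrow P'$, exhibiting $P$ and $P'$ as jeu de taquin equivalent.

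The main obstacle is entirely concentrated in the key lemma, and it is purely local. A slide only alters the two rows (or two columns) bracketing the moving $\bullet$-cell, so $w_{\tilde P}$ and $w_P$ differ only in a bounded window, and I would verify case by case that the windowed change is realized by a sequence of the elementary Knuth moves \eqref{eq:Knuth_rel}. The delicate part is the bookkeeping: one must track how the entry being slid interacts with its neighbours in the two affected rows, distinguish the sub-cases $a\le b$ and $a>b$ of the sliding rule, and check that each atomic reconfiguration of the reading word matches one of the two templates $x\,z\,y\rightleftharpoons z\,x\,y$ ($x\le y<z$) or $y\,x\,z\rightleftharpoons y\,z\,x$ ($x<y\le z$). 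I expect this to be the only genuinely computational step; everything downstream is a clean application of \cref{thm:Knuth_eq_P} and the reversibility of slides.
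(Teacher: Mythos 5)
The paper does not prove this statement: it is recalled as a classical result and deferred entirely to the cited references (Sch\"utzenberger; Sagan, Theorem 3.7.8). Your argument is, in essence, the standard proof from that cited source, and the reduction you set up is correct: granting the key lemma, the identity $\mathrm{rect}(P)=P(w_P)$ follows, the first equivalence is then a clean application of \cref{thm:Knuth_eq_P}, and the ``in particular'' clause follows from reversibility of slides through the common rectification. Two small points of care. First, essentially all of the mathematical content is concentrated in the key lemma you leave as a case check; as stated the theorem is only as proved as that lemma is, so in a self-contained write-up you would need to carry out the verification rather than gesture at it. Second, the lemma should be formulated for a single \emph{atomic} move of the punctured cell, not for a full slide: a horizontal swap of the $\bullet$ does not change the row reading word at all (the $\bullet$ is not read), so only the vertical swap requires matching against the templates in \eqref{eq:Knuth_rel}, and the full slide $J_c$ is then handled by iterating over its atomic steps. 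With that adjustment your locality claim (``only two rows bracketing the $\bullet$ are affected'') is accurate, whereas for a full slide it would not be.
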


\begin{remark}
    An equivalent definition of the Knuth equivalence between tableaux $P,P'$ can be given requiring that their \emph{column reading words} $w^{\mathrm{col}}_P \simeq w^{\mathrm{col}}_{P'}$. More in general, in \cite{fomin_greene_1993} authors discuss a full class or reading orders for tableaux, which include row and column ones, producing equivalent theories of Knuth equivalence.
\end{remark}

We close this subsection stating several simple but crucial properties that endow the skew $\RSK$ map with its many symmetries.
    
\begin{proposition} \label{prop:Knuth_eq_int_ins}
Let $P$ be a semi-standard skew tableau and $P' = \mathcal{R}_{[r]}(P)$ for some row $r$. Then $P \simeq P'$.
\end{proposition}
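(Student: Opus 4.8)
The statement to prove is \cref{prop:Knuth_eq_int_ins}: that internal insertion $\mathcal{R}_{[r]}$ preserves the Knuth equivalence class of a semi-standard skew tableau, i.e.\ $P \simeq \mathcal{R}_{[r]}(P)$.

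\textbf{The plan.} The key idea is that internal insertion is, row by row, exactly the Schensted row-insertion of a single letter, and the passage between the row word of a tableau before and after such an insertion is precisely a sequence of elementary Knuth moves \eqref{eq:Knuth_rel}. This is the analogy already spelled out in \cref{subs:jdt}: for a weakly increasing word $w$ and a letter $x < w_{\ell(w)}$ one has $wx \simeq x^* w^*$, where $x^*$ is the bumped letter and $w^*$ is the modified row. First I would set up notation: let $(c,r)$ be the corner cell vacated, let $v = P(c,r)$, and recall that $\mathcal{R}_{[r]}(P)$ is obtained by removing $v$ from row $r$ and inserting it into row $r+1$ via the bumping algorithm, iterating downward until the insertion terminates (either by appending at the right end of some row, or by sliding $v$ to row $r+1$ when that row is empty). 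I would track the entire chain of bumps: $v = v_r$ is inserted into row $r+1$, bumping $v_{r+1} = P(\overline{c}, r+1)$, which is inserted into row $r+2$, and so on.

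\textbf{Key steps.} The cleanest route is to reduce to the one-row insertion identity and then concatenate. I would argue by induction on the number of bumps. For the base of the local computation, consider the row word $\pi^{\mathrm{row}}_P$ read from the bottom row up. The internal insertion affects only the contiguous block of rows $r, r+1, \dots$ involved in the bumping chain; all other rows are untouched, so their contributions to the row word are unchanged and may be treated as the generic prefixes/suffixes $\alpha, \beta$ in \eqref{eq:Knuth_rel}. Within the affected block, I would show that the transition from the row word of $P$ to that of $\mathcal{R}_{[r]}(P)$ factors as a composition of the elementary transitions $wx \simeq x^* w^*$, one for each row in the bumping chain, where the letter $x$ at each stage is the letter bumped from the row above. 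The point is that Knuth equivalence is preserved under left/right concatenation with fixed words, so each local move lifts to a Knuth equivalence of the full row words; composing the chain of equivalences gives $\pi^{\mathrm{row}}_P \simeq \pi^{\mathrm{row}}_{\mathcal{R}_{[r]}(P)}$, which is by definition $P \simeq \mathcal{R}_{[r]}(P)$. The corner-cell hypothesis and the semi-standard property guarantee that vacating $(c,r)$ leaves a valid skew tableau and that each intermediate bump produces a strictly increasing column / weakly increasing row, so every stage is a genuine semi-standard tableau and the row-insertion identity applies verbatim.

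\textbf{The main obstacle.} The subtle point is the bookkeeping of the row word across the junction between one row and the next in the bumping chain, and the terminal cases. When $v$ is appended at the right of a row (no bump), the relevant word transition is the trivial one $wx$ with $x \ge w_{\ell(w)}$, where no Knuth move is needed — I must check this contributes an honest identity of words rather than requiring a move, so that the chain of equivalences closes correctly. Similarly, when the chain reaches an empty row (so that $v$ simply creates a new cell), I need to verify the row word is only extended by a single letter, consistent with equivalence. A second delicate point is the precise placement of the bumped letter within the global row word: since the row word concatenates rows from the bottom upward, the letter bumped from row $s$ into row $s+1$ appears in a specific position relative to $\alpha$ and $\beta$, and I must confirm the elementary move \eqref{eq:Knuth_rel} is applied to the correct adjacent triple. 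I expect this positional verification, rather than any conceptual difficulty, to be the part that needs care; everything else follows from \cref{thm:Knuth_eq_P} and the standard row-insertion identity recalled in \cref{subs:jdt}.
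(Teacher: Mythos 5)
Your proposal is correct and follows essentially the same route as the paper, whose own proof is a one-sentence appeal to the fact that the internal insertion is reproduced on row words by Knuth moves; you simply spell out the concatenation of the one-row identities $wx \simeq x^* w^*$ along the bumping chain, together with the positional bookkeeping that the paper leaves implicit. No gap.
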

\begin{proof}
    The tableau $P'$ is obtained from $P$ vacating the rightmost cell at row $r$ and inserting the entry of the vacated cell in the row below. The fact that the insertion algorithm is reproduced at the level of row words by a sequence of Knuth moves yields the proof. 
\end{proof}
    
\begin{proposition} \label{prop:RSK_via_jdt}
    Let $P,Q$ be semi-standard tableaux with same skew shape and take $(P',Q')=\RSK(P,Q)$. Then $P \simeq P'$ and $Q \simeq Q'$.
\end{proposition}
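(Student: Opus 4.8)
The statement to prove is \cref{prop:RSK_via_jdt}: for $P,Q$ of the same skew shape with $(P',Q')=\RSK(P,Q)$, we have $P\simeq P'$ and $Q\simeq Q'$.

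The plan is to reduce the claim to the already-established behaviour of internal insertions under Knuth equivalence. The key structural fact is \cref{prop:RSK_from_n_int_ins}, which asserts that $\RSK = \iota_2^{\,n}$ as operations on pairs of tableaux of equal shape, together with \cref{def:RSK_product_tableaux}, which realizes $P'$ as the result of a finite sequence of internal insertions applied to $P$: concretely $P' = P^{(m)}$ where each $P^{(j)}$ is obtained from $P^{(j-1)}$ by composing the maps $\mathcal{R}_{[r]}$ over the row coordinates of the $j$-cells of $Q$. First I would invoke \cref{prop:Knuth_eq_int_ins}, which guarantees $P \simeq \mathcal{R}_{[r]}(P)$ for any single admissible internal insertion. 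Since Knuth equivalence is an equivalence relation, it is preserved under composition of internal insertions; chaining the equivalences $P = P^{(0)} \simeq P^{(1)} \simeq \cdots \simeq P^{(m)} = P'$ yields $P \simeq P'$ immediately.

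For the recording tableau $Q$ the argument is identical once one uses the swap symmetry. By \cref{prop:RSK_swap_symmetry}, $\RSK(Q,P) = (Q',P')$, so $Q'$ is likewise obtained from $Q$ by a sequence of internal insertions (namely the ones prescribed by the cells of $P$ under \cref{def:RSK_product_tableaux} applied to the pair $(Q,P)$). Applying the same chain of single-step equivalences from \cref{prop:Knuth_eq_int_ins} then gives $Q = Q^{(0)} \simeq \cdots \simeq Q' $, hence $Q \simeq Q'$. This is the cleanest route since it avoids re-deriving how the cycling operation on the second tableau interacts with reading words.

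There is essentially no serious obstacle here, as the proposition is a direct corollary of facts proven earlier; the only point requiring minor care is ensuring that every internal insertion occurring during the evaluation of $\RSK$ is of the admissible type covered by \cref{prop:Knuth_eq_int_ins} (that is, that each $\mathcal{R}_{[r]}$ is applied at a row possessing a corner cell or at an empty row, the latter merely creating an extra empty box and not affecting the reading word). The main conceptual step, which I would state explicitly, is that a transformation realized as a finite composition of $\mathcal{R}_{[r]}$'s preserves Knuth equivalence because each factor does and $\simeq$ is transitive. A natural one-line proof is therefore:
\begin{proof}
    By \cref{prop:RSK_from_n_int_ins} and \cref{def:RSK_product_tableaux}, $P'$ is obtained from $P$ by a finite sequence of internal insertions $\mathcal{R}_{[r]}$. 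Each such insertion preserves Knuth equivalence by \cref{prop:Knuth_eq_int_ins}, so by transitivity of $\simeq$ we get $P \simeq P'$. Applying the same reasoning to the pair $(Q,P)$ and using the swap symmetry of \cref{prop:RSK_swap_symmetry}, according to which $\RSK(Q,P)=(Q',P')$, yields $Q \simeq Q'$.
\end{proof}
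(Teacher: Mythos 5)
Your proof is correct and follows essentially the same route as the paper's: the paper likewise deduces the claim from \cref{prop:RSK_from_n_int_ins} (that $P'$ and $Q'$ arise from $P$ and $Q$ via sequences of internal insertions, the $Q$-side coming from $\iota_1^n=\RSK$, which itself rests on the swap symmetry of \cref{prop:RSK_swap_symmetry}) combined with \cref{prop:Knuth_eq_int_ins} and transitivity of $\simeq$. Your only difference is making the invocation of the swap symmetry explicit rather than leaving it packaged inside \cref{prop:RSK_from_n_int_ins}, which is a harmless presentational choice.
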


\begin{proof}
    By the result stated in \cref{prop:RSK_from_n_int_ins} $P'$ and $Q'$ are obtained respectively from $P$ and $Q$ after a sequence of internal insertions. Then, by \cref{prop:Knuth_eq_int_ins} we have $P\simeq P'$ and $Q\simeq Q'$.
\end{proof}
    
\begin{proposition} \label{prop:jdt_of_PQ}
    Let $\overline{\pi}\in \overline{\mathbb{A}}_{n,n}$ and consider the pair of tableaux $(P,Q) \xleftrightarrow[]{\skwRSK \,} (\overline{\pi} ; \nu)$, for some partition $\nu$. Define $\pi' = p(\overline{\pi}^{\na})$ and $\pi''=p((\overline{\pi}^{-1})^{\na})$. Then
    \begin{equation}
        P \simeq P(\pi')
        \qquad
        \text{and}
        \qquad
        Q \simeq P(\pi''). 
    \end{equation}
\end{proposition}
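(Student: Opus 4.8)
Proposition (jeu de taquin of $P,Q$). Let $\overline{\pi}\in \overline{\mathbb{A}}_{n,n}$ and consider the pair of tableaux $(P,Q) \xleftrightarrow[]{\skwRSK \,} (\overline{\pi} ; \nu)$, for some partition $\nu$. Define $\pi' = p(\overline{\pi}^{\na})$ and $\pi''=p((\overline{\pi}^{-1})^{\na})$. Then $P \simeq P(\pi')$ and $Q \simeq P(\pi'')$.

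The plan is as follows. Let me think about what the statement is really claiming.

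We have a weighted biword $\overline{\pi}$, which under the Sagan–Stanley correspondence corresponds to the pair $(P,Q)$ together with a partition $\nu=\ker(P,Q)$. The claim relates the Knuth class of $P$ to the ordinary RSK insertion tableau of the word $\pi'=p(\overline{\pi}^\na)$, where $\overline{\pi}^\na$ is the timetable ordering — the $p$-row read off in order of decreasing weight. So the intuition is: $\pi'$ is exactly the sequence of letters that get inserted into $P$, read in the temporal order dictated by the weights, and the claim is that the Knuth class of the final $P$-tableau is unchanged if we just forget the skew structure and insert the raw letters of $\pi'$ from scratch via classical RSK.

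The cleanest route, I believe, is to reduce everything to the standard case and then to the analysis already built up in Section~\ref{subs:RS_product}. First I would invoke the standardization machinery: by \cref{prop:iota_std} (and the discussion of standardization of matrices and edge configurations in \cref{subs:standardization,subs:RSK_prod_matrices}) it suffices to prove the statement for weighted permutations, i.e. for standard tableaux, since Knuth equivalence and RSK insertion are both compatible with standardization. So assume $P,Q$ are standard and $\overline{\pi}$ is a weighted permutation. Now I would connect the skew $\RSK$ map to classical insertion through the shadow-line / edge-local-rule picture. The key conceptual point is that the Sagan–Stanley correspondence, viewed through the Viennot dynamics on $\mathscr{C}_n$ and the bijection of \cref{thm:matrices_configurations}, builds $P$ by a sequence of internal insertions $\mathcal{R}_{[r]}$ governed by the positions of points $[\overline{\pi}_i]$ on the twisted cylinder, ordered by their weights (this is precisely what \cref{prop:Viennot_and_RSK_ca} and \cref{prop:meaning_M_k} encode). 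The timetable ordering $\overline{\pi}^\na$ is exactly the reading of these points in decreasing-weight order, so $\pi'=p(\overline{\pi}^\na)$ is the word of $p$-letters in the order they enter the construction.

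The heart of the argument is then a Knuth-equivalence statement: performing the internal insertions that build $P$ produces a tableau Knuth-equivalent to what one gets by classically inserting the letters of $\pi'$ in order. For this I would lean on \cref{prop:Knuth_eq_int_ins}, which says each internal insertion $\mathcal{R}_{[r]}$ preserves the Knuth class, together with \cref{prop:RSK_via_jdt} and \cref{thm:Knuth_equiv_jdt}. Concretely, I would argue that the row reading word $\pi^{\mathrm{row}}_P$ is Knuth-equivalent to $\pi'$ by tracking how the weighted letters, which sit at non-positive rows as ``hidden'' cells in the generalized tableau picture, migrate upward into the positive rows through successive internal insertions; each such migration is a jeu de taquin / Knuth move. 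Since classical RSK insertion of $\pi'$ computes $P(\pi')$ and $P(\pi')$ is characterized up to Knuth equivalence by its row word (\cref{thm:Knuth_eq_P}), this yields $P\simeq P(\pi')$. The statement $Q\simeq P(\pi'')$ then follows by the swap symmetry of \cref{prop:RSK_swap_symmetry}: transposing $\overline{M}$ corresponds to passing to $\overline{\pi}^{-1}$ and exchanges the roles of $P$ and $Q$, so the identical argument applied to $\overline{\pi}^{-1}$ gives $Q\simeq P(\pi'')$.

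The main obstacle I anticipate is making rigorous the bookkeeping in the middle step: one must carefully verify that the weight-decreasing order in $\overline{\pi}^\na$ is genuinely the temporal order in which letters are inserted during the generalized-tableau construction, and that the intermediate ``hidden'' cells at non-positive rows do not disturb the Knuth class as they surface. The subtlety is that the skew $\RSK$ map inserts via internal insertions from \emph{within} the tableau rather than by external insertion from the bottom, so one cannot simply cite classical Schensted insertion; instead one must repeatedly appeal to \cref{prop:Knuth_eq_int_ins} and to the fact that a new cell entering from row $0$ to row $1$ plays the role of an external insertion (as noted in the \textbf{Remark} after \cref{def:RSK_product_tableaux}). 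Establishing that this identification of orderings is exact — rather than merely up to commuting insertions — is where the care is needed, and I would handle it by an induction on the number of columns of $\overline{\pi}$, peeling off the lowest-weight column and using \cref{prop:Knuth_eq_int_ins} at each step.
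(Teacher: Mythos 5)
Your proposal is correct and follows essentially the same route as the paper's own proof: identify $\pi'=p(\overline{\pi}^{\na})$ as the list of entries of $P$ in the temporal order in which they are inserted into the first row, conclude $P\simeq P(\pi')$ because internal insertions preserve the Knuth class (\cref{prop:Knuth_eq_int_ins}), and obtain $Q\simeq P(\pi'')$ from the swap symmetry of \cref{prop:RSK_swap_symmetry}. The extra scaffolding you add (standardization reduction, induction on columns) is harmless but not needed beyond what the paper's short argument already uses.
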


\begin{proof}
    From the definition of the Sagan-Stanely correspondence we understand that the timetable ordering $\overline{\pi}^{\na}$ records the``times" at which each entry of the $P$ tableau is inserted in its 1st row. More precisely the word $\pi'=p(\overline{\pi}^{\na})$ is the list of such entries in order of insertion. By \cref{prop:Knuth_eq_int_ins}, $\pi'$ is Knuth equivalent to the row word of $P$ and therefore $P \simeq P(\pi')$. The alternative statement for the $Q$ tableau is proven analogously using the swap symmetry of \cref{prop:RSK_swap_symmetry}.
\end{proof}

\subsection{Dual equivalence} 

Two words $\pi, \pi'$ are \emph{dual equivalent} if their $Q$-tableaux under RSK correspondence are equal. We denote dual equivalence by $\pi \stackrel{*}{\simeq} \pi'$. We say that two skew tableaux $P,P'$ of the same shape are \emph{dual equivalent} if their column reading words are dual equivalent and in this case we write $P \stackrel{*}{\simeq} P'$. Again by a result of \cite{fomin_greene_1993}, the notion of dual equivalence does not depend on the reading order of the tableaux and in literature often the row reading is used. The theory of dual equivalence was started by Haiman in \cite{HAIMAN1992} and below we present two classical results that will be relevant to us.

\begin{theorem}[\cite{HAIMAN1992},\cite{sagan2001symmetric} Theorem 3.8.8] \label{thm:haiman_dual_eq}
Two tableaux $P_1,P_2$ are dual equivalent if and only if for any sequence of jeu de taquin slides $J$, the tableaux $J(P_1), J(P_2)$ have the same shape.
\end{theorem}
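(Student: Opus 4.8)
The plan is to reduce to standard tableaux and then run the argument through Haiman's \emph{elementary dual equivalences}, the local moves that generate the relation $\stackrel{*}{\simeq}$. First I would standardize: since $\std$ commutes with jeu de taquin slides and preserves shapes (each slide of $\std(P)$ projects to the corresponding slide of $P$), and since two tableaux of the same shape are dual equivalent exactly when their standardizations are, both sides of the asserted equivalence are unchanged upon replacing $(P_1,P_2)$ by $(\std(P_1),\std(P_2))$. Thus I may assume $P_1,P_2\in ST(\lambda/\rho)$. The key combinatorial input, which I would invoke from the theory of \cite{Knuth1970} dual to \cref{thm:Knuth_eq_P}, is that for standard tableaux the relation $\stackrel{*}{\simeq}$ (equality of $Q$-tableaux of the reading words) is generated by elementary dual equivalences $d_i$: the involution permuting only the cells carrying the three consecutive values $i-1,i,i+1$ according to their relative positions, fixing all other entries. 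This localizes the problem to the interaction between a single $d_i$ and a single slide.

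For the forward implication I would argue that it suffices to treat the case $P_2=d_i(P_1)$ together with a single jeu de taquin slide $j$, and then induct on the number of elementary moves connecting $P_1$ to $P_2$ and on the length of the slide sequence $J$. The crux is a local commutation lemma of the form $j\circ d_i=d_{i'}\circ j$ for a suitable index $i'$, valid on standard tableaux; since $d_{i'}$ is shape-preserving this gives $\mathrm{sh}(j(d_i(P_1)))=\mathrm{sh}(j(P_1))$, and the induction then yields $\mathrm{sh}(J(P_1))=\mathrm{sh}(J(P_2))$ for arbitrary $J$. Because the cells moved by $d_i$ and those on the sliding path of $j$ involve only boundedly many entries, this commutation is a finite case check on the small local configurations of $i-1,i,i+1$ relative to the punctured cell, entirely analogous in spirit to the commutation of Kashiwara operators with slides in \cref{prop:commutation_jdt_kashiwara}.

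The reverse implication is the main obstacle, and I would prove its contrapositive by induction on $|\lambda/\rho|$: assuming $P_1,P_2$ share a shape but $P_1\not\stackrel{*}{\simeq}P_2$, I must exhibit a slide sequence $J$ with $\mathrm{sh}(J(P_1))\neq\mathrm{sh}(J(P_2))$. If some single slide $j$ already separates the shapes we are done; otherwise every single slide preserves the same-shape relation, and I would show that one can then choose an inner-corner slide $j$ that strictly shrinks the skew shape while keeping the two images non--dual-equivalent, i.e. $j(P_1)\not\stackrel{*}{\simeq}j(P_2)$. The heart of this step is that $\stackrel{*}{\simeq}$ is detected slide-by-slide: the sequence of shapes visited while rectifying (via $\mathrm{rect}$) encodes exactly the $Q$-tableau data, so a difference in $Q$-tableaux must surface at some slide even when no single slide separates the shapes. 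Since $j(P_1)$ and $j(P_2)$ then have the same, smaller shape, the inductive hypothesis furnishes $J'$ separating them, and $J=J'\circ j$ separates $P_1,P_2$.

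The delicate points, where I expect the real work to lie, are precisely (i) verifying the local commutation $j\circ d_i=d_{i'}\circ j$ in all configurations, and (ii) the reverse step's claim that a genuine difference of dual equivalence class cannot be invisible to every single slide, i.e. that the shape trajectory under rectification faithfully records the $Q$-tableau. Both are finite and local in nature but demand careful bookkeeping; the complete details are carried out in \cite{HAIMAN1992} and \cite[Theorem 3.8.8]{sagan2001symmetric}, which I would follow.
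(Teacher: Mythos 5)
The paper does not prove this statement: it is imported verbatim from \cite{HAIMAN1992} and \cite[Theorem 3.8.8]{sagan2001symmetric}, so there is no in-paper argument to compare yours against. Judged on its own terms, your forward direction is the standard route (reduce to standard tableaux, generate $\stackrel{*}{\simeq}$ by elementary dual equivalences $d_i$, and check a local commutation $j\circ d_i=d_{i'}\circ j$ with a single slide), and that part is sound modulo the finite case analysis you defer to the references.

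Your converse, however, has two concrete problems. First, the proposed induction on $|\lambda/\rho|$ cannot get off the ground: a jeu de taquin slide moves a cell but never changes the number of cells, so no slide ``strictly shrinks the skew shape.'' The quantity that can decrease is the size of the \emph{inner} shape $\rho$ (how far the tableau is from being rectified), and that is what any induction here must run on. Second, and more importantly, the argument is missing its essential input: the fact that \emph{any two standard tableaux of the same straight shape are dual equivalent}. Once you have that, the converse needs no contrapositive induction at all: if every slide sequence preserves equality of shapes, rectify both tableaux along one fixed sequence $J$; the rectifications are standard tableaux of the same straight shape, hence dual equivalent, and since dual equivalence is preserved by slides and their inverses (your forward machinery), $P_1=J^{-1}(J(P_1))\stackrel{*}{\simeq}J^{-1}(J(P_2))=P_2$. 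Your appeal to ``the shape trajectory faithfully records the $Q$-tableau'' gestures at this base fact but neither states nor proves it, and it is precisely the nontrivial content of Haiman's argument. I would replace the contrapositive induction by the direct rectification argument and isolate the straight-shape lemma as the statement requiring proof.
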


\begin{theorem}[\cite{HAIMAN1992} Theorem 2.13] \label{thm:uniqueness_dual_eq_non_dual_eq}
    Let $P,P'$ be two semi-standard tableaux with same skew shape. Then $P\simeq P'$ and $P \stackrel{*}{\simeq} P'$ if and only if $P=P'$.
\end{theorem}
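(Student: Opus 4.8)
The statement has a trivial direction and a substantial one. The trivial direction is that $P=P'$ forces $P\simeq P'$ and $P \stackrel{*}{\simeq} P'$, by reflexivity of both equivalences. So the content is the converse: if $P$ and $P'$ share a skew shape $\lambda/\mu$ and satisfy both $P\simeq P'$ and $P \stackrel{*}{\simeq} P'$, then $P=P'$. The plan is to exploit the invertibility of jeu de taquin slides, using the equivalences to show that $P$ and $P'$ rectify to the same straight tableau \emph{along identical sequences of intermediate shapes}, and then to reconstruct each tableau uniquely from that data.

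First I would fix a sequence $J=J_{c_k}\circ\cdots\circ J_{c_1}$ of outward jeu de taquin slides that rectifies $P$, emptying the inner shape $\mu$ one corner at a time, and record the intermediate shapes $\sigma_0=\lambda/\mu,\sigma_1,\dots,\sigma_k=\widetilde\lambda$ (straight) produced along the way, together with the final tableau $U=\mathrm{rect}(P)$. Next I would apply the \emph{same} sequence $J$ to $P'$. By $P \stackrel{*}{\simeq} P'$ and \cref{thm:haiman_dual_eq}, at every stage the partial slides carry $P$ and $P'$ to tableaux of the same shape $\sigma_i$; in particular the inner corner punctured at each step is a legitimate and consistent choice for $P'$, so $J$ is well defined on $P'$ and $J(P')$ is straight of shape $\widetilde\lambda$. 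Being a sequence of slides bringing $P'$ to straight shape, $J(P')$ is a rectification of $P'$, so by Schützenberger's independence of rectification order it equals $\mathrm{rect}(P')$, which by $P\simeq P'$ and \cref{thm:Knuth_equiv_jdt} equals $\mathrm{rect}(P)=U$. Hence $P$ and $P'$ produce exactly the same data $(U;\sigma_0,\dots,\sigma_k)$.

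Finally I would reverse the rectification. Each outward slide $J_{c_i}\colon\sigma_{i-1}\to\sigma_i$ is invertible by an inward slide, and the pair of consecutive shapes $(\sigma_{i-1},\sigma_i)$ determines both the punctured inner corner and the outer corner through which the $\bullet$ exited (these are precisely the two symmetric differences of the shapes). This pins down the inverse inward slide uniquely, so starting from $U$ of shape $\sigma_k$ and running the inverse slides dictated by the recorded shapes reconstructs the original skew tableau of shape $\sigma_0$ unambiguously. Since $P$ and $P'$ feed identical reconstruction data, they are recovered as the same tableau, giving $P=P'$.

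The main obstacle I expect is the reversibility step: justifying rigorously that the \emph{shape sequence alone}, rather than the finer growth-diagram record, suffices to invert each slide, and that the inward slide is the exact inverse of the corresponding outward slide even for semi-standard tableaux with repeated entries. The cleanest way to handle this is to establish, as a lemma, that outward and inward jeu de taquin slides are mutually inverse bijections once the relevant corner is specified, and that for a single slide the before/after shapes determine that corner; the deterministic nature of the sliding rule then makes the reconstruction a straightforward (if slightly tedious) induction on $k$.
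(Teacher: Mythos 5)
Your argument is correct, and it is essentially the classical proof of Haiman's Theorem 2.13: dual equivalence forces the two tableaux to trace identical shape sequences under any fixed rectifying slide sequence, Knuth equivalence forces the rectified tableaux to coincide, and the invertibility of individual slides (with the inverse determined by the before/after shapes) reconstructs both tableaux identically. Note that the paper itself gives no proof here --- it simply cites \cite{HAIMAN1992} --- so there is nothing to diverge from; your reversibility lemma is the standard fact that outward and inward slides at the corresponding corners are mutually inverse bijections, which holds for semi-standard tableaux because the tie-breaking rules ($a\le b$ versus $a>b$) are set up to be exactly undone by the reverse move.
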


In the following theorem we use the notion of Kashiwara operators defined in \cref{subs:classical_crystals}.

\begin{theorem}[\cite{lothaire_2002}, Theorem 5.5.1] \label{thm:kash_op_equiv}
    Let $h$ be anyone between $\E{i},\F{i},i=1,\dots,n-1$ and $\pi \in \mathcal{A}_n^*$ such that $h(\pi) \neq \varnothing$. Then
    \begin{enumerate}
        \item $h(\pi) \stackrel{*}{\simeq} \pi $;
        \item if $\pi' \simeq \pi$ then $h(\pi') \simeq h(\pi)$.
    \end{enumerate}
\end{theorem}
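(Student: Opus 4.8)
The plan is to prove both assertions by direct analysis of the signature (bracketing) rule \eqref{alg:e_i} that defines $\E{i}$ and $\F{i}$, rather than by invoking coplacticity (which in this paper is derived \emph{after} the present theorem, in \cref{prop:commutation_jdt_kashiwara}, so using it would be circular). I would prove statement (2) first and then deduce statement (1) from it. Throughout I would treat only $h=\E{i}$: the case $h=\F{i}$ reduces to it by the partial-inverse relation, since if $\F{i}(\pi)=\pi^\ast\neq\varnothing$ then $\E{i}(\pi^\ast)=\pi$, so both conclusions for $\F{i}$ follow by applying the $\E{i}$-statements to $\pi^\ast$. The guiding structural fact I would isolate at the outset is the \emph{locality} of the rule: the bracketing depends only on the subword of letters equal to $i$ or $i+1$, and $\E{i}$ alters exactly one such letter (an $i+1$ becoming an $i$), leaving every other position inert.

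For statement (2), it suffices to verify $h(\pi)\simeq h(\pi')$ when $\pi$ and $\pi'$ differ by a single elementary Knuth move \eqref{eq:Knuth_rel}, since these moves generate Knuth equivalence. Writing $\pi=\alpha\,u\,\beta$ and $\pi'=\alpha\,u'\,\beta$ where $(u,u')$ is one of the three-letter patterns $xzy\rightleftharpoons zxy$ (with $x\le y<z$) or $yxz\rightleftharpoons yzx$ (with $x<y\le z$), I would compare the matchings of $\pi$ and $\pi'$ by tracking only how the three letters of the factor contribute to the $i/(i+1)$ bracket sequence. The cases are organized by how many of $x,y,z$ lie in $\{i,i+1\}$: when none do, the bracket sequence is literally unchanged and $\E{i}$ acts at the same, untouched, position, so $h(\pi)$ and $h(\pi')$ are related by the very same Knuth move; when one or more do, the inequalities $x\le y<z$ or $x<y\le z$ leave only finitely many patterns, and for each I would compute the changed position before and after the move and check that the two resulting words are again related by an elementary move. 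Concluding $h(\pi)\simeq h(\pi')$ then gives well-definedness of $\E{i}$ on Knuth classes, equivalently $P(h(\pi'))=P(h(\pi))$ by \cref{thm:Knuth_eq_P}.

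For statement (1), I would use that the recording tableau $Q(\pi)$ is determined by the chain of shapes $\mathrm{sh}\,P(\pi_1\cdots\pi_j)$ for $j=0,\dots,\ell(\pi)$, so it suffices to show this shape chain is invariant under $h=\E{i}$. Let $p$ be the position $\E{i}$ modifies in $\pi$. For prefixes of length $j<p$ the prefix is unchanged. The key lemma is that for $j\ge p$ one has $(\E{i}\pi)_{\le j}=\E{i}(\pi_{\le j})\neq\varnothing$; I would prove this by the enclosure argument: truncating a suffix can only \emph{unmatch} openings whose closing partner is removed, so $p$ (unmatched in $\pi$) stays unmatched in $\pi_{\le j}$, while every opening $q<p$ is matched in $\pi$ to some partner $r$ which, since a matched pair cannot straddle the unmatched opening $p$, must satisfy $r<p\le j$; hence all such pairs survive in the prefix and $p$ remains the \emph{leftmost} unmatched opening. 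Then, since the tableau operator $\E{i}$ (defined via the column reading word in \cref{subs:classical_crystals}) changes a single entry and so preserves shape, statement (2) yields $\mathrm{sh}\,P(\E{i}(\pi_{\le j}))=\mathrm{sh}\,P(\pi_{\le j})$ for every $j$. Thus the whole shape chain is preserved, giving $Q(h(\pi))=Q(\pi)$, i.e.\ $h(\pi)\stackrel{*}{\simeq}\pi$.

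The main obstacle I anticipate is the bookkeeping in the finite case analysis of (2): when the three letters of a Knuth factor coincide with $i$ or $i+1$, one must track carefully how the matched/unmatched status of the factor's brackets shifts under the move and confirm that the position altered by $\E{i}$ transfers consistently across both sides of the move. The enclosure lemma underlying (1) is conceptually the crux but is short once stated; by contrast the (2) verification is where the genuine, if elementary, combinatorial work lies, and I would take care to present the pattern list in a way that makes the invariance transparent rather than merely exhaustive.
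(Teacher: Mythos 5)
The paper does not actually prove this statement: it is imported verbatim from Lothaire (Theorem 5.5.1) and used as a black box, its only role being to feed into \cref{prop:commutation_jdt_kashiwara}. So there is no internal proof to compare against; what you have done is reconstruct the textbook argument, and the reconstruction is structurally sound. In particular your handling of statement (1) is correct: the prefix (``enclosure'') lemma --- for $j\ge p$ the leftmost unmatched opener of $\pi_{\le j}$ is still $p$, because matched pairs are nested (nothing unmatched can sit inside a matched pair) and truncation never creates new matches --- is exactly right, and combined with statement (2), \cref{thm:Knuth_eq_P}, and shape-preservation of the tableau operators it yields equality of the shape chains, hence of the $Q$-tableaux. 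Your identification of the elementary-move case analysis as the real work in (2) is also accurate.

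There is, however, one genuine logical slip: the claim that statement (2) for $\F{i}$ ``follows by applying the $\E{i}$-statements to $\pi^\ast=\F{i}(\pi)$.'' This works for statement (1) (apply (1) for $\E{i}$ to $\pi^\ast$ and use $\E{i}(\pi^\ast)=\pi$), but not for statement (2). Given $\pi'\simeq\pi$ with $\F{i}(\pi)\neq\varnothing$, what the partial-inverse relation gives you is $\E{i}\bigl(\F{i}(\pi')\bigr)=\pi'\simeq\pi=\E{i}\bigl(\F{i}(\pi)\bigr)$; to conclude $\F{i}(\pi')\simeq\F{i}(\pi)$ from this you would need injectivity of $\E{i}$ on Knuth classes, and the natural way to prove that injectivity is statement (2) for $\F{i}$ --- the very thing being established. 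The repair is cheap but should be made explicit: the case analysis you plan for $\E{i}$ in fact proves the stronger local statement that an elementary Knuth move transports the entire matched/unmatched bracket structure consistently (corresponding unmatched brackets, in the same left-to-right order and of the same type), and from that single analysis both operators inherit (2) simultaneously, since $\E{i}$ acts on the leftmost unmatched opener and $\F{i}$ on the rightmost unmatched closer; alternatively, run the mirror-image case analysis for $\F{i}$ directly. The same strengthened statement also delivers, as a byproduct you implicitly rely on, the Knuth-invariance of $\varepsilon_i$ and $\varphi_i$, which is what guarantees $\F{i}(\pi')\neq\varnothing$ (and $\E{i}(\pi')\neq\varnothing$) before one can even compare images.
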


\subsection{Generalized Knuth relations for weighted words} \label{subs:generalized_Knuth_rel}

We recall that a weighted word is just a weighted biword $\overline{\pi}$ having $q$-word $q(\overline{\pi})=12 \cdots k$, where $k$ is the lenght of $\overline{\pi}$. Borrowing a notation used in \cite{sagan1990robinson} we will write such a $\overline{\pi}$ as a word in the \emph{weighted alphabet} $\overline{\mathcal{A}}_n$ consisting of symbols $a^{(w)}$, where $a\in \{1,\dots ,n\}$ and $w\in \mathbb{Z}$. In this more compact notation, for instance, we write
\begin{equation}
    \overline{\pi}=
    \left( \begin{matrix}
        1 & 2 & 3 & 4 & 5
        \\
        2 & 1 & 3 & 1 & 2
        \\
        1 & -1 & 0 & 0 & 1
    \end{matrix} \right)
    \qquad
    \text{as}
    \qquad
    \overline{\pi}=
    2^{(1)} 1^{(-1)} 3^{(0)} 1^{(0)} 2^{(1)}.
\end{equation}
On $\overline{\mathcal{A}}_n$ we introduce the total ordering $\prec$ defined as
\begin{equation} \label{eq:total_ordering}
        a^{(w)} \prec b^{(w')}
        \qquad
        \text{if}
        \quad
        w > w',
        \qquad
        \text{or}
        \quad
        w=w' , a< b.
\end{equation}
The following definition generalizes the classical Knuth relations recalled in \eqref{eq:Knuth_rel}.
\begin{definition}[Generalized Knuth relations]
    The generalized Knuth relations $\overline{\pi} \simeq_{\mathrm{g}} \overline{\pi}'$ is the equivalence relation on weighted words generated by the transformations
\begin{equation} \label{eq:gen_Knuth_rel}
\begin{split}
    \alpha \, x^{(w_x)} \, z^{(w_z)} \, y^{(w_y)} \, \beta \rightleftharpoons \alpha \,z^{(w_z)} \, x^{(w_x)} \, y^{(w_y)} \, \beta,
    \qquad
    &\text{if } x^{(w_x)} \preceq y^{(w_y)} \prec z^{(w_z)},
    \\
    \alpha \, y^{(w_y)} \, z^{(w_z)} \, x^{(w_x)} \, \beta \rightleftharpoons \alpha \, y^{(w_y)} \, z^{(w_z)} \,x^{(w_x)} \, \beta,
    \qquad
    &\text{if } x^{(w_x)} \prec y^{(w_y)} \preceq z^{(w_z)},
\end{split}
\end{equation}
where $\alpha,\beta$ are generic weighted words. If a transformation swaps the $i$-th and the $i+1$-th letter of weighted word we say that such transformation has \emph{type i}. For a graphical interpretation of \eqref{eq:gen_Knuth_rel} see \cref{fig:Knuth_rel}. 
\end{definition}

\begin{figure}[h]
        \centering
        \subfloat[]{\includegraphics[scale=.78]{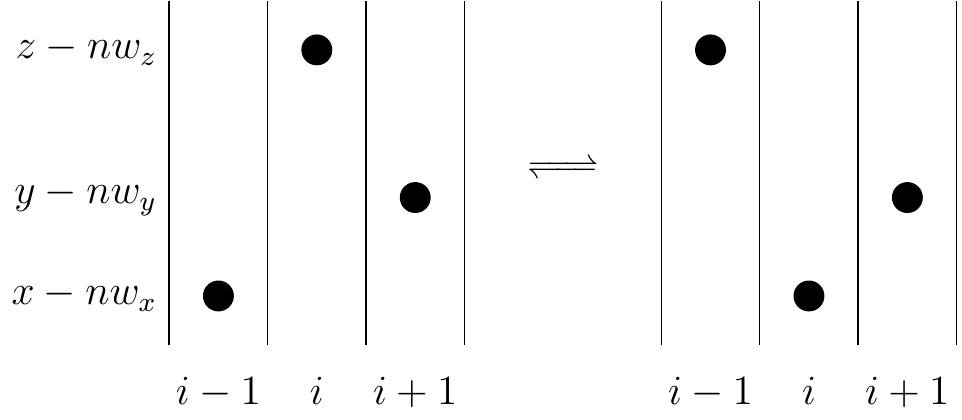}}%
        \hspace{1cm}
        \subfloat[]{{\includegraphics[scale=.78]{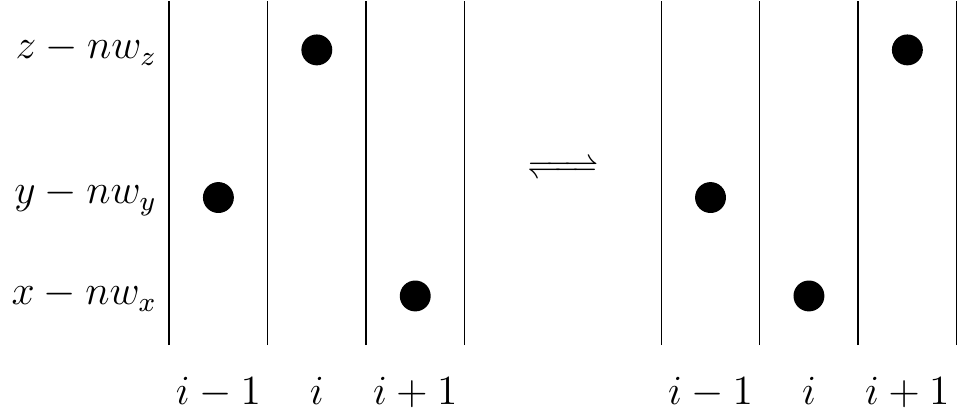} }}%
        \caption{Representing weighted words as point configurations on the twisted cylinder $\mathscr{C}_n$, the two relations \eqref{eq:gen_Knuth_rel} correspond respectively to the left and right panel above.}
        \label{fig:Knuth_rel}
\end{figure}

The following theorem offers a characterization of generalized Knuth equivalence classes, that partially extends \cref{thm:Knuth_eq_P}. Given a weighted biword $\overline{\pi}$ we denote with $(P_t(\overline{\pi}),Q_t(\overline{\pi}))$ the skew $\RSK$ dynamics with initial data $(P,Q)\xrightarrow[]{\skwRSK\,}\overline{\pi}$.

\begin{theorem} \label{thm:gen_Knuth_rel}
    Consider a pair of weighted words $\overline{\pi} \simeq_\mathrm{g} \overline{\pi}'$. Then for all $t \in \mathbb{Z}$ we have $P_t(\overline{\pi}) = P_t(\overline{\pi}')$. 
\end{theorem}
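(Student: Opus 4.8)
The plan is to reduce the statement to a single elementary generalized Knuth move and then to track its effect through the skew $\RSK$ dynamics by comparing, at each time, the classical insertion tableaux attached to the associated timetable words. First I would use that $\simeq_{\mathrm g}$ is generated by the moves \eqref{eq:gen_Knuth_rel}, so that it suffices to treat a single move; by \cref{prop:RSK_swap_symmetry} the second type is obtained from the first by interchanging the roles of $P$ and $Q$, so I may fix attention on one move of the first type. Using that standardization commutes with the skew $\RSK$ map (\cref{prop:iota_std} together with the standardization results of \cref{subs:standardization}) and respects the order $\prec$, I would further reduce to the case of a weighted permutation, so the three letters $x^{(w_x)},z^{(w_z)},y^{(w_y)}$ are pairwise distinct in $\overline{\mathcal A}_n$.

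The heart of the argument rests on \cref{prop:jdt_of_PQ}: for the pair $(P,Q)\xrightarrow{\skwRSK}\overline{\pi}$ one has $P\simeq P(\pi')$ with $\pi'=p(\overline{\pi}^{\na})$. I would then analyze how the move alters $\pi'$, the comparisons defining $\prec$ in \eqref{eq:total_ordering} being exactly those that govern the timetable position of the three letters involved. When $w_x=w_z$ the two swapped letters are timetable-adjacent and their order is reversed, so $\pi'$ undergoes precisely the classical Knuth transformation $xzy\rightleftharpoons zxy$ with $x\le y<z$; by \cref{thm:Knuth_eq_P} the classical insertion tableau is unchanged. When $w_x\ne w_z$ the timetable order of the two swapped letters is fixed by their weights, so $\pi'$ is literally unchanged. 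In either case $P_1(\overline{\pi})\simeq P_1(\overline{\pi}')$ in the classical Knuth sense.

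To pass from Knuth equivalence to genuine equality of skew tableaux I would invoke \cref{thm:uniqueness_dual_eq_non_dual_eq}, by which it suffices to check in addition that $P_1(\overline{\pi})\stackrel{*}{\simeq}P_1(\overline{\pi}')$. Since a generalized Knuth move alters the recording tableau while leaving the shape of the pair and its evolution under jeu de taquin slides unchanged, \cref{thm:haiman_dual_eq} should supply the required dual equivalence; combined with the Knuth equivalence of the previous step this gives $P_1(\overline{\pi})=P_1(\overline{\pi}')$. To upgrade from $t=1$ to arbitrary $t$, I would establish that the Viennot map intertwines $\simeq_{\mathrm g}$ with itself, i.e. $\V(\overline{\pi})\simeq_{\mathrm g}\V(\overline{\pi}')$ whenever $\overline{\pi}\simeq_{\mathrm g}\overline{\pi}'$; granting this, the relations between the skew $\RSK$ and Viennot dynamics (\cref{prop:RSK_ca_and_Viennot_ca}, \cref{prop:Viennot_and_RSK_ca}) identify $P_t(\overline{\pi})$ with the insertion tableau attached to a suitable iterate $\V^{t-1}(\overline{\pi})$, so applying the $t=1$ case to $\V^{t-1}(\overline{\pi})\simeq_{\mathrm g}\V^{t-1}(\overline{\pi}')$ closes the induction.

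I expect the main obstacle to be twofold. The genuinely delicate structural input is the commutation of the generalized Knuth moves with the Viennot map: this requires analyzing how the local rearrangement of \eqref{eq:gen_Knuth_rel}, visualized on $\mathscr{C}_n$ as in \cref{fig:Knuth_rel}, propagates through the shadow-line construction, and in particular that the constraints on $\prec$ are preserved up to the canonical weight shift induced by $\V$. The second, more technical point is the dual-equivalence upgrade: establishing $P_t(\overline{\pi})\stackrel{*}{\simeq}P_t(\overline{\pi}')$ rigorously, rather than merely matching shapes, so that \cref{thm:uniqueness_dual_eq_non_dual_eq} delivers equality and not just Knuth equivalence.
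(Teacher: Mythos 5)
Your proposal contains two genuine gaps, and the scaffolding around the hard step is not what actually carries the proof. The first gap is that the upgrade from Knuth equivalence to equality is circular. Having shown $P_1(\overline{\pi})\simeq P_1(\overline{\pi}')$, you invoke \cref{thm:uniqueness_dual_eq_non_dual_eq}, which additionally requires $P_1(\overline{\pi})\stackrel{*}{\simeq}P_1(\overline{\pi}')$ and, before that, that the two tableaux even have the same skew shape. Your justification --- that the move ``leaves the shape of the pair and its evolution under jeu de taquin slides unchanged'' --- is exactly the hypothesis of \cref{thm:haiman_dual_eq} and is essentially the statement to be proven; nothing in the proposal establishes it. The paper never passes through Knuth or dual equivalence of the output tableaux: it reads $P_t$, in row-coordinate form, directly off the east edges $\mathsf{E}(n,j)$ of the cylinder configuration, and the case analysis of \cref{lemma:quasi_comm_Knuth_Viennot} shows that a generalized Knuth move perturbs the shadow-line construction only inside the three columns involved, so every east edge --- hence every $P_t$ --- is \emph{literally} unchanged. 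That local analysis is precisely the ``main obstacle'' you flag but do not carry out, and it is the whole proof; your timetable-word computation via \cref{prop:jdt_of_PQ} and \cref{thm:Knuth_eq_P} is correct but only pins down the rectification of $P_1$, not $P_1$ itself.

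The second gap is the induction on $t$. By \cref{prop:RSK_ca_and_Viennot_ca} the Viennot map corresponds to shifting both tableaux up by one row, not to one application of the skew $\RSK$ map, so $P_1(\V^{t-1}(\overline{\pi}))$ is a row-translate of $P_1(\overline{\pi})$ and carries no information about $P_t(\overline{\pi})$. The operation on weighted words realizing one step of the skew $\RSK$ dynamics is the global weight shift $w_i\mapsto w_i+1$ (this is how the $\RSK$-time enters the Sagan--Stanley data, cf.\ the proof of \cref{lemma:fixed_pi}), and that shift manifestly preserves $\simeq_{\mathrm{g}}$ since the relations \eqref{eq:gen_Knuth_rel} compare only weight differences; that would be the correct reduction of general $t$ to $t=1$. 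A smaller error: the second relation in \eqref{eq:gen_Knuth_rel} is not obtained from the first by exchanging $P$ and $Q$ --- both types act on the $p$-word and both preserve $P$, while the swap symmetry of \cref{prop:RSK_swap_symmetry} exchanges Knuth moves with the \emph{dual} moves of \eqref{eq:gen_dual_Knuth_rel} --- so both types must be analysed separately.
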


The proof of \cref{thm:gen_Knuth_rel} is based on a simple quasi-commutation relation between Knuth relations and the Viennot map.

\begin{lemma} \label{lemma:quasi_comm_Knuth_Viennot}
    Let $\overline{\pi}$ and $\overline{\pi}'$ be weighted permutations differing by a single Knuth transformation of type $i$. Then $\mathbf{V}(\overline{\pi})$ and $\mathbf{V}(\overline{\pi}')$ also differ by a single Knuth transformation whose type is either $i-1$, $i$ or $i+1$.
\end{lemma}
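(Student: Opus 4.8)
The plan is to exploit the geometric realization of the Viennot map $\mathbf{V}$ as the periodic shadow line construction on $\mathscr{C}_n$, together with the locality of both Knuth transformations and the shadow lines. First I would set up the picture: a weighted permutation $\overline{\pi}$ is a configuration of points $[\overline{\pi}_j] \in \mathscr{C}_n$ via \eqref{eq:points_on_cyl}, and a single generalized Knuth transformation of type $i$ swaps two of the columns $\overline{\pi}_i, \overline{\pi}_{i+1}$ in a way that, by \eqref{eq:gen_Knuth_rel}, moves exactly one of the three consecutive points $[\overline{\pi}_{i-1}],[\overline{\pi}_i],[\overline{\pi}_{i+1}]$ while keeping the remaining two fixed, subject to the ordering constraints encoded by $\prec$. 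The graphical interpretation in \cref{fig:Knuth_rel} makes this precise: a type-$i$ Knuth move corresponds to sliding the ``middle'' point of a local triple of points across the broken line formed by the other two, either in the horizontal or the vertical band of $\mathscr{C}_n$. Since $\mathbf{V}$ is computed purely from the shadow line construction (emanating north and east rays from each black bullet and recording their mutual intersections as the new generation of red bullets), and since these rays propagate locally, the difference between the shadow line diagrams of $\overline{\pi}$ and $\overline{\pi}'$ is confined to a bounded region of $\mathscr{C}_n$ surrounding the three points involved.

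The core of the argument is then a finite case analysis. I would enumerate the possible local configurations of the triple $([\overline{\pi}_{i-1}],[\overline{\pi}_i],[\overline{\pi}_{i+1}])$ allowed by the two relations in \eqref{eq:gen_Knuth_rel}, and for each one draw the before-and-after shadow line pictures. In each case the two diagrams $\mathbf{V}(\overline{\pi})$ and $\mathbf{V}(\overline{\pi}')$ differ only in the relative position of a single red bullet against a pair of nearby red bullets, which is exactly the data of a single generalized Knuth transformation of the output. What must be tracked carefully is the \emph{index} of this output transformation: because $\mathbf{V}$ can shift the timetable ordering of the new-generation bullets by at most one position relative to the input (a single ray can cross at most one neighboring shadow line locally before the transformed point realigns), the type of the resulting Knuth move can differ from $i$ by at most one, giving $i-1$, $i$, or $i+1$. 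I would verify this shift bound directly from the local rules, confirming that no configuration forces a displacement of two or more positions.

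The main obstacle will be organizing the case analysis so that it is genuinely exhaustive while remaining readable. The subtlety is that the two relations in \eqref{eq:gen_Knuth_rel} involve strict versus weak inequalities in the total order $\prec$ of \eqref{eq:total_ordering}, and these boundary cases (equal weights versus strictly ordered weights) change which of the three points is mobile and in which band of $\mathscr{C}_n$ the motion occurs; each must be checked against the shadow line rules separately. A secondary delicate point is ensuring that the output is \emph{exactly} one Knuth transformation and not several — i.e. that the global effect on $\mathbf{V}(\overline{\pi})$ of a single local input move does not cascade. This follows because outside the bounded window the shadow lines of $\overline{\pi}$ and $\overline{\pi}'$ coincide identically, so all red bullets outside the window are unchanged, and within the window the enumeration shows a single bullet is repositioned. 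Once this quasi-commutation is established, \cref{thm:gen_Knuth_rel} follows by iterating: applying $\mathbf{V}$ repeatedly propagates Knuth-equivalence through the entire dynamics, and since $P_t(\overline{\pi})$ is recovered from the configuration $\mathbf{V}^{\,t-1}(\overline{\pi})$ (which by induction remains generalized Knuth equivalent to $\mathbf{V}^{\,t-1}(\overline{\pi}')$), an appeal to \cref{thm:Knuth_eq_P} via the standardization reduction identifies the two insertion tableaux.
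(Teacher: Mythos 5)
Your proposal follows essentially the same route as the paper: the paper's proof is precisely a localized graphical case analysis of the shadow line construction restricted to the sector $\mathsf{S}_{i,i+2}=\{i,i+1,i+2\}\times\mathbb{Z}$, relying on the observation that the heights of the shadow lines entering and leaving this sector are unchanged by the move, so that only the new-generation bullets inside the sector can be affected and the resulting Knuth transformation of $\mathbf{V}(\overline{\pi})$ has type $i-1$, $i$ or $i+1$. One small correction to your setup: a type-$i$ move swaps the heights of the two points in columns $i$ and $i+1$ (so two of the three points move, not one), but this does not affect the viability of the case analysis.
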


\begin{proof}

Our statement is best proven through a graphical argument. In \eqref{eq:Knuth_viennot_1}, \eqref{eq:Knuth_viennot_2}, \eqref{eq:Knuth_viennot_3} we give a schematic representation of all possible cases that could present while performing a generalized Knuth transformation. We focus only on the sector $\mathsf{S}_{i,i+2}=\{i,i+1,i+2\} \times \mathbb{Z} \subseteq \mathscr{C}_n$ where the transformation takes place as the remaining part of the construction is determined by unaffected points outside $\mathsf{S}_{i,i+2}$ and by the heights $j_1,j_2,j_3$ and $j_1',j_2',j_3'$ of the horizontal segments originating and terminating inside $\mathsf{S}_{i,i+2}$. 
In \eqref{eq:Knuth_viennot_1}, \eqref{eq:Knuth_viennot_2} we show the cases where the first transformation of \eqref{eq:gen_Knuth_rel} is applied  
\begin{equation} \label{eq:Knuth_viennot_1}
    \begin{minipage}{.4\linewidth}
        \frame{\includegraphics[scale=.95]{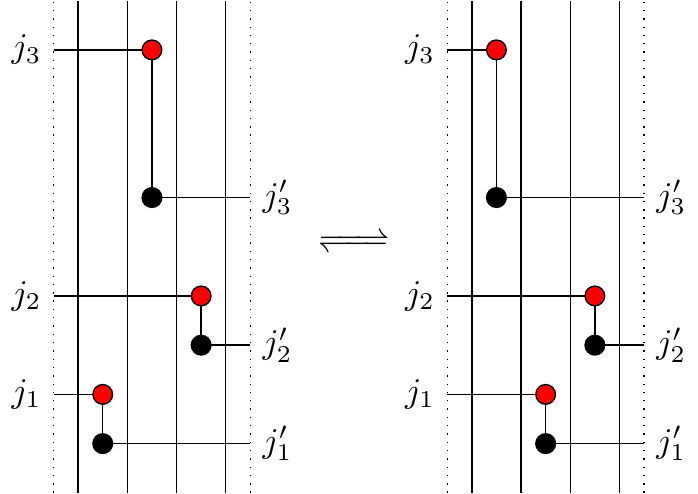}}
    \end{minipage}%
    \hspace{.5cm}
    ,
    \hspace{.5cm}
    \begin{minipage}{.4\linewidth}
        \frame{\includegraphics[scale=.95]{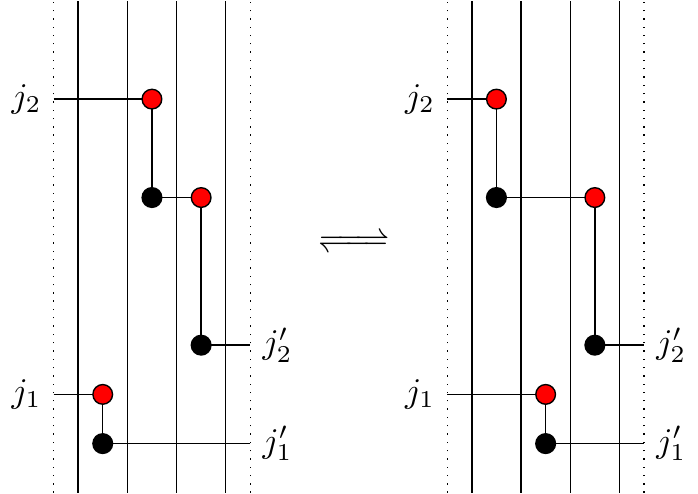}}
    \end{minipage}
    \hspace{.2cm},
\end{equation}
\begin{equation} \label{eq:Knuth_viennot_2}
    \begin{minipage}{.45\linewidth}
        \frame{\includegraphics[scale=.95]{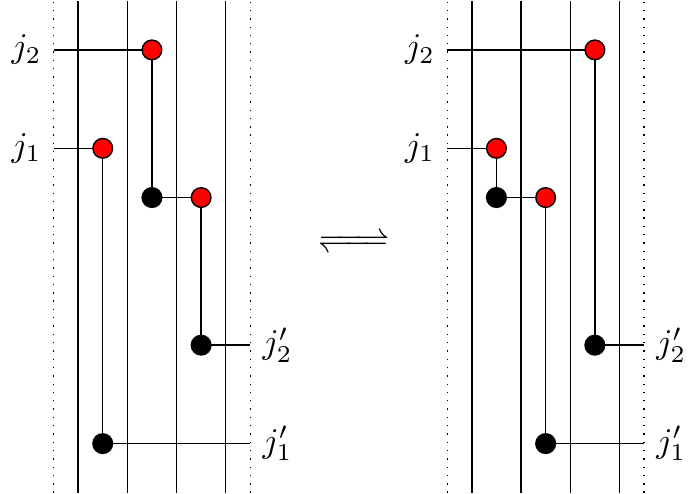}}
    \end{minipage}
    .
\end{equation}
The cases where the second transformation of \eqref{eq:gen_Knuth_rel} is applied are shown below
\begin{equation} \label{eq:Knuth_viennot_3}
    \begin{minipage}{.4\linewidth}
        \frame{\includegraphics[scale=.95]{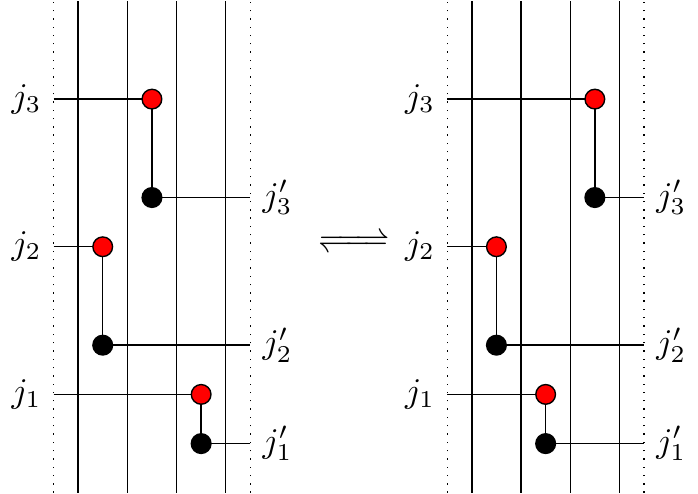}}
    \end{minipage}%
    \hspace{.5cm}
    ,
    \hspace{.5cm}
    \begin{minipage}{.4\linewidth}
        \frame{\includegraphics[scale=.95]{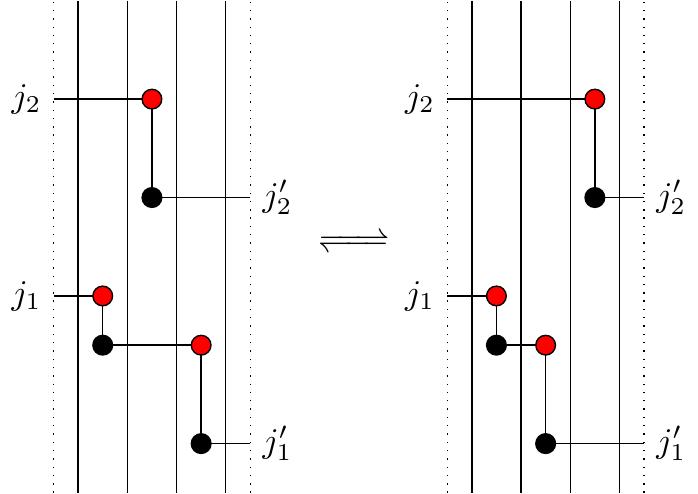}}
    \end{minipage}
    \hspace{.2cm} .
\end{equation}

In all cases, except for \eqref{eq:Knuth_viennot_2}, the generalized Knuth transformation separating $\mathbf{V}(\overline{\pi})$ and $\mathbf{V}(\overline{\pi}')$ is the same separating $\overline{\pi}$ and $\overline{\pi}'$. 
The only non trivial assumption made in the figures above is that the heights $j_1,j_2,j_3$ of shadow lines entering the sector  $\mathsf{S}_{i,i+2}$ are not affected by the transformations. This is a consequence of the fact that the position of horizontal lines exiting $\mathsf{S}_{i,i+2}$ at heights $j_1',j_2',j_3'$ also does not change while swapping highest and lowest point of the triple. 
\end{proof}

\begin{proof}[Proof of \cref{thm:gen_Knuth_rel}]

It is sufficient to prove this theorem in case $\overline{\pi}$ is a weighted permutation as this result would extend to weighted words via standardization. We show that if $\overline{\pi}$ and $\overline{\pi}'$ are separated by a single generalized Knuth relation then $P_t(\overline{\pi}) = P_t(\overline{\pi}')$ for all $t$. We recall that $P_t(\overline{\pi})$ is obtained reading values of east edges $\mathsf{E}(n,n(t-1)+1),\dots,\mathsf{E}(n,nt)$ of the edge configuration $\mathcal{E}(\overline{\pi})$. On the other hand, if $\{\overline{\pi}^{(t)}\}_t$ is the Viennot dynamics with initial data $\overline{\pi}^{(1)}=\overline{\pi}$, then $\mathcal{E}(\overline{\pi})$ is completely determined by the shadow line constructions of transitions $\overline{\pi}^{(t)} \to \overline{\pi}^{(t+1)}$. In fact the segments of each shadow line construction determine all edges having a same fixed value. By arguments presented in the proof of \cref{lemma:quasi_comm_Knuth_Viennot} if the Knuth transformation $\overline{\pi}\to \overline{\pi}'$ involves $i,i+1$ and $i+2$-th letters of both weighted permutations, than this is also the case for $\mathbf{V}(\overline{\pi}) \to  \mathbf{V} (\overline{\pi}')$. Moreover the shadow line construction is unaffected at columns different than $i,i+1,i+2$. This implies that edge configurations $\mathcal{E}(\overline{\pi})$ and $\mathcal{E}(\overline{\pi}')$ differently at edges corresponding to columns $i,i+1,i+2$ and moreover east edges $\mathsf{E}(n,j)$ are common for all $j$. This concludes the proof.
\end{proof}

\begin{remark}
It is easy to see that if two weighted words $\overline{\pi}$ and $\overline{\pi}'$ have the same $P$ tableaux under Sagan-Stanley correspondence then they are not necessarily connected by a sequence of generalized Knuth relations. For example $1^{(1)}2^{(0)}$ and $2^{(0)}1^{(1)}$ have the same $P$ tableau $\begin{ytableau} \, & 2 \\ 1 \end{ytableau}$, but they cannot be transformed into each other via \eqref{eq:gen_Knuth_rel}.
\end{remark}

\begin{remark}
It is also not true that if two words $\overline{\pi},\overline{\pi}'$ are such that $P_t(\overline{\pi})=P_t(\overline{\pi}')$ then they are Knuth equivalent in the generalized sense. For instance the $P$ tableaux of the skew $\RSK$ dynamics corresponding to $1^{(2)}2^{(0)}$ and $2^{(0)}1^{(2)}$ are equal for all $t$, but these words are not generalized Knuth equivalent.
\end{remark}

We conclude this subsection proposing a partial generalization of the notion of dual Knuth relation.

\begin{definition}[Generalized dual Knuth relations]
    The generalized dual Knuth relations $\overline{\pi} \stackrel{*}{\simeq}_\mathrm{g} \overline{\pi}'$ is the equivalence relation on weighted words generated by the transformations
\begin{equation} \label{eq:gen_dual_Knuth_rel}
\begin{split}
    \cdots k^{(w)} \cdots (k+2)^{(w')} \cdots (k+1)^{(w'')} \cdots \rightleftharpoons \cdots (k+1)^{(w)} \cdots (k+2)^{(w')} \cdots k^{(w'')} \cdots,
    \\
    \cdots (k+1)^{(w)} \cdots k^{(w')} \cdots (k+2)^{(w'')} \cdots \rightleftharpoons \cdots (k+2)^{(w)} \cdots k^{(w')} \cdots (k+1)^{(w'')} \cdots,
\end{split}
\end{equation}
where $w \ge w' \ge w''$. For a graphical representation of these transformation see \cref{fig:Knuth_rel_dual}.
\end{definition}

\begin{figure}[h]
        \centering
        \subfloat[]{{\includegraphics[scale=.78]{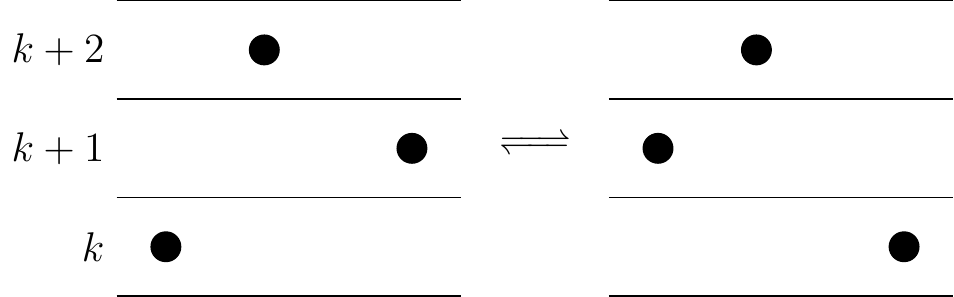}} }%
        \hspace{1cm}
        \subfloat[]{{\includegraphics[scale=.78]{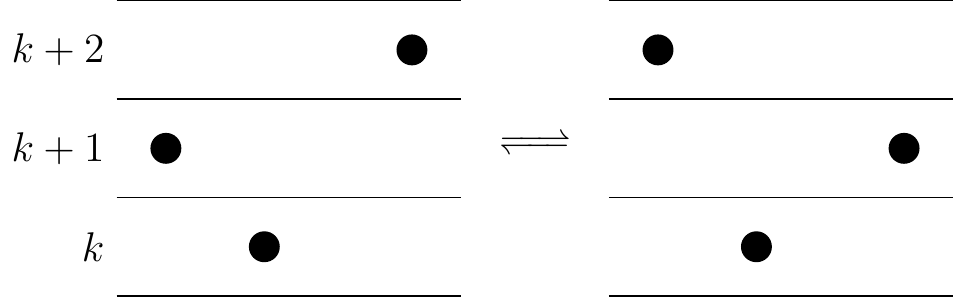} }}%
        \caption{Visualization of dual generalized Knuth relations.}
        \label{fig:Knuth_rel_dual}
\end{figure}

In the following theorem we report a statement dual to \cref{thm:Knuth_eq_P}. We omit the proof, as the argument are equivalent to those presented right above.

\begin{theorem}  \label{thm:gen_dual_knuth_rel}
    Consider a pair of weighted permutations $\overline{\pi} \stackrel{*}{\simeq}_\mathrm{g} \overline{\pi}'$. Then for all $t \in \mathbb{Z}$ we have $Q_t(\overline{\pi}) = Q_t(\overline{\pi}')$. 
\end{theorem}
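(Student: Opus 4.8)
The plan is to deduce \cref{thm:gen_dual_knuth_rel} from the already-established \cref{thm:gen_Knuth_rel} by passing to inverses, exactly as in the classical theory where Knuth and dual Knuth relations are interchanged under $\overline{\pi}\mapsto\overline{\pi}^{-1}$. The first ingredient I would record is the identity $Q_t(\overline{\pi})=P_t(\overline{\pi}^{-1})$, valid for all $t\in\mathbb{Z}$. Indeed, if $(P,Q)\xrightarrow[]{\skwRSK\,}\overline{\pi}$, then since transposing the associated matrix swaps the $p$ and $q$ rows we have $(Q,P)\xrightarrow[]{\skwRSK\,}\overline{\pi}^{-1}$. Writing $(P'_t,Q'_t)$ for the skew $\RSK$ dynamics with initial data $(Q,P)$, the swap symmetry of \cref{prop:RSK_swap_symmetry} gives $\RSK(Q_t,P_t)=(Q_{t+1},P_{t+1})$ whenever $\RSK(P_t,Q_t)=(P_{t+1},Q_{t+1})$, and likewise for $\RSK^{-1}$; a straightforward induction on $|t|$ then yields $(P'_t,Q'_t)=(Q_t,P_t)$ for every $t$, that is $P_t(\overline{\pi}^{-1})=Q_t(\overline{\pi})$.

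The second ingredient is the combinatorial fact that inversion interchanges the two families of relations: if $\overline{\pi}\stackrel{*}{\simeq}_{\mathrm g}\overline{\pi}'$ through a single generalized dual Knuth move \eqref{eq:gen_dual_Knuth_rel}, then $\overline{\pi}^{-1}\simeq_{\mathrm g}(\overline{\pi}')^{-1}$ through a single generalized Knuth move \eqref{eq:gen_Knuth_rel}. I would prove this by direct inspection of the definitions. A dual move fixes three positions (the $q$-values $q_1<q_2<q_3$) and permutes the values $k,k+1,k+2$ sitting on them, subject to $w\ge w'\ge w''$. After applying $\overline{\pi}\mapsto\overline{\pi}^{-1}$ the old values $k,k+1,k+2$ become three consecutive positions while the old positions $q_1,q_2,q_3$ become the letters at those slots; re-sorting the three columns by their new $q$-entry turns the dual move into the swap of two adjacent letters, i.e. a generalized Knuth move. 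For instance, the first line of \eqref{eq:gen_dual_Knuth_rel} produces, at positions $k,k+1,k+2$ of $\overline{\pi}^{-1}$, the transformation $q_1^{(w)}\,q_3^{(w'')}\,q_2^{(w')}\rightleftharpoons q_3^{(w'')}\,q_1^{(w)}\,q_2^{(w')}$, which is precisely the first relation of \eqref{eq:gen_Knuth_rel} with $x^{(w_x)}=q_1^{(w)}$, $y^{(w_y)}=q_2^{(w')}$, $z^{(w_z)}=q_3^{(w'')}$. The one point requiring care is that $w\ge w'\ge w''$ translates exactly into the ordering hypothesis $x^{(w_x)}\preceq y^{(w_y)}\prec z^{(w_z)}$; this works out because the order $\prec$ of \eqref{eq:total_ordering} was chosen so that higher weight precedes lower weight, matching the lexicographic convention used to sort weighted biwords. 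Checking both lines of \eqref{eq:gen_dual_Knuth_rel} against the two lines of \eqref{eq:gen_Knuth_rel} completes this step, and I expect this bookkeeping of weights and letters to be the main (though entirely elementary) obstacle.

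With both ingredients in hand the theorem is immediate: given $\overline{\pi}\stackrel{*}{\simeq}_{\mathrm g}\overline{\pi}'$, the second step gives $\overline{\pi}^{-1}\simeq_{\mathrm g}(\overline{\pi}')^{-1}$, whence \cref{thm:gen_Knuth_rel} yields $P_t(\overline{\pi}^{-1})=P_t((\overline{\pi}')^{-1})$ for all $t$, and the first step converts this into $Q_t(\overline{\pi})=Q_t(\overline{\pi}')$. As an alternative that stays closer to the proof of \cref{thm:gen_Knuth_rel}, one could instead establish a dual version of \cref{lemma:quasi_comm_Knuth_Viennot}, namely that the Viennot map sends a single generalized dual Knuth move to a single generalized dual Knuth move (as depicted in \cref{fig:Knuth_rel_dual}), and then read $Q_t$ off the north edges $\mathsf{N}(\,\cdot\,,n)$ of the configuration $\mathcal{E}(\overline{\pi})$ rather than the east edges used for $P_t$; the same locality argument would then show these north edges are unchanged. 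I would present the inversion argument as primary, since it reuses the completed \cref{thm:gen_Knuth_rel} verbatim.
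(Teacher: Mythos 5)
Your proposal is correct, but it takes a different route from the one the paper intends. The paper omits the proof of this theorem with the remark that ``the arguments are equivalent to those presented right above,'' i.e.\ it envisions rerunning the direct argument of \cref{thm:gen_Knuth_rel}: establish a dual analogue of \cref{lemma:quasi_comm_Knuth_Viennot} by checking that the periodic shadow-line construction is only modified inside the three-row sector $\mathbb{Z}\times\{k,k+1,k+2\}$ affected by a dual move (the configurations of \cref{fig:Knuth_rel_dual}), and then read $Q_t$ off the north edges $\mathsf{N}(\cdot,n)$ of $\mathcal{E}(\overline{\pi})$ instead of the east edges. Your primary argument instead reduces the statement to the already-proven \cref{thm:gen_Knuth_rel} via the inversion $\overline{\pi}\mapsto\overline{\pi}^{-1}$. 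This is more economical — no new case analysis of shadow lines — at the cost of two auxiliary facts: (i) the intertwining $Q_t(\overline{\pi})=P_t(\overline{\pi}^{-1})$, which the paper never isolates as a proposition; it follows from \cref{prop:RSK_swap_symmetry} together with the observation that swapping $(\alpha,\beta)$ reflects the edge configuration across the diagonal of $\mathscr{C}_n$ (the local rules \eqref{eq:RSK_local_rules} are symmetric under $\mathsf{W}\leftrightarrow\mathsf{S}$, $\mathsf{E}\leftrightarrow\mathsf{N}$), hence transposes $\overline{M}$; one should also note that the canonical choice $\ker(P,Q)=\varnothing$ used to make $(P_t,Q_t)(\overline{\pi})$ well defined is preserved under swap, since $\ker(Q,P)=\ker(P,Q)$; and (ii) the combinatorial interchange of the two families of moves under inversion, which your explicit bookkeeping verifies correctly — your translation of $w\ge w'\ge w''$ into $x\preceq y\prec z$ is exactly right (note that the second relation in \eqref{eq:gen_Knuth_rel} as printed has identical left and right sides, an evident typo; your check is against the intended relation $y\,x\,z\rightleftharpoons y\,z\,x$ and goes through). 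Both routes are sound; yours reuses proven machinery and makes the $P$/$Q$ duality explicit, while the paper's avoids any appeal to the transposition symmetry of the Sagan--Stanley construction.
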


\begin{theorem} \label{thm:Greene_invariants_gen_knuth_rel}
    Consider weighted permutations $\overline{\pi},\overline{\pi}'$ such that $\overline{\pi} \simeq_\mathrm{g} \overline{\pi'}$ or $\overline{\pi} \stackrel{*}{\simeq}_\mathrm{g} \overline{\pi}'$. Then, for all $k$, we have
    \begin{equation}
        I_k(\overline{\pi}) = I_k(\overline{\pi}')
        \qquad
        \text{and}
        \qquad
        D_k(\overline{\pi}) = D_k(\overline{\pi}').
    \end{equation}
\end{theorem}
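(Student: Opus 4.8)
The plan is to deduce both invariances at once from the already-established identification of the Greene invariants with the asymptotic increment. First I would observe that, since $I_k$ and $D_k$ are individual numerical statistics and $\simeq_\mathrm{g}$, $\stackrel{*}{\simeq}_\mathrm{g}$ are equivalence relations generated by single moves, it suffices to treat one elementary generalized (resp. dual generalized) Knuth transformation; the general case follows by transitivity.

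The crucial simplification is the following: by \cref{thm:asymptotic_shape_RSK}, for any pair $(P,Q) \xrightarrow{\skwRSK} \overline{\pi}$ the two partitions $\mu(\overline{\pi})$ and $\widetilde{\mu}(\overline{\pi})$ defined through $D_k$ and $I_k$ respectively both coincide with the asymptotic increment $\mu(P,Q)$. Hence the full content of the theorem reduces to a single statement: \emph{the asymptotic increment $\mu(\overline{\pi})$ is unchanged by a generalized or dual-generalized Knuth move}. Proving this one invariance gives $I_k(\overline{\pi}) = I_k(\overline{\pi}')$ and $D_k(\overline{\pi}) = D_k(\overline{\pi}')$ simultaneously, for all $k$.

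For the case $\overline{\pi} \simeq_\mathrm{g} \overline{\pi}'$, I would invoke \cref{thm:gen_Knuth_rel}, which yields $P_t(\overline{\pi}) = P_t(\overline{\pi}')$ for all $t \in \mathbb{Z}$. Writing $\lambda^t/\rho^t$ for the common shape of $(P_t, Q_t)$, the asymptotic increment is recovered from the external shapes of the $P$-tableaux alone via $\mu_j' = \lim_{t\to\infty}(\lambda^t)'_j/t$. Since $P_t(\overline{\pi})$ and $P_t(\overline{\pi}')$ agree as tableaux — in particular their external shapes $\lambda^t$ agree — the limits defining $\mu'$ coincide, so $\mu(\overline{\pi}) = \mu(\overline{\pi}')$. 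The dual case $\overline{\pi} \stackrel{*}{\simeq}_\mathrm{g} \overline{\pi}'$ is identical after replacing \cref{thm:gen_Knuth_rel} by \cref{thm:gen_dual_knuth_rel}: this gives $Q_t(\overline{\pi}) = Q_t(\overline{\pi}')$ for all $t$, and since $P_t$ and $Q_t$ share the same shape $\lambda^t/\rho^t$ throughout the skew $\RSK$ dynamics (\cref{def:RSK_product_tableaux}), the external shape $\lambda^t$ can be read off equally well from the $Q$-tableaux, and the same argument applies.

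The main point requiring care — rather than a genuine obstacle — is the bookkeeping around the projection $(P,Q)\xrightarrow{\skwRSK}\overline{\pi}$, which forgets the partition $\nu$: one must fix a single consistent convention (e.g.\ $\ker(P,Q)=\varnothing$) so that the representatives on the two sides evolve under the same dynamics, and then confirm that the tableau equalities supplied by \cref{thm:gen_Knuth_rel,thm:gen_dual_knuth_rel} are equalities of \emph{shaped} tableaux, hence force equality of the external shapes $\lambda^t$ at every time $t$. A fully self-contained alternative, which avoids even this, would combine \cref{lemma:quasi_comm_Knuth_Viennot} (quasi-commutation of a Knuth move with $\mathbf{V}$) with the $\mathbf{V}$-invariance of $I_k,D_k$ in \cref{thm:subsequences_Viennot}, but the route through \cref{thm:asymptotic_shape_RSK} is shorter and I would present that one.
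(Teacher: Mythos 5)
Your proposal is correct and follows essentially the same route as the paper's own proof: invoke \cref{thm:gen_Knuth_rel} (resp. \cref{thm:gen_dual_knuth_rel}) to get $P_t(\overline{\pi})=P_t(\overline{\pi}')$ (resp. $Q_t(\overline{\pi})=Q_t(\overline{\pi}')$) for all $t$, deduce that the asymptotic increments coincide, and conclude via \cref{thm:asymptotic_shape_RSK}. The only difference is that you spell out the dual case and the shape bookkeeping that the paper dismisses as ``analogous.''
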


\begin{proof}
    Let $(P,Q),(P',Q)'$ be pairs of tableaux such that $(P,Q)\xrightarrow[]{\skwRSK\,} \overline{\pi}$ and $(P,Q)\xrightarrow[]{\skwRSK\,} \overline{\pi}'$.
    We have shown in \cref{thm:gen_Knuth_rel} that if $\overline{\pi} \simeq_\mathrm{g} \overline{\pi'}$, then $P_t=P_t'$ for all $t$. In particular, this shows that the asymptotic increment $\mu$ is common for both pairs $(P,Q)$ and $(P',Q')$. By \cref{thm:asymptotic_shape_RSK} this implies that $I_k,D_k$ are invariant under generalized Knuth relations. The same statement for dual generalized Knuth relations can be proven analogously.  
\end{proof}

\section{Proof of \cref{thm:subsequences_crystals}} \label{app:proof_thm}

We will proceed by direct inspection. Arguments implemented here can be thought as affine generalizations of those originally presented in \cite{Danilov_Kolshevoy_bicrystals} and \cite{vanLeeuwen_crystal_matrices}. We organize the proof of \cref{thm:subsequences_crystals} in a number of lemmas. The first basic property we prove is that the inversion $\overline{\pi} \to \overline{\pi}^{-1}$ preserves increasing and localized decreasing subsequences.

\begin{lemma} \label{lemma:inversion_prererves_is_lds}
    For any $\overline{\pi} \in \overline{\mathbb{A}}_{n,n}$ and any $k$ we have
    \begin{equation}
        I_k(\overline{\pi}^{-1}) = I_k(\overline{\pi})
        \qquad
        \text{and}
        \qquad
        D_k(\overline{\pi}^{-1}) = D_k(\overline{\pi}).
    \end{equation}
\end{lemma}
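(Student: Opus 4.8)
The statement to prove is Lemma B.1: for any weighted biword $\overline{\pi} \in \overline{\mathbb{A}}_{n,n}$ and any $k$, the inversion $\overline{\pi} \to \overline{\pi}^{-1}$ preserves both Greene invariants, $I_k(\overline{\pi}^{-1}) = I_k(\overline{\pi})$ and $D_k(\overline{\pi}^{-1}) = D_k(\overline{\pi})$.

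\textbf{Plan.} The natural approach is geometric, using the representation of weighted biwords as point configurations on the twisted cylinder $\mathscr{C}_n$ via the assignment $[\overline{\pi}_i] = (q_i, p_i - n w_i)$ from \eqref{eq:points_on_cyl}. Under inversion $\overline{\pi} \mapsto \overline{\pi}^{-1}$, the $p$ and $q$ rows are swapped, which corresponds to the correspondence between $\overline{M}$ and its transpose $\overline{M}^T$ described in \cref{subs:biwords}. I would first make precise the effect of this operation on the point set $[\overline{\pi}]$: swapping $p_i \leftrightarrow q_i$ sends the column $\left(\begin{smallmatrix} q_i \\ p_i \\ w_i\end{smallmatrix}\right)$ to $\left(\begin{smallmatrix} p_i \\ q_i \\ w_i\end{smallmatrix}\right)$, so at the level of $\mathscr{C}_n = \mathbb{Z}^2/\sim_n$ the point $(q_i, p_i - n w_i)$ is mapped to $(p_i - n w_i, q_i)$ under the reflection across the main diagonal $(a,b) \mapsto (b,a)$. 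The key observation is that this reflection is a well-defined involution of $\mathscr{C}_n$, because the gluing relation $(j,i) \sim_n (j+kn, i-kn)$ is symmetric under exchanging the two coordinates: if $(j,i)\sim_n(j',i')$ then $(i,j)\sim_n(i',j')$.

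\textbf{Key steps.} The core of the argument is that the diagonal reflection $\sigma:(a,b)\mapsto(b,a)$ on $\mathscr{C}_n$ interchanges the two families of paths that define $I_k$ and $D_k$. First I would verify that $\sigma$ maps up-right paths to up-right paths: the defining steps $\varpi_{\ell+1}\sim_n \varpi_\ell + \mathbf{e}_1$ and $\varpi_{\ell+1}\sim_n\varpi_\ell+\mathbf{e}_2$ from \eqref{eq:up_right_path} are simply exchanged, and the reflected sequence is again up-right. Since $\sigma$ is a bijection preserving the passage time $G(\varpi;\overline{M})=\sum_\ell \overline{M}(\varpi_\ell)$ — because $\overline{M}^T(\sigma(c)) = \overline{M}(c)$ — it sets up a length-preserving bijection between $k$-increasing subsequences of $\overline{\pi}$ and those of $\overline{\pi}^{-1}$, giving $I_k(\overline{\pi}^{-1}) = I_k(\overline{\pi})$. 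Second, I would check the analogous statement for strict down-right loops: the step $\varsigma_{k+1}\sim_n \varsigma_k + a_k\mathbf{e}_1 - b_k\mathbf{e}_2$ is sent by $\sigma$ to $\varsigma_{k+1}\sim_n \varsigma_k - b_k\mathbf{e}_2 + a_k\mathbf{e}_1$, which after relabeling is again of down-right type with the roles of the two coordinates exchanged. Thus $\sigma$ also restricts to a length-preserving bijection between localized decreasing subsequences, yielding $D_k(\overline{\pi}^{-1})=D_k(\overline{\pi})$.

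\textbf{Main obstacle.} The routine content is the verification that the reflection respects the twisted (rather than straight) periodicity of $\mathscr{C}_n$; this is where care is needed, since the nonstandard gluing $(n,i)\sim(1,i+n)$ could a priori fail to be symmetric. I expect the subtle point to be confirming that a down-right loop — which winds around the cylinder and is only defined up to the identification $\sim_n$ — is sent to a genuine down-right loop rather than a broken or non-closed curve, and that the notion of being contained in a single period band is preserved. Once the symmetry of $\sim_n$ under coordinate exchange is established (which is immediate from the definition in \cref{def:twisted_cylinder}, since $(j',i')=(j+kn,i-kn)$ is equivalent to $(i',j')=(i-kn,j+kn)$), the remaining steps are direct bijective checks. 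I would therefore present the lemma as a consequence of the single structural fact that $\overline{M}^T$ corresponds to reflecting the filling of $\mathscr{C}_n$ across its diagonal, an involution that swaps up-right paths with down-right loops' ambient paths while preserving all passage times and lengths.
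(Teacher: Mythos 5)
Your proposal is correct and follows essentially the same route as the paper: both identify inversion $\overline{\pi}\mapsto\overline{\pi}^{-1}$ with the diagonal transposition of $\mathscr{C}_n$ (well defined because $\sim_n$ is symmetric under coordinate exchange) and observe that this involution preserves passage times, sends up-right paths to up-right paths, and sends strict down-right loops to strict down-right loops after reversing the traversal order. The paper simply makes the loop case explicit by writing out the coordinate inequalities, exactly the "relabeling" step you gesture at (note your reflected step should read $-b_k\mathbf{e}_1 + a_k\mathbf{e}_2$, which is a down-right step only when read backwards), and omits the up-right case as immediate.
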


\begin{proof}
    We prove that inversion $\overline{\pi} \to \overline{\pi}^{-1}$, correponding to the transposition $\overline{M}_{i,j}(k) \to \overline{M}_{j,i}(k)$, preserves both increasing and localized increasing subsequences. Let us start with localized decreasing subsequences. A path $\varsigma=(\varsigma_j: j=1,\dots,s ) \subset \mathscr{C}_n$ is a strict down-right loop if and only if its points have coordinates
    \begin{gather*}
        \varsigma_1=(j_1,i_1-nw), \dots, \varsigma_r=(j_r,i_r-nw),
        \\
        \varsigma_{r+1}=(j_{r+1},i_{r+1}-n(w+1)), \dots, \varsigma_s=(j_s,i_s-n(w+1)),
    \end{gather*}
    for some $r,w$ and numbers $i_k,j_k$ such that
    \begin{equation}\label{eq:localized_drp_condition}
        \begin{split}
            &j_1 < \cdots < j_r < j_{r+1} < \cdots < j_s,
            \\
            &i_{r+1} > \cdots > i_s > i_1 > \cdots > i_r.
        \end{split}
    \end{equation}
    Denote with $\varpi^T$ the image under transposition $(j,i-nw)\to(i,j-nw)$ of the path $\varpi$. Then its coordinates are
    \begin{gather*}
        \varpi_r^T=(i_r,j_r-nw), \dots, \varpi_1^T=(i_1,j_1-nw),
        \\
        \varpi_s^T=(i_s,j_s-n(w+1)), \dots, \varpi_{r+1}^T=(i_{r+1},j_{r+1}-n(w+1)),
    \end{gather*}
which, by \eqref{eq:localized_drp_condition}, form again a strict down-right loop.

The proof that transposition maps increasing subsequences $\varpi$ into increasing subsequences $\varpi^T$ is also straightforward and therefore we omit it.
\end{proof}

\begin{lemma} \label{lemma:iota_preserves_I_D}
    We have $I_k(\iota_\epsilon(\overline{\pi})) = I_k(\overline{\pi})$ and $D_k(\iota_\epsilon(\overline{\pi})) = D_k(\overline{\pi})$ for all $\overline{\pi}\in \overline{\mathbb{A}}_{n,n}$, $\epsilon=1,2$ and $k=1,2,\dots$.
    Equivalently, recalling the shift $T_\epsilon$ of \eqref{eq:shift_T}, $I_k(T_\epsilon(\overline{M})) = I_k(\overline{M})$ and $D_k(T_\epsilon(\overline{M})) = D_k(\overline{M})$ for all $\overline{M}\in \overline{\mathbb{M}}_{n\times n}$, $\epsilon=1,2$, $k=1,2,\dots$.
\end{lemma}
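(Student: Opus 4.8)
The plan is to translate the problem into the language of the twisted cylinder $\mathscr{C}_n$ and exploit the fact that $\iota_1,\iota_2$ act as the shifts $T_1,T_2$ on matrices, via \cref{prop:iota_rc_commute_semi} and \cref{thm:matrices_configurations}. Since the statistics $I_k,D_k$ are defined purely in terms of the point configuration $[\overline{\pi}]\subset\mathscr{C}_n$ (an increasing sequence is one whose points lie on an up-right path, a localized decreasing sequence one whose points form a strict down-right loop), the cleanest route is to show directly that the shift operators $T_\epsilon$ on $\overline{\mathbb{M}}_{n\times n}$ preserve $I_k$ and $D_k$, and then invoke the equivalence of the two formulations stated in the lemma. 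So first I would reduce the claim to the matrix/biword picture: it suffices to prove $I_k(T_\epsilon(\overline{M}))=I_k(\overline{M})$ and $D_k(T_\epsilon(\overline{M}))=D_k(\overline{M})$ for both $\epsilon=1,2$.

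The key observation is that the two representations of $\mathscr{C}_n$ in \cref{def:twisted_cylinder} --- the vertical strip \eqref{eq:twisted_cyl_vertical} and the horizontal strip \eqref{eq:twisted_cyl_horizontal} --- make $T_1$ and $T_2$ into genuine rigid translations of the lattice. Indeed, under the identification \eqref{eq:matrices_as_maps} and the map \eqref{eq:points_on_cyl} sending a column $\overline{\pi}_i=\left(\begin{smallmatrix}q_i\\p_i\\w_i\end{smallmatrix}\right)$ to the point $(q_i,p_i-nw_i)$, the shift $T_\epsilon(f)(c)=f(c-\mathbf{e}_\epsilon)$ simply moves the entire point configuration $[\overline{\pi}]$ by the lattice vector $\mathbf{e}_\epsilon$. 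Because the defining conditions of an up-right path \eqref{eq:up_right_path} and of a strict down-right loop are invariant under translation by $\mathbf{e}_1$ or $\mathbf{e}_2$ (both preserve the equivalence $\sim_n$ and the two allowed step directions), any up-right path $\varpi$ through the original configuration is carried to an up-right path $T_\epsilon(\varpi)$ through the shifted configuration with the same number of intersection points, and likewise for strict down-right loops. Hence I would argue: given a maximizing $k$-increasing subsequence of $\overline{\pi}$, its image under the translation is a $k$-increasing subsequence of $T_\epsilon(\overline{\pi})$ of the same total length, giving $I_k(T_\epsilon(\overline{M}))\ge I_k(\overline{M})$; applying the same reasoning to $T_\epsilon^{-1}$ yields the reverse inequality, hence equality. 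The identical argument with strict down-right loops in place of up-right paths handles $D_k$.

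The only genuinely delicate point --- and what I expect to be the main obstacle --- is verifying that a translation of the configuration which crosses the ``seam'' of $\mathscr{C}_n$ (i.e.\ where the periodic identification $(n,i)\sim_n(1,i+n)$ or $(j,n)\sim_n(j+n,1)$ is invoked) does not break the combinatorial type of a path or loop. In other words I must check that the bijection between paths on $\overline{\pi}$ and paths on $T_\epsilon(\overline{\pi})$ respects the winding structure around the cylinder, so that a strict down-right loop (which winds exactly once) is sent to a strict down-right loop and not to a disconnected or doubly-winding object. This is where the two concrete representations in \cref{def:twisted_cylinder} are most useful: $T_1$ is transparent in the horizontal strip \eqref{eq:twisted_cyl_horizontal} and $T_2$ in the vertical strip \eqref{eq:twisted_cyl_vertical}, and in each representation the relevant shift is a translation along the non-periodic direction, so the seam is crossed cleanly. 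I would therefore organize the write-up around choosing, for each $\epsilon$, the representation of $\mathscr{C}_n$ in which $T_\epsilon$ is a translation along the infinite axis, reducing the seam-crossing issue to a routine check. The equivalence with the tableau-level statements $I_k(\iota_\epsilon(\overline{\pi}))=I_k(\overline{\pi})$ etc.\ is then immediate from \cref{prop:iota_rc_commute_semi} together with the identifications $I_k(\overline{M})=I_k(\overline{\pi})$, $D_k(\overline{M})=D_k(\overline{\pi})$ of \cref{def:Greene_invariants}.
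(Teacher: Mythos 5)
Your proposal is correct and takes essentially the same approach as the paper, whose entire proof is the one-line observation that the shifts $T_1,T_2$ preserve up-right paths and strict down-right loops. The ``seam-crossing'' concern you flag is not a genuine obstacle, since $\mathscr{C}_n$ is defined as the quotient $\mathbb{Z}^2/\sim_n$ and translation by $\mathbf{e}_\epsilon$ descends to a well-defined automorphism of the quotient, so the path and loop conditions (which are stated purely in terms of $\sim_n$) are manifestly preserved.
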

\begin{proof}
    This is obvious since shifts $T_1,T_2$ preserve up-right paths and strict down-right loops.
\end{proof}

As a result of \cref{lemma:iota_preserves_I_D,lemma:inversion_prererves_is_lds} and of definition \eqref{eq:Kashiwara_op_matrices_second}, \eqref{eq:Kashiwara_op_weighted_biwords_second} of family $\widetilde{E}^{(2)}_i,\widetilde{F}^{(2)}_i$, in order to prove \cref{thm:subsequences_crystals}, it suffices to show that
\begin{equation}
    I_k(\widetilde{E}^{(1)}_i(\overline{\pi})) = I_k(\overline{\pi}),
    \qquad
    D_k(\widetilde{E}^{(1)}_i(\overline{\pi})) = D_k(\overline{\pi}),
\end{equation}
for all $i=1,\dots,n-1$. In the remaining lemmas below these are indeed the only situation we consider. We start by showing that classical Kashiwara operators preserve increasing subsequences. 

\begin{lemma} \label{lemma:Kashiwara_is}
    Let $\overline{\pi} \in \overline{\mathbb{A}}_{n,n}$ and $i\in\{1,\dots,n-1\}$ such that  $\widetilde{E}^{(1)}_i(\overline{\pi})$ exists. Then $I_k(\widetilde{E}^{(1)}_i(\overline{\pi}))=I_k(\overline{\pi})$.
\end{lemma}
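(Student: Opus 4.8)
The plan is to translate the action of $\widetilde{E}^{(1)}_i$ into the language of the point configuration $[\overline{\pi}] \subset \mathscr{C}_n$ and to modify maximal families of disjoint up-right paths explicitly. By \cref{rem:weighted_biwords_bi_crystals} together with the matrix description \eqref{eq:signature_rule_matrix}, \eqref{eq:signature_rule_matrix_2}, the operator $\widetilde{E}^{(1)}_i$ acts by selecting a single point $\mathbf{p} \in [\overline{\pi}]$ lying on the line $p \equiv i+1$ --- the point corresponding to the unmatched opening parenthesis of the signature rule --- and replacing it by a point $\mathbf{p}'$ with the same $q$-coordinate but $p$-coordinate lowered by one, i.e.\ translating it one unit downward in the $p$-direction; every other point of $[\overline{\pi}]$ is untouched. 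Since $\widetilde{F}^{(1)}_i$ is the partial inverse of $\widetilde{E}^{(1)}_i$ and acts by the reflected move (lowering an $i$-point's row, i.e.\ shifting a point one unit upward), it suffices to establish the single inequality $I_k(\overline{\pi}) \le I_k(\widetilde{E}^{(1)}_i(\overline{\pi}))$: the same argument applied to $\widetilde{F}^{(1)}_i$ gives $I_k(\overline{\sigma}) \le I_k(\widetilde{F}^{(1)}_i(\overline{\sigma}))$, and taking $\overline{\sigma} = \widetilde{E}^{(1)}_i(\overline{\pi})$ with $\widetilde{F}^{(1)}_i \widetilde{E}^{(1)}_i(\overline{\pi}) = \overline{\pi}$ then yields the reverse inequality and hence equality. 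As elsewhere in the text, one may first reduce to a weighted permutation by standardization, so that $[\overline{\pi}]$ has no repeated points and the bracketing argument applies verbatim.

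To prove $I_k(\overline{\pi}) \le I_k(\widetilde{E}^{(1)}_i(\overline{\pi}))$, fix a family $\varpi^{(1)} \cupdot \cdots \cupdot \varpi^{(k)}$ of $k$ disjoint up-right paths realizing $I_k(\overline{\pi})$, and regard the collected points as $k$ disjoint increasing subsequences. If $\mathbf{p}$ lies on none of these paths, the very same family is a $k$-increasing collection for $\widetilde{E}^{(1)}_i(\overline{\pi})$ of the same length, since every point it uses is unchanged. The only genuine case is when $\mathbf{p}$ belongs to one of the chains, say the one carried by $\varpi^{(a)}$. Writing $\mathbf{x}$ and $\mathbf{y}$ for the points immediately preceding and following $\mathbf{p}$ along that chain, both coordinates of $\mathbf{x}$ are weakly below those of $\mathbf{p}$ and both coordinates of $\mathbf{y}$ are weakly above (in the up-right sense of \eqref{eq:up_right_path}). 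After the downward shift $\mathbf{p}\mapsto \mathbf{p}'$ the successor $\mathbf{y}$ still dominates $\mathbf{p}'$, so no obstruction arises above; the chain can fail to remain increasing only when the predecessor $\mathbf{x}$ already lies on the line $p \equiv i+1$ (with strictly smaller $q$), in which case the step $\mathbf{x} \to \mathbf{p}'$ now decreases the $p$-coordinate and violates the up-right condition.

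Resolving this last configuration is the heart of the matter. Here one invokes the defining property of $\mathbf{p}$ as the unmatched point in the signature rule \eqref{eq:signature_rule_matrix_2}: geometrically this means that no point on the line $p\equiv i$ is bracketed with $\mathbf{p}$, so the chain through $\mathbf{p}$ admits a local rerouting --- either by discarding the blocking predecessor $\mathbf{x}$ in favour of a point freed on the line $p \equiv i$, or by exchanging a terminal segment between two of the paths $\varpi^{(a)}, \varpi^{(b)}$ --- that keeps the $k$ paths disjoint and up-right without decreasing the total number of collected points. This is precisely the step requiring a case-by-case analysis of the bracketing, exactly in the spirit of the classical arguments of \cite{Danilov_Kolshevoy_bicrystals, vanLeeuwen_crystal_matrices}, and it is the main obstacle of the proof. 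The periodic geometry of $\mathscr{C}_n$ makes the bookkeeping of winding up-right paths more delicate than in the finite setting, but the locality of the move confines all of the analysis to the three consecutive rows $i-1, i, i+1$, which keeps the verification finite.

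Combining the easy and the hard cases gives $I_k(\overline{\pi}) \le I_k(\widetilde{E}^{(1)}_i(\overline{\pi}))$ for every $k$, and the mirror inequality for $\widetilde{F}^{(1)}_i$ furnishes the reverse, completing the proof of the lemma.
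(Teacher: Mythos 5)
Your proposal sets the problem up exactly as the paper does: the move is the single-point shift $\widehat{c}\mapsto \widehat{c}'=\widehat{c}-\mathbf{e}_2$ on the cylinder, one transports a maximal $k$-increasing family across this move, the cases where the shifted point is unused or where the shifted chain stays increasing are immediate, and the only obstruction is a predecessor sitting on the same row $i+1$ as $\widehat{c}$. Your explicit two-sided reduction (prove only $I_k(\overline{\pi})\le I_k(\widetilde{E}^{(1)}_i(\overline{\pi}))$ and get the converse from the mirrored argument for $\widetilde{F}^{(1)}_i$) is legitimate; the paper is in fact terse on this point and your formulation makes it cleaner.

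However, there is a genuine gap: you stop exactly where the proof begins. The assertion that the chain "admits a local rerouting \dots that keeps the $k$ paths disjoint and up-right without decreasing the total number of collected points" \emph{is} the lemma, and you defer it ("it is the main obstacle of the proof") instead of proving it. Concretely, two things are missing. First, the obstruction is not a single blocking predecessor $\mathbf{x}$: the whole maximal run $a_S\to\cdots\to a_K$ of chain points lying on row $i+1$ immediately before $\widehat{c}$ must be rerouted simultaneously, and the signature rule \eqref{eq:signature_rule_matrix_2} is used not in the form you cite (that no point on row $i$ is bracketed with $\widehat{c}$) but to produce, for \emph{each} $a_s=(j_s,i+1)$ in that run, a matched partner $u_s=(j_s',i)$ with $j_s<j_s'\le \widehat{j}$; these partners are what the rerouted chain is built from. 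Second, one must split according to whether $U=\{u_S,\dots,u_K\}$ meets the family $\overline{\sigma}$: if $U\cap\overline{\sigma}=\varnothing$, the rerouting $a_1\to\cdots\to a_{S-1}\to u_S\to\cdots\to u_K\to\widehat{c}'\to B$ stays inside one chain; otherwise one takes $J$ \emph{maximal} with $u_J\in\overline{\sigma}$ (maximality is what keeps the inserted partners disjoint from the rest of the family), writes the chain through $u_J$ as $A'\to u_J\to B'$, and exchanges terminal segments between the two chains, with a further subcase according to whether $\widehat{c}'\to B'$ is increasing. In every case one must verify that the new sequences are increasing, pairwise disjoint, and of the same total cardinality. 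None of this verification appears in your proposal, so as written it establishes only the easy cases and postulates the hard one; filling it in would essentially reproduce the paper's case analysis.
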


\begin{proof}
    Define $\overline{\pi}' = \widetilde{E}^{(1)}_i(\overline{\pi})$ and let $\overline{\sigma} = \overline{\sigma}^{(1)} \cupdot \cdots \cupdot \overline{\sigma}^{(k)}$ be a $k$-increasing subsequence of $\overline{\pi}$. We show that we can always find a $k$-increasing subsequence $\overline{\xi} \subset \overline{\pi}'$, such that $|\overline{\xi}| = |\overline{\sigma}|$ and this clearly implies the claim of the lemma. Weighted biword $\overline{\pi}'$ is obtained from $\overline{\pi}$ by replacing one entry $\left( \begin{smallmatrix} \widehat{j} \\ i+1 \\ \widehat{w} \end{smallmatrix} \right)$ by $\left( \begin{smallmatrix} \widehat{j} \\ i \\ \widehat{w} \end{smallmatrix} \right)$, where $\widehat{j}, \widehat{w}$ are selected through the signature rule \eqref{eq:signature_rule_matrix}. We denote the corresponding cells $\widehat{c}=(\widehat{j} - n \widehat{w},i+1)$ and $\widehat{c}'=\widehat{c} - \mathbf{e}_2$. In case $\widehat{c}\notin \overline{\sigma}$, then $\overline{\sigma}$ is not affected by transformation $\widetilde{E}^{(1)}_i$ and we simply take $\overline{\xi} = \overline{\sigma}$. Alternatively, assume $\widehat{c} \in \overline{\sigma}$ and without loss of generality let $\widehat{c} \in \overline{\sigma}^{(1)}$. We write
    \begin{equation}
        \overline{\sigma}^{(1)} = a_1 \to \cdots \to a_K \to \widehat{c} \to b_1 \to \cdots \to b_J,
    \end{equation}
    for some increasing subsequences $A=a_1 \to \dots \to a_K$ and $B=b_1 \to \cdots \to b_J$. In case $A \to \widehat{c}'$ is still an increasing sequence we define $\overline{\xi}^{(1)} = A \to \widehat{c}' \to B$ and $\overline{\xi} = \overline{\xi}^{(1)} \cupdot \overline{\sigma}^{(2)} \cupdot \cdots \cupdot \overline{\sigma}^{(k)}$ is the desired $k$-increasing subsequence of $\overline{\pi}'$. 
    
    The only non-trivial case to treat therefore occurs when $A \to \widehat{c}'$ is no longer an increasing sequence, as in \cref{fig:Kashiwara_Greene_1}. This happens only when point $a_S,\dots, a_K$ have coordinates $a_s = (j_s, i+1)$ for $s=S,\dots ,K$. With no loss of generality we assume that $a_{S-1}$ does not lie on the strip $\mathbb{Z} \times \{i+1\}$. By signature rule \eqref{eq:signature_rule_matrix} there exists a set of points $U = \{ u_S, \dots, u_K\}$ with coordinates $u_s = (j_s',i)$ such that $j_s < j_s' \le \widehat{j}$. If $U \cap \overline{\sigma} = \varnothing$ we define 
    \begin{equation}
        \overline{\xi}^{(1)}=a_1 \to \cdots \to a_{S-1} \to u_S \to \cdots \to u_K \to \widehat{c}' \to B
    \end{equation}
    and again $\overline{\xi} = \overline{\xi}^{(1)} \cupdot \overline{\sigma}^{(2)} \cupdot \cdots \cupdot \overline{\sigma}^{(k)}$ has the desired properties. This procedure is given in \cref{fig:Kashiwara_Greene_1}, where the solid red path denotes $\overline{\sigma}^{(1)}$, while the dotted one denotes $\overline{\xi}^{(1)}$.
    \begin{figure}
        \centering
        \includegraphics{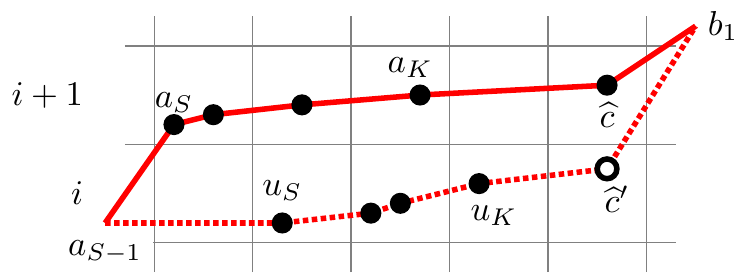}
        \caption{An instance of the relabeling procedure of \cref{lemma:Kashiwara_is}.}
        \label{fig:Kashiwara_Greene_1}
    \end{figure}
    Otherwise we assume that  $U\cap \overline{\sigma} \neq \varnothing$ and let
    \begin{equation}
        J= \max \{s \in \{S,\dots,K\} : u_s \in \overline{\sigma} \}.
    \end{equation}
    With no loss of generality we can write
     \begin{equation}
         \overline{\sigma}^{(2)} = A' \to u_J \to B',
     \end{equation}
     for two increasing subsequences $A',B'$. 
     This situation is reported in
     panel (a) and (c) of  \cref{fig:Kashiwara_Greeene_2345}
     depending on if $\widehat{c}' \to B'$ is increasing or not. 
     There red and blue lines denote $\overline{\sigma}^{(1)}$ and $\overline{\sigma}^{(2)}$. When $\widehat{c}' \to B'$ is increasing we set, as in \cref{fig:Kashiwara_Greeene_2345} panel (b)
     \begin{equation}
         \overline{\xi}^{(1)} = A \to B,
         \qquad
         \overline{\xi}^{(2)} = A' \to u_J \to \widehat{c}' \to B'
     \end{equation}
     and $\overline{\xi} = \overline{\xi}^{(1)} \cupdot \overline{\xi}^{(2)} \cupdot \overline{\sigma}^{(3)} \cupdot \cdots \overline{\sigma}^{(k)}$ is the $k$-increasing subsequence of $\overline{\pi}$ we are looking for.
     \begin{figure}[ht]
            \centering
            \subfloat[]{{\includegraphics[width=.45\linewidth]{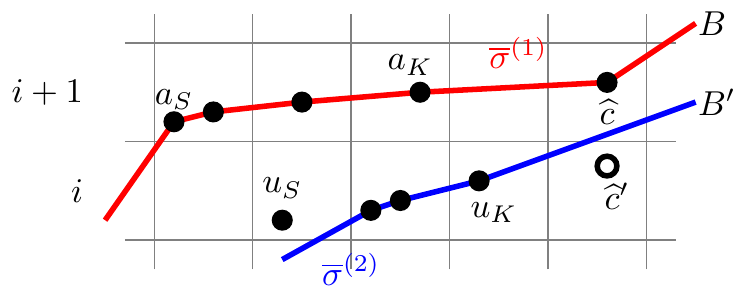} } }%
            \hspace{1cm}
            \subfloat[]{{\includegraphics[width=.45\linewidth]{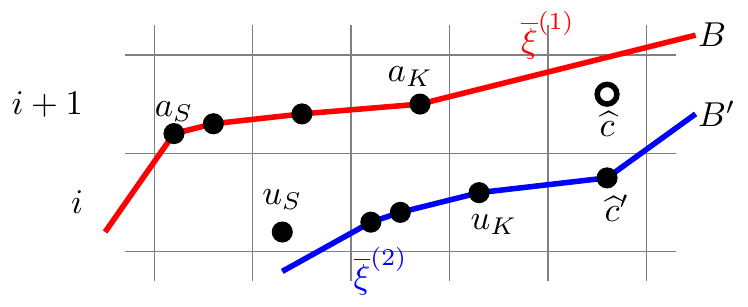} }}%
            \hspace{1cm}
            \subfloat[]{{\includegraphics[width=.45\linewidth]{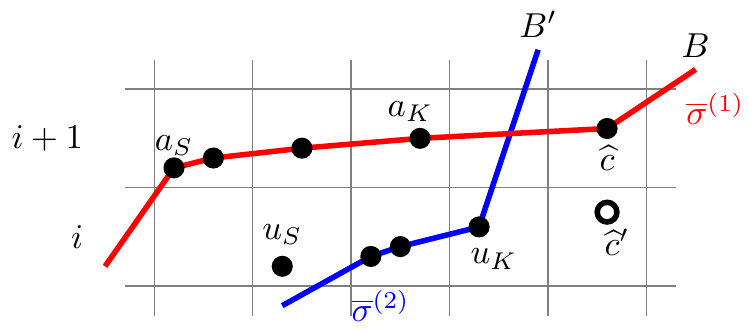} } }%
            \hspace{1cm}
            \subfloat[]{{\includegraphics[width=.45\linewidth]{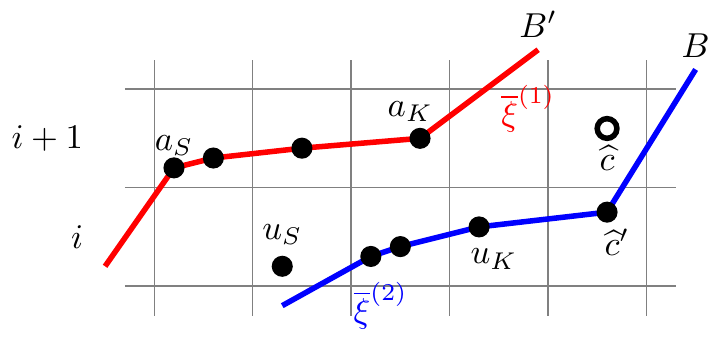} }}%
            \caption{Some of the possible relabeling procedures described in the proof of \cref{lemma:Kashiwara_is}.}
            \label{fig:Kashiwara_Greeene_2345}
        \end{figure}
     Otherwise, if $\widehat{c}' \to B'$ is not increasing, we define
     \begin{equation}
         \overline{\xi}^{(1)} = a_1 \to \cdots \to a_J \to B',
         \qquad
         \overline{\xi}^{(2)} = A' \to u_J \to u_{J+1} \to \cdots \to u_S \to \widehat{c}' \to B,
     \end{equation}
     as in \cref{fig:Kashiwara_Greeene_2345} panels (c) and (d).
     Also in this case $\overline{\xi} = \overline{\xi}^{(1)} \cupdot \overline{\xi}^{(2)} \cupdot \overline{\sigma}^{(3)} \cupdot \cdots \overline{\sigma}^{(k)}$ has the desired properties. This check exhausts all possibilities and concludes the proof.
\end{proof}

The proof of the fact that Kashiwara operators preserve the length of the longest localized decreasing subsequences is slighly more involved than the preservation property for increasing subsequences. We articulate the analysis of this case in the following three lemmas. For the next statement we need to recall how the shadow line construction one draws for the transition $\overline{\pi} \mapsto \mathbf{V}(\overline{\pi})$ defines an ensemble of down-right loops; see \cref{subs:periodic_shadow_line}.

\begin{lemma} \label{lemma:shadow_lines_number_loops}
    Let $\overline{\pi}$ be a $k$-localized decreasing sequence. Then the shadow line construction of $\overline{\pi}$ consists of at most $k$ down right loops. 
\end{lemma}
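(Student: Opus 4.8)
The statement to prove is the following.

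\begin{lemma} \label{lemma:shadow_lines_number_loops}
    Let $\overline{\pi}$ be a $k$-localized decreasing sequence. Then the shadow line construction of $\overline{\pi}$ consists of at most $k$ down right loops.
\end{lemma}

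The plan is to analyze the periodic shadow line construction of Definition~\ref{def:Viennot_map} directly, exploiting the fact that by hypothesis $\overline{\pi}=\overline{\pi}^{(1)} \cupdot \cdots \cupdot \overline{\pi}^{(k)}$ decomposes into $k$ localized decreasing sequences, each of which is supported on a strict down-right loop in $\mathscr{C}_n$. First I would recall that when drawing the shadow lines of a weighted biword, each black bullet emanates rays in the north and east directions, and the resulting broken lines (the down-right loops $\xi^{(1)},\xi^{(2)},\dots$ recorded by the construction) are precisely the successive ``shadow frontiers'': the first loop is the boundary of the shadow cast by all points, the second is the boundary of the shadow cast by the points not lying on the first, and so on. Thus it suffices to bound the number of such frontiers by $k$.

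The key step is to show that a single localized decreasing sequence $\overline{\sigma}$ casts a shadow whose frontier is exactly one down-right loop, and that its points all lie on (or inside) that single loop. By Definition~\ref{def:lds}, the points $[\overline{\sigma}_i]$ form a strict down-right loop, meaning that as we traverse them they weakly decrease in the vertical coordinate and weakly increase in the horizontal coordinate (modulo the periodicity $\sim_n$). Because the up-right shadow rays emitted north and east from a down-right-ordered point set never force a second frontier to appear among those same points, all of $\overline{\sigma}$ is absorbed into one shadow loop. I would make this precise by observing that any two points of a down-right loop are mutually ``non-dominating'' in the partial order used by the shadow construction, so none of them is hidden in the shadow of another, and hence they all contribute to one and the same broken line. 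This is the two-dimensional, periodic analog of the classical statement that the first shadow line of a decreasing sequence of length $m$ captures the whole sequence.

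Given this, I would conclude by a peeling argument. Write $\overline{\pi}=\overline{\sigma}^{(1)} \cupdot \cdots \cupdot \overline{\sigma}^{(k)}$ with each $\overline{\sigma}^{(j)}$ an LDS. The first shadow loop $\xi^{(1)}$ of $\overline{\pi}$ contains the outermost frontier; after removing all points lying on $\xi^{(1)}$, the remaining configuration is still covered by at most $k-1$ of the original LDS's (since each $\overline{\sigma}^{(j)}$, lying on a single down-right loop, either is entirely consumed by earlier frontiers or survives as part of a later LDS), and by induction on $k$ the remaining construction produces at most $k-1$ loops. Hence the total number of down-right loops is at most $k$, which is the claim.

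The main obstacle I anticipate is making rigorous the informal claim that ``each LDS is absorbed into a single frontier'' in the periodic geometry of $\mathscr{C}_n$: the twisting identification $(n,i)\sim_n(1,i+n)$ means that a down-right loop wraps around the cylinder, and one must check carefully that the shadow rays interact correctly across the seam, so that a loop genuinely closes up rather than spiraling. I would handle this by passing to the horizontal-strip representation \eqref{eq:twisted_cyl_horizontal} of $\mathscr{C}_n$, where the strict down-right loop becomes a bounded monotone staircase, and verifying the non-domination property of the points of a single LDS there, after which the periodic gluing is routine. The bound on loops then follows from the monotonicity of frontiers under removal of points, exactly as in the classical case treated in \cite[Chapter 3]{sagan2001symmetric}.
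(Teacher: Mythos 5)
Your opening observation—that the shadow line construction of a single LDS taken in isolation consists of exactly one loop—is correct, but the peeling induction you build on it has a genuine gap. The problematic step is the claim that after deleting the points lying on the first loop $\xi^{(1)}$ of $\overline{\pi}$, ``the remaining configuration is still covered by at most $k-1$ of the original LDS's.'' The first frontier of $\overline{\pi}$ need not contain any single $\overline{\sigma}^{(j)}$ in its entirety: it typically picks up proper subsets of several of them, so after the deletion all $k$ original LDS's can still have surviving points. A concrete counterexample already lives on $\mathscr{C}_4$ with all weights zero: take the four points $(1,3),(2,4),(3,1),(4,2)$ (the pattern $3412$) with the decomposition $\overline{\sigma}^{(1)}=\{(1,3),(4,2)\}$, $\overline{\sigma}^{(2)}=\{(2,4),(3,1)\}$, both of which are strict down-right loops on the cylinder. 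The shadow line construction produces two loops, one through $\{(1,3),(3,1)\}$ and one through $\{(2,4),(4,2)\}$; whichever is removed first contains exactly one point of each $\overline{\sigma}^{(j)}$, and the two remaining points still meet \emph{both} original LDS's. The induction hypothesis therefore cannot be applied to the original decomposition. What is true is that the leftover points can be covered by $k-1$ \emph{new} LDS's, but that is a Mirsky/Greene-type duality statement (``no increasing subsequence of length $>m$ implies a cover by $m$ localized decreasing sequences''), which in the periodic geometry of $\mathscr{C}_n$ is not elementary: it is essentially the content of \cref{lemma:equivalence_partition_mu_and_dual} and \cref{prop:mu_equals_mu_tilde}, which the paper proves later with crystal-theoretic machinery whose chain of dependencies runs through the present lemma. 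Patching your induction this way would thus be circular, or at least require proving something harder first.

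The paper's proof avoids peeling entirely and runs the duality in the cheap direction. From the $\widetilde{k}$ loops $\varsigma^{(1)},\dots,\varsigma^{(\widetilde{k})}$ of the shadow line construction it extracts an \emph{increasing} subsequence of $\overline{\pi}$ with exactly one point on each loop: given a configuration point $c_i\in\overline{\pi}\cap\varsigma^{(i)}$, walk south until you hit a horizontal segment of $\varsigma^{(i+1)}$, then east along $\varsigma^{(i+1)}$ (possibly winding around the cylinder) until the first point $c_{i+1}\in\overline{\pi}$; then $c_{i+1}\to c_i$ is increasing, and induction gives an increasing subsequence of length $\widetilde{k}$. Since an increasing subsequence meets each LDS in at most one point—this is exactly the non-domination property you already identified—the decomposition $\overline{\pi}=\overline{\sigma}^{(1)}\cupdot\cdots\cupdot\overline{\sigma}^{(k)}$ forces $\widetilde{k}\le k$. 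If you want to salvage your write-up, replace the peeling induction by this one-point-per-loop construction; nothing else in your outline is needed.
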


\begin{figure}
    \centering
    \subfloat[]{{\includegraphics[]{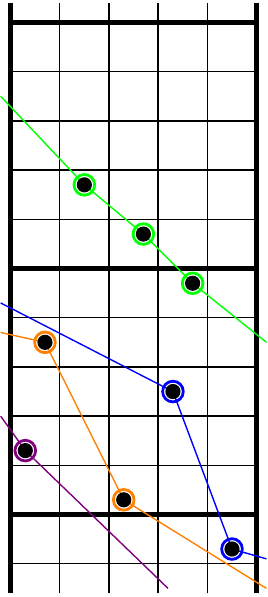} } }%
    \hspace{1cm}
    \subfloat[]{{\includegraphics[]{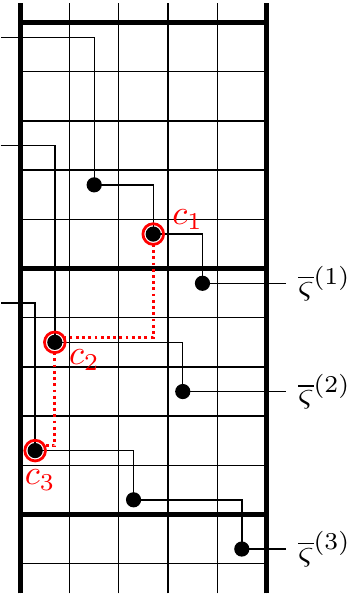} }}%
    \caption{An example of the construction presented in the proof of \cref{lemma:shadow_lines_number_loops}. In the left panel we see the weighted biword $\overline{\pi}$ represented as an union of four LDS's. In the right panel we see that the shadow line construction produces three down right loops.}
    \label{fig:Viennot_loops_and_increasing_subs}
\end{figure}

\begin{proof}
    We show that if the shadow line construction of $\overline{\pi}$ consists of $\widetilde{k}$ down right loops $\varsigma^{(1)},\dots,\varsigma^{(\widetilde{k})}$, then there exists an \emph{increasing} subsequence $\overline{\xi}\subset \overline{\pi}$ of length $\widetilde{k}$. With this assumption let $\overline{\xi} = c_1\to \cdots \to c_{\widetilde{k}}$ be contained in an up-right path in $\mathscr{C}_n$. Writing $\overline{\pi}=\overline{\sigma}^{(1)} \cupdot \cdots \cupdot \overline{\sigma}^{(k)}$, where $\overline{\sigma}^{(j)}$'s are localized decreasing subsequences, no two $c_i$ can belong to the same $\overline{\sigma}^{(j)}$ forcing $k \ge \widetilde{k}$.
    
    Representing $\mathscr{C}_n$ as an infinite vertical strip, we assume that loops are ordered from the topmost $\varsigma^{(1)}$ to the bottom-most $\varsigma^{(\widetilde{k})}$; see \cref{fig:Viennot_loops_and_increasing_subs}. We are left to show that an increasing subsequence of length $\widetilde{k}$ always exists. For this we show that if $c_i \in \overline{\pi} \cap \varsigma^{(i)}$ is a point of the configuration lying on the $i$-th loop of the shadow line construction, then we can always find $c_{i+1} \in \overline{\pi} \cap \varsigma^{(i+1)}$ such that $c_{i+1} \to c_{i}$ is an increasing sequence. To find such $c_{i+1}$ start walking southward from $c_i=(a,b-nw)$ until the first intersection with an horizontal segment of the loop $\varsigma^{(i+1)}$, which happens at a location $(a,b'-nw')$ for some $b'-nw' \le b-nw$. From there travel eastward along $\varsigma^{(i+1)}$ until the first occurrence of a point in $c_{i+1} \in \overline{\pi}$, which might happen after winding around the cylinder. In \cref{fig:Viennot_loops_and_increasing_subs} (b) we represented such walks with red dotted segments. By construction $c_{i+1}\to c_i$ forms an increasing sequence and by induction we conclude the proof.
\end{proof}

\begin{lemma} \label{lemma:kashiwara_2lds}
    Consider a 2-localized decreasing sequence $\overline{\pi}=\overline{\sigma}^{(1)} \cupdot \overline{\sigma}^{(2)}$ and assume $\overline{\pi}'=\widetilde{E}^{(1)}_i(\overline{\pi}) \neq \varnothing$ for some $i\in\{1,\dots,n-1\}$. Then we also have $\overline{\pi}'=\overline{\xi}^{(1)} \cupdot \overline{\xi}^{(2)}$, for two localized decreasing subsequences $\overline{\xi}^{(1)}, \overline{\xi}^{(2)}$.
\end{lemma}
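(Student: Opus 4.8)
The plan is to argue by direct geometric inspection on the twisted cylinder $\mathscr{C}_n$, in the same spirit as the proof of \cref{lemma:Kashiwara_is} for increasing subsequences, the essential difference being that here the combinatorial objects are down-right \emph{loops} and one must track their winding rather than deal with infinite up-right paths. First I would translate the action of the operator into geometry: by \cref{rem:weighted_biwords_bi_crystals} together with the signature rule \eqref{eq:signature_rule_matrix}, \eqref{eq:signature_rule_matrix_2}, passing from $\overline{\pi}$ to $\overline{\pi}'=\widetilde{E}^{(1)}_i(\overline{\pi})$ amounts to selecting a single point $\widehat{c}$ of $p$-value $i+1$ (the leftmost unmatched opening bracket in the cyclic bracket-matching of the strips at $p$-values $i$ and $i+1$) and sliding it one unit to $\widehat{c}'=\widehat{c}-\mathbf{e}_2$ at $p$-value $i$, keeping its first coordinate $q$ and its weight fixed. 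Writing $\overline{\pi}=\overline{\sigma}^{(1)}\cupdot\overline{\sigma}^{(2)}$ with $\widehat{c}\in\overline{\sigma}^{(1)}$ (without loss of generality), the task is to exhibit a decomposition $\overline{\pi}'=\overline{\xi}^{(1)}\cupdot\overline{\xi}^{(2)}$ into two strict down-right loops.

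Second, I would isolate the obstruction to the naive replacement. Let $a$ and $b$ be the predecessor and successor of $\widehat{c}$ along $\overline{\sigma}^{(1)}$, so that $a\to\widehat{c}\to b$ are consecutive strict down-right steps. Replacing $\widehat{c}$ by $\widehat{c}'$ inside $\overline{\sigma}^{(1)}$ keeps the three points a strict down-right loop except when the descending step $\widehat{c}'\to b$ degenerates, which occurs precisely when $b$ already sat one level below $\widehat{c}$, i.e. when $b$ and $\widehat{c}'$ land on the same $p$-value $i$ with $q(\widehat{c}')<q(b)$. The incoming step $a\to\widehat{c}'$ only descends one unit further and remains admissible whenever $\overline{\sigma}^{(1)}$ has at least two points (a single-point loop being trivially preserved). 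In every non-degenerate case I would simply let $\overline{\xi}^{(1)}$ be $\overline{\sigma}^{(1)}$ with $\widehat{c}$ replaced by $\widehat{c}'$, and set $\overline{\xi}^{(2)}=\overline{\sigma}^{(2)}$.

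Third, in the degenerate case I would re-route the two loops, mirroring the tail-exchange of \cref{fig:Kashiwara_Greeene_2345}. The leftmost-unmatched property of $\widehat{c}$ supplied by \eqref{eq:signature_rule_matrix_2} forces the level-$i$ point $b$ to have been matched to a different opening bracket, which in turn forces $\overline{\sigma}^{(2)}$ to contain a point serving as an exchange partner between the strips at $p$-values $i$ and $i+1$; I would cut $\overline{\sigma}^{(1)}$ and $\overline{\sigma}^{(2)}$ at the resulting crossing and reconnect them so that $b$ is reached from $\overline{\xi}^{(1)}$ by a genuine down step while $\widehat{c}'$ is absorbed into $\overline{\xi}^{(2)}$. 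As in the increasing case, the verification then splits into a short list of subcases according to the relative $q$-positions of $a$, $b$, $\widehat{c}'$ and the level-$i$, level-$(i+1)$ points of $\overline{\sigma}^{(2)}$.

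The hard part will be checking that each re-routed pair $\overline{\xi}^{(1)},\overline{\xi}^{(2)}$ really consists of strict down-right \emph{loops} — that each closes up after winding exactly once around $\mathscr{C}_n$ and stays localized within a band of height $n$ as in \eqref{eq:localized_drp_condition} — and not merely of down-right paths. This is exactly where the cylindrical geometry departs from \cref{lemma:Kashiwara_is}: a careless tail-swap could merge the two loops into one doubly-winding loop together with an empty one, or destroy localization. I expect to exclude this by the bracket-matching structure of the signature rule, which pins down the unique place where the descent levels of the two loops interleave and thereby certifies that the exchange respects the two winding numbers; the point count in \cref{lemma:shadow_lines_number_loops} can then be invoked to confirm that the output is again covered by at most two loops. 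Once these subcases are exhausted, the lemma follows.
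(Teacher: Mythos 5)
Your plan follows essentially the same route as the paper's proof: attempt the naive replacement of $\widehat{c}$ by $\widehat{c}'$ inside its loop, observe that the only obstruction is the degeneration of the step out of $\widehat{c}'$ (your unified "cyclic successor at level $i$" condition covers both of the paper's failure modes, the interior degeneration and the wrap-around/localization failure), use the signature rule to extract an exchange-partner point of $\overline{\sigma}^{(2)}$ sitting between $\widehat{c}$ and its successor, and certify the re-routed pair of loops via the restricted shadow-line construction and the loop count of \cref{lemma:shadow_lines_number_loops}. The remaining work is exactly the finite case analysis the paper carries out (its Cases 1.1, 1.2 and 2), so the proposal is a correct skeleton of the same argument.
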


\begin{proof}
    Weighted biword $\overline{\pi}'$ is obtained from $\overline{\pi}$ replacing an entry $\left( \begin{smallmatrix} \widehat{j} \\ i+1 \\ \widehat{w} \end{smallmatrix} \right)$ with $\left( \begin{smallmatrix} \widehat{j} \\ i \\ \widehat{w} \end{smallmatrix} \right)$, where $\widehat{j},\widehat{w}$ are selected through the signature rule \eqref{eq:signature_rule_matrix}. Call $\widehat{c}=(\widehat{j}, i+1 - \widehat{w} n)$ and $\widehat{c}' = \widehat{c} - \mathbf{e}_2$. With no loss of generality assume $\widehat{c} \in \overline{\sigma}^{(1)}$ and write
    \begin{equation}
        \overline{\sigma}^{(1)} = a_1 \to \cdots \to a_M
        \qquad
        \overline{\sigma}^{(2)}=
        b_1 \to \cdots \to b_N,
    \end{equation}
    with $\widehat{c}=a_K$ for some $K$. Representing $\mathscr{C}_n$ as an infinite vertical strip, we assume cells $a_k,b_k$ to be ordered from top to bottom. Define \begin{equation}
        \overline{\theta} = a_1 \to \dots \to a_{K-1} \to \widehat{c}' \to a_{K+1} \to \dots \to a_M.
    \end{equation}
    If $\overline{\theta}$ is an LDS, we set $\overline{\xi}^{(1)}=\overline{\theta}$ and $\overline{\xi}^{(2)}=\overline{\sigma}^{(2)}$ and this yields the desired decomposition of $\overline{\pi}'$. To check the remaining cases we consider two possibilities.
    \medskip
    
    \textbf{Case 1: $\overline{\theta}$ is not localized.} This only happens if $\widehat{c}=a_M$ and $a_1=(j_1,i-n(\widehat{w}-1))$, for some $1\le j_1 < \widehat{j}$. From the signature rule \eqref{eq:signature_rule_matrix_2}  this implies that there exists a point $\widehat{d} \in \overline{\sigma}^{(2)}$ such that $\widehat{d}=(j_2, i+1 - n (\widehat{w}-1))$, with $1 \le j_2 < j_1$, or $\widehat{d} = (\tilde{j}_2,i+1- n \widehat{w})$ with $\widehat{j}<\tilde{j}_2 \le n$.
    We treat these two additional subcases separately. 
    \begin{enumerate}[align=left]
        \item[\textbf{Case 1.1 : $\widehat{d}=(j_2, i+1 - n (\widehat{w}-1))$.}] We can write subsequence $\overline{\sigma}^{(2)} = U \to V$, where $U=u_1 \to \cdots \to u_J$ is the LDS of all $u_j \in \overline{\sigma}^{(2)}$ such that $u_j \to a_1$ is an LDS; see \cref{fig:Kashiwara_Greene_IS_1} (a) for an example. Notice that $u_J=\widehat{d}$. Define sector $S=\{ 1,\dots,n \} \times \{ i+1 - n \widehat{w} , \dots , i-1 - n (\widehat{w}-1) \}$ as in \cref{fig:Kashiwara_Greene_IS_1} (b). By construction $S$ contains all points of $\overline{\pi}$ except for $U$ and $a_1$. Moreover all points in $S$ are contained in two LDS : $V$ and $a_2 \to \cdots \to a_M$. We draw the shadow line construction, restricted to the sector $S$, of all points contained in $S$, as in \cref{fig:Kashiwara_Greene_IS_1} (b). By an adaptation of \cref{lemma:shadow_lines_number_loops}, such shadow line construction consists of at most two down right broken lines we call $\overline{\varsigma}^{(1)}, \overline{\varsigma}^{(2)}$. With no loss of generality we assume that $\widehat{c}\in \overline{\varsigma}^{(1)}$ and this forces $\overline{\varsigma}^{(2)}$ to contain only points of $\overline{\pi}$ that are weakly to east of $a_2$ and weakly north of $b_N$. Define now $W^{(2)}$ selecting all points of $\overline{\pi} \cap \overline{\varsigma}^{(2)}$, without multiplicity and $\overline{\xi}^{(2)} = U \to a_1 \to \eta^{(2)}$. Then $\overline{\xi}^{(2)}$ is an LDS. Subsequently define $W^{(1)}$ taking all points of $\overline{\pi} \cap \overline{\varsigma}^{(1)}$ minus $\widehat{c}$ and set $\overline{\xi}^{(1)} = W^{(1)} \to \widehat{c}'$. Again $\overline{\xi}^{(1)}$ is an LDS and $\overline{\xi}^{(1)}, \overline{\xi}^{(2)}$ provide the desired decomposition of $\overline{\pi}'$
        \begin{figure}[ht]
            \centering
            \subfloat[]{{\includegraphics[height=4.7cm]{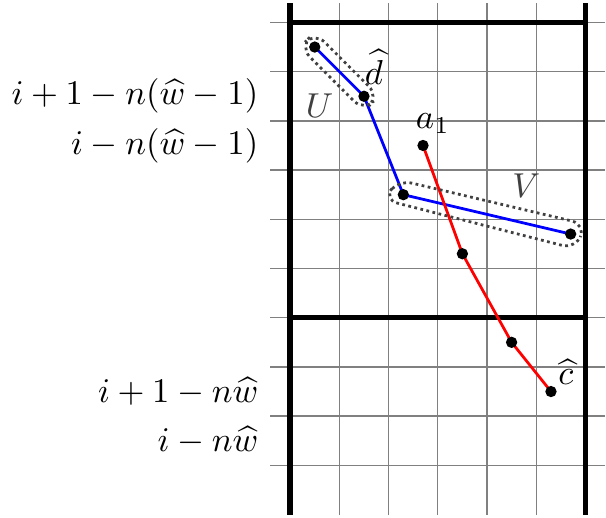} } }%
            \hspace{1cm}
            \subfloat[]{{\includegraphics[height=4.7cm]{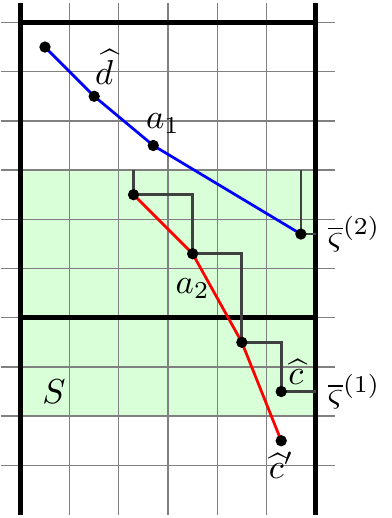} }}%
            \caption{Relabeling procedure corresponding to Case 1.1 in the proof of \cref{lemma:kashiwara_2lds}. Red and blue LDS's in the left panel are $\overline{\sigma}^{(1)},\overline{\sigma}^{(2)}$. In the right panel red and blue LDS's are $\overline{\xi}^{(1)},\overline{\xi}^{(2)}$}
            \label{fig:Kashiwara_Greene_IS_1}
        \end{figure}
        
        \item[\textbf{Case 1.2 : $\widehat{d}=(\tilde{j}_2, i+1 - n \widehat{w})$.}] This case is represented in \cref{fig:Kashiwara_Greene_IS_2} (a).
        We write $\overline{\sigma}^{(2)} = U \to V$, with $U$ having all points located weakly north of $\widehat{c}$ and $V$ having all points located south-east of $\widehat{c}'$. In this case consider the sector $S = \{1,\dots, n\}\times \{ i+1-n\widehat{w}, \dots , i - n (\widehat{w}-1) \}$ as in \cref{fig:Kashiwara_Greene_IS_2} (b). Then all points of $\overline{\pi}$ except those in $V$ are contained in $S$. By signature rule $a_1$ is the northernmost point of the configuration as no other point can be of the form $(j,i-n(\widehat{w}-1))$. Moreover points in $S$ all belong to the union of down-right paths $U$ and $\overline{\sigma}^{(1)}$. Therefore, the inverse shadow line construction, restricted to the sector $S$, of the points within $S$, drawn in \cref{fig:Kashiwara_Greene_IS_2} (b), will consist, again by an adaptation of \cref{lemma:shadow_lines_number_loops}, of exactly two separate broken lines $\overline{\varsigma}^{(1)},\overline{\varsigma}^{(2)}$. With no loss of generality we assume that $\widehat{c} \in \overline{\varsigma}^{(1)}$, while $\widehat{d},a_1 \in \overline{\varsigma}^{(2)}$, which implies that $\overline{\varsigma}^{(1)}$ is contained in the region weakly south of $b_1$. We now define LDS $\overline{\xi}^{(2)}$ taking points $\overline{\pi} \cap \overline{\varsigma}^{(2)}$ without multiplicity. Define also $W$ taking points $\overline{\pi} \cap \overline{\varsigma}^{(1)}$, without mulitplicity and excluding $\widehat{c}$. Finally we set $\overline{\xi}^{(1)}=W \to \widehat{c}' \to V$. Also in this case the decomposition $\overline{\pi}'=\overline{\xi}^{(1)}\cupdot \overline{\xi}^{(2)}$ has the desired properties.
    \begin{figure}
            \centering
            \subfloat[]{{\includegraphics[height=4.7cm]{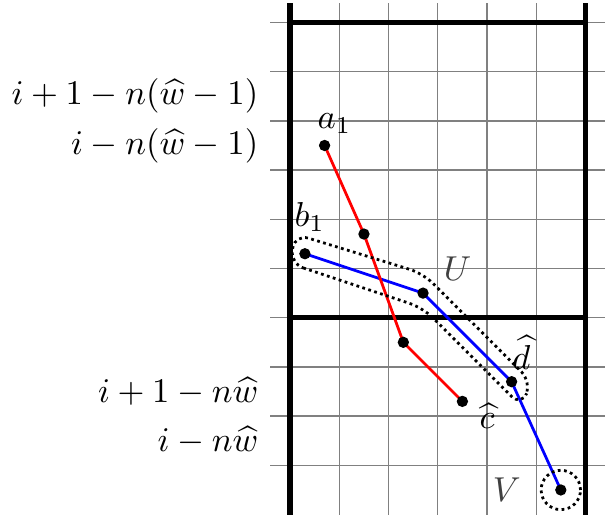} } }%
            \hspace{1cm}
            \subfloat[]{{\includegraphics[height=4.7cm]{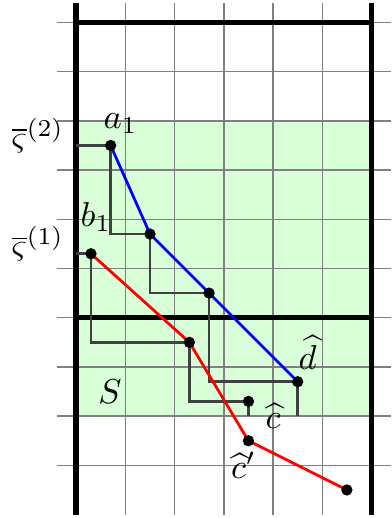} }}%
            \caption{Relabeling procedure corresponding to Case 1.2 in proof of \cref{lemma:kashiwara_2lds}. In the left panel red and blue LDS's are $\overline{\sigma}^{(1)}, \overline{\sigma}^{(2)}$, while in the right panel they are $\overline{\xi}^{(1)}, \overline{\xi}^{(2)}$.}
            \label{fig:Kashiwara_Greene_IS_2}
        \end{figure}
    \end{enumerate}
    
        \textbf{Case 2: $\overline{\theta}$ is not strictly down-right.} This can only happen if $\widehat{c}=a_K$ and $a_{K+1}=(j_1, i-n\widehat{w})$ for some $j_1\in \{ \widehat{j}+1,\cdots ,n \}$ and some $K$. See \cref{fig:Kashiwara_Greene_IS_5} (a) for an example. By the signature rule there must exist $\widehat{d}\in \overline{\sigma}^{(2)}$ such that $\widehat{d}=(j_2,i+1-n\widehat{w})$ with $j_2 \in \{\widehat{j},\cdots j_1-1\}$ and moreover no other point occupies the segment of vertical coordinate $i - n \widehat{w}$. We draw the inverse shadow line construction of the point configuration, which consists in two down-right paths $\overline{\varsigma}^{(1)}, \overline{\varsigma}^{(2)}$, by \cref{lemma:shadow_lines_number_loops}. Because of their relative position we must have, $a_K \in \overline{\varsigma}^{(1)}$, while $a_{K+1}, \widehat{d} \in \overline{\varsigma}^{(2)}$. Then we define $\overline{\xi}^{(1)}$ taking points in $\overline{\pi} \cap \overline{\varsigma}^{(1)}$, without multiplicity, and replacing $\widehat{c}$ by $\widehat{c}'$; see \cref{fig:Kashiwara_Greene_IS_5} (b). Define also $\overline{\xi}^{(2)}$ taking point in $\overline{\pi} \cap \overline{\varsigma}^{(2)}$. Both $\overline{\xi}^{(1)}, \overline{\xi}^{(2)}$ are LDS's and their union is $\overline{\pi}'$. The analysis of this case exhausts all possible configurations of $\overline{\sigma}^{(1)}, \overline{\sigma}^{(2)}$ and completes the proof.
    \begin{figure}[h]
    \centering
    \subfloat[]{{\includegraphics[height=4.7cm]{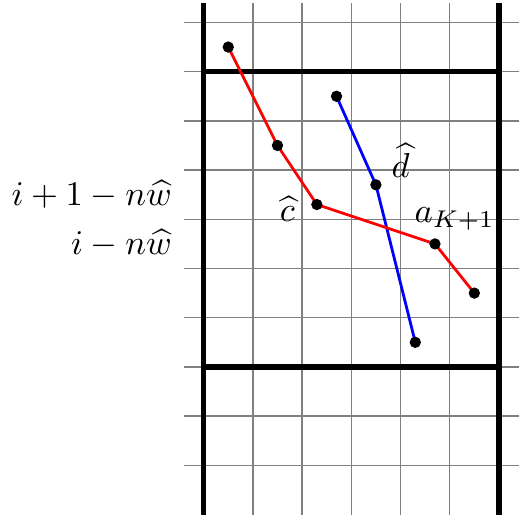} } }%
    \hspace{1cm}
    \subfloat[]{{\includegraphics[height=4.7cm]{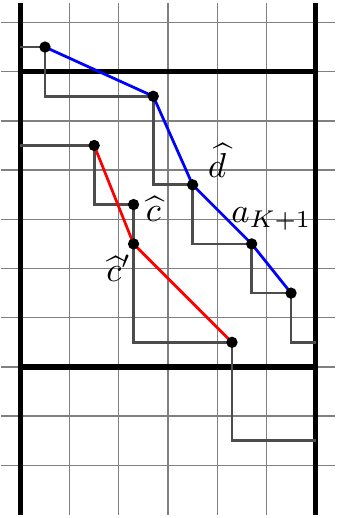}}}
    \caption{Depiction of the relabeling described in Case 2 in the proof of \cref{lemma:kashiwara_2lds}.}
    \label{fig:Kashiwara_Greene_IS_5}
\end{figure}

\end{proof}

\begin{lemma} \label{lemma:Kashiwara_lds}
    Let $\overline{\pi} \in \overline{\mathbb{A}}_{n,n}$ and 
    $\overline{\pi}'=\widetilde{E}^{(1)}_i (\overline{\pi}) \neq \varnothing$ for some $i\in\{1,\dots,n-1\}$. Then, for all $k$, we have $D_k(\overline{\pi}')=D_k(\overline{\pi})$.
\end{lemma}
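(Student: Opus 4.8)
The plan is to reproduce, at the level of $k$ localized decreasing subsequences, the surgery that was carried out for $k$-increasing subsequences in \cref{lemma:Kashiwara_is} and for a single $2$-LDS in \cref{lemma:kashiwara_2lds}. First I would reduce the equality to a one-sided inequality by invertibility. Since $\widetilde{F}^{(1)}_i = (\widetilde{E}^{(1)}_i)^{-1}$ is again a classical Kashiwara operator of the same shape, it suffices to prove that $D_k(g(\overline{\pi})) \ge D_k(\overline{\pi})$ for every $g \in \{\widetilde{E}^{(1)}_i, \widetilde{F}^{(1)}_i\}$ and every $\overline{\pi}$ with $g(\overline{\pi}) \ne \varnothing$: applying this to $\overline{\pi}' = \widetilde{E}^{(1)}_i(\overline{\pi})$ and to $\widetilde{F}^{(1)}_i$ gives $D_k(\overline{\pi}) = D_k(\widetilde{F}^{(1)}_i(\overline{\pi}')) \ge D_k(\overline{\pi}')$, which together with the $\widetilde{E}$-inequality forces equality. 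So I would treat only $g = \widetilde{E}^{(1)}_i$ in detail, the $\widetilde{F}$ case being its mirror image.

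For the inequality itself I would fix a maximizing decomposition $\overline{\sigma} = \overline{\sigma}^{(1)} \cupdot \cdots \cupdot \overline{\sigma}^{(k)}$ with $|\overline{\sigma}| = D_k(\overline{\pi})$, and recall that $\overline{\pi}' = \widetilde{E}^{(1)}_i(\overline{\pi})$ is obtained by moving a single point $\widehat{c}$ of $p$-label $i+1$ to $\widehat{c}' = \widehat{c} - \mathbf{e}_2$ of $p$-label $i$, where $\widehat{c}$ is selected by the signature rule \eqref{eq:signature_rule_matrix}, \eqref{eq:signature_rule_matrix_2}. If $\widehat{c} \notin \overline{\sigma}$ then every point of $\overline{\sigma}$ survives unchanged in $\overline{\pi}'$, so $\overline{\sigma}$ is already a $k$-LDS of $\overline{\pi}'$ and the inequality holds. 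If $\widehat{c} \in \overline{\sigma}$, say $\widehat{c} \in \overline{\sigma}^{(1)}$, I would replace $\widehat{c}$ by $\widehat{c}'$ inside $\overline{\sigma}^{(1)}$ to obtain a sequence $\overline{\theta}$. When $\overline{\theta}$ is still localized and strictly down-right, $\overline{\theta} \cupdot \overline{\sigma}^{(2)} \cupdot \cdots \cupdot \overline{\sigma}^{(k)}$ is the desired $k$-LDS of $\overline{\pi}'$ of the same length. Otherwise $\overline{\theta}$ fails in exactly one of the two ways already isolated in \cref{lemma:kashiwara_2lds} (it stops being localized, or it stops being strictly down-right), and in each case the signature rule produces a companion point $\widehat{d}$; I would then recombine $\overline{\sigma}^{(1)}$ with the single neighbouring LDS containing $\widehat{d}$ into two new localized decreasing subsequences of the same total length, by the restricted (inverse) shadow line constructions used in the three cases of \cref{lemma:kashiwara_2lds}, leaving the remaining $k-2$ subsequences untouched.

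The step I expect to be the main obstacle is precisely this recombination in the case $\widehat{c} \in \overline{\sigma}$: I must guarantee that the surgery stays confined to two of the chosen subsequences and preserves both their number $k$ and the total length $D_k(\overline{\pi})$. In \cref{lemma:kashiwara_2lds} this is automatic because the whole configuration \emph{is} the $2$-LDS, so the companion point $\widehat{d}$ is necessarily one of its points; for general $k$ the point $\widehat{d}$ furnished by the global signature rule on $\overline{M}$ need not a priori lie in $\overline{\sigma}$. I would resolve this by re-deriving the signature-rule consequences of \cref{lemma:kashiwara_2lds} directly from \eqref{eq:signature_rule_matrix_2} applied to the full matrix, localizing the entire argument to a band $\{1,\dots,n\} \times \{i+1-\widehat{w}n, \dots, i-(\widehat{w}-1)n\} \subset \mathscr{C}_n$ around $\widehat{c}$, and invoking the loop count of \cref{lemma:shadow_lines_number_loops} restricted to that band: this shows that the two broken lines produced by the restricted shadow line construction recombine $\overline{\sigma}^{(1)}$ and its neighbour without changing the total number of points, while maximality of $\overline{\sigma}$ forces $\widehat{d}$ to already belong to $\overline{\sigma}$, since otherwise the recombination would exhibit a longer $k$-LDS. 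Carrying out these band-localized case checks, entirely parallel to the figures accompanying \cref{lemma:kashiwara_2lds}, completes the proof.
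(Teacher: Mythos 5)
Your overall architecture is the same as the paper's: reduce to the one-sided inequality $D_k(g(\overline{\pi}))\ge D_k(\overline{\pi})$ via invertibility of Kashiwara operators, take a $k$-LDS $\overline{\sigma}$, and when the moved point $\widehat{c}$ lies in $\overline{\sigma}^{(1)}$ and the naive replacement $\widehat{c}\mapsto\widehat{c}'$ breaks the LDS property, repair the decomposition using the two-component surgery of \cref{lemma:kashiwara_2lds}. The gap is in the repair step. You assert that the surgery ``stays confined to two of the chosen subsequences,'' i.e.\ that the companion point $\widehat{d}$ supplied by the signature rule sits in (at most) one neighbouring component and is positioned so that a single application of \cref{lemma:kashiwara_2lds} works. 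Neither half of this is automatic. The signature rule \eqref{eq:signature_rule_matrix_2} only guarantees the existence of \emph{some} point of $p$-label $i+1$ east of $\widehat{c}$ that is not already ``used up'' by a label-$i$ point in its own component -- the paper formalizes this as the set $\Omega^{(i)}(\overline{\pi};\overline{\sigma})$ in \eqref{eq:set_Omega_pi_sigma} -- and its westernmost element may lie far to the east of the obstruction $\widetilde{c}$. In that situation (the paper's Case~2) no single two-component recombination suffices: one must iteratively reshuffle the decomposition, applying \cref{lemma:kashiwara_2lds} to successive pairs $\overline{\sigma}^{(s)}\cupdot\overline{\sigma}^{(s+1)}$ to transport the westernmost element of $\Omega^{(i)}$ westward, potentially touching many of the $k$ components before the local surgery becomes applicable. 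Your band-localization plus loop-count argument does not produce this induction.

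The maximality argument you lean on to force $\widehat{d}\in\overline{\sigma}$ is also not sound: if $\widehat{d}\notin\overline{\sigma}$, the available move is to \emph{swap} $\widehat{c}$ for $\widehat{d}$ inside $\overline{\sigma}^{(1)}$, which leaves the total length unchanged, so no ``longer $k$-LDS'' arises and no contradiction with maximality is obtained. In fact the paper's proof never invokes maximality of $\overline{\sigma}$ at all -- it establishes the length-preserving transport for an \emph{arbitrary} $k$-LDS and handles $\widehat{d}\notin\overline{\sigma}$ by exactly that direct swap. To close your argument you should (i) drop the maximality claim and treat $\widehat{d}\notin\overline{\sigma}$ by substitution, and (ii) add the iterative transport of the companion point across components, with a termination argument (the westernmost element of $\Omega^{(i)}$ moves strictly west at each step, so the process ends after finitely many reshuffles).
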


\begin{proof}
    As discussed several times above $\overline{\pi}'$ differs from  $\overline{\pi}$ by a replacement of $\left( \begin{smallmatrix} \widehat{j} \\ i+1 \\ \widehat{w} \end{smallmatrix} \right)$ with $\left( \begin{smallmatrix} \widehat{j} \\ i \\ \widehat{w} \end{smallmatrix} \right)$. For this proof it is convenient to parameterize $\mathscr{C}_n$ as the infinite horizontal strip \eqref{eq:twisted_cyl_horizontal} and we define points $\widehat{c}=(\widehat{j}+ \widehat{w} n, i+1)$ and $\widehat{c}' = \widehat{c}-\mathbf{e}_2$. Let $\overline{\sigma}=\overline{\sigma}^{(1)} \cupdot \cdots \cupdot \overline{\sigma}^{(k)}$ be a $k$-LDS of $\overline{\pi}$. We show that we can find a $k$-LDS $\overline{\xi}= \overline{\xi}^{(1)} \cupdot \cdots \cupdot \overline{\xi}^{(k)} \subseteq \overline{\pi}'$ with the same length of $\overline{\sigma}$ and this statement clearly implies the result of the lemma.
    
    Clearly, if $\widehat{c}\notin \overline{\sigma}$ we take $\overline{\xi} = \overline{\sigma}$ and there is nothing to prove. We then assume, with no loss of generality that $\widehat{c} \in \overline{\sigma}^{(1)}$ and in particular $\widehat{c}=[\overline{\sigma}^{(1)}_K]$ for some $K$. As in proof of \cref{lemma:kashiwara_2lds}, define 
    \begin{equation}
        \overline{\theta} = [\overline{\sigma}^{(1)}_1] \to \cdots \to [\overline{\sigma}^{(1)}_{K-1}] \to \widehat{c}' \to [\overline{\sigma}^{(1)}_{K+1}] \to \cdots .   
    \end{equation}
    If $\overline{\theta}$ is an LDS, then we set $\overline{\xi}=\overline{\theta} \cupdot \overline{\sigma}^{(2)} \cupdot \cdots \cupdot \overline{\sigma}^{(K)}$, producing the desired $k$-LDS $\overline{\xi} \subseteq \overline{\pi}'$. If, on the other hand $\overline{\theta}$ is not an LDS, then there exists $\widetilde{c} \in \overline{\sigma}^{(1)}$ such that $\widetilde{c}=(\widetilde{j},i)$ for some $\widehat{j} < \widetilde{j}$. Define $\Omega^{(i)}(\overline{\pi}; \overline{\sigma})$ as the set of $a = (m,i+1) \in \overline{\pi}$ such that
    \begin{equation} \label{eq:set_Omega_pi_sigma}
        \begin{minipage}{.9\linewidth}
            \begin{enumerate}
                \item $m \ge \widehat{j}$;
                \item $a \notin \overline{\sigma}$ or $a \in \overline{\sigma}^{(\ell)}$ for some $\ell$ but $(m',i)\notin \overline{\sigma}^{(\ell)}$ for any $m' \in \mathbb{Z}$.
            \end{enumerate}
        \end{minipage}
    \end{equation}
    Then, by the signature rule $\Omega^{(i)}(\overline{\pi}; \overline{\sigma})$ is not empty and we consider its element $\widehat{d}=(\widehat{m},i+1)$ situated furthest to the west. By this we mean that $(m,i+1)\notin \Omega^{(i)}(\overline{\pi}; \overline{\sigma})$ for any $m < \widehat{m}$. Based on the position of $\widehat{d}$ we distinguish two cases.
    
    \medskip
    
    \textbf{Case 1: $\widehat{d}$ lies between $\widehat{c}$ and $\widetilde{c}$.} More precisely we assume $\widehat{j}\le \widehat{m} < \widetilde{j}$. Then:
    \begin{itemize}
        \item if $\widehat{d}\notin \overline{\sigma}$ we can define $\overline{ \xi}^{(1)}$ replacing, in $\overline{\sigma}^{(1)}$, $\widehat{c}$ with $\widehat{d}$. This produces the desires $k$-LDS $\overline{\xi}'=\overline{\xi}^{(1)}\cupdot \overline{\sigma}^{(2)} \cupdot \cdots \cupdot \overline{\sigma}^{(k)}\subseteq \overline{\pi}'$.
        \item if $\widehat{d} \in \overline{\sigma}$ and with no loss of generality, we assume $\widehat{d} \in \overline{\sigma}^{(2)}$, then weighted biword $\overline{\sigma}^{(1)} \cupdot \overline{\sigma}^{(2)}$ satisfies hypothesis of \cref{lemma:kashiwara_2lds}. By the same lemma, we can find a decomposition $\overline{\xi}^{(1)} \cupdot \overline{\xi}^{(2)} = \widetilde{E}^{(1)}_i (\overline{\sigma}^{(1)} \cupdot \overline{\sigma}^{(2)})$ and  $\overline{\xi}=\overline{\xi}^{(1)} \cupdot \overline{\xi}^{(2)} \cupdot \overline{\sigma}^{(3)} \cupdot \cdots \cupdot \overline{\sigma}^{(k)}$ yields the desired $k$-LDS of $\overline{\pi}'$. This situation is reported in \cref{fig:LDS_horiz_strip_case_1}.
        \begin{figure}
            \centering
            \includegraphics{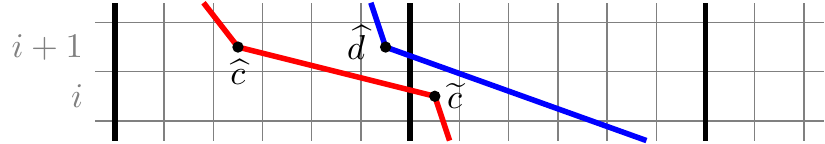}
            \caption{An example of configuration where $\widehat{d} \in \overline{\sigma}^{(2)}$ lies between $\widehat{c}$ and $\widetilde{c}$.}
            \label{fig:LDS_horiz_strip_case_1}
        \end{figure}
    \end{itemize} 
    
    \medskip
    
    \textbf{Case 2: $\widehat{d}$ does not lie between $\widehat{c}$ and $\widetilde{c}$.} When $\widehat{m}\ge \widetilde{j}$ we want to show that through a series of reshuffling of elements of $\overline{\sigma}$ and $\overline{\pi}$ we can always ``transport" the point $\widehat{d}$ in the region between $\widehat{c}$ and $\widetilde{c}$, falling back into \textbf{Case 1}. By the signature rule and by definition of $\widehat{d}$ it is easy to conclude that there exist points $a=(j_a,i+1)$, $b=(j_b,i)$ such that
    \begin{equation}
        \begin{minipage}{.9\linewidth}
            \begin{enumerate}
                \item $a,b \in \overline{\sigma}^{(s)}$ for some $s$.
                \item $a$ lies between $\widehat{c}$ and $\widehat{d}$, while $b$ lies to east of $\widehat{d}$. More precisely $\widehat{j} \le j_a < \widehat{m} < j_b$.
            \end{enumerate}
        \end{minipage}
    \end{equation}
    \begin{figure}[h]
        \centering
        \includegraphics[scale=1.1]{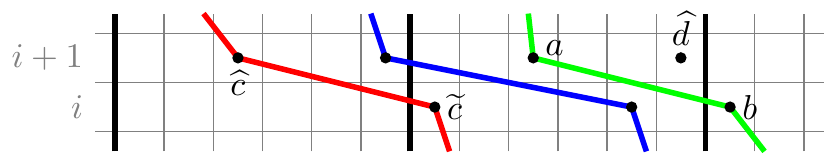}
        \caption{An example of a configuration where $\widehat{d}$ does not lie between $\widehat{c}$ and $\widetilde{c}$.}
        \label{fig:LDS_horiz_strip_case_2}
    \end{figure}
    For an example see \cref{fig:LDS_horiz_strip_case_2}.
    In case $\widehat{d} \notin \overline{\sigma}$, we replace in $\overline{\sigma}^{(s)}$, $a$ with $\widehat{d}$ yielding a new LDS $\widetilde{\sigma}^{(s)}$ and hence a new $k$-LDS $\widetilde{\sigma}$ obtained from $\overline{\sigma}$ interchanging $\widetilde{\sigma}^{(s)}$ and $\overline{\sigma}^{(s)}$. After such replacement the set $\Omega^{(i)}(\overline{\pi}, \widetilde{\sigma})$ differs from $\Omega^{(i)}(\overline{\pi}, \overline{\sigma})$ and in particular its westernmost element is $a$, rather than $\widehat{d}$.
    Alternatively assume $\widehat{d}\in \overline{\sigma}$, say $\widehat{d} \in \overline{\sigma}^{(s+1)}$ for some $s$. Then as in \textbf{Case 1} weighted biword $\overline{\sigma}^{(s)}\cupdot \overline{\sigma}^{(s+1)}$ fulfills hypothesis of \cref{lemma:kashiwara_2lds}. This implies that we can find a decomposition $\overline{\eta}^{(s)} \cupdot \overline{\eta}^{(s+1)} = \overline{\sigma}^{(s)}\cupdot \overline{\sigma}^{(s+1)}$ such that, if $a \in \overline{\eta}^{(s)}$, then $(m,i)\notin \overline{\eta}^{(s)}$ for any $m \in \mathbb{Z}$. Define now $\widetilde{\sigma}$ replacing in $\overline{\sigma}$ LDS's $\overline{\sigma}^{(s)}, \overline{\sigma}^{(s+1)}$ with $\overline{\eta}^{(s)}, \overline{\eta}^{(s+1)}$. Then also in this case the westernmost element of set $\Omega^{(i)}(\overline{\pi},\widetilde{\sigma})$ is no longer $\widehat{d}$, but $a$. This shows that, inductively we can move west, through reshuffling, the westernmost element of $\Omega^{(i)}(\overline{\pi},\overline{\sigma})$ until it lies between $\widehat{c}$ and $\widetilde{c}$, which is then treated by \textbf{Case 1}. This concludes the proof. 
\end{proof}

We finally come to the proof of \cref{thm:subsequences_crystals}.

\begin{proof}[Proof of \cref{thm:subsequences_crystals}]
    It follows by combining results of lemmas enumerated in this appendix. Thanks to \cref{lemma:inversion_prererves_is_lds} the family of Kashiwara operators $\widetilde{E}^{(2)}_i,\widetilde{F}^{(2)}_i$ preserves quantities $I_k,D_k$ only if the first family $\widetilde{E}^{(1)}_i,\widetilde{F}^{(1)}_i$ does. Further, through  \cref{lemma:iota_preserves_I_D} one simply has to check that $\widetilde{E}^{(1)}_i,\widetilde{F}^{(1)}_i$ preserve $I_k,D_k$ for $i=1,\dots,n-1$. This last case is then handled by \cref{lemma:Kashiwara_is} and \cref{lemma:Kashiwara_lds}. Notice that if $\widetilde{E}^{(1)}_i$ preserves $I_k,D_k$, then also $\widetilde{F}^{(1)}_i$ does, being its inverse.
\end{proof}



\bibliographystyle{alpha}
\bibliography{bib}

\end{document}